\documentclass[a4paper]{amsart}
\usepackage[T1]{fontenc}
\usepackage[utf8]{inputenc}
\usepackage{lmodern}


\usepackage{enumitem} 
\setlist[enumerate,1]{label=\textup{(\arabic*)}}

\setlength{\parskip}{0pt plus 1pt}

\usepackage{mathtools, amssymb}
\usepackage{booktabs}

\usepackage{tikz}
\usetikzlibrary{matrix,arrows,calc}

\newcommand{\triplerightarrow}{%
\tikz[minimum height=0ex]
  \path[->]
   node (a)            {}
   node (b) at (1em,0) {}
  (a.north)  edge (b.north)
  (a.center) edge (b.center)
  (a.south)  edge (b.south);%
}

\tikzset{cd/.style={matrix of math nodes,row sep=2em,column sep=2em,
    text height=1.5ex, text depth=0.5ex}}
\tikzset{cdar/.style={->,auto}}
\tikzset{dar/.style={anchor=mid,double,double equal sign
    distance,-implies}} 
\tikzset{equ/.style={double, double equal sign distance}} 

\tikzset{narrowfill/.style={anchor=mid,inner sep=0pt, fill=white}}


\usepackage{xcolor}
\usepackage{microtype}

\numberwithin{equation}{section}

\theoremstyle{plain}
\newtheorem{theorem}{Theorem}[section]
\newtheorem{proposition}[theorem]{Proposition}
\newtheorem{lemma}[theorem]{Lemma}
\newtheorem{corollary}[theorem]{Corollary}

\theoremstyle{definition}
\newtheorem{definition}[theorem]{Definition}

\newtheorem{deflemma}[theorem]{Definition and Lemma}
\newtheorem{assumption}[theorem]{Assumption}

\theoremstyle{remark}
\newtheorem{remark}[theorem]{Remark}
\newtheorem{example}[theorem]{Example}


\newcommand {\emptycomment}[1]{}

\DeclareMathOperator{\Ad}{Ad}  

\DeclareMathOperator{\Kan}{Kan}
\DeclareMathOperator{\Ima}{im} 

\newcommand{\N}{\mathbb N}

\newcommand{\R}{\mathbb R}

\newcommand*{\nb}    {\nobreakdash}     
\newcommand*{\defeq} {\mathrel{\vcentcolon=}} 
\newcommand*{\prto}{\twoheadrightarrow}
\newcommand*{\congto}{\overset\sim\to}
\newcommand*{\blank} {\textup{\textvisiblespace}}
\newcommand*{\id}{\textup{id}}


\newcommand*{\inOb}{\mathrel{\in\in}}

\newcommand*{\norm}[1]{\lVert#1\rVert}

\newcommand{\Cat}{\mathcal C}

\newcommand{\Fun}{\mathfrak{Fun}}
\newcommand{\Anafun}{\mathfrak{Ana}}
\newcommand{\GrCat}{\mathfrak{Gr}}

\newcommand{\Sets}{\mathsf{Sets}}
\newcommand{\Top}{\mathsf{Top}}
\newcommand{\Hausdorff}{\mathsf{Haus\textsf{-}Top}}
\newcommand{\Mfd}{\mathsf{Mfd}}
\newcommand{\fin}{\mathrm{fin}}
\newcommand{\Hil}{\mathrm{Hil}}
\newcommand{\Ban}{\mathrm{Ban}}
\newcommand{\Fre}{\textup{Fré}}
\newcommand{\lcs}{\mathrm{lcs}}

\newcommand{\covers}{\mathcal T}
\newcommand{\surj}{\textup{surj}}
\newcommand{\limlift}{{\textup{biqu}}}
\newcommand{\splitsur}{\textup{split}}
\newcommand{\locsplit}{\textup{loc.split}}
\newcommand{\numsplit}{\textup{num.split}}
\newcommand{\proper}{\textup{prop}}
\newcommand{\open}{\textup{open}}
\newcommand{\locsect}{\textup{loc.sect.}}
\newcommand{\etale}{\textup{ét}}
\newcommand{\subm}{\textup{subm}}



\newcommand*{\Gr}[1][G]{\mathsf{#1}}
\newcommand*{\Act}[1][X]{\mathsf{#1}}
\newcommand*{\Base}[1][Z]{\mathsf{#1}}
\newcommand*{\bunp}[1][p]{\mathsf{#1}}
\newcommand*{\s}{\mathsf s}
\newcommand*{\rg}{\mathsf r}
\newcommand*{\pr}{\mathsf{pr}}
\newcommand*{\mul}{\mathsf m}
\newcommand{\unit}{\mathsf u}
\newcommand{\invers}{\mathsf i}

\newcommand*{\?}{{?}}

\usepackage{hyperref}
\hypersetup{%
  bookmarksnumbered,
  bookmarksopen,
  breaklinks,
  pdfstartview=FitH,
  pdftitle={Groupoids in categories with pretopology},
  pdfauthor={Ralf Meyer and Chenchang Zhu},
  pdfkeywords={Grothendieck topology, cover, groupoid, principal
    bundle, Hilsum-Skandalis morphism;  anafunctor;
  bicategory; comorphism; infinite-dimensional groupoid},
  pdfsubject={Mathematics},
}
\usepackage[lite]{amsrefs} 
\newcommand*{\MRref}[2]{ \href{http://www.ams.org/mathscinet-getitem?mr=#1}{MR \textbf{#1}}}
\newcommand*{\arxiv}[1]{\href{http://www.arxiv.org/abs/#1}{arXiv: #1}}
\renewcommand{\PrintDOI}[1]{\href{http://dx.doi.org/\detokenize{#1}}{doi: \detokenize{#1}}%
  \IfEmptyBibField{pages}{, (to appear in print)}{}}

\begin{document}
\title{Groupoids in categories with pretopology}
\author{Ralf Meyer}
\email{rmeyer2@uni-goettingen.de}
\author{Chenchang Zhu}
\email{zhu@uni-math.gwdg.de}
\address{Mathematisches Institut\\
  Georg-August-Universit\"at G\"ottingen\\
  Bunsenstra\ss e 3--5\\
  37073 G\"ottingen\\
  Germany}

\begin{abstract}
  We survey the general theory of groupoids, groupoid actions,
  groupoid principal bundles, and various kinds of morphisms between
  groupoids in the framework of categories with
  pretopology. The categories of topological spaces, finite or
  infinite dimensional manifolds are examples of such categories.  We
  study extra assumptions on pretopologies that are needed for this
  theory.  We check these extra assumptions in several categories with
  pretopologies.

  Functors between groupoids may be localised at equivalences in two
  ways.  One uses spans of functors, the other bibundles (commuting
  actions) of groupoids.  We show that both approaches give equivalent
  bicategories.  Another type of groupoid morphism, called an actor,
  is closely related to functors between the categories of groupoid
  actions.  We also generalise actors using bibundles, and show that
  this gives another bicategory of groupoids.
\end{abstract}

\subjclass[2000]{51H25}
\keywords{Grothendieck topology; cover; groupoid; groupoid action; groupoid sheaf;
  principal bundle; Hilsum--Skandalis morphism; anafunctor;
  bicategory; comorphism; infinite dimensional groupoid}
\maketitle

\tableofcontents

\section{Introduction}
\label{sec:intro}

The notion of groupoid has many variants: topological groupoids,
étale topological groupoids, Lie groupoids of finite and infinite
dimension, algebraic groupoids, and
so on.  A subtle point is that the definition of a groupoid in a
category depends on a notion of ``cover'' because the range and source
maps are assumed to be covers.  A \emph{pretopology} gives a
reasonable notion of cover in a category.

The need to assume range and source maps to be covers is plain for
Lie groupoids: it ensures that the composable pairs of arrows form a
smooth manifold.  The covers
also influence the notion of principal bundle because their bundle
projections are assumed to be covers; this is equivalent to ``local
triviality'' in the sense of the pretopology.  If our category is
that of topological spaces and the covers are the continuous surjections
with local continuous sections, then we get exactly the usual notion
of local triviality for principal bundles; this is why many
geometers prefer this pretopology on topological spaces.  Many
operator algebraists prefer the pretopology of open continuous
surjections instead.

Some authors (like Goehle~\cite{Goehle:Thesis}) make no assumptions on
the range and source maps of a topological groupoid.  Since any map
with a continuous section is a biquotient map, this amounts to the
same as choosing biquotient maps as covers; this is the so called
canonical topology, that is, it is the largest subcanonical
pretopology on the category of Hausdorff topological spaces (see
Section~\ref{sec:Top_biquotient_maps}).  But how much of the usual
theory remains true in this case?

We will see that some aspects of the theory of groupoid Morita
equivalence require extra assumptions on pretopologies.  In
particular, we will show that for the pretopology of biquotient maps,
equivalent groupoids may have non-equivalent categories of actions on
spaces.  We do not know whether Morita equivalence of groupoids is an
equivalence relation when we choose the pretopology of biquotient
maps.  In addition to the theory of Morita equivalence with bibundles,
we also extend the theory of anafunctors that works without
extra assumptions on the pretopology, but is less concrete.

Since we want to work in the abstract setting of groupoids in a
category with pretopology, we develop all the general theory of
groupoids and their actions from scratch.  Most of our results are
known for some types of groupoids, of course.  Since we develop the
theory from scratch, we also take the opportunity to suggest more
systematic notation for various kinds of morphisms of groupoids.  We
study functors, anafunctors, and bibundle functors, actors and
bibundle actors; in addition, there are several kinds of equivalences:
equivalence functors, ana-isomorphisms, anafunctor equivalences, and
bibundle equivalences; and there are the covering bibundle functors,
which are the intersection of bibundle functors and bibundle actors.
The categories of anafunctors and bibundle functors are equivalent,
and ana-isomorphisms and bibundle equivalences are also equivalent
notions; all the other types of morphisms are genuinely different and
are useful in different situations. Several notions of morphisms have
been introduced in the context of Lie groupoids
in~\cite{Pradines:survey08}; our notion of ``actor'' is unrelated to
the one in~\cite{Pradines:survey08}*{A.2}, however.

Roughly speaking, functors and the related anafunctors and
bibundle functors are appropriate if we view groupoids as
generalised spaces, whereas actors and bibundle actors are appropriate
if we view them as generalised symmetries (groups).

Other new names we introduce are \emph{basic actions} and \emph{basic
  groupoids}.  A groupoid action is basic if, together with some
bundle projection, it is a principal action.  A groupoid~\(\Gr\)
is basic if its action on~\(\Gr^0\) is a basic action.  (The name
``principal groupoid'' is already used for something else by operator
algebraists, so a new name is needed.)

Our initial goal was to generalise the bicategory of
topological groupoids with Hilsum--Skandalis morphisms as arrows and
isomorphisms of the latter as \(2\)\nb-arrows.  It is well-known
that this is a bicategory if, say, we use continuous maps
with local continuous sections as covers (see~\cite{Noohi:Foundations_I}).
Our bibundle functors are the analogue of Hilsum--Skandalis morphisms
in a category with pretopology.  To compose bibundle
functors or bibundle actors, we need extra
assumptions on the pretopology.  We introduce these extra assumptions
here and check them for some pretopologies.

\smallskip

We now explain the contents of the sections of the article.

Section~\ref{sec:pretopologies} introduces \emph{pretopologies} on
categories with coproducts.  Here a pretopology is a family of maps
called \emph{covers}, subject to some rather mild axioms.  The more
established notion of a topos is not useful for our purposes:
it does not cover important examples like the category of smooth
manifolds because all finite limits are required to exist in a topos.

All pretopologies are required subcanonical.  We also introduce
three extra assumptions that are only sometimes required.
Assumption~\ref{assum:final} concerns final objects and is needed to
define groups and products of groupoids, as opposed to fibre
products.  Assumption~\ref{assum:local_cover} requires maps that are
``locally covers'' to be covers; the stronger
Assumption~\ref{assum:two-three} requires that if \(f\circ p\)
and~\(p\) are covers, then so must be~\(f\).  We also prove that the
property of being an isomorphism is local (this is also noticed by
\cite{Wolfson:Descent}*{Axiom 4}).  Assumptions \ref{assum:local_cover}
and~\ref{assum:two-three} are needed to compose bibundle functors and
bibundle actors, respectively.

Section~\ref{sec:groupoids} introduces groupoids, functors and
anafunctors.  We first define \emph{groupoids} in a category with
pretopology
in two equivalent ways and compare these two definitions.  One
definition has the unit and inversion maps as data; the other one
only has the multiplication, range and source maps as data and
characterises groupoids by making sense of the elementwise condition
that there should be unique solutions to equations of the form
\(g\cdot x=h\) or \(x\cdot g=h\) for given arrows \(g\) and~\(h\)
with equal range or equal source, respectively.  We explain in
Section~\ref{sec:groupoids} how such elementwise formulas should be
interpreted in a general category, following ideas from synthetic
geometry \cites{Kock:Synthetic, Moerdijk-Reyes:Models}.  We use
elementwise formulas throughout because they clarify statements and
proofs.

We define \emph{functors} between groupoids and \emph{natural
  transformations} between such functors in a category with
pretopology.  We observe that they form a strict \(2\)\nb-category.
We define a ``base-change'' for groupoids: given a cover \(p\colon
X\prto\Gr^0\), we define a groupoid \(p^*(\Gr)\) with object
space~\(X\) and a functor \(p_*\colon p^*(\Gr)\to\Gr\); such functors
are called \emph{hypercovers}, and they are the prototypes of
equivalences.  An \emph{anafunctor} \(\Gr\to\Gr[H]\) is a triple
\((X,p,F)\) with a cover \(p\colon X\prto \Gr^0\) and a functor
\(F\colon p^*(\Gr)\to\Gr[H]\).  An \emph{ana-isomorphism} is an
isomorphism \(p^*(\Gr)\cong q^*(\Gr[H])\) for two covers \(p\colon
X\to\Gr^0\) and \(q\colon X'\to\Gr[H]^0\).  Anafunctors form a
bicategory.  We characterise equivalences in this bicategory by two
different but equivalent criteria: as those anafunctors that lift to
an anafunctor isomorphism, and functors that are almost fully faithful
and almost essentially surjective.  These constructions all work in
any category with a subcanonical pretopology.  If we assume that being
a cover is a local condition, then a functor is almost fully faithful
and almost essentially surjective if and only if it is fully faithful
and essentially surjective.

Section~\ref{sec:actions} introduces groupoid actions, their
transformation groupoids, and actors.  Our convention is that the
anchor map of a groupoid action need not be a cover; this is needed to
associate bibundle functors to functors.  We call an action a \emph{sheaf} if
its anchor map is a cover.  The \emph{transformation groupoid} of a
groupoid action combines the action and the groupoid that is acting.
An \emph{actor} from~\(\Gr\) to~\(\Gr[H]\) is a left action of~\(\Gr\)
on~\(\Gr[H]^1\) that commutes with the right translation action
of~\(\Gr[H]\).  We show that such an actor allows to turn an
\(\Gr[H]\)\nb-action on~\(\Act\) into a \(\Gr\)\nb-action on~\(\Act\)
in a natural way.  Any such functor from \(\Gr\)\nb-actions to
\(\Gr[H]\)\nb-actions with some mild extra conditions
comes from an actor.  This is the type of ``morphism'' that seems most
relevant when we think of a groupoid as a general form of symmetry,
that is, as something that acts on other objects.

Section~\ref{sec:principal_bundles} introduces principal bundles over
groupoids and basic groupoid actions.  A \emph{principal
  \(\Gr\)\nb-bundle} is a (right) \(\Gr\)\nb-action \(\mul\colon
\Act\times_{\s,\Gr^0,\rg}\Gr^1\to\Act\) with a cover \(\bunp\colon
\Act\prto\Base\), such that the map
\begin{equation}
  \label{eq:intro_principality}
  (\mul,\pr_1)\colon
  \Act\times_{\s,\Gr^0,\rg}\Gr^1\to\Act\times_{\bunp,\Base,\bunp}\Act,\qquad
  (x,g)\mapsto (x\cdot g,x),
\end{equation}
is an isomorphism.  Then~\(\Base\) is the orbit space of the right
\(\Gr\)\nb-action, so the bundle projection is unique if it exists.  A
\(\Gr\)\nb-action is \emph{basic} if it is a principal bundle when
taken together with its orbit space projection.  We construct
pull-backs of bundles and relate maps on the base and total spaces of
principal bundles; this is crucial for the composition of bibundle
functors.  We show that the property of being principal is local, that
is, a pull-back of a ``bundle'' along a cover is principal if and only
if the original bundle is.

Principal bundles and basic actions allow us to define bibundle
equivalences, bibundle functors, and bibundle actors.  These are all
given by commuting actions of \(\Gr\) and~\(\Gr[H]\) on some
object~\(\Act\) of the category.  For a \emph{bibundle functor}, we
require that the right \(\Gr[H]\)\nb-action together with the left
anchor map \(\Act\to\Gr^0\) as bundle projection is a principal
bundle; for a \emph{bibundle equivalence}, we require, in addition,
that the left \(\Gr\)\nb-action together with the right anchor map
\(\Act\to\Gr[H]^0\) is a principal bundle.  For a \emph{bibundle
  actor}, we require the right action to be basic -- with arbitrary
orbit space -- and the right anchor
map \(\Act\to\Gr[H]^0\) to be a cover.  If~\(\Act\) is both a bibundle
actor and a bibundle functor, so that both anchor maps are covers,
then we call it a \emph{covering bibundle functor}.

We show that functors give rise to bibundle functors and that
bibundle functors give rise to anafunctors.  The bibundle functor
associated to a functor is a bibundle equivalence if and only if the
functor is essentially surjective and fully faithful.

The composition of bibundle functors, actors, and equivalences is only
introduced in Section~\ref{sec:composition} because it requires extra
assumptions about certain actions being automatically basic.
Section~\ref{sec:assume_covering_actions} discusses these extra
assumptions.  The stronger one,
Assumption~\ref{assum:covering_acts_basically}, requires all actions
of \v{C}ech groupoids to be basic; the weaker one,
Assumption~\ref{assum:covering_acts_basically_weak}, only requires
this for sheaves over \v{C}ech groupoids, that is, for actions of
\v{C}ech groupoids with a cover as anchor map.  The weaker version
of the assumption together with Assumption~\ref{assum:local_cover} on
covers being local suffices to compose bibundle equivalences and
bibundle functors; the stronger form together with
Assumption~\ref{assum:two-three} (the two-out-of-three property for
covers) is needed to compose covering bibundle functors and bibundle
actors.  Under the appropriate assumptions, all these types of
bibundles form bicategories.

We show in Section~\ref{sec:equivalences_in_bibundles} that the
equivalences in these bicategories are precisely the bibundle
equivalences.  Section~\ref{sec:bibundle_versus_vague_2-cat} shows
that the bicategory of bibundle functors is equivalent to the
bicategory of anafunctors.
Section~\ref{sec:characterise_composite} describes the composite of
two bibundle functors by an isomorphism similar
to~\eqref{eq:intro_principality}.  Section~\ref{sec:basic_local} shows
that the assumption needed for the composition of bibundle actors is
equivalent to the assumption that the property of being basic is a
local property of \(\Gr\)\nb-actions.

We show in Section~\ref{sec:decompose_bibundle_actor} that every
bibundle actor is a composite of an actor and a bibundle
equivalence.  Thus the bicategory of bibundle actors is the
smallest one that contains bibundle equivalences and actors and is
closed under the composition of bibundles.  A similar decomposition of
bibundle functors into bibundle equivalences and functors is
constructed in Section~\ref{sec:bibundle_to_vague} when passing from
bibundle functors to anafunctors.

Section~\ref{sec:quasi-category_bibundle_functors} describes the
quasi-categories of bibundle functors, covering bibundle functors, and
bibundle equivalences very succinctly.  The main point are similarities
between the multiplication in a groupoid, a groupoid action, and the
map relating the product of two bibundle functors to the composite.

Section~\ref{sec:examples_covers} considers several examples of
categories with classes of ``covers'' and checks whether these are
pretopologies and satisfy our extra assumptions.
Section~\ref{sec:Sets_surjections} considers sets and surjective maps,
a rather trivial case.  Section~\ref{sec:Top} considers several
classes of ``covers'' between topological spaces; quotient maps do not
form a pretopology; biquotient maps form a subcanonical pretopology
for which Assumption~\ref{assum:covering_acts_basically} fails; open
surjections and several smaller classes of maps are shown to be
subcanonical pretopologies satisfying all our assumptions.

Section~\ref{sec:Mfd} considers smooth manifolds, both of finite and
infinite dimension, with submersions as covers.  These form a
subcanonical pretopology on locally convex manifolds and several
subcategories, which satisfies the assumptions needed for the composition
of bibundle functors and bibundle equivalences.  For Banach manifolds,
we also prove the stronger assumptions that are needed for the
composition of bibundle actors and covering bibundle functors.  It is
unclear whether these stronger assumptions still hold for Fréchet or
locally convex manifolds.  Thus all the theory developed in this
article applies to groupoids in the category of Banach manifolds and
finite-dimensional manifolds, and most of it, but not all, applies
also to Fréchet and locally convex manifolds.

\smallskip

For a category~\(\Cat\), we write \(X\inOb\Cat\) for objects and
\(f\in\Cat\) for morphisms of~\(\Cat\).  Our categories need not be
small, that is, the objects may form a class only.  The set-theoretic
issues that this entails are not relevant to our discussions below and
can be handled easily using universes.  Hence we leave it to the
reader to add appropriate remarks where needed.

\section{Pretopologies}
\label{sec:pretopologies}

The notion of a Grothendieck (pre)topology
from~\cite{Grothendieck:SGA4_Topos} formalises properties that
``covers'' in a category should have.  Usually, a ``cover'' of an
object~\(X\) is a family of maps \(f_\alpha\colon U_\alpha\to X\),
\(\alpha\in A\).  If our category has coproducts, we may replace such
a family of maps by a single map
\[
(f_\alpha)_{\alpha\in A}\colon \bigsqcup_{\alpha\in A} U_\alpha\to X.
\]
This reduces notational overhead, and is ideal for our purposes
because in the following definitions, we never meet covering families
but only single maps that are covers.

\begin{definition}
  \label{def:pretopology}
  Let~\(\Cat\) be a category with coproducts.  A \emph{pretopology}
  on~\(\Cat\) is a collection~\(\covers\) of arrows, called
  \emph{covers}, with the following properties:
  \begin{enumerate}
  \item isomorphisms are covers;
  \item the composite of two covers is a cover;
  \item if \(f\colon Y\to X\) is an arrow in~\(\Cat\) and \(g\colon
    U\prto X\) is a cover, then the fibre product \(Y\times_X U\) exists
    in~\(\Cat\) and the coordinate projection \(\pr_1\colon Y\times_X
    U \prto Y\) is a cover.  Symbolically,
    \begin{equation}
      \label{eq:fibre-product_of_cover}
      \begin{tikzpicture}[baseline=(current bounding box.west)]
        \matrix[cd] (m) {
          & Y \\
          U & X \\
        };
        \begin{scope}[cdar]
          \draw[->>] (m-2-1) -- node {\(g\)} (m-2-2);
          \draw (m-1-2) -- node {\(f\)} (m-2-2);
        \end{scope}
      \end{tikzpicture}
      \qquad\Rightarrow\qquad
      \begin{tikzpicture}[baseline=(current bounding box.west)]
        \matrix[cd] (m) {
          Y\times_{f,X,g} U & Y \\
          U & X \\
        };
        \begin{scope}[cdar]
          \draw[->>] (m-1-1) -- node {\(\pr_1\)} (m-1-2);
          \draw (m-1-1) -- node {\(\pr_2\)} (m-2-1);
          \draw[->>] (m-2-1) -- node {\(g\)} (m-2-2);
          \draw (m-1-2) -- node {\(f\)} (m-2-2);
        \end{scope}
      \end{tikzpicture}
    \end{equation}
  \end{enumerate}
\end{definition}

We use double-headed arrows~\(\prto\) to denote covers.

We always require pretopologies to be subcanonical:

\begin{deflemma}
  \label{def:subcanonical}
  A pretopology~\(\covers\) is \emph{subcanonical} if it satisfies the
  following equivalent conditions:
  \begin{enumerate}
  \item each cover \(f\colon U\prto X\) in~\(\covers\) is a coequaliser,
    that is, it is the coequaliser of some pair of parallel maps
    \(g_1,g_2\colon Z\rightrightarrows U\);
  \item each cover \(f\colon U\prto X\) in~\(\covers\) is the
    coequaliser of \(\pr_1,\pr_2\colon U\times_{f,X,f}
    U\rightrightarrows U\);
  \item for any object~\(W\) and any cover \(f\colon U\prto X\), we have a
    bijection
    \[
    \Cat(X,W)\congto
    \{h\in \Cat(U,W) \mid h\circ \pr_1 = h\circ
    \pr_2\text{ in }\Cat(U\times_{f,X,f} U,W)\},\quad
    g\mapsto g\circ f;
    \]
  \item all the representable functors \(\Cat(\blank,W)\) on~\(\Cat\)
    are sheaves.
  \end{enumerate}
\end{deflemma}

\begin{proof}
  Condition~(3) makes explicit what it means for \(\Cat(\blank,W)\) to
  be a sheaf and for~\(f\) to be the coequaliser of \(\pr_1,\pr_2\),
  so (2)\(\iff\)(3)\(\iff\)(4).  (2) implies~(1) by taking
  \(Z=U\times_{f,X,f} U\) and \(g_i=\pr_i\) for \(i=1,2\).  It remains
  to prove (1)\(\Rightarrow\)(3).

  Let \(g_1,g_2\colon Z\rightrightarrows U\) be as in~(1).  Then
  \(fg_1=fg_2\), so that \((g_1,g_2)\) is a map \(Z\to U\times_{f,X,f}
  U\).  Let \(\pr_i\colon U\times_{f,X,f} U\to U\) for \(i=1,2\) be
  the coordinate projections.  Since \(\pr_i\circ (g_1,g_2)=g_i\) for
  \(i=1,2\), a map \(h\in\Cat(U,W)\) with \(h\circ\pr_1=h\circ\pr_2\)
  also satisfies \(h\circ g_1=h\circ g_2\).  Since we assume~\(f\) to
  be a coequaliser of \(g_1\) and~\(g_2\), such a map~\(h\) is of the
  form \(\tilde{h}\circ f\) for a unique \(\tilde{h}\in\Cat(X,W)\).
  Conversely, any map of the form \(\tilde{h}\circ f\) satisfies
  \((\tilde{h}\circ f)\circ \pr_1=(\tilde{h}\circ f)\circ \pr_2\).
  Thus~(1) implies~(3).
\end{proof}

\begin{definition}
  \label{def:local}
  Let~\(\covers\) be a pretopology on~\(\Cat\) and let~P be a
  property that arrows in~\(\Cat\) may or may not have.  An arrow
  \(f\colon Y\to X\) has~P \emph{locally} if there is a cover
  \(g\colon U\prto Y\) such that \(\pr_2\colon Y\times_{f,X,g} U\to U\)
  has~P.

  The property~P is \emph{local} if an arrow has~P if and only if it
  has~P locally, that is, in the situation
  of~\eqref{eq:fibre-product_of_cover}, \(f\colon Y\to X\) has~P if
  and only if \(\pr_2\colon Y\times_{f,X,g} U\to U\) has~P.
\end{definition}

\subsection{Isomorphisms are local}
\label{sec:isomorphism_local}

\begin{proposition}
  \label{pro:isomorphism_local}
  If the pretopology is subcanonical, then the property of being an
  isomorphism is local.
\end{proposition}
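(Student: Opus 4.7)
The plan is to prove both implications in Definition~\ref{def:local} for $\mathrm{P}=$``is an isomorphism''. One direction is routine; the real work is the converse.

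The easy direction is that if $f\colon Y\to X$ is an isomorphism, then so is $\pr_2\colon Y\times_{f,X,g}U\to U$ for every cover $g\colon U\prto X$: this is just the fact that pulling back an isomorphism along any morphism yields an isomorphism, which follows formally from the universal property of the fibre product.

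For the substantial direction, suppose $g\colon U\prto X$ is a cover and $\pr_2\colon V\defeq Y\times_{f,X,g}U\to U$ is an isomorphism. Let $s\colon U\to V$ denote its inverse and set $h\defeq \pr_1\circ s\colon U\to Y$. I would extract the key lemma: for any pair of maps $\varphi\colon Z\to Y$ and $\psi\colon Z\to U$ with $f\circ\varphi = g\circ\psi$, we must have $\varphi = h\circ \psi$. This is immediate from the universal property of the fibre product: the induced map $Z\to V$ equals $s\circ\psi$ because $\pr_2$ is an isomorphism with inverse $s$, and then composition with $\pr_1$ gives $\varphi = h\circ\psi$.

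Now I build the inverse of $f$ using the subcanonical property. Let $\pi_1,\pi_2\colon U\times_{g,X,g}U\rightrightarrows U$ be the projections. Since $f\circ(h\circ\pi_1) = g\circ\pi_1 = g\circ\pi_2$, the key lemma with $\varphi=h\circ\pi_1$ and $\psi=\pi_2$ yields $h\circ\pi_1 = h\circ\pi_2$. By subcanonicity (Definition and Lemma~\ref{def:subcanonical}(3)), $g$ is the coequaliser of $\pi_1,\pi_2$, so there is a unique $h'\colon X\to Y$ with $h'\circ g = h$.

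It remains to verify that $h'$ is a two-sided inverse of $f$. For $f\circ h' = \id_X$: we have $(f\circ h')\circ g = f\circ h = g$ (the last equality from $f\circ h = f\circ\pr_1\circ s = g\circ\pr_2\circ s = g$), and since $g$ is an epimorphism (being a coequaliser), we can cancel. For $h'\circ f = \id_Y$: observe that $h\circ\pr_2 = \pr_1\circ s\circ\pr_2 = \pr_1$, whence $(h'\circ f)\circ\pr_1 = h'\circ g\circ\pr_2 = h\circ\pr_2 = \pr_1$; since $\pr_1\colon V\prto Y$ is a cover (as a pullback of $g$), it is an epimorphism by subcanonicity, so we can cancel again.

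The main obstacle is identifying the correct universal-property argument that produces the global inverse $h'$ from the local section $h$. The trick is to recognise that once $\pr_2$ is invertible, the fibre product $V$ acquires a surprisingly rigid structure: any commuting square over $f$ and $g$ is forced to factor through $h$, which is exactly what converts the cocycle condition $h\circ\pi_1=h\circ\pi_2$ from a hopeful guess into a tautology, letting subcanonicity do the rest.
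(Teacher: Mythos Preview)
Your proof is correct and follows essentially the same route as the paper: both construct the candidate inverse as the composite of the first projection with the inverse of the second, show it equalises the kernel pair of~$g$, descend through the coequaliser, and then cancel the two epimorphisms $g$ and~$\pr_1$ to verify the two inverse identities. The paper frames the argument slightly more generally (assuming only that $g$ is a coequaliser and $\pr_1$ is epic, rather than invoking the full pretopology), but your packaging via the ``key lemma'' is a clean way to organise the same computation.
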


\begin{proof}
  Giorgi Arabidze pointed out a slightly stronger version of this
  result that works in any category~\(\Cat\),
  not necessarily with a pretopology.  Namely, let
  $Y\xrightarrow{f} X$ and $U\xrightarrow{g} X$ be arrows in~\(\Cat\)
  such that the fibre product $U\times_{g, X, f} Y$ exists in~$\Cat$
  and the projection $\pr_1\colon U\times_{g, X, f} Y\to U$ is an
  isomorphism.  If~\(g\)
  is a coequaliser in~\(\Cat\)
  (see Definition and Lemma~\ref{def:subcanonical}) and the projection
  $\pr_2\colon U\times_{g, X, f} Y \to Y$ is an epimorphism, then~$f$
  is an isomorphism.  If~$\Cat$ is a category with a subcanonical
  pretopology and~$g$ is a cover, then the assumptions above are
  satisfied.

  To prove the more general statement above, we take the two maps
  \(u_1,u_2\colon A\rightrightarrows U\)
  that have~\(g\)
  as their coequaliser.  Form
  \(y\defeq \pr_2\circ\pr_1^{-1}\circ u_2\colon A\to U\to U\times_{g,
    X, f} Y\to Y\).
  Since
  \(f\circ y = g\circ \pr_1\circ \pr_1^{-1}\circ u_2= g\circ u_1\),
  there is a map \(b\colon A\to U\times_{g, X, f} Y\)
  with \(\pr_1\circ b=u_1\)
  and \(\pr_2\circ b= y\).
  Since~\(\pr_1\)
  is invertible, \(b=\pr_1^{-1} u_1\),
  so \(\pr_2\pr_1^{-1} u_1 = y = \pr_2\pr_1^{-1} u_2\).
  Since~\(g\)
  is the coequaliser of \(u_1\)
  and~\(u_2\),
  we get a unique map \(c\colon X\to Y\)
  with \(c\circ g = \pr_2\pr_1^{-1}\).
  This satisfies \(f\circ c \circ g = g\).
  Since~\(g\)
  is a coequaliser, it is epic, so this gives \(f\circ c=\id_X\).
  Furthermore, \(c f \pr_2 = c g \pr_1 = \pr_2\)
  gives \(c f=\id_Y\)
  because we assumed~\(\pr_2\)
  to be epic.  Thus~\(c\) is inverse to~\(f\).
\end{proof}

\begin{remark}
  Example~\ref{exa:biquotient_covering_not_basic} provides
  Hausdorff topological spaces \((X,\tau_1)\), \((X,\tau_2)\)
  and~\(Y\) such
  that the identity map \((X,\tau_1)\to (X,\tau_2)\) is a continuous
  bijection but not a homeomorphism, and a quotient map \(f\colon
  (X,\tau_2)\to Y\) such that the map \(f\colon (X,\tau_1)\to Y\) is
  a quotient map as well, and such that the identical map
  \[
  (X,\tau_1) \times_{f,Y,f} (X,\tau_1)
  \to (X,\tau_1) \times_{f,Y,f} (X,\tau_2)
  \]
  is a homeomorphism.  Thus the pull-back of a non-homeomorphism along
  a quotient map may become a homeomorphism.  Quotient maps on
  topological spaces do not form a pretopology, so this does not
  contradict Proposition~\ref{pro:isomorphism_local}.
\end{remark}

\subsection{Simple extra assumptions on pretopologies}
\label{sec:assum_local_cover}

Consider the fibre-product situation
of~\eqref{eq:fibre-product_of_cover}.  Then~\(g\) is a cover by
assumption and~\(\pr_1\) is one by the definition of a pretopology.
If~\(f\) is a cover as well, then so is~\(\pr_2\) by the definition
of a pretopology.  That the converse holds is an extra assumption on
the pretopology, which means that the property of being a cover is
local.

\begin{assumption}
  \label{assum:local_cover}
  The property of being a cover is local, that is, in the
  fibre-product situation of~\eqref{eq:fibre-product_of_cover}, if
  \(g\) and~\(\pr_2\) in~\eqref{eq:fibre-product_of_cover} are
  covers, then so is~\(f\).
\end{assumption}

If \(g\) and~\(\pr_2\) are covers, then so is
\(g\circ\pr_2=f\circ\pr_1\) as a composite of covers, and~\(\pr_1\) is
a cover by definition of a pretopology.  Hence
Assumption~\ref{assum:local_cover} is weaker than the following
\emph{two-out-of-three property}:

\begin{assumption}
  \label{assum:two-three}
  Let \(f\in\Cat(Y,Z)\) and \(p\in\Cat(X,Y)\) be composable.  If
  \(f\circ p\) and~\(p\) are covers, then so is~\(f\).
\end{assumption}

A pretopology is called \emph{saturated} if \(f\) is a cover whenever
\(f\circ p\) is one, without requiring~\(p\) to be a cover as well
(see \cite{Roberts:Anafunctors_localisation}*{Definition 3.11}).  This
stronger assumption fails in many cases where
Assumption~\ref{assum:two-three} holds (see
Example~\ref{exa:non-saturated}), and Assumption~\ref{assum:two-three}
suffices for all our applications.  For the categories of locally
convex and Fr\'echet manifolds with surjective submersions as covers,
Assumption~\ref{assum:local_cover} holds, but we cannot prove
Assumption~\ref{assum:two-three}.  The other examples we consider in
Section~\ref{sec:examples_covers} all verify the stronger
Assumption~\ref{assum:two-three}.

\begin{assumption}
  \label{assum:final}
  There is a final object~\(\star\) in~\(\Cat\), and all maps to it
  are covers.
\end{assumption}

\begin{lemma}
  \label{lem:final_object_products}
  Under Assumption~\textup{\ref{assum:final}}, \(\Cat\) has finite
  products.  If \(f_1\colon U_1\prto X_1\) and \(f_2\colon U_2\prto
  X_2\) are covers, then so is \(f_1\times f_2\colon U_1\times
  U_2\to X_1\times X_2\).
\end{lemma}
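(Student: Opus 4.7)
The plan is to realise finite products as fibre products over the terminal object~\(\star\), and then to factor a product of two covers as a composite of two pullbacks. Everything follows from the pretopology axioms together with Assumption~\ref{assum:final}; no further assumption enters.

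For the first statement, the empty product is~\(\star\) itself, so only binary products need attention. Given objects \(X_1,X_2\inOb\Cat\), the unique map \(X_2\to\star\) is a cover by Assumption~\ref{assum:final}. Applying axiom~(3) of Definition~\ref{def:pretopology} with the pair \(X_1\to\star\) and the cover \(X_2\prto\star\) produces the fibre product \(X_1\times_\star X_2\), which coincides with the categorical product \(X_1\times X_2\) because~\(\star\) is terminal; the same axiom tells us that \(\pr_1\colon X_1\times X_2\prto X_1\) is a cover, and by symmetry so is~\(\pr_2\).

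For the second statement, I would decompose
\[
f_1\times f_2 \;=\; (\id_{X_1}\times f_2)\circ(f_1\times\id_{U_2})\colon
U_1\times U_2 \to X_1\times U_2 \to X_1\times X_2
\]
and show each factor is a cover, so that the composite is one by axiom~(2). The key identification is that \(U_1\times U_2\) is the fibre product \((X_1\times U_2)\times_{X_1,f_1} U_1\) over the first-coordinate projection \(X_1\times U_2\to X_1\) and~\(f_1\), and that under this identification the projection to \(X_1\times U_2\) coincides with \(f_1\times\id_{U_2}\); since~\(f_1\) is a cover, axiom~(3) then gives that \(f_1\times\id_{U_2}\) is a cover, and the other factor is handled symmetrically via the cover~\(f_2\). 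The only slightly delicate step is the identification just mentioned, which I would verify by a short universal-property check comparing representing functors; this is the main (but still routine) thing to get right.
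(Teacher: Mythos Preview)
Your proposal is correct and follows the expected line: products as fibre products over~\(\star\), and the decomposition of \(f_1\times f_2\) into two pullback squares is the standard argument. The paper's own proof says only that the first statement is clear and cites \cite{Henriques:L-infty}*{Lemma 2.7} for the second, so you have simply spelled out what that reference presumably contains.
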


\begin{proof}
  The first statement is clear because products are fibre products
  over the final object~\(\star\).  The second one is
  \cite{Henriques:L-infty}*{Lemma 2.7}.
\end{proof}

Assumption~\ref{assum:final} has problems with initial objects.  For
instance, the unique map from the empty set to the one-point set is
not surjective, so Assumption~\ref{assum:final} fails for any
subcanonical pretopology on the category of sets.  We must exclude the
empty set for Assumption~\ref{assum:final} to hold.  We will not use
Assumption~\ref{assum:final} much.

\section{Groupoids in a category with pretopology}
\label{sec:groupoids}

Let~\(\Cat\) be a category with coproducts and let~\(\covers\) be a
subcanonical pretopology on~\(\Cat\).  We define groupoids
in~\((\Cat,\covers)\) in two equivalent ways, with more or less
data.

\subsection{First definition}
\label{sec:groupoids_first_def}

A groupoid in \((\Cat,\covers)\) consists of
\begin{itemize}
\item objects \(\Gr^0\inOb\Cat\) (\emph{objects}) and
  \(\Gr^1\inOb\Cat\) (\emph{arrows}),
\item arrows \(\rg\in\Cat(\Gr^1,\Gr^0)\) (\emph{range}),
  \(\s\in\Cat(\Gr^1, \Gr^0)\) (\emph{source}),
  \(\mul\in\Cat(\Gr^1\times_{\s,\Gr^0,\rg} \Gr^1, \Gr^1)\)
  (\emph{multiplication}), \(\unit\in\Cat(\Gr^0, \Gr^1)\)
  (\emph{unit}), and \(\invers\in\Cat(\Gr^1, \Gr^1)\)
  (\emph{inversion}),
\end{itemize}
such that
\begin{enumerate}
\item \(\rg\) and~\(\s\) are covers;
\item \(\mul\) is associative, that is, \(\rg\circ\mul=\rg\circ
  \pr_1\) and \(\s\circ\mul=\s\circ\pr_2\) for the coordinate
  projections \(\pr_1,\pr_2\colon \Gr^1\times_{\s,\Gr^0,\rg} \Gr^1\to
  \Gr^1\), and the following diagram commutes:
  \begin{equation}
    \label{eq:mul_associative}
    \begin{tikzpicture}[baseline=(current bounding box.west)]
      \matrix[cd,column sep=6em] (m) {
        \Gr^1 \times_{\s,\Gr^0,\rg} \Gr^1 \times_{\s,\Gr^0,\rg} \Gr^1 &
        \Gr^1 \times_{\s,\Gr^0,\rg} \Gr^1 \\
        \Gr^1 \times_{\s,\Gr^0,\rg} \Gr^1 &
        \Gr^1 \\
      };
      \begin{scope}[cdar]
        \draw (m-1-1) -- node {\(\mul \times_{\Gr^0,\rg} \id_{\Gr^1}\)} (m-1-2);
        \draw (m-1-1) -- node[swap] {\(\id_{\Gr^1} \times_{\s,\Gr^0} \mul\)} (m-2-1);
        \draw (m-1-2) -- node {\(\mul\)} (m-2-2);
        \draw (m-2-1) -- node[swap] {\(\mul\)} (m-2-2);
      \end{scope}
    \end{tikzpicture}
  \end{equation}
\item the following equations hold:
  \begin{alignat*}{3}
    \rg\circ \unit &=\id_{\Gr^0},&\quad \rg(1_x)&= x&\quad \forall x&\in \Gr^0,\\
    \s\circ \unit &= \id_{\Gr^0},&\quad \s(1_x)&= x&\quad \forall x&\in \Gr^0,\\
    \mul\circ (\unit\circ\rg,\id_{\Gr^1}) &= \id_{\Gr^1},&\quad
    1_{\rg(g)}\cdot g&=g&\quad \forall g&\in \Gr^1,\\
    \mul\circ (\id_{\Gr^1},\unit\circ\s) &= \id_{\Gr^1},&\quad
    g\cdot 1_{\s(g)}&=g&\quad \forall g&\in \Gr^1,\\
    \s\circ \invers &= \rg,&\quad \s(g^{-1})&=\rg(g)&\quad \forall g&\in \Gr^1,\\
    \rg\circ \invers&=\s,&\quad \rg(g^{-1}) &= \s(g)&\quad \forall g&\in \Gr^1,\\
    \mul\circ (\invers,\id_{\Gr^1}) &= \unit\circ\s,&\quad
    g^{-1}\cdot g&= 1_{\s(g)}&\quad \forall g&\in \Gr^1,\\
    \mul\circ (\id_{\Gr^1},\invers) &= \unit\circ\rg,&\quad
    g\cdot g^{-1}&=1_{\rg(g)}&\quad \forall g&\in \Gr^1.\\
    \intertext{%
      The right two columns interpret the equalities of maps in the
      left column in terms of elements.  The conditions above imply}
    \mul\circ (\unit,\unit) &= \unit,&\quad
    1_x\cdot 1_x&=1_x&\quad \forall x&\in \Gr^0,\\
    \invers^2 &=\id_{\Gr^1},&\quad
    (g^{-1})^{-1} &= g&\quad \forall g&\in \Gr^1,\\
    \mul\circ (\invers\times_{\rg,\Gr^0,\s} \invers) &= \invers \circ
    \mul\circ\sigma,&\quad
    g^{-1}\cdot h^{-1} &= (h\cdot g)^{-1}&\quad
    \forall g,h&\in \Gr^1, \rg(g)=\s(h);
  \end{alignat*}
  here \(\sigma\colon \Gr^1\times_{\rg,\Gr^0,\s} \Gr^1\congto
  \Gr^1\times_{\s,\Gr^0,\rg} \Gr^1\) denotes the flip of the two
  factors.
\end{enumerate}

Most statements and proofs are much clearer in terms of elements.  It
is worthwhile, therefore, to introduce the algorithm that interprets
equations in terms of elements as in the middle and right columns
above as equations of maps as in the left column.

The variables \(x\in \Gr^0\), \(g,h\in \Gr^1\) are interpreted as maps
in~\(\Cat\) from some object \(\?\inOb\Cat\) to \(\Gr^0\)
and~\(\Gr^1\), respectively.  Thus an element~\(x\) of \(X \inOb
\Cat\) is interpreted as a map \(x\colon \?\to X\), and denoted by
\(x\in X\).  The elements of~\(X\) form a category,
which determines~\(X\) by the Yoneda Lemma.

If an elementwise expression~\(A\) is
already interpreted as a map \(A\colon \?\to \Gr^1\), then \(\rg(A)\)
means \(\rg\circ A\colon \?\to \Gr^0\), \(\s(A)\) means \(\s\circ
A\colon \?\to \Gr^0\), and \(A^{-1}\) means \(\invers\circ A\colon
\?\to \Gr^1\); if an elementwise expression~\(A\) is interpreted as
\(A\colon \?\to \Gr^0\), then \(1_A\) means \(\unit\circ A\colon \?\to
\Gr^1\).  If \(A\) and~\(B\) are elementwise expressions that
translate to maps \(A,B\colon \?\to \Gr^1\) with \(\s(A)=\rg(B)\),
that is, \(\s\circ A=\rg\circ B\colon \?\to\Gr^0\), then \(A\cdot B\)
means the composite map
\[
\mul\circ (A,B)\colon \?
\xrightarrow{(A,B)} \Gr^1\times_{\s,\Gr^0,\rg} \Gr^1
\xrightarrow{\mul} \Gr^1.
\]

This algorithm turns the conditions in the middle and right column
above into the conditions in the left column if we let~\(\?\) vary
through all objects of~\(\Cat\).  More precisely, the equation in the
left column implies the interpretation of the condition in the other
two columns for any choice of~\(\?\), and the converse holds for a
suitable choice of~\(\?\).

For instance, in the last condition \(g^{-1}\cdot h^{-1}=(h\cdot
g)^{-1}\), we may choose \(\?=\Gr^1\times_{\rg,\Gr^0,\s} \Gr^1\),
\(g=\pr_1\colon \?\to\Gr^1\), and \(h=\pr_2\colon \?\to\Gr^1\).  The
interpretation of the formula \(g^{-1}\cdot h^{-1} = (h\cdot g)^{-1}\)
for these choices is \(\mul\circ (\invers\times_{\rg,\Gr^0,\s}
\invers) = \invers \circ \mul\circ\sigma\).

As another example, the associativity
diagram~\eqref{eq:mul_associative} is equivalent to the elementwise
statement
\begin{equation}
  \label{eq:mul_associative_elements}
  (g_1\cdot g_2)\cdot g_3=g_1\cdot (g_2\cdot g_3)
  \qquad \forall g_1,g_2,g_3\in \Gr^1,\ \s(g_1)=\rg(g_2),
  \ \s(g_2)=\rg(g_3).
\end{equation}
To go from the elementwise statement to~\eqref{eq:mul_associative},
choose \(\?=\Gr^1\times_{\s,\Gr^0,\rg} \Gr^1\times_{\s,\Gr^0,\rg}
\Gr^1\), \(g_1=\pr_1\), \(g_2=\pr_2\), \(g_3=\pr_3\) (we take one
factor for each of the free variables \(g_1,g_2,g_3\), and implement
the assumptions \(\s(g_1)=\rg(g_2)\), \(\s(g_2)=\rg(g_3)\) by
fibre-product conditions).  Then \(A\defeq g_1\cdot
g_2\) is interpreted as \(\mul\circ (\pr_1,\pr_2)\colon
\Gr^1\times_{\s,\Gr^0,\rg} \Gr^1\times_{\s,\Gr^0,\rg} \Gr^1 \to
\Gr^1\), so \((g_1\cdot g_2)\cdot g_3=A\cdot g_3\) becomes the map
\[
\mul\circ (\mul\circ (\pr_1,\pr_2),\pr_3)
= \mul\circ (\mul\times_{\Gr^0,\rg}\id_{\Gr^1}) \circ (\pr_1,\pr_2,\pr_3)
= \mul\circ (\mul\times_{\Gr^0,\rg}\id_{\Gr^1})
\]
from \(\Gr^1\times_{\s,\Gr^0,\rg} \Gr^1\times_{\s,\Gr^0,\rg} \Gr^1\)
to~\(\Gr^1\).  Similarly, \(g_1\cdot (g_2\cdot g_3)\) is interpreted
as \(\mul\circ (\id_{\Gr^1}\times_{\s,\Gr^0}\mul)\).  Thus \((g_1\cdot
g_2)\cdot g_3=g_1\cdot (g_2\cdot g_3)\) implies
that~\eqref{eq:mul_associative} commutes.  Conversely,
if~\eqref{eq:mul_associative} commutes, then \((g_1\cdot g_2)\cdot
g_3=g_1\cdot (g_2\cdot g_3)\) for any choice of \(\?\inOb\Cat\) and
\(g_1,g_2,g_3\colon \?\to \Gr^1\) with \(\s\circ g_1=\rg\circ g_2\)
and \(\s\circ g_2=\rg\circ g_3\).  Thus~\eqref{eq:mul_associative} is
equivalent to the elementwise statement above.

We may also use elementwise formulas to define maps.  For instance,
suppose that we have already defined maps \(f\colon X\to\Gr^1\) and
\(g\colon Y\to\Gr^1\).  Then there is a unique map \(F\colon
X\times_{\s\circ f,\Gr^0,\rg\circ g} Y \to\Gr^1\) given elementwise by
\(F(x,y) \defeq f(x)\cdot g(y)\) for all \(x\in X\), \(y\in Y\) with
\(\s\circ f(x)=\rg\circ g(y)\).  If a map \(f\in\Cat(X,Z)\) is given
in terms of elements, then it is an isomorphism if and only if every
element~\(z\) of~\(Z\) may be written as \(f(x)\) for a unique \(x\in
X\): the interpretation of this statement in terms of maps is
exactly the Yoneda Lemma.

\begin{remark}
  \label{rem:why_range_cover}
  Why do we need the range and source maps to be covers?  Of course,
  we need some assumption for the fibre product
  \(\Gr^1\times_{\s,\Gr^0,\rg}\Gr^1\) to exist, but there are deeper
  reasons.  Many results in the theory of principal bundles depend on
  the locality of isomorphisms
  (Proposition~\ref{pro:isomorphism_local}), which holds only for
  pull-backs along covers.  The composition of bibundle
  equivalences cannot work unless we require their anchor maps to be
  covers.  The range and source maps of a groupoid must be covers
  because they are the anchor maps for the unit bibundle
  equivalence~\(\Gr^1\) on~\(\Gr\).
\end{remark}

\subsection{Second definition}
\label{sec:groupoids_second_def}

For groupoids in sets, the existence of units and inverses is
equivalent to the existence of unique solutions \(x\in \Gr^1\) to
equations of the form \(x\cdot g=h\) for \(g,h\in \Gr^1\) with
\(\s(g)=\s(h)\) and \(g\cdot x=h\) for \(g,h\in \Gr^1\) with
\(\rg(g)=\rg(h)\).  This leads us to the following equivalent
definition of a groupoid in \((\Cat,\covers)\):

\begin{definition}
  \label{def:groupoid}
  A \emph{groupoid}~\(\Gr\) in \((\Cat,\covers)\) consists of
  \(\Gr^0\inOb\Cat\) (\emph{objects}), \(\Gr^1\inOb\Cat\)
  (\emph{arrows}), \(\rg\in\Cat(\Gr^1,\Gr^0)\)
  (\emph{range}), \(\s\in\Cat(\Gr^1,\Gr^0)\) (\emph{source}), and
  \(\mul\in\Cat(\Gr^1\times_{\s,\Gr^0,\rg} \Gr^1, \Gr^1)\)
  (\emph{multiplication}), such that
  \begin{enumerate}
  \item the maps \(\rg\) and~\(\s\) are covers;
  \item the maps
    \begin{alignat}{2}
      \label{eq:groupoid_basicality_1}
      (\pr_2,\mul)\colon
      \Gr^1\times_{\s,\Gr^0,\rg} \Gr^1&\congto
      \Gr^1\times_{\s,\Gr^0,\s} \Gr^1,
      &\qquad (x,g)&\mapsto (g,x\cdot g),\\
      \label{eq:groupoid_basicality_2}
      (\pr_1,\mul)\colon
      \Gr^1\times_{\s,\Gr^0,\rg} \Gr^1&\congto
      \Gr^1\times_{\rg,\Gr^0,\rg} \Gr^1,
      &\qquad (g,x)&\mapsto (g,g\cdot x),
    \end{alignat}
    are well-defined isomorphisms;
  \item \(\mul\) is associative in the sense of
    \eqref{eq:mul_associative} or, equivalently,
    \eqref{eq:mul_associative_elements}.
  \end{enumerate}
\end{definition}

The first condition implies that the fibre products
\(\Gr^1\times_{\s,\Gr^0,\rg} \Gr^1\), \(\Gr^1\times_{\s,\Gr^0,\s}
\Gr^1\) and \(\Gr^1\times_{\rg,\Gr^0,\rg} \Gr^1\) used above exist
in~\(\Cat\).  The maps in \eqref{eq:groupoid_basicality_1}
and~\eqref{eq:groupoid_basicality_2} are well-defined if and only if
\(\s\circ\mul=\s\circ\pr_2\) and \(\rg\circ\mul=\rg\circ\pr_1\), so
these two equations are assumed implicitly.

If \((\Gr^1,\Gr^0,\rg,\s,\mul,\unit,\invers)\) is a groupoid in the
first sense above, then it is one in the sense of
Definition~\ref{def:groupoid}: the map \((g,h)\mapsto (h\cdot
g^{-1},g)\) is inverse to~\eqref{eq:groupoid_basicality_1}, and the
map \((g,h)\mapsto (g,g^{-1}\cdot h)\) is inverse
to~\eqref{eq:groupoid_basicality_2}.  The converse is proved in
Proposition~\ref{pro:unit_inverse_from_basicality}.

\begin{lemma}
  \label{lem:interpret_invertibility_as_map}
  The invertibility of the map in~\eqref{eq:groupoid_basicality_1}
  is equivalent to the elementwise statement that for all \(g,h\in
  \Gr^1\) with \(\s(g)=\s(h)\) there is a unique \(x\in \Gr^1\) with
  \(\s(x)=\rg(g)\) and \(x\cdot g=h\).  The invertibility of the map
  in~\eqref{eq:groupoid_basicality_2} is equivalent to the
  elementwise statement that for all \(g,h\in \Gr^1\) with
  \(\rg(g)=\rg(h)\) there is a unique \(x\in \Gr^1\) with
  \(\s(g)=\rg(x)\) and \(g\cdot x=h\).
\end{lemma}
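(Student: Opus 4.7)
The plan is to apply the elementwise criterion for a map to be an isomorphism that was stated at the end of Section~\ref{sec:groupoids_first_def} (which is essentially the Yoneda Lemma): a map $f\colon X\to Z$ given in terms of elements is an isomorphism if and only if every element $z\in Z$ can be written as $f(x)$ for a unique $x\in X$. Each of the two claimed equivalences is then just a matter of unpacking.

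For the first map, I would first translate its source and target in elementwise terms. An element of $\Gr^1\times_{\s,\Gr^0,\rg}\Gr^1$ is, by definition of the fibre product, a pair $(x,g)$ of elements of~$\Gr^1$ with $\s(x)=\rg(g)$; an element of $\Gr^1\times_{\s,\Gr^0,\s}\Gr^1$ is a pair $(g,h)$ with $\s(g)=\s(h)$. Applying the elementwise isomorphism criterion to $(\pr_2,\mul)$, invertibility is equivalent to saying that every $(g,h)$ with $\s(g)=\s(h)$ is of the form $(g',x\cdot g')$ for a unique $(x,g')$ satisfying $\s(x)=\rg(g')$. The first coordinate forces $g'=g$, so this collapses to the statement that for every such $(g,h)$ there is a unique $x\in\Gr^1$ with $\s(x)=\rg(g)$ and $x\cdot g=h$, which is exactly the elementwise statement in the lemma.

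The second equivalence is handled by the same argument with roles swapped: elements of $\Gr^1\times_{\rg,\Gr^0,\rg}\Gr^1$ are pairs $(g,h)$ with $\rg(g)=\rg(h)$, and invertibility of~\eqref{eq:groupoid_basicality_2} says every such pair equals $(g',g'\cdot x)$ for a unique $(g',x)$ with $\s(g')=\rg(x)$. The first coordinate again forces $g'=g$, leaving the claimed unique solvability of $g\cdot x=h$.

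There is really no hard step here; the only thing that requires any care is keeping track of which coordinates of the fibre products are constrained by which range/source condition, so that the ``existence and uniqueness'' picked out by the Yoneda criterion is translated into the correct elementwise equation with the correct side conditions. Once those bookkeeping items are in place, both equivalences follow immediately from the elementwise isomorphism criterion.
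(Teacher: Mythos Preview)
Your proposal is correct and follows essentially the same approach as the paper: both unpack the fibre products elementwise and invoke the Yoneda-Lemma criterion for invertibility stated at the end of Section~\ref{sec:groupoids_first_def}. The paper phrases the unique-lifting condition directly in terms of maps $A\colon \?\to\Gr^1\times_{\s,\Gr^0,\rg}\Gr^1$ with $(\pr_2,\mul)\circ A=(g,h)$, while you make the intermediate step ``$g'=g$ is forced by the first coordinate'' explicit, but this is the same argument.
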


\begin{proof}
  We only prove the first statement, the second one is similar.

  We interpret \(g,h,x\) as maps \(g,h,x\colon \?\to \Gr^1\)
  in~\(\Cat\).  The elementwise statement says that for
  \(\?\inOb\Cat\) and \(g,h\colon \?\to \Gr^1\) with \(\s\circ
  g=\s\circ h\) there is a unique \(x\in\Cat(\?,\Gr^1)\) with
  \(\s\circ x=\rg\circ g\) and \(\mul\circ (x,g)=h\).  The maps \(g\)
  and~\(h\) with \(\s\circ g=\s\circ h\) are equivalent to a single
  map \((g,h)\colon \?\to \Gr^1\times_{\s,\Gr^0,\s} \Gr^1\), and the
  conditions \(\s\circ x=\rg\circ g\) and \(\mul\circ (x,g)=h\)
  together are equivalent to \((x,g)\) being a map \(\?\to
  \Gr^1\times_{\s,\Gr^0,\rg} \Gr^1\) with \((\pr_2,\mul)\circ
  (x,g)=(g,h)\).  Hence the elementwise statement is equivalent to the
  statement that for each object~\(\?\) of~\(\Cat\) and each map
  \((g,h)\colon \?\to \Gr^1\times_{\s,\Gr^0,\s} \Gr^1\) there is a
  unique map \(A\colon \?\to \Gr^1\times_{\s,\Gr^0,\rg} \Gr^1\) with
  \((\pr_2,\mul)\circ A=(g,h)\).  This statement means
  that~\((\pr_2,\mul)\) is invertible by the Yoneda Lemma.
\end{proof}

To better understand \eqref{eq:groupoid_basicality_1}
and~\eqref{eq:groupoid_basicality_2}, we view~\(\mul\) as a
\emph{ternary relation} on~\(\Gr^1\).

\begin{definition}
  \label{def:ternary_relation}
  An \emph{\(n\)\nb-ary relation} between \(X_1,\dotsc,X_n\inOb\Cat\)
  is \(R\inOb\Cat\) with maps \(p_j\colon R\to X_j\) for
  \(j=1,\dotsc,n\) such that given \(x_j\in X_j\) for
  \(j=1,\dotsc,n\), there is at most one \(r\in R\) with
  \(p_j(r)=x_j\) for \(j=1,\dotsc,n\).
\end{definition}

The \emph{multiplication relation} defined by~\(\mul\) has
\[
R\defeq \Gr^1\times_{\s,\Gr^0,\rg} \Gr^1,\qquad
p_1\defeq \pr_1,\quad
p_2\defeq\pr_2,\quad
p_3\defeq \mul.
\]

\begin{lemma}
  \label{lem:ternary_nondegenerate}
  The maps in \eqref{eq:groupoid_basicality_1}
  and~\eqref{eq:groupoid_basicality_2} are well-defined isomorphisms
  if and only if the following three maps are well-defined
  isomorphisms:
  \begin{align*}
    (p_1,p_2)&\colon R\to \Gr^1\times_{\s,\Gr^0,\rg} \Gr^1,\\
    (p_2,p_3)&\colon R\to \Gr^1\times_{\s,\Gr^0,\s} \Gr^1,\\
    (p_1,p_3)&\colon R\to \Gr^1\times_{\rg,\Gr^0,\rg} \Gr^1.
  \end{align*}
\end{lemma}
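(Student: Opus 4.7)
The plan is to unpack the three candidate isomorphisms $(p_1,p_2)$, $(p_2,p_3)$, $(p_1,p_3)$ using the explicit data $R = \Gr^1 \times_{\s,\Gr^0,\rg} \Gr^1$ with $p_1 = \pr_1$, $p_2 = \pr_2$, $p_3 = \mul$, and to observe that they coincide, up to renaming variables, with the identity of~$R$ and the two maps \eqref{eq:groupoid_basicality_1} and \eqref{eq:groupoid_basicality_2}. This will reduce the lemma to a tautology.

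First I would treat $(p_1,p_2)\colon R \to \Gr^1 \times_{\s,\Gr^0,\rg} \Gr^1$. This is a map from~$R$ to itself whose two components are the defining projections of~$R$; its well-definedness records only the identity $\s\circ\pr_1 = \rg\circ\pr_2$ that defines~$R$, and the universal property of the fibre product then forces $(p_1,p_2) = \id_R$. Hence this clause of the lemma is automatic and carries no content.

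Next I would read off the elementwise descriptions $(p_2,p_3)(g,h) = (h,\,g\cdot h)$ and $(p_1,p_3)(g,h) = (g,\,g\cdot h)$. After a cosmetic renaming of variables these match the maps in \eqref{eq:groupoid_basicality_1} and \eqref{eq:groupoid_basicality_2} verbatim. Their well-definedness as maps into $\Gr^1 \times_{\s,\Gr^0,\s} \Gr^1$ and $\Gr^1 \times_{\rg,\Gr^0,\rg} \Gr^1$ records precisely the implicit identities $\s\circ\mul = \s\circ\pr_2$ and $\rg\circ\mul = \rg\circ\pr_1$ already noted after Definition~\ref{def:groupoid}. Combining these three identifications yields the equivalence claimed by the lemma in both directions.

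I do not expect any real obstacle here; the whole content of the lemma is the observation that, among the three pairings of coordinates of the multiplication relation, the pairing $(p_1,p_2)$ is trivially the identity of $R$, so requiring all three pairings to be isomorphisms is the same as requiring the two nontrivial ones in Definition~\ref{def:groupoid}(2).
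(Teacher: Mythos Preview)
Your proposal is correct and is essentially the paper's own proof, spelled out in more detail: the paper simply observes that \((p_1,p_2)\) is the definition of~\(R\), while \((p_2,p_3)\) and \((p_1,p_3)\) are literally the maps \eqref{eq:groupoid_basicality_1} and \eqref{eq:groupoid_basicality_2}.
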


\begin{proof}
  The first isomorphism is the definition of~\(R\), the second one is
  \eqref{eq:groupoid_basicality_1}, the third one
  is~\eqref{eq:groupoid_basicality_2}.
\end{proof}

When we view the multiplication as a relation, then the isomorphisms
\eqref{eq:groupoid_basicality_1}
and~\eqref{eq:groupoid_basicality_2} become similar to the statement
that the multiplication is a partially defined map.

\subsection{Equivalence of both definitions of a groupoid}
\label{sec:groupoid_definitions}

\begin{proposition}
  \label{pro:unit_inverse_from_basicality}
  Let \((\Cat,\covers)\) be a category with a subcanonical
  pretopology.  Let \((\Gr^0,\Gr^1,\rg,\s,\mul)\) be a groupoid
  in~\((\Cat,\covers)\) as in Definition~\textup{\ref{def:groupoid}}.
  Then there are unique maps \(\unit\colon \Gr^0\to \Gr^1\) and
  \(\invers\colon \Gr^1\to \Gr^1\) with the properties of unit map and
  inversion listed above.  Moreover, the multiplication~\(\mul\) is a
  cover.
\end{proposition}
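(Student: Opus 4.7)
The plan is to work elementwise with the Yoneda convention from Section~\ref{sec:groupoids_first_def}, treating an element as a map from an arbitrary test object, and to exploit the two invertibility conditions isolated in Lemma~\ref{lem:interpret_invertibility_as_map} to solve for units and inverses. To build the unit, I first invert~\eqref{eq:groupoid_basicality_1} on the diagonal: the pair $(g,g)\in\Gr^1\times_{\s,\Gr^0,\s}\Gr^1$ has a unique preimage of the form $(\tilde\unit(g),g)$, giving a map $\tilde\unit\colon\Gr^1\to\Gr^1$ with $\tilde\unit(g)\cdot g=g$ and $\s(\tilde\unit(g))=\rg(g)$. The decisive step is to show that $\tilde\unit$ factors through~$\rg$. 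Given $g,h\in\Gr^1$ with $\rg(g)=\rg(h)$, invertibility of~\eqref{eq:groupoid_basicality_2} supplies $y\in\Gr^1$ with $g\cdot y=h$; associativity then yields $(\tilde\unit(h)\cdot g)\cdot y=\tilde\unit(h)\cdot h=h=g\cdot y$, and the cancellation provided by~\eqref{eq:groupoid_basicality_1} (injectivity of right multiplication by~$y$) forces $\tilde\unit(h)\cdot g=g$, so applying the same cancellation once more gives $\tilde\unit(h)=\tilde\unit(g)$. Hence $\tilde\unit\circ\pr_1=\tilde\unit\circ\pr_2$ on $\Gr^1\times_{\rg,\Gr^0,\rg}\Gr^1$; because the pretopology is subcanonical and $\rg$ is a cover, Definition and Lemma~\ref{def:subcanonical} produces a unique $\unit\colon\Gr^0\to\Gr^1$ with $\unit\circ\rg=\tilde\unit$.

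A symmetric construction using~\eqref{eq:groupoid_basicality_2} produces $\unit'\colon\Gr^0\to\Gr^1$ satisfying the right-unit equation $g\cdot\unit'(\s(g))=g$. To identify $\unit=\unit'$, I evaluate $\unit(x)\cdot\unit'(x)$ in two ways: the left-unit property applied to $\unit'(x)$ gives $\unit(x)\cdot\unit'(x)=\unit'(x)$, while the right-unit property applied to $\unit(x)$ gives $\unit(x)\cdot\unit'(x)=\unit(x)$. The identities $\rg\circ\unit=\s\circ\unit=\id_{\Gr^0}$ then follow from $\rg\circ\unit\circ\rg=\rg\circ\tilde\unit=\rg$ and the analogous source computation, together with the fact that~$\rg$, being a cover in a subcanonical pretopology, is epic.

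For inversion, applying the inverse of~\eqref{eq:groupoid_basicality_1} to the pair $(g,1_{\s(g)})\in\Gr^1\times_{\s,\Gr^0,\s}\Gr^1$ defines $\invers\colon\Gr^1\to\Gr^1$ with $\invers(g)\cdot g=1_{\s(g)}$ and $\s(\invers(g))=\rg(g)$. The identity $\rg(\invers(g))=\s(g)$ is immediate from $\rg(\invers(g))=\rg(\invers(g)\cdot g)=\rg(1_{\s(g)})=\s(g)$. The remaining inverse axiom $g\cdot\invers(g)=1_{\rg(g)}$ follows from the computation $(g\cdot\invers(g))\cdot g=g\cdot 1_{\s(g)}=g=1_{\rg(g)}\cdot g$ and cancellation of~$g$ on the right. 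Uniqueness of both $\unit$ and $\invers$ is automatic because each is characterised by an equation whose solution is unique by the invertibility conditions.

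Finally, $\mul$ is a cover because it is the second component of the isomorphism~\eqref{eq:groupoid_basicality_1}: the projection $\pr_2\colon\Gr^1\times_{\s,\Gr^0,\s}\Gr^1\to\Gr^1$ is a pull-back of the cover~$\s$ and hence a cover, and $\mul=\pr_2\circ(\pr_2,\mul)$ is therefore a composition of an isomorphism with a cover. The main obstacle is the descent of $\tilde\unit$ through $\rg$, which simultaneously invokes associativity, both invertibility conditions, and subcanonicity; once that step is in place, all remaining verifications are routine elementwise algebra.
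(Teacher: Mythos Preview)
Your proof is correct and follows essentially the same approach as the paper's. The only differences are cosmetic: where the paper shows $\bar\unit(g\cdot h)=\bar\unit(g)$ and then composes with the isomorphism~\eqref{eq:groupoid_basicality_2} to obtain $\bar\unit\circ\pr_1=\bar\unit\circ\pr_2$, you unwrap that composition elementwise (writing $h=g\cdot y$); and where the paper deduces the right-unit law $g\cdot 1_{\s(g)}=g$ directly from the left-unit law via the cancellation $(g\cdot 1_{\s(g)})\cdot h=g\cdot h$, you instead run the symmetric construction to obtain~$\unit'$ and then identify $\unit=\unit'$. One small presentational point: your computation $\unit(x)\cdot\unit'(x)=\unit'(x)=\unit(x)$ silently uses $\s\circ\unit=\id$ and $\rg\circ\unit'=\id$, which you only state afterwards; these are immediate from the defining equations $\s(\tilde\unit(g))=\rg(g)$ and its mirror, so it would be cleaner to record them before invoking the product.
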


\begin{proof}
  We write down the proof in terms of elements.  Interpreting this as
  explained above gives a proof in a general category with
  pretopology.

  First we construct the unit map.  For any \(g\in \Gr^1\), the
  elementwise interpretation of~\eqref{eq:groupoid_basicality_1} in
  Lemma~\ref{lem:interpret_invertibility_as_map} gives a unique map
  \(\bar{\unit}\colon \Gr^1\to \Gr^1\) with
  \(\s(\bar{\unit}(g))=\rg(g)\) and \(\bar{\unit}(g)\cdot g=g\).
  Hence \(\rg(\bar{\unit}(g))=\rg(\bar{\unit}(g)\cdot g)=\rg(g)\).
  Associativity implies
  \[
  \bar{\unit}(g)\cdot (g\cdot h)
  = (\bar{\unit}(g)\cdot g)\cdot h
  = g\cdot h
  = \bar{\unit}(g\cdot h)\cdot (g\cdot h)
  \]
  for all \(g,h\in \Gr^1\) with \(\s(g)=\rg(h)\).  Since
  \(\bar{\unit}(g\cdot h)\) is unique, this gives \(\bar{\unit}(g\cdot
  h) = \bar{\unit}(g)\) for all \(g,h\in \Gr^1\) with
  \(\s(g)=\rg(h)\), that is, \(\bar{\unit}\circ \mul =
  \bar{\unit}\circ \pr_1\).  The map \((\pr_1,\mul)\) is the
  isomorphism in~\eqref{eq:groupoid_basicality_2}, so we get
  \(\bar{\unit}\circ \pr_1=\bar{\unit}\circ \pr_2\) on
  \(\Gr^1\times_{\rg,\Gr^0,\rg} \Gr^1\).  Since the
  pretopology~\(\covers\) is subcanonical, the cover~\(\rg\) is the
  coequaliser of \(\pr_1,\pr_2\colon \Gr^1\times_{\rg,\Gr^0,\rg}
  \Gr^1\rightrightarrows \Gr^1\).  Hence there is a unique map
  \(\unit\colon \Gr^0\to \Gr^1\) with \(\bar{\unit}=\unit\circ\rg\).
  Since~\(\rg\) is an epimorphism, the equations
  \(\s\circ\bar{\unit}=\s\) and \(\rg\circ\bar{\unit}=\rg\) imply
  \(\s\circ \unit=\id_{\Gr^0}\) and \(\rg\circ \unit=\id_{\Gr^0}\).
  Interpreting \(1_x=\unit(x)\) for \(x\in \Gr^0\), this becomes
  \(\s(1_x)=x=\rg(1_x)\) for all \(x\in \Gr^0\).  The defining
  condition \(\bar{\unit}(g)\cdot g=g\) for \(g\in \Gr^1\) becomes
  \(1_{\rg(g)}\cdot g=g\) for all \(g\in \Gr^1\).  If \(g,h\in \Gr^1\)
  satisfy \(\s(g)=\rg(h)\), then
  \[
  (g\cdot 1_{\s(g)})\cdot h
  = g\cdot (1_{\s(g)}\cdot h)
  = g\cdot (1_{\rg(h)}\cdot h)
  = g\cdot h.
  \]
  Since~\(g\) is the unique map \(x\colon \Gr^1\to \Gr^1\) with
  \(x\cdot h=g\cdot h\) by~\eqref{eq:groupoid_basicality_1}, we also
  get \(g\cdot 1_{\s(g)}=g\) for all \(g\in \Gr^1\).  Hence the unit
  map has all required properties.

  The inverse map \(\invers\colon \Gr^1\to \Gr^1\) is defined as the
  unique map with \(\s(\invers(g))=\rg(g)\) and \(\invers(g)\cdot
  g=1_{\s(g)}\) for all \(g\in \Gr^1\).  Since
  \(\s(g)=\s(1_{\s(g)})\), \eqref{eq:groupoid_basicality_1} provides
  a unique map with this property.  We write \(g^{-1}\) instead
  of~\(\invers(g)\) in the following.  Then \(\s(g^{-1})=\rg(g)\),
  \(g^{-1}\cdot g=1_{\s(g)}\) by definition, and
  \(\rg(g^{-1})=\rg(g^{-1}\cdot g)=\rg(1_{\s(g)})= \s(g)\).
  Associativity gives
  \[
  (g\cdot g^{-1})\cdot g
  = g\cdot (g^{-1}\cdot g)
  = g\cdot 1_{\s(g)}
  = g
  = 1_{\rg(g)}\cdot g.
  \]
  Since the map \(x\colon \Gr^1\to \Gr^1\) with \(x\cdot g =
  1_{\rg(g)}\cdot g\) is unique by~\eqref{eq:groupoid_basicality_1},
  this implies \(g\cdot g^{-1}=1_{\rg(g)}\).  Similarly, associativity
  gives
  \[
  (g\cdot h)^{-1}\cdot (g\cdot h)
  = 1_{\s(g\cdot h)}
  = 1_{\s(h)}
  = (h^{-1}\cdot g^{-1})\cdot (g\cdot h)
  \]
  for all \(g,h\in \Gr^1\) with \(\s(g)=\rg(h)\).  Hence the
  uniqueness of the solution of \(x\cdot (g\cdot h)= 1_{\s(h)}\)
  implies \((g\cdot h)^{-1} = h^{-1}\cdot g^{-1}\) for all \(g,h\in
  \Gr^1\) with \(\s(g)=\rg(h)\).  Similarly,
  \[
  (g^{-1})^{-1}\cdot g^{-1}
  = 1_{\s(g^{-1})}
  = 1_{\rg(g)}
  = g\cdot g^{-1},
  \]
  and the uniqueness of the solution to \(x\cdot g^{-1}=1_{\rg(g)}\)
  gives \((g^{-1})^{-1}=g\) for all \(g\in \Gr^1\).  Thus the
  inversion has all expected properties.

  Since~\(\s\) is a cover, so is the coordinate projection
  \(\pr_2\colon \Gr^1\times_{\s,\Gr^0,\s} \Gr^1\to \Gr^1\).
  Composing~\(\pr_2\) with the invertible map
  in~\eqref{eq:groupoid_basicality_1} gives that \(\mul\colon
  \Gr^1\times_{\s,\Gr^0,\rg} \Gr^1\to \Gr^1\) is a cover.
\end{proof}

\subsection{Examples}

The following examples play an important role for the general theory.

\begin{example}
  \label{exa:0-groupoid}
  An object~\(X\) of~\(\Cat\) is viewed as a groupoid by taking
  \(\Gr^1=\Gr^0=X\), \(\rg=\s=\id_X\), and letting~\(\mul\) be the
  canonical isomorphism \(X\times_X X \congto X\).  A groupoid is
  isomorphic to one of this form if and only if its range or source
  map is an isomorphism.  Such groupoids are called
  \emph{\(0\)\nb-groupoids}.
\end{example}

\begin{example}
  \label{exa:covering_groupoid}
  Let \(p\colon X\prto Y\) be a cover.  Its \emph{\v{C}ech groupoid}
  is the groupoid defined by \(\Gr^0=X\), \(\Gr^1=X\times_{p,Y,p} X\),
  \(\rg(x_1,x_2)=x_1\), \(\s(x_1,x_2)=x_2\) for all \(x_1,x_2\in X\)
  with \(p(x_1)=p(x_2)\), and \((x_1,x_2)\cdot (x_2,x_3)\defeq
  (x_1,x_3)\) for all \(x_1,x_2,x_3\in X\) with
  \(p(x_1)=p(x_2)=p(x_3)\).  The range and source maps are covers by
  construction.  The multiplication is clearly associative.  The
  canonical isomorphisms
  \begin{alignat*}{2}
    \Gr^1\times_{\s,\Gr^0,\rg} \Gr^1
    &\congto X\times_{p,Y,p} X\times_{p,Y,p} X,
    &\qquad (x_1,x_2,x_2,x_3)\mapsto(x_1,x_2,x_3),\\
    \Gr^1\times_{\rg,\Gr^0,\rg} \Gr^1
    &\congto X\times_{p,Y,p} X\times_{p,Y,p} X,
    &\qquad (x_1,x_2,x_1,x_3)\mapsto(x_1,x_2,x_3),\\
    \Gr^1\times_{\s,\Gr^0,\s} \Gr^1
    &\congto X\times_{p,Y,p} X\times_{p,Y,p} X,
    &\qquad (x_1,x_2,x_3,x_2)\mapsto(x_1,x_2,x_3),
  \end{alignat*}
  show that \eqref{eq:groupoid_basicality_1}
  and~\eqref{eq:groupoid_basicality_2} are isomorphisms.  Thus we have
  a groupoid in~\((\Cat,\covers)\).  Its unit and inversion maps are
  given by \(1_x=(x,x)\) and \((x_1,x_2)^{-1}=(x_2,x_1)\).
\end{example}

\begin{example}
  \label{exa:groupoid_base_change}
  Let~\(\Gr\) be a groupoid and let \(p\colon X\prto\Gr^0\) be a
  cover.  We define a groupoid \(p^*\Gr=\Gr(X)\) with objects~\(X\)
  and arrows \(X\times_{p,\Gr^0,\rg} \Gr^1\times_{\s,\Gr^0,p} X\),
  range and source maps \(\pr_1\) and~\(\pr_3\), and with the
  multiplication map defined elementwise by
  \[
  (x_1,g_1,x_2)\cdot (x_2,g_2,x_3)\defeq (x_1,g_1\cdot g_2,x_3)
  \]
  for all \(x_1,x_2,x_3\in X\), \(g_1,g_2\in\Gr^1\) with
  \(p(x_1)=\rg(g_1)\), \(p(x_2)=\s(g_1)=\rg(g_2)\),
  \(p(x_3)=\s(g_2)\); this multiplication is associative.  Since the
  maps \(\s\), \(\rg\), and~\(p\) are covers, the fibre product
  \(X\times_{p,\Gr^0,\rg} \Gr^1\times_{\s,\Gr^0,p} X\) exists and
  the source and range maps of \(\Gr(X)\) are covers.  Straightforward
  computations show that \eqref{eq:groupoid_basicality_1}
  and~\eqref{eq:groupoid_basicality_2} are isomorphisms.  Units
  and inverses are given by \(1_x=(x,1_{p(x)},x)\) and
  \((x_1,g,x_2)^{-1}= (x_2,g^{-1},x_1)\) for all \(x,x_1,x_2\in X\),
  \(g\in\Gr^1\) with \(p(x_1)=\rg(g)\), \(p(x_2)=\s(g)\).

  If~\(\Gr\) is just a space~\(\Gr[Y]\) viewed as a groupoid
  (Example~\ref{exa:0-groupoid}), then \(\Gr(X)\) is the \v{C}ech
  groupoid defined in Example~\ref{exa:covering_groupoid}.

  Let \(p_1\colon X_1\prto X_2\) and \(p_2\colon X_2\prto\Gr^0\) be
  covers.  Then \(p_2\circ p_1\) is a cover and there is a natural
  groupoid isomorphism \(p_1^*(p_2^*\Gr) \cong (p_2\circ
  p_1)^*\Gr\).
\end{example}

\begin{example}
  \label{exa:groups}
  Assume Assumption~\ref{assum:final} about a final
  object~\(\star\).  Then a \emph{group} in~\((\Cat,\covers)\) is a
  groupoid~\(\Gr\) with \(\Gr^0=\star\).  The range and source maps
  \(\Gr^1\to\Gr^0=\star\) are automatically covers by
  Assumption~\ref{assum:final}.  The multiplication is now defined
  on the full product \(\Gr^1\times_\star\Gr^1=\Gr^1\times\Gr^1\),
  and the isomorphisms \eqref{eq:groupoid_basicality_1}
  and~\eqref{eq:groupoid_basicality_2} also take place on the
  product \(\Gr^1\times\Gr^1\).  The unit in a groupoid is a map
  \(\unit\colon \star\to\Gr^1\).  For any object \(\?\inOb\Cat\),
  let \(1\colon \?\to\Gr^1\) be the composite of~\(\unit\) with the
  unique map \(\?\to\star\).  These maps give the \emph{unit
    element} in~\(\Gr^1\).  It satisfies \(1\cdot g=g=g\cdot 1\) for
  any map \(g\colon \?\to\Gr^1\).
\end{example}

\subsection{Functors and natural transformations}
\label{sec:functors}

\begin{definition}
  \label{def:functor}
  Let \(\Gr\) and~\(\Gr[H]\) be groupoids in~\((\Cat,\covers)\).  A
  \emph{functor} from~\(\Gr\) to~\(\Gr[H]\) is given by arrows
  \(F^i\in\Cat(\Gr^i,\Gr[H]^i)\) in~\(\Cat\) for \(i=0,1\) with
  \(\rg_{\Gr[H]}(F^1(g))=F^0(\rg_{\Gr}(g))\) and
  \(\s_{\Gr[H]}(F^1(g))=F^0(\s_{\Gr}(g))\) for all \(g\in \Gr^1\)
  and \(F^1(g_1\cdot g_2) = F^1(g_1)\cdot F^1(g_2)\) for all
  \(g_1,g_2\in \Gr^1\) with \(\s_{\Gr}(g_1)=\rg_{\Gr}(g_2)\).

  The \emph{identity functor} \(\id_{\Gr}\colon \Gr\to\Gr\) on a
  groupoid~\(\Gr\) has \(F^i=\id_{\Gr^i}\) for \(i=0,1\).

  The \emph{product} of two functors \(F_1\colon \Gr\to\Gr[H]\) and
  \(F_2\colon \Gr[H]\to\Gr[K]\) is the functor \(F_2\circ F_1\colon
  \Gr\to\Gr[K]\) given by the composite maps \(F_2^i\circ
  F_1^i\colon \Gr^i\to\Gr[K]^i\) for \(i=0,1\).

  Let \(F_1,F_2\colon \Gr\rightrightarrows\Gr[H]\) be two parallel
  functors.  A \emph{natural transformation} \(F_1\Rightarrow F_2\)
  is \(\Phi\in\Cat(\Gr^0,\Gr[H]^1)\) with
  \(\s(\Phi(x))=F_1^0(x)\), \(\rg(\Phi(x))=F_2^0(x)\) for all
  \(x\in\Gr^0\) and \(\Phi(\rg(g))\cdot F_1^1(g) = F_2^1(g)\cdot
  \Phi(\s(g))\) for all \(g\in\Gr^1\):
  \[
  \begin{tikzpicture}[baseline=(current bounding box.west)]
    \matrix[cd,column sep=4em] (m) {
      F_1^0(\s(g))&F_1^0(\rg(g))\\
      F_2^0(\s(g))&F_2^0(\rg(g))\\
    };
    \begin{scope}[cdar]
      \draw (m-1-1) -- node {\(F_1^1(g)\)} (m-1-2);
      \draw (m-1-1) -- node[swap] {\(\Phi(\s(g))\)} (m-2-1);
      \draw (m-1-2) -- node {\(\Phi(\rg(g))\)} (m-2-2);
      \draw (m-2-1) -- node {\(F_2^1(g)\)} (m-2-2);
    \end{scope}
  \end{tikzpicture}
  \]

  The \emph{identity transformation} \(1_F\colon F\Rightarrow F\) on
  a functor \(F\colon \Gr\to\Gr[H]\) is given by
  \(\Phi(x)=1_{F(x)}\) for all \(x\in\Gr^0\).
  If \(\Phi\colon F_1\Rightarrow F_2\) is a natural transformation,
  then its \emph{inverse}~\(\Phi^{-1}\) is the natural
  transformation \(F_2\Rightarrow F_1\) given by
  \(\invers\circ\Phi\colon g\mapsto \Phi(g)^{-1}\).

  The \emph{vertical product} of natural transformations \(\Phi\colon
  F_1\Rightarrow F_2\) and \(\Psi\colon F_2\Rightarrow F_3\) for three
  functors \(F_1,F_2,F_3\colon \Gr\triplerightarrow\Gr[H]\) is
  the natural transformation \(\Psi\cdot\Phi\colon F_1\Rightarrow
  F_3\) defined by \((\Psi\cdot\Phi)(x) \defeq \Psi(x)\cdot\Phi(x)\)
  for all \(x\in\Gr^0\).

  Let \(F_1,F_1'\colon \Gr\rightrightarrows\Gr[H]\) and
  \(F_2,F_2'\colon \Gr[H]\rightrightarrows\Gr[K]\) be composable
  pairs of parallel functors and let \(\Phi\colon F_1\Rightarrow
  F_1'\) and \(\Psi\colon F_2\Rightarrow F_2'\) be natural
  transformations.  Their \emph{horizontal product} is the natural
  transformation \(\Psi\circ\Phi\colon F_2\circ F_1\Rightarrow
  F_2'\circ F_1'\) defined by
  \[
  (\Psi\circ\Phi)(x)
  \defeq \Psi((F_1')^0(x))\cdot F_2^1(\Phi(x))
  =  (F_2')^1(\Phi(x))\cdot \Psi(F_1^0(x)),
  \]
  \[
  \begin{tikzpicture}[baseline=(current bounding box.west)]
    \matrix[cd,column sep=6em] (m) {
      F_2^0\circ F_1^0(x)&F_2^0\circ (F_1')^0(x)\\
      (F_2')^0\circ F_1^0(x)&(F_2')^0\circ (F_1')^0(x)\\
    };
    \begin{scope}[cdar]
      \draw (m-1-1) -- node {\(F_2^1(\Phi(x))\)} (m-1-2);
      \draw (m-1-1) -- node[swap] {\(\Psi(F_1^0(x))\)} (m-2-1);
      \draw (m-1-2) -- node {\(\Psi((F_1')^0(x))\)} (m-2-2);
      \draw (m-2-1) -- node[swap] {\((F_2')^1(\Phi(x))\)} (m-2-2);
      \draw (m-1-1) -- node[anchor=mid,fill=white]
      {\((\Psi\circ\Phi)(x)\)} (m-2-2);
    \end{scope}
  \end{tikzpicture}
  \]
  for all \(x\in\Gr^0\).
  This diagram commutes because of the naturality of~\(\Psi\)
  applied to~\(\Phi(x)\).
\end{definition}

If \(\Phi\colon F_1\Rightarrow F_2\) is a natural transformation,
then
\[
\Phi^{-1}\cdot \Phi=1_{F_1}\colon F_1\Rightarrow F_1,\qquad
\Phi\cdot\Phi^{-1}=1_{F_2}\colon F_2\Rightarrow F_2,
\]
justifying the name inverse.  Thus all natural transformations
between groupoids are \emph{natural isomorphisms}.

\begin{proposition}
  \label{pro:groupoids_and_functors_and_nattrafo}
  Groupoids, functors, and natural transformations
  in~\((\Cat,\covers)\) with the products defined above form a
  strict \(2\)\nb-category in which all \(2\)\nb-arrows are
  invertible.
\end{proposition}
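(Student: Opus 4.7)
The plan is to verify each axiom of a strict $2$-category by translating every identity into an elementwise statement and then checking it pointwise using the groupoid axioms, following the translation algorithm spelled out after Definition~\ref{def:groupoid}. Because that algorithm is faithful, every identity required is exactly the one familiar from the classical case of small groupoids in $\Sets$, so the verification is essentially a bookkeeping exercise; no new issues arise from the pretopology because the constructions involve only composition of arrows and fibre products that are known to exist.

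First I would check that groupoids and functors form an ordinary category. Associativity of functor composition reduces for $i=0,1$ to associativity of composition in $\Cat$, and the identity functor $\id_\Gr$ is a strict two-sided unit because $F^i\circ\id_{\Gr^i}=F^i$. Preservation of range, source, and multiplication by a composite is automatic from the analogous properties of the two factors. Next, for each pair $\Gr,\Gr[H]$, I would check that functors $\Gr\to\Gr[H]$ with vertical products of natural transformations form a category: associativity $(\Xi\cdot\Psi)\cdot\Phi = \Xi\cdot(\Psi\cdot\Phi)$ is immediate from the associativity of $\mul$ in $\Gr[H]$ applied pointwise, and $1_F$ is a strict unit because $1_{F^0(x)}\cdot\Phi(x) = \Phi(x) = \Phi(x)\cdot 1_{F^0(x)}$ for each $x\in\Gr^0$.

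The main work is to verify that horizontal composition is well-defined and satisfies the interchange law. Well-definedness is the assertion
\[
\Psi((F_1')^0(x))\cdot F_2^1(\Phi(x)) = (F_2')^1(\Phi(x))\cdot\Psi(F_1^0(x)),
\]
which is precisely the naturality square of $\Psi$ applied to $\Phi(x)\in\Gr[H]^1$, as flagged in the definition. The interchange law
\[
(\Psi'\cdot\Psi)\circ(\Phi'\cdot\Phi) = (\Psi'\circ\Phi')\cdot(\Psi\circ\Phi)
\]
is the only nontrivial calculation: evaluating at $x\in\Gr^0$, both sides become a product of four arrows in $\Gr[K]^1$, and they agree after one use of associativity of $\mul$ in $\Gr[K]$ together with one use of the naturality square for $\Psi$ applied to $\Phi(x)$. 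The remaining strict identities $1_G\circ 1_F = 1_{G\circ F}$ and the strict associativity and unit laws for horizontal composition reduce to functoriality of the outer factor and identities in $\Gr[K]$.

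Finally, the fact that all $2$-arrows are invertible has essentially been recorded before the proposition: setting $\Phi^{-1}\defeq\invers\circ\Phi$, the identities $\s\circ\invers=\rg$ and $\rg\circ\invers=\s$ show that $\Phi^{-1}$ lies in the correct hom-object and points from $F_2$ to $F_1$, while $\Phi^{-1}\cdot\Phi = 1_{F_1}$ and $\Phi\cdot\Phi^{-1}=1_{F_2}$ hold pointwise by the groupoid inverse axioms. The naturality of $\Phi^{-1}$ is obtained by inverting the naturality square of $\Phi$ and using $(g\cdot h)^{-1}=h^{-1}\cdot g^{-1}$. The main obstacle is purely organisational: once the elementwise formalism is in force, the proof is parallel to that for small groupoids in $\Sets$ and contains no categorical subtleties specific to $(\Cat,\covers)$.
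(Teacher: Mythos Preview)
Your proposal is correct and matches the paper's approach exactly: the paper's own proof is simply ``See~\cites{Baez:Introduction_n, Leinster:Basic_Bicategories, Noohi:two-groupoids} for the definition of a strict \(2\)\nb-category. The proof is straightforward.'' You have supplied precisely the routine elementwise verification that the paper leaves to the reader, using the translation algorithm exactly as the paper intends it to be used.
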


\begin{proof}
  See~\cites{Baez:Introduction_n, Leinster:Basic_Bicategories,
    Noohi:two-groupoids} for the definition of a strict
  \(2\)\nb-category.  The proof is straightforward.
\end{proof}

\begin{example}
  \label{exa:functor_from_base_change}
  Let~\(\Gr\) be a groupoid and let \(p\colon X\prto\Gr^0\) be a
  cover.  Then~\(p\) and the map \(\pr_2\colon X\times_{p,\Gr^0,\rg}
  \Gr^1\times_{\s,\Gr^0,p} X\to \Gr^1\) form a functor
  \(p_*\colon \Gr(X)\to\Gr\).  Functors of this form are called
  \emph{hypercovers}.
\end{example}

\begin{example}
  \label{exa:base_change_of_functor}
  Let \(\Gr\) and~\(\Gr[H]\) be groupoids, \(F\colon \Gr\to\Gr[H]\)
  a functor, and \(p\colon X\prto\Gr[H]^0\) a cover.  Let
  \(\tilde{X}\defeq \Gr^0\times_{F^0,\Gr[H]^0,p} X\); this exists
  and \(\pr_1\colon \tilde{X}\prto \Gr^0\) is a cover because~\(p\)
  is a cover.  We define a functor \(p^*(F) = F(X)\colon
  \Gr(\tilde{X})\to\Gr[H](X)\) by \(\pr_2\colon \tilde{X}\to X\) on
  objects and by \((x_1,g,x_2)\mapsto
  (\pr_2(x_1),F^1(g),\pr_2(x_2))\) for all \(x_1,x_2\in\tilde{X}\),
  \(g\in\Gr^1\) with \(\pr_1(x_1)=\rg(g)\) and \(\pr_1(x_2)=\s(g)\)
  on arrows.
\end{example}

\begin{example}
  \label{exa:equivalence_id}
  An equivalence from the identity functor on~\(\Gr\) to a functor
  \(F\colon \Gr\to\Gr\) is \(\Phi\in\Cat(\Gr^0,\Gr^1)\) with
  \(\s\circ\Phi=\id_{\Gr^0}\), \(\rg\circ\Phi=F^0\), and
  \begin{equation}
    \label{eq:inner_endomorphism}
    F^1(g) = \Phi(\rg(g)) \cdot g\cdot \Phi(\s(g))^{-1}
    \qquad\text{for all }g\in\Gr^1.
  \end{equation}
  Conversely, let~\(\Phi\) be any section for \(\s\colon
  \Gr^1\prto\Gr^0\).  Then \(F^0=\rg\circ\Phi\)
  and~\eqref{eq:inner_endomorphism} define an endomorphism
  \(\Ad(\Phi)=F\colon \Gr\to\Gr\) such that~\(\Phi\) is a natural
  transformation \(\id_{\Gr^0}\Rightarrow F\).  Functors \(F\colon
  \Gr\to\Gr\) that are naturally equivalent to the identity functor
  are called \emph{inner}.
\end{example}

The horizontal product of \(\Phi_i\colon \id_{\Gr^0}\Rightarrow
\Ad(\Phi_i)\) for \(i=1,2\) is a natural transformation
\[
\Phi_1\circ\Phi_2\colon \id_{\Gr^0}\Rightarrow
\Ad(\Phi_1)\circ\Ad(\Phi_2).
\]
Here
\begin{equation}
  \label{eq:compose_sections}
  \Phi_1\circ\Phi_2(x) \defeq \Phi_1(\rg\circ\Phi_2(x))\cdot \Phi_2(x)
  \qquad\text{for all }x\in\Gr^0.
\end{equation}
This product on sections for~\(\s\) gives a monoid with unit
\(\unit\colon x\mapsto 1_x\).  The map~\(\Ad\) from this monoid to the
submonoid of inner endomorphisms of~\(\Gr\) is a unital homomorphism,
that is, \(\Ad(\Phi_1\circ \Phi_2) = \Ad(\Phi_1)\circ\Ad(\Phi_2)\) and
\(\Ad(1)=\id_{\Gr}\).

\begin{deflemma}
  \label{lem:bisections}
  The following are equivalent for a section~\(\Phi\) of~\(\s\):
  \begin{enumerate}
  \item \(\Phi\) is invertible for the horizontal product~\(\circ\);
  \item \(\Ad(\Phi)\) is an automorphism;
  \item \(\rg\circ\Phi\) is invertible in~\(\Cat\).
  \end{enumerate}
  Sections of~\(\s\) with these equivalent properties are called
  \emph{bisections}.
\end{deflemma}

\begin{proof}
  Let~\(\Phi\) be invertible for~\(\circ\) with
  inverse~\(\Phi^{-1}\).  Then \(\Ad(\Phi^{-1})\) is inverse
  to~\(\Ad(\Phi)\), so \(\Ad(\Phi)\) is an automorphism of~\(\Gr\).
  Then \(\rg\circ\Phi=\Ad(\Phi)^0\in\Cat(\Gr^0,\Gr^0)\) is
  invertible in~\(\Cat\).  Conversely, if \(\alpha\defeq
  \rg\circ\Phi\) is invertible in~\(\Cat\), then \(\Phi^{-1}(x)
  \defeq \Phi(\alpha^{-1}(x))^{-1}\colon \Gr^0\to\Gr^1\) has
  \(\s(\Phi^{-1}(x)) =
  \rg(\Phi(\alpha^{-1}(x)))=\alpha\alpha^{-1}(x)=x\), so it is a
  section, and both \(\Phi\circ\Phi^{-1}= 1_x\) and
  \(\Phi^{-1}\circ\Phi= 1_x\).  We have shown
  (1)\(\Rightarrow\)(2)\(\Rightarrow\)(3)\(\Rightarrow\)(1).
\end{proof}

We may think of a section as a subobject \(\Phi(\Gr^0)\subseteq
\Gr^1\) such that~\(\s\) restricts to an isomorphism
\(\Phi(\Gr^0)\congto\Gr^0\).  Being a bisection means that \emph{both}
\(\s\) and~\(\rg\) restrict to isomorphisms
\(\Phi(\Gr^0)\congto\Gr^0\).  This explains the name ``bisection.''

Isomorphisms of groupoids with natural transformations between them
form a strict \(2\)\nb-groupoid.  Thus the automorphisms of~\(\Gr\)
with natural transformations between them form a strict
\(2\)\nb-group or, equivalently, a crossed module.  The crossed module
combines automorphisms of~\(\Gr\) and bisections because the latter
are the natural transformations from the identity functor to another
automorphism of~\(\Gr\).  The crossed module involves the horizontal
product~\(\circ\) on bisections and the composition~\(\circ\) on
automorphisms, and the group homomorphism~\(\Ad\) from bisections to
automorphisms; another piece of structure is the conjugation action
of automorphisms on bisections, given by conjugation:
\[
F\bullet\Phi\defeq 1_F\circ \Phi\circ 1_{F^{-1}}\colon
\id_{\Gr}=F\circ \id_{\Gr}\circ F^{-1} \Rightarrow F\Ad(\Phi)F^{-1}.
\]
By the definition of the horizontal product, we have
\[
F\bullet\Phi(x) = F^1(\Phi(x))
\qquad\text{for all }x\in\Gr^0.
\]
The crossed module conditions \(\Ad(F\bullet\Phi)=F\Ad(\Phi)F^{-1}\)
and \(\Ad(\Phi_1)\bullet \Phi_2=\Phi_1\circ\Phi_2\circ\Phi_1^{-1}\)
are easy to check.

\subsection{Anafunctors}
\label{sec:vague_functors}

The functor \(p_*\colon \Gr(X)\to\Gr\) from
Example~\ref{exa:functor_from_base_change} is not an isomorphism,
but should be an equivalence of groupoids.  When we formally invert
these functors, we arrive at the following definition:

\begin{definition}
  \label{def:gen_functor}
  Let \(\Gr\) and~\(\Gr[H]\) be groupoids in~\(\Cat\).  An
  \emph{anafunctor} from~\(\Gr\) to~\(\Gr[H]\) is a triple
  \((X,p,F)\), where \(X\inOb\Cat\), \(p\colon X\prto\Gr^0\) is a
  cover, and \(F\colon \Gr(X)\to\Gr[H]\) is a functor,
  with~\(\Gr(X)\) defined in
  Example~\ref{exa:groupoid_base_change}.

  An \emph{isomorphism} between two anafunctors \((X_i,p_i,F_i)\),
  \(i=1,2\), is an isomorphism \(\varphi\in\Cat(X_1,X_2)\) with
  \(p_2\circ\varphi= p_1\) and \(F_2\circ \varphi_*=F_1\), where
  \(\varphi_*\colon \Gr(X_1)\to\Gr(X_2)\) is the functor given on
  objects by~\(\varphi\) and on arrows by \(\varphi_*^1(x_1,g,x_2) =
  (\varphi(x_1),g,\varphi(x_2))\) for all \(x_1,x_2\in X_1\),
  \(g\in\Gr^1\) with \(p_1(x_1)=\rg(g)\), \(p_1(x_2)=\s(g)\).

  The functor \(F\colon \Gr(X)\to\Gr[H]\) is a pair of maps
  \[
  F^0\colon X\to\Gr[H]^0,\qquad
  F^1\colon X\times_{p,\Gr^0,\rg} \Gr^1\times_{\s,\Gr^0,p}
  X\to\Gr[H]^1
  \]
  with \(\rg(F^1(x_1,g,x_2))=F^0(x_1)\),
  \(\s(F^1(x_1,g,x_2))=F^0(x_2)\) for all \(x_1,x_2\in X\),
  \(g\in\Gr^1\) with \(p(x_1)=\rg(g)\), \(\s(g)=p(x_2)\), and
  \(F^1(x_1,g_1,x_2)\cdot F^1(x_2,g_2,x_3)= F^1(x_1,g_1\cdot
  g_2,x_3)\) for all \(x_1,x_2,x_3\in X\), \(g_1,g_2\in\Gr^1\) with
  \(p(x_1)=\rg(g_1)\), \(p(x_2)=\s(g_1)=\rg(g_2)\), and
  \(p(x_3)=\s(g_2)\).
\end{definition}

Pronk~\cite{Pronk:Etendues_fractions} and
Carchedi~\cite{Carchedi:Thesis} and several other authors use spans of
functors like we do, but with weak equivalences instead of
hypercovers.  As a consequence, they use the weak instead of the
strong pull-back to compose spans.  An equivalent notion of anafunctor
is used by Roberts~\cite{Roberts:Anafunctors_localisation}, who also
shows that we get an equivalent bicategory if we invert all weak
equivalences.  The following category of the bicategory of anafunctors
may also be found in~\cite{Roberts:Anafunctors_localisation}.

We are going to compose anafunctors.  Let \(\Gr_1\), \(\Gr_2\)
and~\(\Gr_3\) be groupoids in~\((\Cat,\covers)\) and let
\((X_{ij},p_{ij},F_{ij})\) for \(ij=12\) and \(ij=23\) be anafunctors
from~\(\Gr_i\) to~\(\Gr_j\).  Their product
\((X_{13},p_{13},F_{13})\) is the anafunctor from~\(\Gr_1\)
to~\(\Gr_3\) given by
\begin{align*}
  X_{13}&\defeq X_{12}\times_{F_{12}^0,\Gr_2^0,p_{23}} X_{23},\\
  p_{13}&\defeq p_{12}\circ \pr_1\colon X_{13}\prto X_{12}\prto \Gr^0,\\
  F_{13}&\defeq F_{23}\circ F_{12}(X_{23})\colon \Gr_1(X_{13})\cong
  \Gr_1(X_{12})(X_{23})\to\Gr_2(X_{23})\to \Gr_3.
\end{align*}
Thus \(F_{13}^0(x_1,x_2)= F_{23}^0(x_2)\) for all \(x_1\in X_{12}\),
\(x_2\in X_{23}\) with \(F_{12}^0(x_1)=p_{23}(x_2)\) and
\(F_{13}^1(x_1,x_2,g,x_3,x_4) =
F_{23}^1(x_2,F_{12}^1(x_1,g,x_3),x_4)\) for all \(x_1,x_3\in X_{12}\),
\(x_2,x_4\in X_{23}\), \(g\in\Gr^1\) with
\(F_{12}^0(x_1)=p_{23}(x_2)\), \(F_{12}^0(x_3)=p_{23}(x_4)\),
\(p_{12}(x_1)=\rg(g)\), \(p_{12}(x_3)=\s(g)\).  The map \(\pr_1\colon
X_{13}\to X_{12}\) is a cover because~\(p_{23}\) is one.
Hence~\(p_{13}\) is a cover as a composite of covers.

\begin{lemma}
  \label{lem:composition_vague_functors}
  The composition of anafunctors is associative up to
  isomorphism; the associator comes from the canonical isomorphism
  \[
  (X_{12}\times_{F_{12}^0,\Gr_2^0,p_{23}} X_{23})
  \times_{F_{23}^0\pr_2,\Gr_3^0,p_{34}} X_{34}
  \congto
  X_{12}\times_{F_{12}^0,\Gr_2^0,p_{23}\pr_1} (X_{23}
  \times_{F_{23}^0,\Gr_3^0,p_{34}} X_{34}).
  \]
  The identity functor viewed as an anafunctor with \(X=\Gr^0\)
  and \(p=\id_{\Gr^0}\) is a unit for this composition, up to the
  natural isomorphisms \(Y\times_{\Gr^0} \Gr^0 \cong Y \cong
  \Gr^0\times_{\Gr^0} Y\).
\end{lemma}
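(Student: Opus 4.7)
The plan is to verify the three conditions of an anafunctor isomorphism (invertibility of $\varphi$, compatibility with the cover maps, and compatibility with the functors) for the associator, and to reduce the unit laws to the natural identifications of the relevant fibre products. I will write everything elementwise in the style the authors have already established.

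For associativity, let $(X_{ij},p_{ij},F_{ij})$ be anafunctors $\Gr_i\to\Gr_j$ for $ij\in\{12,23,34\}$. Unpacking the definition of composition given above, the left-parenthesised composite $((X_{12},p_{12},F_{12})\cdot(X_{23},p_{23},F_{23}))\cdot(X_{34},p_{34},F_{34})$ has base object $(X_{12}\times_{F_{12}^0,\Gr_2^0,p_{23}} X_{23})\times_{F_{23}^0\pr_2,\Gr_3^0,p_{34}} X_{34}$, while the right-parenthesised composite has base object $X_{12}\times_{F_{12}^0,\Gr_2^0,p_{23}\pr_1}(X_{23}\times_{F_{23}^0,\Gr_3^0,p_{34}} X_{34})$. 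The canonical isomorphism~$\varphi$ between these iterated fibre products sends $((x_1,x_2),x_3)\mapsto(x_1,(x_2,x_3))$ elementwise. Compatibility with the cover maps is immediate, because both $p_{14}$'s factor through the outermost coordinate $x_1\in X_{12}$ via the same $p_{12}$, and~$\varphi$ preserves this coordinate.

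The only delicate point is the compatibility $F_{14}'\circ\varphi_*=F_{14}$ of the two composite functors. Here I would invoke the natural isomorphism $p_1^*(p_2^*\Gr)\cong(p_2\circ p_1)^*\Gr$ of Example~\ref{exa:groupoid_base_change}, which lets us rewrite both iterated base changes $\Gr_1(X_{13})(X_{14})$ and $\Gr_1(X_{12})(X_{14})$ as $\Gr_1$ pulled back to the common object $X_{14}$. Tracing through the definitions given in the paragraph before the lemma, both composite functors send an arrow $(x_1,x_2,x_3,g,x_1',x_2',x_3')$ (with the obvious conditions) to
\[
F_{34}^1\bigl(x_3,\,F_{23}^1(x_2,F_{12}^1(x_1,g,x_1'),x_2'),\,x_3'\bigr),
\]
which is symmetric in the bracketing, so the two functors agree after the canonical re-bracketing~$\varphi_*$. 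This is essentially a diagram chase, and I expect the main obstacle to be notational bookkeeping rather than any conceptual difficulty.

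For the unit laws, the identity anafunctor on~$\Gr[H]$ is $(\Gr[H]^0,\id,\id_{\Gr[H]})$, where the identity functor here is viewed as a functor $\Gr[H](\Gr[H]^0)\to\Gr[H]$ via the canonical identification $\Gr[H](\Gr[H]^0)\cong\Gr[H]$ from Example~\ref{exa:groupoid_base_change}. Composing an anafunctor $(X,p,F)\colon\Gr\to\Gr[H]$ on the right with this identity gives base object $X\times_{F^0,\Gr[H]^0,\id}\Gr[H]^0$, cover $p\circ\pr_1$, and functor $\id_{\Gr[H]}\circ F(\Gr[H]^0)$. The canonical isomorphism $X\times_{\Gr[H]^0}\Gr[H]^0\cong X$ serves as~$\varphi$; it intertwines $p\circ\pr_1$ with~$p$, and conjugation by~$\varphi_*$ turns $\id_{\Gr[H]}\circ F(\Gr[H]^0)$ back into~$F$ using the natural identification of $\Gr(X)$ with its pullback along $\id_X$. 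The left unit is entirely analogous using $\Gr^0\times_{\Gr^0} X\cong X$ and the functor $F(X)$ on the identity side reducing to~$F$. In each case the verification of the defining equations of an anafunctor isomorphism is immediate from the universal properties of the fibre products involved.
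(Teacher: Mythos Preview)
Your proposal is correct and is exactly what the paper has in mind: the paper's own proof consists of the single sentence ``The proof is routine,'' and your elementwise verification of the three conditions for an anafunctor isomorphism (invertibility of the rebracketing map, compatibility with the covers via the shared first coordinate, and the coincidence of the two composite functors on arrows) is precisely the routine check being alluded to. Your computation that both bracketings yield $F_{34}^1\bigl(x_3,F_{23}^1(x_2,F_{12}^1(x_1,g,x_1'),x_2'),x_3'\bigr)$ is correct, and the unit argument via $Y\times_{\Gr^0}\Gr^0\cong Y$ is likewise what is intended.
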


\begin{proof}
  The proof is routine.
\end{proof}

Since fibre products are, in general, only associative up to the
canonical isomorphism in the above lemma, anafunctors do not form a
category; together with their isomorphisms, they give a bicategory.
We now incorporate natural transformations into this bicategory.

To simplify notation, we usually ignore the associators in
Lemma~\ref{lem:composition_vague_functors} from now on, assuming that
fibre products are strictly associative.

\begin{definition}
  \label{def:gen_transfor}
  Let \(\Gr\) and~\(\Gr[H]\) be groupoids in~\(\Cat\) and let
  \((X_1,p_1,F_1)\) and \((X_2,p_2,F_2)\) be anafunctors
  from~\(\Gr\) to~\(\Gr[H]\).  Let \(X\defeq
  X_1\times_{p_1,\Gr^0,p_2} X_2\) and let \(p\defeq
  p_1\circ\pr_1=p_2\circ\pr_2\colon X\prto\Gr^0\).  We get functors
  \(F_1\circ (\pr_1)_*\colon \Gr(X)\to\Gr[H]\) and \(F_2\circ
  (\pr_2)_*\colon \Gr(X)\to\Gr[H]\).  An \emph{anafunctor natural
    transformation} from \((X_1,p_1,F_1)\) to \((X_2,p_2,F_2)\) is a
  natural transformation \(\Phi\colon F_1\circ (\pr_1)_*\Rightarrow
  F_2\circ (\pr_2)_*\):
  \[
  \begin{tikzpicture}[baseline=(current bounding box.west)]
    \matrix[cd] (m) {
      X&X_1\\
      X_2&\Gr^0\\
    };
    \begin{scope}[cdar,->>]
      \draw (m-1-1) -- node {\(\pr_1\)} (m-1-2);
      \draw (m-1-1) -- node[swap] {\(\pr_2\)} (m-2-1);
      \draw (m-1-2) -- node {\(p_1\)} (m-2-2);
      \draw (m-2-1) -- node[swap] {\(p_2\)} (m-2-2);
      \draw (m-1-1) -- node[anchor=mid,fill=white]
      {\(p\)} (m-2-2);
    \end{scope}
  \end{tikzpicture}
  \qquad
  \begin{tikzpicture}[baseline=(current bounding box.west)]
    \matrix[cd,column sep=6em] (m) {
      \Gr(X)&\Gr(X_1)\\
      \Gr(X_2)&\Gr[H]\\
    };
    \begin{scope}[cdar]
      \draw (m-1-1) -- node {\((\pr_1)_*\)} (m-1-2);
      \draw (m-1-1) -- node[swap] {\((\pr_2)_*\)} (m-2-1);
      \draw (m-1-2) -- node {\(F_1\)} (m-2-2);
      \draw (m-2-1) -- node[swap] {\(F_2\)} (m-2-2);
    \end{scope}
    \draw[dar] (m-1-2) -- node[fill=white] {\(\Phi\)} (m-2-1);
  \end{tikzpicture}
  \]

  The natural transformation~\(\Phi\) is a map \(\Phi\colon
  X\to\Gr[H]^1\) with \(\s\circ\Phi=F_1^0\circ \pr_1\colon X\to
  \Gr[H]^0\), \(\rg\circ\Phi=F_2^0\circ \pr_2\colon X\to \Gr[H]^0\),
  and
  \[
  \Phi(x_1,x_2)\cdot F_1^1(x_1,g,x_3)
  = F_2^1(x_2,g,x_4) \cdot \Phi(x_3,x_4)
  \]
  for all \(x_1,x_3\in X_1\), \(x_2,x_4\in X_2\), \(g\in\Gr^1\) with
  \(p_1(x_1)=p_2(x_2)=\rg(g)\) and \(p_1(x_3)=p_2(x_4)=\s(g)\).
\end{definition}

\begin{example}
  \label{exa:vague_natural_trafo}
  Let \((X_i,p_i,F_i)\) for \(i=1,2\) be anafunctors from~\(\Gr\)
  to~\(\Gr[H]\) and let \(\varphi\colon X_1\congto X_2\) be an
  isomorphism between them.  Then
  \[
  \Phi(x_1,x_2) \defeq F_1^1(\varphi^{-1}(x_2),1_{p_1(x_1)},x_1)
  = F_2^1(x_2,1_{p_2(x_2)},\varphi(x_1))
  \]
  for all \(x_1\in X_1\), \(x_2\in X_2\) with \(p_1(x_1)=p_2(x_2)\)
  is a natural transformation from \((X_1,p_1,F_1)\) to
  \((X_2,p_2,F_2)\).
\end{example}

\begin{lemma}
  \label{lem:vague_gen_transfor}
  Let \(F_1,F_2\colon \Gr\to\Gr[H]\) be functors and let \(p\colon
  X\prto\Gr^0\) be a cover.  Let \(p_*\colon \Gr(X)\to\Gr\) be
  the induced hypercover.  Then natural
  transformations \(F_1\circ p_*\Rightarrow F_2\circ p_*\) are in
  canonical bijection with natural transformations \(F_1\Rightarrow
  F_2\).
\end{lemma}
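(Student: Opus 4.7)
The plan is to build the bijection explicitly in both directions, using that a cover in a subcanonical pretopology is a coequaliser of its kernel pair, hence an epimorphism.

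In one direction, starting from a natural transformation $\Psi\colon F_1\Rightarrow F_2$ (a map $\Psi\colon \Gr^0\to\Gr[H]^1$ satisfying $\s\circ\Psi=F_1^0$, $\rg\circ\Psi=F_2^0$ and the usual naturality square), I form $\Phi\defeq \Psi\circ p\colon X\to\Gr[H]^1$. The range and source conditions for $\Phi$ follow by composing those for $\Psi$ with $p$; the naturality condition for $\Phi$ as a transformation $F_1\circ p_*\Rightarrow F_2\circ p_*$ unfolds on an arrow $(x_1,g,x_2)\in \Gr(X)^1$ to $\Psi(p(x_1))\cdot F_1^1(g)=F_2^1(g)\cdot \Psi(p(x_2))$, which is exactly the naturality of $\Psi$ evaluated at $g$ (using $p(x_1)=\rg(g)$, $p(x_2)=\s(g)$).

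In the other direction, given $\Phi\colon X\to\Gr[H]^1$ satisfying naturality with respect to $F_1\circ p_*$ and $F_2\circ p_*$, I first show that $\Phi$ factors through $p$. Applied to the ``identity'' arrows $(x_1,1_{p(x_1)},x_2)\in\Gr(X)^1$, defined whenever $p(x_1)=p(x_2)$, the naturality condition reads $\Phi(x_1)\cdot 1=1\cdot \Phi(x_2)$, i.e.\ $\Phi\circ\pr_1=\Phi\circ\pr_2$ on $X\times_{p,\Gr^0,p}X$. Since the pretopology is subcanonical, $p$ is the coequaliser of $\pr_1,\pr_2$, so there is a unique $\Psi\colon \Gr^0\to\Gr[H]^1$ with $\Phi=\Psi\circ p$. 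The range and source identities for $\Psi$ then follow from those for $\Phi$ because $p$ is epic. For $\Psi$'s naturality as a map on $\Gr^1\to\Gr[H]^1$, the two maps $g\mapsto \Psi(\rg(g))\cdot F_1^1(g)$ and $g\mapsto F_2^1(g)\cdot \Psi(\s(g))$ are to be compared after pulling back along the projection $Z\defeq X\times_{p,\Gr^0,\rg}\Gr^1\times_{\s,\Gr^0,p}X\to\Gr^1$; this projection is a composite of pullbacks of the cover $p$, hence itself a cover and in particular epic. On $Z$ the equation becomes precisely the naturality condition for $\Phi$, which holds by hypothesis.

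The two assignments are mutually inverse by construction: $\Psi\mapsto \Psi\circ p\mapsto \Psi$ uses that $p$ is epic (so the factorisation recovers $\Psi$), and $\Phi\mapsto \Psi\mapsto \Psi\circ p$ recovers $\Phi$ by definition of $\Psi$. The only non-formal step is the descent argument using the subcanonical hypothesis; the rest is unwinding definitions. I expect the minor obstacle to be bookkeeping the epimorphism $Z\prto \Gr^1$ needed to check $\Psi$'s naturality map-theoretically — one has to arrange for $\rg(g)$ and $\s(g)$ to be simultaneously lifted along $p$, which is exactly what $Z$ accomplishes and which works because pretopologies are stable under pullback and composition.
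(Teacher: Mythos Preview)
Your proof is correct and follows essentially the same route as the paper's: both directions of the bijection are built exactly as you describe, the descent of~\(\Phi\) through~\(p\) uses naturality at the arrows \((x_1,1_{p(x_1)},x_2)\) together with the subcanonical hypothesis, and the naturality of the resulting~\(\Psi\) is checked after precomposing with the cover \(p_*^1\colon \Gr(X)^1=Z\to\Gr^1\). The paper is terser in places (it does not spell out why \(Z\to\Gr^1\) is a cover, nor the range/source checks), but the argument is the same.
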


\begin{proof}
  A natural transformations \(\Phi\colon F_1\Rightarrow F_2\)
  induces a natural transformation \(\Phi\circ 1_{p_*}\colon
  F_1\circ p_*\Rightarrow F_2\circ p_*\) by the horizontal product;
  explicitly, \(\Phi\circ 1_{p_*}\) is the map \(\Phi\circ p\colon
  X\to\Gr^0\to\Gr[H]^1\).  To show that this canonical map
  \(\Phi\mapsto \Phi\circ 1_{p_*}\) is bijective, we must prove that
  any natural transformation \(\Psi\colon F_1\circ p_*\Rightarrow
  F_2\circ p_*\colon \Gr(X)\rightrightarrows \Gr[H]^1\) factors
  uniquely through~\(p\) and a map \(\Phi\colon \Gr^0\to\Gr[H]^1\);
  the latter is then automatically a natural transformation
  \(F_1\Rightarrow F_2\) because \(p_*^1\colon \Gr(X)^1\to\Gr^1\) is a
  cover, hence an epimorphism, and the conditions for a natural
  transformation hold after composing with~\(p\).

  Since~\(\Psi\) is natural with respect to all arrows
  \((x_1,1_{p(x_1)},x_2)\) in \(\Gr(X)\) for \(x_1,x_2\in X\) with
  \(p(x_1)=p(x_2)\), we get \(\Psi(x_1)=\Psi(x_2)\) if
  \(p(x_1)=p(x_2)\).  Since the pretopology is subcanonical
  and~\(p\) is a cover, \(\Psi\) factors uniquely through \(p\colon
  X\prto\Gr^0\).
\end{proof}

\begin{theorem}
  \label{the:vague_functor_2-category}
  Groupoids in~\((\Cat,\covers)\), anafunctors and natural
  transformations of anafunctors with the composition of
  anafunctors described above form a bicategory with invertible
  \(2\)\nb-arrows.
\end{theorem}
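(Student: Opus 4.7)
The plan is to verify the bicategory axioms. The composition of $1$\nb-arrows, together with its associator and unit constraints, was established in Lemma~\ref{lem:composition_vague_functors}, so what remains is to define vertical and horizontal compositions of anafunctor natural transformations, to verify interchange and coherence, and to prove invertibility of $2$\nb-arrows. For vertical composition, given anafunctors $(X_i,p_i,F_i)$ from $\Gr$ to $\Gr[H]$ for $i=1,2,3$ and natural transformations $\Phi\colon (X_1,p_1,F_1)\Rightarrow (X_2,p_2,F_2)$ and $\Psi\colon (X_2,p_2,F_2)\Rightarrow (X_3,p_3,F_3)$, realised as maps $X_1\times_{\Gr^0} X_2\to\Gr[H]^1$ and $X_2\times_{\Gr^0} X_3\to\Gr[H]^1$, I would form the pointwise product $(x_1,x_2,x_3)\mapsto\Psi(x_2,x_3)\cdot\Phi(x_1,x_2)$ on the triple fibre product $X_1\times_{\Gr^0} X_2\times_{\Gr^0} X_3$. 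The projection to $X_1\times_{\Gr^0} X_3$ is a cover because $p_2$ is one; applying the naturality of $\Phi$ and of $\Psi$ to the identity arrow $1_{p_2(x_2)}$ in $\Gr$ shows that this product equalises the two projections from the fibre product of that cover with itself, so subcanonicity produces a unique descended map $\Psi\cdot\Phi\colon X_1\times_{\Gr^0} X_3\to\Gr[H]^1$, whose own naturality is then checked after pulling back to the triple product.

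Horizontal composition is defined in parallel fashion. Given $\Phi\colon (X_1,p_1,F_1)\Rightarrow (X_1',p_1',F_1')$ between anafunctors $\Gr\to\Gr[H]$ and $\Psi\colon (X_2,p_2,F_2)\Rightarrow (X_2',p_2',F_2')$ between anafunctors $\Gr[H]\to\Gr[K]$, one forms a pointwise expression modelled on the horizontal product of ordinary natural transformations from Section~\ref{sec:functors} on an iterated fibre product involving $X_1,X_1',X_2,X_2'$, and descends to the fibre product carrying the target $2$\nb-arrow. The identity $2$\nb-arrow on $(X,p,F)$ is the map $(x_1,x_2)\mapsto F^1(x_2,1_{p(x_2)},x_1)$ on $X\times_{\Gr^0} X$, as in Example~\ref{exa:vague_natural_trafo}. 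The interchange law, the associativity and unit laws for vertical composition, and the pentagon and triangle coherence axioms are then equalities of descended maps, verified by pulling back to a common iterated fibre product and invoking subcanonicity. Invertibility of a $2$\nb-arrow $\Phi$ is obtained by setting $\Phi^{-1}(x_2,x_1)\defeq\Phi(x_1,x_2)^{-1}$ after interchanging the two fibre-product factors; the vertical composites $\Phi^{-1}\cdot\Phi$ and $\Phi\cdot\Phi^{-1}$ reduce, on the appropriate triple fibre product, to $F_1^1(x_1',1,x_1)$ and $F_2^1(x_2',1,x_2)$ respectively by the same naturality identity used for vertical composition, so by subcanonicity they coincide with the identity $2$\nb-arrows.

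The main obstacle is bookkeeping rather than genuine content: every check reduces to the same two-step pattern of defining a map pointwise on an iterated fibre product and descending along a cover using subcanonicity, with the required equalisation of projections coming from the naturality conditions built into anafunctor $2$\nb-arrows. Once the vertical-composition descent is in place, all the remaining verifications are variations on this theme.
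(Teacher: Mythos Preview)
Your proposal is correct and follows essentially the same approach as the paper. The only stylistic difference is that the paper packages the descent step into Lemma~\ref{lem:vague_gen_transfor} (natural transformations $F_1\circ p_*\Rightarrow F_2\circ p_*$ biject with natural transformations $F_1\Rightarrow F_2$) and then invokes that lemma each time, whereas you inline the same subcanonicity argument directly; your ``naturality at the identity arrow'' computation is exactly the content of that lemma's proof.
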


\begin{proof}
  We must compose natural transformations of anafunctors vertically
  and horizontally.  We reduce this to the same constructions for
  ordinary natural transformations: the reduction explains why the
  conditions for a bicategory hold.

  Let \((X_i,p_i,F_i)\) be three anafunctors from~\(\Gr\)
  to~\(\Gr[H]\) and let \(\Phi_{ij}\colon (X_i,p_i,F_i)\Rightarrow
  (X_j,p_j,F_j)\) for \(ij=12,23\) be natural transformations to
  compose vertically.  Let \(X_{ij}\defeq X_i\times_{p_i,\Gr^0,p_j}
  X_j\) and \(p_{ij}\defeq p_i\circ\pr_1= p_j\circ\pr_2\colon
  X_{ij}\prto\Gr^0\) for \(ij=12,13,23\).  Let \(X_{123}\defeq
  X_1\times_{p_1,\Gr^0,p_2} X_2\times_{p_2,\Gr^0,p_3} X_3\) and
  \(p_{123}\defeq p_1\circ \pr_1=p_2\circ\pr_2=p_3\circ\pr_3\colon
  X\prto\Gr^0\).

  The given natural transformations are maps \(\Phi_{12}\colon
  X_{12}\to\Gr[H]^1\) and \(\Phi_{23}\colon X_{23}\to\Gr[H]^1\)
  with, among others, \(\s(\Phi_{12}(x_1,x_2)) = F_1^0(x_1)\),
  \(\rg(\Phi_{12}(x_1,x_2)) = F_2^0(x_2)=\s(\Phi_{23}(x_2,x_3))\),
  \(\rg(\Phi_{23}(x_2,x_3)) = F_3^0(x_3)\) for all \(x_i\in X_i\) with
  \(p_i(x_i)=p_j(x_j)\) for \(i,j\in \{1,2,3\}\).  We define
  \[
  \Phi\colon X_{123}\to \Gr[H]^1,\qquad
  \Phi(x_1,x_2,x_3)\defeq \Phi_{23}(x_2,x_3)\cdot\Phi_{12}(x_1,x_2),
  \]
  which is well-defined because \(\rg(\Phi_{12}(x_1,x_2)) =
  F_2^0(x_2)=\s(\Phi_{23}(x_2,x_3))\); moreover,
  \(\s(\Phi(x_1,x_2,x_3))=\s(\Phi_{12}(x_1,x_2)) = F_1^0(x_1)\) and
  \(\rg(\Phi(x_1,x_2,x_3)) =
  \rg(\Phi_{23}(x_2,x_3)) = F_3^0(x_3)\).  The naturality
  of~\(\Phi_{ij}\) implies that~\(\Phi\) is a natural transformation
  from \(F_1\circ (\pr_1)_*\colon \Gr(X_{123})\to\Gr[H]\) to
  \(F_3\circ (\pr_3)_*\colon \Gr(X_{123})\to\Gr[H]\).
  Lemma~\ref{lem:vague_gen_transfor} shows that~\(\Phi\) factors
  uniquely through \(\pr_{13}\colon X_{123}\prto X_{13}\) and thus
  defines a natural transformation \(\Phi_{13}\colon
  (X_1,p_1,F_1)\Rightarrow (X_3,p_3,F_3)\).  This~\(\Phi_{13}\) is the
  vertical product of \(\Phi_{23}\) and~\(\Phi_{12}\).

  If~\(\Phi_{12}\) comes from an isomorphism of anafunctors
  \(\varphi_{12}\colon X_1\to X_2\) as in
  Example~\ref{exa:vague_natural_trafo}, then the
  composite~\(\Phi_{13}\) above is
  \begin{equation}
    \label{eq:nat_trafo_iso_12}
    \Phi_{13}\colon X_1\times_{p_1,\Gr^0,p_3} X_3
    \xrightarrow[\cong]{\varphi_{12}\times_{\Gr^0,p_3} X_3}
    X_2\times_{p_2,\Gr^0,p_3} X_3 \xrightarrow{\Phi_{23}}
    \Gr[H]^1,
  \end{equation}
  Similarly, if~\(\Phi_{23}\)
  comes from an isomorphism of anafunctors
  \(\varphi_{23}\colon X_2\to X_3\),
  then the vertical product~\(\Phi_{13}\) is
  \begin{equation}
    \label{eq:nat_trafo_iso_23}
    \Phi_{13}\colon X_1\times_{p_1,\Gr^0,p_3} X_3
    \xrightarrow[\cong]{X_1\times_{p_1,\Gr^0} \varphi_{23}^{-1}}
    X_1\times_{p_1,\Gr^0,p_2} X_2 \xrightarrow{\Phi_{12}}
    \Gr[H]^1.
  \end{equation}
  As a consequence, the natural transformation corresponding to the
  identity isomorphism of an anafunctor is a unit for the vertical
  product.

  Since arrows in groupoids are invertible, any natural
  transformation of anafunctors has an inverse: exchange the
  order of the factors in the fibre product and compose~\(\Phi\)
  with the inversion map on~\(\Gr[H]^1\).  This is an inverse for
  the vertical product above.  Thus all \(2\)\nb-arrows are
  invertible.

  When we further compose with a natural transformation
  \(\Phi_{34}\colon (X_3,p_3,F_3)\Rightarrow (X_4,p_4,F_4)\), then
  we may first construct a natural transformation on the fibre
  product~\(X_{1234}\) of all four~\(X_i\).  Then by
  Lemma~\ref{lem:vague_gen_transfor} this factors uniquely through
  the covers \(X_{1234}\prto X_{124}\) and \(X_{1234}\prto
  X_{134}\), and then further through the covers \(X_{124}\prto X_{14}\)
  and \(X_{134}\prto X_{14}\).  The resulting map on~\(X_{14}\) does
  not depend on whether we factor through \(X_{124}\) or~\(X_{134}\)
  first.  Thus the vertical product is associative.

  Now we turn to the horizontal product.  Let~\(\Gr[K]\) be a third
  groupoid in~\((\Cat,\covers)\); let \((X_i,p_i,F_i)\) for
  \(i=1,2\) be anafunctors \(\Gr\to\Gr[H]\) and let
  \((Y_i,q_i,E_i)\) for \(i=1,2\) be anafunctors
  \(\Gr[H]\to\Gr[K]\); let \(\Phi\colon
  (X_1,p_1,F_1)\Rightarrow(X_2,p_2,F_2)\) and \(\Psi\colon
  (Y_1,q_1,E_1)\Rightarrow(Y_2,q_2,E_2)\) be natural
  transformations.  The horizontal product~\(\Psi\circ\Phi\) must be
  a natural transformation of anafunctors \(E_1\circ
  F_1\Rightarrow E_2\circ F_2\).  We are going to reduce it to an
  ordinary horizontal product of natural transformations of
  composable functors.  To make our functors composable, we pass to
  larger fibre products.

  First, let
  \[
  Y\defeq Y_1\times_{q_1,\Gr[H]^0,q_2} Y_2,\qquad
  \bar{E}_i\defeq E_i\circ (\pr_i)_*\colon
  \Gr[H](Y)\to\Gr[H](Y_i)\to \Gr[K];
  \]
  by definition, \(\Psi\) is a natural transformation of functors
  \(\Psi\colon \bar{E}_1\Rightarrow\bar{E}_2\).  Secondly, let
  \[
  Y_iX_j \defeq Y_i\times_{q_i,\Gr[H]^0,F_j^0} X_j
  \]
  for \(i,j\in\{1,2\}\) and
  \[
  YX \defeq Y_1X_1 \times_{\Gr^0} Y_2X_1 \times_{\Gr^0} Y_1X_2
  \times_{\Gr^0} Y_2X_2
  \cong (Y\times_{q,\Gr[H]^0,F_1^0} X_1) \times_{\Gr^0}
  (Y\times_{q,\Gr[H]^0,F_2^0} X_2).
  \]
  Let \(\bar{F}_i\colon \Gr(YX)\to\Gr[H](Y)\) be the composite
  functors \(\Gr(YX)\to \Gr(Y\times_{\Gr^0} X_i)\to\Gr[H](Y)\), where
  the first functor is associated to the coordinate projection
  \(YX\to Y\times_{\Gr^0} X_i\) and the second functor
  is~\(F_i(Y)\).  The natural transformation of anafunctors~\(\Phi\)
  induces a natural transformation of ordinary
  functors \(\bar\Phi\colon \bar{F}_1\Rightarrow\bar{F}_2\).

  Now the natural transformations of ordinary functors \(\Psi\)
  and~\(\bar{\Phi}\) may be composed horizontally in the usual way,
  giving a natural transformation \(\bar{E}_1\circ\bar{F}_1
  \Rightarrow \bar{E}_2\circ\bar{F}_2\).  The composite functor
  \(\bar{E}_i\circ\bar{F}_j\colon \Gr(YX)\to\Gr[K]\) and the
  composite anafunctor \(E_i\circ F_j\) are related as
  follows: the latter is given by a functor \(E_i\circ F_j\colon
  \Gr(Y_iX_j)\to \Gr[K]\), and we get \(\bar{E}_i\circ\bar{F}_j\) by
  composing \(E_i\circ
  F_j\) with the functor \(\Gr(YX)\to\Gr(Y_iX_j)\) associated to the
  coordinate projection \(YX\to Y_iX_j\).  Now
  Lemma~\ref{lem:vague_gen_transfor} shows that the natural
  transformation \(\Psi\circ\bar{\Phi}\) descends to a natural
  transformation of anafunctors \(E_1\circ F_1\Rightarrow
  E_2\circ F_2\).  This defines the horizontal product.

  It is routine to see that the horizontal product is associative and
  commutes with the vertical product (exchange law).  For each proof,
  we replace anafunctors by ordinary functors defined over suitable
  fibre products, and first get the desired equality of natural
  transformations on this larger fibre product.  Then
  Lemma~\ref{lem:vague_gen_transfor} shows that it holds as an
  equality of natural transformations of anafunctors.

  The composition of anafunctors is associative and unital up to
  canonical isomorphisms of anafunctors by
  Lemma~\ref{lem:vague_gen_transfor}.  These isomorphisms give natural
  transformations by Example~\ref{exa:vague_natural_trafo}.  These are
  natural with respect to natural transformations of anafunctors
  and satisfy the usual coherence conditions; this follows from the
  simplifications of the vertical product with isomorphisms in
  \eqref{eq:nat_trafo_iso_12} and~\eqref{eq:nat_trafo_iso_23}.
\end{proof}

\begin{theorem}
  \label{the:vague_functor_localisation}
  The bicategory of anafunctors is the localisation of the
  \(2\)\nb-category
  of groupoids, functors and natural transformations at the class of
  hypercovers, and it is also the localisation at the class of weak equivalences
\end{theorem}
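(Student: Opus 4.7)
The plan is to verify the universal property of localisation by invoking Pronk's bicategorical calculus of fractions (see \cite{Pronk:Etendues_fractions}), applied separately to the class~$W_h$ of hypercovers and the class~$W_e$ of weak equivalences, and then to identify both resulting bicategories with anafunctors.

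First, I would record the obvious embedding of 2\nb-categories $\iota\colon \mathfrak{Gr}\to \mathfrak{Ana}$ sending a functor $F\colon \Gr\to\Gr[H]$ to the anafunctor $(\Gr^0,\id_{\Gr^0},F)$ and the identity on 2\nb-arrows. I would then check that $\iota$ sends every hypercover $p_*\colon \Gr(X)\to\Gr$ to an equivalence in $\mathfrak{Ana}$: its inverse is the anafunctor $(X,p,\id_{\Gr(X)})\colon \Gr\to\Gr(X)$, and the unit and counit are the canonical isomorphisms of anafunctors built from Example~\ref{exa:vague_natural_trafo} and the identifications of iterated base changes in Example~\ref{exa:groupoid_base_change}.

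Next, I would verify Pronk's axioms (BF1)--(BF5) for $W_h$ in $\mathfrak{Gr}$. Axioms (BF1) and (BF2) (identities and closure under composition) are immediate from Example~\ref{exa:groupoid_base_change}. For the 2-out-of-3 axiom (BF5) for isomorphisms I would use the definition directly. The main technical step is (BF3): given a hypercover $q_*\colon \Gr[H](Y)\to\Gr[H]$ and a functor $F\colon \Gr\to\Gr[H]$, I would form the pullback $\tilde Y \defeq \Gr^0\times_{F^0,\Gr[H]^0,q} Y$, which is a cover of~$\Gr^0$, and obtain the hypercover $(\pr_1)_*\colon \Gr(\tilde Y)\to\Gr$ together with the lifted functor $F(Y)\colon \Gr(\tilde Y)\to\Gr[H](Y)$ of Example~\ref{exa:base_change_of_functor}; the required 2\nb-isomorphism $q_*\circ F(Y)\cong F\circ (\pr_1)_*$ is just the canonical one. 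Axiom (BF4) follows from Lemma~\ref{lem:vague_gen_transfor}: a natural transformation between composites with a hypercover descends uniquely.

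With Pronk's axioms in hand, the localisation $\mathfrak{Gr}[W_h^{-1}]$ is described as the bicategory of spans $\Gr\xleftarrow{p_*}\Gr(X)\xrightarrow{F}\Gr[H]$ with hypercover left leg, 2\nb-cells being equivalence classes of roofs of natural transformations; this is precisely the description of $\mathfrak{Ana}$ given in Definition~\ref{def:gen_functor} and Definition~\ref{def:gen_transfor}, with composition via fibre products matching the formula for composing spans in Pronk's construction. Thus $\iota$ exhibits $\mathfrak{Ana}$ as $\mathfrak{Gr}[W_h^{-1}]$. The main obstacle here is checking that the two descriptions of 2\nb-cells coincide, which reduces again to Lemma~\ref{lem:vague_gen_transfor} and the coherence calculations already carried out in the proof of Theorem~\ref{the:vague_functor_2-category}.

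Finally, for the class $W_e$ of weak equivalences (functors that are almost fully faithful and almost essentially surjective in the sense discussed in Section~\ref{sec:intro}), I would observe two facts: every hypercover is a weak equivalence, so there is a canonical 2\nb-functor $\mathfrak{Gr}[W_h^{-1}]\to\mathfrak{Gr}[W_e^{-1}]$; and conversely, every weak equivalence becomes an equivalence in $\mathfrak{Ana}$ by the characterisation of equivalences of anafunctors established earlier in Section~\ref{sec:vague_functors}. By the universal property of $\mathfrak{Gr}[W_e^{-1}]$, this produces a 2\nb-functor $\mathfrak{Gr}[W_e^{-1}]\to\mathfrak{Ana}$ inverse to the comparison map up to equivalence. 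The harder part of this final step is to refine an arbitrary weak equivalence by a hypercover, i.e.\ to show that every weak equivalence $E\colon \Gr\to\Gr[H]$ admits a hypercover $p_*\colon \Gr(X)\to\Gr$ such that $E\circ p_*$ factors through a hypercover of~$\Gr[H]$; this is the key compatibility needed to check that the two localisations agree, and is where the subcanonicity of the pretopology together with Assumption~\ref{assum:local_cover} (when it applies) enters.
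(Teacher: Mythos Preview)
The paper does not actually prove this theorem: it simply cites Roberts~\cite{Roberts:Anafunctors_localisation} (Theorem~7.2 for the localisation at weak equivalences, Proposition~6.4 for the comparison with hypercovers) and then unpacks what the universal property says concretely.  Your proposal, by contrast, attempts a direct proof via Pronk's calculus of fractions.  That is a legitimate route, and checking (BF1)--(BF4) for hypercovers goes essentially as you outline, using Examples~\ref{exa:groupoid_base_change} and~\ref{exa:base_change_of_functor} and Lemma~\ref{lem:vague_gen_transfor}.

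There is, however, a genuine gap in the identification step.  Pronk's bicategory of fractions composes spans using \emph{iso-comma} (weak) pullbacks, whereas anafunctors are composed using the \emph{strict} fibre product $X_{12}\times_{F_{12}^0,\Gr_2^0,p_{23}} X_{23}$.  Your claim that Pronk's span bicategory for~$W_h$ ``is precisely the description of~$\mathfrak{Ana}$'' elides exactly this point.  The reason the two agree is that hypercovers are (representably) fully faithful functors, so the iso-comma object of $p_*$ against any functor is equivalent to the strict pullback; but this needs to be stated and checked, and it is the technical heart of the comparison---indeed, it is precisely what the paper flags just before Theorem~\ref{the:vague_functor_2-category} and what Roberts'~Proposition~6.4 handles.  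Without it, you have verified that $W_h$ admits a calculus of fractions but not that the resulting localisation is the anafunctor bicategory as defined here.

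A smaller point: your final paragraph invokes Assumption~\ref{assum:local_cover}, but the theorem holds for any subcanonical pretopology.  The refinement you need---that every weak equivalence becomes invertible in~$\mathfrak{Ana}$, equivalently that it lifts to an ana-isomorphism---is exactly the implication \ref{e:vague_eq2}$\Rightarrow$\ref{e:vague_eq3} of Theorem~\ref{the:explicit-vague-equivalence}, which is proved later in the same section without that assumption (using the ``almost'' versions of full faithfulness and essential surjectivity).  So you should drop the caveat and instead point forward to that argument.
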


This theorem clarifies in which sense the anafunctor bicategory is the
right bicategory of groupoids to work with.  It follows from results
in~\cite{Roberts:Anafunctors_localisation}: first, Theorem~7.2 says
that the anafunctor bicategory is the localisation at the class of all
weak equivalences; secondly, Proposition 6.4 implies that it makes no
difference whether we localise at hypercovers or weak equivalences.
Instead of a proof, we make the statement of
Theorem~\ref{the:vague_functor_localisation} more concrete.

Let~\(\Fun\)
be the strict \(2\)\nb-category
of groupoids, functors and transformations.  Let \(\Anafun\)
be the bicategory of groupoids, anafunctors and anafunctor natural
transformations.  Let~\(\mathcal{D}\)
be another bicategory.  We embed \(\Fun\to\Anafun\)
in the obvious way, sending a functor \(F\colon \Gr\to\Gr[H]\)
to the anafunctor \((\Gr^0,\id_{\Gr^0},F)\).
We assume now that fibre products in~\(\Cat\)
of the form \(X\times_X Y\)
or \(Y\times_X X\)
are \emph{equal} to~\(Y\),
not just canonically \emph{isomorphic} via the coordinate projection;
this can be arranged by redefining the fibre product in these cases.
This choice ensures that the map from functors to anafunctors above is
part of a \emph{strict} homomorphism of bicategories.  Furthermore, it
implies that the identity functors are strict units in the bicategory
of anafunctors, that is, the unit transformations
\(\id_{\Gr}\circ (X,p,F) \cong (X,p,F)\)
and \((X,p,F) \circ \id_{\Gr[H]} \cong (X,p,F)\)
for an anafunctor \((X,p,F)\colon \Gr\to\Gr[H]\)
are identity \(2\)\nb-arrows.

Since the embedding \(\Fun\hookrightarrow \Anafun\)
is a strict homomorphism, we may restrict a morphism of bicategories
\(E\colon \Anafun\to\mathcal{D}\)
to a morphism \(E|_{\Fun}\colon \Fun\to\mathcal{D}\);
restrict a transformation \(\Phi\colon E_1\to E_2\)
between two morphisms
\(E_1,E_2\colon \Anafun\rightrightarrows\mathcal{D}\)
to a transformation
\(\Phi|_{\Fun}\colon E_1|_{\Fun} \to E_2|_{\Fun}\);
and restrict a modification \(\psi\colon \Phi_1\to\Phi_2\)
between two such transformations
\(\Phi_1,\Phi_2\colon E_1\rightrightarrows E_2\)
to a modification \(\psi|_t\colon \Phi_1|_{\Fun}\to \Phi_2|_{\Fun}\).
Here and below we use the notation
of~\cite{Leinster:Basic_Bicategories} regarding bicategories,
morphisms, transformations, and modifications.

Theorem~\ref{the:vague_functor_localisation} is almost equivalent to
the following three more concrete statements:
\begin{enumerate}
\item Any homomorphism \(\Fun\to\mathcal{D}\)
  that maps hypercovers to equivalences is the restriction of a
  homomorphism \(\Anafun\to\mathcal{D}\).
\item Let \(\bar{E}_1,\bar{E}_2\colon \Anafun\to\mathcal{D}\)
  be homomorphisms.  Any strong transformation
  \(\bar{E}_1|_{\Fun} \to \bar{E}_2|_{\Fun}\)
  is the restriction of a strong transformation
  \(\bar{E}_1\to\bar{E}_2\).
\item Let \(\bar{E}_1,\bar{E}_2\colon \Anafun\to\mathcal{D}\)
  be homomorphisms and let
  \(\bar{\Phi}_1,\bar{\Phi}_2\colon \bar{E}_1\rightrightarrows
  \bar{E}_2\)
  be strong transformations.  Restriction to~\(\Fun\)
  gives a bijection from modifications
  \(\bar\Phi_1\Rightarrow \bar\Phi_2\)
  to modifications
  \(\bar\Phi_1|_{\Fun}\Rightarrow \bar\Phi_2|_{\Fun}\).
\end{enumerate}
Note that we restrict attention to \emph{homo}morphisms and
\emph{strong} transformations, as usual in the theory of localisations
\cites{Pronk:Etendues_fractions, Roberts:Anafunctors_localisation}.
That is, we require the \(2\)\nb-arrows
in~\(\mathcal{D}\)
that appear in the definition of a morphism or a transformation to be
invertible.

Actually, instead of~(2), the localisation theorem
in~\cite{Roberts:Anafunctors_localisation} only says that any strong
transformation \(\Phi\colon \bar{E}_1|_{\Fun}\to \bar{E}_2|_{\Fun}\)
is equivalent to the restriction of a strong transformation
\(\bar{\Phi}\colon \bar{E}_1\to \bar{E}_2\)
by an invertible modification \(t\colon \bar{\Phi}|_{\Fun}\to \Phi\).
We may, however, get rid of~\(t\)
because \(\Fun\)
and~\(\Anafun\)
have the same objects.  Thus we may conjugate~\(\bar{\Phi}\)
by the invertible modification~\(t\)
to get a strong transformation that restricts to~\(\Phi\)
exactly.  Hence the usual form of the localisation theorem gives~(2).
Similarly, instead of~(1), the localisation theorem
in~\cite{Roberts:Anafunctors_localisation} only says that for any
homomorphism \(E\colon \Fun\to\mathcal{D}\)
there is a homomorphism \(\bar{E}\colon \Anafun\to\mathcal{D}\)
such that~\(\bar{E}|_{\Fun}\)
is equivalent to~\(E\),
that is, there are strong transformations between \(E\)
and~\(\bar{E}|_{\Phi}\)
that are inverse to each other up to invertible modifications.  In
particular, this strong transformation gives equivalences
\(E(\Gr)\cong \bar{E}(\Gr)\)
for any groupoid~\(\Gr\);
we may conjugate~\(\bar{E}\)
by these to produce an equivalent homomorphism~\(\bar{E}'\)
so that all the arrows \(\bar{E}'(\Gr) \to E(\Gr)\)
involved in the resulting strong transformation
\(\bar{E}'|_{\Fun} \cong E\)
become identities.  Hence this strong transformation consists only
invertible \(2\)\nb-arrows
\(\bar{E}'(F)\cong E(F)\)
for all functors~\(F\),
subject to some conditions.  We use these invertible \(2\)\nb-arrows
together with the identity \(2\)\nb-arrows
on~\(\bar{E}'(X,p,F)\)
for anafunctors with non-identical~\(p\)
to change~\(\bar{E}'\)
once more to an equivalent homomorphism
\(\bar{E}''\colon \Anafun\to\mathcal{D}\).
Now the strong transformation \(\bar{E}''|_{\Fun} \cong E\)
becomes the identity transformation; that is,
\(\bar{E}''|_{\Fun}= E\).
Thus (1)--(3) are equivalent to the usual form of the localisation
theorem.

If the bicategory~\(\mathcal{D}\)
is strictly unital as well, then it makes sense to restrict attention
to strictly unital homomorphisms.  A homomorphism on~\(\Anafun\)
is strictly unital if and only if its restriction to~\(\Fun\)
is, so assertion~(1) continues to hold for strictly unital
homomorphisms as well.

\subsection{Anafunctor equivalences and ana-isomorphisms}
\label{sec:vague_equivalence_iso}

An arrow \(f\colon \Gr\to\Gr[H]\) in a bicategory is an
\emph{equivalence} if there are an arrow \(g\colon \Gr[H]\to\Gr\) and
invertible \(2\)\nb-arrows \(g\circ f\Rightarrow\id_{\Gr}\) and
\(f\circ g\Rightarrow \id_{\Gr[H]}\).

\begin{definition}
  \label{def:vague_equivalence}
  Equivalences in the bicategory of anafunctors are called
  \emph{anafunctor equivalences}.
\end{definition}

Anafunctor natural transformations may be far from being ``isomorphisms.''
This makes it non-trivial to decide whether a given anafunctor is an
anafunctor equivalence.  We are going to provide necessary and sufficient
conditions for this that are easier to check.

\begin{definition}
  \label{def:vague_isomorphism}
  An \emph{ana-isomorphism} between two groupoids \(\Gr\)
  and~\(\Gr[H]\) in~\((\Cat,\covers)\) is given by an object
  \(X\inOb\Cat\), covers \(p\colon X\prto\Gr^0\) and \(q\colon
  X\prto\Gr[H]^0\), and a groupoid isomorphism \(F\colon p^*(\Gr)
  \congto q^*(\Gr[H])\) with \(F^0=\id_{X}\).  An anafunctor
  from~\(\Gr\) to~\(\Gr[H]\) \emph{lifts to an ana-isomorphism} if it
  is equivalent (through an anafunctor natural transformation) to
  \((X,p,q_*\circ \bar{F})\) for an ana-isomorphism
  \((X,p,q,\bar{F})\).
\end{definition}

\begin{definition}
  \label{def:equivalence_functor}
  A functor \(F\colon \Gr\to\Gr[H]\) is \emph{essentially surjective}
  if the map
  \begin{equation}
    \label{eq:ess_surj}
    \rg_{\Act}\colon \Gr^0\times_{F^0,\Gr[H]^0,\rg} \Gr[H]^1 \to
    \Gr[H]^0,\qquad (g,x)\mapsto \s(x),
  \end{equation}
  is a cover.  It is \emph{almost essentially surjective} if the
  map~\eqref{eq:ess_surj} is a cover locally (see
  Definition~\ref{def:local}).  It is \emph{fully faithful} if the
  fibre product \(\Gr^0\times_{F^0,\Gr[H]^0,\rg}
  \Gr[H]^1\times_{\s,\Gr[H]^0,F^0} \Gr^0\) exists in~\(\Cat\) and the
  map
  \begin{equation}
    \label{eq:fully_faithful}
    \Gr^1 \to \Gr^0\times_{F^0,\Gr[H]^0,\rg}
    \Gr[H]^1\times_{\s,\Gr[H]^0,F^0} \Gr^0,\qquad
    g\mapsto (\rg(g),F^1(g),\s(g)),
  \end{equation}
  is an isomorphism in~\(\Cat\).  It is \emph{almost fully faithful}
  if there is some cover \(q\colon Y\prto \Gr[H]^0\) for which the
  functor~\(q^*(F)\) is fully faithful.
\end{definition}

It is trivial that essentially surjective functors are almost
essentially surjective and fully faithful functors are almost fully
faithful.  The converse holds under
Assumption~\textup{\ref{assum:local_cover}}:

\begin{lemma}
  \label{lem:almost_ess_surjective_ff}
  If Assumption~\textup{\ref{assum:local_cover}} holds, then any
  almost essentially surjective functor is essentially surjective.  An
  essentially surjective and almost fully faithful functor is
  fully faithful.
\end{lemma}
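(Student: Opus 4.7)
The plan is to deduce both statements from the assumption together with Proposition~\ref{pro:isomorphism_local}.

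The first statement is essentially a definitional restatement. Being almost essentially surjective means that the map $\rg_\Act$ of~\eqref{eq:ess_surj} is a cover locally; Assumption~\ref{assum:local_cover} is precisely the axiom that being a cover is a local property, so $\rg_\Act$ is already a cover, i.e., $F$ is essentially surjective.

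For the second statement, let $q\colon Y\prto\Gr[H]^0$ be a cover for which $q^*(F)$ is fully faithful. My strategy is to identify the fully-faithfulness map $\varphi_{q^*(F)}$ with a pull-back of $\varphi_F$ (the map of~\eqref{eq:fully_faithful}) along a suitable cover, and then apply Proposition~\ref{pro:isomorphism_local}. First, essential surjectivity ensures that the codomain
\[
Z \defeq \Gr^0\times_{F^0,\Gr[H]^0,\rg}\Gr[H]^1\times_{\s,\Gr[H]^0,F^0}\Gr^0
\]
of $\varphi_F$ exists in $\Cat$: the first fibre product exists since $\rg$ is a cover, and the composite map to $\Gr[H]^0$ sending $(x,h)\mapsto\s(h)$ is the cover $\rg_\Act$, which allows the second fibre product against $F^0$ to be formed. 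Now set
\[
W \defeq Y\times_{q,\Gr[H]^0,F^0\pr_1} Z\times_{F^0\pr_3,\Gr[H]^0,q} Y,
\]
which exists because $q$ is a cover, and let $\pi\colon W\to Z$ be the middle projection. Since $\pi$ factors as a composite of two base-changes of the cover $q$, it is itself a cover.

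Unpacking Example~\ref{exa:base_change_of_functor} with $\widetilde Y\defeq\Gr^0\times_{F^0,\Gr[H]^0,q}Y$, one checks that the domain $\Gr(\widetilde Y)^1$ of $\varphi_{q^*(F)}$ is naturally isomorphic to $\Gr^1\times_{\varphi_F,Z,\pi} W$: elementwise, both sides are parametrised by triples $(g,y_1,y_2)\in\Gr^1\times Y\times Y$ satisfying $q(y_1)=F^0(\rg(g))$ and $q(y_2)=F^0(\s(g))$. Under this identification, $\varphi_{q^*(F)}$ becomes the pull-back projection $\pi^*(\varphi_F)\colon \Gr^1\times_{\varphi_F,Z,\pi} W\to W$. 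Since $q^*(F)$ is fully faithful, $\pi^*(\varphi_F)$ is an isomorphism, so Proposition~\ref{pro:isomorphism_local} forces $\varphi_F$ itself to be an isomorphism, i.e., $F$ is fully faithful. The main obstacle is this last identification: conceptually it is transparent because both sides parametrise the same triples with the same fibre-product constraints, but carrying it out rigorously requires notationally heavy bookkeeping through the nested base-change constructions of Examples~\ref{exa:groupoid_base_change} and~\ref{exa:base_change_of_functor}.
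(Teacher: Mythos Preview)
Your proof is correct and follows essentially the same approach as the paper: both argue that the codomain~\(Z\) of~\eqref{eq:fully_faithful} exists using essential surjectivity, form the same cover \(W\to Z\) by adjoining two copies of~\(Y\) via~\(q\), identify \(\varphi_{q^*(F)}\) with the pull-back of~\(\varphi_F\) along this cover, and then invoke Proposition~\ref{pro:isomorphism_local}. You have spelled out the identification of the domain of~\(\varphi_{q^*(F)}\) with the fibre product \(\Gr^1\times_{\varphi_F,Z,\pi}W\) a bit more explicitly than the paper does, but the argument is the same.
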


\begin{proof}
  Assumption~\textup{\ref{assum:local_cover}} says that a map that is
  locally a cover is a cover, so an almost essentially surjective
  functor is essentially surjective.

The fibre product
  \(\Gr^0\times_{F^0,\Gr[H]^0,\rg} \Gr[H]^1\) always exists
  because~\(\rg\) is a cover.  If~\(F\) is essentially surjective,
  then \(\s\colon \Gr^0\times_{F^0,\Gr[H]^0,\rg} \Gr[H]^1\prto
  \Gr[H]^0\) is a cover, so the fibre product
  \((\Gr^0\times_{F^0,\Gr[H]^0,\rg} \Gr[H]^1)\times_{\s,\Gr[H]^0,F^0}
  \Gr^0\) exists.  Let \(q\colon Y\prto \Gr[H]^0\) be a cover such
  that~\(q^*(F)\) is fully faithful.  The projection
  \[
  \pr_{234}\colon Y\times_{q,\Gr[H]^0, F^0}
  \Gr^0\times_{F^0,\Gr[H]^0,\rg} \Gr[H]^1\times_{\s,\Gr[H]^0,F^0}
  \Gr^0 \times_{F^0,\Gr[H]^0, q} Y \to
  \Gr^0 \times_{F^0,\Gr[H]^0,\rg} \Gr[H]^1\times_{\s,\Gr[H]^0,F^0}
  \Gr^0
  \]
  is a cover because~\(q\) is a cover.  The pull-back of the
  map~\eqref{eq:fully_faithful} for the functor~\(F\) along this cover
  gives the map~\eqref{eq:fully_faithful} for the functor~\(q^*(F)\),
  which is assumed to be an isomorphism.  Since being an isomorphism
  is a local property by Proposition~\ref{pro:isomorphism_local}, the
  functor~\(F\) is fully faithful if~\(q^*(F)\) is fully faithful.
  (This conclusion holds whenever the fibre product
  \(\Gr^0\times_{F^0,\Gr[H]^0,\rg} \Gr[H]^1\times_{\s,\Gr[H]^0,F^0}
  \Gr^0\) exists.)
\end{proof}

\begin{theorem}
  \label{the:explicit-vague-equivalence}
  Let \((X, p, F)\) be an anafunctor from~\(\Gr\) to~\(\Gr[H]\).
  Then the following are equivalent:
  \begin{enumerate}
  \item \label{e:vague_eq1} \((X,p,F)\) is an anafunctor equivalence;
  \item \label{e:vague_eq2} the functor \(F\colon \Gr(X)\to\Gr[H]\) is
    almost fully faithful and almost essentially surjective;
  \item \label{e:vague_eq3} \((X, p, F)\) lifts to an ana-isomorphism.
  \end{enumerate}
  If Assumption~\textup{\ref{assum:local_cover}} holds, then we may
  replace~\ref{e:vague_eq2} by the condition that~\(F\) is fully
  faithful and essentially surjective -- the usual definition of a
  weak equivalence.
\end{theorem}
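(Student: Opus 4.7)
The plan is to establish the cycle \ref{e:vague_eq3}\(\Rightarrow\)\ref{e:vague_eq1}\(\Rightarrow\)\ref{e:vague_eq2}\(\Rightarrow\)\ref{e:vague_eq3}; the final sentence of the theorem then follows immediately from Lemma~\ref{lem:almost_ess_surjective_ff}.

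For \ref{e:vague_eq3}\(\Rightarrow\)\ref{e:vague_eq1}, I would proceed by explicit construction. Given an ana-isomorphism \((X,p,q,\bar F)\) with \(\bar F\colon p^*\Gr\congto q^*\Gr[H]\) and \(\bar F^0=\id_X\), the anafunctor \((X,q,p_*\circ \bar F^{-1})\colon \Gr[H]\to\Gr\) is a candidate for an inverse of \((X,p,q_*\circ\bar F)\). I would compute both composite anafunctors using the prescription of Section~\ref{sec:vague_functors}: the composite \((X,p,q_*\circ\bar F)\circ(X,q,p_*\circ\bar F^{-1})\) has underlying space \(X\times_{q,\Gr[H]^0,q} X\) and a functor which, after invoking the isomorphism \(\bar F\circ \bar F^{-1}=\id\), takes the form of a hypercover built from \(p\) and the projections. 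Then I would exhibit an anafunctor natural transformation from this composite to \(\id_{\Gr[H]}\) using the diagonal \(X\to X\times_{\Gr[H]^0} X\) and the unit map of~\(\Gr[H]\). The other composite is handled symmetrically.

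For \ref{e:vague_eq1}\(\Rightarrow\)\ref{e:vague_eq2}, let \((Y,q,G)\) be an inverse anafunctor with invertible anafunctor natural transformations \(\Phi\) and \(\Psi\) witnessing equivalence with the identities. The transformation~\(\Psi\) is a map whose domain is a fibre product over \(\Gr[H]^0\) involving \(Y\) and~\(X\); chasing \(\Psi\) through the source and range maps of~\(\Gr[H]\) produces a factorisation of a cover (coming from the definition of composition of anafunctors) through the map~\eqref{eq:ess_surj} associated to~\(F\). Concretely, the cover \(Y\times_{G^0,\Gr^0,p}X\prto \Gr[H]^0\) factors through \(\eqref{eq:ess_surj}\) after base change along \(q\), which gives that~\eqref{eq:ess_surj} is locally a cover, i.e.\ \(F\) is almost essentially surjective. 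For almost full faithfulness, I would use~\(\Phi\) similarly: after pulling back along the cover \(q\colon Y\prto \Gr[H]^0\), the functor \(q^*F\) acquires a retraction up to natural isomorphism built from~\(G\) and~\(\Phi\), and this retraction forces the map~\eqref{eq:fully_faithful} for \(q^*F\) to be an isomorphism (its inverse is extracted from~\(G^1\) and~\(\Phi\)).

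The implication \ref{e:vague_eq2}\(\Rightarrow\)\ref{e:vague_eq3} is the main obstacle, since we must manufacture an actual groupoid isomorphism. The plan is: first enlarge \(X\) by pulling back along the cover for almost full faithfulness, obtaining an isomorphic anafunctor \((X',p',F')\) in which \(F'\) is honestly fully faithful; then enlarge once more by fibre product over \(\Gr[H]^0\) with the cover \(\eqref{eq:ess_surj}\) provided by almost essential surjectivity, so that the composite map \(q'\colon X''\to \Gr[H]^0\) (defined elementwise by \(q'(x,g,\gamma)=\s(\gamma)\)) is a cover. Denote the resulting anafunctor again by \((X,p,F)\). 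Fully faithfulness gives the isomorphism~\eqref{eq:fully_faithful}, and I would verify by direct elementwise computation (using the formula for base-changed groupoids in Example~\ref{exa:groupoid_base_change}) that the induced map
\[
  p^*\Gr\congto q^*\Gr[H],\qquad (x_1,g,x_2)\mapsto(x_1,F^1(x_1,g,x_2),x_2),
\]
is an isomorphism of groupoids over~\(X\); the inverse is built from the inverse of~\eqref{eq:fully_faithful}. Finally, the natural transformation of Example~\ref{exa:vague_natural_trafo} identifies the original anafunctor with \((X,p,q_*\circ \bar F)\), showing that it lifts to the ana-isomorphism \((X,p,q,\bar F)\). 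The delicate point throughout is keeping track of which fibre products exist and which maps are covers after each enlargement of~\(X\); Proposition~\ref{pro:isomorphism_local} is used to promote local isomorphism statements to global ones.
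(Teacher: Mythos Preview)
Your cycle \ref{e:vague_eq3}\(\Rightarrow\)\ref{e:vague_eq1}\(\Rightarrow\)\ref{e:vague_eq2}\(\Rightarrow\)\ref{e:vague_eq3} matches the paper's, and your arguments for \ref{e:vague_eq3}\(\Rightarrow\)\ref{e:vague_eq1} and \ref{e:vague_eq2}\(\Rightarrow\)\ref{e:vague_eq3} are sound. For the latter you take a slightly different route: you first pass to an equivalent anafunctor in which \(F\) is genuinely fully faithful and only then enlarge for essential surjectivity, whereas the paper enlarges directly to \(X\Gr[H]Y\), builds the conjugated functor~\(G\), and proves \(G\) is merely \emph{almost} fully faithful---but since \(G^0\) is a cover, Lemma~\ref{lem:hypercover_surjective_fully_faithful} then upgrades this. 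Both routes work; yours is arguably cleaner once one checks that full faithfulness of \(F'\) transfers to~\(G\) (it does, by the conjugation formula).

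The real gap is in \ref{e:vague_eq1}\(\Rightarrow\)\ref{e:vague_eq2}, in the almost-essential-surjectivity half. From~\(\Psi\) you obtain a map
\[
\alpha\colon Y\times_{G^0,\Gr^0,p}X \to X\times_{F^0,\Gr[H]^0,\rg}\Gr[H]^1\times_{\s,\Gr[H]^0,q}Y,\qquad (y,x)\mapsto(x,\Psi(y,x)^{-1},y),
\]
over~\(Y\), and observe that \(\pr_Y\circ\alpha=\pr_1\) is a cover. But this does \emph{not} force \(\pr_Y\) to be a cover: the theorem is stated for an arbitrary subcanonical pretopology with no saturation or two-out-of-three assumption, and \(\alpha\) is neither a cover nor an isomorphism in general (a point \((x,h,y)\) in the target need not satisfy \(G^0(y)=p(x)\)). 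So your ``factorisation of a cover through~\eqref{eq:ess_surj}'' proves nothing about~\eqref{eq:ess_surj}.

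The paper fills this hole with a different idea, Lemma~\ref{lem:unique_map_XHY-XGY}: using the \emph{other} transformation~\(\Phi\) together with~\(G\), one builds a map \(a\colon X\Gr[H]Y\to\Gr^1\) and shows that \((\pr_1,a,\pr_3)\colon X\Gr[H]Y\to X\Gr Y\) is an isomorphism; the candidate inverse is constructed from \(F\) and~\(\Psi\), so both natural transformations are genuinely needed. Since \(\pr_3\colon X\Gr Y\to Y\) is visibly a cover (a composite of pullbacks of \(p\) and~\(\s\)), the isomorphism transports this to \(\pr_Y\colon X\Gr[H]Y\to Y\), which is precisely the pullback of~\eqref{eq:ess_surj} along~\(q\). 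Your almost-full-faithfulness sketch is likewise too thin: the paper (Lemma~\ref{lem:vague_equivalence_fully_faithful}) again needs extra base changes along~\(p\) and a simultaneous use of \(\Phi\) and~\(\Psi\) to produce and verify the inverse of the fully-faithfulness map for~\(q^*F\); a ``retraction up to natural isomorphism'' built from \(G\) and~\(\Phi\) alone does not obviously do this.
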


The proof of the theorem will occupy the remainder of this subsection.
The last statement about replacing~\ref{e:vague_eq2} is Lemma~\ref{lem:almost_ess_surjective_ff}.  We
will show the implications
\ref{e:vague_eq1}\(\Rightarrow\)\ref{e:vague_eq2}\(\Rightarrow\)\ref{e:vague_eq3}\(\Rightarrow\)\ref{e:vague_eq1}.

We start with the easiest implication:
\ref{e:vague_eq3}\(\Rightarrow\)\ref{e:vague_eq1}.  The main point
here is the following example:

\begin{example}
  \label{exa:base_change_vague_isomorphism}
  Let~\(\Gr\) be a groupoid in~\((\Cat,\covers)\) and let \(p\colon
  X\prto\Gr^0\) be a cover.  The hypercover \(p_*\colon \Gr(X)\to\Gr\)
  is an anafunctor equivalence.  Its quasi-inverse is the anafunctor
  \((X,p,\id_{\Gr(X)})\) from~\(\Gr\) to~\(\Gr(X)\).  The composite
  functor \(\Gr\to\Gr(X)\to\Gr\) is the anafunctor~\((X,p,p_*)\).
  The unit section \(X\to\Gr(X)^1\) gives a natural transformation
  between~\((X,p,p_*)\) and the identity anafunctor
  \((\Gr^0,\id_{\Gr^0},\id_{\Gr})\) on~\(\Gr\).  The other composite
  functor \(\Gr(X)\to\Gr\to\Gr(X)\) is the anafunctor
  \((X\times_{\Gr^0} X,\pr_1,(\pr_2)_*)\).  It is naturally equivalent
  to the identity on~\(\Gr(X)\) because the functors \((\pr_2)_*\)
  and~\((\pr_1)_*\) from \(\Gr(X\times_{\Gr^0} X)\) to~\(\Gr(X)\) are
  naturally equivalent.
\end{example}

If \(F\cong q_*\circ\bar{F}\) for a cover \(q\colon X\to\Gr[H]^0\) and
an isomorphism~\(\bar{F}\), then the functors \(F\colon
\Gr(X)\to\Gr[H]\) and \(p_*\colon \Gr(X)\to\Gr\) are anafunctor equivalences by Example~\ref{exa:base_change_vague_isomorphism}.
Since \((X,p,F)\circ p_* \simeq F\colon \Gr(X)\to \Gr[H]\), we have
\((X,p,F) \simeq F\circ (p_*)^{-1}\), so~\((X,p,F)\) is an anafunctor
equivalence as well; here~\(\simeq\) denotes equivalence of
anafunctors.  Thus~\ref{e:vague_eq3} implies~\ref{e:vague_eq1} in
Theorem~\ref{the:explicit-vague-equivalence}.

Now we show \ref{e:vague_eq2}\(\Rightarrow\)\ref{e:vague_eq3}.  The
following lemma explains why (almost) fully faithful functors are
related to hypercovers.

\begin{lemma}
  \label{lem:hypercover_surjective_fully_faithful}
  Given a functor \(F\colon \Gr(X)\to \Gr[H]\),
  the following statements are equivalent to each other:
  \begin{enumerate}
  \item $F$ is isomorphic to a hypercover;
  \item $F$ is fully faithful and \(F^0\colon X\prto\Gr[H]^0\)
    is a cover;
  \item $F$ is almost fully faithful and \(F^0\colon X\prto\Gr[H]^0\)
    is a cover.
  \end{enumerate}
\end{lemma}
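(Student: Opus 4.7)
I will handle the equivalence $(1)\Leftrightarrow(2)$ directly by unravelling the definitions, and obtain $(2)\Leftrightarrow(3)$ from Lemma~\ref{lem:almost_ess_surjective_ff}. For $(2)\Rightarrow(1)$, because $F^0\colon X\prto \Gr[H]^0$ is a cover the base-change groupoid $(F^0)^*\Gr[H]$ exists; I will define a functor
\[
\bar F\colon \Gr(X)\to (F^0)^*\Gr[H],\qquad \bar F^0 \defeq \id_X,\qquad \bar F^1(g) \defeq (\rg(g), F^1(g), \s(g)),
\]
and verify that $F = (F^0)_* \circ \bar F$ by direct inspection. Functoriality of $\bar F$ is immediate from that of $F$; the key point is that $\bar F^1$ is, by construction, the fully-faithfulness map~\eqref{eq:fully_faithful} for $F$, so full faithfulness of $F$ is exactly the assertion that $\bar F$ is an isomorphism of groupoids. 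This exhibits $F$ as a hypercover precomposed with the isomorphism $\bar F$.

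For $(1)\Rightarrow(2)$, suppose $F = q_* \circ \bar F$ for a cover $q\colon Y\prto \Gr[H]^0$ and an isomorphism $\bar F\colon \Gr(X) \congto \Gr[H](Y)$. Then on objects $F^0 = q\circ \bar F^0$, which is a cover since $q$ is a cover and $\bar F^0$ is an isomorphism. Moreover $\bar F^1$ is an isomorphism $\Gr(X)^1 \congto \Gr[H](Y)^1$, and its composition with the evident base-change isomorphism $\Gr[H](Y)^1 \congto X\times_{F^0,\Gr[H]^0,\rg}\Gr[H]^1\times_{\s,\Gr[H]^0,F^0} X$ induced by $(\bar F^0)^{-1}$ is precisely the fully-faithfulness map~\eqref{eq:fully_faithful} of $F$. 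Hence $F$ is fully faithful.

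The implication $(2)\Rightarrow(3)$ is immediate on taking $q = \id_{\Gr[H]^0}$. For $(3)\Rightarrow(2)$, I invoke the parenthetical remark at the end of the proof of Lemma~\ref{lem:almost_ess_surjective_ff}: almost fully faithfulness implies fully faithfulness whenever the fibre product $X\times_{F^0,\Gr[H]^0,\rg}\Gr[H]^1\times_{\s,\Gr[H]^0,F^0} X$ exists, and under~(3) this fibre product does exist because $F^0$ is a cover. The only genuinely delicate point is the precise reading of ``isomorphic to a hypercover'' in~(1): under the strict reading $F = q_*\circ\bar F$ everything is a bookkeeping exercise, while under a weaker natural-iso reading one would first have to show that $F^0$ is a cover before one could conjugate the natural isomorphism into $\bar F$ to reduce to the strict case.
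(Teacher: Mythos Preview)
Your proof is correct and follows essentially the same approach as the paper's: the paper likewise identifies the map \((\rg,F^1,\s)\) with the fully-faithfulness isomorphism to produce the groupoid isomorphism \(\Gr(X)\cong (F^0)^*\Gr[H]\), and it handles \((2)\Leftrightarrow(3)\) via Lemma~\ref{lem:almost_ess_surjective_ff} (observing that \(F^0\) a cover makes \(F\) essentially surjective, which is equivalent to your observation that the relevant fibre product then exists). Your concern about the reading of ``isomorphic to a hypercover'' is explicitly resolved in the paper: immediately after the lemma statement it stipulates the strict reading \(F = q_*\circ\bar F\) for an isomorphism \(\bar F\colon \Gr(X)\congto q^*(\Gr[H])\), so no additional work is needed there.
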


Here ``isomorphic'' means that there is an isomorphism \(\Gr(X)\cong
q^*(\Gr[H])\) intertwining~\(F\) and the hypercover \(q_*\colon
q^*(\Gr[H])\to\Gr[H]\).

\begin{proof}
  If~\(F^0\) is a cover, then \(F\) is essentially surjective,
  so~\(F\) is almost fully faithful if and only if~\(F\) is fully
  faithful by Lemma~\ref{lem:almost_ess_surjective_ff}. Thus we only
  need to show the equivalence between (1) and (2).

  Let~\(F\) be fully faithful and let \(F^0\colon X\prto\Gr[H]^0\) be
  a cover.  Since~\(F\) is fully faithful, the map
  \((\rg,F^1,\s)\colon \Gr(X)^1 \congto X\times_{F^0,\Gr[H]^0,\rg}
  \Gr[H]^1 \times_{\s,\Gr[H]^0,F^0} X\) is an isomorphism.  This
  isomorphism together with the identity map on~\(X\) is a groupoid
  isomorphism from~\(\Gr(X)\) to~\((F^0)^*(\Gr[H])\) that
  intertwines~\(F\) and~\(q_*\).  Conversely, if \(F\cong q_*\),
  then~\(F\) is fully faithful and~\(F^0\) is a cover because~\(q\) is
  one.
\end{proof}

Now assume that \(F\colon \Gr(X)\to\Gr[H]\) is almost fully faithful
and almost essentially surjective.  Then there is a cover \(q\colon
Y\prto\Gr[H]^0\) such that the map
\[
\pr_3\colon X\Gr[H]Y\defeq
X\times_{F^0,\Gr[H]^0,\rg} \Gr[H]^1\times_{\s,\Gr[H]^0,q} Y
\to Y
\]
is a cover.  Then so is \(q\circ\pr_3\colon X\Gr[H]Y\prto \Gr[H]^0\).
The projection \(\pr_1\colon X\Gr[H]Y\to X\) is a cover because \(q\)
and~\(\rg\) are covers.  Hence the anafunctor~\((X,p,F)\) is
equivalent to \((X\Gr[H]Y,p\circ\pr_1,F\circ(\pr_1)_*)\).  This
functor maps \((x,h,y)\mapsto F^0(x)\) on objects and
\((x_1,h_1,y_1,g,x_2,h_2,y_2)\mapsto F^1(x_1,g,x_2)\) on arrows.  It is
equivalent to the functor~\(G\) given by
\[
G^0(x,h,y)\defeq q(y)=\s(h),\qquad
G^1(x_1,h_1,y_1,g,x_2,h_2,y_2)\defeq h_1^{-1}\cdot
F^1(x_1,g,x_2)\cdot h_2;
\]
the natural transformation \(G\Rightarrow F\circ(\pr_1)_*\) is
\((x,h,y)\mapsto h\).  Thus \((X,p,F)\) is equivalent to
\((X\Gr[H]Y,p\circ\pr_1,G)\).  The map \(G^0=q\circ\pr_3\colon
X\Gr[H]Y\prto\Gr[H]^0\) is a cover.

\begin{lemma}
  The functor~\(G\) is almost fully faithful.
\end{lemma}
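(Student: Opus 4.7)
The plan is to prove the stronger statement that $G$ is in fact \emph{fully} faithful, from which almost-fully-faithfulness is immediate.  The main tool is Proposition~\ref{pro:isomorphism_local}: the fully-faithfulness map
\[
  \phi_G\colon \Gr(X\Gr[H]Y)^1 \to M_G\defeq X\Gr[H]Y\times_{G^0,\rg}\Gr[H]^1\times_{\s,G^0}X\Gr[H]Y
\]
is an isomorphism as soon as it is one after pulling back along some cover of $M_G$.

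Since $F$ is almost fully faithful, fix a cover $q'\colon Y'\prto\Gr[H]^0$ such that $(q')^*(F)\colon \Gr(X)(\tilde X)\to\Gr[H](Y')$ is fully faithful, where $\tilde X = X\times_{F^0,\Gr[H]^0,q'}Y'$; write $\psi$ for its fully-faithfulness map.  Now construct a cover $\mu\colon \tilde M_G\prto M_G$ by adjoining to each triple $(w_1,k,w_2)\in M_G$ a pair of lifts $(y_1'',y_2'')\in Y'\times Y'$ with $q'(y_i'')=F^0(x_{w_i})$.  This is obtained by two successive pullbacks of $q'$ along the composite $F^0\circ\pr_1$ applied to the first and the third factor of $M_G$; since $q'$ is a cover, $\mu$ is a cover.

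The decisive step is that $\mu^*(\phi_G)$ sits in a pullback square against $\psi$ via forgetful maps.  On the target side, send $(w_1,k,w_2,y_1'',y_2'')\mapsto ((x_{w_1},y_1''),h_{w_1}\cdot k\cdot h_{w_2}^{-1},(x_{w_2},y_2''))$, which lies in the target of $\psi$ because $\rg(h_{w_1}\cdot k\cdot h_{w_2}^{-1})=F^0(x_{w_1})=q'(y_1'')$ and similarly on the source; on the source side, send $(w_1,g,w_2,y_1'',y_2'')\mapsto ((x_{w_1},y_1''),(x_{w_1},g,x_{w_2}),(x_{w_2},y_2''))$.  Commutativity uses the defining identity $G^1=h_1^{-1}\cdot F^1\cdot h_2$, so that $h_{w_1}\cdot G^1(w_1,g,w_2)\cdot h_{w_2}^{-1}=F^1(x_{w_1},g,x_{w_2})$.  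The square is a pullback because an element of $\tilde N_G$ is nothing but an element of $N_G=\Gr(X\Gr[H]Y)^1$ together with $y_1'',y_2''\in Y'$ over $F^0(x_{w_1}),F^0(x_{w_2})$, which is precisely the data of an element of $N_G\times_{M_G}\tilde M_G$ once the relation $F^1(x_{w_1},g,x_{w_2})=h_{w_1}\cdot k\cdot h_{w_2}^{-1}$ is imposed.

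Since $\psi$ is an isomorphism and pullbacks of isomorphisms are isomorphisms, $\mu^*(\phi_G)$ is an isomorphism.  Proposition~\ref{pro:isomorphism_local} then yields that $\phi_G$ itself is an isomorphism, so $G$ is fully faithful, in particular almost fully faithful.  The principal technical work is checking that the described square is actually a pullback and that $\mu$ is actually a cover; both are routine once the fibre-product presentations of $M_G$ and of the target of $\psi$ are unpacked, and they are the only places where the subcanonicity of the pretopology (through Proposition~\ref{pro:isomorphism_local}) is really used.
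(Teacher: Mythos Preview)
Your approach is essentially the paper's: both pull back the fully-faithfulness comparison map for~\(G\) along the cover obtained by adjoining lifts \(y''_i\in Y'\) over \(F^0(x_{w_i})\), and identify the result with the known isomorphism~\(\psi\) for \((q')^*F\) composed with the conjugation \(k\mapsto h_{w_1}\,k\,h_{w_2}^{-1}\). The paper carries this out by ``adding extra variables'' and composing two explicit isomorphisms; you package the same computation as a pullback square and invoke Proposition~\ref{pro:isomorphism_local}. You correctly note that, since \(G^0=q\circ\pr_3\) is a cover and hence \(M_G\) exists, this actually shows \(G\) is \emph{fully} faithful; the paper states only ``almost fully faithful'', which is all that is needed downstream.

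Two small expository points: your map \(\tilde M_G\to T_\psi\) involves the conjugation \(k\mapsto h_{w_1}\,k\,h_{w_2}^{-1}\) and so is not really ``forgetful''; and your pullback-verification sentence is garbled—you want to identify \(\tilde N_G\) with \(\tilde M_G\times_{T_\psi}N_\psi\), not with \(N_G\times_{M_G}\tilde M_G\) (which is its definition). The underlying identification is correct once you unpack both sides as tuples \((w_1,g,w_2,y''_1,y''_2)\) subject to \(F^1(x_{w_1},g,x_{w_2})=h_{w_1}\,k\,h_{w_2}^{-1}\).
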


\begin{proof}
  By assumption, there is a cover \(r\colon Z\prto \Gr[H]^0\) such
  that~\(r^*(F)\) is fully faithful; that is, the map
  \begin{multline*}
    Z\times_{r,\Gr[H]^0, F^0} X\times_{p,\Gr^0,\rg} \Gr^1
    \times_{\s,\Gr^0,p} X \times_{F^0,\Gr[H]^0, r} Z
    \\ \to Z\times_{r,\Gr[H]^0, F^0} X\times_{F^0,\Gr[H]^0,\rg}
    \Gr[H]^1\times_{\s,\Gr[H]^0,F^0} X
    \times_{F^0,\Gr[H]^0, r} Z,\\
    (z_1,x_1,g,x_2,z_2)\mapsto (z_1,x_1,F^1(x_1,g,x_2),x_2,z_2),
  \end{multline*}
  is an isomorphism.  Adding extra variables \(y_1,y_2\in Y\),
  \(h_1,h_2\in\Gr[H]^1\) with \((x_i,h_i,y_i)\in X\Gr[H]Y\) for
  \(i=1,2\) gives an isomorphism
  \[
  (z_1,x_1,h_1,y_1,g,x_2,h_2,y_2,z_2)\mapsto
  (z_1,x_1,h_1,y_1,F^1(x_1,g,x_2),x_2,h_2,y_2,z_2)
  \]
  between the resulting fibre products.  The conjugation map
  \[
  (z_1,x_1,h_1,y_1,h,x_2,h_2,y_2,z_2)\mapsto
  (z_1,x_1,h_1,y_1,h_1^{-1} h h_2,x_2,h_2,y_2,z_2)
  \]
  is an isomorphism between suitable fibre products as well, with
  inverse
  \[
  (z_1,x_1,h_1,y_1,h,x_2,h_2,y_2,z_2)\mapsto
  (z_1,x_1,h_1,y_1,h_1 h h_2^{-1},x_2,h_2,y_2,z_2).
  \]
  Composing these isomorphisms gives the map
  \begin{multline*}
    (z_1,x_1,h_1,y_1,g,x_2,h_2,y_2,z_2)
    \\\mapsto
    (z_1,x_1,h_1,y_1,G^1(x_1,h_1,y_1,g,x_2,h_2,y_2),x_2,h_2,y_2,z_2).
  \end{multline*}
  Since this is an isomorphism, \(G\) is almost fully faithful.
\end{proof}

Since~\(G\) is almost fully faithful and~\(G^0\) is a cover, \(G\) is
isomorphic to a hypercover by
Lemma~\ref{lem:hypercover_surjective_fully_faithful}.  Thus
\((X,p,F)\) lifts to an ana-isomorphism.  This finishes the proof
that \ref{e:vague_eq2}\(\Rightarrow\)\ref{e:vague_eq3} in
Theorem~\ref{the:explicit-vague-equivalence}.

Finally, we show that~\ref{e:vague_eq1} implies~\ref{e:vague_eq2} in
Theorem~\ref{the:explicit-vague-equivalence}; this is the most
difficult part of the theorem.  An anafunctor equivalence is given by
anafunctors \((X, p, F)\colon \Gr \to \Gr[H]\) and \((Y, q, E)\colon
\Gr[H]\to \Gr\) and natural transformations \(\Psi\colon (X, p, F)
\circ (Y, q, E) \Rightarrow\id_{\Gr[H]}\) and \(\Phi\colon (Y, q,
E)\circ (X, p, F) \Rightarrow \id_{\Gr}\), which are automatically
invertible.  More explicitly, \(\Psi\colon Y\times_{E^0, \Gr^0, p} X
\to \Gr[H]^1\) is an anafunctor natural transformation from~\(F\circ p^* E\)
to~\(\id_{\Gr[H]}\); that is, \(\Psi(y,x)\in\Gr[H]^1\) is defined for
all \(y\in Y\), \(x\in X\) with \(E^0(y)=p(x)\) and is an arrow
in~\(\Gr[H]^1\) with \(\s(\Psi(y,x))=F^0(x)\) and
\(\rg(\Psi(y,x))=q(y)\); and for all \((x_1, y_1, h, y_2, x_2) \in
X\times_{p, \Gr^0, E^0} Y \times_{q,\Gr[H]^0,\rg} \Gr[H]^1
\times_{\s,\Gr[H]^0,q} Y\times_{E^0, \Gr^0, p} X\),
\begin{equation}
  \label{eq:Psi_natural}
  \Psi(y_1, x_1) \cdot F^1(x_1, E^1(y_1, h, y_2), x_2)
  = h \cdot \Psi(y_2, x_2);
\end{equation}
and \(\Phi\colon X\times_{F^0, \Gr[H]^0, q} Y \to \Gr^1\) is a natural
transformation from~\(E \circ q^* F\) to~\(\id_{\Gr}\), that is,
\(\Phi(x,y)\in\Gr^1\) is defined for all \(x\in X\), \(y\in Y\)
with \(F^0(x)=q(y)\) and is an arrow in~\(\Gr^1\) with
\(\s(\Psi(x,y))=E^0(y)\) and \(\rg(\Psi(x,y))=p(x)\); and for all
\((y_1, x_1, g, x_2, y_2) \in Y\times_{q, \Gr[H]^0, F^0} X
\times_{p,\Gr^0,\rg} \Gr^1 \times_{\s,\Gr^0,p} X\times_{F^0, \Gr[H]^0,
  q} Y\),
\begin{equation}
  \label{eq:Phi_natural}
  \Phi(x_1, y_1) \cdot E^1(y_1, F^1(x_1, g, x_2), y_2)
  = g \cdot \Phi(x_2, y_2).
\end{equation}

\begin{lemma}
  \label{lem:unique_map_XHY-XGY}
  There is a unique map \(a\colon X\times_{F^0, \Gr[H]^0, \rg}
  \Gr[H]^1 \times_{\s,\Gr[H]^0, q} Y\to \Gr^1\) with
  \begin{equation}
    \label{eq:uniqueness_of_a}
    a(x_1,h,y_2) = \Phi(x_1, y_1) \cdot E^1(y_1,h,y_2)
  \end{equation}
  for all \(x_1\in X\), \(h\in\Gr[H]^1\), \(y_1,y_2\in Y\) with
  \(F^0(x_1)=\rg(h)= q(y_1)\), \(\s(h)=q(y_2)\).

  The following map is an isomorphism:
  \[
  (\pr_1,a,\pr_3)\colon
  X\times_{F^0, \Gr[H]^0, \rg} \Gr[H]^1 \times_{\s,\Gr[H]^0, q} Y
  \congto X\times_{p, \Gr^0, \rg} \Gr^1 \times_{\s, \Gr^0, E^0} Y.
  \]
\end{lemma}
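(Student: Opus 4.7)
The plan is to construct $a$ by descent along the cover adjoining an auxiliary copy of $Y$, then build a candidate inverse using the symmetric data $(\Psi, F)$, and verify mutual inversion by reducing to the two naturality identities together with a coherence that can be ensured by refining the chosen equivalence.

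For the existence and uniqueness of $a$, set $D \defeq X \times_{F^0, \Gr[H]^0, \rg} \Gr[H]^1 \times_{\s, \Gr[H]^0, q} Y$ and pass to the cover $\tilde D \defeq D \times_{F^0 \circ \pr_1, \Gr[H]^0, q} Y$ obtained by adjoining a $y_1 \in Y$ with $q(y_1) = F^0(x_1)$; the projection $\tilde D \to D$ is a cover because $q$ is. On $\tilde D$ the formula $(x_1, h, y_2, y_1) \mapsto \Phi(x_1, y_1) \cdot E^1(y_1, h, y_2)$ is a well-defined map to $\Gr^1$. To descend it to $D$, I would apply~\eqref{eq:Phi_natural} with $x_1 = x_2$ and $g = 1_{p(x_1)}$ to obtain $\Phi(x_1, y_1) \cdot E^1(y_1, 1_{F^0(x_1)}, y_1') = \Phi(x_1, y_1')$ for any two choices $y_1, y_1'$ above $F^0(x_1)$, and then use the functoriality of $E$ to deduce independence of $y_1$. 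Subcanonicity of the pretopology then gives the unique $a$ satisfying~\eqref{eq:uniqueness_of_a}, and a direct range-source computation shows that $(\pr_1, a, \pr_3)$ lands in $X \times_{p, \Gr^0, \rg} \Gr^1 \times_{\s, \Gr^0, E^0} Y$.

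For the inverse, I would construct symmetrically $b(x_1, g, y_2) \defeq F^1(x_1, g, x_2) \cdot \Psi(y_2, x_2)^{-1}$, with $x_2 \in X$ auxiliary satisfying $p(x_2) = \s(g)$; its independence of $x_2$ follows by the dual descent argument using~\eqref{eq:Psi_natural} with $h = 1_{q(y_2)}$. Verifying $a \circ b = \id$ amounts to substituting, splitting $E^1(y_1, F^1(x_1, g, x_2) \cdot \Psi(y_2, x_2)^{-1}, y_2)$ via the functoriality of $E$ at an auxiliary $y_1'$ above $F^0(x_2)$, and applying~\eqref{eq:Phi_natural} to collapse $\Phi(x_1, y_1) \cdot E^1(y_1, F^1(x_1, g, x_2), y_1')$ to $g \cdot \Phi(x_2, y_1')$; what remains is the identity $\Phi(x_2, y_1') \cdot E^1(y_1', \Psi(y_2, x_2)^{-1}, y_2) = 1_{E^0(y_2)}$. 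The equation $b \circ a = \id$ reduces symmetrically, via~\eqref{eq:Psi_natural}, to $\Psi(y_1, x_2') \cdot F^1(x_2', \Phi(x_1, y_1)^{-1}, x_1) = 1_{F^0(x_1)}$.

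The main obstacle is these two cancellation identities: they are the elementwise form of the triangle identities for an adjoint equivalence, and need not hold for an arbitrary pair $(\Phi, \Psi)$ witnessing only an unoriented equivalence. I would address this by replacing $\Psi$ (keeping $\Phi$, and hence $a$, fixed) using the standard argument that refines any bicategorical equivalence to an adjoint equivalence; this is available because all $2$-arrows in the anafunctor bicategory are invertible (Theorem~\ref{the:vague_functor_2-category}). With the triangle identities secured, both cancellations are immediate, and the remaining work is the routine fibre-product bookkeeping required to carry each descent step through in the general pretopological setting.
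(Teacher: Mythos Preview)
Your descent construction of $a$ and your candidate inverse $b$ match the paper's $\hat{a}$ and $\gamma$ exactly. The divergence is in the endgame: you want the residual factors in $a\circ b$ and $b\circ a$ to vanish, which you propose to secure by refining $(\Phi,\Psi)$ to an adjoint equivalence (legitimate, since the statement depends only on~$\Phi$, and the refinement is available in any bicategory with invertible $2$\nb-arrows). The paper instead keeps the original~$\Psi$ and observes that on a large cover each composite $\beta\gamma$, $\gamma\beta$ sends the relevant coordinate $g$ (resp.~$h$) to its product with an expression $\xi$ (resp.~$\eta$) assembled only from the \emph{other} coordinates; one-sided multiplication by a fixed arrow is invertible, so $\beta\gamma$ and $\gamma\beta$ are isomorphisms without being the identity, hence $\beta$ is an isomorphism, and Proposition~\ref{pro:isomorphism_local} descends this. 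Your route is more conceptual but obliges you to check that your two cancellation identities are precisely what the triangle laws become once the anafunctor bicategory's non-trivial unitors and associators are unwound, and that the refined~$\Psi$ is again a concrete map on the correct fibre product --- plausible, but not free. The paper's trick avoids all coherence questions and any modification of the given data.
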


\begin{proof}
  The fibre products \(X\times_{F^0, \Gr[H]^0, \rg} \Gr[H]^1
  \times_{\s,\Gr[H]^0, q} Y\) and \(Y\times_{q,\Gr[H]^0,F^0}
  X\times_{F^0, \Gr[H]^0, \rg} \Gr[H]^1 \times_{\s,\Gr[H]^0, q} Y\)
  exist because \(\rg\) and~\(q\) are covers.  The fibre product
  \(X\times_{p, \Gr^0, \rg} \Gr^1 \times_{\s, \Gr^0, E^0} Y\) exists
  because \(p\) and~\(\s\) are covers.
  Equation~\eqref{eq:uniqueness_of_a} defines a map \(\hat{a}\colon
  Y\times_{q,\Gr[H]^0,F^0} X\times_{F^0, \Gr[H]^0, \rg} \Gr[H]^1
  \times_{\s,\Gr[H]^0, q} Y \to \Gr^1\).  We must check
  that~\(\hat{a}\) factors through the projection~\(\pr_{234}\).
  Since~\(q\) is a cover, so is \(\pr_{234}\).  Since our pretopology
  is subcanonical, \(\hat{a}\) factors through this cover if and only
  if \(\hat{a} \circ \pi_1=\hat{a}\circ\pi_2\), where \(\pi_1,\pi_2\)
  are the two canonical projections
  \begin{multline*}
    (Y\times_{q,\Gr[H]^0,F^0} X\times_{F^0, \Gr[H]^0, \rg} \Gr[H]^1
    \times_{\s,\Gr[H]^0, q} Y) \times_{\pr_{234},\pr_{234}}
    (Y\times_{q,\Gr[H]^0,F^0} X\times_{F^0, \Gr[H]^0, \rg} \Gr[H]^1
    \times_{\s,\Gr[H]^0, q} Y)
    \\ \to
    Y\times_{q,\Gr[H]^0,F^0} X\times_{F^0, \Gr[H]^0, \rg} \Gr[H]^1
    \times_{\s,\Gr[H]^0, q} Y.
  \end{multline*}
  We may identify \(\pi_1,\pi_2\) with the two coordinate projections
  \begin{multline*}
    \pr_{2345},\pr_{1345}\colon
    Y\times_{q,\Gr[H]^0,q} Y\times_{q,\Gr[H]^0,F^0} X\times_{F^0, \Gr[H]^0, \rg} \Gr[H]^1
    \times_{\s,\Gr[H]^0, q} Y
    \\\rightrightarrows Y\times_{q,\Gr[H]^0,F^0} X\times_{F^0, \Gr[H]^0, \rg} \Gr[H]^1
    \times_{\s,\Gr[H]^0, q} Y.
  \end{multline*}
  Hence~\(\hat{a}\) factors through~\(\pr_{234}\) if and only if
  \(\hat{a}(y_1,x_1,h,y_2) = \hat{a}(y_1',x_1,h,y_2)\) for all
  \(y_1,y_1',y_2\in Y\), \(x_1\in X\), \(h\in\Gr[H]^1\) with
  \(q(y_1)=q(y_1')=F^0(x_1)=\rg(h)\), \(\s(h)=q(y_2)\).  Equivalently,
  \[
  \Phi(x_1, y_1) \cdot E^1(y_1,h,y_2)
  = \Phi(x_1, y_1') \cdot E^1(y_1',h,y_2)
  \]
  for \(y_1,y_1',x_1,h,y_2\) as above.  Since
  \(E^1(y_1',h,y_2)=E^1(y_1',1_{q(y_1)},y_1)\cdot E^1(y_1,h,y_2)\),
  and \(q(y_1)=q(y_1')=F^0(x_1)\), this is equivalent to
  \[
  \Phi(x_1, y_1) = \Phi(x_1, y_1') \cdot E^1(y_1',1_{q(y_1)},y_1)
  = \Phi(x_1, y_1') \cdot E^1(y_1',F^1(x_1,1_{p(x_1)},x_1),y_1).
  \]
  This equation is a special case of the naturality
  condition~\eqref{eq:Phi_natural} for~\(\Phi\).  This finishes the
  proof that~\(\hat{a}\) factors through a unique map \(a\colon
  X\times_{F^0, \Gr[H]^0, \rg} \Gr[H]^1 \times_{\s,\Gr[H]^0, q} Y\to
  \Gr^1\).
  Since \(\rg(a(x_1,h,y_2)) = \rg(\Phi(x_1,y_2))=p(x_1)\) and
  \(\s(a(x_1,h,y_2))= \s(E^1(y_1,h,y_2))=E^0(y_2)\), the map
  \((\pr_1,a,\pr_3)\) maps \(X\times_{F^0, \Gr[H]^0, \rg} \Gr[H]^1
  \times_{\s,\Gr[H]^0, q} Y\) to \(X\times_{p, \Gr^0, \rg} \Gr^1
  \times_{\s, \Gr^0, E^0} Y\).

  The property of being an isomorphism is local by
  Proposition~\ref{pro:isomorphism_local}.  Thus the
  map~\((\pr_1,a,\pr_3)\) is an isomorphism if and only if the
  following pull-back along a cover induced by~\(q\) is an
  isomorphism:
  \begin{multline*}
    (\pr_1,\pr_2,\hat{a},\pr_4)\colon
    Y\times_{q,\Gr[H]^0,F^0} X\times_{F^0, \Gr[H]^0, \rg} \Gr[H]^1
    \times_{\s,\Gr[H]^0, q} Y
    \\\congto Y\times_{q,\Gr[H]^0,F^0} X\times_{p, \Gr^0, \rg} \Gr^1
    \times_{\s, \Gr^0, E^0} Y.
  \end{multline*}
  Here we have simplified the pull-back using the construction
  of~\(a\) through~\(\hat{a}\).  Actually, to construct a map in the
  opposite direction, we add even more variables and study the map
  \begin{multline*}
    \beta\colon X \times_{p,\Gr[H]^0, E^0}
    Y\times_{q,\Gr[H]^0,F^0} X\times_{F^0, \Gr[H]^0, \rg} \Gr[H]^1
    \times_{\s,\Gr[H]^0, q} Y \times_{E^0,\Gr^0, p} X
    \times_{F^0,\Gr[H]^0, q} Y
    \\ \to
    X \times_{p,\Gr[H]^0, E^0}
    Y\times_{q,\Gr[H]^0,F^0} X\times_{p, \Gr^0, \rg} \Gr^1
    \times_{\s,\Gr^0, E^0} Y \times_{E^0,\Gr^0, p} X
    \times_{F^0,\Gr[H]^0, q} Y,\\
    (x_3,y_1,x_1,h,y_2,x_2,y_3)\mapsto
    (x_3,y_1,x_1,\Phi(x_1,y_1)\cdot E^1(y_1,h,y_2),y_2,x_2,y_3).
  \end{multline*}
  Since \(p\) and~\(q\) are covers, the fibre products above exist
  and~\(\beta\) is a pull-back of~\((\pr_1,a,\pr_3)\) along a cover.
  By Proposition~\ref{pro:isomorphism_local}, it suffices to prove
  that~\(\beta\) is invertible.

  In the opposite direction, we have the map defined elementwise by
  \[
  \gamma (x_3,y_1,x_1,g,y_2,x_2,y_3)\defeq
  (x_3,y_1,x_1,F^1(x_1,g,x_2)\cdot \Psi(y_2,x_2)^{-1},y_2,x_2,y_3);
  \]
  the product in the fourth entry is well-defined because
  \(\s(F^1(x_1,g,x_2)) = F^0(x_2)
  = \s(\Psi(y_2,x_2))\), and it has range and source
  \[
  \rg(F^1(x_1,g,x_2))= F^0(x_1),\qquad
  \rg(\Psi(y_2,x_2))=q(y_2),
  \]
  respectively, so \((x_3,y_1,x_1, F^1(x_1,g,x_2)\cdot
  \Psi(y_2,x_2)^{-1}, y_2,x_2,y_3)\) belongs to the domain
  of~\(\beta\).  In the following computations, we only consider the
  fourth entry for simplicity because this is the only one that is
  touched by~\(\beta\).  We compute
  \begin{align*}
    \pr_4\circ\beta\circ \gamma &(x_3,y_1,x_1,g,y_2,x_2,y_3)
    \\ &= \Phi(x_1,y_1)\cdot E^1(y_1,F^1(x_1,g,x_2)\cdot
    \Psi^{-1}(y_2,x_2),y_2)
    \\ &= \Phi(x_1,y_1)\cdot E^1(y_1,F^1(x_1,g,x_2),y_3) \cdot
    E^1(y_3,\Psi(y_2,x_2)^{-1},y_2)
    \\ &= g \cdot \Phi(x_2,y_3) \cdot E^1(y_3,\Psi(y_2,x_2)^{-1},y_2),
    \\ \pr_4\circ\gamma\circ \beta &(x_3,y_1,x_1,h,y_2,x_2,y_3)
    \\ &= F^1(x_1, \Phi(x_1,y_1)\cdot E^1(y_1,h,y_2),x_2)\cdot
    \Psi(y_2,x_2)^{-1}
    \\ &= F^1(x_1, \Phi(x_1,y_1),x_3) \cdot F^1(x_3,
    E^1(y_1,h,y_2),x_2)\cdot \Psi(y_2,x_2)^{-1}
    \\ &= F^1(x_1, \Phi(x_1,y_1),x_3) \cdot \Psi(y_1,x_3)^{-1}\cdot h.
  \end{align*}
  The first map multiplies~\(g\) from the right with some complicated
  element~\(\xi\) of~\(\Gr^1\) with \(\rg(\xi)=\s(\xi)=\s(g)\); the
  crucial point is that~\(\xi\) does not depend on~\(g\), so the map
  \(g\mapsto g\cdot \xi^{-1}\) is a well-defined inverse for
  \(\beta\circ\gamma\).  Similarly, the map~\(\gamma\circ\beta\) is
  invertible because it multiplies~\(h\) on the left with some
  element~\(\eta\) of~\(\Gr[H]^1\) with \(\s(\eta)=\rg(\eta)=\rg(h)\)
  that does not depend on~\(h\).
  Thus~\(\beta\) is both left and right invertible, so it is
  invertible.
\end{proof}

\begin{lemma}
  \label{lem:covers}
  The projection \(\pr_3\colon X\Gr[H]Y\defeq X\times_{F^0, \Gr[H]^0,
    \rg} \Gr[H]^1 \times_{\s, \Gr[H]^0, q} Y \to Y\) is a cover.
\end{lemma}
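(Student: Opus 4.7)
The plan is to transport the question along the isomorphism
\[
(\pr_1,a,\pr_3)\colon X\Gr[H]Y \congto X\times_{p, \Gr^0, \rg} \Gr^1 \times_{\s, \Gr^0, E^0} Y
\]
established in the preceding Lemma~\ref{lem:unique_map_XHY-XGY}, and then exhibit the corresponding projection to~\(Y\) on the right-hand side as a composite of two pullbacks of covers.

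First I would observe that, by construction, the isomorphism \((\pr_1,a,\pr_3)\) is compatible with the projection to~\(Y\): on both sides it is given by the third coordinate, so it intertwines \(\pr_3\colon X\Gr[H]Y\to Y\) with \(\pr_3\colon X\times_{p, \Gr^0, \rg} \Gr^1 \times_{\s, \Gr^0, E^0} Y\to Y\). Since isomorphisms are covers and composites of covers are covers, it suffices to check that this second projection is a cover.

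Next, I would decompose the second projection as
\[
X\times_{p, \Gr^0, \rg} \Gr^1 \times_{\s, \Gr^0, E^0} Y
\xrightarrow{\pr_{23}} \Gr^1\times_{\s,\Gr^0,E^0} Y
\xrightarrow{\pr_2} Y.
\]
The map \(\pr_2\colon \Gr^1\times_{\s,\Gr^0,E^0} Y\to Y\) is a cover because it is the fibre product projection of the cover \(\s\colon \Gr^1\prto\Gr^0\) along \(E^0\colon Y\to\Gr^0\) (axiom~(3) of Definition~\ref{def:pretopology}). Likewise, \(\pr_{23}\) is the fibre product projection of the cover \(p\colon X\prto\Gr^0\) along \(\rg\circ\pr_1\colon \Gr^1\times_{\s,\Gr^0,E^0} Y\to\Gr^0\), hence is a cover by the same axiom. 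Their composite is therefore a cover, completing the proof.

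There is no serious obstacle here; the argument is essentially a bookkeeping step that exploits the isomorphism from Lemma~\ref{lem:unique_map_XHY-XGY} to replace a fibre product built over \(\Gr[H]^0\) (whose projection to~\(Y\) is not obviously a cover, since \(F^0\colon X\to\Gr[H]^0\) need not be one) by one built over \(\Gr^0\), in which both coordinate maps being fibred are covers.
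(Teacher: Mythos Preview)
Your proof is correct and follows essentially the same route as the paper: transport along the isomorphism of Lemma~\ref{lem:unique_map_XHY-XGY} to the fibre product \(X\times_{p,\Gr^0,\rg}\Gr^1\times_{\s,\Gr^0,E^0} Y\), then factor \(\pr_3\) as \(\pr_2\circ\pr_{23}\) and recognise each factor as the pullback of a cover (\(\s\) and~\(p\), respectively). The paper's proof is identical in structure, only more terse.
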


\begin{proof}
  The isomorphism in Lemma~\ref{lem:unique_map_XHY-XGY} allows us to
  replace \(\pr_3\colon X\Gr[H]Y \to Y\) by \(\pr_3\colon X\Gr Y\defeq
  X\times_{p, \Gr^0, \rg} \Gr^1 \times_{\s, \Gr^0, E^0} Y\to Y\).
  Since \(p\colon X\prto \Gr^0\) and \(\s\colon \Gr^1\prto\Gr^0\) are
  covers, so are the induced maps \(\pr_{23}\colon X\Gr Y \prto \Gr
  Y\) and \(\pr_2\colon \Gr Y\prto Y\).  Hence so is their composite
  \(\pr_3\colon X\Gr Y\prto Y\).
\end{proof}

\begin{lemma}
  \label{lem:vage_equivalence_essentially_surjective}
  The functor \(F\colon \Gr(X) \to \Gr[H]\) is almost essentially
  surjective.
\end{lemma}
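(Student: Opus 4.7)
The plan is to unpack the definitions and observe that everything needed has essentially been assembled in the preceding two lemmas. The functor $F\colon \Gr(X)\to\Gr[H]$ is almost essentially surjective if the map
\[
\mu\colon X\times_{F^0,\Gr[H]^0,\rg} \Gr[H]^1 \to \Gr[H]^0,\qquad (x,h)\mapsto \s(h),
\]
is a cover locally in the sense of Definition~\ref{def:local}. So the strategy is to produce an explicit cover of the target $\Gr[H]^0$ along which $\mu$ pulls back to a cover.

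The natural cover to try is $q\colon Y\prto\Gr[H]^0$ coming from the quasi-inverse anafunctor $(Y,q,E)$. Pulling $\mu$ back along $q$ gives exactly the fibre product
\[
X\Gr[H]Y = X\times_{F^0,\Gr[H]^0,\rg}\Gr[H]^1\times_{\s,\Gr[H]^0,q} Y,
\]
and the resulting map to $Y$ is the coordinate projection $\pr_3\colon X\Gr[H]Y\to Y$.

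By Lemma~\ref{lem:covers} this projection $\pr_3$ is a cover. Therefore $\mu$ is a cover locally along $q$, which is exactly the condition that $F$ be almost essentially surjective. There is no real obstacle here: the entire content of the statement has been extracted into Lemma~\ref{lem:unique_map_XHY-XGY} (which provides the isomorphism $X\Gr[H]Y\cong X\Gr Y$ needed to make $\pr_3$ manifestly a composite of covers) and Lemma~\ref{lem:covers} (which concludes that $\pr_3$ is a cover). So the proof is essentially a one-line appeal to Lemma~\ref{lem:covers} together with the observation that the pullback of $\mu$ along the cover $q$ is precisely the map $\pr_3\colon X\Gr[H]Y\prto Y$.
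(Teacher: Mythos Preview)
Your proof is correct and follows essentially the same approach as the paper: both pull back the map \(\mu\colon X\times_{F^0,\Gr[H]^0,\rg}\Gr[H]^1\to\Gr[H]^0\) along the cover \(q\colon Y\prto\Gr[H]^0\), identify the result with \(\pr_3\colon X\Gr[H]Y\to Y\), and invoke Lemma~\ref{lem:covers} to conclude that this is a cover. The paper presents this via a fibre-product square, but the content is identical to what you wrote.
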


\begin{proof}
  We consider the following diagram
  \[
  \begin{tikzpicture}[baseline=(current bounding box.west)]
    \matrix[cd] (m) {
      X\times_{F^0, \Gr[H]^0, \rg} \Gr[H]^1
      \times_{\s, \Gr[H], q} Y & Y \\
      X\times_{F^0, \Gr^0, \rg} \Gr[H]^1 &\Gr[H]^0 \\
    };
    \begin{scope}[cdar]
      \draw[->>] (m-1-1) -- node {\(\pr_Y\)} (m-1-2);
      \draw (m-2-1) -- node {\(\s\circ \pr_{\Gr[H]^1}\)} (m-2-2);
      \draw[->>] (m-1-1) -- node[swap] {\(\pr_{12}\)} (m-2-1);
      \draw[->>] (m-1-2) -- node {\(q\)} (m-2-2);
    \end{scope}
  \end{tikzpicture}
  \]
  Lemma~\ref{lem:covers} shows that~\(\pr_Y\) is a cover.  The
  map~\(q\) is a cover by assumption.  Thus \(s\circ\pr_{\Gr[H]^1}\)
  is locally a cover, meaning that~\(F\) is almost essentially
  surjective.
\end{proof}

\begin{lemma}
  \label{lem:vague_equivalence_fully_faithful}
  The functor \(F\colon \Gr(X)\to\Gr[H]\) is almost fully faithful.
\end{lemma}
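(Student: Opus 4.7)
The plan is to take the cover $q\colon Y\prto \Gr[H]^0$ from the quasi-inverse anafunctor $(Y,q,E)$ as the test cover and prove that the pullback $q^*(F)$ is fully faithful. Unpacking Example~\ref{exa:base_change_of_functor} and using the identity $q\circ\pr_2 = F^0\circ\pr_1$ on $\tilde X \defeq X\times_{F^0,\Gr[H]^0,q} Y$, the fully-faithfulness of $q^*(F)$ reduces to showing that the map
\[
(y_1,x_1,g,x_2,y_2)\mapsto (y_1,x_1,F^1(x_1,g,x_2),x_2,y_2)
\]
from $Y\times_{q,\Gr[H]^0,F^0} X\times_{p,\Gr^0,\rg}\Gr^1\times_{\s,\Gr^0,p}X\times_{F^0,\Gr[H]^0,q}Y$ to $Y\times_{q,\Gr[H]^0,F^0}X\times_{F^0,\Gr[H]^0,\rg}\Gr[H]^1\times_{\s,\Gr[H]^0,F^0}X\times_{F^0,\Gr[H]^0,q}Y$ is an isomorphism.

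Guided by Lemma~\ref{lem:unique_map_XHY-XGY}, I would produce the candidate inverse by the transport formula
\[
(y_1,x_1,h,x_2,y_2)\mapsto (y_1,x_1,\,\Phi(x_1,y_1)\cdot E^1(y_1,h,y_2)\cdot \Phi(x_2,y_2)^{-1},\,x_2,y_2).
\]
The middle factor lies in $\Gr^1$ with range $p(x_1)$ and source $p(x_2)$, so the assignment lands in the expected fibre product. One round-trip check uses the naturality~\eqref{eq:Phi_natural} of $\Phi$ applied to $F^1(x_1,g,x_2)$: it telescopes the three-fold product $\Phi(x_1,y_1)\cdot E^1(y_1,F^1(x_1,g,x_2),y_2)\cdot \Phi(x_2,y_2)^{-1}$ back to $g$ directly.

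The other round-trip is the hard part: I must show $F^1(x_1,\,\Phi(x_1,y_1)\cdot E^1(y_1,h,y_2)\cdot\Phi(x_2,y_2)^{-1},\,x_2)=h$. To split this three-fold product by the multiplicativity of $F^1$, intermediate points of $X$ lying over $E^0(y_1)$ and $E^0(y_2)$ are required, but such points need not exist globally. Following the device used in the proof of Lemma~\ref{lem:unique_map_XHY-XGY}, I would pull back once more along the cover $p\colon X\prto \Gr^0$ in two additional slots to introduce these intermediate $X$-points, verify the identity at the enlarged fibre product using the naturality~\eqref{eq:Psi_natural} of $\Psi$ (which controls $F^1\circ E^1$ up to conjugation by $\Psi$), and then descend via the locality of isomorphisms (Proposition~\ref{pro:isomorphism_local}). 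Combining the two round-trips exhibits the candidate formula as a two-sided inverse, so $q^*(F)$ is fully faithful and $F$ is almost fully faithful.
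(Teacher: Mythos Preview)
Your overall plan---take $q$ as the test cover, write down the forward map~$\delta$ and the candidate inverse~$\bar\epsilon$ via $\Phi(x_1,y_1)\cdot E^1(y_1,h,y_2)\cdot\Phi(x_2,y_2)^{-1}$, and check $\bar\epsilon\circ\delta=\id$ using~\eqref{eq:Phi_natural}---matches the paper exactly. The gap is in the other round-trip.

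The equation you propose to verify, $F^1\bigl(x_1,\Phi(x_1,y_1)\cdot E^1(y_1,h,y_2)\cdot\Phi(x_2,y_2)^{-1},x_2\bigr)=h$, does \emph{not} hold in general. After pulling back to introduce $x_3,x_4\in X$ over $E^0(y_1),E^0(y_2)$ and applying~\eqref{eq:Psi_natural} to the middle piece, one obtains
\[
\eta = F^1(x_1,\Phi(x_1,y_1),x_3)\cdot \Psi(y_1,x_3)^{-1}\cdot h \cdot \Psi(y_2,x_4)\cdot F^1(x_4,\Phi(x_2,y_2)^{-1},x_2).
\]
The flanking factors do not depend on~$h$, but they are not units: that would require $F^1(x_1,\Phi(x_1,y_1),x_3)=\Psi(y_1,x_3)$, a triangle-identity compatibility between the two independently given transformations $\Phi$ and~$\Psi$ that is never assumed for a bare equivalence. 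So $\delta\circ\bar\epsilon$ is not the identity on the nose, and no amount of descent will make it one.

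The paper's fix is exactly what your reference to Proposition~\ref{pro:isomorphism_local} hints at but your wording obscures: the displayed formula shows that the pulled-back $\delta\circ\bar\epsilon$ is left and right multiplication by fixed arrows, hence \emph{invertible}; by locality of isomorphisms, $\delta\circ\bar\epsilon$ itself is invertible. Since $\bar\epsilon\circ\delta=\id$ already, $\delta\circ\bar\epsilon$ is an invertible idempotent, hence the identity, and $\delta$ is an isomorphism. Replace ``verify the identity'' by ``show invertibility'' and your sketch becomes the paper's proof.
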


\begin{proof}
  We claim that~\(q^*(F)\) is fully faithful for the given cover
  \(q\colon Y\prto\Gr[H]^0\), that is, the map
  \begin{multline*}
    \delta\colon
    Y\times_{q,\Gr[H]^0, F^0} X\times_{p,\Gr^0,\rg} \Gr^1
    \times_{\s,\Gr^0,p} X \times_{F^0,\Gr[H]^0,q} Y
    \\\to Y\times_{q,\Gr[H]^0, F^0} X\times_{F^0,\Gr[H]^0,\rg} \Gr[H]^1
    \times_{\s,\Gr[H]^0,q} Y \times_{q,\Gr[H]^0,F^0} X,
    \\ (y_1,x_1,g,x_2,y_2)\mapsto (y_1,x_1,F^1(x_1,g,x_2),y_2,x_2),
  \end{multline*}
  is an isomorphism.  The fibre product on the right exists by
  Lemma~\ref{lem:covers}.  An element of the codomain of~\(\delta\) is
  given by \(y_1,y_2\in Y\), \(x_1,x_2\in X\), \(h\in\Gr[H]^1\) with
  \(q(y_1)=F^0(x_1)=\rg(h)\), \(q(y_2)=F^0(x_2)=\s(h)\).  Hence
  \(E^1(y_1,h,y_2)\), \(\Phi(x_1,y_1)\) and \(\Phi(x_2,y_2)\) are
  well-defined arrows in~\(\Gr^1\), and their ranges and sources match
  so that
  \[
  \epsilon(y_1,x_1,h,x_2,y_2) \defeq
  \Phi(x_1,y_1)\cdot E^1(y_1,h,y_2)\cdot \Phi(x_2,y_2)^{-1} \in \Gr^1
  \]
  is well-defined and has range \(\rg(\Phi(x_1,y_1))=p(x_1)\) and
  source \(\s(\Phi(x_2,y_2))=p(x_2)\).  We get a map
  \begin{multline*}
    \bar\epsilon\colon
    Y\times_{q,\Gr[H]^0, F^0} X\times_{F^0,\Gr[H]^0,\rg} \Gr[H]^1
    \times_{\s,\Gr[H]^0,F^0} X \times_{F^0,\Gr[H]^0,q} Y
    \\\to Y\times_{q,\Gr[H]^0, F^0} X\times_{p,\Gr^0,\rg} \Gr^1
    \times_{\s,\Gr^0,p} X \times_{F^0,\Gr[H]^0,q} Y,
    \\ (y_1,x_1,h,x_2,y_2)\mapsto (y_1,x_1,\epsilon(y_1,x_1,h,x_2,y_2),x_2,y_2).
  \end{multline*}
  The map \(\bar\epsilon\circ\delta\) is the identity on
  \(Y\times_{q,\Gr[H]^0, F^0} X\times_{p,\Gr^0,\rg} \Gr^1
  \times_{\s,\Gr^0,p} X \times_{F^0,\Gr[H]^0,q} Y\) by the naturality
  condition~\eqref{eq:Phi_natural}.  We are going to show that the
  composite \(\delta\circ\bar\epsilon\) is invertible (and hence the
  identity because \(\bar\epsilon\circ\delta\) is the identity).
  Elementwise, \(\delta\circ\bar\epsilon\) maps \((y_1,x_1,h,x_2,y_2)\)
  to \((y_1,x_1,\eta(y_1,x_1,h,x_2,y_2),x_2,y_2)\) with
  \[
  \eta(y_1,x_1,h,x_2,y_2) =
  F^1(x_1,\Phi(x_1,y_1)\cdot E^1(y_1,h,y_2)\cdot \Phi(x_2,y_2)^{-1},x_2).
  \]
  To show that~\(\delta\circ\bar\epsilon\) is an isomorphism, we
  pull~\(\delta\circ\bar\epsilon\) back to the map on
  \[
  X\times_{p,\Gr^0,E^0} Y\times_{q,\Gr[H]^0, F^0} X\times_{F^0,\Gr[H]^0,\rg} \Gr[H]^1
  \times_{\s,\Gr[H]^0,F^0} X \times_{F^0,\Gr[H]^0,q} Y
  \times_{E^0,\Gr^0,p} X
  \]
  that sends \((x_3,y_1,x_1,h,x_2,y_2,x_4)\) to
  \((x_3,y_1,x_1,\eta(y_1,x_1,h,x_2,y_2),x_2,y_2,x_4)\).  Since
  \(p\colon X\to \Gr^0\) is a cover,
  Proposition~\ref{pro:isomorphism_local} shows
  that~\(\delta\circ\bar\epsilon\) is an isomorphism if and only if this
  new map is an isomorphism.  Using the extra variables \(x_3,x_4\),
  we may simplify~\(\eta\):
  \begin{multline*}
    \eta(y_1,x_1,h,x_2,y_2) =
    F^1(x_1,\Phi(x_1,y_1)\cdot E^1(y_1,h,y_2)\cdot
    \Phi(x_2,y_2)^{-1},x_2)
    \\ = F^1(x_1,\Phi(x_1,y_1),x_3)\cdot F^1(x_3, E^1(y_1,h,y_2), x_4)
    \cdot F^1(x_4,\Phi(x_2,y_2)^{-1},x_2)
    \\ = F^1(x_1,\Phi(x_1,y_1),x_3)\cdot \Psi(y_1,x_3)^{-1} \cdot h\cdot
    \Psi(y_2,x_4) \cdot F^1(x_4,\Phi(x_2,y_2)^{-1},x_2).
  \end{multline*}
  This map is invertible because~\(\eta\) multiplies~\(h\) on the left
  and right by expressions that do not depend on~\(h\): the inverse
  maps \((x_3,y_1,x_1,h,x_2,y_2,x_4)\) to
  \begin{multline*}
    (x_3,y_1,x_1,\Psi(y_1,x_3)\cdot F^1(x_1,\Phi(x_1,y_1),x_3)^{-1}
    \cdot h
    \\\times F^1(x_2,\Phi(x_2,y_2),x_4)\cdot \Psi(y_2,x_4)^{-1},x_2,y_2,x_4).
  \end{multline*}
  Thus~\(\delta\circ\bar\epsilon\) is invertible.  It follows
  that~\(\delta\) is invertible, so that~\(F\) is almost
  fully faithful.
\end{proof}

This finishes the proof that \ref{e:vague_eq1} implies~\ref{e:vague_eq2} in
Theorem~\ref{the:explicit-vague-equivalence}, and hence the proof of
the theorem.

\section{Groupoid actions}
\label{sec:actions}

Let~\(\Cat\) be a category with coproducts and~\(\covers\) a
subcanonical pretopology on~\(\Cat\).

\begin{deflemma}
  \label{def:action}
  Let \(\Gr=(\Gr^0,\Gr^1,\rg,\s,\mul)\) be a groupoid
  in~\((\Cat,\covers)\).  A (right) \emph{\(\Gr\)\nb-action}
  in~\(\Cat\) is \(\Act\inOb\Cat\) with \(\s\in\Cat(\Act,\Gr^0)\)
  (\emph{anchor}) and \(\mul\in\Cat(\Act\times_{\s,\Gr^0,\rg} \Gr^1,
  \Act)\) (\emph{action}), denoted multiplicatively as~\(\cdot\), such
  that
  \begin{enumerate}
  \item \(\s(x\cdot g)=\s(g)\) for all \(x\in \Act\), \(g\in \Gr^1\)
    with \(\s(x)=\rg(g)\);
  \item \((x\cdot g_1)\cdot g_2= x\cdot (g_1\cdot g_2)\) for all
    \(x\in \Act\), \(g_1,g_2\in \Gr^1\) with \(\s(x)=\rg(g_1)\),
    \(\s(g_1)=\rg(g_2)\);
  \item \(x\cdot 1_{\s(x)}=x\) for all \(x\in \Act\).
  \end{enumerate}
  This definition does not depend on~\(\covers\).  In the presence of
  (1) and~(2), the third condition is equivalent to
  \begin{enumerate}
  \item[(\(3'\))] \(\mul\colon \Act\times_{\s,\Gr^0,\rg} \Gr^1\to \Act\) is an
    epimorphism;
  \item[(\(3''\))] \(\mul\colon \Act\times_{\s,\Gr^0,\rg} \Gr^1\to \Act\) is a
    cover;
  \item[(\(3'''\))] \((\mul,\pr_2)\colon \Act\times_{\s,\Gr^0,\rg} \Gr^1\to
    \Act\times_{\s,\Gr^0,\s} \Gr^1\), \((x,g)\mapsto (x\cdot g,g)\), is
    invertible.
  \end{enumerate}
  An action where the anchor map is a cover is called a \emph{sheaf}
  over~\(\Gr\).
\end{deflemma}

\begin{proof}
  Conditions (1)--(3) imply \((x\cdot g^{-1})\cdot g= x\cdot
  (g^{-1}\cdot g) = x\cdot 1_{\s(x)}= x\) for all \(x\in \Act\),
  \(g\in \Gr^1\) with \(\s(x)=\s(g)\) and \((x\cdot g)\cdot g^{-1} =
  x\) for \(x\in \Act\), \(g\in \Gr^1\) with \(\s(x)=\rg(g)\).
  Hence the elementwise formula \((x,g)\mapsto (x\cdot g^{-1},g)\)
  gives an inverse for the map in~(\(3'''\)).  Thus (3)
  implies~(\(3'''\)) in the presence of (1) and~(2).

  Assume~(\(3'''\)).  The map \(\pr_1\colon \Act\times_{\s,\Gr^0,\s}
  \Gr^1\to \Act\) is a cover because \(\s\colon \Gr^1\prto\Gr^0\) is a
  cover.  Composing with the isomorphism in~(\(3'''\)) shows
  that~\(\mul\) is a cover.  Covers are epimorphisms because the
  pretopology is subcanonical.  Thus
  \[
  (1)\text{--}(3)\Rightarrow(3''')\Rightarrow(3'')\Rightarrow(3').
  \]

  Since \((x\cdot g)\cdot 1_{\s(g)}= x \cdot (g\cdot 1_{\s(g)}) =
  x\cdot g\) for all \(x\in \Act\), \(g\in \Gr^1\) with
  \(\s(x)=\rg(g)\), the map \(f\colon \Act\to \Act\), \(x\mapsto
  x\cdot 1_{\s(x)}\), satisfies \(f\circ \mul=\mul\).  If~\(\mul\) is
  an epimorphism, this implies \(f=\id_{\Act}\).  Thus \((3')\)
  implies~(3).
\end{proof}

\begin{remark}
  \label{rem:anchor_not_cover}
  Sheaves over a Lie groupoid are equivalent to groupoid actions with
  an étale anchor map
  (see~\cite{Moerdijk-Mrcun:Groupoids_sheaves}*{Section 3.1}).
  Sheaf theory only works well for étale groupoids, however.  If we
  work in the category of smooth manifolds with
  étale maps as covers, then the above definition of a sheaf
  is an equivalent way of defining the
  concept of a sheaf via local sections
  \cite{Moerdijk-Mrcun:Groupoids_sheaves}*{Section 3.1}.  We also
  require the anchor
  map to be surjective, which restricts to sheaves for which all
  stalks are non-empty; this seems a rather mild restriction.  We have
  not yet tried to carry over the usual constructions with
  sheaves to non-étale groupoids using the above definition.

  The class of groupoid actions with a cover as anchor map certainly
  deserves special consideration, but it is not a good idea to require
  this for all actions.  Then we would also have to require this for
  bibundles and, in particular, for bibundle functors.  Hence we could
  only treat covering bibundle functors, where both anchor maps are
  covers.  But the bibundle functor associated to a functor need not
  be covering: this happens if and only if the functor is essentially
  surjective.  As a result, anafunctors and bibundle functors would
  no longer be equivalent.
\end{remark}

\begin{definition}
  \label{def:G-equivariant}
  Let \(\Act\) and~\(\Act[Y]\) be right \(\Gr\)\nb-actions.  A
  \emph{\(\Gr\)\nb-equivariant map} or briefly \emph{\(\Gr\)\nb-map}
  \(\Act\to \Act[Y]\) is \(f\in\Cat(\Act,\Act[Y])\) with
  \(\s(f(x))=\s(x)\) for all \(x\in \Act\) and \(f(x\cdot
  g)=f(x)\cdot g\) for all \(x\in \Act\), \(g\in\Gr^1\) with
  \(\s(x)=\rg(g)\).

  The \(\Gr\)\nb-actions and \(\Gr\)\nb-maps form a category, which we
  denote by~\(\Cat(\Gr)\).  Let \(\Cat_\covers(\Gr) \subseteq
  \Cat(\Gr)\) be the full subcategory of \(\Gr\)\nb-sheaves.
\end{definition}

\begin{definition}
  \label{def:G-invariant}
  Let \(\Act\) be a \(\Gr\)\nb-action and \(\Base\inOb\Cat\).  A map
  \(f\in\Cat(\Act,\Base)\) is \emph{\(\Gr\)\nb-invariant} if
  \(f(x\cdot g)=f(x)\) for all \(x\in\Act\), \(g\in\Gr^1\) with
  \(\s(x)=\rg(g)\).
\end{definition}

\subsection{Examples}
\label{sec:act_examples}

\begin{example}
  \label{exa:action_on_objects}
  Any groupoid~\(\Gr\) acts on~\(\Gr^0\) by \(\s=\id_{\Gr^0}\) and
  \(\rg(g)\cdot g=\s(g)\) for all \(g\in\Gr^1\).
\end{example}

\begin{proposition}
  \label{pro:Cat(Gr)_final}
  \(\Gr^0\) is a final object in \(\Cat(\Gr)\)
  and~\(\Cat_\covers(\Gr)\).
\end{proposition}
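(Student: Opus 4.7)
My plan is to exhibit the unique $\Gr$-map from any action $\Act$ to $\Gr^0$ as the anchor map itself, and then verify that no other such map can exist.

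First I would note that $\Gr^0$ with its canonical self-action from Example~\ref{exa:action_on_objects} is a sheaf: its anchor map is $\id_{\Gr^0}$, which is an isomorphism and hence a cover. Thus $\Gr^0$ is an object of both $\Cat(\Gr)$ and the full subcategory $\Cat_\covers(\Gr)$, so it suffices to establish finality in $\Cat(\Gr)$.

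Now let $\Act$ be any right $\Gr$-action with anchor $\s_\Act\colon \Act\to\Gr^0$ and action $\mul_\Act$. I claim $\s_\Act$ is itself $\Gr$-equivariant when $\Gr^0$ carries the action of Example~\ref{exa:action_on_objects}. For equivariance I must check $\s_{\Gr^0}(\s_\Act(x))=\s_\Act(x)$, which is trivial since $\s_{\Gr^0}=\id_{\Gr^0}$, and $\s_\Act(x\cdot g)=\s_\Act(x)\cdot g$ for all $x\in\Act$, $g\in\Gr^1$ with $\s_\Act(x)=\rg(g)$. By condition~(1) of Definition and Lemma~\ref{def:action}, $\s_\Act(x\cdot g)=\s(g)$, and by the description of the action of~\(\Gr\) on~\(\Gr^0\), $\s_\Act(x)\cdot g = \rg(g)\cdot g=\s(g)$, so both sides agree.

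For uniqueness, suppose $f\colon\Act\to\Gr^0$ is any $\Gr$-equivariant map in the sense of Definition~\ref{def:G-equivariant}. The anchor-compatibility condition demands $\s_{\Gr^0}(f(x))=\s_\Act(x)$ for every $x\in\Act$; since $\s_{\Gr^0}=\id_{\Gr^0}$, this forces $f(x)=\s_\Act(x)$, i.e.\ $f=\s_\Act$ as a map in~\(\Cat\) (by the Yoneda-style interpretation of elementwise formulas explained in Section~\ref{sec:groupoids_first_def}). The second equivariance condition $f(x\cdot g)=f(x)\cdot g$ is then automatic from the verification above. There is no real obstacle here: the proof rests entirely on the tautology $\s_{\Gr^0}=\id_{\Gr^0}$, which pins down any equivariant map to $\Gr^0$ as its own anchor. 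The same argument, restricted to sheaves, yields finality in $\Cat_\covers(\Gr)$.
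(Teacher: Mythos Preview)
Your proof is correct and follows exactly the same approach as the paper: the anchor map~\(\s_\Act\) is the unique \(\Gr\)-map \(\Act\to\Gr^0\), and \(\Gr^0\) is a sheaf because \(\id_{\Gr^0}\) is a cover. The paper's proof is a one-sentence version of what you wrote out in detail.
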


\begin{proof}
  For any \(\Gr\)\nb-action~\(\Act\), the source map~\(\s\) is a
  \(\Gr\)\nb-map, and it is the only \(\Gr\)\nb-map \(\Act\to\Gr^0\).
  Since identity maps are covers, \(\Gr^0\) belongs
  to~\(\Cat_\covers(\Gr)\).
\end{proof}

\begin{example}
  \label{exa:action_0-groupoid}
  View \(\Gr[Y]\inOb\Cat\) as a groupoid as in
  Example~\ref{exa:0-groupoid}.  A \(\Gr[Y]\)\nb-action on~\(\Act\) is
  equivalent to a map \(\Act\to \Gr[Y]\), namely, the anchor map of
  the action; the multiplication map \(\Act\times_{\Gr[Y]}
  \Gr[Y]\to\Act\) must be the canonical isomorphism.  A
  \(\Gr[Y]\)\nb-map between actions \(\s_i\colon \Act_i\to\Gr[Y]\),
  \(i=1,2\), of~\(\Gr[Y]\) is a map \(f\colon \Act_1\to\Act_2\) with
  \(\s_2\circ f=\s_1\).  Thus \(\Cat(\Gr[Y])\) is the slice category
  \(\Cat\downarrow \Gr[Y]\) of objects in~\(\Cat\) over~\(\Gr[Y]\).
\end{example}

\begin{example}
  \label{exa:group_action}
  Let~\(\Gr\) be a group in~\((\Cat,\covers)\), that is, \(\Gr^0\) is
  a final object.  Then there is a unique map \(\Act\to\Gr^0\) for any
  \(\Act\inOb\Cat\).  Since this unique map is the only choice for the
  anchor map, a \(\Gr\)\nb-action is given by the multiplication map
  \(\Act\times\Gr^1\to\Act\) alone.  This defines a group action if
  and only if \((x\cdot g_1)\cdot g_2= x\cdot (g_1\cdot g_2)\) for all
  \(x\in \Act\), \(g_1,g_2\in \Gr^1\), and \(x\cdot 1=x\) for all
  \(x\in\Act\); the unit element~\(1\) is defined in
  Example~\ref{exa:groups}.
\end{example}

\subsection{Transformation groupoids}
\label{sec:trafo_groupoids}

\begin{deflemma}
  \label{deflem:transformation_groupoid}
  Let~\(\Act\) be a right \(\Gr\)\nb-action.  The \emph{transformation
    groupoid} \(\Act\rtimes \Gr\) in~\((\Cat,\covers)\) is the
  groupoid with objects~\(\Act\), arrows~\(\Act\times_{\s,\Gr^0,\rg}
  \Gr^1\), range~\(\pr_1\), source~\(\mul\), and multiplication
  defined by
  \[
  (x_1,g_1)\cdot (x_2,g_2) \defeq (x_1,g_1\cdot g_2)
  \]
  for all \(x_1,x_2\in \Act\), \(g_1,g_2\in \Gr^1\) with
  \(\s(x_1)=\rg(g_1)\), \(\s(x_2)=\rg(g_2)\), and \(x_1\cdot
  g_1=x_2\), that is, \(\s(x_1,g_1)=\rg(x_2,g_2)\) in~\(\Act\); then
  \(\s(x_1)=\rg(g_1\cdot g_2)\), so that \((x_1,g_1\cdot g_2)\in
  \Act\times_{\s,\Gr^0,\rg} \Gr^1\).

  This is a groupoid in \((\Cat,\covers)\) with unit map and inversion
  given by \(1_x\defeq (x,1_{\s(x)})\) and \((x,g)^{-1}\defeq (x\cdot
  g,g^{-1})\).  It acts on~\(\Act\) with anchor map \(\s=\id_{\Act}\colon \Act\to
  \Act\), by \(x\cdot (x,g) \defeq x\cdot g\) for all \(x\in \Act\), \(g\in
  \Gr^1\) with \(\s(x)=\rg(g)\).
\end{deflemma}

\begin{proof}
  The range map \(\pr_1\colon \Act\times_{\s,\Gr^0,\rg} \Gr^1\prto
  \Act\) of \(\Act\rtimes\Gr\) is a cover because \(\rg\colon
  \Gr^1\prto \Gr^0\) is a cover; Lemma~\ref{def:action} shows that the
  source map~\(\mul\) of \(\Act\rtimes\Gr\) is a cover as well.  The
  remaining properties are routine computations.  The action of
  \(\Act\rtimes \Gr\) on~\(\Act\) is a special case of
  Example~\ref{exa:action_on_objects}.
\end{proof}

\begin{proposition}
  \label{pro:transformation_groupoid_action}
  Let~\(\Act\) be a \(\Gr\)\nb-action.  An action of the
  transformation groupoid \(\Act\rtimes\Gr\) on an object
  \(\Act[Y]\inOb\Cat\) is equivalent to an action of~\(\Gr\)
  on~\(\Act[Y]\) together with a \(\Gr\)\nb-map \(f\colon
  \Act[Y]\to\Act\).  Furthermore, the following two groupoids are
  isomorphic:
  \[
  \Act[Y]\rtimes(\Act\rtimes\Gr) \cong \Act[Y]\rtimes\Gr.
  \]
  A map \(\Act[Y]\to\Base\) is \(\Act\rtimes\Gr\)-invariant if and
  only if it is \(\Gr\)\nb-invariant and over~\(\Act\), and a map
  between two \(\Act\rtimes\Gr\)\nb-actions is
  \(\Act\rtimes\Gr\)-equivariant if and only if it is
  \(\Gr\)\nb-equivariant.
\end{proposition}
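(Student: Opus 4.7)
\emph{Proof proposal.} I work elementwise throughout, interpreting formulas via the Yoneda algorithm of Section~\ref{sec:groupoids_first_def}; since \(\Act\rtimes\Gr\) is just another groupoid in~\((\Cat,\covers)\) and an action is the data listed in Lemma~\ref{def:action}, everything is a matter of unpacking definitions, and the real work is the bookkeeping of fibre products.

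First I unpack an \(\Act\rtimes\Gr\)\nb-action on~\(\Act[Y]\).  It consists of an anchor \(\sigma\colon \Act[Y]\to (\Act\rtimes\Gr)^0=\Act\) and a multiplication defined on \(\Act[Y]\times_{\sigma,\Act,\rg_{\Act\rtimes\Gr}} (\Act\rtimes\Gr)^1\); since \(\rg_{\Act\rtimes\Gr}=\pr_1\), the canonical isomorphism \((y,x,g)\mapsto (y,g)\) identifies this fibre product with \(\Act[Y]\times_{\s\circ\sigma,\Gr^0,\rg}\Gr^1\).  Writing the multiplication as \((y,g)\mapsto y\cdot g\), the action axioms (1), (2), (3) of Lemma~\ref{def:action} for \(\Act\rtimes\Gr\) become: (1) \(\sigma(y\cdot g)=\sigma(y)\cdot g\); (2) \((y\cdot g_1)\cdot g_2= y\cdot (g_1 g_2)\) whenever \(\s(y\cdot g_1)\) and \(\rg(g_2)\) match; and (3) \(y\cdot 1_{\s(\sigma(y))}=y\).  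Conditions (2) and~(3), together with the anchor \(\s\circ\sigma\), are exactly the axioms of a \(\Gr\)\nb-action on~\(\Act[Y]\); condition~(1) is exactly the statement that \(\sigma\colon \Act[Y]\to\Act\) is a \(\Gr\)\nb-map.  Conversely, given such a pair \((\text{action},f)\) one reverses the construction; both assignments are manifestly inverse to each other.

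For the groupoid isomorphism, I unpack both sides.  By the previous paragraph the arrow object of \(\Act[Y]\rtimes(\Act\rtimes\Gr)\) is \(\Act[Y]\times_{\sigma,\Act,\pr_1}(\Act\times_{\s,\Gr^0,\rg}\Gr^1) \cong \Act[Y]\times_{\s_{\Act[Y]},\Gr^0,\rg}\Gr^1\), which is the arrow object of \(\Act[Y]\rtimes\Gr\).  Under this identification, the range maps match (both are \(\pr_1\)), and the source map of \(\Act[Y]\rtimes(\Act\rtimes\Gr)\) is \((y,(x,g))\mapsto y\cdot (x,g)= y\cdot g\), which is the source of \(\Act[Y]\rtimes\Gr\).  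A short computation with the multiplication in the transformation groupoid shows that the multiplications agree as well; this gives the isomorphism \(\Act[Y]\rtimes(\Act\rtimes\Gr)\cong \Act[Y]\rtimes\Gr\).

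Finally, invariance and equivariance.  A map \(h\colon \Act[Y]\to\Base\) is \(\Act\rtimes\Gr\)\nb-invariant iff \(h(y\cdot (x,g))=h(y)\) for all composable data, which after the identification above reads \(h(y\cdot g)=h(y)\); together with the fact that \(\Act[Y]\) carries a \(\Gr\)\nb-map to~\(\Act\) (namely \(\sigma\)), this is exactly the claimed condition.  Analogously, a map \(h\colon \Act[Y]_1\to\Act[Y]_2\) is \(\Act\rtimes\Gr\)\nb-equivariant iff it is \(\Gr\)\nb-equivariant for the induced \(\Gr\)\nb-actions and commutes with the anchors to~\(\Act\), i.e.\ is an arrow in \(\Cat(\Gr)\downarrow\Act\).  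I do not anticipate any serious obstacle; the only point demanding care is the systematic rewriting of the fibre product \(\Act[Y]\times_{\sigma,\Act,\pr_1}(\Act\rtimes\Gr)^1\) as \(\Act[Y]\times_{\s\circ\sigma,\Gr^0,\rg}\Gr^1\), which is used implicitly in all four statements.
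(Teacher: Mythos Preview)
Your proposal is correct and follows essentially the same approach as the paper: both arguments hinge on the canonical identification \(\Act[Y]\times_{\sigma,\Act,\pr_1}(\Act\times_{\s,\Gr^0,\rg}\Gr^1)\cong \Act[Y]\times_{\s\circ\sigma,\Gr^0,\rg}\Gr^1\) (the paper's equation~\eqref{eq:trafo_gr_arrows}) and then read off each claim from it. Your explicit pairing of the action axioms (1)--(3) for \(\Act\rtimes\Gr\) with ``\(\sigma\) is a \(\Gr\)\nb-map'' plus ``\(\Gr\)\nb-action axioms'' is a slightly more detailed version of what the paper calls ``routine to check''; your added remark that \(\Act\rtimes\Gr\)\nb-equivariance also forces compatibility with the anchors to~\(\Act\) is a harmless sharpening of the paper's looser phrasing.
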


\begin{proof}
  An action of \(\Act\rtimes\Gr\) on~\(\Act[Y]\) is given by an anchor
  map \(f\colon \Act[Y]\to\Act\) and a multiplication map
  \[
  \Act[Y]\times_{f,\Act,\pr_1} (\Act\times_{\s,\Gr^0,\rg}\Gr^1) \to \Act[Y].
  \]
  We compose~\(f\) with \(\s_{\Act}\colon \Act\to\Gr^0\) to get an anchor
  map \(\s_{\Act[Y]}\colon \Act[Y]\to\Gr^0\), and we compose the
  multiplication map with the canonical isomorphism
  \begin{equation}
    \label{eq:trafo_gr_arrows}
    \Act[Y]\times_{\s_{\Act[Y]},\Gr^0,\rg}\Gr^1 \congto
    \Act[Y]\times_{f,\Act,\pr_1} (\Act\times_{\s_{\Act},\Gr^0,\rg}\Gr^1)
  \end{equation}
  to get a \(\Gr\)\nb-action
  on~\(\Act[Y]\): \(y\cdot g\defeq y\cdot (f(y),g)\) for all
  \(y\in\Act[Y]\), \(g\in\Gr^1\) with \(\s_{\Act[Y]}(y)=\rg(g)\).  It is
  routine to check that this is an action of~\(\Gr\).  The map~\(f\)
  is \(\Gr\)\nb-equivariant: \(\s_{\Act}\circ f=\s_{\Act[Y]}\) by
  construction, and \(f(y\cdot g)=\s_{\Act\rtimes\Gr}(f(y),g) =
  f(y)\cdot g\).

  Conversely, given a \(\Gr\)\nb-action on~\(\Act[Y]\) and a
  \(\Gr\)\nb-equivariant map \(f\colon \Act\to\Act[Y]\), we define an
  action of \(\Act\rtimes\Gr\) by taking~\(f\) as the anchor map and
  \(y\cdot (x,g) \defeq y\cdot g\) for all \(y\in\Act[Y]\),
  \(x\in\Act\), \(g\in\Gr^1\) with \(f(y)=x\), \(\s(x)=\rg(g)\).
  These two processes are inverse to each other.  Since both
  constructions are natural, a map is \(\Act\rtimes\Gr\)-invariant if
  and only if it is both \(\Gr\)\nb-invariant and a map over~\(\Act\).

  The transformation groupoids \(\Act[Y]\rtimes(\Act\rtimes\Gr)\)
  and~\(\Act[Y]\rtimes\Gr\) have the same objects~\(\Act[Y]\), and
  isomorphic arrows by~\eqref{eq:trafo_gr_arrows}.  This isomorphism
  also intertwines their range, source and multiplication maps, so
  both transformation groupoids for~\(\Act[Y]\) are isomorphic.  The
  isomorphism~\eqref{eq:trafo_gr_arrows} also implies that
  \(\Act\rtimes\Gr\)- and \(\Gr\)\nb-invariance are equivalent.
\end{proof}

\subsection{Left actions and several commuting actions}
\label{sec:left_actions}

\emph{Left \(\Gr\)\nb-actions} and \(\Gr\)\nb-maps between them are
defined similarly; but we denote anchor maps for left actions by
\(\rg\colon \Act\to \Gr^0\) instead of~\(\s\).

\begin{lemma}
  \label{lem:left_right_action}
  The categories of left and right \(\Gr\)\nb-actions are isomorphic.
\end{lemma}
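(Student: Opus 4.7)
The plan is to construct mutually inverse functors between the categories of left and right $\Gr$\nobreakdash-actions by exploiting the inversion map of~$\Gr$. Given a right $\Gr$\nobreakdash-action with anchor $\s\colon\Act\to\Gr^0$ and multiplication $\mul\colon\Act\times_{\s,\Gr^0,\rg}\Gr^1\to\Act$, I will produce a left action on the same object~$\Act$ by taking the left anchor to be~$\s$ (now relabelled $\rg$) and defining the left multiplication elementwise by $g\cdot x\defeq x\cdot g^{-1}$, for $g\in\Gr^1$, $x\in\Act$ with $\s(g)=\s(x)$. In the opposite direction, a left action with anchor $\rg\colon\Act\to\Gr^0$ and multiplication $g\cdot x$ is sent to the right action with anchor~$\rg$ and $x\cdot g\defeq g^{-1}\cdot x$.

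To make this rigorous in a general category with pretopology, I first need to produce the multiplication map as a genuine morphism in~$\Cat$. The key observation is that the inversion map $\invers\colon\Gr^1\to\Gr^1$ (which exists by Proposition~\ref{pro:unit_inverse_from_basicality} and satisfies $\s\circ\invers=\rg$, $\rg\circ\invers=\s$) induces a canonical isomorphism
\[
\id_{\Act}\times_{\Gr^0}\invers\colon
\Act\times_{\s,\Gr^0,\s}\Gr^1\congto\Act\times_{\s,\Gr^0,\rg}\Gr^1,
\]
so the composite $\mul\circ(\id_{\Act}\times_{\Gr^0}\invers)$ is a well-defined morphism $\Act\times_{\s,\Gr^0,\s}\Gr^1\to\Act$, which after applying a flip becomes the desired left multiplication map on $\Gr^1\times_{\s,\Gr^0,\s}\Act$.

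Next, I would verify the axioms of a left action (associativity and unitality) using the element-notation algorithm laid out in Section~\ref{sec:groupoids_first_def}, reducing them to identities like $(g_1\cdot g_2)\cdot x=x\cdot(g_1g_2)^{-1}=x\cdot g_2^{-1}\cdot g_1^{-1}=g_1\cdot(g_2\cdot x)$ and $1_{\rg(x)}\cdot x=x\cdot 1_{\s(x)}=x$, all following from the right-action axioms together with $\invers^2=\id$ and $(gh)^{-1}=h^{-1}g^{-1}$. Equivariance of morphisms is immediate: $f(g\cdot x)=f(x\cdot g^{-1})=f(x)\cdot g^{-1}=g\cdot f(x)$, so the construction is functorial.

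Finally, the two functors are mutually inverse because $(g^{-1})^{-1}=g$ forces the composition in either order to give the identity on both objects and morphisms of the action categories; this is a straightforward elementwise verification that lifts to~$\Cat$ by Yoneda. The only genuine subtlety, and the step I would write out most carefully, is the bookkeeping of fibre products: one must consistently track which side of the fibre product is glued along $\s$ versus $\rg$, and use~$\invers$ to switch between $\Act\times_{\s,\Gr^0,\rg}\Gr^1$ and $\Gr^1\times_{\s,\Gr^0,\rg}\Act$ and their variants. No extra assumption on the pretopology is needed.
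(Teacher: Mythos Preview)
Your proposal is correct and follows essentially the same approach as the paper: convert a right action to a left action by setting $\rg=\s$ and $g\cdot x\defeq x\cdot g^{-1}$, and vice versa, then observe that equivariance is preserved so the functors are mutually inverse. Your write-up is simply more explicit than the paper's terse proof, spelling out the fibre-product bookkeeping and the verification of the action axioms that the paper leaves implicit.
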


\begin{proof}
  We turn a right \(\Gr\)\nb-action \((\s,\mul)\) on~\(\Act\) into a left
  \(\Gr\)\nb-action by \(\rg=\s\) and \(g\cdot x\defeq x\cdot g^{-1}\)
  for \(g\in\Gr^1\), \(x\in \Act\) with \(\rg(g^{-1})=\s(g)=\rg(x)\).  A
  left \(\Gr\)\nb-action gives a right one by \(\s=\rg\) and \(x\cdot
  g\defeq g^{-1}\cdot x\) for \(x\in \Act\), \(g\in\Gr^1\) with
  \(\s(x)=\rg(g)=\s(g^{-1})\).  A map is equivariant for left actions
  if and only if it is equivariant for the corresponding right
  actions, so this is an isomorphism of categories.
\end{proof}

\begin{definition}
  \label{def:two_actions}
  Let \(\Gr\) and~\(\Gr[H]\) be groupoids in~\((\Cat,\covers)\).  A
  \emph{\(\Gr,\Gr[H]\)-bibundle} is \(\Act\inOb\Cat\) with a left
  \(\Gr\)\nb-action and a right \(\Gr[H]\)\nb-action such that
  \(\s(g\cdot x)=\s(x)\), \(\rg(x\cdot h)=\rg(x)\), and \((g\cdot
  x)\cdot h=g\cdot (x\cdot h)\) for all \(g\in\Gr^1\), \(x\in \Act\),
  \(h\in\Gr[H]^1\) with \(\s(g)=\rg(x)\), \(\s(x)=\rg(h)\).  Let
  \(\Cat(\Gr,\Gr[H])\) be the category with \(\Gr,\Gr[H]\)-bibundles
  as objects and \emph{\(\Gr,\Gr[H]\)-maps} -- maps \(\Act\to
  \Act[Y]\) that are equivariant for both actions -- as arrows.
\end{definition}

A \(\Gr,\Gr[H]\)-bibundle also has a \emph{transformation groupoid}
\(\Gr\ltimes\Act\rtimes\Gr[H]\) with objects~\(\Act\) and arrows
\[
(\Gr\ltimes\Act\rtimes\Gr[H])^1 \defeq
\Gr^1\times_{\s,\Gr^0,\rg} \Act\times_{\s,\Gr[H]^0,\rg} \Gr[H]^1;
\]
\(\rg(g,x,h)\defeq g\cdot x\), \(\s(g,x,h)\defeq x\cdot h\), and
\[
(g_1,x_1,h_1)\cdot (g_2,x_2,h_2)
\defeq (g_1\cdot g_2,g_2^{-1}\cdot x_1,h_1\cdot h_2)
= (g_1\cdot g_2,x_2\cdot h_1^{-1},h_1\cdot h_2)
\]
for all \(g_1,g_2\in\Gr^1\), \(x_1,x_2\in\Act\),
\(h_1,h_2\in\Gr[H]^1\) with \(\s(g_i)=\rg(x_i)\),
\(\s(x_i)=\rg(h_i)\) for \(i=1,2\) and \(x_1\cdot h_1=g_2\cdot
x_2\), so that \(g_2^{-1}\cdot x_1 = x_2\cdot h_1^{-1}\).

\begin{remark}
  \label{rem:two_actions_versus_product_action}
  Assume Assumption~\ref{assum:final}.  Then the product of two
  groupoids in~\((\Cat,\covers)\) is again a groupoid
  in~\((\Cat,\covers)\); the range and source maps are again covers by
  Lemma~\ref{lem:final_object_products}.  The category
  \(\Cat(\Gr,\Gr[H])\) is isomorphic to \(\Cat(\Gr\times\Gr[H])\):
  actions of \(\Gr\) and~\(\Gr[H]\) determine an action of
  \(\Gr\times\Gr[H]\) by \(x\cdot (g,h)\defeq g^{-1}\cdot x\cdot h\)
  for \(x\in \Act\), \(g\in\Gr^1\), \(h\in\Gr[H]^1\) with
  \(\rg(g)=\s(x)\), \(\rg(x)=\s(h)\).  Here we use that
  \((g,h)\in\Gr^1\times\Gr[H]^1\) is equivalent to \(g\in\Gr^1\) and
  \(h\in\Gr[H]^1\).  The transformation groupoid
  \(\Gr\ltimes\Act\rtimes\Gr[H]\) is naturally isomorphic to the
  transformation groupoid \(\Act\rtimes(\Gr\times\Gr[H])\) via
  \((g,x,h)\mapsto (g\cdot x,g^{-1},h)\).
\end{remark}

More generally, we may consider an object of~\(\Cat\) with several
groupoids acting on the left and several on the right, with all
actions commuting.  We single out \(\Gr,\Gr[H]\)-bibundles because
they are the basis of several bicategories of groupoids.

\subsection{Fibre products of groupoid actions}
\label{sec:fibre_product_actions}

Let \(\Act_1\), \(\Act_2\) and~\(\Act[Y]\) be \(\Gr\)\nb-actions and
let \(f_i\colon \Act_i\to\Act[Y]\) for \(i=1,2\) be \(\Gr\)\nb-maps.
Assume that the fibre product \(\Act\defeq
\Act_1\times_{f_1,\Act[Y],f_2} \Act_2\) exists in~\(\Cat\) (this
happens, for instance, if \(f_1\) or~\(f_2\) is a cover).

\begin{lemma}
  \label{lem:unique_fibre-product_action}
  There is a unique \(\Gr\)\nb-action on~\(\Act\) for which both
  coordinate projections \(\pr_i\colon \Act\to\Act_i\), \(i=1,2\), are
  equivariant.  With this \(\Gr\)\nb-action, \(\Act\) becomes a fibre product in
  the category of \(\Gr\)\nb-actions.
\end{lemma}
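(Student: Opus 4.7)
The plan is to construct the action on~\(\Act\) elementwise and then show that it is forced by the equivariance requirement, which simultaneously gives existence and uniqueness. Then the fibre-product property in \(\Cat(\Gr)\) follows by lifting a competing cone against the fibre product in~\(\Cat\) and verifying that the comparison map is automatically \(\Gr\)-equivariant.

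First I would set up the anchor map. Since both \(f_i\) are \(\Gr\)-maps, \(\s_{\Act_1}\circ \pr_1 = \s_{\Act[Y]}\circ f_1\circ \pr_1 = \s_{\Act[Y]}\circ f_2\circ \pr_2 = \s_{\Act_2}\circ \pr_2\), so there is a well-defined map \(\s_\Act\colon \Act\to\Gr^0\) that restricts to \(\s_{\Act_i}\circ\pr_i\) on both projections; any \(\Gr\)-action on~\(\Act\) with equivariant projections must use this anchor. For the action map, elementwise one wants \((x_1,x_2)\cdot g \defeq (x_1\cdot g, x_2\cdot g)\) whenever \(\s_\Act(x_1,x_2)=\rg(g)\). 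This is well-defined on the relevant fibre product because \(f_i\) are \(\Gr\)-equivariant, so \(f_1(x_1\cdot g)=f_1(x_1)\cdot g = f_2(x_2)\cdot g=f_2(x_2\cdot g)\), which gives a map into \(\Act_1\times_{\Act[Y]}\Act_2=\Act\) by the universal property of the fibre product. Translated into a map of objects of~\(\Cat\), this is the unique map
\[
\mul_\Act\colon \Act\times_{\s_\Act,\Gr^0,\rg}\Gr^1\to\Act
\]
with \(\pr_i\circ \mul_\Act = \mul_{\Act_i}\circ(\pr_i\times\id_{\Gr^1})\) for \(i=1,2\); again uniqueness of the action is forced by the universal property.

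Next I would verify the three action axioms of Definition and Lemma~\ref{def:action}. Each of them is an equality of maps into~\(\Act\), and by the universal property of~\(\Act\) as a fibre product it suffices to check these equalities after composing with each~\(\pr_i\). Since \(\pr_i\) intertwines the constructed action with the given action on~\(\Act_i\), each axiom for~\(\Act\) reduces to the corresponding axiom for~\(\Act_i\), which holds by assumption.

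Finally I would check the universal property in \(\Cat(\Gr)\). Let \(\Act[Z]\) be a \(\Gr\)-action and let \(g_i\colon \Act[Z]\to \Act_i\) be \(\Gr\)-maps with \(f_1\circ g_1=f_2\circ g_2\). The universal property of~\(\Act\) in~\(\Cat\) provides a unique map \(g\colon \Act[Z]\to\Act\) with \(\pr_i\circ g=g_i\). Equivariance of~\(g\) is the equality \(g(z\cdot h)=g(z)\cdot h\) for all \(z\in\Act[Z]\), \(h\in\Gr^1\) with \(\s(z)=\rg(h)\); by the universal property of~\(\Act\) this can again be checked after composing with each~\(\pr_i\), where it becomes \(g_i(z\cdot h)=g_i(z)\cdot h\), which holds because~\(g_i\) is \(\Gr\)-equivariant and~\(\pr_i\) intertwines the two actions. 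Similarly, \(\s_\Act\circ g=\s_{\Act[Z]}\) follows from \(\s_{\Act_i}\circ g_i=\s_{\Act[Z]}\). Thus~\(g\) is a \(\Gr\)-map, proving that~\(\Act\) with the constructed action is a fibre product in \(\Cat(\Gr)\).

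No real obstacle is expected; the only thing one must be careful about is writing the action map cleanly as a morphism in~\(\Cat\) rather than only elementwise, but the universal property of the fibre product does all the bookkeeping and the elementwise formulas interpret directly via the translation scheme in Section~\ref{sec:groupoids_first_def}.
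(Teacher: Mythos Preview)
Your proof is correct and follows essentially the same approach as the paper: both define the anchor and action elementwise by \(\s(x_1,x_2)=\s(x_1)=\s(x_2)\) and \((x_1,x_2)\cdot g=(x_1\cdot g,x_2\cdot g)\), argue uniqueness from the equivariance requirement, and verify the universal property by reducing to the projections. You are slightly more explicit about why the action lands in the fibre product and about checking the action axioms componentwise, but the argument is the same.
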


\begin{proof}
  We define a \(\Gr\)\nb-action on~\(\Act\) as follows.  The anchor
  map \(\s\colon \Act\to\Gr^0\) is defined by
  \[
  \s(x_1,x_2)\defeq \s(x_1)=\s(f_1(x_1))=\s(f_2(x_2))=\s(x_2),
  \]
  the multiplication by \((x_1,x_2)\cdot g\defeq (x_1\cdot g,x_2\cdot
  g)\) for all \(x_1,x_2\in\Act_1\), \(g\in\Gr^1\) with
  \(f_1(x_1)=f_2(x_2)\) and \(\s(x_1,x_2)=\rg(g)\).  This elementwise
  formula defines a map
  \((\Act_1\times_{f_1,\Act[Y],f_2}\Act_2)\times_{\s,\Gr^0,\rg} \Gr^1
  \to \Act_1\times_{f_1,\Act[Y],f_2}\Act_2\) by taking
  \(\?=(\Act_1\times_{f_1,\Act[Y],f_2}\Act_2)\times_{\s,\Gr^0,\rg}
  \Gr^1\) and \(x_1\), \(x_2\), \(g\) the three coordinate projections
  on~\(\?\).

  Routine computations show that this defines a \(\Gr\)\nb-action
  on~\(\Act\).  The coordinate projections \(\Act\to\Act_i\) for
  \(i=1,2\) are \(\Gr\)\nb-equivariant by construction, and the above
  formulas are clearly the only ones that make this happen.  Finally,
  a map \(h\colon \Act[W]\to\Act\) is a \(\Gr\)\nb-map if and only if
  \(h_i\defeq \pr_i\circ h\) for \(i=1,2\) are \(\Gr\)\nb-maps, and
  two \(\Gr\)\nb-maps \(h_i\colon \Act[W]\to \Act_i\) combine to a
  \(\Gr\)\nb-map \(\Act[W]\to\Act\) if and only if \(f_1\circ
  h_1=f_2\circ h_2\).  Thus~\(\Act\) has the property of a
  fibre product in the category~\(\Cat(\Gr)\).
\end{proof}

\subsection{Actors: another category of groupoids}
\label{sec:actors}

A functor \(\Gr\to\Gr[H]\) between two groupoids does \emph{not}
induce a functor \(\Cat(\Gr[H])\to\Cat(\Gr)\).  (The only general
result is Proposition~\ref{pro:bibundles_act}, which gives a functor
\(\Cat_\covers(\Gr[H])\to \Cat(\Gr)\) on the subcategory of sheaves.)
Here we introduce actors, a different type of groupoid morphisms that
are equivalent to functors \(\Cat(\Gr[H])\to\Cat(\Gr)\) with some
extra properties.  In the context of locally compact groupoids, these
were studied by Buneci (see \cites{Buneci:Morphisms_dynamical,
  Buneci-Stachura:Morphisms_groupoids}) because they induce morphisms
between groupoid \(\textup{C}^*\)\nb-algebras.  In the context of Lie
groupoids and Lie algebroids, these were studied under the name of
comorphisms (see \cite{Chen-Liu:Comorphisms},
\cite{Mackenzie:General_Lie_groupoid_algebroid}*{Definition 4.3.16}
and the references there).  Our notion of ``actor'' is unrelated to
the one in~\cite{Pradines:survey08}*{A.2}.

\begin{definition}
  \label{def:actors}
  Let \(\Gr\) and~\(\Gr[H]\) be groupoids in~\((\Cat,\covers)\).  An
  \emph{actor} from~\(\Gr\) to~\(\Gr[H]\) is a left
  \(\Gr\)\nb-action on~\(\Gr[H]^1\) that commutes with the right
  multiplication action of~\(\Gr[H]\) on~\(\Gr[H]^1\).
\end{definition}

Other authors exchange left and right in the above definition.  The
following generalises
\cite{Buss-Exel-Meyer:InverseSemigroupActions}*{Lemma 4.3}:

\begin{proposition}
  \label{pro:actor_as_functor}
  An actor from~\(\Gr\) to~\(\Gr[H]\) is equivalent to a pair
  consisting of a left action of~\(\Gr\) on~\(\Gr[H]^0\) and a functor
  \(\Gr\ltimes\Gr[H]^0\to \Gr[H]\) that is the identity on objects.
\end{proposition}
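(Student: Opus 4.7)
The plan is to exhibit explicit, inverse constructions in both directions.

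\emph{From actor to pair.} Given a left \(\Gr\)-action \(\star\colon \Gr^1 \times_{\s,\Gr^0,\mu} \Gr[H]^1 \to \Gr[H]^1\) with anchor \(\mu\colon \Gr[H]^1 \to \Gr^0\) commuting with the right regular \(\Gr[H]\)-action, I will first pull the anchor back along the unit to obtain \(\tilde\mu \defeq \mu\circ\unit_{\Gr[H]}\colon \Gr[H]^0\to\Gr^0\); \(\Gr[H]\)-invariance of~\(\mu\) (a consequence of commutativity with the right regular action) forces \(\mu = \tilde\mu\circ\rg_{\Gr[H]}\). I will then define the induced \(\Gr\)-action on~\(\Gr[H]^0\) elementwise by \(g\cdot x \defeq \rg_{\Gr[H]}(g\star 1_x)\) for \(g\in\Gr^1\), \(x\in\Gr[H]^0\) with \(\s(g)=\tilde\mu(x)\), checking the associativity and unit axioms from those for~\(\star\). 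Finally, I will define the identity-on-objects functor \(F\colon \Gr\ltimes\Gr[H]^0\to\Gr[H]\) by \(F^1(g,x)\defeq g\star 1_x\); the source and range conditions are immediate, and functoriality will follow from a short calculation using commutativity of the two actions and the fact that \(1_{x_1}\) is a unit in~\(\Gr[H]^1\):
\[
F^1(g_1,x_1)\cdot F^1(g_2,x_2)
= (g_1\star 1_{x_1})\cdot (g_2\star 1_{x_2})
= g_1\star(1_{x_1}\cdot (g_2\star 1_{x_2}))
= g_1\star (g_2\star 1_{x_2})
= (g_1g_2)\star 1_{x_2},
\]
which matches \(F^1((g_1,x_1)(g_2,x_2))=F^1(g_1g_2,x_2)\).

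\emph{From pair to actor.} Conversely, given a \(\Gr\)-action \(\cdot\) on~\(\Gr[H]^0\) with anchor~\(\tilde\mu\) and an identity-on-objects functor \(F\colon \Gr\ltimes\Gr[H]^0\to\Gr[H]\), I will take as anchor \(\mu\defeq \tilde\mu\circ\rg_{\Gr[H]}\) and define
\[
g\star h \defeq F^1(g,\rg_{\Gr[H]}(h))\cdot h
\qquad\text{for }g\in\Gr^1,\ h\in\Gr[H]^1,\ \s(g)=\mu(h);
\]
the composition in~\(\Gr[H]\) is well-defined because \(F^0=\id_{\Gr[H]^0}\) gives \(\s_{\Gr[H]}(F^1(g,\rg_{\Gr[H]}(h)))=\rg_{\Gr[H]}(h)\). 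Functoriality of~\(F\) (for units and composition in \(\Gr\ltimes\Gr[H]^0\)) together with associativity in~\(\Gr[H]\) will translate directly into the left-action axioms for~\(\star\); commutativity with the right regular \(\Gr[H]\)-action is automatic, since~\(\star\) is built by multiplying on the left in~\(\Gr[H]\), and right multiplication is associative.

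\emph{Mutual inverseness.} Starting from an actor, extracting the pair, and rebuilding gives \(F^1(g,\rg_{\Gr[H]}(h))\cdot h = (g\star 1_{\rg_{\Gr[H]}(h)})\cdot h = g\star(1_{\rg_{\Gr[H]}(h)}\cdot h)=g\star h\) by commutativity and the unit law in~\(\Gr[H]\). Starting from a pair, producing an actor, and re-extracting recovers the original anchor (since \(\mu\circ\unit_{\Gr[H]}=\tilde\mu\circ\rg_{\Gr[H]}\circ\unit_{\Gr[H]}=\tilde\mu\)) and the original functor (since \(F^1(g,x)\cdot 1_x=F^1(g,x)\) and \(\rg_{\Gr[H]}(F^1(g,x)\cdot 1_x)=g\cdot x\)). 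Each verification will be carried out elementwise in the sense of Section~\ref{sec:groupoids_first_def}, which makes them valid in any \((\Cat,\covers)\).

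The main subtlety—rather than a real obstacle—is bookkeeping: one must keep track of which anchor is being used where, and verify in the elementwise calculus that all the fibre products implicit in formulas such as \(g\star h=F^1(g,\rg_{\Gr[H]}(h))\cdot h\) really exist and that the required compatibilities among the various source, range, anchor, and unit maps hold. Once this is set up carefully, all verifications reduce to the unit law and associativity in~\(\Gr[H]\) and the left-action axioms, with commutativity of the two actions as the single nontrivial ingredient that lets us slide the identity \(1_x\) past the left \(\Gr\)-action.
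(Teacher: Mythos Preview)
Your proposal is correct and follows essentially the same route as the paper's proof: both extract the anchor on~\(\Gr[H]^0\) via \(\tilde\mu=\mu\circ\unit_{\Gr[H]}\), define the \(\Gr\)-action on~\(\Gr[H]^0\) by \(g\cdot x=\rg_{\Gr[H]}(g\star 1_x)\) and the functor by \(F^1(g,x)=g\star 1_x\), and in the converse direction rebuild the actor as \(g\star h=F^1(g,\rg_{\Gr[H]}(h))\cdot h\). Your write-up is slightly more explicit about the functoriality computation and the mutual-inverse checks than the paper, but the constructions and the key identity \(g\star h=(g\star 1_{\rg_{\Gr[H]}(h)})\cdot h\) are identical.
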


\begin{proof}
  Let \(\rg_{\Gr^0}\colon \Gr[H]^1\to\Gr^0\) and \(\mul\colon
  \Gr^1\times_{\s,\Gr^0,\rg} \Gr[H]^1\to\Gr[H]^1\) define an actor
  from~\(\Gr\) to~\(\Gr[H]\).  Since the left and right actions
  on~\(\Gr[H]^1\) commute, we have \(\rg_{\Gr^0}(h_1\cdot
  h_2)=\rg_{\Gr^0}(h_1)\) for all \(h_1,h_2\in\Gr[H]^1\) with
  \(\s(h_1)=\rg_{\Gr[H]^0}(h_2)\).  Thus \(\rg_{\Gr^0}(h) =
  \rg_{\Gr^0}(h\cdot h^{-1}) = \rg_{\Gr^0}(1_{\rg_{\Gr[H]^0}(h)})\).
  With the map \(\rg^0\colon \Gr[H]^0\to\Gr^0\) defined by
  \(\rg^0(x)\defeq \rg_{\Gr^0}(1_x)\) for \(x\in\Gr[H]^0\), this
  becomes \(\rg_{\Gr^0} = \rg^0\circ\rg_{\Gr[H]^0}\).

  The map~\(\rg^0\) is the anchor map of a \(\Gr\)\nb-action
  on~\(\Gr[H]^0\).  The action \(\Gr^1\times_{\s,\Gr^0,\rg^0}
  \Gr[H]^0\to\Gr[H]^0\) is defined elementwise by \(g\cdot x\defeq
  \rg_{\Gr[H]^0}(g\cdot 1_x)\).  We have \(\rg^0(g\cdot x) =
  \rg_{\Gr^0}(1_{g\cdot x}) = \rg_{\Gr^0}(1_{\rg_{\Gr[H]^0}(g\cdot
    1_x)}) = \rg_{\Gr^0}(g\cdot 1_x) = \rg(g)\) and
  \(1_{\rg^0(x)}\cdot x = x\) for all \(x\in\Gr[H]^0\).  Furthermore,
  \(\rg_{\Gr[H]^0}(g\cdot h) = \rg_{\Gr[H]^0}(g\cdot h\cdot h^{-1}) =
  g\cdot \rg_{\Gr[H]^0}(h)\) for all \(g\in\Gr^1\), \(h\in\Gr[H]^1\)
  with \(\s(g)=\rg_{\Gr^0}(h)\).  Hence \((g_1\cdot g_2)\cdot x =
  g_1\cdot (g_2\cdot x)\) for all \(g_1,g_2\in\Gr^1\), \(x\in\Gr[H]^0\)
  with \(\s(g_1)=\rg(g_2)\), \(\s(g_2)(x)=\rg^0(x)\).

  The map
  \[
  F^1\colon \Gr^1\times_{\s,\Gr^0,\rg^0} \Gr[H]^0\to \Gr[H]^1,\qquad
  (g,x)\mapsto g\cdot 1_x,
  \]
  together with the identity map on~\(\Gr[H]^0\) is a functor
  \(F\colon \Gr\ltimes\Gr[H]^0\to \Gr[H]\).  This functor determines
  the actor because \(g\cdot h = (g\cdot 1_{\rg_{\Gr[H]^0}(h)}) \cdot
  h = F(g,\rg_{\Gr[H]^0}(h)) \cdot h\) for all \(g\in\Gr^1\),
  \(h\in\Gr[H]^1\) with \(\s(g)=\rg_{\Gr^0}(h)\).

  Conversely, a left \(\Gr\)\nb-action on~\(\Gr[H]^0\) and a functor
  \(F\colon \Gr\ltimes\Gr[H]^0\to \Gr[H]\) yield an actor by taking
  the anchor map \(\rg^0\circ\rg_{\Gr[H]^0}\colon
  \Gr[H]^1\to\Gr[H]^0\to\Gr^0\) and the multiplication \(g\cdot
  h\defeq F(g,\rg_{\Gr[H]^0}(h))\cdot h\).
\end{proof}

Proposition~\ref{pro:actor_as_functor} shows that actors
\(\Gr\to\Gr[H]\) are usually not functors \(\Gr\to\Gr[H]\), and vice
versa.  The intersection of both types of morphisms is described in
the following example:

\begin{example}
  \label{exa:actor_from_nice_functor}
  Let \(F\colon \Gr\to\Gr[H]\) be a functor with invertible
  \(F^0\in\Cat(\Gr^0,\Gr[H]^0)\).  Then we define an associated
  actor by \(\rg_{\Gr}\colon \Gr[H]^1\to\Gr^0\), \(g\mapsto
  (F^0)^{-1}(\rg_{\Gr[H]}(g))\), and \(g\cdot h\defeq F^1(g)\cdot
  h\) for all \(g\in\Gr^1\), \(h\in\Gr[H]^1\) with
  \(\rg(h)=\s(F^1(g)) = F^0(\s(g))\).  Conversely, if an actor has
  the property that the associated anchor map \(\rg\colon
  \Gr[H]^0\to\Gr^0\) is invertible, then we may identify~$\Gr$ with
  the transformation groupoid $\Gr\ltimes\Gr[H]^0$ and get a functor
  \(\Gr\to\Gr[H]\) by Proposition~\ref{pro:actor_as_functor}.  The
  actor associated to this functor is the one we started with.
\end{example}

\begin{proposition}
  \label{pro:actor_acts}
  An actor from~\(\Gr\) to~\(\Gr[H]\) induces a functor
  \(\Cat(\Gr[H])\to\Cat(\Gr)\) that does not change the underlying
  objects in~\(\Cat\), that is, we have a commutative diagram
  \[
  \begin{tikzpicture}[baseline=(current bounding box.west)]
    \matrix[cd] (m) {
      \Cat( \Gr[H])
    & & \Cat(\Gr), \\
    & \Cat & \\
    };
    \begin{scope}[cdar]
      \draw[->] (m-1-1) -- node {} (m-1-3);
      \draw[->] (m-1-3) -- node {$F_{\Gr}$} (m-2-2);
      \draw[->] (m-1-1) -- node[swap] {$F_{\Gr[H]}$} (m-2-2);
    \end{scope}
  \end{tikzpicture}
  \]
  where $F_{\Gr}$ and $F_{\Gr[H]}$ are the forgetful functors forgetting
  the $\Gr$- and $\Gr[H]$-actions, respectively.
  Furthermore, \(\Gr[H]\)\nb-invariant maps are
  also \(\Gr\)\nb-invariant.  Any functor \(\Cat(\Gr[H])\to\Cat(\Gr)\)
  with these extra properties comes from an actor.  Thus groupoids as
  objects and actors as arrows form a category, and
  \(\Gr\mapsto\Cat(\Gr)\) is a contravariant functor on this category.
\end{proposition}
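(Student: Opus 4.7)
The plan is to use Proposition~\ref{pro:actor_as_functor} to rephrase an actor as a pair $(\rg^0, F)$, where $\rg^0\colon \Gr[H]^0\to\Gr^0$ is the anchor of a left $\Gr$\nb-action on~$\Gr[H]^0$ and $F\colon \Gr\ltimes\Gr[H]^0\to\Gr[H]$ is a functor with $F^0=\id_{\Gr[H]^0}$. Because $F^0$ is the identity, a right $\Gr[H]$\nb-action pulls back along~$F$ to a right $(\Gr\ltimes\Gr[H]^0)$\nb-action on the same underlying object; by the analogue of Proposition~\ref{pro:transformation_groupoid_action} for right actions of the transformation groupoid $\Gr\ltimes\Gr[H]^0$, such an action decomposes as a right $\Gr$\nb-action together with a $\Gr$\nb-equivariant map to~$\Gr[H]^0$, and forgetting the equivariant map produces the sought functor $\Cat(\Gr[H])\to\Cat(\Gr)$. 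Concretely, a right $\Gr[H]$\nb-action $(\Act, \s_{\Act})$ is sent to the right $\Gr$\nb-action on~$\Act$ with anchor $\rg^0\circ\s_{\Act}$ and multiplication
\[
x\cdot g \defeq x\cdot_{\Gr[H]} F^1(g^{-1}, \s_{\Act}(x))^{-1}
\qquad\text{for } x\in\Act,\ g\in\Gr^1,\ \rg(g)=\rg^0(\s_{\Act}(x)).
\]
The unit axiom follows because $F$ preserves units, while associativity reduces, via the identity $F^1(g, y)^{-1}=F^1(g^{-1}, g\cdot y)$ (a consequence of $F$ being a functor), to the composition rule $(g_2^{-1}, g_1^{-1}\cdot\s_{\Act}(x))\cdot(g_1^{-1}, \s_{\Act}(x))=(g_2^{-1}g_1^{-1}, \s_{\Act}(x))$ in $\Gr\ltimes\Gr[H]^0$. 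A $\Gr[H]$\nb-equivariant map respects $\s_{\Act}$ and commutes with right $\Gr[H]$\nb-multiplication, hence it also respects the new anchor and commutes with the new $\Gr$\nb-action; $\Gr[H]$\nb-invariance implies $\Gr$\nb-invariance because the new $\Gr$\nb-action factors through right $\Gr[H]$\nb-multiplication on~$\Act$.

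For the converse, let $E\colon \Cat(\Gr[H])\to\Cat(\Gr)$ preserve underlying objects and morphisms (as expressed by the commutative triangle with the forgetful functors) and send $\Gr[H]$\nb-invariant maps to $\Gr$\nb-invariant maps. Applying~$E$ to the canonical $\Gr[H]$\nb-action on~$\Gr[H]^0$ from Example~\ref{exa:action_on_objects} yields a $\Gr$\nb-action on $\Gr[H]^0$, whose anchor~$\rg^0\colon \Gr[H]^0\to\Gr^0$ provides the left $\Gr$\nb-action on $\Gr[H]^0$ part of the sought actor. Because the $\Gr[H]$\nb-anchor $\s_{\Act}\colon \Act\to\Gr[H]^0$ of any $\Gr[H]$\nb-action is the unique $\Gr[H]$\nb-map to the final object of $\Cat(\Gr[H])$, functoriality of~$E$ forces the $\Gr$\nb-anchor of $E(\Act)$ to equal $\rg^0\circ\s_{\Act}$ and makes $\s_{\Act}$ a $\Gr$\nb-equivariant map. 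By the analogue of Proposition~\ref{pro:transformation_groupoid_action}, this promotes $E(\Act)$ to a right $(\Gr\ltimes\Gr[H]^0)$\nb-action, lifting $E$ to a functor $\tilde E\colon \Cat(\Gr[H])\to\Cat(\Gr\ltimes\Gr[H]^0)$ that still preserves underlying objects.

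The main obstacle, which I expect to be the most technical step, is to extract the functor~$F$ from~$\tilde E$ and to verify that the two constructions are mutually inverse. Applying~$\tilde E$ to the right $\Gr[H]$\nb-action $\Gr[H]^1$ by right multiplication gives a right $(\Gr\ltimes\Gr[H]^0)$\nb-action on~$\Gr[H]^1$, and one defines
\[
F^1\colon \Gr^1\times_{\s,\Gr^0,\rg^0}\Gr[H]^0\to\Gr[H]^1,\qquad
F^1(g, y)\defeq 1_{g\cdot y}\cdot(g, y),
\]
where the right\nb-hand side uses the multiplication of the $(\Gr\ltimes\Gr[H]^0)$\nb-action on $\Gr[H]^1$. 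That $\rg(F^1(g, y))=g\cdot y$ follows because $\rg\colon \Gr[H]^1\to\Gr[H]^0$ is $\Gr[H]$\nb-invariant under right multiplication and~$E$ preserves invariance; that $\s(F^1(g, y))=y$ follows because $\s\colon \Gr[H]^1\to\Gr[H]^0$ is $\Gr[H]$\nb-equivariant to the canonical action on~$\Gr[H]^0$, so $\s(1_{g\cdot y}\cdot(g,y)) = (g\cdot y)\cdot g = y$ under the right $\Gr$\nb-action induced by~$\rg^0$ on~$\Gr[H]^0$. Functoriality of~$F$ then reduces to the associativity and unitality of the $(\Gr\ltimes\Gr[H]^0)$\nb-action, and checking that the two constructions are mutually inverse reduces to the identity $g\cdot h = F^1(g, \rg(h))\cdot h$ of Proposition~\ref{pro:actor_as_functor}. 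Finally, the composite of two functors with the stated two properties still enjoys those properties, so the bijection established above shows that composition of actors is well\nb-defined, making groupoids and actors into a category and $\Gr\mapsto\Cat(\Gr)$ a contravariant functor on it.
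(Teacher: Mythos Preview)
Your forward construction is correct and essentially equivalent to the paper's; the paper works with \emph{left} actions to avoid the inversions in your formula (their definition is simply \(g\cdot x\defeq F^1(g,\rg(x))\cdot x\) for a left \(\Gr[H]\)-action~\(\Act\)), but the content is the same.

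For the converse, however, your sketch has a genuine gap at exactly the step you flagged as ``most technical''.  Your claim that functoriality of~\(F^1\) ``reduces to the associativity and unitality of the \((\Gr\ltimes\Gr[H]^0)\)-action'' is not correct.  Associativity of the action gives
\[
F^1(g_1g_2,y_2)=1_{g_1\cdot y_1}\cdot\bigl((g_1,y_1)\cdot(g_2,y_2)\bigr)
=F^1(g_1,y_1)\cdot(g_2,y_2),
\]
where the last ``\(\cdot\)'' is the \((\Gr\ltimes\Gr[H]^0)\)-action.  But functoriality asks for \(F^1(g_1,y_1)\cdot_{\Gr[H]} F^1(g_2,y_2)\) on the left, and equating these amounts to the identity
\[
(h_1\cdot_{\Gr[H]} h_2)\cdot(g,y)=h_1\cdot_{\Gr[H]}\bigl(h_2\cdot(g,y)\bigr),
\]
that is, the new right \((\Gr\ltimes\Gr[H]^0)\)-action on~\(\Gr[H]^1\) commutes with \emph{left} \(\Gr[H]\)-multiplication.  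This is not a formal consequence of the action axioms; it is precisely the substance of the converse.  The same missing ingredient is needed to show that \(E\) on a general~\(\Act\) agrees with the action induced by your~\(F^1\): you need \((x\cdot_{\Gr[H]} h)\cdot(g,y)=x\cdot_{\Gr[H]}(h\cdot(g,y))\).

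The paper supplies this via an auxiliary object.  In your right-action language: give \(\Gr[H]^1\times_{\s,\Gr[H]^0,\rg}\Gr[H]^1\) the right \(\Gr[H]\)-action on the second coordinate; then \(\pr_2\) and~\(\mul\) are \(\Gr[H]\)-equivariant and \(\pr_1\) is \(\Gr[H]\)-invariant.  Functoriality of~\(\tilde E\) together with preservation of invariants yields \((h_1,h_2)\cdot(g,y)=(h_1,h_2\cdot(g,y))\) from \(\pr_1,\pr_2\), and then equivariance of~\(\mul\) gives the missing commutation.  Replacing the first factor by a general~\(\Act\) (and~\(\mul\) by the action map) gives the analogous statement needed to recover~\(E\) from the actor.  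Once you insert this step, your route via \(\tilde E\) works; but as written, the reduction you claim does not hold.
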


\begin{proof}
  We use categories of left actions during the proof.  Describe an
  actor as in Proposition~\ref{pro:actor_as_functor} as a
  right \(\Gr\)\nb-action on~\(\Gr[H]^0\) with a functor \(F\colon
  \Gr[H]^0\rtimes\Gr\to\Gr[H]\).  Let~\(\Act\) be a left
  \(\Gr[H]\)\nb-action.  Then we define a left \(\Gr\)\nb-action
  on~\(\Act\) by taking the anchor map \(\rg^0\circ\rg\colon
  \Act\to\Gr[H]^0\to\Gr^0\) and the multiplication \(g\cdot x\defeq
  F^1(g,\rg(x))\cdot x\) for all \(g\in\Gr^1\), \(x\in\Act\) with
  \(\s(g) = \rg^0(\rg(x))\).  This action is the unique one with
  \(\rg_{\Gr^0}(h\cdot x)= \rg_{\Gr^0}(h)\) and \(g\cdot (h\cdot x) =
  (g\cdot h)\cdot x\) for all \(g\in\Gr^1\), \(h\in\Gr[H]^1\),
  \(x\in\Act\) with \(\s(g)=\rg_{\Gr^0}(h)\), \(\s(h) = \rg(x)\).

  Routine computations show that the above formulas define a
  \(\Gr\)\nb-action on~\(\Act\) in a natural way so as to give a
  functor \(\Cat(\Gr[H])\to\Cat(\Gr)\), and that
  \(\Gr[H]\)\nb-invariant maps \(f\colon \Act\to\Base\) remain
  \(\Gr\)\nb-invariant.

  Conversely, let \(F\colon \Cat(\Gr[H])\to\Cat(\Gr)\) be a functor
  that does not change the underlying object of~\(\Cat\) and that
  preserves invariant maps.  That is, \(F\) equips any left
  \(\Gr[H]\)\nb-action~\(\Act\) with a left \(\Gr\)\nb-action, such
  that \(\Gr[H]\)\nb-maps remain \(\Gr\)\nb-maps and
  \(\Gr[H]\)\nb-invariant maps remain \(\Gr\)\nb-invariant.  When we
  apply~\(F\) to the left multiplication action on~\(\Gr[H]^1\), we
  get a left \(\Gr\)\nb-action on~\(\Gr[H]^1\).  We claim that this
  commutes with the right multiplication action of~\(\Gr[H]^1\), so
  that we have an actor, and that this actor induces the
  functor~\(F\).

  Let~\(\Act\) be any left \(\Gr[H]\)\nb-action.  Let~\(\Gr[H]\) act
  on \(\Gr[H]\Act\defeq \Gr[H]^1 \times_{\s,\Gr[H]^0,\rg} \Act\) by
  \(h_1\cdot (h_2,x)\defeq (h_1\cdot h_2,x)\) for all
  \(h_1,h_2\in\Gr[H]^1\), \(x\in \Act\) with \(\s(h_1)=\rg(h_2)\),
  \(\s(h_2)=\rg(x)\).  Equip~\(\Gr[H]\Act\) with the \(\Gr\)\nb-action
  from \(F(\Gr[H]\Act)\).  Routine computations show that the maps
  \(\pr_1\colon \Gr[H]\Act\to \Gr[H]^1\) and \(\mul\colon
  \Gr[H]\Act\to \Act\) are \(\Gr[H]\)\nb-equivariant and the map
  \(\pr_2\colon \Gr[H]\Act\to\Act\) is \(\Gr[H]\)\nb-invariant.  By
  assumption, \(\pr_1\) and~\(\mul\) are \(\Gr\)\nb-equivariant
  and~\(\pr_2\) is \(\Gr\)\nb-invariant.  The statements for \(\pr_1\)
  and~\(\pr_2\) mean that \(g\cdot (h,x) = (g\cdot h,x)\) for all
  \(g\in\Gr^1\), \(h\in\Gr[H]^1\), \(x\in\Act\) with
  \(\s(g)=\rg_{\Gr^0}(h)\), \(\s(h)=\rg(x)\).  The
  \(\Gr\)\nb-equivariance of~\(\mul\) gives \(g\cdot \mul(h, x) =
  \mul(g\cdot (h, x)) = \mul(g\cdot h,x)\) or briefly \(g\cdot (h\cdot
  x) = (g\cdot h)\cdot x\) for \(g,h,x\) as above.  For
  \(\Act=\Gr[H]^1\), this says that the left \(\Gr\)\nb-action
  on~\(\Gr^1\) commutes with the right \(\Gr[H]\)\nb-action by
  multiplication, so that we have an actor.  For general~\(\Act\), we
  get \(g\cdot x= g\cdot (1_{\rg(x)}\cdot x) = (g\cdot
  1_{\rg(x)})\cdot x\), so the \(\Gr\)\nb-action given by~\(F\) is
  the one from this actor.  We also see that the functor~\(F\)
  determines the actor uniquely.

  Since the composite of two functors \(\Cat(\Gr[H])\to\Cat(\Gr)\) and
  \(\Cat(\Gr[K])\to\Cat(\Gr[H])\) that does not change the underlying
  object of~\(\Cat\) and preserves invariant maps again has the same
  properties, it must come from an actor.  Thus actors may be
  composed.  More explicitly, given actors \(\Gr\to\Gr[H]\) and
  \(\Gr[H]\to\Gr[K]\), there is a unique right \(\Gr\)\nb-action
  on~\(\Gr[K]^1\) with \((g\cdot h)\cdot k = g\cdot (h\cdot k)\) for
  all \(k\in \Gr[K]^1\), \(h\in\Gr[H]^1\), \(g\in\Gr^1\) with
  \(\s(g)=\rg_{\Gr[G]^0}(h)\) and \(\s(h)=\rg_{\Gr[H]^0}(k)\).  This
  left action gives the actor from~\(\Gr\) to~\(\Gr[K]\) that
  corresponds to the composite functor \(\Cat(\Gr[K])\to\Cat(\Gr)\).
\end{proof}

\begin{proposition}
  \label{pro:isom_actors}
  An actor $\Gr\to \Gr[H]$ is invertible if and only if it comes from an
  isomorphism between $\Gr$ and $\Gr[H]$.
\end{proposition}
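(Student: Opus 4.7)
\emph{Plan.} The approach is to reduce invertibility of an actor to invertibility of its anchor map on objects, and then apply Example~\ref{exa:actor_from_nice_functor}. Write an actor $A\colon \Gr \to \Gr[H]$ in terms of its anchor $\rho^0_A\colon \Gr[H]^0 \to \Gr^0$ together with a functor $\Gr\ltimes\Gr[H]^0 \to \Gr[H]$ as in Proposition~\ref{pro:actor_as_functor}. The content of Example~\ref{exa:actor_from_nice_functor} is that $A$ comes from a genuine functor $F\colon\Gr\to\Gr[H]$ if and only if $\rho^0_A$ is invertible in~$\Cat$, in which case $F^0=(\rho^0_A)^{-1}$ is forced. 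So the crux is to show that if $A$ is invertible as an actor, then $\rho^0_A$ is invertible in~$\Cat$.

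The ``if'' direction is direct. An isomorphism $F\colon\Gr\to\Gr[H]$ has invertible $F^0$ and hence yields an actor by Example~\ref{exa:actor_from_nice_functor}; likewise for $F^{-1}$. A short check with the composition rule from the proof of Proposition~\ref{pro:actor_acts} shows that the actors from $F$ and $F^{-1}$ compose (in either order) to the identity actor, so the actor from $F$ is invertible.

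For the ``only if'' direction, suppose $B\colon\Gr[H]\to\Gr$ is an inverse actor, with anchor $\rho^0_B\colon \Gr^0\to\Gr[H]^0$. Using the contravariant functor $\Gr\mapsto\Cat(\Gr)$ from Proposition~\ref{pro:actor_acts}, the induced functor $\Cat(\Gr[H])\to\Cat(\Gr)$ sends the canonical action of $\Gr[H]$ on $\Gr[H]^0$ (anchor $\id_{\Gr[H]^0}$) to the $\Gr$-action on $\Gr[H]^0$ with anchor $\rho^0_A$, and then $\Cat(\Gr)\to\Cat(\Gr[H])$ sends this to the $\Gr[H]$-action on $\Gr[H]^0$ with anchor $\rho^0_B\circ\rho^0_A$. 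Since $A\circ B=\id_{\Gr[H]}$, the induced endofunctor of $\Cat(\Gr[H])$ is the identity, forcing $\rho^0_B\circ\rho^0_A=\id_{\Gr[H]^0}$; symmetrically $\rho^0_A\circ\rho^0_B=\id_{\Gr^0}$. Hence $\rho^0_A$ is invertible, and by Example~\ref{exa:actor_from_nice_functor} the actors $A$ and $B$ come from functors $F_A\colon\Gr\to\Gr[H]$ and $F_B\colon\Gr[H]\to\Gr$. A direct calculation shows that when two actors arise from functors with invertible object maps, their composite actor is the one coming from the composite functor; together with the uniqueness of the functor producing a given actor, this yields $F_B\circ F_A=\id_\Gr$ and $F_A\circ F_B=\id_{\Gr[H]}$, so $F_A$ is an isomorphism.

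The main obstacle is the bookkeeping around composition of actors: one must carefully track how the composition rule $(g\cdot h)\cdot k = g\cdot(h\cdot k)$ of Proposition~\ref{pro:actor_acts} interacts with the equivalent description via anchors and functors from Proposition~\ref{pro:actor_as_functor}, and confirm that composing two actors arising from functors yields the actor from the composite functor. Once this is in place, the uniqueness clause of Example~\ref{exa:actor_from_nice_functor} makes the final identification of $F_A$ as an isomorphism automatic.
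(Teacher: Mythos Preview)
Your proof is correct and follows essentially the same strategy as the paper's: show that an invertible actor has invertible anchor map on objects, then invoke Example~\ref{exa:actor_from_nice_functor} and the compatibility of the actor--functor correspondence with composition. The only minor difference is that the paper obtains ``anchor maps compose when actors compose'' directly from the composition rule in Proposition~\ref{pro:actor_acts}, whereas you route the same fact through the contravariant functor \(\Gr\mapsto\Cat(\Gr)\) applied to the canonical action on~\(\Gr[H]^0\); both are valid and amount to the same observation.
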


\begin{proof}
  The anchor maps \(\Gr[H]^0\to\Gr^0\) are composed when we compose
  actors.  Since this anchor map is the identity for the identity
  actor, the anchor map \(\Gr[H]^0\to\Gr^0\) for an invertible actor
  must be invertible in~\(\Cat\).  The identification between actors
  \(\Gr\to\Gr[H]\) with invertible anchor map \(\Gr[H]^0\to\Gr^0\) and
  functors acting identically on objects in
  Example~\ref{exa:actor_from_nice_functor} is compatible with the
  compositions on both sides.  Thus invertible actors are the same as
  invertible functors, that is, isomorphisms of groupoids.
\end{proof}

In order to enrich actors to a \(2\)\nb-category, we study
automorphisms of~\(\Gr[H]^1\) that preserve the right \(\Gr[H]\)\nb-action:

\begin{proposition}
  \label{pro:left_multiplier}
  The right \(\Gr[H]\)\nb-maps \(\varphi\colon \Gr[H]^1\to\Gr[H]^1\) are exactly
  the maps of the form \(h\mapsto \Phi(\rg(h))\cdot h\) for a
  section \(\Phi\colon \Gr[H]^0\to\Gr[H]^1\) of~\(\s\).  This map is
  invertible if and only if~\(\Phi\) is a bisection.
\end{proposition}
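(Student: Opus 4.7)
The plan is to establish a bijection between sections of $\s\colon \Gr[H]^1\to\Gr[H]^0$ and right $\Gr[H]$-maps $\Gr[H]^1\to\Gr[H]^1$, and then identify invertibility on both sides using Definition and Lemma~\ref{lem:bisections}.

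First I would construct the bijection elementwise. Given a right $\Gr[H]$-map $\varphi\colon \Gr[H]^1\to\Gr[H]^1$, set $\Phi(x)\defeq \varphi(1_x)$ for $x\in\Gr[H]^0$; the equivariance condition $\s\circ\varphi=\s$ applied to units gives $\s(\Phi(x))=\s(1_x)=x$, so $\Phi$ is a section of $\s$. Writing any $h\in\Gr[H]^1$ as $1_{\rg(h)}\cdot h$ and applying the equivariance of $\varphi$ then recovers $\varphi(h)=\Phi(\rg(h))\cdot h$. Conversely, for a section $\Phi$ the formula $\varphi_\Phi(h)\defeq \Phi(\rg(h))\cdot h$ makes sense because $\s(\Phi(\rg(h)))=\rg(h)$; associativity of $\mul$ and the identity $\rg(h_1h_2)=\rg(h_1)$ give right $\Gr[H]$-equivariance. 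The two constructions are manifestly inverse to each other.

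Next I would observe that this bijection is an isomorphism of monoids: a direct elementwise calculation shows
\[
\varphi_{\Phi_1}\circ \varphi_{\Phi_2}(h)
= \Phi_1\bigl(\rg(\Phi_2(\rg(h)))\bigr)\cdot \Phi_2(\rg(h))\cdot h
= (\Phi_1\circ \Phi_2)(\rg(h))\cdot h
= \varphi_{\Phi_1\circ\Phi_2}(h),
\]
where $\Phi_1\circ\Phi_2$ denotes the horizontal product of sections defined in~\eqref{eq:compose_sections}; and the identity section $\unit\colon x\mapsto 1_x$ corresponds to $\id_{\Gr[H]^1}$. Hence $\varphi_\Phi$ is invertible in the monoid of right $\Gr[H]$-maps if and only if $\Phi$ is invertible for the horizontal product $\circ$.

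Finally, Definition and Lemma~\ref{lem:bisections} states that $\Phi$ is invertible for $\circ$ precisely when it is a bisection, which proves the second assertion. The whole argument is routine elementwise manipulation interpreted as in Section~\ref{sec:groupoids_first_def}; the only mildly delicate point is the identification of the monoid structures, but once made, invoking Lemma~\ref{lem:bisections} finishes the proof immediately. No real obstacle is expected.
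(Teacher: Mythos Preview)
Your proof is correct and follows the same overall architecture as the paper: establish a bijection between sections of~\(\s\) and right \(\Gr[H]\)-maps \(\Gr[H]^1\to\Gr[H]^1\), check that it intertwines the horizontal product~\(\circ\) with composition, and then invoke Definition and Lemma~\ref{lem:bisections}.

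The one genuine difference is how you extract~\(\Phi\) from a given~\(\varphi\). You set \(\Phi\defeq\varphi\circ\unit\) and use \(h=1_{\rg(h)}\cdot h\) together with equivariance; this is short and elementary. The paper instead invokes the isomorphism~\eqref{eq:groupoid_basicality_1} to produce a map \(\omega\colon\Gr[H]^1\to\Gr[H]^1\) with \(\varphi(h)=\omega(h)\cdot h\), shows \(\omega(h_1 h_2)=\omega(h_1)\), and then uses that the pretopology is subcanonical (so~\(\rg\) is a coequaliser) to factor~\(\omega\) through~\(\rg\). Your route avoids the coequaliser argument entirely by using the unit map directly, which is cleaner; the paper's route stays closer to the second definition of a groupoid and to the argument pattern of Proposition~\ref{pro:unit_inverse_from_basicality}. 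One small point you leave implicit and the paper makes explicit: ``invertible'' in the statement means invertible in~\(\Cat\), so you should note (as the paper does) that the inverse of an invertible right \(\Gr[H]\)-map is again a right \(\Gr[H]\)-map, so invertibility in~\(\Cat\) and in the monoid of right \(\Gr[H]\)-maps coincide.
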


\begin{proof}
  In the following, ``section'' always means a section of~\(\s\).
  It is clear that left multiplication with a section defines an
  \(\Gr[H]\)\nb-map.  Furthermore, the map from sections to
  \(\Gr[H]\)\nb-maps is a homomorphism for the horizontal
  product~\eqref{eq:compose_sections} of sections, so a bisection
  gives an invertible map on~\(\Gr[H]^1\).  It remains to prove the
  converse.  Let \(\varphi\colon \Gr[H]^1\to\Gr[H]^1\) be an
  \(\Gr[H]\)\nb-map and let \(h\in\Gr[H]^1\).  Then
  \(\s(\varphi(h))=\s(h)\), so
  Lemma~\ref{lem:interpret_invertibility_as_map} gives a unique
  \(\omega_h\) with \(\varphi(h)=\omega_h\cdot h\).  This defines a map
  \(\omega\colon \Gr[H]^1\to\Gr[H]^1\) with \(\s\circ\omega=\rg\) and
  \(\omega(h)\cdot h=\varphi(h)\) for all \(h\in\Gr[H]^1\).  Since
  \(\varphi(h_1\cdot h_2)=\varphi(h_1)\cdot h_2\) for all
  \(h_1,h_2\in\Gr[H]^1\) with \(\s(h_1)=\rg(h_2)\), we get
  \(\omega(h_1\cdot h_2)=\omega(h_1)\).  This implies
  \(\omega=\Phi\circ\rg\) for a map \(\Phi\colon
  \Gr[H]^0\to\Gr[H]^1\), compare the proof of
  Proposition~\ref{pro:unit_inverse_from_basicality}.  We have
  \(\s\circ\Phi=\id_{\Gr[H]^0}\), so~\(\Phi\) is a section, and
  \(\varphi(h)=\Phi(\rg(h))\cdot h\) for all \(h\in\Gr[H]^1\) by
  construction.  The section~\(\Phi\) is uniquely determined
  by~\(\varphi\).

  If~\(\varphi\) is invertible, its inverse is also an \(\Gr[H]\)\nb-map
  and hence associated to some section~\(\Psi\).  Since the map
  \(\Phi\mapsto\varphi\) is injective, \(\Phi\) is invertible in the
  monoid of sections, hence it is a bisection by
  Lemma~\ref{lem:bisections}.
\end{proof}

Let \(\mul_1,\mul_2\) be two actors from~\(\Gr\) to~\(\Gr[H]\), that
is, left actions of~\(\Gr\) on~\(\Gr[H]^1\) that commute with the
right multiplication action.  A \(2\)\nb-arrow from~\(\mul_1\)
to~\(\mul_2\) is a \(\Gr,\Gr[H]\)-bibundle map
\((\Gr[H]^1,\mul_1)\to (\Gr[H]^1,\mul_2)\).  By
Proposition~\ref{pro:left_multiplier}, this is the same as a
section~\(\Phi\) of~\(\Gr[H]\) with
\[
\Phi(\rg(g\cdot_1 h))\cdot (g\cdot_1 h) =
g\cdot_2 (\Phi(\rg(h))\cdot_1 h)
\]
for all \(g\in\Gr^1\), \(h\in\Gr[H]^1\) with
\(\s(g)=\rg_{\Gr,1}(\rg(h))\), where \(\cdot_1\) and~\(\cdot_2\)
come from \(\mul_1\) and~\(\mul_2\), respectively.  Composing the
\(\Gr,\Gr[H]\)-maps
associated to sections \(\Phi_1\) and~\(\Phi_2\) gives the
\(\Gr,\Gr[H]\)-map associated to the section \(\Phi_1\circ\Phi_2\)
defined in~\eqref{eq:compose_sections}.

\begin{proposition}
  \label{pro:actor_2-category}
  There is a strict \(2\)\nb-category with groupoids
  in~\((\Cat,\covers)\) as objects, actors as arrows,
  sections~\(\Phi\) as above as \(2\)\nb-arrows, the usual
  composition of actors and the usual unit actors, and with the
  product~\(\circ\) for sections as vertical product.  The
  equivalences in this \(2\)\nb-category are precisely the
  isomorphisms of groupoids in~\((\Cat,\covers)\).
\end{proposition}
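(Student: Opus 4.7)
Proposition~\ref{pro:actor_acts} already shows that groupoids and actors form a $1$-category, so the task is to add the $2$-arrows and check the axioms of a strict $2$-category, then characterise the equivalences.

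\textbf{Vertical and horizontal products.} For the vertical product of $2$-arrows $\Phi_1\colon \mul_1\Rightarrow\mul_2$ and $\Phi_2\colon \mul_2\Rightarrow\mul_3$, I would take $\Phi_2\circ\Phi_1$ using \eqref{eq:compose_sections}. A direct substitution shows that this section still satisfies the intertwining relation for $\mul_1,\mul_3$; associativity, unitality and invertibility of the vertical product then follow from the monoidal structure of $\circ$ on sections already recalled around Lemma~\ref{lem:bisections}. For the horizontal product of $2$-arrows $\Phi\colon a_1\Rightarrow a_2$ and $\Psi\colon b_1\Rightarrow b_2$, where $a_i\colon \Gr\to\Gr[H]$ and $b_i\colon \Gr[H]\to\Gr[K]$, I would use the identification in Proposition~\ref{pro:actor_as_functor} to replace each actor by its corresponding functor on a transformation groupoid and each $2$-arrow by an ordinary natural transformation, then take the usual horizontal product there. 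An elementwise formula modelled on Definition~\ref{def:functor} produces a well-defined section $\Gr[K]^0\to\Gr[K]^1$ intertwining $b_1 a_1$ and $b_2 a_2$; associativity, the exchange law and strict unitality with identity $2$-arrows are inherited from the ordinary case.

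\textbf{Characterising equivalences.} An isomorphism of groupoids gives an invertible actor by Proposition~\ref{pro:isom_actors}, hence \emph{a fortiori} an equivalence. Conversely, assume $a\colon \Gr\to\Gr[H]$ is an equivalence with quasi-inverse $b\colon \Gr[H]\to\Gr$ and invertible $2$-arrows $\Phi\colon ba\Rightarrow\id_\Gr$ and $\Psi\colon ab\Rightarrow\id_{\Gr[H]}$. By Lemma~\ref{lem:bisections} the underlying sections of $\Phi,\Psi$ are bisections, so $\rg\circ\Phi$ and $\rg\circ\Psi$ are invertible in~$\Cat$. Unwinding the definition of a $2$-arrow of actors, $\rg\circ\Phi$ equals the composite anchor map $\alpha\circ\beta\colon \Gr^0\to\Gr[H]^0\to\Gr^0$ of the composite actor $ba$, where $\alpha,\beta$ are the anchor maps of $a,b$; symmetrically $\rg\circ\Psi = \beta\circ\alpha$. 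A standard one-sided-inverse argument then shows that $\alpha$ and $\beta$ are individually invertible in~$\Cat$. By Example~\ref{exa:actor_from_nice_functor} both actors thus come from functors of groupoids, and using the bisections $\Phi,\Psi$ to readjust $b$ on the nose turns these into strict mutual inverses; Proposition~\ref{pro:isom_actors} then identifies $a$ with the actor of a groupoid isomorphism.

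\textbf{Main obstacle.} The heaviest bookkeeping lies in the horizontal product of $2$-arrows, where the anchor maps of the four actors $a_1,a_2,b_1,b_2$ in general differ and must all be tracked. The clean route is to work throughout via Proposition~\ref{pro:actor_as_functor}, so that $2$-arrows of actors become honest natural transformations between functors on transformation groupoids and all strict $2$-category axioms reduce to their standard versions. In the equivalence characterisation, the subtle point is that a $2$-equivalence need not literally be invertible on the nose; absorbing the bisections $\Phi,\Psi$ into one side rectifies this so that Proposition~\ref{pro:isom_actors} can be applied.
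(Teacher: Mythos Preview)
Your horizontal product construction has a real gap. You propose to transport everything through Proposition~\ref{pro:actor_as_functor} and then take the ``usual horizontal product'' of natural transformations. But two parallel actors \(a_1,a_2\colon \Gr\to\Gr[H]\) in general have different anchor maps \(\Gr[H]^0\to\Gr^0\), so Proposition~\ref{pro:actor_as_functor} produces functors out of \emph{different} transformation groupoids \(\Gr\ltimes_1\Gr[H]^0\) and \(\Gr\ltimes_2\Gr[H]^0\); there is then no ordinary natural transformation between them to invoke, and hence nothing standard to compose horizontally. You flag this as the main obstacle, but the suggested remedy (``work throughout via Proposition~\ref{pro:actor_as_functor}'') is precisely what fails. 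The paper avoids the issue entirely by writing down an explicit formula: for \(\Phi\colon a_1\Rightarrow a_2\) and \(\Psi\colon b_1\Rightarrow b_2\) it sets \((\Psi\bullet\Phi)(x)=\Phi(\rg_{\Gr[H],1}(x))\cdot\Psi(x)\) and checks directly that this section intertwines \(b_1a_1\) and \(b_2a_2\).

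Your equivalence argument is close to the paper's but slightly more roundabout. The paper does not pass through the individual anchor maps: it observes that an invertible \(2\)-arrow \(\id_{\Gr}\Rightarrow ba\) is a bisection, so (via Example~\ref{exa:equivalence_id} and Example~\ref{exa:actor_from_nice_functor}) the actor \(ba\) is the actor associated to an inner automorphism, hence already an isomorphism in the \(1\)-category of actors; similarly for \(ab\). Then the purely \(1\)-categorical fact that \(ba\) and \(ab\) invertible forces \(a\) invertible, and Proposition~\ref{pro:isom_actors} finishes. Your ``readjust \(b\) on the nose'' step is exactly this inner-automorphism observation, and making it precise amounts to the paper's argument.
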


\begin{proof}
  It remains to construct the horizontal product.  Let \(\Gr\),
  \(\Gr[H]\) and~\(\Gr[K]\) be groupoids, let \(\mul_1,\mul_2\colon
  \Gr\rightrightarrows\Gr[H]\) and \(\mul'_1,\mul'_2\colon
  \Gr[H]\rightrightarrows\Gr[K]\) be actors, let \(\Phi\colon
  \Gr[H]^0\to\Gr[H]^1\) and \(\Psi\colon \Gr[K]^0\to\Gr[K]^1\) be
  sections with
  \[
  \Phi(\rg(g\cdot_1 h))\cdot (g\cdot_1 h) =
  g\cdot_2 (\Phi(\rg(h))\cdot_1 h),\qquad
  \Psi(\rg(h\cdot_1 k))\cdot (h\cdot_1 k) =
  h\cdot_2 (\Psi(\rg(k))\cdot_1 k)
  \]
  for all \(g\in\Gr^1\), \(h\in\Gr[H]^1\), \(k\in\Gr[K]^1\) with
  \(\s(g)=\rg_{\Gr,1}(\rg(h))\) and \(\s(h)=\rg_{\Gr[H],1}(\rg(k))\),
  respectively.  Then \(\Psi\bullet\Phi\colon \Gr[K]^0\to\Gr[K]^1\),
  \(x\mapsto \Phi(\rg_{\Gr[H],1}(x))\cdot\Psi(x)\), is a section
  for~\(\Gr[K]\) that gives a \(2\)\nb-arrow from the composite
  \(\mul'_1\circ\mul_1\) to \(\mul'_2\circ\mul_2\).

  An equivalence \(\alpha\colon \Gr\to\Gr[H]\) in this
  \(2\)\nb-category has a quasi-inverse \(\beta\colon \Gr[H]\to\Gr\)
  and invertible \(2\)\nb-arrows
  \(\id_{\Gr}\Rightarrow\beta\circ\alpha\) and
  \(\id_{\Gr[H]}\Rightarrow \alpha\circ\beta\).  Being invertible,
  they correspond to bisections.  Then it follows that the actors
  \(\beta\circ\alpha\) and \(\alpha\circ\beta\) come from inner
  automorphisms of \(\Gr\) and~\(\Gr[H]\), respectively, as in
  Example~\ref{exa:actor_from_nice_functor}.  Thus
  \(\beta\circ\alpha\) and \(\alpha\circ\beta\) are isomorphisms in
  the \(1\)\nb-category of actors.  This implies that~\(\alpha\) is
  an isomorphism in the \(1\)\nb-category of actors.  Now
  Proposition~\ref{pro:isom_actors} shows that~\(\alpha\) is an
  isomorphism of categories.
\end{proof}

\section{Principal bundles}
\label{sec:principal_bundles}

\begin{definition}
  \label{def:principal_bundle}
  Let \(\Gr\) be a groupoid in~\((\Cat,\covers)\).  A
  \emph{\(\Gr\)\nb-bundle} (over~\(\Base\)) is a \(\Gr\)\nb-action
  \((\Act,\s,\mul)\) with a \(\Gr\)\nb-invariant map \(\bunp\colon
  \Act\to\Base\) (\emph{bundle projection}).  A \(\Gr\)\nb-bundle is
  \emph{principal} if
  \begin{enumerate}
  \item \(\bunp\) is a cover;
  \item the following map is an isomorphism:
    \begin{equation}
      \label{eq:principal_bundle}
      (\pr_1,\mul)\colon \Act\times_{\s,\Gr^0,\rg}\Gr^1\congto
      \Act\times_{\bunp,\Base,\bunp} \Act,\qquad
      (x,g)\mapsto (x,x\cdot g).
    \end{equation}
  \end{enumerate}
\end{definition}

The isomorphism~\eqref{eq:principal_bundle} is equivalent to the
elementwise statement that for \(x_1,x_2\in \Act\) with
\(\bunp(x_1)=\bunp(x_2)\), there is a unique \(g\in \Gr^1\) with
\(\s(x_1)=\rg(g)\) and \(x_1\cdot g=x_2\); this translation is proved
like Lemma~\ref{lem:interpret_invertibility_as_map}.

\begin{definition}
  \label{def:orbit_space}
  Let~\(\Gr\) be a groupoid and~\(\Act\) a \(\Gr\)\nb-action
  in~\((\Cat,\covers)\).  The \emph{orbit space projection}
  \(\pi_{\Act}\colon \Act\to \Act/\Gr\) is the coequaliser of the
  two maps \(\pr_1,\mul\colon \Act\times_{\s,\Gr^0,\rg}
  \Gr^1\rightrightarrows \Act\).
\end{definition}

While the coequaliser need not exist in~\(\Cat\), it always exists in
the category of presheaves over~\(\Cat\).

\begin{lemma}
  \label{lem:principal_bundle}
  Let~\(\Act\) with bundle projection \(\bunp\colon \Act\prto \Base\)
  be a principal \(\Gr\)\nb-action.  Then~\(\bunp\) is equivalent to
  the orbit space projection \(\Act\to \Act/\Gr\), which therefore
  exists in~\(\Cat\) and is a cover.  The orbit space projection is
  always \(\Gr\)\nb-invariant, and condition~\((3'')\) in
  Definition~\emph{\ref{def:action}} follows
  from~\eqref{eq:principal_bundle}.
\end{lemma}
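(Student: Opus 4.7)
The plan is to derive the three assertions in turn, all from the principality isomorphism~\eqref{eq:principal_bundle} together with the assumed subcanonicity of the pretopology. None of the steps should require substantial calculation; the main idea is simply to transport structure through the isomorphism~\eqref{eq:principal_bundle}.

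First I would verify condition~\((3'')\), that \(\mul\colon \Act\times_{\s,\Gr^0,\rg}\Gr^1\to \Act\) is a cover. Under the isomorphism~\eqref{eq:principal_bundle}, the map~\(\mul\) corresponds to the second coordinate projection \(\pr_2\colon \Act\times_{\bunp,\Base,\bunp}\Act\to \Act\). Since~\(\bunp\) is assumed to be a cover, \(\pr_2\) is a cover by axiom~(3) of Definition~\ref{def:pretopology}; composing with the inverse of~\eqref{eq:principal_bundle} then shows that~\(\mul\) is a cover. This uses only principality and the fibre-product axiom of a pretopology.

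Next I would show that~\(\bunp\) is the coequaliser of the pair \(\pr_1,\mul\colon \Act\times_{\s,\Gr^0,\rg}\Gr^1\rightrightarrows\Act\), so that it coincides with the orbit space projection \(\pi_{\Act}\colon\Act\to\Act/\Gr\) from Definition~\ref{def:orbit_space}. Since~\(\bunp\) is a cover in a subcanonical pretopology, Definition and Lemma~\ref{def:subcanonical} gives that~\(\bunp\) is the coequaliser of \(\pr_1,\pr_2\colon \Act\times_{\bunp,\Base,\bunp}\Act\rightrightarrows\Act\). Under the isomorphism~\eqref{eq:principal_bundle}, the pair \((\pr_1,\pr_2)\) corresponds exactly to the pair \((\pr_1,\mul)\), because \(\pr_1\circ(\pr_1,\mul)=\pr_1\) and \(\pr_2\circ(\pr_1,\mul)=\mul\). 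Precomposing with an isomorphism preserves coequalisers, so~\(\bunp\) is also the coequaliser of \(\pr_1,\mul\); hence \(\Act/\Gr\) exists in~\(\Cat\) and~\(\pi_{\Act}=\bunp\) up to canonical isomorphism, in particular \(\pi_{\Act}\) is a cover.

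Finally, the \(\Gr\)\nb-invariance of the orbit space projection is tautological and does not require principality: the defining property of the coequaliser~\(\pi_{\Act}\) is the identity \(\pi_{\Act}\circ\pr_1=\pi_{\Act}\circ\mul\), which is exactly the elementwise statement \(\pi_{\Act}(x)=\pi_{\Act}(x\cdot g)\) for all \(x\in\Act\), \(g\in\Gr^1\) with \(\s(x)=\rg(g)\). The only mild subtlety to watch in the write-up is to be careful that the coequaliser we construct lives in~\(\Cat\) and not merely in presheaves over~\(\Cat\); but this is automatic once we identify it with the given cover~\(\bunp\).
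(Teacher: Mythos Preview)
Your proposal is correct and follows essentially the same argument as the paper: both use subcanonicity to identify~\(\bunp\) as the coequaliser of \(\pr_1,\pr_2\) on \(\Act\times_{\bunp,\Base,\bunp}\Act\), then transport this through the isomorphism~\eqref{eq:principal_bundle} to identify it with the coequaliser of \(\pr_1,\mul\), and read off that~\(\mul\) is a cover from the same isomorphism. The only cosmetic difference is that you isolate the verification of~\((3'')\) as a separate first step, whereas the paper obtains it in passing during the coequaliser argument.
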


\begin{proof}
  Since the pretopology is subcanonical, the cover \(\bunp\colon
  \Act\prto \Base\) is the coequaliser of \(\pr_1,\pr_2\colon
  \Act\times_{\bunp,\Base,\bunp}\Act\rightrightarrows \Act\), and these
  two maps are covers.  By the
  isomorphism~\eqref{eq:principal_bundle}, \(\bunp\) is the
  coequaliser of \(\pr_1,\mul\colon \Act\times_{\s,\Gr^0,\rg}
  \Gr^1\rightrightarrows \Act\) as well, and both \(\pr_1\)
  and~\(\mul\) are
  covers.  Thus~\(\bunp\) is an orbit space projection and~\(\mul\)
  is automatically a cover; the latter is
  Definition~\ref{def:action}.(\(3''\)).
\end{proof}

\subsection{Examples}
\label{sec:example_principal}

\begin{example}
  \label{exa:0-groupoid_principal_bundle}
  Let~\(\Gr[Y]\) be a \(0\)\nb-groupoid (see
  Example~\ref{exa:0-groupoid}), let \(\Act\to\Gr[Y]\) be a
  \(\Gr[Y]\)\nb-action, and let \(\bunp=\id_{\Act}\).  This is a
  principal \(\Gr[Y]\)\nb-bundle.
\end{example}

\begin{example}
  \label{exa:covering_groupoid_principal}
  Let \(\bunp\colon \Act\prto \Base\) be a cover and let~\(\Gr\) be
  its \v{C}ech groupoid (see Example~\ref{exa:covering_groupoid}).
  The canonical action of~\(\Gr\) on its objects~\(\Act\) is given by
  \(\s=\id_{\Act}\) and \(x_1\cdot (x_1,x_2)\defeq x_2\) for all
  \(x_1,x_2\in \Act\) with \(\bunp(x_1)=\bunp(x_2)\) (see
  Example~\ref{exa:action_on_objects}).  Together with the bundle
  projection \(\bunp\colon \Act\prto \Base\), this is a principal
  bundle: the map \(\Act\times_{\id_{\Act},\Act,\pr_1}
  (\Act\times_{\bunp,\Base,\bunp} \Act) \to
  \Act\times_{\bunp,\Base,\bunp} \Act\) defined elementwise by
  \((x_1,(x_1,x_2))\mapsto (x_1,x_2)\) is an isomorphism.
\end{example}

\subsection{Pull-backs of principal bundles}
\label{sec:principal_pull-back}

\begin{proposition}
  \label{pro:pull-back_principal}
  Let \(f\colon \tilde{\Base}\to\Base\) be a map in~\(\Cat\) and let
  \((\Act,\s,\bunp,\mul)\) be a principal \(\Gr\)\nb-bundle
  over~\(\Base\).  Then there is a principal \(\Gr\)\nb-bundle
  \((\tilde{\Act},\tilde{\s},\tilde{\bunp},\tilde{\mul})\)
  over~\(\tilde{\Base}\) with a \(\Gr\)\nb-map \(\hat{f}\colon
  \tilde{\Act}\to\Act\) with
  \(\bunp\circ\hat{f}=f\circ\tilde{\bunp}\).  These are unique in the
  following sense: for any principal \(\Gr\)\nb-bundle
  \((\tilde{\Act}',\tilde{\s}',\tilde{\bunp}',\tilde{\mul}')\)
  over~\(\tilde{\Base}\) and \(\Gr\)\nb-map \(\hat{f}'\colon
  \tilde{\Act}\to\Act\), there is a unique isomorphism of principal
  \(\Gr\)\nb-bundles \(\varphi\colon
  \tilde{\Act}\congto\tilde{\Act}'\) with
  \(\hat{f}'\circ\varphi=\hat{f}\).
\end{proposition}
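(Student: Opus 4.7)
The plan is to take the obvious candidate \(\tilde{\Act}\defeq \tilde{\Base}\times_{f,\Base,\bunp}\Act\), equip it with the inherited \(\Gr\)-action on the second factor, check principality by reducing it to that of~\(\Act\) via pull-back, and handle uniqueness by the general fact that every \(\Gr\)-equivariant map between principal \(\Gr\)-bundles over a common base is automatically an isomorphism.

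For existence, the fibre product \(\tilde{\Act}\) exists because \(\bunp\) is a cover. Set \(\tilde{\bunp}\defeq\pr_1\) (a cover as a pull-back of~\(\bunp\)), \(\hat{f}\defeq\pr_2\), and \(\tilde{\s}\defeq \s\circ\hat{f}\); let \(\Gr\) act on the second factor by \((\tilde{b},x)\cdot g\defeq (\tilde{b}, x\cdot g)\). The action is well-defined because~\(\bunp\) is \(\Gr\)-invariant by Lemma~\ref{lem:principal_bundle}. The action axioms, \(\Gr\)-equivariance of~\(\hat{f}\), \(\Gr\)-invariance of~\(\tilde{\bunp}\), and the identity \(\bunp\circ\hat{f}=f\circ\tilde{\bunp}\) are then all immediate. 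To verify principality, rewrite source and target of \((\pr_1,\tilde{\mul})\) using the canonical identifications
\[
\tilde{\Act}\times_{\tilde{\s},\Gr^0,\rg}\Gr^1 \cong \tilde{\Base}\times_{f,\Base,\bunp}(\Act\times_{\s,\Gr^0,\rg}\Gr^1),\quad \tilde{\Act}\times_{\tilde{\bunp},\tilde{\Base},\tilde{\bunp}}\tilde{\Act} \cong \tilde{\Base}\times_{f,\Base,\bunp}(\Act\times_{\bunp,\Base,\bunp}\Act).
\]
Under these, \((\pr_1,\tilde{\mul})\) becomes the pull-back of the principality isomorphism of~\(\Act\) along~\(f\), hence an isomorphism.

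For uniqueness, given another such \((\tilde{\Act}',\tilde{\s}',\tilde{\bunp}',\tilde{\mul}')\) with \(\Gr\)-map \(\hat{f}'\colon \tilde{\Act}'\to\Act\) satisfying \(\bunp\circ\hat{f}'=f\circ\tilde{\bunp}'\), the universal property of the fibre product~\(\tilde{\Act}\) yields a unique map \(\psi\defeq(\tilde{\bunp}',\hat{f}')\colon \tilde{\Act}'\to\tilde{\Act}\) over~\(\tilde{\Base}\) with \(\hat{f}\circ\psi=\hat{f}'\); it is \(\Gr\)-equivariant because both of its components are. Setting \(\varphi\defeq\psi^{-1}\) then gives the asserted bundle isomorphism, and its uniqueness follows from the universal property.

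The main obstacle is to show \(\psi\) is invertible. I reduce this to the general lemma that any \(\Gr\)-equivariant map \(\psi\colon \Act_1\to\Act_2\) between principal \(\Gr\)-bundles over a common base~\(\Base\) is an isomorphism, to be proved via Proposition~\ref{pro:isomorphism_local}. Pull \(\psi\) back along the cover \(g\defeq\pr_2\colon \Act_1\times_{\bunp_1,\Base,\bunp_2}\Act_2\prto\Act_2\) (a cover as a pull-back of~\(\bunp_1\)); the resulting pulled-back map is canonically isomorphic to \(\Act_1\times_{\bunp_1,\Base,\bunp_1}\Act_1 \to \Act_1\times_{\bunp_1,\Base,\bunp_2}\Act_2\), \((x_1,x_2)\mapsto (x_1,\psi(x_2))\). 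Using the principality isomorphisms for \(\Act_1\) and~\(\Act_2\), source and target both identify with \(\Act_1\times_{\s,\Gr^0,\rg}\Gr^1\), and under these identifications \(\Gr\)-equivariance of~\(\psi\) makes this map the identity. Locality of isomorphisms then forces~\(\psi\) to be an isomorphism. The delicate point will be the bookkeeping needed to check that the two principality identifications intertwine the pulled-back map with the identity.
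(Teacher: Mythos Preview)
Your proof is correct and follows essentially the same approach as the paper: the same fibre-product construction for existence, and for uniqueness the same cover \(\tilde{\Act}'\times_{\tilde{\Base}}\tilde{\Act}\prto\tilde{\Act}\) together with locality of isomorphisms and the two principality isomorphisms. The only difference is presentational---you abstract the invertibility step as a standalone lemma and reduce the pulled-back map to the identity via the principality identifications, whereas the paper carries out the equivalent argument elementwise in the specific situation at hand.
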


This principal \(\Gr\)\nb-bundle is called the \emph{pull-back}
of \((\Act,\s,\bunp,\mul)\) along~\(f\).

\begin{proof}
  Let \(\tilde{\Act} \defeq \tilde{\Base}\times_{f,\Base,\bunp}
  \Act\), \(\tilde{\s} \defeq \s\circ\pr_2\colon
  \tilde{\Act}\to\Gr^0\) and \(\tilde{\bunp}\defeq \pr_1\colon
  \tilde{\Act}\to\tilde{\Base}\).  The fibre product~\(\tilde{\Act}\)
  exists and~\(\tilde{\bunp}\) is a cover because~\(\bunp\) is a
  cover.  Let
  \[
  \tilde{\mul}\colon \tilde{\Act} \times_{\tilde{\s},\Gr^0,\rg}\Gr^1
  = \tilde{\Base} \times_{f,\Base,\bunp} \Act \times_{\s,\Gr^0,\rg} \Gr^1
  \xrightarrow{\id_{\tilde{\Base}} \times_{f,\Base} \mul}
  \tilde{\Base} \times_{f,\Base,\bunp} \Act
  = \tilde{\Act};
  \]
  this map is well-defined because \(\bunp\circ\mul=\bunp\circ\pr_1\).

  Routine computations show that \((\tilde{\Act}, \tilde{\s},
  \tilde{\mul})\) is a \(\Gr\)\nb-action and that
  \(\tilde{\bunp}\circ\tilde{\mul}=\tilde{\bunp}\circ\pr_1\).  The
  map~\eqref{eq:principal_bundle} with tildes is the pull-back
  along~\(f\) of the same map without tildes, where we use the
  canonical maps
  from both sides to~\(\Base\).  Since pull-backs of isomorphisms
  remain isomorphisms, \((\tilde{\Act}, \tilde{\Base}, \tilde{\s},
  \tilde{\mul}, \tilde{\bunp})\) is a principal \(\Gr\)\nb-bundle
  over~\(\Base\).  The second coordinate projection \(\hat{f}\colon
  \tilde{\Act}\to\Act\) is a \(\Gr\)\nb-map with \(\bunp\circ\hat{f} =
  f\circ\tilde{\bunp}\).

  Let \((\tilde{\Act}',\tilde{\s}',\tilde{\bunp}',\tilde{\mul}')\)
  be another principal \(\Gr\)\nb-bundle over~\(\tilde{\Base}\) with
  a \(\Gr\)\nb-map \(\hat{f}'\colon \tilde{\Act}'\to\Act\) with
  \(\bunp\circ\hat{f}' = f\circ\tilde{\bunp}'\).  Then
  \(\varphi\defeq (\tilde{\bunp}',\hat{f}')\colon
  \tilde{\Act}'\to\tilde{\Act}\) is a \(\Gr\)\nb-map with
  \(\tilde{\bunp}\circ\varphi= \tilde{\bunp}'\), and it is the only
  \(\Gr\)\nb-map with this extra property.  The proof will be
  finished by showing that~\(\varphi\) is an isomorphism.

  Since \(\tilde{\bunp}'\colon \tilde{\Act}'\prto\tilde{\Base}\) is
  a cover, so is its pull-back along \(\tilde{\bunp}\colon
  \tilde{\Act}\to\tilde{\Base}\).  Identifying
  \(\tilde{\Act}'\times_{\tilde{\bunp}',\tilde{\Base},\tilde{\bunp}}
  \tilde{\Act} \cong \tilde{\Act}'\times_{f\circ
    \tilde{\bunp}',\Base,\bunp}\Act\), this cover becomes the map
  \[
  \tilde{\bunp}'\times_{\Base} \id_{\Act}\colon
  \tilde{\Act}'\times_{f\tilde{\bunp}',\Base,\bunp} \Act
  \prto \tilde{\Act} =
  \tilde{\Base}\times_{f,\Base,\bunp} \Act,\qquad
  (x_1,x_2)\mapsto (\tilde{\bunp}'(x_1),x_2).
  \]
  Since \(\tilde{\bunp}'\times_{\Base} \id_{\Act}\) is a cover, the
  fibre product of the following two maps exists:
  \[
  \tilde{\Act}'\times_{f\tilde{\bunp}',\Base,\bunp} \Act
  \xrightarrow{\tilde{\bunp}'\times_{\Base} \id_{\Act}}
  \tilde{\Act}
  \xleftarrow{\varphi} \tilde{\Act}'.
  \]
  An element of this fibre product is a triple \(x_1\in\tilde{\Act}'\),
  \(x_2\in\Act\), \(x_3\in\tilde{\Act}'\) with
  \(f(\tilde{\bunp}'(x_1))=\bunp(x_2)\), \(\hat{f}'(x_3)=x_2\),
  \(\tilde{\bunp}'(x_3)=\tilde{\bunp}'(x_1)\).  Then \(\bunp(x_2)=
  \bunp(\hat{f}'(x_3)) = f(\tilde{\bunp}'(x_3)) =
  f(\tilde{\bunp}'(x_1)) = \bunp(\hat{f}'(x_1))\).  Since~\(\Act\) is a
  principal \(\Gr\)\nb-bundle over~\(\Base\), there is a unique
  \(g\in\Gr\) with \(\s(x_2)=\rg(g)\) and \(\hat{f}'(x_1)= x_2\cdot
  g\).  Since~\(\tilde{\Act}'\) is a principal \(\Gr\)\nb-bundle
  over~\(\tilde{\Base}\) and \(\tilde{\bunp}'(x_3) =
  \tilde{\bunp}'(x_1)\), there is also a unique \(g'\in\Gr^1\) with
  \(\tilde{\s}'(x_1)=\rg(g')\) and \(x_1\cdot g'=x_3\).  Then
  \(x_2=\hat{f}'(x_3)=\hat{f}'(x_1\cdot g') = \hat{f}'(x_1)\cdot
  g'\), so \(g=g'\).  Thus \(x_3=\hat{f}'(x_1)\cdot g\) is uniquely
  determined by \(x_1\) and~\(x_2\).  That is, the pull-back
  of~\(\varphi\) along the cover \(\tilde{\bunp}'\times_{\Base}
  \id_{\Act}\) is an isomorphism.  This implies that~\(\varphi\) is
  an isomorphism by Proposition~\ref{pro:isomorphism_local}.
\end{proof}

\begin{proposition}
  \label{pro:G-map_versus_base_map}
  Let\/ \(\Act_1\) and\/~\(\Act_2\) be principal \(\Gr\)\nb-bundles over
  \(\Base_1\) and~\(\Base_2\), respectively.  Then a \(\Gr\)\nb-map
  \(f\colon \Act_1\to\Act_2\) induces a map \(f/\Gr\colon
  \Base_1\to\Base_2\).  The map~\(f\) is an isomorphism if and only
  if~\(f/\Gr\) is.  If~\(f/\Gr\) is a cover, so is~\(f\).  Conversely,
  under Assumption~\textup{\ref{assum:local_cover}}, \(f\) is a cover
  if and only if~\(f/\Gr\) is.
\end{proposition}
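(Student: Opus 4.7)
The plan is to identify $f$ as a pull-back of $f/\Gr$ along the cover $\bunp_2\colon\Act_2\prto\Base_2$; all four assertions then drop out from Proposition~\ref{pro:isomorphism_local}, the pretopology axiom, and Assumption~\ref{assum:local_cover} respectively.

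First I would construct $f/\Gr$. Since $f$ is $\Gr$-equivariant, the composite $\bunp_2\circ f\colon\Act_1\to\Base_2$ is $\Gr$-invariant. By Lemma~\ref{lem:principal_bundle}, the bundle projection $\bunp_1\colon\Act_1\prto\Base_1$ is the orbit space projection, that is, the coequaliser of $\pr_1,\mul\colon\Act_1\times_{\s,\Gr^0,\rg}\Gr^1\rightrightarrows\Act_1$. Hence $\bunp_2\circ f$ factors uniquely through $\bunp_1$, yielding the desired map $f/\Gr\colon\Base_1\to\Base_2$ characterised by $(f/\Gr)\circ\bunp_1=\bunp_2\circ f$.

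Next I would apply Proposition~\ref{pro:pull-back_principal} to $f/\Gr$ and $\Act_2$: the pull-back is the principal $\Gr$-bundle $(f/\Gr)^*\Act_2=\Base_1\times_{f/\Gr,\Base_2,\bunp_2}\Act_2$ over $\Base_1$, with bundle projection $\pr_1$ and canonical $\Gr$-map $\pr_2$ to $\Act_2$. Since $\Act_1$ is itself a principal $\Gr$-bundle over $\Base_1$ and $f\colon\Act_1\to\Act_2$ is a $\Gr$-map with the required compatibility $\bunp_2\circ f=(f/\Gr)\circ\bunp_1$, the uniqueness part of Proposition~\ref{pro:pull-back_principal} produces a unique isomorphism of principal $\Gr$-bundles $\varphi\colon\Act_1\congto(f/\Gr)^*\Act_2$ with $\pr_2\circ\varphi=f$. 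In other words, $f$ coincides, up to the canonical isomorphism $\varphi$, with the pull-back projection of $f/\Gr$ along the cover $\bunp_2$.

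With this identification, the remaining claims are routine. Proposition~\ref{pro:isomorphism_local} applied to $f/\Gr$ and the cover $\bunp_2$ says that $f/\Gr$ is an isomorphism precisely when $\pr_2\cong f$ is, which settles the isomorphism statement. The pretopology axiom that pull-backs of covers along arbitrary maps are again covers gives that $f$ is a cover whenever $f/\Gr$ is. Conversely, under Assumption~\ref{assum:local_cover}, knowing that both $\bunp_2$ and $\pr_2\cong f$ are covers forces $f/\Gr$ to be a cover as well. No step presents a genuine obstacle; the only thing requiring care is to match up the fibre-product diagrams so that $\bunp_2$ plays the role of the cover $g$ in Proposition~\ref{pro:isomorphism_local} and in Assumption~\ref{assum:local_cover}, and to verify that the isomorphism $\varphi$ really does intertwine $f$ with the projection $\pr_2$ as claimed.
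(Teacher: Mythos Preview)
Your proof is correct and follows essentially the same approach as the paper: construct $f/\Gr$ via the orbit-space (coequaliser) property, recognise $\Act_1$ as the pull-back of $\Act_2$ along $f/\Gr$ using the uniqueness clause of Proposition~\ref{pro:pull-back_principal}, and then read off the three conclusions from Proposition~\ref{pro:isomorphism_local}, the pretopology axiom, and Assumption~\ref{assum:local_cover}. The paper's proof is terser but the logical structure is identical.
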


\begin{proof}
  Since \(\Base_i\) is the orbit space of~\(\Act_i\) and orbit spaces
  are constructed naturally, the \(\Gr\)\nb-map~\(f\) induces a map
  \(f/\Gr\colon \Base_1\to\Base_2\).  Since \(\bunp_2\circ
  f=f/\Gr\circ\bunp_1\), \(\Act_1\) must be the pull-back
  of~\(\Act_2\) along~\(f/\Gr\) by
  Proposition~\ref{pro:pull-back_principal}.  That is, the diagram
  \[
  \begin{tikzpicture}[baseline=(current bounding box.west)]
    \matrix[cd] (m) {
      \Act_1&\Act_2 \\
      \Base_1&\Base_2\\
    };
    \begin{scope}[cdar]
      \draw (m-1-1) -- node {\(f\)} (m-1-2);
      \draw (m-2-1) -- node {\(f/\Gr\)} (m-2-2);
      \draw[->>] (m-1-1) -- node[swap] {\(\bunp_1\)} (m-2-1);
      \draw[->>] (m-1-2) -- node {\(\bunp_2\)} (m-2-2);
    \end{scope}
  \end{tikzpicture}
  \]
  is a fibre-product diagram.  Proposition~\ref{pro:isomorphism_local}
  shows that~\(f\) is an isomorphism if and only if~\(f/\Gr\) is one.
  By the pretology axioms, \(f\) is a cover if~\(f/G\) is, and the
  converse holds under Assumption~\ref{assum:local_cover}.
\end{proof}

\begin{proposition}
  \label{pro:fibre-product_base_above}
  Consider a commuting square of \(\Gr\)\nb-maps between principal
  \(\Gr\)\nb-bundles $\Act_i$ and the corresponding maps between their base
  spaces $\Base_i$:
  \[
  \begin{tikzpicture}[baseline=(current bounding box.west)]
    \matrix[cd] (m) {
      \Act_1&\Act_2 \\
      \Act_3&\Act_4\\
    };
    \begin{scope}[cdar]
      \draw (m-1-1) -- node[swap] {\(\gamma_3\)} (m-2-1);
      \draw (m-1-2) -- node {\(\delta_2\)} (m-2-2);
      \draw (m-1-1) -- node {\(\gamma_2\)} (m-1-2);
      \draw (m-2-1) -- node {\(\delta_3\)} (m-2-2);
    \end{scope}
  \end{tikzpicture}
  \qquad
  \begin{tikzpicture}[baseline=(current bounding box.west)]
    \matrix[cd] (m) {
      \Base_1&\Base_2 \\
      \Base_3&\Base_4\\
    };
    \begin{scope}[cdar]
      \draw (m-1-1) -- node {\(\beta_2\)} (m-1-2);
      \draw (m-2-1) -- node {\(\alpha_3\)} (m-2-2);
      \draw (m-1-1) -- node[swap] {\(\beta_3\)} (m-2-1);
      \draw (m-1-2) -- node {\(\alpha_2\)} (m-2-2);
    \end{scope}
  \end{tikzpicture}
  \]
  The first square is a fibre-product square if and only if the
  second one is.
\end{proposition}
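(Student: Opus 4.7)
The plan is to apply Proposition~\ref{pro:G-map_versus_base_map} to the canonical $\Gr$-map $\varphi\colon \Act_1\to \Act_2\times_{\delta_2,\Act_4,\delta_3}\Act_3$ supplied by the commuting $\Act$-square, after equipping $\Act_2\times_{\Act_4}\Act_3$ with the structure of a principal $\Gr$-bundle over $\Base_2\times_{\alpha_2,\Base_4,\alpha_3}\Base_3$. Proposition~\ref{pro:G-map_versus_base_map} then says $\varphi$ is an isomorphism if and only if the induced map $\varphi/\Gr\colon \Base_1\to \Base_2\times_{\Base_4}\Base_3$ is an isomorphism, and these conditions are exactly the pullback conditions for the two squares.

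First I would invoke the uniqueness in Proposition~\ref{pro:pull-back_principal} applied to each $\Gr$-map $\delta_i\colon \Act_i\to\Act_4$ covering $\alpha_i\colon \Base_i\to\Base_4$ for $i=2,3$. This produces canonical isomorphisms of principal bundles
\[
\Act_i \;\cong\; \Base_i\times_{\alpha_i,\Base_4,\bunp_4}\Act_4,
\]
under which $\delta_i$ becomes $\pr_2$. Rearranging fibre products yields a natural identification
\[
\Act_2\times_{\delta_2,\Act_4,\delta_3}\Act_3
\;\cong\;
\Base_2\times_{\alpha_2,\Base_4,\bunp_4}\Act_4\times_{\bunp_4,\Base_4,\alpha_3}\Base_3,
\]
valid whenever either side exists.

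Next I would use Proposition~\ref{pro:pull-back_principal} a second time. Since $\bunp_4$ is a cover, as soon as $\Base_2\times_{\Base_4}\Base_3$ exists, pulling $\Act_4$ back along the canonical map $\Base_2\times_{\Base_4}\Base_3\to\Base_4$ produces a principal $\Gr$-bundle whose total space is precisely the right-hand side above. Conversely, if $\Act_2\times_{\Act_4}\Act_3$ exists, its orbit space (which exists by Lemma~\ref{lem:principal_bundle}, once we note that the diagonal $\Gr$-action is principal because the right-hand side is a pull-back of $\Act_4$) supplies $\Base_2\times_{\Base_4}\Base_3$. Thus in both directions of the iff, both fibre products exist simultaneously, and $\Act_2\times_{\Act_4}\Act_3$ is a principal $\Gr$-bundle with base $\Base_2\times_{\Base_4}\Base_3$.

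Finally, the $\Gr$-map $\varphi$ induces on orbit spaces the canonical comparison map coming from the commuting $\Base$-square, so $\varphi/\Gr$ is the map $\Base_1\to \Base_2\times_{\Base_4}\Base_3$. By Proposition~\ref{pro:G-map_versus_base_map}, $\varphi$ is an isomorphism iff $\varphi/\Gr$ is, which is the claim. The main obstacle is the existence bookkeeping in the converse direction, where one has to produce $\Base_2\times_{\Base_4}\Base_3$ out of the existence of $\Act_2\times_{\Act_4}\Act_3$; this is resolved by observing that the latter is automatically a principal bundle (being a pull-back of $\Act_4$) and appealing to Lemma~\ref{lem:principal_bundle}.
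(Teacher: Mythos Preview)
Your approach via the comparison map $\varphi$ and Proposition~\ref{pro:G-map_versus_base_map} works cleanly for the direction ``$\Base$-square is a fibre product $\Rightarrow$ $\Act$-square is a fibre product'': once $\Base_2\times_{\Base_4}\Base_3$ exists, pulling $\Act_4$ back along the canonical map to $\Base_4$ produces $\Act_2\times_{\Act_4}\Act_3$ as a principal bundle over it, $\varphi$ induces the identity on bases, and Proposition~\ref{pro:G-map_versus_base_map} finishes. This is arguably tidier than the paper's elementwise argument for that direction.

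The converse direction, however, has a genuine circularity. You want to conclude that $\Base_2\times_{\Base_4}\Base_3$ exists by taking the orbit space of $\Act_2\times_{\Act_4}\Act_3$, appealing to Lemma~\ref{lem:principal_bundle} after noting that the diagonal action is principal ``because the right-hand side is a pull-back of~$\Act_4$''. But to exhibit it as a pull-back of~$\Act_4$ along a map $\Base_2\times_{\Base_4}\Base_3\to\Base_4$ you already need $\Base_2\times_{\Base_4}\Base_3$ to exist; and Lemma~\ref{lem:principal_bundle} does not manufacture a base out of thin air---it assumes a bundle projection $\bunp\colon\Act\to\Base$ is given and identifies~$\Base$ with the orbit space. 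The only base you actually have for $\Act_2\times_{\Act_4}\Act_3\cong\Act_1$ is $\Base_1$ itself, so $\varphi/\Gr$ collapses to the identity on~$\Base_1$ and tells you nothing about the universal property. Since the ambient category need not have fibre products unless one leg is a cover, and neither $\alpha_2$ nor~$\alpha_3$ is assumed to be one, this existence issue is not a formality.

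The paper closes this gap by abandoning the comparison-map strategy here and directly verifying that~$\Base_1$ satisfies the universal property: given test maps $z_2,z_3$ into $\Base_2,\Base_3$ agreeing over~$\Base_4$, it pulls the bundles $\Act_i$ back to a common principal bundle $\?\?\prto\?$, lifts the~$z_i$ uniquely to $\Gr$-maps $x_i\colon\?\?\to\Act_i$ via Proposition~\ref{pro:pull-back_principal}, uses the assumed fibre-product property of the $\Act$-square to get $x_1\colon\?\?\to\Act_1$, and then descends to $z_1\colon\?\to\Base_1$ via Proposition~\ref{pro:G-map_versus_base_map}. This lift-and-descend argument is the missing ingredient in your converse direction.
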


\begin{proof}
  Assume first that the square of~\(\Base_i\) is a fibre-product
  square.  Let \(x_2\in\Act_2\), \(x_3\in\Act_3\) satisfy \(x_4\defeq
  \delta_2(x_2) = \delta_3(x_3)\).  We must show that there is a
  unique \(x_1\in\Act_1\) with \(\gamma_i(x_1)=x_i\) for \(i=2,3\).
  Let \(z_i\defeq \bunp_i(x_i)\in\Base_i\), then \(\alpha_2(z_2) =
  \bunp_4(\delta_2(x_2)) = \bunp_4(\delta_3(x_3)) = \alpha_3(z_3)\).
  Since~\(\Base_1\) is the pull-back of \(\alpha_2,\alpha_3\) by
  assumption, we get a unique \(z_1\in\Base_1\) with
  \(\beta_i(z_1)=z_i\) for \(i=2,3\).
  Proposition~\ref{pro:pull-back_principal} shows that
  \(\Act_1\prto\Base_1\) is the pull-back of \(\Act_4\prto\Base_4\)
  along \(\alpha_i\circ \beta_i\colon \Base_1\to\Base_4\), and
  similarly for \(\Act_i\prto\Base_i\) for \(i=2,3\).  Thus
  \(x_4\in\Act_4\) and \(z_1\in\Act_1\) determine a unique element
  \(x_1\in\Act_1\) with \(\bunp_1(x_1)=z_1\) and
  \(\delta_i\gamma_i(x_1)=x_4\) for \(i=2,3\).  We claim that this is
  the unique element of~\(\Act_1\) with \(\gamma_i(x_1)=x_i\) for
  \(i=2,3\).

  Any element of~\(\Act_1\) with \(\gamma_i(x_1)=x_i\) for
  \(i=2,3\) will also satisfy \(\bunp_1(x_1)=z_1\) and
  \(\delta_i\gamma_i(x_1)=x_4\) for \(i=2,3\), so uniqueness is clear.
  Since \(\Act_i\prto\Base_i\) is the pull-back of
  \(\Act_4\prto\Base_4\) for \(i=2,3\), the element \(x_i\in\Act_i\)
  is uniquely determined by \(z_i=\bunp_i(x_i)\) and
  \(x_4=\delta_i(x_i)\).  Since \(\bunp_i(\gamma_i(x_1)) =
  \beta_i\bunp_1(x_1)= \beta_i(z_1)=z_i = \bunp_i(x_i)\) and
  \(\delta_i(\gamma_i(x_1))=x_4=\delta_i(x_i)\), we get
  \(x_i=\gamma_i(x_1)\) as desired.  Hence we get a bijection between
  \(x_1\in\Act_1\) and pairs \(x_2\in\Act_2\), \(x_3\in\Act_3\) with
  \(\delta_2(x_2)=\delta_3(x_3)\).  This is the elementwise statement
  corresponding to \(\Act_1\cong \Act_2\times_{\Act_4}\Act_3\).

  Now assume, conversely, that \(\Act_1\cong\Act_2\times_{\Act_4}
  \Act_3\).  We are going to show that
  \(\Base_1\cong\Base_2\times_{\Base_4} \Base_3\).  This means that,
  for all \(\?\inOb\Cat\), the map
  \[
  \Cat(\?,\Base_1) \to
  \{(z_2,z_3)\in \Cat(\?,\Base_2)\times\Cat(\?,\Base_3) \mid
  \alpha_2 z_2=\alpha_3 z_3\}
  \]
  is a bijection.  Let \(z_i\colon \?\to\Base_i\) for \(i=2,3,4\) with
  \(\alpha_2z_2=z_4=\alpha_3z_3\) be given.  The \(\Gr\)\nb-maps
  \(\Act_i\to\Act_4\) for \(i=2,3\) give isomorphisms between
  \(\Act_i\prto\Base_i\) and the pull-back of \(\Act_4\to\Base_4\)
  along~\(\alpha_i\).  Pulling back further along the maps~\(z_i\)
  shows that the three pull-backs of the principal \(\Gr\)\nb-bundles
  \(\Act_i\to\Base_i\) along~\(z_i\) for \(i=2,3,4\) are canonically
  isomorphic principal \(\Gr\)\nb-bundles over~\(\?\).  Let
  \(\?\?\prto\?\) be this unique principal \(\Gr\)\nb-bundle
  over~\(\?\).  By Proposition~\ref{pro:pull-back_principal}, the
  maps~\(z_i\) for \(i=2,3,4\) lift uniquely to \(x_i\colon
  \?\?\to\Act_i\).  These liftings still satisfy \(\gamma_2
  x_2=x_4=\gamma_3 x_3\) because the lifting of~\(z_4\) is unique.

  Since \(\Act_1\cong\Act_2\times_{\Act_4} \Act_3\) is a fibre product
  in the category of \(\Gr\)\nb-actions by
  Lemma~\ref{lem:unique_fibre-product_action}, the unique map
  \(x_1\colon \?\?\to\Act_1\) with \(\delta_i x_1=x_i\) for \(i=2,3\)
  is a \(\Gr\)\nb-map.  Thus it induces a map \(z_1\colon
  \?\to\Base_1\) by Proposition~\ref{pro:G-map_versus_base_map}.  The
  naturality of this construction implies \(\beta_i z_1=z_i\) for
  \(i=2,3\).  Any map \(z_1'\colon \?\to\Base_1\) with \(\beta_i
  z_1'=z_i\) for \(i=2,3\) lifts to a map \(x_1'=(z_1',x_i)\colon
  \?\?\to \Base_1\times_{\Base_i} \Act_i\cong \Act_1\) with \(\delta_i
  x_1'=x_i\) for \(i=2,3\) by
  Proposition~\ref{pro:pull-back_principal}.  Since
  \(\Act_1\cong\Act_2\times_{\Act_4} \Act_3\), we must have
  \(x_1'=x_1\), which gives \(z_1'=z_1\).
  Hence there is a unique map \(z_1\colon \?\to\Base_1\) with
  \(\beta_i z_1=z_i\) for \(i=2,3\), as desired.
\end{proof}

\subsection{Locality of principal bundles}
\label{sec:principal_local}

We formulate now what it means for principal bundles to be a
``local'' notion.  Let~\(\Gr\) be a groupoid in~\((\Cat,\covers)\).
Let \(\Act,\Base\inOb\Cat\) and let \(\s\colon \Act\to\Gr^0\),
\(\bunp\colon \Act\to\Base\), and \(\mul\colon
\Act\times_{\s,\Gr^0,\rg} \Gr^1 \to \Act\) be maps.  We pull back
this data along a cover \(f\colon \tilde{\Base}\prto\Base\) as
follows.  Let \(\tilde{\Act} \defeq
\tilde{\Base}\times_{f,\Base,\bunp} \Act\), \(\tilde{\s} \defeq
\s\circ\pr_2\colon \tilde{\Act}\to\Gr^0\), \(\tilde{\bunp}\defeq
\pr_1\colon \tilde{\Act}\to\tilde{\Base}\) and
\[
\tilde{\mul}\colon \tilde{\Act} \times_{\tilde{\s},\Gr^0,\rg}\Gr^1
= \tilde{\Base} \times_{f,\Base,\bunp} \Act \times_{\s,\Gr^0,\rg} \Gr^1
\xrightarrow{\id_{\tilde{\Base}} \times_{f,\Base} \mul}
\tilde{\Base} \times_{f,\Base,\bunp} \Act
= \tilde{\Act};
\]
this map is well-defined if \(\bunp\circ\mul=\bunp\circ\pr_1\).

\begin{proposition}
  \label{pro:principal_is_local}
  The data \((\tilde{\Act}, \tilde{\Base}, \tilde{\s}, \tilde{\mul},
  \tilde{\bunp})\) is a well-defined principal \(\Gr\)\nb-bundle
  over~\(\tilde{\Base}\) if and only if \((\Act, \Base, \s, \mul,
  \bunp)\) is a principal \(\Gr\)\nb-bundle over~\(\Base\).
\end{proposition}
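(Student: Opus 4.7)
The statement is an ``if and only if,'' and the forward direction is essentially a special case of Proposition~\ref{pro:pull-back_principal}: assuming $(\Act,\Base,\s,\mul,\bunp)$ is a principal $\Gr$-bundle, the invariance $\bunp\circ\mul=\bunp\circ\pr_1$ holds, so $\tilde{\mul}$ is well defined, and the constructed object is literally the pull-back of $\Act$ along $f$, which is again a principal $\Gr$-bundle. I will therefore focus on the converse.

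The starting observation for the converse is that $\hat{f}\defeq \pr_2\colon \tilde{\Act}=\tilde{\Base}\times_{f,\Base,\bunp}\Act \to \Act$ is the pull-back of the cover $f$ along $\bunp$, so $\hat{f}$ is a cover by the pretopology axioms and hence an epimorphism because $\covers$ is subcanonical. The same applies to the induced map $\hat{f}\times_{\Gr^0}\id_{\Gr^1}$, which is again a cover. I would first use the epi property of $\hat{f}$ to transport the equality $\tilde{\bunp}\circ\tilde{\mul}=\tilde{\bunp}\circ\pr_1$ (necessary for $\tilde{\mul}$ to be defined at all) back to $\bunp\circ\mul=\bunp\circ\pr_1$ on $\Act$. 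I would then verify the three conditions of Definition~\ref{def:action} for $(\Act,\s,\mul)$ one at a time: each is an equality of arrows in $\Cat$, and each follows from the corresponding equality for $\tilde{\mul}$ by pre-composing with $\hat{f}$ or $\hat{f}\times_{\Gr^0}\id$ (for axioms involving the action fibre product) and then cancelling these epimorphisms. For example, $\s\circ\mul=\s\circ\pr_2$ on $\Act\times_{\Gr^0}\Gr^1$ holds because both sides agree after pre-composition with the cover $\hat{f}\times_{\Gr^0}\id$, by the analogous identity for $\tilde{\mul}$; associativity and unitality are handled the same way on the appropriate fibre products.

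To see that $\bunp\colon \Act\to\Base$ is a cover, I would invoke Assumption~\ref{assum:local_cover}: by construction $\tilde{\bunp}$ is literally the pull-back of $\bunp$ along the cover $f$, so $\bunp$ is locally a cover and hence a cover. For the principality condition, I would observe that the fibre products
\[
\tilde{\Act}\times_{\tilde{\s},\Gr^0,\rg}\Gr^1
\quad\text{and}\quad
\tilde{\Act}\times_{\tilde{\bunp},\tilde{\Base},\tilde{\bunp}}\tilde{\Act}
\]
are canonically identified with the pull-backs along $f\colon \tilde{\Base}\prto \Base$ of $\Act\times_{\s,\Gr^0,\rg}\Gr^1$ and $\Act\times_{\bunp,\Base,\bunp}\Act$, respectively, and under these identifications $(\tilde{\pr}_1,\tilde{\mul})$ is exactly the pull-back along $f$ of the map~\eqref{eq:principal_bundle} for $(\Act,\Base,\s,\mul,\bunp)$. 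Since the pulled-back map is an isomorphism by hypothesis and being an isomorphism is local by Proposition~\ref{pro:isomorphism_local}, the original map~\eqref{eq:principal_bundle} is an isomorphism too.

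The main obstacle is the bookkeeping of the fibre products: making explicit the canonical identifications under which $(\tilde{\pr}_1,\tilde{\mul})$ becomes the pull-back of $(\pr_1,\mul)$ along $f$, so that Proposition~\ref{pro:isomorphism_local} applies without ambiguity. Once this identification and the role of $\hat{f}$ as an epimorphism are set up, the reduction of each axiom to the corresponding statement for the pull-back is routine and the locality results do the rest.
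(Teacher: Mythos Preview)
Your approach matches the paper's: Proposition~\ref{pro:pull-back_principal} for the forward direction; for the converse, epi-cancellation through the cover~$\hat f$ (and its fibre-product variants) to descend the action axioms, and Proposition~\ref{pro:isomorphism_local} to deduce the shear isomorphism~\eqref{eq:principal_bundle} from its pulled-back version. The paper pulls back along~$\hat f$ on one leg of $\Act\times_{\Base}\Act$ rather than along~$f$ on~$\Base$, but this is an inessential variation.

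One substantive point: you invoke Assumption~\ref{assum:local_cover} to conclude that~$\bunp$ is a cover. The proposition is stated without that hypothesis, and the paper's own proof never verifies condition~(1) of Definition~\ref{def:principal_bundle}: after the action axioms it says ``it remains to verify'' the shear isomorphism and writes $\Act\times_{\bunp,\Base,\bunp}\Act$ as though~$\bunp$ were already known to be a cover. So your proof is more careful here, but it exposes that the converse direction, as stated, tacitly needs Assumption~\ref{assum:local_cover}. A minor wording issue: the equality $\tilde{\bunp}\circ\tilde{\mul}=\tilde{\bunp}\circ\pr_1$ is automatic once~$\tilde{\mul}$ exists (both sides project to~$\tilde{\Base}$); what actually forces $\bunp\circ\mul=\bunp\circ\pr_1$ is that the formula $(z,x,g)\mapsto(z,x\cdot g)$ must land in $\tilde{\Base}\times_{\Base}\Act$, which gives the invariance after precomposing with $\hat f\times_{\Gr^0}\id$ and then cancelling.
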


Put in a nutshell, principality for \(\Gr\)\nb-bundles is a local
property.

\begin{proof}
  If \((\Act, \s, \mul, \bunp)\) is a principal \(\Gr\)\nb-bundle
  over~\(\Base\), then Proposition~\ref{pro:pull-back_principal}
  shows that \((\tilde{\Act}, \tilde{\s},
  \tilde{\mul},\tilde{\bunp})\) is a principal \(\Gr\)\nb-bundle
  over~\(\tilde{\Base}\).

  Now assume that \((\tilde{\Act}, \tilde{\s}, \tilde{\mul},
  \tilde{\bunp})\) is a well-defined principal \(\Gr\)\nb-bundle
  over~\(\tilde{\Base}\).  We are going to show that \((\Act, \s,
  \mul, \bunp)\) is a principal \(\Gr\)\nb-bundle over~\(\Base\).
  For~\(\tilde{\mul}\) to be well-defined, we need
  \(\bunp\circ\mul=\bunp\circ\pr_1\) or \(\bunp(x\cdot g)=\bunp(x)\)
  for all \(x\in\Act\), \(g\in\Gr^1\) with \(\s(x)=\rg(g)\).  Since
  \(f\) is a cover, so is the map
  \(\tilde{\Act}\times_{\s,\Gr^0,\rg}
  \Gr^1\prto\Act\times_{\s,\Gr^0,\rg} \Gr^1\) it induces.
  Since~\(\covers\) is subcanonical, this map is an epimorphism.
  Now we get \(\s\circ\mul=\s\circ\pr_2\colon
  \Act\times_{\s,\Gr^0,\rg} \Gr^1\to\Gr^0\) because
  composing \(\s\circ\mul\) and \(\s\circ\pr_2\) with the cover
  \(\tilde{\Act}\times_{\s,\Gr^0,\rg}
  \Gr^1\prto\Act\times_{\s,\Gr^0,\rg} \Gr^1\) gives the same map
  \(\tilde{\s}\circ\mul=\tilde{\s}\circ\pr_2\).  A similar, more
  complicated argument shows that~\(\mul\) is associative
  because~\(\tilde{\mul}\) is associative and both multiplications
  are intertwined by covers \(\tilde{\Act}\times_{\s,\Gr^0,\rg}
  \Gr^1\times_{\s,\Gr^0,\rg} \Gr^1 \prto \Act\times_{\s,\Gr^0,\rg}
  \Gr^1\times_{\s,\Gr^0,\rg} \Gr^1\) and \(\tilde{\Act}\prto\Act\).
  Similarly, we get the unitality condition for the multiplication,
  so \((\Act,\s,\mul)\) is a \(\Gr\)\nb-action.

  It remains to verify that the map
  \[
  (\mul,\pr_1)\colon \Act\times_{\s,\Gr^0,\rg} \Gr^1
  \to \Act \times_{\bunp,\Base,\bunp} \Act
  \]
  is an isomorphism.  Since the coordinate projection
  \(\tilde{\Act}\prto\Act\) is a cover, so is the induced map \(\Act
  \times_{\bunp,\Base,f\circ\tilde{\bunp}} \tilde{\Act} \prto \Act
  \times_{\bunp,\Base,\bunp} \Act\).  The pull-back of
  \((\mul,\pr_1)\) along this map is equivalent to the map
  \begin{equation}
    \label{eq:principal_local_property}
    (\tilde{\mul},\pr_1)\colon
    \tilde{\Act}\times_{\tilde{\s},\Gr^0,\rg} \Gr^1
    \to \tilde{\Act}
    \times_{\tilde{\bunp},\tilde{\Base},\tilde{\bunp}} \tilde{\Act},
  \end{equation}
  where we identify
  \begin{align*}
    \tilde{\Act} \times_{\tilde{\bunp},\tilde{\Base},\tilde{\bunp}} \tilde{\Act}
    &\cong
    \Act \times_{\bunp,\tilde{\Base},f\circ\tilde{\bunp}} \tilde{\Act},\\
    (\Act\times_{\s,\Gr^0,\rg} \Gr^1)\times_{\Act \times_{\bunp,\Base,\bunp} \Act}
    (\Act \times_{\bunp,\Base,f\circ\tilde{\bunp}} \tilde{\Act})
    &\cong \tilde{\Act}\times_{\tilde{\s},\Gr^0,\rg} \Gr^1
  \end{align*}
  in the obvious way.  The map~\eqref{eq:principal_local_property} is
  an isomorphism because~\(\tilde{\Act}\) is principal.  This implies
  that \((\mul,\pr_1)\) is an isomorphism because the property of
  being an isomorphism is local by
  Proposition~\ref{pro:isomorphism_local}.
\end{proof}

\subsection{Basic actions and \texorpdfstring{\v{C}}{C}ech groupoids}
\label{sec:basic_covering}

\begin{definition}
  \label{def:basic_action}
  A groupoid action or sheaf that, together with some bundle
  projection, is
  part of a principal bundle is called \emph{basic}.  A groupoid is
  called \emph{basic} if its canonical action on~\(\Gr^0\) in
  Example~\ref{exa:action_on_objects} is basic.
\end{definition}

We call such actions ``basic'' because they have a well-behaved base
and because ``principal groupoid'' already has a different meaning for
topological groupoids (it means that the action on~\(\Gr^0\) is free).

\begin{proposition}
  \label{pro:covering_groupoid_basic}
  A \(\Gr\)\nb-action~\(\Act\) is basic if and only if the
  transformation groupoid \(\Act\rtimes \Gr\) is isomorphic to a
  \v{C}ech groupoid of a certain cover \(\bunp\colon \Act\prto
  \Base\).
\end{proposition}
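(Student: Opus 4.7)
The plan is to observe that this proposition is essentially a restatement of the defining principality condition~\eqref{eq:principal_bundle}, with both sides reinterpreted as the arrow objects of two groupoids over~\(\Act\).

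For the forward direction, suppose \(\Act\) is basic with bundle projection \(\bunp\colon \Act\prto\Base\) (which we may take to be the orbit space projection by Lemma~\ref{lem:principal_bundle}). Then the arrow object \(\Act\times_{\s,\Gr^0,\rg}\Gr^1\) of \(\Act\rtimes\Gr\) and the arrow object \(\Act\times_{\bunp,\Base,\bunp}\Act\) of the \v{C}ech groupoid of~\(\bunp\) are identified by the principality isomorphism \((\pr_1,\mul)\). I would then verify elementwise that this is a groupoid isomorphism, with the identity map on objects: the range and source maps match because \(\rg_{\Act\rtimes\Gr}(x,g)=x=\pr_1(x,x\cdot g)\) and \(\s_{\Act\rtimes\Gr}(x,g)=x\cdot g=\pr_2(x,x\cdot g)\); and multiplication matches because \((x,g_1)\cdot(x\cdot g_1,g_2)=(x,g_1\cdot g_2)\) maps to \((x,x\cdot g_1\cdot g_2) = (x,x\cdot g_1)\cdot(x\cdot g_1, x\cdot g_1\cdot g_2)\) in the \v{C}ech groupoid.

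For the converse, suppose \((\varphi^0,\varphi^1)\colon \Act\rtimes\Gr\congto \check{\mathrm{C}}(q)\) is an isomorphism of groupoids onto the \v{C}ech groupoid of some cover \(q\colon\Act\prto\Base\). After composing with the canonical isomorphism \(\check{\mathrm{C}}(q)\cong \check{\mathrm{C}}(q\circ(\varphi^0)^{-1})\) induced by \((\varphi^0)^{-1}\) on objects, we may assume \(\varphi^0=\id_\Act\); set \(\bunp\defeq q\circ(\varphi^0)^{-1}\). Compatibility with the range and source maps then forces \(\pr_1\circ\varphi^1=\pr_1\) and \(\pr_2\circ\varphi^1=\mul\), so \(\varphi^1=(\pr_1,\mul)\). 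Thus the principality map~\eqref{eq:principal_bundle} is an isomorphism. It only remains to observe that \(\bunp\) is \(\Gr\)\nb-invariant: since \(\varphi^1\) takes values in \(\Act\times_{\bunp,\Base,\bunp}\Act\), we have \(\bunp(x)=\bunp(x\cdot g)\) for all composable pairs \((x,g)\), which is the invariance condition. Hence \((\Act,\s,\bunp,\mul)\) is a principal \(\Gr\)\nb-bundle and \(\Act\) is basic.

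The only subtle step is the reduction \(\varphi^0=\id_\Act\) in the converse direction; once this is handled by replacing~\(q\) with \(q\circ(\varphi^0)^{-1}\), the rest is essentially a direct translation between the two definitions. No deeper machinery than Definition~\ref{def:principal_bundle}, Example~\ref{exa:covering_groupoid}, and Definition and Lemma~\ref{deflem:transformation_groupoid} is required.
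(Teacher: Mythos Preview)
Your proof is correct and follows the same approach as the paper: both directions reduce to recognising the principality isomorphism~\eqref{eq:principal_bundle} as precisely the arrow-level map of a groupoid isomorphism over~\(\id_{\Act}\). Your treatment is in fact more careful than the paper's, since you spell out the reduction to \(\varphi^0=\id_{\Act}\) and the \(\Gr\)\nb-invariance of~\(\bunp\), both of which the paper leaves implicit.
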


\begin{proof}
  Let~\(\Act\) be basic with bundle projection \(\bunp\colon \Act\prto
  \Base\).  Then~\eqref{eq:principal_bundle} provides an isomorphism
  \((\Act\rtimes \Gr)^1 \congto \Act\times_{\bunp,\Base,\bunp} \Act\).
  Together with the identity on objects, this is an isomorphism of
  groupoids from \(\Act\rtimes \Gr\) to the \v{C}ech groupoid
  of~\(\bunp\).  Conversely, let \(\Act\rtimes\Gr\) be isomorphic to
  a \v{C}ech groupoid.  We may assume that the isomorphism is the
  identity on objects, so that the cover whose \v{C}ech groupoid we
  take is a map \(\bunp\colon \Act\prto \Base\).  Since the maps
  \(\pr_1\) and~\(\mul\) in~\eqref{eq:principal_bundle} are the
  range and source maps of \(\Act\rtimes\Gr\), the isomorphism of
  groupoids from \(\Act\rtimes \Gr\) to the \v{C}ech groupoid
  of~\(\bunp\) must be given by \((x,g)\mapsto (x,x\cdot g)\) on
  arrows; thus~\eqref{eq:principal_bundle} is an isomorphism.
\end{proof}

\begin{corollary}
  \label{cor:basic_action_versus_groupoid}
  A \(\Gr\)\nb-action on~\(\Act\) is basic if and only if its
  transformation groupoid \(\Act\rtimes \Gr\) is basic.
\end{corollary}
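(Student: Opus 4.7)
The plan is to reduce the corollary to Proposition~\ref{pro:covering_groupoid_basic} by exploiting the identification of iterated transformation groupoids provided by Proposition~\ref{pro:transformation_groupoid_action}. Both ``basic $\Gr$-action on $\Act$'' and ``basic transformation groupoid $\Act\rtimes\Gr$'' will be translated into the single statement that a certain groupoid is isomorphic to the \v{C}ech groupoid of a cover $\bunp\colon\Act\prto\Base$.

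First I would handle the forward direction. Suppose the $\Gr$-action on~$\Act$ is basic, with bundle projection $\bunp\colon\Act\prto\Base$. Proposition~\ref{pro:covering_groupoid_basic} then gives an isomorphism of groupoids $\Act\rtimes\Gr \cong \check{\Gr}(\bunp)$, the \v{C}ech groupoid of~$\bunp$. Example~\ref{exa:covering_groupoid_principal} shows that the canonical action of $\check{\Gr}(\bunp)$ on its object space~$\Act$ is a principal bundle over~$\Base$. Transporting this principality along the above isomorphism (which is the identity on objects, hence sends the canonical action of $\Act\rtimes\Gr$ on~$\Act$ to the canonical action of $\check{\Gr}(\bunp)$ on~$\Act$) shows that $\Act\rtimes\Gr$ is a basic groupoid.

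For the converse, assume $\Act\rtimes\Gr$ is basic. By definition its canonical action on~$\Act$ is basic, so Proposition~\ref{pro:covering_groupoid_basic} applied to $\Act\rtimes\Gr$ yields a cover $\bunp\colon\Act\prto\Base$ and an isomorphism $\Act\rtimes(\Act\rtimes\Gr) \cong \check{\Gr}(\bunp)$ that is the identity on objects. Here I invoke the second half of Proposition~\ref{pro:transformation_groupoid_action}, which gives a natural isomorphism $\Act\rtimes(\Act\rtimes\Gr)\cong\Act\rtimes\Gr$ (the anchor $\Act\to\Act$ for the $\Act\rtimes\Gr$-action is the identity, so the $\Gr$-map $f$ in that proposition is the identity on~$\Act$). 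Composing these two isomorphisms produces an isomorphism $\Act\rtimes\Gr\cong\check{\Gr}(\bunp)$, so Proposition~\ref{pro:covering_groupoid_basic} in the other direction gives that the original $\Gr$-action on~$\Act$ is basic with bundle projection~$\bunp$.

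The only delicate point I anticipate is checking that the isomorphisms constructed in the two invocations of Proposition~\ref{pro:covering_groupoid_basic} and in Proposition~\ref{pro:transformation_groupoid_action} are all compatible, i.e.\ that they act as the identity on the object space~$\Act$ and intertwine the respective canonical actions. This is essentially bookkeeping: the isomorphism in Proposition~\ref{pro:covering_groupoid_basic} is given explicitly by $(x,g)\mapsto(x,x\cdot g)$ on arrows, and the isomorphism in Proposition~\ref{pro:transformation_groupoid_action} uses the canonical identification \eqref{eq:trafo_gr_arrows}, which is also the identity on objects and compatible with multiplication. Hence the composition is again the identity on objects and transports the canonical $\Act\rtimes\Gr$-action on~$\Act$ to the canonical action of a \v{C}ech groupoid on its object space, which is all that is needed.
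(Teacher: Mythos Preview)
Your proof is correct and follows essentially the same idea as the paper's: both reduce the statement to the criterion of Proposition~\ref{pro:covering_groupoid_basic}, namely that the relevant transformation groupoid is isomorphic to a \v{C}ech groupoid. The paper compresses your argument into one line by simply noting that this criterion depends only on the groupoid \(\Act\rtimes\Gr\); your explicit invocation of Proposition~\ref{pro:transformation_groupoid_action} for the identification \(\Act\rtimes(\Act\rtimes\Gr)\cong\Act\rtimes\Gr\) unpacks precisely what that one line hides (the transformation groupoid of the canonical action of any groupoid on its own object space is the groupoid itself).
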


\begin{proof}
  The criterion in Proposition~\ref{pro:covering_groupoid_basic}
  depends only on \(\Act\rtimes \Gr\).
\end{proof}

\begin{corollary}
  \label{cor:basic_action_trafo_gr}
  An action of a transformation groupoid \(\Act\rtimes\Gr\) is basic
  if and only if the restriction of the action to~\(\Gr\) is basic.
\end{corollary}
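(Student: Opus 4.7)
The plan is to reduce this statement immediately to Corollary~\ref{cor:basic_action_versus_groupoid} via Proposition~\ref{pro:transformation_groupoid_action}. Specifically, given an action of $\Act\rtimes\Gr$ on some $\Act[Y]\inOb\Cat$, Proposition~\ref{pro:transformation_groupoid_action} tells us that this is equivalent to the data of a $\Gr$-action on $\Act[Y]$ together with a $\Gr$-equivariant anchor map $f\colon \Act[Y]\to \Act$; this underlying $\Gr$-action is precisely what is meant by the ``restriction of the action to~$\Gr$.''

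The key ingredient is the groupoid isomorphism
\[
\Act[Y]\rtimes(\Act\rtimes\Gr) \cong \Act[Y]\rtimes\Gr
\]
also supplied by Proposition~\ref{pro:transformation_groupoid_action}. Applying Corollary~\ref{cor:basic_action_versus_groupoid} to each side: the $\Act\rtimes\Gr$-action on $\Act[Y]$ is basic iff the transformation groupoid $\Act[Y]\rtimes(\Act\rtimes\Gr)$ is basic (in the sense of Definition~\ref{def:basic_action}), and the restricted $\Gr$-action on $\Act[Y]$ is basic iff $\Act[Y]\rtimes\Gr$ is basic. Since these two transformation groupoids are isomorphic, the two basicness conditions are equivalent.

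There is essentially no obstacle here: the work has already been done in Proposition~\ref{pro:transformation_groupoid_action} (which provides the crucial groupoid isomorphism) and in Corollary~\ref{cor:basic_action_versus_groupoid} (which characterises basicness purely in terms of the transformation groupoid). If there is any subtlety worth spelling out, it is to confirm that ``basic'' as a property of a groupoid depends only on the isomorphism class of the groupoid in $(\Cat,\covers)$, which is immediate from Definition~\ref{def:basic_action} since an isomorphism of groupoids transports the canonical action on objects along with any bundle projection witnessing principality.
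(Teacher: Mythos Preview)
Your proof is correct and follows exactly the same approach as the paper: use the isomorphism \(\Act[Y]\rtimes(\Act\rtimes\Gr)\cong\Act[Y]\rtimes\Gr\) from Proposition~\ref{pro:transformation_groupoid_action} and then apply Corollary~\ref{cor:basic_action_versus_groupoid} on each side. The paper's proof is simply a terser version of what you wrote.
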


\begin{proof}
  Proposition~\ref{pro:transformation_groupoid_action} shows that
  \(\Act[Y]\rtimes (\Act\rtimes\Gr) \cong \Act[Y]\rtimes\Gr\), so the
  assertion follows from
  Corollary~\ref{cor:basic_action_versus_groupoid}.
\end{proof}

\begin{lemma}
  \label{lem:orbit_space_trafo_gr}
  Let~\(\Act[Y]\) carry an action of a transformation groupoid
  \(\Act\rtimes\Gr\), equip it with the resulting \(\Gr\)\nb-action.
  Then \(\Act[Y]/(\Act\rtimes\Gr) \cong \Act[Y]/\Gr\).
\end{lemma}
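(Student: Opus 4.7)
The plan is to reduce both orbit spaces to coequalisers of the same pair of maps, using the isomorphism of arrow objects already established in Proposition~\ref{pro:transformation_groupoid_action}.

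By Definition~\ref{def:orbit_space}, the orbit space $\Act[Y]/\Gr$ is the coequaliser of
\[
\pr_1,\mul_\Gr\colon \Act[Y]\times_{\s_{\Act[Y]},\Gr^0,\rg}\Gr^1 \rightrightarrows \Act[Y],
\]
while $\Act[Y]/(\Act\rtimes\Gr)$ is the coequaliser of the analogous pair
\[
\pr_1,\mul_{\Act\rtimes\Gr}\colon \Act[Y]\times_{f,\Act,\pr_1}(\Act\times_{\s_\Act,\Gr^0,\rg}\Gr^1) \rightrightarrows \Act[Y],
\]
where $f\colon \Act[Y]\to\Act$ is the anchor map of the $\Act\rtimes\Gr$-action, which corresponds (by Proposition~\ref{pro:transformation_groupoid_action}) to the $\Gr$-equivariant map appearing in the equivalence between $\Act\rtimes\Gr$-actions and pairs (a $\Gr$-action together with a $\Gr$-map to $\Act$).

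The key step is the canonical isomorphism from the proof of Proposition~\ref{pro:transformation_groupoid_action},
\[
\Phi\colon \Act[Y]\times_{\s_{\Act[Y]},\Gr^0,\rg}\Gr^1 \congto \Act[Y]\times_{f,\Act,\pr_1}(\Act\times_{\s_\Act,\Gr^0,\rg}\Gr^1),\qquad (y,g)\mapsto (y,f(y),g),
\]
which is well-defined because $\s_{\Act[Y]}=\s_\Act\circ f$, and whose inverse is $(y,x,g)\mapsto (y,g)$ (using that $f(y)=x$). This isomorphism intertwines the two pairs of parallel maps: on the left, $\pr_1(y,g)=y$ and $\mul_\Gr(y,g)=y\cdot g$; on the right, $\pr_1(y,f(y),g)=y$ and $\mul_{\Act\rtimes\Gr}(y,f(y),g)=y\cdot(f(y),g)=y\cdot g$, where the last equality is precisely the definition of the $\Gr$-action associated to the $\Act\rtimes\Gr$-action in Proposition~\ref{pro:transformation_groupoid_action}.

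Therefore both orbit space projections are coequalisers of the same pair of parallel maps (after identifying the arrow objects via $\Phi$), so by the universal property of coequalisers they are canonically isomorphic; the induced isomorphism $\Act[Y]/(\Act\rtimes\Gr)\cong \Act[Y]/\Gr$ is the identity on the underlying object $\Act[Y]$. There is no real obstacle here — the content is entirely packaged in the arrow-object isomorphism of Proposition~\ref{pro:transformation_groupoid_action}; one only has to verify that this isomorphism intertwines range and source, which is a direct unwinding of the definitions.
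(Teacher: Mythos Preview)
Your proof is correct and takes essentially the same approach as the paper: both reduce to the arrow-object isomorphism~\eqref{eq:trafo_gr_arrows} from Proposition~\ref{pro:transformation_groupoid_action}. The paper phrases it as ``the invariant maps for both actions are the same, hence so are the orbit spaces,'' which is exactly your coequaliser argument stated at one level of abstraction higher; you have simply unpacked why the invariant maps coincide by explicitly verifying that~\(\Phi\) intertwines \(\pr_1\) and~\(\mul\).
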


\begin{proof}
  Proposition~\ref{pro:transformation_groupoid_action} shows that
  the invariant maps for both actions are the same, hence so are the
  orbit spaces.
\end{proof}

\section{Bibundle functors, actors, and equivalences}
\label{sec:HS_equivalences}

Let \(\Gr\) and~\(\Gr[H]\) be groupoids in~\((\Cat,\covers)\).  We
describe several important classes of \(\Gr,\Gr[H]\)\nb-bibundles:

\begin{definition}
  \label{def:HS-morphism}
  A \emph{bibundle equivalence from~\(\Gr\) to~\(\Gr[H]\)} is a
  \(\Gr,\Gr[H]\)\nb-bibundle~\(\Act\) such that both the left and
  right actions are principal bundles with bundle projections
  \(\s\colon \Act\prto \Gr[H]^0\) and \(\rg\colon \Act\prto \Gr^0\),
  respectively.  We call \(\Gr\) and~\(\Gr[H]\) \emph{equivalent} if
  there is a bibundle equivalence from~\(\Gr\) to~\(\Gr[H]\).

  A \emph{bibundle functor from~\(\Gr\) to~\(\Gr[H]\)} is a
  \(\Gr,\Gr[H]\)\nb-bibundle~\(\Act\) such that the right
  \(\Gr[H]\)\nb-action is a principal bundle with bundle projection
  \(\rg\colon \Act\prto \Gr^0\).  A bibundle functor is
  \emph{covering} if the anchor map \(\s\colon \Act\prto\Gr[H]^0\)
  is a cover.

  A \emph{bibundle actor from~\(\Gr\) to~\(\Gr[H]\)} is a
  \(\Gr,\Gr[H]\)\nb-bibundle~\(\Act\) such that the right
  \(\Gr[H]\)\nb-action is basic and a sheaf, that is, \(\s\colon
  \Act\prto \Gr[H]^0\) is a cover.
\end{definition}

Bibundle functors are also called generalised
morphisms\cite{Moerdijk-Mrcun:Groupoids_sheaves}*{Section 2.5} or
Hilsum--Skandalis morphisms~\cite{Mrcun:Thesis}, and bibundle equivalences just
equivalences or Morita equivalences.  We will show later that for
sufficiently nice pretopologies, bibundle functors and actors are
precisely the products of bibundle equivalences with functors and
actors, respectively (see Sections \ref{sec:bibundle_to_vague}
and~\ref{sec:decompose_bibundle_actor}).  This justifies the names
above.

The anchor map \(\rg\colon \Act\prto \Gr^0\) of a bibundle functor
is always a cover because it is the bundle projection of a principal
action.  For an equivalence~\(\Act\), both anchor maps \(\rg\colon
\Act\prto \Gr^0\) and \(\s\colon \Act\prto \Gr[H]^0\) are covers for
the same reason.  Thus bibundle equivalences are covering bibundle
functors.  A bibundle functor is a bibundle actor as well if and
only if it is covering.  The following diagram illustrates the
relations between these notions:
\[
\begin{tikzpicture}
  \draw(0,.5)--(12,.5)--(12,2)--(0,2)--(0,.5);
  \draw(3,0)--(9,0)--(9,3)--(3,3)--(3,0);
  \draw(3.25,.75)--(8.75,.75)--(8.75,1.25)--(3.25,1.25)--(3.25,.75);
  \path (6,1) node {bibundle equivalence};
  \path (6,1.625) node {covering bibundle functor};
  \path (6,2.5) node {bibundle functor};
  \path (10.5,1.5) node {bibundle actor};
\end{tikzpicture}
\]

\begin{example}
  \label{exa:unit_bibundle}
  Let~\(\Gr\) be a groupoid in~\((\Cat,\covers)\).  Then~\(\Gr\) acts
  on~\(\Gr^1\) on the left and right by multiplication.  These actions
  turn~\(\Gr^1\) into a bibundle equivalence from~\(\Gr\) to itself,
  so equivalence of groupoids is a reflexive relation.  First, the two
  actions commute by associativity, so they form a
  \(\Gr,\Gr\)-bibundle; secondly, the right multiplication action
  gives a principal \(\Gr\)\nb-bundle with bundle projection
  \(\s\colon \Gr^1\prto \Gr^0\) by~\eqref{eq:groupoid_basicality_1};
  thirdly, the left action gives a principal \(\Gr\)\nb-bundle with
  bundle projection \(\rg\colon \Gr^1\prto \Gr^0\)
  by~\eqref{eq:groupoid_basicality_2}.

  The assumptions that \(\rg\) and~\(\s\) be covers and the conditions
  \eqref{eq:groupoid_basicality_1}
  and~\eqref{eq:groupoid_basicality_2} in the definition of a groupoid
  are necessary and sufficient for this unit bibundle equivalence to
  work.
\end{example}

Left and right actions of groupoids are equivalent by
Lemma~\ref{lem:left_right_action}.  Left and right
principal bundles are also equivalent in the same way.  Thus a
\(\Gr,\Gr[H]\)\nb-bibundle~\(\Act\) gives an
\(\Gr[H],\Gr\)\nb-bibundle~\(\Act^*\), which is the same object
of~\(\Cat\) with the two anchor maps exchanged and \(h\cdot x\cdot
g\defeq g^{-1}\cdot x\cdot h^{-1}\); this is a bibundle equivalence if
and only if~\(\Act\) is one.  Thus equivalence of groupoids is a
symmetric relation.  Transitivity seems to require an extra assumption
in order to compose bibundle equivalences, see
Section~\ref{sec:composition}.

\begin{example}
  \label{exa:covering_groupoid_equivalent_to_space}
  Let \(\bunp\colon \Act\prto \Base\) be a cover
  in~\((\Cat,\covers)\).  View~\(\Base\) as a groupoid with only
  identity arrows and let~\(\Gr\) be the \v{C}ech groupoid
  of~\(\bunp\).  Then \(\Gr\) and~\(\Base\) are equivalent.

  The equivalence bibundle is~\(\Act\), with the canonical right
  action of~\(\Gr\) on its object space and with the left
  \(\Base\)\nb-action given by the anchor map~\(\bunp\).  The right
  \(\Gr\)\nb-action on~\(\Act\) with~\(\bunp\) as bundle projection
  gives a principal \(\Gr\)\nb-bundle by
  Example~\ref{exa:covering_groupoid_principal}.  The left
  \(\Base\)\nb-action is given simply by its anchor map~\(\bunp\), and
  any such action gives a principal bundle with the identity map
  \(\Act\to\Act\) as bundle projection; the identity is also the right
  anchor map.

  Conversely, if~\(\Act\) is a bibundle equivalence from a
  groupoid~\(\Gr\) to a \(0\)\nb-groupoid~\(\Base\), then~\(\Gr\) is
  isomorphic to the \v{C}ech groupoid of the anchor map
  \(\bunp\colon \Act\prto\Base\).
\end{example}

\begin{example}
  \label{exa:covering_groupoids_equivalent}
  More generally, let \(\bunp_i\colon \Act_i\prto\Base\) for \(i=1,2\)
  be two covers and let \(\Gr_1\) and~\(\Gr_2\) be their \v{C}ech
  groupoids.  Let \(\Act\defeq \Act_1\times_{\bunp_1,\Base,\bunp_2}
  \Act_2\), \(\rg\defeq\pr_1\colon \Act\prto\Act_1=\Gr_1^0\) and
  \(\s\defeq\pr_2\colon \Act\prto\Act_2=\Gr_2^0\), and define the
  multiplication maps by \((x_1,y_1)\cdot (y_1,y_2)\defeq (x_1,y_2)\),
  \((x_1,x_2)\cdot (x_2,y_2)\defeq (x_1,y_2)\) for
  \(x_1,y_1\in\Act_1\), \(x_2,y_2\in\Act_2\) with
  \(\bunp_1(x_1)=\bunp_1(y_1)= \bunp_2(x_2) = \bunp_2(y_2)\).
  Then~\(\Act\) is a bibundle equivalence from~\(\Gr_1\) to~\(\Gr_2\).
  This shows that equivalence of groupoids is a transitive relation
  among \v{C}ech groupoids.
\end{example}

\begin{example}
  \label{exa:HS_0-groupoid}
  Let~\(\Gr[Y]\) be an object of~\(\Cat\) viewed as a groupoid.  An
  action of~\(\Gr[Y]\) is the same as a map to~\(\Gr[Y]\) by
  Example~\ref{exa:action_0-groupoid}.  Thus a \(\Gr,\Gr[Y]\)-bibundle
  is the same as a (left) \(\Gr\)\nb-action~\(\Act\) with a
  \(\Gr\)\nb-invariant map \(f\colon \Act\to\Gr[Y]\).

  A bibundle functor \(\Gr\to\Gr[Y]\) is, up to isomorphism, the same
  as a \(\Gr\)\nb-invariant map \(\Gr^0\to\Gr[Y]\) because of the
  assumption that~\(\rg\) induces an isomorphism
  \(\Act=\Act/\Gr[Y]\congto\Gr^0\).  A bibundle functor
  \(\Gr[Y]\to\Gr\) is the same as a principal \(\Gr\)\nb-bundle
  over~\(\Gr[Y]\).

  A bibundle actor \(\Gr\to\Gr[Y]\) is the same as a
  \(\Gr\)\nb-action~\(\Act\) with a \(\Gr\)\nb-invariant cover
  \(\Act\prto\Gr[Y]\).  A bibundle actor \(\Gr[Y]\to\Gr\) is the same
  as a principal \(\Gr\)\nb-bundle~\(\Act\) over some space~\(\Base\)
  with a map \(\Base\to\Gr[Y]\), such that the anchor map \(\s\colon
  \Act\prto\Gr^0\) is a cover.

  Let \(\Gr[Y]_1\) and \(\Gr[Y]_2\) be two objects of~\(\Cat\) viewed
  as groupoids.  Then a \(\Gr[Y]_1,\Gr[Y]_2\)-bibundle is a span
  \(\Gr[Y]_1\leftarrow\Act\rightarrow \Gr[Y]_2\) in~\(\Cat\).  This is
  \begin{itemize}
  \item a bibundle functor if and only if the map
    \(\Gr[Y]_1\leftarrow\Act\) is invertible;
  \item a bibundle equivalence if and only if both maps
    \(\Gr[Y]_1\leftarrow\Act\rightarrow \Gr[Y]_2\) are isomorphisms;
  \item a bibundle actor if and only if the map
    \(\Act\prto\Gr[Y]_2\) is a cover.
  \end{itemize}
\end{example}

Example~\ref{exa:covering_groupoid_equivalent_to_space} shows that a
groupoid is equivalent to a \(0\)\nb-groupoid if and only if it is
basic.  By Proposition~\ref{pro:covering_groupoid_basic}, a groupoid
is basic if and only if it is isomorphic to a \v{C}ech groupoid.

\subsection{From functors to bibundle functors}
\label{sec:functors_to_bibundles}

Let \(\Gr\) and~\(\Gr[H]\) be groupoids in~\((\Cat,\covers)\) and
let \(F\colon \Gr\to\Gr[H]\) be a functor, given by
\(F^i\in\Cat(\Gr^i,\Gr[H]^i)\) for \(i=0,1\).  We are going to
define an associated bibundle functor~\(\Act_F\) from~\(\Gr\)
to~\(\Gr[H]\) (see also~\cite{Moerdijk-Mrcun:Groupoids_sheaves}).

Since \(\rg\colon \Gr[H]^1\prto\Gr[H]^0\) is a cover, the fibre
product \(\Act_F=\Act\defeq \Gr^0\times_{F^0,\Gr[H]^0,\rg}
\Gr[H]^1\) exists and the coordinate projection \(\pr_1\colon
\Act\prto \Gr^0\) is a cover.  This map is the anchor map for a left
\(\Gr\)\nb-action on~\(\Act\) defined elementwise by \(g\cdot
(x,h)\defeq (\rg(g),F^1(g)\cdot h)\) for all \(g\in \Gr^1\),
\(x\in\Gr^0\), \(h\in \Gr[H]^1\) with \(\s(g)=x\),
\(\rg(h)=F^0(x) = F^0(\s(g))=\s(F^1(g))\).  The map \(\s\colon
\Act\to\Gr[H]^0\), \((x,h)\mapsto \s(h)\), for \(x\in \Gr^0\), \(h\in
\Gr[H]^1\) with \(F^0(x)=\rg(h)\) is the anchor map for a right
\(\Gr[H]\)\nb-action defined elementwise by \((x,h_1)\cdot h_2\defeq
(x,h_1\cdot h_2)\) for all \(x\in \Gr^0\), \(h_1,h_2\in \Gr[H]^1\)
with \(F^0(x)=\rg(h_1)\) and \(\s(h_1)=\rg(h_2)\).

\begin{lemma}
  \label{lem:functor_to_bibundle_functor}
  The \(\Gr,\Gr[H]\)-bibundle~\(\Act\) is a bibundle functor.
\end{lemma}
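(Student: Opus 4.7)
The plan is to verify, in turn, that the two formulas really define commuting left and right actions making $\Act$ a $\Gr,\Gr[H]$-bibundle, and then that the right $\Gr[H]$-action together with $\bunp\defeq\pr_1\colon\Act\prto\Gr^0$ is a principal $\Gr[H]$-bundle. The first part is a bookkeeping exercise with the functoriality of $F$; the second reduces cleanly to a pullback of the unit bibundle equivalence of $\Gr[H]$ on itself from Example~\ref{exa:unit_bibundle}.

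For the bibundle axioms, I would first observe that $\Act$ exists and $\pr_1\colon\Act\prto\Gr^0$ is a cover because $\rg\colon\Gr[H]^1\prto\Gr[H]^0$ is a cover. The elementwise formulas for the left and right actions are well-defined maps into $\Act$ by the functoriality identity $F^0\circ\rg = \rg\circ F^1$; the anchor-map identities $\rg(g\cdot(x,h))=\rg(g)$, $\s(g\cdot(x,h))=\s(h)$, $\rg((x,h_1)\cdot h_2)=x$, $\s((x,h_1)\cdot h_2)=\s(h_2)$ follow directly. Associativity and unitality for each action, and commutativity of the two actions, are immediate consequences of associativity of $\mul$ in $\Gr[H]$: explicitly, $(g\cdot(x,h_1))\cdot h_2 = (\rg(g),F^1(g)\cdot h_1\cdot h_2) = g\cdot((x,h_1)\cdot h_2)$.

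For principality, the main idea is that $\Act$ is by construction the pullback of $\Gr[H]^1\prto\Gr[H]^0$ (viewed as a right principal $\Gr[H]$-bundle via the multiplication action, with bundle projection $\rg$ as in Example~\ref{exa:unit_bibundle}) along the map $F^0\colon\Gr^0\to\Gr[H]^0$. Under this identification, the pullback right $\Gr[H]$-action on $\Gr^0\times_{F^0,\Gr[H]^0,\rg}\Gr[H]^1$ constructed in Proposition~\ref{pro:pull-back_principal} agrees on the nose with the right action given above, and the pullback bundle projection coincides with $\pr_1=\rg_\Act$. Therefore Proposition~\ref{pro:pull-back_principal} immediately yields that $(\Act,\s,\rg,\mul_{\Gr[H]})$ is a principal $\Gr[H]$-bundle over $\Gr^0$.

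I do not expect any real obstacle; the only thing that needs care is to match the data of the pulled-back principal bundle with the explicit formulas for the right action and bundle projection written in the preceding paragraph (so that Proposition~\ref{pro:pull-back_principal} applies directly rather than up to a further isomorphism). Once this identification is made, no calculation of the principality isomorphism~\eqref{eq:principal_bundle} is needed, and the lemma follows.
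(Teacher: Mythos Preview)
Your proof is correct, and it takes a genuinely different route from the paper's. The paper verifies principality directly by an elementwise computation: given $(x,h_1),(x,h_2)\in\Act$ with the same bundle projection $x$, the unique $y\in\Gr[H]^1$ with $(x,h_1)\cdot y=(x,h_2)$ is $y=h_1^{-1}\cdot h_2$, which establishes the isomorphism~\eqref{eq:principal_bundle} by hand. You instead observe that $\Act=\Gr^0\times_{F^0,\Gr[H]^0,\rg}\Gr[H]^1$ with its right $\Gr[H]$-action and projection~$\pr_1$ is literally the pull-back along~$F^0$ of the principal $\Gr[H]$-bundle $\rg\colon\Gr[H]^1\prto\Gr[H]^0$ from Example~\ref{exa:unit_bibundle}, and then invoke Proposition~\ref{pro:pull-back_principal}. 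Your identification of the pulled-back data with the explicit formulas is exact (the proof of Proposition~\ref{pro:pull-back_principal} gives $\tilde{\mul}=\id_{\Gr^0}\times_{\Gr[H]^0}\mul$, $\tilde{\s}=\s\circ\pr_2$, $\tilde{\bunp}=\pr_1$), so no further isomorphism is needed. Your approach is more structural and explains \emph{why} $\Act$ is principal---it inherits this from~$\Gr[H]^1$---while the paper's direct argument is shorter and entirely self-contained, not requiring Proposition~\ref{pro:pull-back_principal} to have been established first.
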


\begin{proof}
  The left and right actions on~\(\Act\) commute.  For \(x\in \Gr^0\),
  \(h_1,h_2,y\in \Gr[H]^1\) with \(F^0(x)=\rg(h_1)=\rg(h_2)\) we have
  \(\s(h_1)=\rg(y)\) and \((x,h_1)\cdot y = (x,h_2)\) if and only if
  \(y=h_1^{-1}\cdot h_2\).  Hence there is a unique such~\(y\),
  proving the isomorphism~\eqref{eq:principal_bundle} for the
  \(\Gr[H]\)\nb-bundle \(\rg\colon \Act\to\Gr^0\).  Since \(\rg\colon
  \Act\prto\Gr^0\) is a cover as well, the right \(\Gr[H]\)\nb-action
  together with~\(\rg\) is a principal \(\Gr[H]\)\nb-bundle.
\end{proof}

We may generalise the construction above as follows.  Let \(F\colon
\Gr\to\Gr[H]\) be a functor and let~\(\Act[Y]\) be a
bibundle functor from~\(\Gr[H]\) to~\(\Gr[K]\).  We may then
construct a bibundle functor \(F^*(\Act[Y])\) from~\(\Gr\)
to~\(\Gr[K]\).  The bibundle functor~\(\Act_F\) is the special case
where \(\Act[Y]=\Gr[H]^1\) is the identity bibundle functor
on~\(\Gr[H]\).  The only change in the construction is to
replace~\(\Gr[H]^1\) by~\(\Act[Y]\) everywhere.  Thus the underlying
object of~\(\Cat\) is \(F^*(\Act[Y]) \defeq
\Gr^0\times_{F^0,\Gr[H]^0,\rg} \Act[Y]\); this exists in~\(\Cat\)
and the coordinate projection \(\pr_1=\s\colon F^*(\Act[Y])\prto
\Gr^0\) is a cover because \(\rg\colon \Act[Y]\prto \Gr[H]^0\) is a
cover.

The same formulas as above define a left \(\Gr\)\nb-action and a right
\(\Gr[K]\)\nb-action on~\(\Act[Y]\).  The right \(\Gr[K]\)\nb-action
together with \(\pr_1\colon F^*(\Act[Y])\prto\Gr^0\) is a principal
\(\Gr[K]\)\nb-bundle because for \(x\in \Gr^0\), \(y_1,y_2\in
\Act[Y]\), \(k\in\Gr[K]^1\) with \(F^0(x)=\rg(y_1)=\rg(y_2)\), we have
\(\s(y_1)=\rg(k)\) and \((x,y_1)\cdot k = (x,y_2)\) if and only if
\(\s(y_1)=\rg(k)\) and \(y_1\cdot k=y_2\), and this has a unique
solution if \(\rg(y_1)=\rg(y_2)\) because~\(\Act[Y]\) is a bibundle
functor.

\begin{proposition}
  \label{pro:equivalence_functor}
  The bibundle functor associated to~\(F\) is covering if and only
  if~\(F\) is essentially surjective, and an equivalence bibundle if
  and only if~\(F\) is essentially surjective and fully faithful.
\end{proposition}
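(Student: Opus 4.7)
The plan is to unpack both equivalences by identifying the two defining conditions (essential surjectivity, and fully faithfulness) directly with the two conditions that make $\Act_F$ a covering bibundle functor and a bibundle equivalence, respectively.

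For the first part, I would simply observe that the right anchor map of $\Act_F = \Gr^0\times_{F^0,\Gr[H]^0,\rg}\Gr[H]^1$ is the map $(x,h)\mapsto \s(h)$, which is literally the map~\eqref{eq:ess_surj} whose being a cover defines essential surjectivity. So ``$\Act_F$ is covering'' and ``$F$ is essentially surjective'' are the same condition by inspection.

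For the second part, we already know from Lemma~\ref{lem:functor_to_bibundle_functor} that the right $\Gr[H]$\nb-action together with $\rg_\Act\defeq \pr_1\colon \Act_F\prto\Gr^0$ is a principal bundle. Thus $\Act_F$ is a bibundle equivalence iff the left $\Gr$\nb-action together with $\s_\Act\defeq \s\circ\pr_2\colon \Act_F\to\Gr[H]^0$ is a principal bundle. The cover condition on $\s_\Act$ is exactly essential surjectivity, handled by part one. What remains is to show that, assuming essential surjectivity, the principality isomorphism
\[
(\pr_2,\mul_{\text{left}})\colon \Gr^1\times_{\s,\Gr^0,\rg_\Act}\Act_F
\to \Act_F\times_{\s_\Act,\Gr[H]^0,\s_\Act}\Act_F,\qquad
(g,(x_2,h_2))\mapsto ((\rg(g),F^1(g)\cdot h_2),(x_2,h_2)),
\]
is an isomorphism iff $F$ is fully faithful.

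The key step will be to rewrite this principality map as a pull-back of the fully-faithful map~\eqref{eq:fully_faithful} along the cover $\rg\colon \Gr[H]^1\prto \Gr[H]^0$. On the domain, the canonical identification $\Gr^1\times_{\s,\Gr^0,\pr_1}\Act_F\cong \Gr^1\times_{F^0\s,\Gr[H]^0,\rg}\Gr[H]^1$ just drops the redundant coordinate $x_2=\s(g)$. On the codomain, the map $((x_1,h_1),(x_2,h_2))\mapsto (x_1,h_1\cdot h_2^{-1},x_2,h_2)$ is an isomorphism to $(\Gr^0\times_{F^0,\Gr[H]^0,\rg}\Gr[H]^1\times_{\s,\Gr[H]^0,F^0}\Gr^0)\times_{F^0\pr_3,\Gr[H]^0,\rg}\Gr[H]^1$, with inverse $(x_1,h,x_2,h_2)\mapsto ((x_1,h\cdot h_2),(x_2,h_2))$. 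Under these two identifications, a direct check shows that the principality map becomes
\[
(g,h_2)\mapsto (\rg(g),F^1(g),\s(g),h_2),
\]
which is the fibre product of the fully-faithful map~\eqref{eq:fully_faithful} with the identity on $\Gr[H]^1$ over $\Gr[H]^0$ along $\rg\colon \Gr[H]^1\prto\Gr[H]^0$.

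Since $\rg$ is a cover, Proposition~\ref{pro:isomorphism_local} (locality of isomorphisms) implies that the pulled-back map is an isomorphism if and only if the fully-faithful map is. Thus, given essential surjectivity, the principality condition on the left action is equivalent to $F$ being fully faithful, completing the proof. The main obstacle — mild but worth stating — is bookkeeping the fibre-product identifications cleanly enough that the translation between the principality map and the fully-faithful map is transparent; once one commits to the change of variables $(x_1,h_1,x_2,h_2)\leftrightarrow (x_1,h_1 h_2^{-1},x_2,h_2)$ on the codomain, the rest is routine.
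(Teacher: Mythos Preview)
Your proof is correct and essentially the same as the paper's: both identify the right anchor of~\(\Act_F\) with the map~\eqref{eq:ess_surj} and relate the left principality map to~\eqref{eq:fully_faithful} via the change of variables \((h_1,h_2)\leftrightarrow(h_1 h_2^{-1},h_2)\). Your final step packages this as a pull-back of~\eqref{eq:fully_faithful} along the cover induced by \(\rg\colon\Gr[H]^1\prto\Gr[H]^0\) and invokes Proposition~\ref{pro:isomorphism_local}, whereas the paper writes the mutual inverses of~\(\varphi\) and~\(\psi\) out explicitly---a cosmetic rather than a structural difference.
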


\begin{proof}
  The right anchor map of~\(\Act_F\) is given by \((x,h)\mapsto
  \s(h)\) for \(x\in\Gr^0\), \(h\in\Gr[H]^1\) with \(F^0(x)=\rg(h)\);
  this is exactly the map that is required to be a cover in order
  for~\(F\) to be essentially surjective.  The bibundle~\(\Act_F\) is
  a bibundle equivalence if and only if the right anchor map is a
  cover and the following map is invertible:
  \[
  \varphi\colon \Gr^1 \times_{\s,\Gr^0,\rg}\Act_F\to \Act_F
  \times_{\s,\Gr[H]^0,\s} \Act_F,\qquad
  (g,x)\mapsto (g\cdot x,x).
  \]
  The domain and codomain of~\(\varphi\) are naturally isomorphic to
  \(\Gr^1\times_{F^0\circ\s,\Gr[H]^0,\rg} \Gr[H]^1\) and
  \(\Gr^0\times_{F^0,\Gr[H]^0,\rg} \Gr[H]^1\times_{\s,\Gr[H]^0,\s}
  \Gr[H]^1\times_{\rg,\Gr[H]^0,F^0} \Gr^0\), respectively,
  and~\(\varphi\) is given elementwise by \(\varphi(g,h) =
  (\rg(g),F^1(g)\cdot h,h,\s(g))\) for all \(g\in\Gr^1\),
  \(h\in\Gr[H]^1\) with \(F^0(\s(g))=\rg(h)\).  If~\(\varphi\) is
  invertible, then the first component of~\(\varphi^{-1}\) applied to
  \((x_1,h,1_{\s(h)},x_2)\) for \(x_1,x_2\in\Gr^0\),
  \(h\in\Gr[H]^1\) with \(F^0(x_1)=\rg(h)\), \(F^0(x_2)=\s(h)\),
  gives an inverse to the map~\(\psi\) in~\eqref{eq:fully_faithful}.
  Conversely, if~\(\psi\) is
  invertible, then so is~\(\varphi\) with \(\varphi^{-1}(x_1,h_1,h_2,x_2)
  = (\psi^{-1}(x_1,h_1\cdot h_2^{-1},x_2),h_2)\).
\end{proof}

\begin{example}
  \label{exa:hypercover_equivalence_functor}
  Let \(p\colon X\prto\Gr^0\) be a cover.  The hypercover \(p_*\colon
  \Gr(X)\to\Gr\) is an equivalence functor.
\end{example}

\subsection{From bibundle functors to anafunctors}
\label{sec:bibundle_to_vague}

Let~\(\Act\) be a bibundle functor between two groupoids \(\Gr\)
and~\(\Gr[H]\) in~\((\Cat,\covers)\).  We are going to turn it into
an anafunctor \(\Gr\to\Gr[H]\).  We take \(\rg\colon
\Act\prto\Gr^0\) as the cover that is part of an anafunctor.  It
remains to define a functor \(F\colon \Gr(\Act)\to\Gr[H]\) using the
\(\Gr,\Gr[H]\)-bibundle~\(\Act\).

\begin{proposition}
  \label{pro:bibundle_to_vague}
  Let~\(\Act\) be a bibundle functor \(\Gr\to\Gr[H]\).  There is a
  natural isomorphism of groupoids \(\Gr(\Act) \cong
  \Gr\ltimes\Act\rtimes\Gr[H]\) that acts identically on objects.
\end{proposition}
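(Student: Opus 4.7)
The plan is to construct the isomorphism explicitly using the principal bundle structure on the right $\Gr[H]$-action. On objects, both groupoids have $\Act$ as object space, so the isomorphism acts as $\id_{\Act}$ there. The key tool on arrows is the principal bundle isomorphism
\[
  (\pr_1,\mul)\colon \Act\times_{\s,\Gr[H]^0,\rg} \Gr[H]^1
  \congto \Act\times_{\rg,\Gr^0,\rg} \Act,
\]
which, being an isomorphism, admits an inverse sending $(x_1,x_2)$ to $(x_1,\langle x_1,x_2\rangle)$, where $\langle x_1,x_2\rangle$ denotes the unique $h\in\Gr[H]^1$ with $x_1\cdot h=x_2$ (well-defined whenever $\rg(x_1)=\rg(x_2)$).

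I would then define, in elementwise notation, the map
\[
  \Phi\colon \Gr(\Act)^1 \to (\Gr\ltimes\Act\rtimes\Gr[H])^1,\qquad
  (x_1,g,x_2)\mapsto (g,\,g^{-1}\cdot x_1,\,\langle g^{-1}\cdot x_1,x_2\rangle),
\]
which is well-defined as a morphism in $\Cat$ because it is built from the multiplication, inversion, and the inverse of the principal bundle isomorphism above. In the opposite direction, define
\[
  \Psi\colon (\Gr\ltimes\Act\rtimes\Gr[H])^1 \to \Gr(\Act)^1,\qquad
  (g,x,h)\mapsto (g\cdot x,\,g,\,x\cdot h),
\]
which is manifestly a morphism in $\Cat$; checking ranges and sources uses only that the left and right actions commute and preserve the respective anchor maps.

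The next step is to verify $\Phi$ and $\Psi$ are mutually inverse: starting from $(g,x,h)$, the composite gives $(g,\,g^{-1}(g\cdot x),\,\langle g^{-1}(g\cdot x),x\cdot h\rangle)=(g,x,h)$ since $x\cdot h=x\cdot h$; starting from $(x_1,g,x_2)$, the composite recovers $(g\cdot g^{-1}x_1,g,(g^{-1}x_1)\cdot\langle\cdot,\cdot\rangle)=(x_1,g,x_2)$. Elementwise these are trivial; translated via Yoneda, they become equalities of maps in $\Cat$ between the relevant fibre products. Then I would check that $\Phi$ preserves range, source, and composition: range and source follow from $\rg(g,g^{-1}x_1,h)=g\cdot g^{-1}x_1=x_1$ and $\s(g,g^{-1}x_1,h)=(g^{-1}x_1)\cdot h=x_2$; for composition, a pair $(x_1,g_1,x_2)\cdot(x_2,g_2,x_3)=(x_1,g_1g_2,x_3)$ should map to the product of $(g_1,g_1^{-1}x_1,h_1)$ and $(g_2,g_2^{-1}x_2,h_2)$ in $\Gr\ltimes\Act\rtimes\Gr[H]$, and the required identities $g_2^{-1}(g_1^{-1}x_1)=(g_1g_2)^{-1}x_1$ and $(g_1g_2)^{-1}x_1\cdot(h_1h_2)=x_3$ follow from associativity of the left action and the commutation of the two actions.

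The main potential obstacle is purely bookkeeping: ensuring that all the fibre products line up correctly and that the elementwise constructions translate to genuine morphisms in $\Cat$. The conceptual content is almost nothing beyond the principality of the right action, which is precisely what allows one to ``solve'' for the $\Gr[H]$-component uniquely. Naturality in $\Act$ (with respect to $\Gr,\Gr[H]$-equivariant maps, or more generally maps of bibundle functors) follows at once because both $\Phi$ and $\Psi$ are defined using only the structure maps of $\Act$ and the groupoid operations, none of which refer to extrinsic data.
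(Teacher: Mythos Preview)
Your proposal is correct and follows essentially the same approach as the paper: the paper defines the map $F^1(g,x,h)=(g\cdot x,g,x\cdot h)$ (your~$\Psi$), checks it is a functor, and then inverts it exactly as you do by setting $x=g^{-1}\cdot x_1$ and using principality of the right $\Gr[H]$-action to find the unique~$h$ with $x\cdot h=x_2$. The only cosmetic difference is that the paper verifies the functor axioms on~$\Psi$ while you verify them on~$\Phi$, but since you establish that they are mutual inverses this comes to the same thing.
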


\begin{proof}
  Both groupoids \(\Gr\ltimes\Act\rtimes\Gr[H]\) and \(\Gr(\Act)\)
  have object space~\(\Act\).  Their arrow spaces are
  \(\Gr^1\times_{\s,\Gr^0,\rg} \Act\times_{\s,\Gr[H]^0,\rg} \Gr[H]^1\)
  and \(\Act\times_{\rg,\Gr^0,\rg}\Gr^1 \times_{\s,\Gr^0,\rg} \Act\),
  respectively.  We define
  \[
  F^1\colon
  \Gr^1\times_{\s,\Gr^0,\rg} \Act\times_{\s,\Gr[H]^0,\rg} \Gr[H]^1
  \to \Act\times_{\rg,\Gr^0,\rg}\Gr^1 \times_{\s,\Gr^0,\rg} \Act
  \]
  elementwise by \(F^1(g,x,h) \defeq (g\cdot x,g,x\cdot h)\) for all
  \(g\in\Gr^1\), \(x\in\Act\), \(h\in\Gr[H]^1\) with
  \(\s(g)=\rg(x)\), \(\s(x)=\rg(h)\); this is well-defined because
  then \(g\cdot x\) and \(x\cdot h\) are defined and \(\rg(g\cdot
  x)=\rg(g)\) and \(\s(g)=\rg(x)=\rg(x\cdot h)\).  Furthermore,
  \(\rg(F^1(g,x,h))=g\cdot x=\rg(g,x,h)\), \(\s(F^1(g,x,h))= x\cdot
  h=\s(g,x,h)\) and
  \[
  F^1(g_1,x_1,h_1)\cdot F^1(g_2,x_2,h_2)
  = (g_1\cdot x_1,g_1\cdot g_2,x_2\cdot h_2)
  = F^1(g_1\cdot g_2,g_2^{-1}\cdot x_1,h_1\cdot h_2)
  \]
  for all \(g_1,g_2\in\Gr^1\), \(x_1,x_2\in\Act\),
  \(h_1,h_2\in\Gr[H]^1\) with \(\s(g_i)=\rg(x_i)\),
  \(\s(x_i)=\rg(h_i)\) and \(x_1\cdot h_1=g_2\cdot x_2\).
  Thus~\(F^1\) together with the identity on arrows is a functor.
  It remains to show that~\(F^1\) is an invertible map in~\(\Cat\).

  Let \(x_1,x_2\in\Act\), \(g\in\Gr^1\) satisfy \(\rg(x_1)=\rg(g)\),
  \(\s(g)=\rg(x_2)\).  Then \(x\defeq g^{-1}\cdot x_1\) is
  well-defined and \(\rg(x)=\s(g)=\rg(x_2)\).  Since~\(\Act\) with
  bundle projection~\(\rg\) is a principal \(\Gr[H]\)\nb-bundle,
  there is a unique \(h\in\Gr[H]^1\) with \(\s(x)=\rg(h)\) and
  \(x_2=x\cdot h\).  Thus \(F^1(g,x,h)=(x_1,g,x_2)\).  Furthermore,
  the element~\((g,x,h)\) is unique with this property, so~\(F^1\)
  is invertible by the Yoneda Lemma.
\end{proof}

We compose the isomorphism \(\Gr(\Act)\congto
\Gr\ltimes\Act\rtimes\Gr[H]\) in
Proposition~\ref{pro:bibundle_to_vague} with the functor
\[
\Gr\ltimes\Act\rtimes\Gr[H] \to \Gr[H]
\]
that is \(\s\colon \Act\to\Gr[H]\) on objects and the coordinate
projection \(\pr_3\colon (\Gr\ltimes\Act\rtimes\Gr[H])^1 \to
\Gr[H]^1\) on arrows.  This yields a functor \(F_{\Act}\colon
\Gr(\Act)\to\Gr[H]\), given on \(\Gr(\Act)^0=\Act\) by~\(\s\) and on
\(\Gr(\Act)^1 = \Act \times_{\rg,\Gr^0,\rg}\Gr^1 \times_{\s,\Gr^0,\rg}
\Act\) by
\[
F_{\Act}^1(g\cdot x,g,x\cdot h) \defeq h
\]
for all \(g\in\Gr^1\), \(x\in\Act\), \(h\in\Gr[H]^1\) with
\(\s(g)=\rg(x)\), \(\s(x)=\rg(h)\);
Proposition~\ref{pro:bibundle_to_vague} says that any
\((x_1,g,x_2)\in\Act\times_{\rg,\Gr^0,\rg}\Gr^1\times_{\s,\Gr^0,\rg}\Act\)
may be rewritten as \(x_1=g\cdot x\), \(x_2=x\cdot h\) for unique
\(g,x,h\) as above.

The triple \((\Act,\rg,F_{\Act})\) is an anafunctor from~\(\Gr\)
to~\(\Gr[H]\).

\begin{corollary}
  \label{cor:equivalence_bibundle_vague_isomorphism}
  An equivalence bibundle~\(\Act\) from~\(\Gr\) to~\(\Gr[H]\) induces
  an isomorphism \(\Gr(\Act)\congto\Gr[H](\Act)\), that is, a
  ana-isomorphism between \(\Gr\) and~\(\Gr[H]\).
\end{corollary}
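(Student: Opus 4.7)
The plan is to apply Proposition~\ref{pro:bibundle_to_vague} symmetrically, exploiting the fact that a bibundle equivalence is both a bibundle functor \(\Gr \to \Gr[H]\) and, with roles swapped, a bibundle functor \(\Gr[H] \to \Gr\). Since \(\Act\) is an equivalence, both anchor maps \(\rg_\Gr \colon \Act \prto \Gr^0\) and \(\s_{\Gr[H]} \colon \Act \prto \Gr[H]^0\) are covers, so both \(\Gr(\Act)\) and \(\Gr[H](\Act)\) are well-defined groupoids on the common object space~\(\Act\).

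First I would invoke Proposition~\ref{pro:bibundle_to_vague} for the bibundle functor \(\Act\colon \Gr \to \Gr[H]\) to obtain a groupoid isomorphism \(\Gr(\Act) \cong \Gr \ltimes \Act \rtimes \Gr[H]\) that is the identity on objects. Next, viewing the reversed bibundle \(\Act^*\) as a bibundle functor \(\Gr[H] \to \Gr\) (which uses precisely that the left \(\Gr\)\nb-action on~\(\Act\) is principal with bundle projection~\(\s_{\Gr[H]}\)), I apply the same proposition again to get \(\Gr[H](\Act^*) \cong \Gr[H] \ltimes \Act^* \rtimes \Gr\), again the identity on objects. By definition, \(\Gr[H](\Act^*)\) and \(\Gr[H](\Act)\) are the same groupoid: both are built from the cover \(\s_{\Gr[H]}\colon \Act\prto\Gr[H]^0\) as in Example~\ref{exa:groupoid_base_change}. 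Moreover, the two transformation groupoids \(\Gr \ltimes \Act \rtimes \Gr[H]\) and \(\Gr[H] \ltimes \Act^* \rtimes \Gr\) coincide, up to the canonical reordering of factors in the fibre product \(\Gr^1 \times_{\s,\Gr^0,\rg_\Gr} \Act \times_{\s_{\Gr[H]},\Gr[H]^0,\rg} \Gr[H]^1\), with the same multiplication (see Remark~\ref{rem:two_actions_versus_product_action}). Composing the two isomorphisms therefore yields an isomorphism \(\Gr(\Act) \congto \Gr[H](\Act)\) that is the identity on objects, which is exactly the data of an ana-isomorphism with covers \(\rg_\Gr\) and \(\s_{\Gr[H]}\).

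Elementwise, an arrow \((x_1, g, x_2) \in \Gr(\Act)^1\) (so \(\rg_\Gr(x_1) = \rg(g)\) and \(\rg_\Gr(x_2) = \s(g)\)) is sent to \((x_1, h, x_2) \in \Gr[H](\Act)^1\), where \(h\in\Gr[H]^1\) is the unique arrow with \(x_1 \cdot h = g \cdot x_2\); existence and uniqueness of~\(h\) follow from the principality of the right \(\Gr[H]\)\nb-action over the bundle projection~\(\rg_\Gr\), and the source and range conditions match because the left and right actions commute. The inverse sends \((x_1, h, x_2)\) to \((x_1, g, x_2)\) where \(g\in\Gr^1\) is the unique arrow with \(g\cdot x_2 = x_1\cdot h\), obtained from the principality of the left \(\Gr\)\nb-action over~\(\s_{\Gr[H]}\). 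The compatibility with multiplications is automatic from Proposition~\ref{pro:bibundle_to_vague}, since both constructions factor through the identification with \(\Gr\ltimes\Act\rtimes\Gr[H]\). There is no real obstacle here, beyond bookkeeping the symmetric roles of the two groupoids; no additional pretopological assumptions beyond those already in force are needed.
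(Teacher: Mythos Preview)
Your proof is correct and follows essentially the same route as the paper: apply Proposition~\ref{pro:bibundle_to_vague} to~\(\Act\) and to~\(\Act^*\), then compose through the common transformation groupoid \(\Gr\ltimes\Act\rtimes\Gr[H]\). The paper's proof is more terse and leaves the identification of \(\Gr[H]\ltimes\Act^*\rtimes\Gr\) with \(\Gr\ltimes\Act\rtimes\Gr[H]\) implicit; your explicit elementwise description is a welcome addition. One small point: the reference to Remark~\ref{rem:two_actions_versus_product_action} is not quite apt, since that remark requires Assumption~\ref{assum:final}; the identification you need is simply the map \((g,x,h)\mapsto (h^{-1},x,g^{-1})\) composed with the inversion of the groupoid, which needs no extra hypothesis.
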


\begin{proof}
  If~\(\Act\) is an equivalence bibundle from~\(\Gr\) to~\(\Gr[H]\),
  then we may exchange left and right and get another equivalence
  bibundle~\(\Act^*\) from~\(\Gr[H]\) to~\(\Gr\).
  Proposition~\ref{pro:bibundle_to_vague} applied to \(\Act\)
  and~\(\Act^*\) gives groupoid isomorphisms
  \[
  \rg^*(\Gr)
  = \Gr(\Act)
  \congto \Gr\ltimes\Act\rtimes\Gr[H]
  \congto \Gr[H](\Act)
  = \s^*(\Gr[H]).\qedhere
  \]
\end{proof}

\begin{lemma}
  \label{lem:functor_to_bibundle_to_vague}
  Let \(F\colon \Gr\to\Gr[H]\) be a functor.  This gives rise first to
  a bibundle functor \(\Act_F\colon \Gr\to\Gr[H]\), secondly to an
  anafunctor \((\Act_F,\rg_F,\tilde{F})\) from~\(\Gr\)
  to~\(\Gr[H]\), where \(\rg_F\colon \Act_F\prto\Gr^0\) is the left
  anchor map of~\(\Act_F\).  This anafunctor is equivalent to
  \((\Gr^0,\id_{\Gr^0}, F)\), that is, to~\(F\) viewed as an
  anafunctor.
\end{lemma}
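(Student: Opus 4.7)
The plan is to write down an explicit anafunctor natural transformation between \((\Act_F,\rg_F,\tilde F)\) and \((\Gr^0,\id_{\Gr^0},F)\), and verify directly that it satisfies the naturality condition of Definition~\ref{def:gen_transfor}. Since any anafunctor natural transformation is automatically invertible (all \(2\)\nb-arrows in the anafunctor bicategory are invertible by Theorem~\ref{the:vague_functor_2-category}), exhibiting such a transformation proves equivalence.

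First I would unpack the pieces. The bibundle functor is \(\Act_F=\Gr^0\times_{F^0,\Gr[H]^0,\rg}\Gr[H]^1\) with left anchor \(\rg_F=\pr_1\), right anchor \(\s_F(x,h)=\s(h)\), left action \(g\cdot(x,h)=(\rg(g),F^1(g)\cdot h)\), and right action \((x,h_1)\cdot h_2=(x,h_1\cdot h_2)\). The induced anafunctor has functor \(\tilde F\colon \Gr(\Act_F)\to\Gr[H]\) whose object component is the right anchor \(\s_F\). To compute \(\tilde F^1\) on an arrow \(((x_1,h_1),g,(x_2,h_2))\in \Act_F\times_{\rg,\Gr^0,\rg}\Gr^1\times_{\s,\Gr^0,\rg}\Act_F\) (so \(x_1=\rg(g)\), \(x_2=\s(g)\)), use the recipe from Section~\ref{sec:bibundle_to_vague}: decompose \((x_1,h_1)=g\cdot y\) and \((x_2,h_2)=y\cdot h\). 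A short calculation gives \(y=(\s(g),F^1(g)^{-1}\cdot h_1)\) and \(h=h_1^{-1}\cdot F^1(g)\cdot h_2\), so
\[
  \tilde F^1\bigl((x_1,h_1),g,(x_2,h_2)\bigr) = h_1^{-1}\cdot F^1(g)\cdot h_2.
\]

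Next I would define the natural transformation. The relevant fibre product is \(\Act_F\times_{\rg_F,\Gr^0,\id_{\Gr^0}}\Gr^0\cong \Act_F\) via \(\pr_1\), so we set
\[
  \Phi\colon \Act_F\to \Gr[H]^1,\qquad \Phi(x,h)\defeq h.
\]
The anchor conditions are immediate: \(\s\circ\Phi(x,h)=\s(h)=\tilde F^0(x,h)\) and \(\rg\circ\Phi(x,h)=\rg(h)=F^0(x)=F^0\circ\pr_2(x,h)\). The naturality condition to check is, for all \((x_1,h_1),(x_2,h_2)\in\Act_F\) and \(g\in\Gr^1\) with \(x_1=\rg(g)\), \(x_2=\s(g)\),
\[
  \Phi(x_1,h_1)\cdot \tilde F^1\bigl((x_1,h_1),g,(x_2,h_2)\bigr) = F^1(g)\cdot \Phi(x_2,h_2),
\]
which, substituting our formulas, becomes \(h_1\cdot(h_1^{-1}\cdot F^1(g)\cdot h_2)=F^1(g)\cdot h_2\); this is trivially true. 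Hence \(\Phi\) is a well-defined anafunctor natural transformation, giving the desired equivalence.

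The only mildly delicate point is the computation of \(\tilde F^1\), which requires correctly unwinding the decomposition \(y_1=g\cdot y\), \(y_2=y\cdot h\) inside \(\Act_F\) using the explicit formulas for the left and right actions; everything else is routine. No extra hypotheses beyond those already in force are needed, since the construction of \(\Phi\) uses only the coordinate projection \(\Act_F\to\Gr[H]^1\).
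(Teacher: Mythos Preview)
Your proof is correct and follows exactly the same approach as the paper: you compute \(\tilde F^1\bigl((x_1,h_1),g,(x_2,h_2)\bigr)=h_1^{-1}\cdot F^1(g)\cdot h_2\), define \(\Phi(x,h)=h\), and verify the source, range, and naturality conditions directly. The paper presents the naturality check as a commuting square in~\(\Gr[H]\), but the content is identical.
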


\begin{proof}
  We have \(\Act_F = \Gr^0\times_{F^0,\Gr[H]^0,\rg} \Gr[H]^1\) with
  \(\rg_F=\pr_1\).  The functor \(\tilde{F}\colon
  \Gr(\Act_F)\to\Gr[H]\) is given elementwise by
  \[
  \tilde{F}^0(x,h)\defeq \s(h),\qquad
  \tilde{F}^1\bigl((x_1,h_1),g,(x_2,h_2)\bigr)
  = h_1^{-1}\cdot F^1(g)\cdot h_2
  \]
  for all \(x,x_1,x_2\in\Gr^0\), \(h,h_1,h_2\in\Gr[H]^1\),
  \(g\in\Gr^1\) with \(F^0(x)=\rg(h)\), \(F^0(x_1)=\rg(h_1)\),
  \(F^0(x_2)=\rg(h_2)\), \(x_1=\rg(g)\), \(x_2=\s(g)\).  The map
  \(\Phi\colon \Act_F\to\Gr[H]^1\), \((x,h)\mapsto h\), is a natural
  transformation \(\tilde{F}\Rightarrow F\) because of the following
  commuting diagram in~\(\Gr[H]^1\):
  \[
  \begin{tikzpicture}[baseline=(current bounding box.south east)]
    \matrix[cd,column sep=.5em,text depth=0ex] (m) {
      \rg(h_2) & F^0(x_2)& F^0(\s(g)) &[3em] F^0(\rg(g)) & F^0(x_1) & \rg(h_1) \\
      \s(h_2) & &&&& \s(h_1) \\
    };
    \begin{scope}[cdar]
      \draw (m-1-3) -- node {\(F^1(g)\)} (m-1-4);
      \draw (m-2-1) -- node[swap] {\(\tilde{F}^1\bigl((x_1,h_1),g,(x_2,h_2)\bigr)
        = h_1^{-1}\cdot F^1(g)\cdot h_2\)} (m-2-6);
      \draw (m-2-1) -- node {\(h_2\)} (m-1-1);
      \draw (m-2-6) -- node {\(h_1\)} (m-1-6);
    \end{scope}

    \draw[equ] (m-1-1) -- (m-1-2);
    \draw[equ] (m-1-2) -- (m-1-3);
    \draw[equ] (m-1-4) -- (m-1-5);
    \draw[equ] (m-1-5) -- (m-1-6);
  \end{tikzpicture}
  \qedhere
  \]
\end{proof}

\section{Composition of bibundles}
\label{sec:composition}

The composition of bibundle functors and actors requires an extra
assumption on the pretopology.  We first formulate this assumption in
several closely related ways and then use it to compose the various
types of bibundles.

\subsection{Assumptions on \texorpdfstring{\v{C}}{C}ech groupoid actions}
\label{sec:assume_covering_actions}

Let~\(\covers\) be a subcanonical pretopology on~\(\Cat\).

\begin{assumption}
  \label{assum:covering_acts_basically}
  Any action of a \v{C}ech groupoid of a cover in~\((\Cat,\covers)\) is basic.
\end{assumption}

\begin{assumption}
  \label{assum:covering_acts_basically_weak}
  Any sheaf over a \v{C}ech groupoid of a cover in~\((\Cat,\covers)\) is basic.
\end{assumption}

Assumption~\ref{assum:covering_acts_basically} is obviously stronger
than Assumption~\ref{assum:covering_acts_basically_weak}.  We will use
Assumptions \ref{assum:local_cover}
and~\ref{assum:covering_acts_basically_weak} to compose bibundle
\emph{functors} and \emph{equivalences}.  The stronger Assumptions
\ref{assum:two-three} and~\ref{assum:covering_acts_basically} are
necessary and sufficient to compose bibundle \emph{actors}.

The following two lemmas reformulate the assumptions above.  Recall
that \(\Cat(\Gr)\) and \(\Cat_\covers(\Gr)\) are the categories of
\(\Gr\)\nb-actions and \(\Gr\)\nb-sheaves for a groupoid~\(\Gr\)
in~\((\Cat,\covers)\).

\begin{proposition}
  \label{pro:basic_assum}
  The following are equivalent:
  \begin{enumerate}
  \item Let~\(\Gr\) be a groupoid, \(\Act_1\) and~\(\Act_2\)
    \(\Gr\)\nb-actions and \(f\colon \Act_1\to\Act_2\) a
    \(\Gr\)\nb-map.  If \(\Act_2\) is basic,
    then so is~\(\Act_1\).
  \item Any action of a basic groupoid is basic.
  \item Any action of a \v{C}ech groupoid is basic.
  \item Let \(\bunp\colon \Act\to \Base\) be a cover, let \(\Gr\) be
    its \v{C}ech groupoid, and let~\(\Act[Y]\) be a \(\Gr\)\nb-action.
    Then there are \(\tilde{\Base}\inOb\Cat\) and a map \(f\colon
    \tilde{\Base}\to\Base\) such that
    \(\Act[Y]\cong\tilde{\Base}\times_{f,\Base,\bunp} \Act\)
    with~\(\Gr\) acting on \(\tilde{\Base}\times_{f,\Base,\bunp}
    \Act\) by \((z,x_1)\cdot (x_1,x_2)\defeq (z,x_2)\), as in
    Proposition~\textup{\ref{pro:pull-back_principal}}.
  \item Let \(\bunp\colon \Act\to \Base\) be a cover and let~\(\Gr\)
    be its \v{C}ech groupoid.  Then the functor \(\Cat(\Base)\to
    \Cat(\Gr)\) induced by the equivalence bibundle~\(\Act\)
    between~\(\Gr\) and the \(0\)\nb-groupoid~\(\Base\) is an
    equivalence of categories.
  \end{enumerate}
\end{proposition}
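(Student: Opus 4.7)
The plan is to establish the cycle of implications $(1)\Rightarrow(2)\Rightarrow(3)\Rightarrow(1)$ together with $(3)\Leftrightarrow(4)\Leftrightarrow(5)$. The proofs of $(1)\Rightarrow(2)$ and $(2)\Rightarrow(3)$ will be very short, leveraging facts proved earlier. The implications $(3)\Rightarrow(1)$ and $(3)\Rightarrow(4)$ are the substantive steps; the hardest one is $(3)\Rightarrow(4)$ because it requires constructing the base $\tilde{\Base}$ and the map $f\colon\tilde\Base\to\Base$ from scratch and showing the resulting map from $\Act[Y]$ to the pullback is an isomorphism.

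For $(1)\Rightarrow(2)$, assume $\Gr$ is basic and let $\Act$ be any $\Gr$-action. The anchor map $\s\colon\Act\to\Gr^0$ is a $\Gr$-equivariant map to the canonical $\Gr$-action on $\Gr^0$, which is basic by hypothesis; so $(1)$ forces $\Act$ to be basic. For $(2)\Rightarrow(3)$, Example~\ref{exa:covering_groupoid_principal} shows that a \v{C}ech groupoid is basic, so $(2)$ applied to such a groupoid gives $(3)$. For $(3)\Rightarrow(1)$, given a $\Gr$-map $f\colon\Act_1\to\Act_2$ with $\Act_2$ basic, Proposition~\ref{pro:covering_groupoid_basic} identifies $\Act_2\rtimes\Gr$ with a \v{C}ech groupoid; Proposition~\ref{pro:transformation_groupoid_action} then upgrades the $\Gr$-action on $\Act_1$ (together with $f$) to an action of this \v{C}ech groupoid, which is basic by $(3)$; finally Corollary~\ref{cor:basic_action_trafo_gr} transfers basicity back to the original $\Gr$-action. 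This closes the first cycle.

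For $(3)\Leftrightarrow(4)$: the direction $(4)\Rightarrow(3)$ is immediate because the $\Gr$-action described in $(4)$ is literally the pull-back along $f$ of the principal $\Gr$-bundle $\Act\prto\Base$ from Example~\ref{exa:covering_groupoid_principal}, and Proposition~\ref{pro:pull-back_principal} ensures pull-backs of principal bundles are principal. For $(3)\Rightarrow(4)$, let $\Act[Y]$ be a $\Gr$-action with $\Gr$ the \v{C}ech groupoid of $\bunp$; by $(3)$ it is basic with orbit-space projection $q\colon\Act[Y]\prto\tilde\Base$. The composite $\bunp\circ\s_{\Act[Y]}\colon\Act[Y]\to\Base$ is $\Gr$-invariant (since arrows of the \v{C}ech groupoid pair points with equal $\bunp$-image), hence descends along $q$ to a map $f\colon\tilde\Base\to\Base$. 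The canonical map $(q,\s_{\Act[Y]})\colon\Act[Y]\to\tilde\Base\times_{f,\Base,\bunp}\Act$ is a $\Gr$-map between two principal $\Gr$-bundles over $\tilde\Base$ that intertwines the bundle projections, so Proposition~\ref{pro:pull-back_principal} (uniqueness of pull-backs) forces it to be an isomorphism.

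For $(4)\Leftrightarrow(5)$: the equivalence bibundle $\Act$ from $\Gr$ to $\Base$ described in Example~\ref{exa:covering_groupoid_equivalent_to_space} induces a functor $\Cat(\Base)\to\Cat(\Gr)$ sending $(\tilde\Base\to\Base)$ to the pull-back $\tilde\Base\times_{f,\Base,\bunp}\Act$ with its canonical $\Gr$-action. Essential surjectivity of this functor is exactly statement $(4)$, so $(5)\Rightarrow(4)$ is trivial and $(4)\Rightarrow(5)$ reduces to verifying full faithfulness. The latter follows from Proposition~\ref{pro:G-map_versus_base_map}: a $\Gr$-map between two principal $\Gr$-bundles over $\tilde\Base_1,\tilde\Base_2$ that lie over $\Base$ descends uniquely to a map of base spaces over $\Base$, and conversely every such base map lifts to a $\Gr$-map between the pull-backs. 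The main obstacle in the whole argument is carefully handling the descent step in $(3)\Rightarrow(4)$, where subcanonicity of the pretopology is used to produce $f$ from the invariant map $\bunp\circ\s_{\Act[Y]}$; everything else is bookkeeping using the principal-bundle machinery of Section~\ref{sec:principal_bundles}.
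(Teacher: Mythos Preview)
Your proposal is correct and follows essentially the same route as the paper: the same cycle $(1)\Rightarrow(2)\Rightarrow(3)\Rightarrow(1)$ via the transformation-groupoid trick, and the same treatment of $(3)\Leftrightarrow(4)\Leftrightarrow(5)$ via the pull-back and base-map machinery of Section~\ref{sec:principal_bundles}. The only cosmetic difference is that where you descend the invariant map $\bunp\circ\s_{\Act[Y]}$ directly through the coequaliser~$q$, the paper phrases this as applying Proposition~\ref{pro:G-map_versus_base_map} to the $\Gr$-map $\s_{\Act[Y]}\colon\Act[Y]\to\Act$; these are the same construction.
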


\begin{proof}
  (1)\(\Rightarrow\)(2): Let~\(\Gr\) be a basic groupoid and
  let~\(\Act\) be a \(\Gr\)\nb-action.  Let \(\Act_2=\Gr^0\) and let
  \(f=\s\colon \Act\to\Act_2\) be the anchor map.  Since~\(f\) is a
  \(\Gr\)\nb-map and the \(\Gr\)\nb-action on~\(\Gr^0\) is basic by
  assumption, (1) gives that the \(\Gr\)\nb-action on~\(\Act\) is also
  basic.

  (2) implies~(3) because \v{C}ech groupoids are basic
  (Example~\ref{exa:covering_groupoid_principal}).

  (3) implies~(1): the \(\Gr\)\nb-action on~\(\Act_1\) and the
  map~\(f\) combine to an action of \(\Act_2\rtimes\Gr\)
  on~\(\Act_1\).  The groupoid \(\Act_2\rtimes\Gr\) is isomorphic to
  a \v{C}ech groupoid of a cover by
  Proposition~\ref{pro:covering_groupoid_basic}.  Hence its action
  on~\(\Act_1\) is basic by~(3).
  Corollary~\ref{cor:basic_action_trafo_gr} shows that the action
  of~\(\Gr\) on~\(\Act_1\) is basic as well.

  (4)\(\iff\)(3): The action of the \v{C}ech groupoid of~\(\bunp\)
  on~\(\Act\) is basic by
  Example~\ref{exa:covering_groupoid_principal}.  Hence its
  pull-back along \(f\colon \tilde{\Base}\to\Base\) remains a basic
  action by Proposition~\ref{pro:pull-back_principal}.  Thus the
  actions described in~(4) are basic, so~(4) implies~(3).
  Conversely, let~\(\tilde{\Act}\) be a basic \(\Gr\)\nb-action for
  the \v{C}ech groupoid of \(\bunp\colon \Act\prto\Base\).  Let
  \(\tilde{\bunp}\colon \tilde{\Act}\to\tilde{\Base}\) be the bundle
  projection.  The anchor map \(\hat{f}\defeq \tilde{\s}\colon
  \tilde{\Act}\to\Act=\Gr^0\) of the \(\Gr\)\nb-action
  on~\(\tilde{\Act}\) is a \(\Gr\)\nb-map.  It induces a map
  \(f=\hat{f}/\Gr\colon \tilde{\Base}\to\Base\) by
  Proposition~\ref{pro:G-map_versus_base_map}.
  Proposition~\ref{pro:pull-back_principal} shows that
  \(\tilde{\Act}\) is isomorphic to
  \(\tilde{\Base}\times_{f,\Base,\bunp}\Act\) with the canonical
  action.  Thus~(3) implies~(4).

  A \(\Base\)\nb-action is the same as a map
  \(\tilde{\Base}\to\Base\).  The \v{C}ech groupoid~\(\Gr\) of
  \(\bunp\colon \Act\to\Base\) is equivalent to the
  \(0\)\nb-groupoid~\(\Base\) by
  Example~\ref{exa:covering_groupoid_equivalent_to_space}.  The
  functor \(\Cat(\Base)\to \Cat(\Gr)\) induced by this equivalence
  is, by definition, the pull-back construction described in~(4).
  (This is indeed a special case of the composition of bibundle
  equivalences with actions defined below.)  Propositions
  \ref{pro:pull-back_principal} and~\ref{pro:G-map_versus_base_map}
  yield a bijection between maps
  \(\tilde{\Base}_1\to\tilde{\Base}_2\) and \(\Gr\)\nb-maps
  \(\tilde{\Base}_1\times_{\Base}\Act \to
  \tilde{\Base}_2\times_{\Base}\Act\).  This means that the functor
  \(\Cat(\Base)\to \Cat(\Gr)\) is fully faithful.  Condition~(4)
  means that it is essentially surjective.  A fully faithful functor
  is an equivalence if and only if it is essentially surjective.
\end{proof}

Proposition~\ref{pro:basic_assum}.(5) shows that we need
Assumption~\ref{assum:covering_acts_basically} if we want equivalent
groupoids to have equivalent action categories.

\begin{proposition}
  \label{pro:basic_assum_weak}
  The following are equivalent:
  \begin{enumerate}
  \item Let~\(\Gr\) be a groupoid, \(\Act_1\) and~\(\Act_2\)
    \(\Gr\)\nb-actions and \(f\colon \Act_1\prto\Act_2\) a cover
    that is also a \(\Gr\)\nb-map.  If \(\Act_2\) is basic, then so
    is~\(\Act_1\).
  \item Any sheaf over a basic groupoid is basic.
  \item Any sheaf over a \v{C}ech groupoid is basic.
  \item Let \(\bunp\colon \Act\prto \Base\) be a cover, let~\(\Gr\) be
    its \v{C}ech groupoid.  Let~\(\Act[Y]\) be a \(\Gr\)\nb-sheaf.
    Then there are \(\tilde{\Base}\inOb\Cat\) and a map
    \(f\colon\tilde{\Base}\to\Base\) such that
    \(\Act[Y]\cong\tilde{\Base}\times_{f,\Base,\bunp} \Act\)
    with~\(\Gr\) acting on~\(\tilde{\Base}\times_{f,\Base,\bunp}
    \Act\) by \((z,x_1)\cdot (x_1,x_2)\defeq (z,x_2)\), as in
    Proposition~\textup{\ref{pro:pull-back_principal}}.
  \end{enumerate}
  Assumption~\textup{\ref{assum:local_cover}} is equivalent to the
  statement that the map~\(f\) in \textup{(4)} is automatically a
  cover.
\end{proposition}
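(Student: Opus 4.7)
The plan is to mimic the cyclic structure of the proof of Proposition~\ref{pro:basic_assum}, replacing ``action'' with ``sheaf'' throughout and paying attention to where we need the anchor (or the $\Gr$\nb-map) to be a cover. I would prove the implications \textup{(1)}\(\Rightarrow\)\textup{(2)}\(\Rightarrow\)\textup{(3)}\(\Rightarrow\)\textup{(1)} and \textup{(3)}\(\iff\)\textup{(4)} in the same order as before, and then handle the last statement separately.

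For \textup{(1)}\(\Rightarrow\)\textup{(2)}, given a basic groupoid~\(\Gr\) and a sheaf~\(\Act\) over~\(\Gr\), I would take \(\Act_2 = \Gr^0\) (basic by assumption) and \(f = \s\colon \Act\prto\Gr^0\); this is a \(\Gr\)\nb-map and a cover precisely because~\(\Act\) is a sheaf, so~\textup{(1)} applies. The step \textup{(2)}\(\Rightarrow\)\textup{(3)} is immediate from Example~\ref{exa:covering_groupoid_principal}. For \textup{(3)}\(\Rightarrow\)\textup{(1)}, I would view the cover \(f\colon \Act_1\prto\Act_2\) as making \(\Act_1\) a sheaf over the transformation groupoid \(\Act_2\rtimes\Gr\) via Proposition~\ref{pro:transformation_groupoid_action}; since \(\Act_2\) is a basic \(\Gr\)\nb-action, Proposition~\ref{pro:covering_groupoid_basic} identifies \(\Act_2\rtimes\Gr\) with a \v{C}ech groupoid, so~\textup{(3)} makes \(\Act_1\) basic as an \(\Act_2\rtimes\Gr\)-sheaf, and Corollary~\ref{cor:basic_action_trafo_gr} descends this to basicness of the \(\Gr\)\nb-action.

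For \textup{(3)}\(\iff\)\textup{(4)}, in one direction the pull-back description in~\textup{(4)} gives a sheaf (the projection \(\pr_1\) is a cover because~\(\bunp\) is) that is basic by Proposition~\ref{pro:pull-back_principal}. Conversely, if \(\Act[Y]\) is a \(\Gr\)\nb-sheaf that is basic, its bundle projection \(\tilde{\bunp}\colon \Act[Y]\prto\tilde\Base\) exists by Lemma~\ref{lem:principal_bundle}; the anchor map \(\hat f = \tilde\s\colon \Act[Y]\prto\Act\) is a cover (since~\(\Act[Y]\) is a sheaf) and a \(\Gr\)\nb-map, so Proposition~\ref{pro:G-map_versus_base_map} produces \(f = \hat f/\Gr\colon \tilde\Base\to\Base\), and Proposition~\ref{pro:pull-back_principal} identifies \(\Act[Y]\) with the pull-back \(\tilde\Base\times_{f,\Base,\bunp}\Act\).

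The last statement is the only part that is genuinely new. The forward direction is immediate from Proposition~\ref{pro:G-map_versus_base_map}: under Assumption~\ref{assum:local_cover}, the cover \(\hat f\) descends to a cover~\(f\). For the converse, which I expect to be the main obstacle, I plan to encode an arbitrary local-cover situation as a sheaf over a \v{C}ech groupoid. Given a candidate local cover, that is, \(g\colon Y\to X\) together with a cover \(p\colon U\prto X\) such that \(\pr_2\colon Y\times_{g,X,p} U\prto U\) is a cover, let~\(\Gr\) be the \v{C}ech groupoid of~\(p\) and equip \(\Act[Y]\defeq Y\times_{g,X,p} U\) with the canonical \(\Gr\)\nb-action \((y,u_1)\cdot(u_1,u_2)\defeq (y,u_2)\). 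Then \(\Act[Y]\) is a \(\Gr\)\nb-sheaf with anchor \(\pr_2\). Applying~\textup{(4)}, the resulting map \(f\colon \tilde\Base\to X\) must agree with \(g\colon Y\to X\) by the uniqueness part of Proposition~\ref{pro:pull-back_principal} together with Proposition~\ref{pro:G-map_versus_base_map} applied to the coordinate projection \(\Act[Y]\to Y\), so that \(\tilde\Base = Y\) and \(f=g\). Hence if~\(f\) is automatically a cover, so is~\(g\), which is precisely Assumption~\ref{assum:local_cover}.
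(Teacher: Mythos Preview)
Your proposal is correct and follows essentially the same route as the paper: the cyclic argument \textup{(1)}\(\Rightarrow\)\textup{(2)}\(\Rightarrow\)\textup{(3)}\(\Rightarrow\)\textup{(1)} and \textup{(3)}\(\iff\)\textup{(4)} is lifted verbatim from Proposition~\ref{pro:basic_assum} with the sheaf hypothesis inserted exactly where you indicate, and for the final statement the paper likewise observes that every fibre-product situation~\eqref{eq:fibre-product_of_cover} with~\(g\) a cover arises as an instance of~\textup{(4)}. Your treatment is simply a more detailed unpacking of the paper's one-line reduction; the only cosmetic point is that in the \textup{(4)}\(\Rightarrow\)\textup{(3)} step the parenthetical about \(\pr_1\) concerns the bundle projection rather than the anchor, so the word ``sheaf'' there is superfluous (you are given a sheaf and must show it is basic, not the other way around).
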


\begin{proof}
  The proof is the same as for Proposition~\ref{pro:basic_assum}.
  Assumption~\ref{assum:local_cover} is necessary and sufficient for
  the map~\(f\) in~(4) to be a cover because each fibre-product
  situation~\eqref{eq:fibre-product_of_cover} in which~\(g\) is a
  cover gives rise to a situation as in~(4).
\end{proof}

Let \(\bunp\colon \Act\prto \Base\) be a cover and let~\(\Gr\) be its
\v{C}ech groupoid.  Then the equivalence~\(\Act\) induces a functor
\(\Cat_\covers(\Base)\to \Cat_\covers(\Gr)\) between the
\(0\)\nb-groupoid~\(\Base\) and~\(\Gr\).  By
Proposition~\ref{pro:basic_assum_weak}, Assumptions
\ref{assum:local_cover} and~\ref{assum:two-three}
together are equivalent to the statement that this functor
\(\Cat_\covers(\Base)\to \Cat_\covers(\Gr)\) is an equivalence of
categories.

\begin{proposition}
  \label{pro:smaller_coverings_assum}
  Let \(\covers\) and~\(\covers'\) be pretopologies on~\(\Cat\) with
  \(\covers\subseteq\covers'\).  If~\(\covers'\) is subcanonical,
  then so is~\(\covers\).  If~\(\covers'\) satisfies
  Assumption~\textup{\ref{assum:covering_acts_basically}}, then so
  does~\(\covers\).  If~\(\covers'\) satisfies
  Assumption~\textup{\ref{assum:covering_acts_basically_weak}}, then
  so does~\(\covers\).
\end{proposition}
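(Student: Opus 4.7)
The plan is to prove the three implications independently, exploiting in each case that the relevant categorical data (coequalisers, fibre products, \v{C}ech groupoids, group actions, principal bundles) are defined purely in terms of $\Cat$ and do not see the choice of pretopology, so that a statement established using $\covers'$-covers is transported to $\covers$ by appealing to the closure of $\covers$ under pullbacks.

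For subcanonicity, I would take $f\colon U\prto X$ in $\covers\subseteq\covers'$. Since $\covers'$ is a pretopology, the fibre product $U\times_{f,X,f} U$ exists in $\Cat$, and since $\covers'$ is subcanonical, $f$ is the coequaliser of the two coordinate projections in $\Cat$. Being a coequaliser is an intrinsic property of a diagram in $\Cat$, so condition~(2) of Definition and Lemma~\ref{def:subcanonical} holds for every $f\in\covers$. Hence $\covers$ is subcanonical.

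For Assumption~\ref{assum:covering_acts_basically}, I would invoke the equivalent characterisation~(4) in Proposition~\ref{pro:basic_assum}: given a $\covers$-cover $\bunp\colon \Act\prto\Base$, its \v{C}ech groupoid~$\Gr$, and a $\Gr$-action~$\Act[Y]$, note that $\bunp$ is also a $\covers'$-cover, that the \v{C}ech groupoid is the same object of $\Cat$ in both contexts, and that a $\Gr$-action does not depend on the choice of pretopology. Applying characterisation~(4) to $\covers'$ gives $\tilde{\Base}\inOb\Cat$ and $f\colon\tilde{\Base}\to\Base$ with $\Act[Y]\cong \tilde{\Base}\times_{f,\Base,\bunp}\Act$ carrying the canonical action. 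The bundle projection is the coordinate projection $\pr_1\colon\Act[Y]\to\tilde{\Base}$, which is a pullback of $\bunp$ along~$f$; by the pretopology axiom for $\covers$ itself, $\pr_1$ is a $\covers$-cover. Thus characterisation~(4) of Proposition~\ref{pro:basic_assum} holds for $\covers$, proving Assumption~\ref{assum:covering_acts_basically} for $\covers$.

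For Assumption~\ref{assum:covering_acts_basically_weak}, I would run the same argument with Proposition~\ref{pro:basic_assum_weak} in place of Proposition~\ref{pro:basic_assum}. The only additional point to check is that the hypothesis transfers: a $\Gr$-sheaf in $(\Cat,\covers)$ has anchor map that is a $\covers$-cover, hence a $\covers'$-cover, so it is automatically a $\Gr$-sheaf in $(\Cat,\covers')$. The rest is identical. There is no serious obstacle in any of the three parts; the only thing to keep in mind is that "basic" implicitly carries a pretopology with it (through the requirement that the bundle projection be a cover), and the cleanest way to handle this is to use the pullback characterisation of basicness, where the cover in question is explicitly a pullback of the \v{C}ech cover $\bunp$ and therefore inherits membership in whichever pretopology contains $\bunp$.
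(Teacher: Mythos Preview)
Your proposal is correct and follows essentially the same route as the paper: both invoke the pullback characterisation (4) of Proposition~\ref{pro:basic_assum} (respectively Proposition~\ref{pro:basic_assum_weak}) for~$\covers'$ to put the action into the form $\tilde{\Base}\times_{f,\Base,\bunp}\Act$, and then observe that this pullback is basic in~$(\Cat,\covers)$ because the bundle projection $\pr_1$ is a pullback of the $\covers$-cover~$\bunp$. Your treatment is slightly more explicit than the paper's (you spell out the subcanonicity argument and the sheaf transfer, which the paper leaves implicit or dismisses as ``similar''), but the substance is identical.
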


\begin{proof}
  Let \(\bunp\colon \Act\prto\Base\) be a cover in~\(\covers\) and
  let~\(\Gr\) be its \v{C}ech groupoid.  Let~\(\Act[Y]\) be a
  \(\Gr\)\nb-action.  Since \(\covers\subseteq\covers'\), \(\bunp\)
  is also a cover in~\(\covers'\).  Proposition~\ref{pro:basic_assum}.(4)
  for the pretopology~\(\covers'\) shows that there is a map
  \(f\colon \tilde{\Base}\to\Base\) such that
  \(\Act[Y]\cong\tilde{\Base}\times_{f,\Base,\bunp} \Act\)
  with~\(\Gr\) acting by \((z,x_1)\cdot (x_1,x_2)\defeq (z,x_2)\).
  This action is basic also in~\((\Cat,\covers)\).  The proof of the
  second statement is similar.
\end{proof}

Often a category admits many different pretopologies.
Proposition~\ref{pro:smaller_coverings_assum} shows that we do not
have to check Assumptions \ref{assum:covering_acts_basically}
and~\ref{assum:covering_acts_basically_weak} for all pretopologies,
it suffices to look at a maximal one among the interesting
pretopologies.

\subsection{Composition of bibundle functors and actors}
\label{sec:composition_sub}

\begin{proposition}
  \label{pro:bibundles_act}
  Let \(\Gr\) and~\(\Gr[H]\) be groupoids in~\((\Cat,\covers)\).
  Under Assumption~\textup{\ref{assum:covering_acts_basically}}, a
  bibundle actor~\(\Act\) from~\(\Gr\) to~\(\Gr[H]\) induces a
  functor
  \[
  \Cat(\Gr[H])\to\Cat(\Gr),\qquad
  \Act[Y]\mapsto \Act\times_{\Gr[H]}\Act[Y].
  \]

  Under
  Assumption~\textup{\ref{assum:covering_acts_basically_weak}}, a
  \(\Gr,\Gr[H]\)-bibundle~\(\Act\) with basic action of~\(\Gr[H]\)
  induces a functor
  \[
  \Cat_\covers(\Gr[H])\to\Cat(\Gr),\qquad
  \Act[Y]\mapsto \Act\times_{\Gr[H]}\Act[Y].
  \]

  Both constructions above are natural in~\(\Act\), that is, a
  \(\Gr,\Gr[H]\)-map \(\Act_1\to\Act_2\) induces a natural
  \(\Gr\)\nb-map \(\Act_1\times_{\Gr[H]}\Act[Y] \to
  \Act_2\times_{\Gr[H]}\Act[Y]\).  Assume
  Assumption~\textup{\ref{assum:local_cover}}.  If the
  \(\Gr,\Gr[H]\)-map \(f\colon \Act_1\prto\Act_2\) and the
  \(\Gr[H]\)-map \(g\colon \Act[Y]_1\prto\Act[Y]_2\) are covers,
  then so is the induced map \(f\times_{\Gr[H]} g\colon
  \Act_1\times_{\Gr[H]} \Act[Y]_1\to \Act_2\times_{\Gr[H]}
  \Act[Y]_2\).

  If Assumptions \textup{\ref{assum:covering_acts_basically_weak}}
  and~\textup{\ref{assum:two-three}} hold, then
  \(\Act\times_{\Gr[H]}\Act[Y]\inOb\Cat_\covers(\Gr)\) if~\(\Act\) is
  a \(\Gr,\Gr[H]\)-bibundle functor and
  \(\Act[Y]\inOb\Cat_\covers(\Gr[H])\).
\end{proposition}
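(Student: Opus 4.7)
My plan is to realise the composite $\Act\times_{\Gr[H]}\Act[Y]$ as the orbit space of a diagonal right $\Gr[H]$-action on $Z\defeq \Act\times_{\s,\Gr[H]^0,\rg}\Act[Y]$, defined by $(x,y)\cdot h\defeq (x\cdot h, h^{-1}\cdot y)$. By Proposition~\ref{pro:transformation_groupoid_action} this is equivalent to an action of $\Act\rtimes\Gr[H]$ on~$Z$ with anchor $\pr_1\colon Z\to\Act$. The hypothesis that the right $\Gr[H]$-action on~$\Act$ is basic, combined with Proposition~\ref{pro:covering_groupoid_basic}, identifies $\Act\rtimes\Gr[H]$ with the \v{C}ech groupoid of the bundle projection $\Act\prto \Act/\Gr[H]$. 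Under Assumption~\ref{assum:covering_acts_basically} every action of such a groupoid is basic; under the weaker Assumption~\ref{assum:covering_acts_basically_weak} the same holds provided $\pr_1\colon Z\to\Act$ is a cover, which happens exactly when $\Act[Y]$ is a sheaf, since then $\rg\colon\Act[Y]\prto\Gr[H]^0$ is a cover and $\pr_1$ is its pull-back. In either setting I define $\Act\times_{\Gr[H]}\Act[Y]\defeq Z/(\Act\rtimes\Gr[H])$; the orbit projection is a cover by Lemma~\ref{lem:principal_bundle}, and by Lemma~\ref{lem:orbit_space_trafo_gr} this orbit space agrees with the $\Gr[H]$-orbit space.

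Next, the left $\Gr$-action on~$\Act$ lifts to~$Z$ by $g\cdot(x,y)\defeq(g\cdot x, y)$ and commutes with the right $\Act\rtimes\Gr[H]$-action because the original $\Gr$- and $\Gr[H]$-actions on~$\Act$ commute; since the orbit projection is a coequaliser, this $\Gr$-action descends uniquely to the quotient. Functoriality in~$\Act[Y]$ and naturality in~$\Act$ are then immediate, because a $\Gr,\Gr[H]$-map $f\colon\Act_1\to\Act_2$ and a $\Gr[H]$-map $g\colon\Act[Y]_1\to\Act[Y]_2$ produce an equivariant map $f\times_{\Gr[H]^0} g\colon Z_1\to Z_2$ that descends to a $\Gr$-map between the orbit spaces. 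For the cover assertion, under Assumption~\ref{assum:local_cover}, $f\times_{\Gr[H]^0} g$ is itself a cover as a composite of successive pull-backs of the covers $f$ and~$g$ along covers; since $Z_1$ and~$Z_2$ are principal $\Gr[H]$-bundles over the respective quotients, Proposition~\ref{pro:G-map_versus_base_map} (which invokes Assumption~\ref{assum:local_cover}) shows that the induced map on quotients is a cover.

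For the final statement, assume $\Act$ is a bibundle functor and $\Act[Y]\inOb\Cat_\covers(\Gr[H])$. The left anchor map of $\Act\times_{\Gr[H]}\Act[Y]\to\Gr^0$ fits into a commuting triangle with $Z\xrightarrow{\pr_1}\Act\xrightarrow{\rg}\Gr^0$ and the orbit projection $Z\prto\Act\times_{\Gr[H]}\Act[Y]$. The map $\pr_1$ is a cover as the pull-back of $\rg\colon\Act[Y]\prto\Gr[H]^0$, and $\rg\colon\Act\prto\Gr^0$ is a cover because it is the bundle projection of the right principal $\Gr[H]$-action on~$\Act$; hence $Z\to\Gr^0$ is a cover. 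The orbit projection is a cover, so Assumption~\ref{assum:two-three} forces the left anchor map of the quotient to be a cover, showing it is a sheaf over~$\Gr$. The main subtlety is organisational, keeping track of which assumption is invoked at which step; the geometric content reduces entirely to the identification of $\Act\rtimes\Gr[H]$ with a \v{C}ech groupoid, after which the conclusions follow directly from the assumed basicness or cover properties of \v{C}ech groupoid actions and their equivariant maps.
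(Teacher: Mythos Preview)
Your approach matches the paper's: you inline what the paper packages as Propositions~\ref{pro:basic_assum}(1) and~\ref{pro:basic_assum_weak}(1) (the identification of $\Act\rtimes\Gr[H]$ with a \v{C}ech groupoid is exactly how those implications are proved there), and the overall structure is otherwise identical.

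One step is too quick. The sentence ``since the orbit projection is a coequaliser, this $\Gr$-action descends uniquely to the quotient'' does not by itself handle the multiplication map, whose domain is $\Gr^1\times_{\s,\Gr^0,\rg} Z$ rather than~$Z$; you need that the map $\Gr^1\times_{\s,\Gr^0,\rg} Z \to \Gr^1\times_{\s,\Gr^0,\rg}(Z/\Gr[H])$ is itself a coequaliser before you can factor through it. The paper handles this by recognising that map as the pull-back of the principal $\Gr[H]$-bundle $Z\prto Z/\Gr[H]$ along~$\pr_2$ (Proposition~\ref{pro:pull-back_principal}) and then invoking Proposition~\ref{pro:G-map_versus_base_map} to obtain the induced map on base spaces. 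Your coequaliser argument can be completed in the same way, but the step should be made explicit.

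A minor bookkeeping point: that $f\times_{\Gr[H]^0} g$ is a cover follows from the pretopology axioms alone (factor it as $(f\times\id)$ followed by $(\id\times g)$, each a pull-back of a cover), not ``along covers'' and not yet requiring Assumption~\ref{assum:local_cover}; that assumption enters only at the next step, when Proposition~\ref{pro:G-map_versus_base_map} is used to pass to the orbit spaces.
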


\begin{proof}
  The proofs for bibundle functors and actors are almost the same.
  The first difference is that the fibre product
  \[
  \Act[XY]\defeq \Act\times_{\s,\Gr[H]^0,\s} \Act[Y]
  \]
  exists in~\(\Cat\) for different reasons: if~\(\Act\) is a bibundle
  actor, then \(\s\colon \Act\prto\Gr[H]^0\) is a cover, and otherwise
  \(\Act[Y]\inOb\Cat_\covers(\Gr[H])\) means that \(\s\colon
  \Act[Y]\prto\Gr[H]^0\) is a cover.

  There is an obvious left action of~\(\Gr\) on~\(\Act[XY]\) with
  anchor map \(\rg_{\Act[XY]}\defeq \rg\circ\pr_{\Act}\colon
  \Act[XY]\to\Act\to\Gr^0\) and multiplication map
  \[
  \Gr^1\times_{\s,\Gr^0,\rg_{\Act[XY]}} \Act[XY]\to\Act[XY]
  \]
  given elementwise by \(g\cdot (x,y)\defeq (g\cdot x,y)\) for
  \(g\in\Gr^1\), \(x\in\Act\), \(y\in\Act[Y]\) with
  \(\s(g)=\rg(x)\), \(\s(x)=\s(y)\).  We equip~\(\Act[XY]\) with the
  unique right \(\Gr[H]\)\nb-action for which the coordinate
  projections \(\pr_{\Act}\colon \Act[XY]\to\Act\) and
  \(\pr_{\Act[Y]}\colon \Act[XY]\to\Act[Y]\) are \(\Gr[H]\)\nb-maps
  (see Lemma~\ref{lem:unique_fibre-product_action}).  Elementwise,
  we have \(\s(x,y)=\s(x)=\s(y)\) and \((x,y)\cdot h= (x\cdot
  h,y\cdot h)\) for all \(x\in\Act\), \(y\in\Act[Y]\),
  \(h\in\Gr[H]^1\) with \(\s(x)=\s(y)=\rg(h)\).

  The actions of \(\Gr\) and~\(\Gr[H]\) on~\(\Act[XY]\) clearly
  commute, and the map on objects
  \[
  \Cat(\Gr,\Gr[H])\times \Cat(\Gr[H])\to\Cat(\Gr,\Gr[H]),\qquad
  \Act,\Act[Y]\mapsto \Act[XY],
  \]
  is part of a bifunctor; that is, a \(\Gr,\Gr[H]\)-map
  \(\Act_1\to\Act_2\) and an \(\Gr[H]\)\nb-map
  \(\Act[Y]_1\to\Act[Y]_2\) induce a \(\Gr,\Gr[H]\)-map
  \(\Act_1\Act[Y]_1\to\Act_2\Act[Y]_2\).

  The coordinate projection \(\pr_{\Act}\colon \Act[XY]\to\Act\) is an
  \(\Gr[H]\)\nb-map and~\(\Act\) is a basic \(\Gr[H]\)\nb-action by
  assumption.  In the first case,
  Proposition~\ref{pro:basic_assum}.(1) shows that \(\Act[XY]\) is a
  basic \(\Gr[H]\)\nb-action.  In the second case, the map
  \(\pr_{\Act}\colon \Act[XY]\prto\Act\) is a cover because
  \(\s\colon \Act[Y]\prto\Gr[H]^0\) is one; thus
  Proposition~\ref{pro:basic_assum_weak}.(1) shows that \(\Act[XY]\)
  is a basic \(\Gr[H]\)\nb-action.  Let \(\bunp\colon
  \Act[XY]\prto\Base\) be the bundle projection of this basic
  action.  We also write \(\Base=\Act\times_{\Gr[H]} \Act[Y]\).

  Recall that a \(\Gr,\Gr[H]\)\nb-map \(f\colon \Act_1\to\Act_2\)
  and an \(\Gr[H]\)\nb-map \(g\colon \Act[Y]_1\to\Act[Y]_2\) induce
  a \(\Gr,\Gr[H]\)\nb-map \(f\times_{\Gr[H]^0} g\colon
  \Act_1\Act[Y]_1\to\Act_2\Act[Y]_2\).  By
  Proposition~\ref{pro:G-map_versus_base_map}, this induces a map
  \(f\times_{\Gr[H]} g\colon
  \Act_1\times_{\Gr[H]}\Act[Y]_1\to\Act_2\times_{\Gr[H]}\Act[Y]_2\).
  Thus the construction of \(\Act\times_{\Gr[H]} \Act[Y]\) is
  bifunctorial.  If \(f\) and~\(g\) are covers, then so is the
  induced map \(f\times_{\Gr[H]^0} g\); this is a general property
  of pretopologies.  By Proposition~\ref{pro:G-map_versus_base_map},
  the map \(f\times_{\Gr[H]} g\) induced by this on the base spaces
  of principal bundles is a cover as well if
  Assumption~\ref{assum:local_cover} holds.

  It remains to push the \(\Gr\)\nb-action on~\(\Act[XY]\) down to a
  natural \(\Gr\)\nb-action on~\(\Base\).  Let
  \(\Base[W]\defeq\Act/\Gr[H]\).  The anchor map \(\s\colon
  \Act\to\Gr^0\) descends to a map \(\s_{\Base[W]}\colon
  \Base[W]\to\Gr^0\) because it is \(\Gr[H]\)\nb-invariant.
  Proposition~\ref{pro:G-map_versus_base_map} gives a unique map
  \(\pr_{\Act}/\Gr[H]\colon \Base\to\Base[W]\), which we compose
  with~\(\s_{\Base[W]}\) to get a map \(\rg_{\Base}\colon
  \Base\to\Gr^0\).  This is the anchor map of the desired action
  on~\(\Base\).
  It is the unique map with \(\rg_{\Act}\circ\pr_{\Act} =
  \rg_{\Base}\circ\bunp\colon \Act\Act[Y]\to\Gr^0\).

  Pulling back the principal \(\Gr[H]\)\nb-bundle
  \(\Act[XY]\prto\Base\) along \(\pr_2\colon
  \Gr^1\times_{\s,\Gr^0,\rg_{\Base}} \Base\prto \Base\) gives
  \[
  \Gr^1\times_{\s,\Gr^0,\rg_{\Base}} \Base \times_{\Base} \Act[XY]
  \cong \Gr^1\times_{\s,\Gr^0,\rg} \Act[XY]
  \]
  with its usual \(\Gr[H]\)\nb-action.  This is a principal bundle
  over \(\Gr^1\times_{\s,\Gr^0,\rg_{\Base}} \Base\) by
  Proposition~\ref{pro:pull-back_principal}.  By
  Proposition~\ref{pro:G-map_versus_base_map}, the left
  \(\Gr^1\)\nb-action
  \(\Gr^1\times_{\s,\Gr^0,\rg}\Act[XY]\to\Act[XY]\) induces a map
  \(\mul_{\Base}\colon \Gr^1\times_{\s,\Gr^0,\rg_{\Base}}
  \Base\to\Base\) on the bases of these principal bundles.  It is
  routine to see that \(\rg_{\Base}\) and~\(\mul_{\Base}\) define a
  left \(\Gr\)\nb-action on~\(\Base\), using the corresponding facts
  for~\(\Act[XY]\) and passing to orbit spaces by
  Proposition~\ref{pro:G-map_versus_base_map}.  Thus
  \(\Act\times_{\Gr[H]}\Act[Y]\) carries a left \(\Gr\)\nb-action.
  This \(\Gr\)\nb-action is natural in the sense that the maps
  \(f\times_{\Gr[H]} g\) defined above are \(\Gr\)\nb-equivariant.

  If~\(\Act\) is a \(\Gr,\Gr[H]\)-bibundle functor and
  \(\Act[Y]\inOb\Cat_\covers(\Gr[H])\), then \(\pr_{\Act}\colon
  \Act[XY]\prto\Act\) is a cover because \(\s\colon
  \Act[Y]\prto\Gr[H]^0\) is one.  Since \(\rg\colon \Act\prto\Gr^0\)
  for bibundle functors, the composite map \(\Act[XY]\prto\Gr^0\) is a
  cover as well.  The map \(\Act[XY]\prto\Act\times_{\Gr[H]}\Act[Y]\)
  is a cover as the bundle projection of a principal bundle.
  Assumption~\textup{\ref{assum:two-three}} shows that the induced map
  \(\Act\times_{\Gr[H]}\Act[Y]\to\Gr^0\) is a cover, so
  \(\Act\times_{\Gr[H]}\Act[Y]\inOb\Cat_\covers(\Gr)\).
\end{proof}

\begin{proposition}
  \label{pro:invariants_as_composite}
  The quotient~\(\Act/\Gr[H]\) for a \(\Gr,\Gr[H]\)-bibundle~\(\Act\)
  with basic action of\/~\(\Gr[H]\) inherits a natural
  \(\Gr\)\nb-action.  If~\(\Act\) carries an action of a third
  groupoid~\(\Gr[K]\) that commutes with the actions of \(\Gr\)
  and~\(\Gr[H]\), then the induced actions of \(\Gr\)
  and~\(\Gr[K]\) on~\(\Act/\Gr[H]\) commute.
\end{proposition}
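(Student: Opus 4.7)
The plan is to derive everything from the existence of the bundle projection \(\bunp\colon\Act\prto\Act/\Gr[H]\), which exists as a cover by Lemma~\ref{lem:principal_bundle} because the right \(\Gr[H]\)-action is basic. No further assumption on the pretopology is required: this is essentially Proposition~\ref{pro:bibundles_act} applied with \(\Act[Y]=\Gr[H]^0\), where basicality of \(\Act\times_{\s,\Gr[H]^0,\s}\Gr[H]^0\cong\Act\) holds by hypothesis, so the pretopology assumptions used there become superfluous.

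First I would descend the anchor map. Since the left and right actions commute, \(\rg\colon\Act\to\Gr^0\) is \(\Gr[H]\)-invariant, so by subcanonicity and the fact that~\(\bunp\) coequalises the two maps \(\pr_1,\mul\colon\Act\times_{\s,\Gr^0,\rg}\Gr[H]^1\rightrightarrows\Act\), it factors uniquely through a map \(\rg_{\Base}\colon\Act/\Gr[H]\to\Gr^0\). Next I would descend the multiplication. View \(\Act\) as a principal \(\Gr[H]\)-bundle over~\(\Act/\Gr[H]\) and pull it back along \(\pr_2\colon\Gr^1\times_{\s,\Gr^0,\rg_{\Base}}(\Act/\Gr[H])\to\Act/\Gr[H]\); Proposition~\ref{pro:pull-back_principal} identifies the total space of this pull-back with \(\Gr^1\times_{\s,\Gr^0,\rg}\Act\) carrying the canonical right \(\Gr[H]\)-action \((g,x)\cdot h=(g,x\cdot h)\). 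The left \(\Gr\)-multiplication \(\mul\colon\Gr^1\times_{\s,\Gr^0,\rg}\Act\to\Act\) is then a \(\Gr[H]\)-equivariant map between two principal \(\Gr[H]\)-bundles, and Proposition~\ref{pro:G-map_versus_base_map} yields a unique map \(\mul_{\Base}\colon\Gr^1\times_{\s,\Gr^0,\rg_{\Base}}(\Act/\Gr[H])\to\Act/\Gr[H]\) of base spaces.

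It remains to verify the action axioms and the commuting-action statement. Associativity, unitality and \(\rg_{\Base}\circ\mul_{\Base}=\rg_{\Base}\circ\pr_2\) are expressed as equalities of maps whose pull-backs to the relevant iterated fibre products over \(\Act\) (which are covers, hence epimorphisms) hold by the corresponding axioms for \(\mul\); thus they descend. For a commuting action of a third groupoid \(\Gr[K]\), the same construction applied to the \(\Gr[K]\)-multiplication (which is again \(\Gr[H]\)-equivariant) produces a \(\Gr[K]\)-action on \(\Act/\Gr[H]\) with anchor map induced by the left \(\Gr[K]\)-anchor. The commutativity of the two descended actions reduces to the equality of two \(\Gr[H]\)-equivariant maps \(\Gr^1\times_{\Gr^0,\rg}\Act\times_{\s,\Gr[K]^0}\Gr[K]^1\rightrightarrows\Act\) (act by \(\Gr\) first, or by \(\Gr[K]\) first), which is precisely the commutativity upstairs; Proposition~\ref{pro:G-map_versus_base_map} then transports it to \(\Act/\Gr[H]\).

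The argument has no genuinely hard step; the only care required is in the identification of the pulled-back principal bundle along \(\pr_2\) with \(\Gr^1\times_{\s,\Gr^0,\rg}\Act\), which ensures that Proposition~\ref{pro:G-map_versus_base_map} is applicable to \(\mul\). Everything else is a formal descent of equations of maps from~\(\Act\) to~\(\Act/\Gr[H]\) along the cover~\(\bunp\).
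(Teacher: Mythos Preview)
Your proposal is correct and follows essentially the same route as the paper: the paper's proof simply points back to the construction in the proof of Proposition~\ref{pro:bibundles_act} specialised to \(\Act[Y]=\Gr[H]^0\), which is exactly the descent-via-Propositions~\ref{pro:pull-back_principal} and~\ref{pro:G-map_versus_base_map} argument you spell out. One small slip: the anchor compatibility for a \emph{left} action reads \(\rg_{\Base}\circ\mul_{\Base}=\rg_{\Gr}\circ\pr_1\), not \(\rg_{\Base}\circ\pr_2\); otherwise the argument is fine.
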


\begin{proof}
  The first assertion is shown during the proof of
  Proposition~\ref{pro:bibundles_act}.  We are dealing with
  the special case \(\Act[Y]=\Gr[H]^0\), where \(\Act[XY]\cong
  \Act\), so that \(\Act\times_{\Gr[H]}\Gr[H]^0\cong \Act/\Gr[H]\).
  No extra assumption about actions of \v{C}ech groupoids is needed in
  this particular case.  A (left or right) action of~\(\Gr[K]\)
  commuting with those of \(\Gr\) and~\(\Gr[H]\) induces an action
  on~\(\Act/\Gr[H]\) for the same reasons as for the action
  of~\(\Gr\).  The proof that the induced actions of \(\Gr\)
  and~\(\Gr[K]\) on~\(\Act/\Gr[H]\) still commute is routine and
  omitted.  We only remark that, under Assumption~\ref{assum:final},
  we could replace a pair of commuting actions of \(\Gr\)
  and~\(\Gr[K]\) by a single action of \(\Gr\times\Gr[K]\), which
  would clearly still commute with the action of~\(\Gr[H]\).  This
  action of \(\Gr\times\Gr[K]\) descends to~\(\Act/\Gr[H]\) and may
  then be turned back into two commuting actions of \(\Gr\)
  and~\(\Gr[K]\) on~\(\Act/\Gr[H]\).
\end{proof}

\begin{proposition}
  \label{pro:compose_bibundles}
  Assume Assumptions
  \textup{\ref{assum:covering_acts_basically_weak}}
  and~\textup{\ref{assum:local_cover}}.

  Let \(\Gr\), \(\Gr[H]\) and~\(\Gr[K]\) be groupoids
  in~\((\Cat,\covers)\).  Let \(\Act\) and~\(\Act[Y]\) be
  bibundle functors from~\(\Gr\) to~\(\Gr[H]\) and from~\(\Gr[H]\)
  to~\(\Gr[K]\), respectively.  Then \(\Act\times_{\Gr[H]}\Act[Y]\)
  is a bibundle functor from~\(\Gr\) to~\(\Gr[K]\) in a natural way
  with respect to \(\Gr,\Gr[H]\)-maps \(\Act_1\to\Act_2\) and
  \(\Gr[H],\Gr[K]\)-maps \(\Act[Y]_1\to\Act[Y]_2\).  If both
  \(\Act\) and~\(\Act[Y]\) are bibundle equivalences, then so is
  \(\Act\times_{\Gr[H]} \Act[Y]\).

  Under Assumption~\textup{\ref{assum:two-three}}, if both bibundle
  functors \(\Act\) and~\(\Act[Y]\) are covering, so is
  \(\Act\times_{\Gr[H]} \Act[Y]\).

  Assume Assumptions \textup{\ref{assum:covering_acts_basically}}
  and~\textup{\ref{assum:two-three}}.  Let \(\Act\)
  and~\(\Act[Y]\) be bibundle actors from~\(\Gr\) to~\(\Gr[H]\) and
  from~\(\Gr[H]\) to~\(\Gr[K]\), respectively.  Then
  \(\Act\times_{\Gr[H]}\Act[Y]\) is a bibundle actor from~\(\Gr\)
  to~\(\Gr[K]\) in a natural way with respect to \(\Gr,\Gr[H]\)-maps
  \(\Act_1\to\Act_2\) and \(\Gr[H],\Gr[K]\)-maps
  \(\Act[Y]_1\to\Act[Y]_2\).
\end{proposition}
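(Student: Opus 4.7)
Proof Proposal.

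Throughout, let $\Act[XY]\defeq\Act\times_{\s,\Gr[H]^0,\rg}\Act[Y]$ with its three pairwise-commuting actions (left $\Gr$, diagonal $\Gr[H]$, right $\Gr[K]$), and write $\bunp\colon\Act[XY]\prto\Base$ for the principal $\Gr[H]$-bundle projection onto $\Base\defeq\Act\times_{\Gr[H]}\Act[Y]$. Propositions~\ref{pro:bibundles_act} and~\ref{pro:invariants_as_composite}, applied under the assumptions appropriate to each case, already supply the commuting left $\Gr$- and right $\Gr[K]$-actions on $\Base$ and establish naturality in $\Act$ and $\Act[Y]$. My plan is to check the remaining properties of the right $\Gr[K]$-action on $\Base$ by transferring them from $\Act[XY]$, exploiting the local character of principality and basic-ness (Propositions~\ref{pro:principal_is_local} and~\ref{pro:basic_assum}).

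For bibundle functors, I want to show that the right $\Gr[K]$-action on $\Base$, together with $\rg_\Base\colon\Base\to\Gr^0$, forms a principal bundle. The right $\Gr[K]$-action on $\Act[XY]$ is the pull-back along $\s_\Act$ of the principal $\Gr[K]$-bundle $\Act[Y]\to\Gr[H]^0$, so by Proposition~\ref{pro:pull-back_principal} it is principal with bundle projection $\pr_1\colon\Act[XY]\prto\Act$. Applied to the $\Gr[H]$-map $\pr_1$ between the principal $\Gr[H]$-bundles $\Act[XY]\to\Base$ and $\Act\to\Gr^0$, Proposition~\ref{pro:G-map_versus_base_map} identifies $\rg_\Base=\pr_1/\Gr[H]$ as a cover (using Assumption~\ref{assum:local_cover}) and produces an isomorphism $\Act[XY]\cong\Base\times_{\Gr^0}\Act$. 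Since this pull-back of the candidate bundle $\Base\to\Gr^0$ along the cover $\rg_\Act$ is a principal $\Gr[K]$-bundle, Proposition~\ref{pro:principal_is_local} yields that $\Base$ itself is a principal $\Gr[K]$-bundle over $\Gr^0$. The bibundle-equivalence case follows from the symmetric argument on the left side.

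For the covering refinement, it remains to show that $\s_\Base\colon\Base\to\Gr[K]^0$ is a cover. The maps $\s_\Act$ and $\s_{\Act[Y]}$ are covers by the covering hypothesis, so $\pr_2\colon\Act[XY]\prto\Act[Y]$ is a cover as a pull-back of $\s_\Act$, and therefore $\s_\Base\circ\bunp=\s_{\Act[Y]}\circ\pr_2$ is a cover. Since $\bunp$ is a cover, Assumption~\ref{assum:two-three} forces $\s_\Base$ to be a cover.

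The bibundle-actor case is the main obstacle. The cover argument for $\s_\Base$ just given still applies; what is new is basic-ness of the right $\Gr[K]$-action on $\Base$, and I will obtain it by an iterated descent. First, the $\Gr[K]$-map $\pr_2\colon\Act[XY]\to\Act[Y]$ lands in the basic $\Gr[K]$-action $\Act[Y]$, so Proposition~\ref{pro:basic_assum}(1), available under Assumption~\ref{assum:covering_acts_basically}, makes $\Act[XY]$ basic as a $\Gr[K]$-action. Secondly, $\pr_1\colon\Act[XY]\to\Act$ is $\Gr[K]$-invariant and $\Gr[H]$-equivariant, hence descends to a $\Gr[H]$-map $\Act[XY]/\Gr[K]\to\Act$; since $\Act$ is basic as a $\Gr[H]$-action, a second application of Proposition~\ref{pro:basic_assum}(1) makes $\Act[XY]/\Gr[K]$ basic as a $\Gr[H]$-action, with base $(\Act[XY]/\Gr[K])/\Gr[H]=\Base/\Gr[K]$. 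Applied now to the $\Gr[H]$-map $\Act[XY]\to\Act[XY]/\Gr[K]$ between these two principal $\Gr[H]$-bundles, Proposition~\ref{pro:G-map_versus_base_map} exhibits $\Act[XY]$ as the pull-back of the candidate bundle $\Base\to\Base/\Gr[K]$ along the cover $\bunp/\Gr[K]\colon\Act[XY]/\Gr[K]\prto\Base/\Gr[K]$ (which is a cover by Proposition~\ref{pro:G-map_versus_base_map} and Assumption~\ref{assum:local_cover}). This pull-back is a principal $\Gr[K]$-bundle, so Proposition~\ref{pro:principal_is_local} gives that $\Base\to\Base/\Gr[K]$ is a principal $\Gr[K]$-bundle, i.e., the right $\Gr[K]$-action on $\Base$ is basic. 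The main difficulty in this last step is keeping track of the interaction between the three commuting actions and the two iterated quotients; once the bookkeeping is set up correctly, the local-principality transfer carries the argument through.
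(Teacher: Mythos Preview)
Your argument is correct and takes a genuinely different route from the paper.  For the bibundle-functor case, the paper verifies principality of the right $\Gr[K]$-action on $\Base$ by showing directly that the shear map on~$\Base$ is the map induced on $\Gr[H]$-orbit spaces by the shear map on~$\Act[XY]$, using Proposition~\ref{pro:fibre-product_base_above} to identify the two fibre products $\Base\times_{\Gr^0}\Base$ and $\Base\times_{\Gr[K]^0}\Gr[K]^1$ as bases of principal $\Gr[H]$-bundles.  You instead recognise $\Act[XY]$ as the pull-back of the candidate bundle $\Base\to\Gr^0$ along the cover $\rg_{\Act}$ and invoke Proposition~\ref{pro:principal_is_local}.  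This is cleaner: the shear-map computation is absorbed into the locality statement, and you avoid the explicit fibre-product identifications.  For bibundle actors the paper constructs the base as $\Act\times_{\Gr[H]}(\Act[Y]/\Gr[K])$ and again descends the shear map; your iterated quotient $(\Act[XY]/\Gr[K])/\Gr[H]$ is the same object, reached by a different order of quotients, and you again replace the shear-map descent by locality.

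One point in the actor case needs tightening.  You write $\Base/\Gr[K]$ for $(\Act[XY]/\Gr[K])/\Gr[H]$ before having shown that the $\Gr[K]$-action on~$\Base$ is basic, which is exactly what you are proving; the name is premature, though harmless once the argument is complete.  More substantively, the cover along which you pull back in Proposition~\ref{pro:principal_is_local} is not ``$\bunp/\Gr[K]$'': the map~$\bunp$ is $\Gr[K]$-equivariant rather than invariant, and its target $\Base/\Gr[K]$ is not yet available, so $\bunp/\Gr[K]$ is not defined.  The cover you actually need is the $\Gr[H]$-bundle projection $\Act[XY]/\Gr[K]\prto(\Act[XY]/\Gr[K])/\Gr[H]$, which is a cover simply because your step~2 made $\Act[XY]/\Gr[K]$ a principal $\Gr[H]$-bundle; no appeal to Proposition~\ref{pro:G-map_versus_base_map} or Assumption~\ref{assum:local_cover} is needed for that.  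With this relabelling the pull-back square from Proposition~\ref{pro:G-map_versus_base_map} identifies the pull-back of $\Base\to(\Act[XY]/\Gr[K])/\Gr[H]$ along that cover with the principal $\Gr[K]$-bundle $\Act[XY]\to\Act[XY]/\Gr[K]$, and Proposition~\ref{pro:principal_is_local} finishes the job.
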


\begin{proof}
  Since right and left actions of~\(\Gr[H]\) are equivalent,
  Proposition~\ref{pro:bibundles_act} also works for a left
  \(\Gr[H]\)\nb-action on~\(\Act[Y]\).  All cases of composition
  considered fall into one of the two cases of
  Proposition~\ref{pro:bibundles_act}, which provides
  \(\Act\times_{\Gr[H]}\Act[Y]\inOb\Cat\) with a natural \(\Gr\)\nb-action.

  A right \(\Gr[K]\)\nb-action on~\(\Act\times_{\Gr[H]}\Act[Y]\) is
  constructed like the left \(\Gr\)\nb-action in the proof of
  Proposition~\ref{pro:bibundles_act}.  First, \(\Act[XY]\)
  carries a right \(\Gr[K]\)\nb-action that commutes with the
  actions of \(\Gr\) and~\(\Gr[H]\): the anchor map is
  \(\s\circ\pr_{\Act[Y]}\colon \Act[XY]\to\Act[Y]\to\Gr[K]^0\) and
  the multiplication is \(\id_{\Act} \times_{\s,\Gr[H]^0}
  \mul_{\Act[Y]}\colon \Act[XY]
  \times_{\s\circ\pr_{\Act[Y]},\Gr[K]^0,\rg} \Gr[K]^1\to \Act[XY]\).
  Proposition~\ref{pro:invariants_as_composite} shows that this
  \(\Gr[K]\)\nb-action descends from \(\Act[XY]\)
  to~\(\Act\times_{\Gr[H]}\Act[Y]\) and still commutes the induced
  action of~\(\Gr\) on \(\Act\times_{\Gr[H]} \Act[Y]\) commute.

  It remains to show that the \(\Gr,\Gr[K]\)-bibundle
  \(\Act\times_{\Gr[H]}\Act[Y]\) is a bibundle functor, bibundle
  equivalence, covering bibundle functor, or bibundle actor if both
  \(\Act\) and~\(\Act[Y]\) are, under appropriate assumptions
  on~\((\Cat,\covers)\).  We consider the case of bibundle functors
  first.  We must show that \(\rg\colon
  \Act\times_{\Gr[H]}\Act[Y]\to\Gr^0\) is a principal
  \(\Gr[K]\)\nb-bundle.

  It is clear that the anchor map \(\rg\colon
  \Act\times_{\Gr[H]}\Act[Y]\to\Gr^0\) is \(\Gr[K]\)\nb-invariant.
  We claim that it is a cover.  The map \(\pr_{\Act}\colon
  \Act[XY]\prto\Act\) is a cover because \(\rg\colon
  \Act[Y]\prto\Gr[H]^0\) is one.  Hence so is the induced map
  \(\pr_{\Act}/\Gr[H]\colon \Act\times_{\Gr[H]}\Act[Y]\prto
  \Act/\Gr[H]\cong \Gr^0\) by
  Proposition~\ref{pro:G-map_versus_base_map}; here we need
  Assumption~\ref{assum:local_cover}.  This map is the anchor map
  \(\rg\colon \Act\times_{\Gr[H]}\Act[Y]\to\Gr^0\).

  We must also show that the map
  \begin{equation}
    \label{eq:iso_on_XY_over_H}
    (\mul,\pr_1)\colon
    (\Act\times_{\Gr[H]}\Act[Y])\times_{\s,\Gr[K]^0,\rg} \Gr[K]^1
    \to (\Act\times_{\Gr[H]}\Act[Y])\times_{\rg,\Gr^0,\rg}
    (\Act\times_{\Gr[H]}\Act[Y])
  \end{equation}
  is an isomorphism.
  Since \(\rg\colon \Act[Y]\prto\Gr[H]^0\) is a principal
  \(\Gr[K]\)\nb-bundle, its pull-back along \(\s\colon
  \Act\prto\Gr[H]^0\) is a principal \(\Gr[K]\)\nb-bundle
  \(\pr_{\Act}\colon \Act[XY]\prto\Act\).  This means that the map
  \begin{equation}
    \label{eq:iso_on_XY}
    \Act[XY]\times_{\s,\Gr[K]^0,\rg} \Gr[K]^1 \xrightarrow{(\mul,\pr_1)}
    \Act[XY]\times_{\pr_{\Act},\Act,\pr_{\Act}} \Act[XY]
  \end{equation}
  is an isomorphism.

  Pulling \(\Act[XY]\prto\Act\times_{\Gr[H]}\Act[Y]\) back along
  \(\rg\colon \Gr[K]^1\prto\Gr[K]^0\) gives a principal
  \(\Gr[H]\)\nb-bundle \(\Act[XY]\times_{\s,\Gr[K]^0,\rg} \Gr[K]^1\prto
  (\Act\times_{\Gr[H]}\Act[Y])\times_{\s,\Gr[K]^0,\rg} \Gr[K]^1\).
  Proposition~\ref{pro:fibre-product_base_above} shows that
  \(\Act[XY]\times_{\pr_{\Act},\Act,\pr_{\Act}} \Act[XY]\) is a
  principal \(\Gr[H]\)\nb-bundle over \((\Act\times_{\Gr[H]}
  \Act[Y]) \times_{\rg,\Gr^0,\rg} (\Act\times_{\Gr[H]} \Act[Y])\).
  Hence~\eqref{eq:iso_on_XY_over_H} is the map on the base spaces
  induced by the \(\Gr[H]\)\nb-equivariant
  isomorphism~\eqref{eq:iso_on_XY}.
  Thus~\eqref{eq:iso_on_XY_over_H} is an isomorphism as well by
  Proposition~\ref{pro:G-map_versus_base_map}.  This finishes the
  proof that \(\Act\times_{\Gr[H]}\Act[Y]\) is a bibundle functor if
  \(\Act\) and~\(\Act[Y]\) are.  We have used Assumptions
  \ref{assum:covering_acts_basically_weak}
  and~\ref{assum:local_cover}.

  If both \(\Act\) and~\(\Act[Y]\) are bibundle equivalences, then the
  same
  arguments as above for the right \(\Gr[K]\)\nb-action apply to the
  left \(\Gr\)\nb-action on~\(\Act\times_{\Gr[H]} \Act[Y]\).  They
  show that \(\s\colon \Act\times_{\Gr[H]} \Act[Y]\prto \Gr[K]^0\) is
  a principal \(\Gr\)\nb-bundle.  Thus \(\Act\times_{\Gr[H]}\Act[Y]\)
  is a bibundle equivalence if \(\Act\) and~\(\Act[Y]\) are, under
  Assumptions \ref{assum:covering_acts_basically_weak}
  and~\ref{assum:local_cover}.

  Assume now that \(\s\colon \Act\prto\Gr[H]^0\) and \(\s\colon
  \Act[Y]\prto\Gr[K]^0\) are covers.  Then so are
  \(\pr_{\Act[Y]}\colon \Act[XY]\prto\Act[Y]\) and the composite map
  \(\Act[XY]\prto\Act[Y]\prto\Gr[K]^0\).  Since the bundle projection
  \(\Act[XY]\prto \Act\times_{\Gr[H]}\Act[Y]\) is a cover as well,
  Assumption~\ref{assum:two-three} gives that the anchor map
  \(\Act\times_{\Gr[H]}\Act[Y]\to \Gr[K]^0\) is a cover.  Thus a
  product of covering bibundle functors is again a covering bibundle
  functor, under Assumptions \ref{assum:two-three}
  and~\ref{assum:covering_acts_basically_weak}.  Furthermore, the
  right anchor map for a product of two bibundle actors is a cover.
  It remains to show that the right action on the product of two
  bibundle actors is basic.

  Since the \(\Gr[K]\)\nb-action on~\(\Act[Y]\) is basic, we may
  form its base \(\Act[W]\cong\Act[Y]/\Gr[K]\) and get a principal
  \(\Gr[K]\)\nb-bundle \(\bunp\colon \Act[Y]\prto\Act[W]\).
  Proposition~\ref{pro:invariants_as_composite} shows that the
  \(\Gr[H]\)\nb-action on~\(\Act[Y]\) descends to~\(\Act[W]\).

  Now we may also form the \(\Gr\)\nb-action
  \(\Act\times_{\Gr[H]}\Act[W]\), and the cover~\(\bunp\) induces a
  cover \(q\colon \Act\times_{\Gr[H]}\Act[Y] \prto
  \Act\times_{\Gr[H]}\Act[W]\) by
  Proposition~\ref{pro:bibundles_act}; here we only need
  Assumption~\ref{assum:local_cover}.  The map
  \begin{equation}
    \label{eq:compose_bibundles_K_basic}
    (\mul,\pr_1)\colon
    \Act[XY] \times_{\s,\Gr[K]^0,\rg} \Gr[K]^1 \to
    \Act[XY] \times_{\Act[XW]} \Act[XY]
  \end{equation}
  is an isomorphism because \(\bunp\colon \Act[Y]\prto\Act[W]\) is a
  principal \(\Gr[K]\)\nb-bundle, which we have pulled back
  to~\(\Act[XW]\).  The spaces in~\eqref{eq:compose_bibundles_K_basic}
  are total spaces of principal \(\Gr[H]\)\nb-bundles over
  \((\Act\times_{\Gr[H]}\Act[Y]) \times_{\s,\Gr[K]^0,\rg} \Gr[K]^1\)
  and \((\Act\times_{\Gr[H]}\Act[Y])
  \times_{\Act\times_{\Gr[H]}\Act[W]}
  (\Act\times_{\Gr[H]}\Act[Y])\), respectively.
  Proposition~\ref{pro:bibundles_act} shows that the induced
  map \((\mul,\pr_1)\colon (\Act\times_{\Gr[H]}\Act[Y])
  \times_{\s,\Gr[K]^0,\rg} \Gr[K]^1\to (\Act\times_{\Gr[H]}\Act[Y])
  \times_{\Act\times_{\Gr[H]}\Act[W]} (\Act\times_{\Gr[H]}\Act[Y])\)
  on the bases is also an isomorphism.  Hence the induced
  \(\Gr[K]\)\nb-action on \(\Act\times_{\Gr[H]}\Act[Y]\) is basic
  with bundle projection~\(q\).
\end{proof}

\begin{remark}
  \label{rem:induced_orbit_space}
  The proof above shows that \((\Act\times_{\Gr[H]} \Act[Y])/\Gr[K]
  \cong \Act \times_{\Gr[H]} (\Act[Y]/\Gr[K])\) whenever the actions
  of~\(\Gr[H]\) on~\(\Act\) and of~\(\Gr[K]\) on~\(\Act[Y]\) are basic
  and suitable assumptions about~\((\Cat,\covers)\) ensure
  that \(\Act\times_{\Gr[H]} \Act[Y]\) and \(\Act \times_{\Gr[H]}
  (\Act[Y]/\Gr[K])\) exist and \(q\colon \Act\times_{\Gr[H]} \Act[Y]
  \to \Act \times_{\Gr[H]} (\Act[Y]/\Gr[K])\) is a cover.
\end{remark}

\begin{proposition}
  \label{pro:compose_bibundle_associative}
  The compositions defined above under suitable assumptions are
  associative in the following sense:
  \begin{enumerate}
  \item For bibundle functors or bibundle actors \(\Act\colon \Gr
    \to\Gr[H]\), \(\Act[Y]\colon \Gr[H] \to\Gr[K]\), and
    \(\Act[Z]\colon \Gr[K]\to\Gr[L]\), there is a natural
    bibundle isomorphism \((\Act\times_{\Gr[H]}\Act[Y])\times_{\Gr[K]}
    \Act[Z] \congto \Act\times_{\Gr[H]}(\Act[Y]\times_{\Gr[K]}
    \Act[Z])\); these associators for four composable bibundle
    functors or actors make the usual pentagon diagram commute.

  \item For bibundle actors \(\Act\colon \Gr\to\Gr[H]\) and
    \(\Act[Y]\colon \Gr[H]\to\Gr[K]\), and a
    \(\Gr[K]\)\nb-action~\(\Act[Z]\), there is a natural
    \(\Gr\)-equivariant isomorphism
    \((\Act\times_{\Gr[H]}\Act[Y])\times_{\Gr[K]} \Act[Z] \congto
    \Act\times_{\Gr[H]}(\Act[Y]\times_{\Gr[K]} \Act[Z])\).
  \end{enumerate}
\end{proposition}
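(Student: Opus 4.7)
\emph{Proof plan.} The strategy is to realise both sides of each associator as iterated quotients of a single common object carrying two commuting basic actions, and to exploit the fact that iterated quotients by commuting actions are canonically isomorphic regardless of order.

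For part~(1), I introduce the triple fibre product
\[
\Act[XYZ] \defeq \Act \times_{\s,\Gr[H]^0,\rg} \Act[Y] \times_{\s,\Gr[K]^0,\rg} \Act[Z].
\]
This object exists in~\(\Cat\) because, in each situation covered by the proposition, at least one of the two anchor maps meeting at~\(\Gr[H]^0\) (respectively~\(\Gr[K]^0\)) is a cover.  It carries four pairwise commuting actions: a left \(\Gr\)\nb-action (through~\(\Act\)), a right \(\Gr[L]\)\nb-action (through~\(\Act[Z]\); omitted in part~(2)), a diagonal \(\Gr[H]\)\nb-action (right on~\(\Act\), left on~\(\Act[Y]\)), and a diagonal \(\Gr[K]\)\nb-action (right on~\(\Act[Y]\), left on~\(\Act[Z]\)).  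Under the standing assumptions — \ref{assum:covering_acts_basically_weak} and~\ref{assum:local_cover} for bibundle functors, or \ref{assum:covering_acts_basically} and~\ref{assum:two-three} for bibundle actors — the \(\Gr[H]\)- and \(\Gr[K]\)\nb-actions on~\(\Act[XYZ]\) are basic.  Combining the definition of bibundle composition as an orbit space (Propositions~\ref{pro:bibundles_act} and~\ref{pro:compose_bibundles}), the identity that fibre products commute with orbit spaces by actions on a single factor (an instance of Proposition~\ref{pro:fibre-product_base_above} together with Proposition~\ref{pro:pull-back_principal}), and the strict associativity of fibre products in~\(\Cat\), I obtain canonical isomorphisms
\[
(\Act\times_{\Gr[H]}\Act[Y])\times_{\Gr[K]}\Act[Z]
\cong \bigl(\Act[XYZ]/\Gr[H]\bigr)/\Gr[K]
\quad\text{and}\quad
\Act\times_{\Gr[H]}(\Act[Y]\times_{\Gr[K]}\Act[Z])
\cong \bigl(\Act[XYZ]/\Gr[K]\bigr)/\Gr[H].
\]
Both iterated quotients realise the coequaliser of the pair of commuting \(\Gr[H]\)- and \(\Gr[K]\)\nb-actions on~\(\Act[XYZ]\), so they are canonically isomorphic; this canonical isomorphism is the desired associator.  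Naturality in each of the three bibundles is inherited from the naturality of fibre products and orbit spaces via Proposition~\ref{pro:G-map_versus_base_map}.

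For the pentagon coherence applied to four composable bibundles between five groupoids, the same method applied to a quadruple fibre product divided by the three pairwise commuting diagonal actions of the intermediate groupoids identifies each of the five vertices of the pentagon with the same iterated coequaliser; each edge of the pentagon corresponds to one permissible reordering of the quotients, so both paths around the pentagon reduce to the identity on this common coequaliser.  Part~(2) is treated verbatim by dropping the right \(\Gr[L]\)\nb-action and regarding~\(\Act[Z]\) as a bare \(\Gr[K]\)\nb-action; the resulting isomorphism is automatically \(\Gr\)\nb-equivariant because the left \(\Gr\)\nb-action on~\(\Act[XYZ]\) commutes with every quotient in the construction.

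The main technical obstacle is showing that, after the first of the iterated quotients, the remaining commuting action descends to a basic action on the intermediate quotient, so that the second quotient is again a well-behaved orbit space.  Proposition~\ref{pro:invariants_as_composite} supplies the descent; but to conclude that the descended action is basic (rather than just an abstract action) requires the cover-preservation assumptions~\ref{assum:local_cover} or~\ref{assum:two-three}, together with the basicity assumptions~\ref{assum:covering_acts_basically_weak} or~\ref{assum:covering_acts_basically} that already underpin Proposition~\ref{pro:compose_bibundles}.  Once these are in place, the universal property of the coequaliser finishes the identification quickly.
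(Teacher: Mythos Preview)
Your proposal is correct and follows essentially the same route as the paper: both introduce the triple fibre product \(\Act[XYZ]\) with its commuting \(\Gr[H]\)- and \(\Gr[K]\)-actions, identify each bracketing of the composite as an iterated quotient of~\(\Act[XYZ]\), and then argue that the two orders of quotienting agree. The paper packages the swap of quotients via Remark~\ref{rem:induced_orbit_space} (proved inside Proposition~\ref{pro:compose_bibundles}), whereas you invoke the coequaliser universal property directly; these are the same argument in different clothing, and your references to Propositions~\ref{pro:fibre-product_base_above} and~\ref{pro:invariants_as_composite} supply exactly the ingredients the paper uses. For the pentagon, the paper phrases the argument as ``the associator is the map induced on quotients by the fibre-product associator, and the latter satisfies the pentagon'', which is precisely your quadruple-fibre-product reduction.
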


\begin{proof}
  Literally the same argument proves both statements.  The usual
  fibre product \(\Act[XYZ] \defeq \Act\times_{\s,\Gr[H]^0,\rg}
  \Act[Y]\times_{\s,\Gr[K]^0,\rg} \Act[Z]\) is associative up to very
  canonical isomorphisms, which we drop from our notation to simplify.
  This space carries commuting actions of \(\Gr[H]\) and~\(\Gr[K]\) by
  \((x,y,z)\cdot h=(x\cdot h,h^{-1}\cdot y,z)\) and \((x,y,z)\cdot
  k=(x,y\cdot k,k^{-1}\cdot z)\) for \(x\in\Act\), \(y\in\Act[Y]\),
  \(z\in\Act[Z]\), \(h\in\Gr[H]^1\), \(k\in\Gr[K]^1\) with
  \(\s(x)=\rg(y)=\rg(h)\), \(\s(y)=\rg(z)=\rg(k)\).  We get
  \(\Act\times_{\Gr[H]}(\Act[Y]\times_{\Gr[K]} \Act[Z])\) from
  \(\Act[XYZ]\) by first taking the base space \(\Act\times_{\Gr[H]^0}
  (\Act[Y]\times_{\Gr[K]} \Act[Z])\) for the basic
  \(\Gr[K]\)\nb-action on~\(\Act[XYZ]\) and then taking the base
  space of the resulting basic \(\Gr[H]\)\nb-action on
  \(\Act\times_{\Gr[H]^0} (\Act[Y]\times_{\Gr[K]} \Act[Z])\).
  Remark~\ref{rem:induced_orbit_space} implies that
  \(\Act\times_{\Gr[H]}(\Act[Y]\times_{\Gr[K]} \Act[Z])\) is
  naturally isomorphic to
  \((\Act\times_{\Gr[H]}(\Act[Y]\times_{\Gr[K]^0} \Act[Z]))/\Gr[K]\).
  The latter is, in turn, naturally isomorphic to
  \(((\Act\times_{\Gr[H]}\Act[Y])\times_{\Gr[K]^0} \Act[Z])/\Gr[K]\),
  which is \((\Act\times_{\Gr[H]}\Act[Y])\times_{\Gr[K]} \Act[Z]\) by
  definition.  This provides the desired associator.

  The associator is characterised uniquely by the statement that it is
  lifted by the associator \((\Act\times_{\s,\Gr[H]^0,\rg}
  \Act[Y])\times_{\s,\Gr[K]^0,\rg} \Act[Z] \to
  \Act\times_{\s,\Gr[H]^0,\rg} (\Act[Y]\times_{\s,\Gr[K]^0,\rg}
  \Act[Z])\).  Since the latter associators clearly make the
  associator pentagon commute, the same holds for the induced maps on
  the quotients.
\end{proof}

\begin{remark}
  \label{rem:final_object_action_actor}
  Assume Assumption~\ref{assum:final} about a final object~\(\star\).
  Then a left \(\Gr\)\nb-action~\(\Act\) is the same as a bibundle
  actor from~\(\Gr\) to~\(\star\).  The functor
  \(\Cat(\Gr[H])\to\Cat(\Gr)\) defined by a bibundle actor~\(\Act\)
  from~\(\Gr\) to~\(\Gr[H]\) in Proposition~\ref{pro:bibundles_act} is
  equivalent to the composition of bibundle actors
  \(\Gr\to\Gr[H]\to\star\) under this identification.  Thus the second
  statement in Proposition~\ref{pro:compose_bibundle_associative}
  becomes a special case of the first one under
  Assumption~\ref{assum:final}.
\end{remark}

\begin{proposition}
  \label{pro:compose_bibundle_unital}
  The bibundle equivalence~\(\Gr^1\) from~\(\Gr\) to itself is a unit
  both for the composition of bibundle functors and bibundle actors,
  up to natural isomorphisms \(\Act\times_{\Gr} \Gr^1 \cong \Act\) and
  \(\Gr^1\times_{\Gr} \Act[Y] \cong \Act[Y]\) for a bibundle functor
  or actor~\(\Act\) to~\(\Gr\) and a bibundle
  functor or actor~\(\Act[Y]\) out of\/~\(\Gr\).  The functor
  \(\Cat(\Gr)\to\Cat(\Gr)\) induced by the  bibundle
  equivalence~\(\Gr^1\) is naturally isomorphic to the identity
  functor.  It is routine to check that this natural construction is
  also equivariant with respect to the appropriate left or right
  actions.
\end{proposition}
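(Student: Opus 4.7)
The plan is to construct both unit isomorphisms by hand using the multiplication and unit maps of the groupoid, and then to observe that all naturality and equivariance statements follow by inspection. I focus on the left unit law $\Gr^1\times_{\Gr}\Act[Y]\cong \Act[Y]$; the right unit law is entirely symmetric, and the statement about the functor $\Cat(\Gr)\to\Cat(\Gr)$ is precisely the special case where~$\Act[Y]$ ranges over all (left or right) $\Gr$-actions.

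By Proposition~\ref{pro:bibundles_act} and Proposition~\ref{pro:compose_bibundles}, the composite $\Gr^1\times_{\Gr}\Act[Y]$ is the orbit space of $\Gr^1\times_{\s,\Gr^0,\rg}\Act[Y]$ under the diagonal right $\Gr$-action $(g,y)\cdot h\defeq (g\cdot h,h^{-1}\cdot y)$. I first show that this particular quotient exists in~\(\Cat\) without extra assumptions on the pretopology, by exhibiting the isomorphism
\[
\varphi\colon \Gr^1\times_{\s,\Gr^0,\rg}\Act[Y] \congto
\Act[Y]\times_{\rg,\Gr^0,\rg}\Gr^1,\qquad
(g,y)\mapsto (g\cdot y,g),
\]
with inverse $(y',g)\mapsto (g,g^{-1}\cdot y')$. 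Under~$\varphi$ the diagonal action becomes right multiplication on the second factor alone: $(g\cdot y, g)\cdot h=(g\cdot y, g\cdot h)$. Hence the orbit space projection is simply $\pr_1\colon \Act[Y]\times_{\rg,\Gr^0,\rg}\Gr^1\to \Act[Y]$, which is a cover (and a coequaliser by the principal-bundle property of $\Gr^1$, Example~\ref{exa:unit_bibundle}). Composing with $\varphi$, the multiplication map $\mul\colon (g,y)\mapsto g\cdot y$ is $\Gr$-invariant under the diagonal action (since $(g\cdot h)\cdot(h^{-1}\cdot y)=g\cdot y$ by associativity) and descends to a morphism $\bar\mul\colon \Gr^1\times_{\Gr}\Act[Y]\to\Act[Y]$, which is the desired isomorphism.

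An explicit inverse is built from the unit map: let $\iota\defeq (\unit\circ\rg,\id_{\Act[Y]})\colon \Act[Y]\to \Gr^1\times_{\s,\Gr^0,\rg}\Act[Y]$, $y\mapsto (1_{\rg(y)},y)$, and compose with the quotient projection to obtain $\bar\iota\colon \Act[Y]\to\Gr^1\times_{\Gr}\Act[Y]$. One identity is immediate: $\bar\mul\circ\bar\iota(y)=1_{\rg(y)}\cdot y=y$. For the other identity, the elementwise equality $(g,y)\cdot g^{-1}=(1_{\rg(g)},g\cdot y)$ in the total space shows that $(g,y)$ and $(1_{\rg(g)},g\cdot y)=\iota(g\cdot y)$ have the same image in the orbit space, so $\bar\iota\circ\bar\mul=\id$. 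Both identities translate into genuine maps in~\(\Cat\) by the usual Yoneda-style interpretation (as justified in Proposition~\ref{pro:unit_inverse_from_basicality}) together with the fact that the orbit-space projection is a coequaliser (Lemma~\ref{lem:principal_bundle}). The right unit law $\Act\times_{\Gr}\Gr^1\cong \Act$ follows by the same argument with left and right interchanged; the inverse is $x\mapsto [x,1_{\s(x)}]$.

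Naturality in $\Act$ and $\Act[Y]$ is immediate because $\bar\mul$ and $\bar\iota$ are constructed only from the structure maps of $\Gr$ and the anchor/multiplication maps of the bibundle in question, and these commute with any equivariant map. Equivariance of the unit isomorphisms with respect to the remaining left action on~$\Act$ or right action on~$\Act[Y]$ follows because the underlying formulas $\mul$ and $(\unit\circ\rg,\id)$ involve only the $\Gr$-structure, which commutes with any other action by the definition of a bibundle. The only potential obstacle in the proof is the rigorous passage from elementwise formulas to morphisms in~\(\Cat\) at the level of the orbit space; this is handled uniformly by the coequaliser characterisation of principal bundle projections together with the subcanonicity of the pretopology, exactly as in earlier proofs in the paper.
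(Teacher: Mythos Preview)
Your proof is correct and follows essentially the same approach as the paper's: both transport the diagonal \(\Gr\)-action on \(\Gr^1\times_{\s,\Gr^0,\rg}\Act[Y]\) (respectively \(\Act\times_{\s,\Gr^0,\rg}\Gr^1\)) via an explicit isomorphism to a one-sided multiplication action on a pull-back of the principal bundle \(\Gr^1\prto\Gr^0\), so that the orbit space is identified with \(\Act[Y]\) (respectively \(\Act\)) by Proposition~\ref{pro:pull-back_principal}. The only difference is cosmetic: the paper writes out the case \(\Act\times_{\Gr}\Gr^1\cong\Act\) and uses the isomorphism \((x,g)\mapsto(x\cdot g^{-1},g)\), while you treat \(\Gr^1\times_{\Gr}\Act[Y]\cong\Act[Y]\) and additionally spell out the inverse \(\bar\iota\) via the unit section, which is a harmless (and arguably helpful) extra detail.
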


\begin{proof}
  The bibundle equivalence~\(\Gr^1\) is constructed in
  Example~\ref{exa:unit_bibundle}.  Pulling the principal
  left \(\Gr\)\nb-bundle \(\s\colon \Gr^1\prto \Gr^0\) back along the
  anchor map \(\s\colon \Act\prto\Gr^0\) gives a principal left
  \(\Gr\)\nb-bundle \(\Act\times_{\s,\Gr^0,\s} \Gr^1\prto \Act\); here
  the action of~\(\Gr\) is by \(g_1\cdot (x,g_2) \defeq (x,g_1\cdot
  g_2)\).  The map \(\Act\times_{\s,\Gr^0,\s} \Gr^1\to
  \Act\times_{\s,\Gr^0,\rg} \Gr^1\), \((x,g)\mapsto (x\cdot
  g^{-1},g)\), is an isomorphism with inverse \((x,g)\mapsto (x\cdot
  g,g)\).  It intertwines the \(\Gr\)\nb-action
  on~\(\Act\times_{\s,\Gr^0,\s} \Gr^1\) with the \(\Gr\)\nb-action on
  \(\Act\times_{\s,\Gr^0,\rg} \Gr^1\) by \(g_1\cdot (x,g_2) \defeq
  (x\cdot g_1^{-1},g_1\cdot g_2)\).  These isomorphic
  \(\Gr\)\nb-actions have isomorphic orbit spaces, and the orbit space
  for the latter is \(\Act\times_{\Gr} \Gr^1\).  Hence the
  multiplication map \(\Act\times_{\s,\Gr^0,\rg}\Gr^1\to\Act\),
  \((x,g)\mapsto x\cdot g\), induces an isomorphism \(\Act\times_{\Gr}
  \Gr^1 \congto \Act\).  Similarly, the multiplication map
  \(\Gr^1\times_{\s,\Gr^0,\rg}\Act[Y]\to\Act[Y]\), \((g,y)\mapsto
  g\cdot y\), induces an isomorphism \(\Gr^1\times_{\Gr} \Act[Y]
  \congto \Act[Y]\).
\end{proof}

Putting together the results above gives the following theorem:

\begin{theorem}
  \label{the:bibundle_two-categories}
  Assume Assumptions \textup{\ref{assum:local_cover}}
  and~\textup{\ref{assum:covering_acts_basically}}.  Then there is a
  bicategory with groupoids in~\((\Cat,\covers)\) as objects,
  bibundle functors from~\(\Gr\) to~\(\Gr[H]\) as arrows, maps of
  \(\Gr,\Gr[H]\)-bibundles as \(2\)\nb-arrows between arrows
  \(\Gr\to\Gr[H]\), \(\times_{\Gr}\) in reverse order as composition,
  \(\Gr^1\)~as unit arrow on~\(\Gr\), the associator and unitors from
  Propositions \textup{\ref{pro:compose_bibundle_associative}}
  and~\textup{\ref{pro:compose_bibundle_unital}}, and composition of
  maps as vertical product of \(2\)\nb-arrows; the horizontal product
  of \(2\)\nb-arrows \(f\colon \Act_1\Rightarrow\Act_2\) for
  \(\Act_1,\Act_2\colon \Gr\rightrightarrows\Gr[H]\) and \(g\colon
  \Act[Y]_1\Rightarrow\Act[Y]_2\) for \(\Act[Y]_1,\Act[Y]_2\colon
  \Gr[H]\rightrightarrows\Gr[K]\) is \(f\times_{\Gr[H]} g\colon
  \Act_1\times_{\Gr[H]} \Act[Y]_1 \to \Act_2\times_{\Gr[H]}
  \Act[Y]_2\).  All \(2\)\nb-arrows in this bicategory are invertible.

  Assume Assumptions \textup{\ref{assum:two-three}}
  and~\textup{\ref{assum:covering_acts_basically}}.  Then almost all
  of the above holds for bibundle actors, except that \(2\)\nb-arrows
  need no longer be invertible.  Furthermore, the covering bibundle
  functors form a sub-bicategory of both.
\end{theorem}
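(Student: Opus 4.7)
My plan is to assemble the theorem from the preparatory propositions already established. Specifically, Proposition~\ref{pro:compose_bibundles} gives us the composition operation on arrows and verifies closure under composition for the three classes (bibundle functors, bibundle equivalences inside bibundle functors, and bibundle actors); Proposition~\ref{pro:compose_bibundle_associative} supplies the associator together with the pentagon coherence; and Proposition~\ref{pro:compose_bibundle_unital} supplies the unit \(\Gr^1\) and the left/right unitors. Thus most of the data for the bicategory is already in hand, and what remains is to assemble it, verify the outstanding coherence axioms, define the horizontal product of \(2\)\nb-arrows and check the exchange law, and finally handle the invertibility statement and the sub-bicategory claim.

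First I would fix the data: objects are groupoids, arrows are bibundle functors (respectively actors), composition is \(\times_{\Gr[H]}\) in reverse order, identity on~\(\Gr\) is the unit bibundle~\(\Gr^1\) from Example~\ref{exa:unit_bibundle}, and \(2\)\nb-arrows \(\Act_1 \Rightarrow \Act_2\) are \(\Gr,\Gr[H]\)-maps, composed vertically in the obvious way. The horizontal product \(f \times_{\Gr[H]} g\) of \(2\)\nb-arrows is defined using the bifunctoriality of the composition established in Proposition~\ref{pro:bibundles_act}; the exchange law then reduces to the analogous identity for the bifunctor \((\Act, \Act[Y]) \mapsto \Act \times_{\s,\Gr[H]^0,\rg} \Act[Y]\) on the level of ordinary fibre products, combined with the observation that taking orbit spaces by the basic \(\Gr[H]\)\nb-action is functorial via Proposition~\ref{pro:G-map_versus_base_map}. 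Naturality of the associator and unitors with respect to \(2\)\nb-arrows, and the triangle identity, will be verified by the same reduction: lift each identity to a statement about \(\Act \times_{\s,\Gr[H]^0,\rg} \Act[Y] \times_{\s,\Gr[K]^0,\rg} \Act[Z]\) where everything is strictly associative, then push down along the bundle projections of the basic actions using Proposition~\ref{pro:G-map_versus_base_map}.

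For the invertibility of \(2\)\nb-arrows in the bibundle functor bicategory, the key observation is that a \(\Gr,\Gr[H]\)-map \(f\colon \Act_1 \to \Act_2\) between bibundle functors is in particular a \(\Gr[H]\)-map between principal \(\Gr[H]\)-bundles over~\(\Gr^0\) which induces the identity on the base~\(\Gr^0\); by Proposition~\ref{pro:G-map_versus_base_map}, \(f\) is automatically an isomorphism. This invertibility can fail for bibundle actors because there the bundle projection on the right can be arbitrary, so the induced map on bases need not be the identity. For the sub-bicategory claim about covering bibundle functors, closure under composition is exactly the last sentence of Proposition~\ref{pro:compose_bibundles} (needing Assumption~\ref{assum:two-three}), the unit \(\Gr^1\) is obviously covering since \(\rg, \s\colon \Gr^1 \prto \Gr^0\) are covers by definition of a groupoid, and both unitors and the associator from Propositions \ref{pro:compose_bibundle_associative} and~\ref{pro:compose_bibundle_unital} restrict to this sub-collection.

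The main obstacle I anticipate is not any single identity but the bookkeeping: each coherence diagram a priori lives on a quotient space such as \((\Act \times_{\Gr[H]} \Act[Y]) \times_{\Gr[K]} \Act[Z]\), and showing commutativity requires lifting the diagram to the corresponding space \(\Act \times_{\s,\Gr[H]^0,\rg} \Act[Y] \times_{\s,\Gr[K]^0,\rg} \Act[Z]\), checking the lifted identity (which is immediate from strict associativity of ordinary fibre products), and descending back, using both the subcanonical property of the pretopology and the naturality of base-space formation for principal bundles. Once the pattern "lift, check, descend" is set up once, the remaining coherence axioms fall out by the same argument; I would therefore present the argument for the pentagon in detail and note that the triangle, exchange law, and naturality follow mutatis mutandis.
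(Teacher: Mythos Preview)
Your proposal is correct and follows essentially the same approach as the paper: the paper's proof is even more terse, declaring the bicategory structure a ``direct consequence of the propositions above'' and then giving exactly your argument for invertibility of \(2\)-arrows via Proposition~\ref{pro:G-map_versus_base_map} (noting that \(f/\Gr[H]\colon \Act_1/\Gr[H]\to\Act_2/\Gr[H]\) is the identity on~\(\Gr^0\) by \(\Gr\)-equivariance). Your additional elaboration on the ``lift, check, descend'' pattern for the coherence axioms is a reasonable expansion of what the paper leaves implicit.
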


\begin{proof}
  It is a direct consequence of the propositions above that bibundle
  functors, bibundle equivalences, bibundle actors, and covering
  bibundle functors are the arrows in bicategories, with the
  \(2\)\nb-arrows and compositions as asserted.

  We show that \(2\)\nb-arrows between bibundle functors are
  equivalences.  Let \(\Act_1\) and~\(\Act_2\) be bibundle functors
  from~\(\Gr\) to~\(\Gr[H]\) and let \(f\colon
  \Act_1\Rightarrow\Act_2\) be a \(\Gr,\Gr[H]\)-map.  Then the induced
  map \(f/\Gr[H]\colon \Act_1/\Gr[H]\to\Act_2/\Gr[H]\) is the identity
  map on~\(\Gr^0\) by \(\Gr\)\nb-equivariance and because
  \(\Act_i/\Gr[H]\cong \Gr^0\) by assumption.  Since~\(f/\Gr[H]\) is
  an isomorphism, so is~\(f\) by
  Proposition~\ref{pro:G-map_versus_base_map}.  This argument fails
  for bibundle actors because their definition does not
  determine~\(\Act_i/\Gr\).
\end{proof}

\begin{remark}
  \label{rem:two-three_actor_necessary}
  Assume Assumption~\ref{assum:final}.  Then Assumptions
  \ref{assum:covering_acts_basically} and~\ref{assum:two-three} are
  necessary to compose bibundle actors.

  By Remark~\ref{rem:final_object_action_actor}, we may identify
  \(\Cat(\Gr)\) for a groupoid~\(\Gr\) with the category of bibundle
  actors \(\Gr\to\star\) and \(2\)\nb-arrows between them.  Let
  \(\bunp\colon \Act\prto\Base\) be a cover and let~\(\Gr\) be its
  \v{C}ech groupoid.  The functor \(\Cat(\Base)\to\Cat(\Gr)\),
  \(\Act[Y]\mapsto \Act[Y]\times_{\Base}\Act\), in
  Proposition~\ref{pro:basic_assum}.5 composes with the bibundle
  equivalence~\(\Act\) from~\(\Base\) to~\(\Gr\).  Exchanging left
  and right in~\(\Act\) gives a bibundle equivalence~\(\Act^*\)
  from~\(\Gr\) to~\(\Base\), such that \(\Act^*\times_{\Base}
  \Act\cong \Gr^1\) and \(\Act\times_{\Gr} \Act^*\cong \Base\).  That
  is, \(\Act^*\) is inverse to~\(\Act\).  If we can compose bibundle
  actors, we can compose them with bibundle equivalences as well,
  and the latter composition must be an equivalence.  Hence the
  existence of a composition of bibundle actors implies
  Proposition~\ref{pro:basic_assum}.5 and thus
  Assumption~\ref{assum:covering_acts_basically}.

  Now consider also a map \(f\colon \Base\to\Act[Y]\) such that
  \(f\circ\bunp\colon \Act\prto\Act[Y]\) is a cover.  Then~\(\Act\)
  with its usual left action of~\(\Gr\) and right action
  of~\(\Act[Y]\) by~\(f\circ\bunp\) is a covering bibundle functor
  from~\(\Gr\) to~\(\Act[Y]\), and hence a bibundle actor.  Composing
  it with the equivalence~\(\Act^*\) gives the bibundle functor
  \(\Base\to\Gr\to\Act[Y]\) from the map~\(f\).  This is a bibundle
  actor if and only if it is a covering bibundle functor if and only
  if~\(f\) is a cover.  Thus Assumption~\ref{assum:two-three} is
  necessary for composites of bibundle actors to exist, and also for
  composites of covering bibundle functors to remain covering bibundle
  functors.
\end{remark}

\subsection{Bibundle functors versus anafunctors}
\label{sec:bibundle_versus_vague_2-cat}

\begin{theorem}
  \label{the:bibundle_to_vague_functor}
  The construction of anafunctors from bibundle functors in
  Section~\textup{\ref{sec:bibundle_to_vague}} is part of an
  equivalence from the bicategory of bibundle functors to the
  bicategory of anafunctors; this equivalence is the identity on
  objects.
\end{theorem}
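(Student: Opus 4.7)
The plan is to show that the assignment $\Act \mapsto (\Act,\rg,F_\Act)$ of Section~\ref{sec:bibundle_to_vague} extends to a homomorphism of bicategories that is the identity on objects and induces equivalences on every hom-category; since both bicategories have the same class of objects, this delivers the desired equivalence. The assignment is extended on \(2\)\nb-arrows as follows: given a bibundle map $f\colon \Act_1\to\Act_2$ between bibundle functors $\Gr\to\Gr[H]$, define $\Phi_f\colon \Act_1\times_{\rg,\Gr^0,\rg} \Act_2 \to \Gr[H]^1$ by the elementwise condition $x_2\cdot \Phi_f(x_1,x_2)= f(x_1)$; the right-principality of~$\Act_2$ over~$\Gr^0$ makes this well-defined, and the $\Gr,\Gr[H]$\nb-equivariance of~$f$ together with the characterisation $F_\Act^1(x_1,g,x_3)=h$ iff $x_1\cdot h=g\cdot x_3$ from Proposition~\ref{pro:bibundle_to_vague} yields the naturality condition of Definition~\ref{def:gen_transfor}. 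For composable bibundle functors $\Act\colon\Gr\to\Gr[H]$ and $\Act[Y]\colon\Gr[H]\to\Gr[K]$, the bundle projection cover $\Act\times_{\s,\Gr[H]^0,\rg}\Act[Y]\prto \Act\times_{\Gr[H]}\Act[Y]$ of the basic diagonal $\Gr[H]$\nb-action (Proposition~\ref{pro:bibundles_act}) identifies the anafunctor associated to the composite bibundle functor with the composite of the two component anafunctors, giving the required coherence \(2\)\nb-isomorphism.

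For essential surjectivity on hom-categories, take an anafunctor $(X,p,F\colon\Gr(X)\to\Gr[H])$ and construct a bibundle functor whose associated anafunctor is equivalent to it. The hypercover $p_*\colon\Gr(X)\to\Gr$ is an equivalence functor by Example~\ref{exa:hypercover_equivalence_functor}, so by Proposition~\ref{pro:equivalence_functor} its associated bibundle functor~$\Act_{p_*}$ is a bibundle equivalence, whose opposite bibundle~$\Act_{p_*}^*$ is then a bibundle equivalence $\Gr\to\Gr(X)$. Let~$\Act_F$ be the bibundle functor $\Gr(X)\to\Gr[H]$ attached to~$F$ by the construction of Section~\ref{sec:functors_to_bibundles}. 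Then $\Act_{p_*}^*\times_{\Gr(X)} \Act_F$ is a bibundle functor $\Gr\to\Gr[H]$ by Proposition~\ref{pro:compose_bibundles}, and the coherence from paragraph one, combined with Lemma~\ref{lem:functor_to_bibundle_to_vague} and Example~\ref{exa:base_change_vague_isomorphism}, identifies its associated anafunctor with the composite anafunctor $(X,p,\id_{\Gr(X)})\circ (X,\id_X,F)$, which collapses to $(X,p,F)$.

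For full faithfulness on hom-categories, every anafunctor natural transformation $\Psi\colon(\Act_1,\rg,F_{\Act_1})\Rightarrow(\Act_2,\rg,F_{\Act_2})$ between bibundle functors $\Act_1,\Act_2\colon\Gr\to\Gr[H]$ must come from a unique bibundle map $f\colon\Act_1\to\Act_2$ via $\Phi_f=\Psi$. The map~$f$ is given elementwise by $f(x_1)\defeq x_2\cdot\Psi(x_1,x_2)$, which defines an honest morphism in~$\Cat$ because the composite $\Act_1\times_{\rg,\Gr^0,\rg}\Act_2\to\Act_2$, $(x_1,x_2)\mapsto x_2\cdot\Psi(x_1,x_2)$, is invariant along the fibres of the cover $\pr_1\colon\Act_1\times_{\rg,\Gr^0,\rg}\Act_2\prto\Act_1$: specialising the naturality of~$\Psi$ to $g=1_{\rg(x_1)}$ and varying the second coordinate yields the required equality along the kernel pair, and subcanonicity provides the unique factorisation. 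Analogous naturality specialisations establish $\Gr,\Gr[H]$\nb-equivariance of~$f$, and the assignments $f\mapsto\Phi_f$ and $\Psi\mapsto f$ are mutually inverse by construction. The main technical obstacle is the coherence verification in paragraph one: matching anafunctor composition (which uses ordinary fibre products over $\Gr[H]^0$) with bibundle composition (which further quotients by the diagonal $\Gr[H]$\nb-action) requires carefully combining Proposition~\ref{pro:bibundles_act} with the locality of isomorphisms (Proposition~\ref{pro:isomorphism_local}) to see that the canonical comparison between the two fibre products descends to a \(2\)\nb-isomorphism after passing to orbit spaces.
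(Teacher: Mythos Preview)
Your proposal is correct and uses the same inverse construction as the paper (your $\Act_{p_*}^*\times_{\Gr(X)}\Act_F$ is exactly the paper's $\beta(X,p,F)$), but the argument is organised differently. The paper verifies both round-trips by explicit elementwise formulas---the map $(g,x,h)\mapsto g\cdot x\cdot h$ for bibundle $\to$ anafunctor $\to$ bibundle, and a hand-built natural transformation~$\Psi$ for the other direction---and checks compatibility with composition in the \emph{anafunctor $\to$ bibundle} direction by first reducing to honest functors (equation~\eqref{eq:compose_functor_bibundle_functor}) and then extending. You instead establish the forward homomorphism (bibundle $\to$ anafunctor) via the orbit-space cover $\Act\times_{\Gr[H]^0}\Act[Y]\prto\Act\times_{\Gr[H]}\Act[Y]$ and then argue full faithfulness plus essential surjectivity on each hom-category; your direct reconstruction of~$f$ from~$\Psi$ via $f(x_1)=x_2\cdot\Psi(x_1,x_2)$ replaces the paper's first round-trip computation and is arguably cleaner.

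One step you should make explicit: your essential-surjectivity argument requires that the anafunctor associated to~$\Act_{p_*}^*$ be equivalent to $(X,p,\id_{\Gr(X)})$, and neither Lemma~\ref{lem:functor_to_bibundle_to_vague} nor Example~\ref{exa:base_change_vague_isomorphism} gives this directly---the former handles $\Act_{p_*}$, not its flip, and the latter is a statement purely inside the anafunctor bicategory. The missing link is that homomorphisms of bicategories preserve quasi-inverses, so the anafunctor of~$\Act_{p_*}^*$ is quasi-inverse to the anafunctor of~$\Act_{p_*}\simeq p_*$, hence equivalent to $(X,p,\id_{\Gr(X)})$ by Example~\ref{exa:base_change_vague_isomorphism}. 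This is fine, but it rests on the coherence axioms for your comparison $2$-isomorphism, which is precisely the verification you flag at the end as the main technical obstacle; the paper sidesteps this by doing the second round-trip as a direct computation rather than routing through the composition coherence.
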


This theorem requires Assumptions \ref{assum:local_cover}
and~\ref{assum:covering_acts_basically_weak} in order for the
bicategory of bibundle functors to be defined.  Related results for
Lie groupoids and topological groupoids (with suitable covers) are
proved by Pronk~\cite{Pronk:Etendues_fractions} and
Carchedi~\cite{Carchedi:Thesis}.  Pronk develops a general calculus of
fractions in bicategories for such purposes.

\begin{proof}
  Let \(\Gr\) and~\(\Gr[H]\) be groupoids in~\((\Cat,\covers)\).  We
  show first that the groupoid of anafunctors from~\(\Gr\)
  to~\(\Gr[H]\) with natural transformations of anafunctors as
  arrows is equivalent to the groupoid of bibundle functors
  from~\(\Gr\) to~\(\Gr[H]\) with \(\Gr,\Gr[H]\)-maps between them
  as arrows.  Secondly, we check that this equivalence is compatible
  with the composition of arrows in the appropriate weak sense.

  The anafunctor associated to a bibundle functor~\(\Act\)
  from~\(\Gr\) to~\(\Gr[H]\) is \((\Act,\rg,F_{\Act})\), where
  \(F_{\Act}^0\defeq \s\colon \Act\to\Gr[H]^0\) on objects and
  \(F_{\Act}^1\colon \Act\times_{\rg,\Gr^0,\rg} \Gr^1
  \times_{\s,\Gr^0,\rg} \Act\to\Gr[H]^1\) is determined by the
  elementwise formula \(F_{\Act}^1(g\cdot x,g,x\cdot h) = h\) for
  all \(x\in\Act\), \(g\in\Gr^1\), \(h\in\Gr[H]^1\) with
  \(\s(g)=\rg(x)\), \(\s(x)=\rg(h)\) (see the proof of
  Proposition~\ref{pro:bibundle_to_vague}).

  Conversely, let \((X,p,F)\) be an anafunctor from~\(\Gr\)
  to~\(\Gr[H]\).  We have built bibundle functors from functors in
  Section~\ref{sec:functors_to_bibundles}.  In particular,
  \(p_*\colon \Gr(X)\to\Gr\) gives a bibundle functor
  \(X\times_{p,\Gr^0,\rg}\Gr^1\) from \(\Gr(X)\) to~\(\Gr\), and
  \(F\colon \Gr(X)\to\Gr[H]\) gives a bibundle functor
  \(X\times_{F^0,\Gr[H]^0,\rg} \Gr[H]^1\) from~\(\Gr(X)\)
  to~\(\Gr[H]\).  The bibundle functor
  \(X\times_{p,\Gr^0,\rg}\Gr^1\) from \(\Gr(X)\) to~\(\Gr\) is a
  bibundle equivalence
  (Example~\ref{exa:hypercover_equivalence_functor}); exchanging the
  left and right actions on
  \(X\times_{p,\Gr^0,\rg}\Gr^1\) gives a quasi-inverse bibundle
  functor \((X\times_{p,\Gr^0,\rg}\Gr^1)^*\) from~\(\Gr\)
  to~\(\Gr(X)\).  We map the anafunctor \((X,p,F)\) to the
  bibundle functor
  \[
  \beta(X,p,F)\defeq
  (X\times_{p,\Gr^0,\rg}\Gr^1)^* \times_{\Gr(X)}
  (X\times_{F^0,\Gr[H]^0,\rg} \Gr[H]^1).
  \]
  This is the orbit space of the right action of~\(\Gr(X)\) on the
  fibre product
  \[
  (X\times_{p,\Gr^0,\rg}\Gr^1)^* \times_{X}
  (X\times_{F^0,\Gr[H]^0,\rg} \Gr[H]^1)
  \cong \Gr^1\times_{\s,\Gr^0,p} X \times_{F^0,\Gr[H]^0,\rg} \Gr[H]^1
  \]
  with anchor map \(\s(g,x,h)\defeq x\) and
  \[
  (g_1,x_1,h)\cdot (x_1,g_2,x_2) \defeq (g_1\cdot g_2, x_2,
  F^1(x_1,g_2,x_2)^{-1}\cdot h)
  \]
  for all \(g_1,g_2\in\Gr^1\), \(x_1,x_2\in\Act\), \(h\in\Gr[H]^1\)
  with \(\s(g_1)=p(x_1)=\rg(g_2)\), \(F^0(x_1)=\rg(h)\),
  \(p(x_2)=\s(g_2)\); the isomorphism above involves the inversion
  in~\(\Gr^1\), which exchanges the source and range maps.  The
  \(\Gr,\Gr[H]\)-action on \(\beta(X,p,F)\)
  is induced by the obvious \(\Gr,\Gr[H]\)-action
  \[
  g_1\cdot (g_2,x,h_1)\cdot h_2 \defeq (g_1\cdot g_2,x,h_1\cdot h_2)
  \]
  on \(\Gr^1\times_{\s,\Gr^0,p} X \times_{F^0,\Gr[H]^0,\rg}
  \Gr[H]^1\).

  We claim that these two maps between bibundle functors and
  anafunctors are inverse to each other up to natural \(2\)\nb-arrows.
  First we start with a bibundle functor~\(\Act\), turn it into an
  anafunctor \((\Act,\rg,F_{\Act})\) and then turn that into a
  bibundle functor \((\Gr^1\times_{\s,\Gr^0,\rg} \Act
  \times_{\s,\Gr[H]^0,\rg} \Gr[H]^1)/\Gr(X)\).  We claim that the
  map
  \[
  \Gr^1\times_{\s,\Gr^0,\rg} \Act \times_{\s,\Gr[H]^0,\rg}
  \Gr[H]^1\to \Act,\qquad
  (g,x,h)\mapsto g\cdot x\cdot h,
  \]
  descends to an isomorphism \(\beta(\Act,\rg,F_{\Act})\congto\Act\);
  it is clear that this isomorphism is a \(\Gr,\Gr[H]\)-map.  We
  must show that \((g_1,x_1,h_1),(g_2,x_2,h_2)\in
  \Gr^1\times_{\s,\Gr^0,\rg} \Act \times_{\s,\Gr[H]^0,\rg}
  \Gr[H]^1\) satisfy \(g_1\cdot x_1\cdot h_1= g_2\cdot x_2\cdot
  h_2\) if and only if there is \((x_3,g_3,x_4)\in\Gr(X)^1\) with
  \(\s(g_1,x_1,h_1)=\rg(x_3,g_3,x_4)\) and \((g_1,x_1,h_1)\cdot
  (x_3,g_3,x_4) = (g_2,x_2,h_2)\).  First, \(\s(g_i,x_i,h_i) =
  x_i\), \(\rg(x_3,g_3,x_4)=x_3\) and \(\s((g_1,x_1,h_1)\cdot
  (x_3,g_3,x_4)) = \s(x_3,g_3,x_4)=x_4\), so we must have
  \(x_1=x_3\) and \(x_2=x_4\).  Moreover, the \(\Gr^1\)\nb-entry in
  \((g_1,x_1,h_1)\cdot (x_3,g_3,x_4)\) is \(g_1\cdot g_3\), so we
  must have \(g_3=g_1^{-1}\cdot g_2\).  Since
  \(\s(g_1^{-1})=\rg(g_1)=\rg(g_1\cdot x_1\cdot h_1)\) and
  \(\rg(g_2) = \rg(g_2\cdot x_2\cdot h_2)\), this product is
  well-defined if \(g_1\cdot x_1\cdot h_1 = g_2\cdot x_2\cdot h_2\).
  Furthermore, \(\rg(g_1^{-1}\cdot g_2)=\s(g_1)=\rg(x_1)\) and
  \(\s(g_1^{-1}\cdot g_2)=\s(g_2)=\rg(x_2)\), so that
  \((x_1,g_1^{-1}\cdot g_2,x_2)\in\Gr(X)^1\).  Unravelling the
  definition of~\(F_{\Act}\), we see that \((g_1,x_1,h_1)\cdot
  (x_1,g_1^{-1}\cdot g_2,x_2) = (g_2,x_2,h_2)\) if and only if
  \(g_1\cdot x_1\cdot h_1=g_2\cdot x_2\cdot h_2\).  This proves the
  natural isomorphism of bibundle functors \(\beta(\Act,\rg,F_{\Act})
  \cong \Act\).

  Next we start with an anafunctor \((X,p,F)\), take the
  associated bibundle functor
  \[
  \Act[Y]\defeq (\Gr^1\times_{\s,\Gr^0,p} X
  \times_{F^0,\Gr[H]^0,\rg} \Gr[H]^1)/\Gr(X),
  \]
  and construct an anafunctor from that.  This anafunctor
  contains the cover \(q\colon \Act[Y]\prto\Gr^0\) induced by the map
  \(\Gr^1\times_{\s,\Gr^0,p} X \times_{F^0,\Gr[H]^0,\rg} \Gr[H]^1\to
  \Gr^0\), \((g,x,h)\mapsto \rg(g)\), and a functor \(E\colon
  \Gr(\Act[Y])\to\Gr[H]\).  This functor is described elementwise by
  \(E^0([g,x,h]) = \s(h)\) for \(g\in\Gr^1\), \(h\in\Gr[H]^1\),
  \(x\in X\) with \(\s(g)=p(x)\), \(F^0(x)=\rg(h)\) and
  \[
  E^1([g_1,x_1,h_1],g,[g_2,x_2,h_2]) \defeq
  h_1^{-1}\cdot F^1(x_1,g_1^{-1}\cdot g\cdot g_2,x_2)\cdot h_2
  \]
  for \(g,g_1,g_2\in\Gr^1\), \(h_1,h_2\in\Gr[H]^1\), \(x_1,x_2\in
  X\) with \(\s(g_i)=p(x_i)\), \(F^0(x_i)=\rg(h_i)\) for \(i=1,2\)
  and \(\rg(g)=\rg(g_1)\), \(\s(g)=\rg(g_2)\); here \([g,x,h]\)
  stands for the image of \((g,x,h)\in \Gr^1\times_{\s,\Gr^0,p} X
  \times_{F^0,\Gr[H]^0,\rg} \Gr[H]^1\) under the quotient map
  to~\(\Act[Y]\).  The constructions above show that there is a
  unique functor~\(E\) with these properties.

  We describe a canonical natural transformation~\(\Psi\)
  from~\((\Act[Y],q,E)\) to~\((X,p,F)\); this is a map from
  \(X\times_{p,\Gr^0,q} \Act[Y]\) to~\(\Gr[H]^1\) with suitable
  properties.  Elementwise, \(\Psi\) is determined uniquely by
  \[
  \Psi(\tilde{x},[g,x,h]) = F^1(\tilde{x},g,x)\cdot h\colon
  \s(h) \xrightarrow{h} \rg(h) = F^0(x)
  \xrightarrow{F^1(\tilde{x},g,x)} F^0(\tilde{x})
  \]
  for \(\tilde{x},x\in X\), \(g\in\Gr^1\), \(h\in\Gr[H]^1\) with
  \(\s(g)=p(x)\), \(F^0(x)=\rg(h)\), \(p(\tilde{x})=\rg(g)\).  We
  must check that this is well-defined and that it gives a natural
  transformation.

  For well-definedness, we use that \(X \times_{p,\Gr^0,q}
  \Act[Y]\) is the orbit space of the \(\Gr(X)\)-action on
  \(X\times_{p,\Gr^0,\rg} \Gr^1 \times_{\s,\Gr^0,p} X
  \times_{F^0,\Gr[H]^0,\rg} \Gr[H]^1\) on the last three legs; this
  follows as in the construction of the left \(\Gr\)\nb-action on
  \(\Act[X]\times_{\Gr} \Act[Y]\) in the proof of
  Proposition~\ref{pro:bibundles_act}.  It is clear that~\(\Psi\) is
  well-defined as a map \(X\times_{p,\Gr^0,\rg} \Gr^1
  \times_{\s,\Gr^0,p} X \times_{F^0,\Gr[H]^0,\rg}
  \Gr[H]^1\to\Gr[H]^1\).  This map is \(\Gr(X)\)-invariant because
  \[
  F^1(\tilde{x},g_1\cdot g_2,x_2)\cdot F^1(x_1,g_2,x_2)^{-1}\cdot h
  = F^1(\tilde{x},g_1,x_1)\cdot h
  \]
  for \(\tilde{x},x_1,x_2\in X\), \(g_1,g_2\in\Gr^1\),
  \(h\in\Gr[H]^1\) with \(p(\tilde{x})=\rg(g_1)\),
  \(\s(g_1)=p(x_1)=\rg(g_2)\), \(F^0(x_1)=\rg(h)\),
  \(\s(g_2)=p(x_2)\), so that \((x_1,g_2,x_2)\in\Gr(X)\) and
  \((\tilde{x},g_1,x_1,h) \in X\times_{p,\Gr^0,\rg} \Gr^1
  \times_{\s,\Gr^0,p} X \times_{F^0,\Gr[H]^0,\rg} \Gr[H]^1\).

  Naturality of~\(\Psi\) follows because for
  \((\tilde{x}_i,g_i,x_i,h_i)\in X\times_{p,\Gr^0,\rg} \Gr^1
  \times_{\s,\Gr^0,p} X \times_{F^0,\Gr[H]^0,\rg} \Gr[H]^1\) for
  \(i=1,2\), \(g\in\Gr^1\), with \(\rg(g)=p(\tilde{x}_1)\),
  \(\s(g)=p(\tilde{x}_2)\), the following diagram in~\(\Gr[H]\)
  commutes:
  \[
  \begin{tikzpicture}
    \matrix[cd,column sep = 0pt] (m) {
      \s(h_1) &[1.5em] \rg(h_1) &[.5em] F^0(x_1) &[6.7em] F^0(\tilde{x}_1) \\
      \s(h_2) & \rg(h_2) & F^0(x_2) & F^0(\tilde{x}_2) \\
    };
    \begin{scope}[cdar]
      \draw (m-1-1) -- node {\(h_1\)} (m-1-2);
      \draw (m-2-1) -- node[swap] {\(h_2\)} (m-2-2);
      \draw (m-1-3) -- node {\(F^1(\tilde{x}_1,g_1,x_1)\)} (m-1-4);
      \draw (m-2-3) -- node[swap] {\(F^1(\tilde{x}_2,g_2,x_2)\)} (m-2-4);
      \draw (m-2-1) -- node {\(h_1^{-1}F^1(x_1,g_1^{-1}gg_2,x_2)h_2\)}
      (m-1-1);
      \draw (m-2-3) -- node[swap] {\(F^1(x_1,g_1^{-1}gg_2,x_2)\)} (m-1-3);
      \draw (m-2-4) -- node[swap] {\(F^1(\tilde{x_1},g,\tilde{x}_2)\)}
      (m-1-4);
    \end{scope}
    \draw[equ] (m-1-2) -- (m-1-3);
    \draw[equ] (m-2-2) -- (m-2-3);
  \end{tikzpicture}
  \]
  the two rows give~\(\Psi\) and the left and right vertical maps
  give \(E^1\) and~\(F^1\) for an arrow in \(\Gr(X\times_{\Gr^0}
  \Act[Y])\).

  We have now constructed an equivalence between the groupoids of
  bibundle functors and anafunctors from \(\Gr\) to~\(\Gr[H]\).
  To check that we have an equivalence of bicategories, it
  remains to show that the map from anafunctors to bibundle
  functors is compatible with composition of anafunctors up to
  natural \(\Gr,\Gr[H]\)-maps.  The identity (ana)functor on a
  groupoid~\(\Gr\) is clearly mapped to the unit bibundle functor
  on~\(\Gr\).

  The first step is to show that the composition of bibundle
  functors gives the usual composition for ordinary functors.  Let
  \(F_2\colon \Gr\to\Gr[H]\) and \(F_1\colon \Gr[H]\to\Gr[K]\) be
  functors.  Let \(\beta(F_2)\) and~\(\beta(F_1)\) be the associated
  bibundle functors.  We claim that \(\beta(F_1\circ F_2)\) is
  naturally isomorphic to \(\beta(F_2)\times_{\Gr[H]} \beta(F_1)\) as
  a \(\Gr,\Gr[K]\)-action.  The underlying objects are
  \(\Gr^0\times_{F_1^0\circ F_2^0,\Gr[K]^0,\rg} \Gr[K]^1\) for
  \(\beta(F_1\circ F_2)\) and
  \[
  (\Gr^0\times_{F_2^0,\Gr[H]^0,\rg} \Gr[H]^1)
  \times_{\Gr[H]} (\Gr[H]^0\times_{F_1^0,\Gr[K]^0,\rg} \Gr[K]^1)
  \]
  for \(\beta(F_2)\times_{\Gr[H]} \beta(F_1)\); the latter is the
  orbit space of a canonical \(\Gr[H]\)-action on
  \[
  (\Gr^0\times_{F_2^0,\Gr[H]^0,\rg} \Gr[H]^1)
  \times_{\Gr[H]^0} (\Gr[H]^0\times_{F_1^0,\Gr[K]^0,\rg} \Gr[K]^1)
  \cong \Gr^0\times_{F_2^0,\Gr[H]^0,\rg} \Gr[H]^1
  \times_{F_1^0\circ\s,\Gr[K]^0,\rg} \Gr[K]^1.
  \]
  The isomorphism is induced by the map
  \[
  \alpha\colon \Gr^0\times_{F_2^0,\Gr[H]^0,\rg} \Gr[H]^1
  \times_{F_1^0\circ\s,\Gr[K]^0,\rg} \Gr[K]^1\to
  \Gr^0\times_{F_1^0\circ F_2^0,\Gr[K]^0,\rg} \Gr[K]^1,\qquad
  (x,h,k)\mapsto (x,F_1^1(h)\cdot k),
  \]
  for \(x\in\Gr^0\), \(h\in\Gr[H]^1\), \(k\in\Gr[K]^1\) with
  \(F_2^0(x)= \rg(h)\), \(F_1^0(\s(h))=\s(F_1^1(h))=\rg(k)\).  Since
  \(\rg\colon \Gr[H]^1\prto\Gr[H]^0\) is a cover, so is the coordinate
  projection \(\Gr^0\times_{F_2^0,\Gr[H]^0,\rg}\Gr[H]^1 \prto \Gr^0\).
  The pull-back of this cover along the coordinate projection
  \(\Gr^0\times_{F_1^0\circ F_2^0,\Gr[K]^0,\rg} \Gr[K]^1\to\Gr^0\) is
  a cover as well, and this pull-back is exactly~\(\alpha\).
  Thus~\(\alpha\) is a cover.  We must also show that~\(\alpha\)
  induces an isomorphism
  \[
  (\Gr^0\times_{F_2^0,\Gr[H]^0,\rg} \Gr[H]^1)
  \times_{\Gr[H]} (\Gr[H]^0\times_{F_1^0,\Gr[K]^0,\rg} \Gr[K]^1)
  \to \Gr^0\times_{F_1^0\circ F_2^0,\Gr[K]^0,\rg} \Gr[K]^1.
  \]
  We show that~\(\alpha\) is the bundle projection of a principal
  \(\Gr[H]\)-bundle.  This means that if \((x_i,h_i,k_i)\in
  \Gr^0\times_{F_2^0,\Gr[H]^0,\rg} \Gr[H]^1
  \times_{F_1^0\circ\s,\Gr[K]^0,\rg} \Gr[K]^1\) for \(i=1,2\), then
  there is \(h\in\Gr[H]^1\) with
  \[
  (x_2,h_2,k_2)=
  (x_1,h_1,k_1)\cdot h \defeq
  (x_1,h_1\cdot h,F_1^1(h)^{-1}\cdot k_1)
  \]
  if and only if \(\alpha(x_1,h_1,k_1)=\alpha(x_2,h_2,k_2)\).
  Indeed, the only possible choice is \(h=h_1^{-1}\cdot h_2\), and
  this does the trick if and only if \(x_1=x_2\) and
  \(F_1^1(h_1)\cdot k_1= F_1^1(h_2)\cdot k_2\).  This provides an
  isomorphism
  \begin{equation}
    \label{eq:compose_functor_bibundle_functor}
    \beta(F_2)\times_{\Gr[H]} \beta(F_1) \cong \beta(F_1\circ F_2)
  \end{equation}
  for two functors \(F_1\) and~\(F_2\).

  Now consider anafunctors \((X,p,F)\) from~\(\Gr\) to~\(\Gr[H]\)
  and \((Y,q,E)\) from~\(\Gr[H]\) to~\(\Gr[K]\).  Their composition is
  the anafunctor \((X\times_{F^0,\Gr[H]^0,q} Y,p\circ\hat{q},
  E\circ\hat{F})\), where \(\hat{q}=\pr_1\colon
  X\times_{F^0,\Gr[H]^0,q} Y\to X\) and \(\hat{F}\colon
  \Gr(X\times_{F^0,\Gr[H]^0,q} Y) \to \Gr[H](Y)\) is induced by~\(F\).
  The bibundle functors associated to \((X,p,F)\) and \((Y,q,E)\) are
  \(\beta(F)\times_{\Gr(X)} \beta(p_*)^*\) and
  \(\beta(E)\times_{\Gr[H](Y)} \beta(q_*)^*\), respectively; here~\(^*\)
  means to exchange left and right actions for a bibundle equivalence.
  We have an equality of functors \(q_*\circ \hat{F} = F\circ
  \hat{q}_*\) by construction.  Hence the
  isomorphisms~\eqref{eq:compose_functor_bibundle_functor} above
  induce first an isomorphism
  \(\beta(\hat{q}_*)^{-1}\times_{\Gr(X\times_{\Gr[H]^0} Y)}
  \beta(\hat{F}) \cong \beta(F)\times_{\Gr[H]} \beta(q_*)^{-1}\) and then
  an isomorphism
  \begin{align*}
    \beta(X,p,F) \times_{\Gr[H]} \beta(Y,q,E)
    &\defeq
    \beta(p_*)^{-1} \times_{\Gr(X)} \beta(F)
    \times_{\Gr[H]} \beta(q_*)^{-1}
    \times_{\Gr[H](Y)} \beta(E)
    \\ &\cong
    \beta(p_*)^{-1} \times_{\Gr(X)} \beta(\hat{q}_*)^{-1}
    \times_{\Gr(X\times_{\Gr[H]^0} Y)} \beta(\hat{F})
    \times_{\Gr[H](Y)} \beta(E)
    \\ &\cong
    \beta((p\circ \hat{q})_*)^{-1}
    \times_{\Gr(X\times_{\Gr[H]^0} Y)} \beta(E\circ \hat{F})
    \\ &= \beta\bigl((Y,q,E)\circ (X,p,F)\bigr).
  \end{align*}
  Thus the compositions of anafunctors and bibundle functors agree
  up to the \(2\)\nb-arrows above.
  It remains to show that these isomorphisms of bibundle functors
  are natural with respect to natural transformations of
  anafunctors and that they satisfy the coherence conditions needed for
  a functor between bicategories
  (see~\cite{Leinster:Basic_Bicategories}).  All this is straightforward
  computation and left to the reader.
\end{proof}

\subsection{Decomposing bibundle actors}
\label{sec:decompose_bibundle_actor}

Let \(\Gr\) and~\(\Gr[H]\) be groupoids and let~\(\Act\) be a
bibundle actor from~\(\Gr\) to~\(\Gr[H]\).  We will
decompose~\(\Act\) into an ordinary actor from~\(\Gr\) to an
auxiliary groupoid~\(\Gr[K]\) and a bibundle equivalence
from~\(\Gr[K]\) to~\(\Gr[H]\):

\begin{proposition}
  \label{pro:decompose_bibundle_actor}
  Assume Assumptions \textup{\ref{assum:local_cover}}
  and~\textup{\ref{assum:covering_acts_basically_weak}}.  Any
  bibundle actor is a composite of an actor and a bibundle
  equivalence.
\end{proposition}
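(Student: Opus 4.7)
The plan is to construct, from the bibundle actor $\Act$ from $\Gr$ to $\Gr[H]$, an auxiliary ``gauge groupoid'' $\Gr[K]$ on the base $\Base \defeq \Act/\Gr[H]$ of the basic right $\Gr[H]$\nb-action, such that $\Act$ is naturally a bibundle equivalence from $\Gr[K]$ to $\Gr[H]$ and the residual left $\Gr$\nb-action on $\Base$ refines to an actor from $\Gr$ to $\Gr[K]$. The composite of this actor with this equivalence will then recover $\Act$ by the unitality of composition.

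First I would form $\Base \defeq \Act/\Gr[H]$ with its bundle projection $\bunp\colon \Act\prto\Base$; the left \(\Gr\)\nb-action on~\(\Act\) descends to~\(\Base\) by Proposition~\ref{pro:invariants_as_composite}. The groupoid~\(\Gr[K]\) is defined by \(\Gr[K]^0\defeq \Base\) and \(\Gr[K]^1 \defeq (\Act\times_{\s,\Gr[H]^0,\s}\Act)/\Gr[H]\), with \(\Gr[H]\) acting diagonally on the right. To see that this orbit space exists in~\(\Cat\), note that the coordinate projection \(\pr_1\colon \Act\times_{\s,\Gr[H]^0,\s}\Act\prto\Act\) is a cover (since \(\s\) is a cover) and is \(\Gr[H]\)\nb-equivariant for the diagonal action on the source and the right action on the target; since the target is basic, Proposition~\ref{pro:basic_assum_weak}.(1), which is equivalent to Assumption~\ref{assum:covering_acts_basically_weak}, makes the diagonal action basic. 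The range and source \(\Gr[K]^1\to\Base\) induced by \(\bunp\circ\pr_1\) and \(\bunp\circ\pr_2\) are covers by Proposition~\ref{pro:G-map_versus_base_map} (using Assumption~\ref{assum:local_cover}), and by choosing representatives with equal middle entries the multiplication \([(y_1,y_2)]\cdot[(y_2,y_3)]\defeq[(y_1,y_3)]\) is well-defined, with unit \([(y,y)]\) and inversion \([(y_1,y_2)]\mapsto[(y_2,y_1)]\); the groupoid axioms and the isomorphisms in Definition~\ref{def:groupoid} then follow routinely.

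Next I would exhibit \(\Act\) as a bibundle equivalence from \(\Gr[K]\) to \(\Gr[H]\). The right \(\Gr[H]\)\nb-action is already principal with bundle projection \(\bunp\colon \Act\prto\Base=\Gr[K]^0\). For the left \(\Gr[K]\)\nb-action, define \([(y_1,y_2)]\cdot x\defeq y_1\cdot h\), where \(h\in\Gr[H]^1\) is the unique arrow with \(y_2\cdot h=x\) provided by the principal \(\Gr[H]\)\nb-bundle structure of \(\Act\prto\Base\); this is well-defined on the quotient and commutes with the right \(\Gr[H]\)\nb-action. Principality of this left action with bundle projection \(\s\colon \Act\prto\Gr[H]^0\) amounts to the isomorphism \(\Gr[K]^1\times_{\s,\Base,\bunp}\Act\congto\Act\times_{\s,\Gr[H]^0,\s}\Act\), \(([(y_1,y_2)],x)\mapsto ([(y_1,y_2)]\cdot x,x)\), whose inverse is \((x_1,x_2)\mapsto([(x_1,x_2)],x_2)\).

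Then I would define the actor \(\Gr\to\Gr[K]\) as the left \(\Gr\)\nb-action on \(\Gr[K]^1\) given by \(g\cdot[(y_1,y_2)]\defeq [(g\cdot y_1,y_2)]\), anchored by the descended \(\Gr\)\nb-action on \(\Base\); well-definedness on the quotient uses that the left and right actions on \(\Act\) commute, and commutation with right multiplication in \(\Gr[K]^1\) is immediate from the definition. Finally, for the composition, the bibundle actor associated to this actor is just \(\Gr[K]^1\), so by Proposition~\ref{pro:compose_bibundle_unital} the composite \(\Gr[K]^1\times_{\Gr[K]}\Act\) is canonically isomorphic to \(\Act\) via \(([(y_1,y_2)],x)\mapsto [(y_1,y_2)]\cdot x\); a short check (writing \(x=y_2\cdot h\) so that the image is \(y_1\cdot h\)) shows that this isomorphism intertwines the \(\Gr\)\nb-action coming from the actor with the original left \(\Gr\)\nb-action on \(\Act\), completing the decomposition. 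The main obstacle is verifying that \(\Gr[K]\) is indeed a groupoid in \((\Cat,\covers)\), since this requires showing the orbit space \(\Gr[K]^1\) exists (via Assumption~\ref{assum:covering_acts_basically_weak}) and that the induced range and source maps are covers (via Assumption~\ref{assum:local_cover}); once \(\Gr[K]\) is in place, the rest is a matter of transcribing principality conditions and checking equivariance.
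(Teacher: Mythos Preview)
Your proposal is correct and follows essentially the same route as the paper: you build the gauge groupoid~\(\Gr[K]\) with \(\Gr[K]^0=\Act/\Gr[H]\) and \(\Gr[K]^1=(\Act\times_{\s,\Gr[H]^0,\s}\Act)/\Gr[H]\), make~\(\Act\) a \(\Gr[K],\Gr[H]\)-bibundle equivalence via the same formula, and define the actor by \(g\cdot[(y_1,y_2)]=[(g\cdot y_1,y_2)]\). The only cosmetic difference is that the paper invokes Proposition~\ref{pro:fibre-product_base_above} to identify \(\Gr[K]^1\times_{\s,\Gr[K]^0,\rg}\Gr[K]^1\) with \((\Act\times_{\s,\Gr[H]^0,\s}\Act\times_{\s,\Gr[H]^0,\s}\Act)/\Gr[H]\) before defining the multiplication, whereas you phrase this as ``choosing representatives with equal middle entries''; these amount to the same thing.
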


The groupoid~\(\Gr[K]\) is defined using only the right
\(\Gr[H]\)\nb-action on~\(\Act\), in such a way that~\(\Act\) is a
bibundle equivalence from~\(\Gr[K]\) to~\(\Gr\).  The construction
goes back to Ehresmann; for locally compact groupoids, this
construction is contained
in~\cite{Muhly-Renault-Williams:Equivalence}*{p.~5}.

\begin{proof}
  The fibre product \(\Act\times_{\s,\Gr[H]^0,\s}\Act\) exists and the
  coordinate projections
  \[
  \pr_1,\pr_2\colon
  \Act\times_{\s,\Gr[H]^0,\s}\Act\rightrightarrows\Act
  \]
  are covers because \(\s\colon \Act\prto\Gr[H]^0\) is a cover by
  assumption.  We let \(\Gr[K]^0\defeq \Act/\Gr[H]\) and
  \(\Gr[K]^1\defeq (\Act\times_{\s,\Gr[H]^0,\s}\Act)/\Gr[H]\),
  where~\(\Gr[H]\) acts diagonally on
  \(\Act\times_{\s,\Gr[H]^0,\s}\Act\), that is, \((x_1,x_2)\cdot
  h\defeq (x_1\cdot h,x_2\cdot h)\) if \(x_1,x_2\in\Act\),
  \(h\in\Gr[H]^1\) with \(\s(x_1)=\s(x_2)=\rg(h)\).
  Assumption~\ref{assum:covering_acts_basically_weak} ensures that
  this action is basic because \(\pr_1\) and~\(\pr_2\) are
  \(\Gr[H]\)\nb-maps and the \(\Gr[H]\)\nb-action on~\(\Act\) is
  basic.  The range and source maps \(\rg,\s\colon
  \Gr[K]^1\rightrightarrows\Gr[K]^0\) are induced by the coordinate
  projections \(\pr_1\) and~\(\pr_2\) above and are covers by
  Proposition~\ref{pro:G-map_versus_base_map}; here we need
  Assumption~\ref{assum:local_cover}.  Using
  Proposition~\ref{pro:fibre-product_base_above}, we may identify
  \begin{align*}
    \Gr[K]^1\times_{\s,\Gr[K]^0,\rg} \Gr[K]^1
    &\cong ((\Act\times_{\s,\Gr[H]^0,\s}\Act)\times_{\Act}
    (\Act\times_{\s,\Gr[H]^0,\s}\Act))/\Gr[H]\\
    &\cong (\Act\times_{\s,\Gr[H]^0,\s}\Act\times_{\s,\Gr[H]^0,\s}\Act)/\Gr[H].
  \end{align*}
  The multiplication map of~\(\Gr[K]^1\) is the map on the base
  induced by the \(\Gr[H]\)\nb-map \((\pr_1,\pr_3)\colon
  \Act\times_{\s,\Gr[H]^0,\s}\Act\times_{\s,\Gr[H]^0,\s}\Act \to
  \Act\times_{\s,\Gr[H]^0,\s}\Act\).
  Proposition~\ref{pro:fibre-product_base_above} also implies the
  isomorphisms \eqref{eq:groupoid_basicality_1}
  and~\eqref{eq:groupoid_basicality_2} for~\(\Gr[K]\), so
  that~\(\Gr[K]\) is a groupoid in~\((\Cat,\covers)\).

  We are going to construct a canonical actor from~\(\Gr\)
  to~\(\Gr[K]\).  Let the groupoid~\(\Gr\) act on
  \(\Act\times_{\s,\Gr[H]^0,\s}\Act\) by \(g\cdot (x_1,x_2)\defeq
  (g\cdot x_1,x_2)\) for all \(g\in\Gr^1\), \(x_1,x_2\in\Act\) with
  \(\s(g)=\rg(x_1)\) and \(\s(x_1)=\s(x_2)\); this is well-defined
  because \(\s(g\cdot x_1)=\s(x_1)\).  This \(\Gr\)\nb-action commutes
  with the diagonal \(\Gr[H]\)\nb-action and hence descends to a
  \(\Gr\)\nb-action on the \(\Gr[H]\)\nb-orbit space~\(\Gr[K]^1\) by
  Proposition~\ref{pro:invariants_as_composite}.  The resulting
  \(\Gr\)\nb-action on~\(\Gr[K]^1\) commutes with the right
  multiplication action because \(g\cdot [x_1,x_3]=[g\cdot x_1,x_3] =
  (g\cdot [x_1,x_2])\cdot [x_2,x_3]\) for all \(g\in\Gr^1\),
  \(x_1,x_2,x_3\in\Act\) with \(\s(g)=\rg(x_1)\),
  \(\s(x_1)=\s(x_2)=\s(x_3)\).

  We are going to construct a left \(\Gr[K]\)\nb-action on~\(\Act\)
  such that~\(\Act\) becomes a bibundle equivalence from~\(\Gr[K]\)
  to~\(\Gr[H]\).  Its anchor map is the bundle projection
  \(\bunp\colon \Act\prto\Act/\Gr[H]\).  The action
  \((\Act\times_{\s,\Gr[H]^0,\s}\Act)/\Gr[H]
  \times_{\bunp,\Act/\Gr[H],\bunp} \Act \to \Act\) is induced by the
  map \(\Act\times_{\s,\Gr[H]^0,\s}\Act
  \times_{\bunp,\Act/\Gr[H],\bunp} \Act \to \Act\) that is given
  elementwise by \((x_1,x_2)\cdot (x_2\cdot h)\defeq x_1\cdot h\) for
  all \(x_1,x_2\in\Act\), \(h\in\Gr[H]^1\) with
  \(\s(x_1)=\s(x_2)=\rg(h)\); this defines a map on
  \(\Act\times_{\s,\Gr[H]^0,\s}\Act \times_{\bunp,\Act/\Gr[H],\bunp}
  \Act\) because there is \(h\in\Gr[H]^1\) with \(x_3=x_2\cdot h\)
  whenever \(x_2,x_3\in\Act\) satisfy \(\bunp(x_2)=\bunp(x_3)\).  We
  have \((\Act\times_{\s,\Gr[H]^0,\s}\Act)/\Gr[H]
  \times_{\bunp,\Act/\Gr[H],\bunp} \Act \cong
  (\Act\times_{\s,\Gr[H]^0,\s}\Act \times_{\bunp,\Act/\Gr[H],\bunp}
  \Act)/\Gr[H]\), where \(\Gr[H]\) acts on
  \(\Act\times_{\s,\Gr[H]^0,\s}\Act \times_{\bunp,\Act/\Gr[H],\bunp}
  \Act\) by \((x_1,x_2,x_3)\cdot h\defeq (x_1\cdot h,x_2\cdot
  h,x_3)\).  The multiplication map defined above is
  \(\Gr[H]\)\nb-invariant for this action, hence it descends to a map
  \(\Gr[K]^1\times_{\s,\Act/\Gr[H],\bunp}\Act\to\Act\).

  Let \(\Gr[H]\)~act on \(\Gr[K]^1\times_{\s,\Gr[K]^0,\bunp} \Act\) by
  \((k,x)\cdot h\defeq (k,x\cdot h)\).  This action is basic with
  \[
  (\Gr[K]^1\times_{\s,\Gr[K]^0,\bunp} \Act)/\Gr[H]
  \cong \Gr[K]^1\times_{\s,\Gr[K]^0,\bunp} (\Act/\Gr[H])
  \cong \Gr[K]^1,
  \]
  by Remark~\ref{rem:induced_orbit_space}.  Thus the
  \(\Gr[H]\)\nb-map
  \begin{equation}
    \label{eq:K-action_on_X}
    \Gr[K]^1\times_{\s,\Gr[K]^0,\bunp} \Act \to \Act\times_{\s,\Gr[H]^0,\s}\Act,
    \qquad (k,x)\mapsto (k\cdot x,x),
  \end{equation}
  induces an isomorphism \((\Gr[K]^1\times_{\s,\Gr[K]^0,\bunp}
  \Act)/\Gr[H] \cong (\Act\times_{\s,\Gr[H]^0,\s}\Act)/\Gr[H]\) on the
  bases of principal \(\Gr[H]\)\nb-bundles.  Hence the map
  in~\eqref{eq:K-action_on_X} is an isomorphism as well by
  Proposition~\ref{pro:G-map_versus_base_map}.  Thus \(\s\colon
  \Act\prto\Gr[H]^0\) and the \(\Gr[K]\)\nb-action on~\(\Act\) form a
  principal \(\Gr[K]\)\nb-bundle.  Thus we have turned~\(\Act\) into a
  bibundle equivalence from~\(\Gr[K]\) to~\(\Gr[H]\).

  The actor from~\(\Gr\) to~\(\Gr[K]\) is also a bibundle actor.  Its
  composite with the bibundle equivalence~\(\Act\) from~\(\Gr[K]\)
  to~\(\Gr[H]\) is~\(\Act\) as a right \(\Gr[H]\)\nb-action because
  \(\Gr[K]^1\times_{\Gr[K]} \Act \cong \Act\) for any bibundle
  actor~\(\Act\) from~\(\Gr[K]\) to~\(\Gr[H]\).  The induced action
  of~\(\Gr\) on the composite is the given one because \(g\cdot
  [x,x]\cdot x= [g\cdot x,x]\cdot x= g\cdot x\) for all \(g\in\Gr^1\),
  \(x\in\Act\), where we interpret \([x,x]\in\Gr[K]^1\).  This shows
  that the actor and the bibundle equivalence constructed above
  compose to the given bibundle actor~\(\Act\).
\end{proof}

We conclude that the category of bibundle actors is the smallest one
that contains both bibundle equivalences and actors and where the
composition is given by~\(\times_{\Gr[H]}\).

Now let~\(\Act\) be a covering bibundle functor.  Equivalently,
\(\Act\) is a bibundle actor and the range map induces an
isomorphism \(\Act/\Gr[H]\congto \Gr^0\).  In the above
construction, this means that \(\Gr[K]^0=\Gr^0\), so that the actor
from~\(\Gr\) to~\(\Gr[K]\) acts identically on objects.  Such actors
are exactly the ones that are also functors, by
Proposition~\ref{pro:actor_as_functor}.

\subsection{The symmetric imprimitivity theorem}
\label{sec:symmetric_imprimitivity}

\begin{theorem}
  \label{the:symmetric_imprimitivity}
  Let \(\Gr\) and~\(\Gr[H]\) be groupoids and~\(\Act\) a
  \(\Gr,\Gr[H]\)-bibundle with basic actions of \(\Gr\)
  and~\(\Gr[H]\).  This induces a left action of \(\Gr\)
  on~\(\Act/\Gr[H]\) and a right action of~\(\Gr[H]\)
  on~\(\Gr\backslash\Act\), and~\(\Act\) becomes a bibundle
  equivalence from \(\Gr\ltimes(\Act/\Gr[H])\) to
  \((\Gr\backslash\Act)\rtimes\Gr[H]\).
\end{theorem}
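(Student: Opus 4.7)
The plan is to construct the bibundle equivalence in three stages: first produce the induced actions and form the two transformation groupoids; second lift the original actions on~\(\Act\) to actions of these transformation groupoids, making~\(\Act\) a bibundle over them; third reduce both principality conditions to the hypothesis that the \(\Gr\)- and \(\Gr[H]\)-actions on~\(\Act\) are basic.

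For the first step, the right \(\Gr[H]\)-action being basic provides a cover \(\bunp_{\Gr[H]}\colon \Act\prto\Act/\Gr[H]\) by Lemma~\ref{lem:principal_bundle}, and Proposition~\ref{pro:invariants_as_composite} equips~\(\Act/\Gr[H]\) with an induced left \(\Gr\)-action (which applies because the left \(\Gr\)-action commutes with the right \(\Gr[H]\)-action); symmetrically for \(\bunp_{\Gr}\colon \Act\prto\Gr\backslash\Act\). The induced anchor maps make the transformation groupoids \(\Gr\ltimes(\Act/\Gr[H])\) and \((\Gr\backslash\Act)\rtimes\Gr[H]\) well-defined groupoids in~\((\Cat,\covers)\), with the required range and source covers inherited from the covers \(\rg\) and~\(\s\) of \(\Gr\) and~\(\Gr[H]\).

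For the second step, the map \(\bunp_{\Gr[H]}\colon \Act\to\Act/\Gr[H]\) is \(\Gr\)-equivariant by construction of the induced action, so the left-action analogue of Proposition~\ref{pro:transformation_groupoid_action} yields a left action of \(\Gr\ltimes(\Act/\Gr[H])\) on~\(\Act\) with anchor~\(\bunp_{\Gr[H]}\), given elementwise by \((g,[y])\cdot x\defeq g\cdot x\) whenever \([y]=\bunp_{\Gr[H]}(x)\); symmetrically one gets a right \((\Gr\backslash\Act)\rtimes\Gr[H]\)-action on~\(\Act\) with anchor~\(\bunp_{\Gr}\), given by \(x\cdot([z],h)\defeq x\cdot h\). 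These two new actions commute because the original \(\Gr\)- and \(\Gr[H]\)-actions commute, so~\(\Act\) becomes a bibundle for the two new groupoids with two anchors that are both covers.

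For the third step, I would verify the two principality isomorphisms. For the left action, consider the map
\[
(\Gr\ltimes(\Act/\Gr[H]))^1 \times_{\s,\Act/\Gr[H],\bunp_{\Gr[H]}} \Act
\to
\Act \times_{\bunp_{\Gr},\Gr\backslash\Act,\bunp_{\Gr}} \Act,
\quad
((g,[y]),x)\mapsto(g\cdot x,x).
\]
The fibre-product constraint \([y]=\bunp_{\Gr[H]}(x)\) forces \([y]=[x]\), giving a canonical identification of the domain with \(\Gr^1\times_{\s,\Gr^0,\rg}\Act\) via \((g,x)\mapsto((g,[x]),x)\); under this identification the map becomes \((g,x)\mapsto(g\cdot x,x)\), which is exactly the principality isomorphism for the basic left \(\Gr\)-action on~\(\Act\) with bundle projection~\(\bunp_{\Gr}\). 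The right-action principality reduces symmetrically to principality of the basic right \(\Gr[H]\)-action on~\(\Act\). The only real obstacle is bookkeeping: one must track the various anchor maps and canonical fibre-product isomorphisms carefully enough that the transformation-groupoid principality really does reduce to the underlying groupoid principality. Once this identification is set up, each verification is immediate from the hypotheses.
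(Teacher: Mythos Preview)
Your proposal is correct and follows essentially the same approach as the paper: build the induced actions via Proposition~\ref{pro:invariants_as_composite}, lift to transformation-groupoid actions via Proposition~\ref{pro:transformation_groupoid_action}, and then reduce principality for the transformation groupoids to principality for \(\Gr\) and~\(\Gr[H]\). The only packaging difference is that where you explicitly identify the shear map for \(\Gr\ltimes(\Act/\Gr[H])\) with the shear map for~\(\Gr\) via the fibre-product isomorphism \((\Gr\ltimes(\Act/\Gr[H]))^1 \times_{\Act/\Gr[H]} \Act \cong \Gr^1\times_{\Gr^0}\Act\), the paper instead cites Corollary~\ref{cor:basic_action_trafo_gr} and Lemma~\ref{lem:orbit_space_trafo_gr}, which encapsulate exactly this identification.
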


\begin{proof}
  The induced \(\Gr\)\nb-action on~\(\Act/\Gr[H]\) exists by
  Proposition~\ref{pro:invariants_as_composite}.  The bundle
  projection \(\s'\colon \Act\prto\Act/\Gr[H]\) is a \(\Gr\)\nb-map.
  Hence the \(\Gr\)\nb-action on~\(\Act\) and~\(\s'\) combine to a
  left action of \(\Gr\ltimes(\Act/\Gr[H])\) on~\(\Act\) with anchor
  map~\(\s'\) by
  Proposition~\ref{pro:transformation_groupoid_action}.  This left
  action is basic with the same orbit space~\(\Gr\backslash\Act\) by
  Corollary~\ref{cor:basic_action_trafo_gr} and
  Lemma~\ref{lem:orbit_space_trafo_gr} because the \(\Gr\)\nb-action
  on~\(\Act\) is basic.  Exchanging left and right, the same
  arguments give an \(\Gr[H]\)\nb-action
  on~\(\Gr\backslash\Act\) and a basic right action of
  \((\Gr\backslash\Act)\rtimes\Gr[H]\) on~\(\Act\) with anchor map
  \(\rg'\colon \Act\prto\Gr\backslash\Act\) and orbit
  space~\(\Act/\Gr[H]\).  The actions of \(\Gr\ltimes(\Act/\Gr[H])\)
  and \((\Gr\backslash\Act)\rtimes\Gr[H]\) on~\(\Act\) commute
  because the actions of \(\Gr\) and~\(\Gr[H]\) commute, \(\s'\) is
  \(\Gr[H]\)\nb-invariant, and~\(\rg'\) is \(\Gr\)\nb-invariant.
  Hence they form a bibundle equivalence.
\end{proof}

This theorem generalises the symmetric imprimitivity theorem for
actions of locally compact groups by Green and
Rieffel~\cite{Rieffel:Applications_Morita} and provides many
important examples of bibundle equivalences between transformation
groupoids.

\begin{example}
  \label{exa:imprimitivity_groups}
  Assume Assumption~\ref{assum:final} and let~\(\Gr\) be a group
  in~\(\Cat\).  Let \(\Gr[H]\hookrightarrow\Gr\)
  and~\(\Gr[K]\hookrightarrow\Gr\) be ``closed subgroups''
  of~\(\Gr\); we mean by this that the restrictions of the
  multiplication actions on~\(\Gr\) to \(\Gr[H]\) and~\(\Gr[K]\) are
  basic.  Since the left and right multiplication actions commute,
  \(\Gr\) is an \(\Gr[H],\Gr[K]\)-bibundle.  We get an induced
  bibundle equivalence \(\Gr[H]\ltimes (\Gr/\Gr[K]) \sim
  (\Gr[H]\backslash\Gr) \rtimes \Gr[K]\).
\end{example}

\subsection{Characterising composites of bibundle functors}
\label{sec:characterise_composite}

\begin{proposition}
  \label{pro:characterise_composition}
  Let \(\Gr\), \(\Gr[H]\), \(\Gr[K]\) be groupoids in~\(\Cat\).  Let
  \(\Act\colon \Gr\to\Gr[H]\), \(\Act[Y]\colon \Gr[H]\to\Gr[K]\), and
  \(\Act[W]\colon \Gr\to\Gr[K]\) be bibundle functors.  Then
  isomorphisms \(\Act\times_{\Gr[H]}\Act[Y] \congto\Act[W]\) are in
  canonical bijection with \(\Gr,\Gr[K]\)-maps \(\mul\colon
  \Act\times_{\s,\Gr[H]^0,\rg} \Act[Y] \to \Act[W]\) with
  \(\mul(x\cdot h,y)=\mul(x,h\cdot y)\) for all \(x\in\Act\), \(h\in
  \Gr[H]^1\), \(y\in\Act[Y]\) with \(\s(x)=\rg(h)\),
  \(\s(h)=\rg(y)\).  If~\(\mul\) is such a map, then~\(\mul\) is a
  cover and the following map is an isomorphism:
  \begin{equation}
    \label{eq:compose_bibundle_functor_basicality}
    (\pr_1,\mul)\colon \Act\times_{\s,\Gr[H]^0,\rg} \Act[Y]
    \to \Act\times_{\rg,\Gr^0,\rg} \Act[W],\qquad
    (x,y)\mapsto (x,\mul(x,y)).
  \end{equation}
\end{proposition}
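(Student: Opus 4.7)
The plan is to establish the bijection by descent from the quotient cover $\pi\colon \Act\times_{\s,\Gr[H]^0,\rg}\Act[Y] \prto \Act\times_{\Gr[H]}\Act[Y]$, and then to deduce the two additional properties by reducing to Proposition~\ref{pro:G-map_versus_base_map} in suitable principal-bundle settings.

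For the correspondence, given an isomorphism $\varphi\colon \Act\times_{\Gr[H]}\Act[Y] \congto \Act[W]$ I would set $\mul\defeq \varphi\circ\pi$; this is a $\Gr,\Gr[K]$-map, and balancedness follows from the identity $(x,h\cdot y)\cdot h = (x\cdot h, y)$ in the diagonal $\Gr[H]$-action, so both sides lie in the same $\pi$-fibre. Conversely, given $\mul$ with the balancedness property, I would first check that $\mul$ is invariant under the diagonal $\Gr[H]$-action, because $\mul((x,y)\cdot h) = \mul(x\cdot h, h^{-1}\cdot y) = \mul(x, h\cdot h^{-1}\cdot y) = \mul(x,y)$. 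Since $\pi$ is the projection of a principal $\Gr[H]$-bundle (see the proof of Proposition~\ref{pro:compose_bibundles}) and the pretopology is subcanonical, $\pi$ is the coequaliser of the two maps coming from this diagonal action, so $\mul$ factors uniquely as $\mul=\tilde\mul\circ\pi$ for a $\Gr,\Gr[K]$-map $\tilde\mul\colon \Act\times_{\Gr[H]}\Act[Y]\to \Act[W]$.

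The decisive step will be to show that $\tilde\mul$ is automatically an isomorphism. Both $\Act\times_{\Gr[H]}\Act[Y]$ and $\Act[W]$ are bibundle functors $\Gr\to\Gr[K]$, so each is a principal $\Gr[K]$-bundle whose bundle projection is the left anchor map to $\Gr^0$. As a $\Gr[K]$-equivariant map between these principal bundles, $\tilde\mul$ induces a map $\tilde\mul/\Gr[K]\colon \Gr^0\to\Gr^0$ on the bases; the $\Gr$-equivariance of $\tilde\mul$ forces it to intertwine the two left anchors, and this pins the base map down to be $\id_{\Gr^0}$. Proposition~\ref{pro:G-map_versus_base_map} then makes $\tilde\mul$ an isomorphism. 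Uniqueness of the factorisation through $\pi$ shows the two constructions $\varphi\mapsto\mul$ and $\mul\mapsto\tilde\mul$ are mutually inverse.

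For the final assertions, $\mul=\tilde\mul\circ\pi$ is a cover as the composite of a cover with an isomorphism. To see that $(\pr_1,\mul)$ in~\eqref{eq:compose_bibundle_functor_basicality} is an isomorphism, I would factor it as $(\id_{\Act}\times\tilde\mul)\circ(\pr_1,\pi)$, where $(\pr_1,\pi)\colon \Act\times_{\s,\Gr[H]^0,\rg}\Act[Y] \to \Act\times_{\rg,\Gr^0,\rg}(\Act\times_{\Gr[H]}\Act[Y])$ sends $(x,y)\mapsto(x,[x,y])$. The first factor is obviously an isomorphism; the second is a $\Gr[H]$-equivariant map between two principal $\Gr[H]$-bundles over $\Act\times_{\Gr[H]}\Act[Y]$ --- the domain with the diagonal action, the codomain as the pull-back of the principal $\Gr[H]$-bundle $\Act\prto\Gr^0$ along $\rg\colon \Act\times_{\Gr[H]}\Act[Y]\to\Gr^0$ --- and its induced map on bases is the identity, so Proposition~\ref{pro:G-map_versus_base_map} again concludes. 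The only delicate point is the bookkeeping of which principal-bundle structure is in play at each step; once that is set up correctly, both non-trivial conclusions follow mechanically from the locality of isomorphisms.
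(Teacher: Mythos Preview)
Your proof is correct and follows essentially the same strategy as the paper's: both rely on Proposition~\ref{pro:G-map_versus_base_map} applied to the principal \(\Gr[H]\)-bundle structures on \(\Act\times_{\s,\Gr[H]^0,\rg}\Act[Y]\) and on the pull-back of \(\Act\prto\Gr^0\). The only cosmetic differences are that the paper cites Theorem~\ref{the:bibundle_two-categories} for the automatic invertibility of \(\tilde\mul\) (where you inline that argument), and the paper treats \((\pr_1,\mul)\) as a single \(\Gr[H]\)-map between principal bundles with base map \(\tilde\mul\), rather than factoring through the intermediate space \(\Act\times_{\rg,\Gr^0,\rg}(\Act\times_{\Gr[H]}\Act[Y])\).
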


\begin{proof}
  Any \(\Gr,\Gr[K]\)\nb-map \(\Act\times_{\Gr[H]}\Act[Y]\to\Act[W]\)
  is an isomorphism by Theorem~\ref{the:bibundle_two-categories}.
  Since \(\Act\times_{\Gr[H]}\Act[Y]\) is the orbit space of an
  \(\Gr[H]\)\nb-action on \(\Act\times_{\s,\Gr[H]^0,\rg}\Act[Y]\), a
  \(\Gr,\Gr[K]\)\nb-map \(\Act\times_{\Gr[H]}\Act[Y]\to\Act[W]\) is
  equivalent to an \(\Gr[H]\)\nb-invariant \(\Gr,\Gr[K]\)\nb-map
  \(\Act\times_{\s,\Gr[H]^0,\rg}\Act[Y]\to\Act[W]\).  It remains to
  show that~\eqref{eq:compose_bibundle_functor_basicality} is always
  an isomorphism.

  Since~\(\Act\) is a principal \(\Gr[H]\)\nb-bundle over~\(\Gr^0\),
  its pull-back \(\Act\times_{\rg,\Gr^0,\rg} \Act[W]\) along
  \(\rg\colon \Act[W]\prto\Gr^0\) is a principal
  \(\Gr[H]\)\nb-bundle over~\(\Act[W]\) with bundle
  projection~\(\pr_2\) by Proposition~\ref{pro:pull-back_principal};
  here~\(\Gr[H]\) acts on
  \(\Act\times_{\rg,\Gr^0,\rg} \Act[W]\) by \(\s(x,w)=\s(x)\) and
  \((x,w)\cdot h\defeq (x\cdot h,w)\) for all \(x\in\Act\),
  \(w\in\Act[W]\), \(h\in\Gr[H]^1\) with \(\rg(x)=\rg(w)\),
  \(\s(x)=\rg(h)\).  By definition of the composition,
  \(\Act\times_{\s,\Gr[H]^0,\rg}\Act[Y]\prto\Act\times_{\Gr[H]}\Act[Y]\)
  is a principal \(\Gr[H]\)\nb-bundle as well.  We compute that
  \((\pr_1,\mul)\) is an \(\Gr[H]\)\nb-map:
  \begin{multline*}
    (\pr_1,\mul)((x,y)\cdot h)
    = (\pr_1,\mul)(x\cdot h,h^{-1}\cdot y)
    \\= (x\cdot h,\mul(x\cdot h,h^{-1}\cdot y))
    = (x\cdot h,\mul(x,y))
    = (x,\mul(x,y))\cdot h
  \end{multline*}
  for all \(x\in\Act\), \(y\in\Act[Y]\), \(h\in\Gr[H]^1\) with
  \(\s(x)=\rg(y)=\rg(h)\).
  Proposition~\ref{pro:G-map_versus_base_map} shows
  that~\eqref{eq:compose_bibundle_functor_basicality} is an
  isomorphism if and only if the induced map
  \(\Act\times_{\Gr[H]}\Act[Y]\to\Act[W]\) is an isomorphism.

  Since
  \(\rg\colon \Act\prto\Gr^0\) is a cover, so is the induced map
  \(\pr_2\colon \Act\times_{\rg,\Gr^0,\rg}\Act[W]\prto\Act[W]\).
  Composing this with the
  isomorphism~\eqref{eq:compose_bibundle_functor_basicality} shows
  that~\(\mul\) is a cover.
\end{proof}

We have seen some special cases of the
isomorphism~\eqref{eq:compose_bibundle_functor_basicality} before.
First, the condition \eqref{eq:groupoid_basicality_2} in the
definition of a groupoid is equivalent to \(\Gr^1\times_{\Gr}
\Gr^1\cong \Gr^1\).  Secondly, the condition
\eqref{eq:principal_bundle} for a principal bundle is equivalent to
\(\Act\times_{\Gr} \Gr^1 \cong \Act\) for a principal
\(\Gr\)\nb-bundle \(\Act\prto\Base\) viewed as a bibundle functor
from~\(\Base\) to~\(\Gr\).  We will use this similarity to simplify
the description of the quasi-category of groupoids and bibundle
functors in Section~\ref{sec:quasi-category_bibundle_functors}.

Condition~\((3''')\) in Definition~\ref{def:action} is also similar
and corresponds to a variant of
Proposition~\ref{pro:characterise_composition} where we characterise
the composite of a covering bibundle functor with a groupoid action.
This variant is also true, and the isomorphism in~\((3''')\)
in Definition~\ref{def:action} is equivalent to \(\Gr^1\times_{\Gr}
\Act\cong\Act\) for any left \(\Gr\)\nb-action~\(\Act\).

\begin{remark}
  \label{rem:characterise_composition}
  The proof of Proposition~\ref{pro:characterise_composition} does
  not use Assumption~\ref{assum:covering_acts_basically}.  For any
  subcanonical pretopology~\((\Cat,\covers)\), if \(\Gr\),
  \(\Gr[H]\), \(\Gr[K]\) are groupoids, \(\Act\colon \Gr\to\Gr[H]\),
  \(\Act[Y]\colon \Gr[H]\to\Gr[K]\) and \(\Act[W]\colon
  \Gr\to\Gr[K]\) bibundle functors, and \(\mul\colon
  \Act\times_{\s,\Gr[H]^0,\rg}\Act[Y]\to\Act[W]\) an
  \(\Gr[H]\)\nb-invariant \(\Gr,\Gr[K]\)-map and a cover such
  that~\eqref{eq:compose_bibundle_functor_basicality} is an
  isomorphism, then \(\mul\) is the bundle projection of a principal
  \(\Gr[H]\)\nb-bundle, so that \(\Act\times_{\Gr[H]}\Act[Y]\)
  exists and is isomorphic to~\(\Act[W]\).
\end{remark}

\subsection{Locality of basic actions}
\label{sec:basic_local}

We now reformulate Assumption~\ref{assum:covering_acts_basically}
in a way similar to the locality of principal bundles in
Proposition~\ref{pro:principal_is_local}.

Let~\(\Gr\) be a groupoid, \(\Act\) a right \(\Gr\)\nb-action,
\(f\colon\tilde{\Base}\prto\Base\) a cover,
\(\Base,\tilde{\Base}\inOb\Cat\), and let \(\varphi\colon
\Act\to\Base\) be some \(\Gr\)\nb-invariant map.  Then we may pull
back~\(\Act\) along~\(f\) to a \(\Gr\)\nb-action~\(\tilde{\Act}\): let
\(\tilde{\Act}\defeq \tilde{\Base}\times_{f,\Base,\varphi} \Act\)
(this exists because~\(f\) is a cover) and define the
\(\Gr\)\nb-action on~\(\tilde{\Act}\) by \(\s(z,x)\defeq \s(x)\) and
\((z,x)\cdot g\defeq (z,x\cdot g)\) for all \(z\in \tilde{\Base}\),
\(x\in\Act\), \(g\in\Gr^1\) with \(f(z)=\varphi(x)\),
\(\s(x)=\rg(g)\).

\begin{proposition}
  \label{pro:basic_action_local}
  In the above situation, if~\(\Act\) is a basic \(\Gr\)\nb-action,
  then so is~\(\tilde{\Act}\).  Under
  Assumption~\textup{\ref{assum:two-three}},
  Assumption~\textup{\ref{assum:covering_acts_basically}} is
  equivalent to the following converse statement: in the above
  situation, \(\Act\)~is basic if~\(\tilde{\Act}\) is.
\end{proposition}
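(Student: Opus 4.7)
The plan is to handle Part~1 as a direct consequence of Proposition~\ref{pro:pull-back_principal}, and then to prove both implications of Part~2 separately, with the hard direction requiring effective descent via Assumption~\ref{assum:covering_acts_basically} combined with Assumption~\ref{assum:two-three} to upgrade the descended map to a cover.

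\textbf{Part 1.} If $\Act$ is basic with bundle projection $\bunp_\Act\colon \Act\prto \Act/\Gr$, then the $\Gr$-invariant map $\varphi$ factors as $\bar\varphi\circ\bunp_\Act$ through the orbit space, and $\tilde\Act$ is naturally isomorphic to the pullback of the principal $\Gr$-bundle $(\Act,\bunp_\Act)$ along the map $\tilde\Base\times_{f,\Base,\bar\varphi}(\Act/\Gr)\to \Act/\Gr$. Proposition~\ref{pro:pull-back_principal} then gives that $\tilde\Act$ is principal, hence basic.

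\textbf{Part 2, the converse statement implies Assumption~\ref{assum:covering_acts_basically}.} Given a cover $\bunp\colon \Act\prto \Base$ with \v{C}ech groupoid $\Gr$ and a $\Gr$-action $\Act[Y]$, I apply the converse statement with $f=\bunp$ and $\varphi=\bunp\circ \s\colon \Act[Y]\to \Base$ (which is $\Gr$-invariant). The pullback $\tilde{\Act[Y]} = \Act\times_{\bunp,\Base,\bunp\circ\s}\Act[Y]$ is canonically isomorphic in~$\Cat$ to $\Act[Y]\times_{\s,\Act,\rg}\Gr^1$ via $(z,y)\leftrightarrow(y,(\s(y),z))$, and under this isomorphism the map
\[
\psi\colon \tilde{\Act[Y]}\to \Act[Y],\qquad (z,y)\mapsto y\cdot (\s(y),z),
\]
becomes the multiplication map $\mul$, a cover by Definition and Lemma~\ref{def:action}(3''). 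A direct check shows $\psi$ is $\Gr$-invariant and that the principality isomorphism holds for $(\tilde{\Act[Y]},\psi)$: the unique $g$ with $(z,y_1)\cdot g=(z,y_2)$ is the \v{C}ech arrow $g=(\s(y_1),\s(y_2))$. Hence $\tilde{\Act[Y]}$ is basic, and the converse gives that $\Act[Y]$ is basic.

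\textbf{Part 2, Assumption~\ref{assum:covering_acts_basically} implies the converse statement.} Let $\tilde\Act$ be basic with bundle projection $\tilde\bunp\colon \tilde\Act\prto \tilde\Base[W]$. The \v{C}ech groupoid $\Gr_f$ of $f$ acts on $\tilde\Act=\tilde\Base\times_\Base\Act$ on the first factor, with $\pr_2\colon \tilde\Act\prto \Act$ as its bundle projection (Example~\ref{exa:covering_groupoid_principal}); this action commutes with the $\Gr$-action, so by Proposition~\ref{pro:invariants_as_composite} it descends to a $\Gr_f$-action on $\tilde\Base[W]$ with anchor $\bar{\pr_1}\colon \tilde\Base[W]\to \tilde\Base$. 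By Assumption~\ref{assum:covering_acts_basically} applied to $\Gr_f$, this action is basic, yielding $\Base[W]\defeq \tilde\Base[W]/\Gr_f$ with cover $q\colon \tilde\Base[W]\prto \Base[W]$. The composite $q\circ\tilde\bunp$ is $\Gr_f$-invariant, so factors uniquely as $\bunp'\circ\pr_2$ for some $\bunp'\colon \Act\to \Base[W]$. By Assumption~\ref{assum:two-three} applied to the cover $q\circ\tilde\bunp$ factoring through the cover $\pr_2$, $\bunp'$ is a cover. The uniqueness clause of Proposition~\ref{pro:pull-back_principal} applied to the principal $\Gr_f$-bundle $\tilde\Base[W]\prto \Base[W]$ pulled back along $\bunp'$, together with the $\Gr_f$-map $\tilde\bunp\colon\tilde\Act\to\tilde\Base[W]$, identifies $\tilde\Act\cong \tilde\Base[W]\times_{\Base[W]}\Act$ in a manner that is also $\Gr$-equivariant (since the iso sends $\tilde x\mapsto (\tilde\bunp(\tilde x),\pr_2(\tilde x))$ and both components are $\Gr$-invariant, respectively $\Gr$-equivariant). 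This exhibits $\tilde\Act$ as the pullback of the tentative principal bundle data $(\Act,\Base[W],\s,\mul,\bunp')$ along the cover $q$; since $\tilde\Act$ is a principal $\Gr$-bundle, Proposition~\ref{pro:principal_is_local} gives that $(\Act,\bunp')$ is a principal $\Gr$-bundle, so $\Act$ is basic.

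The main obstacle is the last direction: organising the interacting descents along $\Gr$ and along $\Gr_f$ so as to produce $\Base[W]$, and then verifying that the natural comparison $\tilde\Act\to \tilde\Base[W]\times_{\Base[W]}\Act$ is a $\Gr$-equivariant isomorphism, which is what allows Proposition~\ref{pro:principal_is_local} to conclude principality of the data $(\Act,\bunp')$.
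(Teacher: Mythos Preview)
Your proof is correct.  Parts~1 and the implication ``converse $\Rightarrow$ Assumption~\ref{assum:covering_acts_basically}'' essentially match the paper's argument (the paper writes the map you call~$\psi$ as $q(x,z)=x\cdot(\s(x),z)$ and verifies the same principality isomorphism).

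For the implication ``Assumption~\ref{assum:covering_acts_basically} $\Rightarrow$ converse'' you take a genuinely different route.  The paper passes to the transformation groupoid $\Act\rtimes\Gr$: it observes that $\tilde{\Act}$, with its basic $\Gr$-action and the covering $\Gr$-map $\pr_2$, is a bibundle actor from the \v{C}ech groupoid~$\Gr_f$ to $\Act\rtimes\Gr$; it then invokes Proposition~\ref{pro:compose_bibundles} (which already packages Assumptions \ref{assum:two-three} and~\ref{assum:covering_acts_basically}) to compose with the bibundle equivalence~$\tilde{\Base}$ from~$\Base$ to~$\Gr_f$, obtaining $\Gr_f\backslash\tilde{\Act}\cong\Act$ as a bibundle actor and hence a basic $\Act\rtimes\Gr$-action, whence basic as a $\Gr$-action by Corollary~\ref{cor:basic_action_trafo_gr}.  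Your argument instead unpacks this double quotient directly: you form $\Base[W]=(\tilde{\Act}/\Gr)/\Gr_f$ by hand, use Assumption~\ref{assum:two-three} to see that the induced $\bunp'\colon\Act\to\Base[W]$ is a cover, identify $\tilde{\Act}$ with the pull-back along~$q$ via the uniqueness clause of Proposition~\ref{pro:pull-back_principal}, and conclude with Proposition~\ref{pro:principal_is_local}.  Your approach is more elementary---it avoids citing the bibundle-actor composition machinery---while the paper's is more conceptual, since this proposition is precisely the statement that composition of bibundle actors with equivalences preserves bibundle actors, so it makes sense to reduce to that.  Two small points worth spelling out in your write-up: the principality of the $\Gr_f$-action on~$\tilde{\Act}$ follows from Proposition~\ref{pro:pull-back_principal} (pull-back of $\tilde{\Base}\prto\Base$ along~$\varphi$) rather than Example~\ref{exa:covering_groupoid_principal} directly; and the $\Gr$-invariance of~$\bunp'$, needed before you can invoke Proposition~\ref{pro:principal_is_local}, follows because $\bunp'\circ\pr_2=q\circ\tilde{\bunp}$ is $\Gr$-invariant and $\pr_2$ is a $\Gr$-equivariant epimorphism.
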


\begin{proof}
  If~\(\Act\) is a basic \(\Gr\)\nb-action, let~\(\Base_0\) be its
  base and \(\bunp\colon \Act\prto\Base_0\) its bundle projection.
  There is a unique map \(\varphi_0\colon \Base_0\to\Base\) with
  \(\varphi_0\circ\bunp=\varphi\) because~\(\varphi\) is
  \(\Gr\)\nb-invariant and~\(\bunp\) is the orbit space projection
  by Lemma~\ref{lem:principal_bundle}.  Let \(\tilde{\Base}_0\defeq
  \tilde{\Base}\times_{f,\Base,\varphi_0} \Base_0\).
  Then~\(\tilde{\Act}\) is naturally isomorphic to the pull-back
  \(\tilde{\Base}_0\times_{\pr_2,\Base_0,\bunp} \Act\).  The induced
  \(\Gr\)\nb-action on this together with the projection
  to~\(\tilde{\Base}_0\) is a principal bundle by
  Proposition~\ref{pro:pull-back_principal}.  Thus~\(\tilde{\Act}\)
  is basic if~\(\Act\) is.

  Conversely, assume that~\(\tilde{\Act}\) is a basic
  \(\Gr\)\nb-action.  First, we also assume that Assumptions
  \ref{assum:two-three} and~\ref{assum:covering_acts_basically} hold.
  Since~\(f\) is a cover, so is \(\pr_2\colon \tilde{\Act}\prto\Act\).
  The map~\(\pr_2\) is a \(\Gr\)\nb-map as well, so the
  \(\Gr\)\nb-action and~\(\pr_2\) give a right
  \(\Act\rtimes\Gr\)\nb-action on~\(\tilde{\Act}\) by
  Proposition~\ref{pro:transformation_groupoid_action}.  This action
  is basic by Corollary~\ref{cor:basic_action_trafo_gr} and because
  the \(\Gr\)\nb-action on~\(\tilde{\Act}\) is assumed basic.  The
  advantage of the right
  \(\Act\rtimes\Gr\)\nb-action over the right \(\Gr\)\nb-action is
  that its anchor map~\(\pr_2\) is a cover.

  Let~\(\Gr[H]\) be the \v{C}ech groupoid of~\(f\).  It acts
  on~\(\tilde{\Act}\) on the left with anchor map~\(\pr_1\) and action
  \((z_1,z_2)\cdot (z_2,x)\defeq (z_1,x)\) for all
  \(z_1,z_2\in\tilde{\Act[Z]}\), \(x\in\Act\) with
  \(f(z_1)=f(z_2)=\varphi(x)\).  This action commutes with the right
  \(\Act\rtimes\Gr\)\nb-action.  Thus~\(\tilde{\Act}\) is a bibundle
  actor from~\(\Gr[H]\) to~\(\Gr\).
  Proposition~\ref{pro:compose_bibundles} allows us to compose
  bibundle actors.  In particular, we may compose~\(\tilde{\Act}\)
  with the bibundle equivalence~\(\tilde{\Base}\) from~\(\Base\)
  to~\(\Gr[H]\) (see
  Example~\ref{exa:covering_groupoid_equivalent_to_space}).  The
  composite is \(\Gr[H]\backslash \tilde{\Act}\cong \Act\) because the
  pull-back of the principal \(\Gr[H]\)\nb-bundle
  \(\tilde{\Base}\prto\Base\) along~\(\varphi\) gives a principal
  \(\Gr[H]\)\nb-bundle \(\tilde{\Act}\prto\Act\).

  Since the isomorphism is the coordinate projection, the induced
  \(\Act\rtimes\Gr\)\nb-action on \(\tilde{\Act}\prto\Act\) is the one
  we started with.  Since this composite of bibundle actors is again a
  bibundle actor from~\(\Base\) to~\(\Act\rtimes\Gr\), we conclude
  that~\(\Act\) is a basic right \(\Act\rtimes\Gr\)\nb-action.  Hence
  it is a basic right \(\Gr\)\nb-action by
  Corollary~\ref{cor:basic_action_trafo_gr}.  Thus being basic is a
  local property of groupoid actions.

  Now assume that basic actions are local.  Let \(f\colon
  \tilde{\Base}\prto\Base\) be a cover and let~\(\Gr\) be its
  \v{C}ech groupoid.  Let~\(\Act\) be a \(\Gr\)\nb-action.  We want
  to show that~\(\Act\) is basic as well, by proving that it is
  locally basic.  Composing the anchor map \(\s\colon
  \Act\to\tilde{\Base}\) with~\(f\) gives a \(\Gr\)\nb-invariant map
  \(\varphi\defeq f\circ\s\colon \Act\to\Base\).  The pull-back of
  the \(\Gr\)\nb-action~\(\Act\) along~\(f\) gives
  \(\tilde{\Act}\defeq \Act\times_{\varphi,\Base,f} \tilde{\Base}\)
  with \(\Gr\)\nb-action by \((x,z)\cdot (z_1,z_2)=(x\cdot
  (z_1,z_2),z)\) for all \(x\in\Act\), \(z,z_1,z_2\in\tilde{\Base}\)
  with \(\varphi(x)=f(z)=f(z_1)=f(z_2)\), \(z_1=\s(x)\).

  The multiplication \(\mul\colon \Act\times_{\s,\Gr^0,\rg}\Gr^1
  \prto\Act\) is a cover by~\((3'')\) in Definition~\ref{def:action}.
  Identifying
  \[
  \Act\times_{\s,\Gr^0,\rg}\Gr^1
  \defeq \Act\times_{\s,\tilde{\Base},\pr_1}
  (\tilde{\Base}\times_{f,\Base,f} \tilde{\Base})
  \cong \Act\times_{f\circ\s,\Base,f} \tilde{\Base},
  \]
  this cover becomes \(q\colon \tilde{\Act}\prto\Act\), \((x,z)\mapsto
  x\cdot (\s(x),z)\).  We claim that~\(q\) is the bundle projection of
  a principal \(\Gr\)\nb-bundle.  If
  \((x_1,z_1),(x_2,z_2)\in\tilde{\Act}\) with \(x_1\cdot (\s(x_1),z_1)
  = x_2\cdot (\s(x_2),z_2)\), then \(f(\s(x_1))= f(z_1)\),
  \(f(\s(x_2)) = f(z_2)\), and \(z_1= \s(x_1\cdot (\s(x_1),z_1)) =
  \s(x_2\cdot (\s(x_2),z_2)) = z_2\).  Thus
  \begin{multline*}
    (x_1,z_1)\cdot (\s(x_1),\s(x_2))
    = (x_1,z_1)\cdot (\s(x_1),z_1)\cdot (z_2,\s(x_2))
    \\= (x_2,z_2)\cdot (\s(x_2),z_2)\cdot (z_2,\s(x_2))
    = (x_2,z_2).
  \end{multline*}
  Furthermore, if \((x_1,z_1)\cdot (z_3,z_4)= (x_2,z_2)\), then
  \(z_3=\s(x_1)\) and \(z_4=\s(x_2)\), so \((\s(x_1),\s(x_2))\) is the
  unique element of~\(\Gr^1\) with this property.  Thus the
  map~\eqref{eq:principal_bundle} is an isomorphism and
  \(\tilde{\Act}\prto\Act\) is a principal \(\Gr\)\nb-bundle.
  Since~\(\tilde{\Act}\) is the pull-back of the
  \(\Gr\)\nb-action~\(\Act\) along the cover~\(f\), any
  \(\Gr\)\nb-action is locally basic.  Therefore, if all locally basic
  actions are basic, then
  Assumption~\ref{assum:covering_acts_basically} follows.
\end{proof}

\subsection{Equivalences in bibundle functors and actors}
\label{sec:equivalences_in_bibundles}

\begin{proposition}
  \label{pro:bibundle_equivalence_invertible}
  Let\/ \(\Gr\) and\/~\(\Gr[H]\) be groupoids and let\/ \(\Act\colon
  \Gr\to\Gr[H]\) be a bibundle equivalence in\/~\((\Cat,\covers)\).
  Then there are canonical isomorphisms of bibundle equivalences\/
  \(\Act\times_{\Gr[H]}\Act^*\cong\Gr^1\) and/\
  \(\Act^*\times_{\Gr}\Act\cong\Gr[H]^1\).
\end{proposition}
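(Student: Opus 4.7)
The plan is to build the isomorphism $\Act\times_{\Gr[H]}\Act^*\cong\Gr^1$ by applying Proposition~\ref{pro:characterise_composition} to a canonical multiplication map from the fibre product before quotient, and then to repeat the argument with left and right exchanged for $\Act^*\times_{\Gr}\Act\cong\Gr[H]^1$. Recall the three bibundle functors involved: $\Act\colon\Gr\to\Gr[H]$, $\Act^*\colon\Gr[H]\to\Gr$ (same underlying object, with $h\cdot^* x\defeq x\cdot h^{-1}$ and $x\cdot^* g\defeq g^{-1}\cdot x$), and the unit bibundle $\Gr^1\colon\Gr\to\Gr$. The fibre product $\Act\times_{\s,\Gr[H]^0,\s}\Act$ exists since $\s\colon\Act\prto\Gr[H]^0$ is a cover; this is the space whose $\Gr[H]$-quotient is $\Act\times_{\Gr[H]}\Act^*$.

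The key ingredient is the principality of the \emph{left} $\Gr$-action on~$\Act$ (which uses that $\Act$ is a bibundle \emph{equivalence}, not merely a bibundle functor): the map $\Gr^1\times_{\s,\Gr^0,\rg}\Act\to\Act\times_{\s,\Gr[H]^0,\s}\Act$, $(g,x)\mapsto(g\cdot x,x)$, is an isomorphism. I will define $\mu\colon\Act\times_{\s,\Gr[H]^0,\s}\Act\to\Gr^1$ as the first coordinate of the inverse, so that $\mu(x_1,x_2)$ is the unique $g\in\Gr^1$ with $g\cdot x_2=x_1$. Anchor bookkeeping gives $\rg(\mu(x_1,x_2))=\rg(x_1)$ and $\s(\mu(x_1,x_2))=\rg(x_2)$, which match the left and right $\Gr^0$-anchors of $\Gr^1$ composed with the relevant coordinate projections.

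Next I will verify that $\mu$ is a $\Gr,\Gr$-map satisfying the middle-equivariance condition in Proposition~\ref{pro:characterise_composition}. Left $\Gr$-equivariance $\mu(g_0\cdot x_1,x_2)=g_0\cdot\mu(x_1,x_2)$ and right $\Gr$-equivariance $\mu(x_1,g_0^{-1}\cdot x_2)=\mu(x_1,x_2)\cdot g_0$ (noting that the right $\Gr$-action on $\Act^*$ acts on the second factor via $x_2\mapsto g_0^{-1}\cdot x_2$) follow directly from the defining equation $g\cdot x_2=x_1$ by left and right multiplication with $g_0$ and $g_0^{-1}$, respectively, together with uniqueness of $g$. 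For the $\Gr[H]$-invariance, I need $\mu(x_1\cdot h,x_2)=\mu(x_1,h\cdot^* x_2)=\mu(x_1,x_2\cdot h^{-1})$: if $g\cdot x_2=x_1$, then multiplying by $h^{-1}$ on the right (using commutation of the two actions on $\Act$) gives $g\cdot(x_2\cdot h^{-1})=x_1\cdot h^{-1}$, while rewriting the same equation as $g\cdot x_2\cdot h=x_1\cdot h$ gives $g\cdot x_2=x_1\cdot h\cdot h^{-1}$, so both sides equal the same $g$. Once these are checked, Proposition~\ref{pro:characterise_composition} immediately produces the isomorphism $\Act\times_{\Gr[H]}\Act^*\cong\Gr^1$ of $\Gr,\Gr$-bibundles and simultaneously shows the induced map on fibre products is an isomorphism, with no further principality to verify.

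The second isomorphism $\Act^*\times_{\Gr}\Act\cong\Gr[H]^1$ is obtained by the symmetric construction, exchanging the roles of $\Gr$ and $\Gr[H]$ and using the principality of the \emph{right} $\Gr[H]$-action on~$\Act$: define $\nu\colon\Act\times_{\rg,\Gr^0,\rg}\Act\to\Gr[H]^1$ by letting $\nu(x_1,x_2)$ be the unique $h\in\Gr[H]^1$ with $x_1\cdot h=x_2$, verify $\Gr[H],\Gr[H]$-equivariance and $\Gr$-middle-invariance by the same pattern, and apply Proposition~\ref{pro:characterise_composition} once more. The main obstacle is purely notational: keeping the left/right conventions for $\Act^*$ straight and matching them against the conditions of Proposition~\ref{pro:characterise_composition}; once that bookkeeping is done, everything reduces to a handful of one-line uniqueness arguments.
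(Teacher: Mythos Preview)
Your proposal is correct and follows essentially the same route as the paper: define the map~$\mu$ from the inverse of the principality isomorphism for the left $\Gr$-action, then invoke Proposition~\ref{pro:characterise_composition} (the paper is terser and does not spell out the equivariance checks, but the argument is the same). The second isomorphism is handled symmetrically in both cases.
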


\begin{proof}
  Since \(\rg\colon \Act\prto\Gr[H]^0\) is a left principal
  \(\Gr\)\nb-bundle, the map \(\Gr^1\times_{\s,\Gr^0,\rg} \Act \to
  \Act\times_{\s,\Gr[H]^0,\s}\Act\), \((g,x)\mapsto (x,g\cdot x)\), is
  an isomorphism.  The inverse is of the form \((x_1,x_2)\mapsto
  (x_1,\mul(x_1,x_2))\) for a map \(\mul\colon
  \Act\times_{\s,\Gr[H]^0,\s}\Act\to\Gr^1\).  Identifying
  \(\Act\times_{\s,\Gr[H]^0,\s}\Act\cong
  \Act\times_{\s,\Gr[H]^0,\rg}\Act^*\), the criterion of
  Proposition~\ref{pro:characterise_composition} applied to~\(\mul\)
  shows that \(\Act\times_{\Gr[H]} \Act^*\cong\Gr^1\).  Explicitly,
  the isomorphism is induced by~\(\mul\), so it maps the class of
  \((x_1,x_2)\in\Act\times_{\s,\Gr[H]^0,\rg}\Act^*\) to the unique
  \(g\in\Gr^1\) with \(g\cdot x_2=x_1\).

  The same reasoning for~\(\Act^*\) instead of~\(\Act\) gives
  \(\Act^*\times_{\Gr} \Act\cong\Gr[H]^1\).
\end{proof}

\begin{theorem}
  \label{the:equivalences_in_bibundle_functors}
  The equivalences in the bicategory of bibundle functors and
  bibundle actors are exactly the bibundle equivalences.  Furthermore,
  if \(\Act[Y]\colon \Gr[H]\to\Gr\) is quasi-inverse to \(\Act\colon
  \Gr\to\Gr[H]\), then \(\Act[Y]\cong\Act^*\) is obtained
  from~\(\Act\) by exchanging left and right actions.
\end{theorem}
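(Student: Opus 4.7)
The plan is to establish the two implications separately and deduce $\Act[Y]\cong\Act^*$ from the uniqueness of quasi-inverses in a bicategory.  \emph{Bibundle equivalence implies equivalence.}  Since the definition of a bibundle equivalence is symmetric in the two sides, $\Act^*\colon \Gr[H]\to\Gr$ is again a bibundle equivalence.  Proposition~\ref{pro:bibundle_equivalence_invertible} then supplies canonical bibundle isomorphisms $\Act\times_{\Gr[H]}\Act^*\cong\Gr^1$ and $\Act^*\times_{\Gr}\Act\cong\Gr[H]^1$, and I would verify that these satisfy the triangle identities for an adjoint equivalence.  This is a routine elementwise computation using the explicit formula for the inversion map $\mu$ in the proof of Proposition~\ref{pro:bibundle_equivalence_invertible} together with the associators and unitors from Propositions~\ref{pro:compose_bibundle_associative} and~\ref{pro:compose_bibundle_unital}.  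Hence $\Act^*$ is a quasi-inverse for $\Act$.

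Conversely, suppose $\Act$ has a quasi-inverse $\Act[Y]$ in the bicategory of bibundle functors, respectively bibundle actors.  In the bibundle functor case, I would invoke Theorem~\ref{the:bibundle_to_vague_functor}, which identifies this bicategory with the bicategory of anafunctors; equivalences of bicategories preserve and reflect equivalent objects, so the anafunctor attached to~$\Act$ is an anafunctor equivalence.  Theorem~\ref{the:explicit-vague-equivalence} then lifts it to an ana-isomorphism $p^*(\Gr)\cong q^*(\Gr[H])$; tracing this back through the construction of Section~\ref{sec:bibundle_to_vague}, together with Corollary~\ref{cor:equivalence_bibundle_vague_isomorphism}, yields a bibundle equivalence $\Gr,\Gr[H]$-isomorphic to~$\Act$.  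In the bibundle actor case, I would apply Proposition~\ref{pro:decompose_bibundle_actor} to factor $\Act\simeq E\circ A$ with $A\colon \Gr\to\Gr[K]$ an actor and $E\colon \Gr[K]\to\Gr[H]$ a bibundle equivalence.  Since $E$ is an equivalence by the first direction, $A$ inherits the property of being an equivalence in the bibundle\nb-actor bicategory.  Applying Proposition~\ref{pro:characterise_composition} to the defining $2$\nb-iso of quasi-invertibility of $A$, and exploiting the explicit description of $A$ as $\Gr[K]^1$ with its left and right multiplication actions (Proposition~\ref{pro:actor_as_functor} and Example~\ref{exa:actor_from_nice_functor}), I would read off that the base anchor $\Gr[K]^0\to\Gr^0$ of $A$ is invertible; Proposition~\ref{pro:isom_actors} then identifies $A$ with a groupoid isomorphism, hence a bibundle equivalence, and Proposition~\ref{pro:compose_bibundles} makes $\Act\simeq E\circ A$ a bibundle equivalence.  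Finally, uniqueness of quasi-inverses in a bicategory yields the canonical $2$\nb-isomorphism $\Act[Y]\cong\Act^*$.

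The hard part will be the bibundle\nb-actor case, specifically the step that promotes an equivalent actor to a groupoid isomorphism.  The delicate issue is that the quasi-inverse of $A$ is a priori only a bibundle actor and need not itself be an actor, so Proposition~\ref{pro:actor_2-category} does not apply directly; the work has to be done inside the bibundle\nb-actor bicategory by unpacking the covering maps and the $\Gr[K]$\nb-invariance properties supplied by Proposition~\ref{pro:characterise_composition}.
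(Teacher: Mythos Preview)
Your overall architecture matches the paper's: the forward direction via Proposition~\ref{pro:bibundle_equivalence_invertible}, the bibundle functor case via the anafunctor equivalence (Theorems \ref{the:bibundle_to_vague_functor} and~\ref{the:explicit-vague-equivalence}), and the reduction of the bibundle actor case to actors via Proposition~\ref{pro:decompose_bibundle_actor} are all exactly what the paper does.  One small point: you do not need to check triangle identities---an equivalence in a bicategory only asks for invertible $2$\nb-arrows $\Act\times_{\Gr[H]}\Act^*\cong\Gr^1$ and $\Act^*\times_{\Gr}\Act\cong\Gr[H]^1$, and the paper does not bother with more.

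The gap is exactly where you flag it, and your proposed workaround does not go through.  Proposition~\ref{pro:characterise_composition} is stated for bibundle \emph{functors}; the actor~$A$ is a bibundle functor only when its orbit space $\Gr[K]^1/\Gr[K]\cong\Gr[K]^0$ coincides with~$\Gr^0$ via the anchor map, which is precisely what you are trying to prove.  So the proposition does not apply to~$A$, and ``unpacking the covering maps'' it supplies is not available.  The paper instead resolves the difficulty by a one-line trick you have not spotted: since $A=\Gr[K]^1$ as a right $\Gr[K]$\nb-action, the unit isomorphism of Proposition~\ref{pro:compose_bibundle_unital} gives $A\times_{\Gr[K]} B\cong \Gr[K]^1\times_{\Gr[K]} B\cong B$ as right $\Gr$\nb-actions; combined with the quasi-inverse isomorphism $A\times_{\Gr[K]} B\cong\Gr^1$, this forces $B\cong\Gr^1$ as a right $\Gr$\nb-action, so the quasi-inverse~$B$ \emph{is itself an actor}.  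Now both $A$ and~$B$ live in the strict $2$\nb-category of actors, where the $2$\nb-arrows between actors coincide with those in the bibundle actor bicategory, and Proposition~\ref{pro:actor_2-category} applies directly to conclude that~$A$ is a groupoid isomorphism.
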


We require Assumptions \ref{assum:local_cover}
and~\ref{assum:covering_acts_basically_weak} for the bibundle
functor case and Assumptions \ref{assum:two-three}
and~\ref{assum:covering_acts_basically} for the bibundle actor case
because otherwise the bicategories in question need not be
defined (Theorem~\ref{the:bibundle_two-categories}).

\begin{proof}
  The bicategories of bibundle functors and anafunctors are equivalent
  by Theorem~\ref{the:bibundle_to_vague_functor}.  By
  Theorem~\ref{the:explicit-vague-equivalence}, the equivalences in
  the bicategory of anafunctors are exactly those anafunctors that
  lift to ana-isomorphisms.  Since the equivalence to bibundle
  functors maps ana-isomorphisms to bibundle equivalences, the
  equivalences in the bicategory of bibundle functors are exactly the
  bibundle equivalences.

  Without Assumption~\ref{assum:local_cover} something goes wrong at
  this point because a fully faithful functor that is only almost
  essentially surjective but not essentially surjective is an
  equivalence in the bicategory of anafunctors, but the
  associated bibundle functor is not a bibundle equivalence by
  Proposition~\ref{pro:equivalence_functor}.

  It is clear that bibundle equivalences are equivalences in the
  bicategory of bibundle actors.  Conversely, let~\(\Act\) be
  an equivalence in this bicategory.
  Proposition~\ref{pro:decompose_bibundle_actor} decomposes~\(\Act\)
  as a composite of a bibundle equivalence and an actor.  Thus it
  suffices to show that an actor that is an equivalence in the
  bicategory of bibundle actors is already an isomorphism of
  categories.

  Let
  \(\mul\colon \Gr^1\times_{\s,\Gr^0,\rg_{\Gr}}\Gr[H]^1 \to\Gr[H]^1\)
  be an actor that is an equivalence in the bicategory of bibundle
  actors.  We write~\({}_{\mul}\Gr[H]^1\)
  for the bibundle actor described by~\(\mul\).
  Let~\(\Act[Y]\) be its quasi-inverse.  Thus
  \({}_{\mul}\Gr[H]^1\times_{\Gr[H]}\Act[Y]\cong \Gr^1\) as
  \(\Gr,\Gr\)\nb-bibundles.  Since there is a natural isomorphism of
  right \(\Gr\)\nb-actions
  \(\Gr[H]^1\times_{\Gr[H]}\Act[Y]\cong\Act[Y]\), we get
  \(\Act[Y]\cong\Gr^1\) as a right \(\Gr\)\nb-action.
  Thus~\(\Act[Y]\) is an actor.  When we view actors as bibundle
  actors, the \(2\)\nb-arrows are exactly the \(\Gr,\Gr[H]\)-maps
  \(\Gr[H]^1\to\Gr[H]^1\).  These are the same \(2\)\nb-arrows that we
  already used in the bicategory of actors.  Any equivalence in this
  bicategory is an isomorphism of categories by
  Proposition~\ref{pro:actor_2-category}.  Since the \(2\)\nb-arrows
  are the same in both bicategories, we conclude that our original
  actor is an isomorphism of categories.
\end{proof}

\section{The quasi-category of bibundle functors}
\label{sec:quasi-category_bibundle_functors}

Let~\(\Cat\) be a category with coproducts and with a
pretopology~\(\covers\) that satisfies Assumptions
\ref{assum:local_cover}
and~\ref{assum:covering_acts_basically_weak}.
Theorem~\ref{the:bibundle_two-categories} shows that groupoids
in~\((\Cat,\covers)\) with bibundle functors as arrows and
\(\Gr,\Gr[H]\)-maps as \(2\)\nb-arrows form a bicategory
with invertible \(2\)\nb-arrows, that is, a $(2, 1)$-category.  Its
nerve is a quasi-category (an $(\infty, 1)$-category) in
the sense of Boardman--Vogt~\cite{Boardman-Vogt:Homotopy_algebraic}
and André Joyal~\cite{Joyal:Quasi-categories}, that is, a
simplicial set satisfying all inner Kan conditions; furthermore,
coming from a bicategory, it satisfies inner unique Kan conditions
in dimensions above~\(3\).  This quasi-category contains essentially
the same information as the bicategory.

We are going to describe this quasi-category in a more elementary way,
without mentioning groupoids and bibundle functors.  The main point is
the similarity between the conditions \eqref{eq:principal_bundle},
\eqref{eq:groupoid_basicality_2}
and~\eqref{eq:compose_bibundle_functor_basicality} in the definitions
of groupoids, principal bundles, and the composition of bibundle
functors.  An \(n\)\nb-simplex in~\(\GrCat\) consists of
\begin{itemize}
\item \(X_i\inOb\Cat\) for \(0\le i\le n\);
\item \(X_{ij}\inOb\Cat\) for \(0\le i\le j\le n\);
\item \(\rg_{ij}\in\Cat(X_{ij},X_i)\) and \(\s_{ij}\in\Cat(X_{ij},
  X_j)\) for \(0\le i\le j\le n\);
\item \(\mul_{ijk}\in\Cat(X_{ij}\times_{\s_{ij},X_j,\rg_{jk}} X_{jk},
  X_{ik})\) for \(0\le i\le j\le k\le n\);
\end{itemize}
such that the following conditions hold:
\begin{enumerate}
\item \label{cond:qc_cover1} \(\rg_{ij}\) is a cover for all \(0\le
  i\le j\le n\); hence the
  domain~\(X_{ij}\times_{\s_{ij},X_j,\rg_{jk}} X_{jk}\)
  of~\(\mul_{ijk}\) is well-defined;
\item \(\s_{ii}\) is a cover for all \(0\le i\le n\);
\item \label{cond:qc_preassoc} \(\rg_{ik}\circ \mul_{ijk} =
  \rg_{ij}\circ \pr_1\) and \(\s_{ik}\circ \mul_{ijk} = \s_{jk}\circ
  \pr_2\) on \(X_{ij}\times_{X_j} X_{jk}\) for all \(0\le i\le j\le
  k\le n\); briefly, \(\rg(x\cdot y)=\rg(x)\) and \(\s(x\cdot
  y)=\s(y)\);
\item \label{cond:qc_assoc} (associativity) for \(0\le i\le j\le k\le l\le n\), the
  following diagram commutes:
  \[
  \begin{tikzpicture}
    \matrix[cd,column sep=7em] (m) {
      X_{ij}\times_{X_j} X_{jk}\times_{X_k} X_{kl} &
      X_{ij}\times_{X_j} X_{jl} \\
      X_{ik}\times_{X_k} X_{kl} & X_{il} \\
    };
    \begin{scope}[cdar]
      \draw (m-1-1) -- node {\(\id_{X_{ij}}\times_{X_j} \mul_{jkl}\)} (m-1-2);
      \draw (m-1-1) -- node[swap] {\(\mul_{ijk}\times_{X_k} \id_{X_{kl}}\)} (m-2-1);
      \draw (m-2-1) -- node {\(\mul_{ikl}\)} (m-2-2);
      \draw (m-1-2) -- node {\(\mul_{ijl}\)} (m-2-2);
    \end{scope}
  \end{tikzpicture}
  \]
  briefly, \((x\cdot y)\cdot z=x\cdot (y\cdot z)\);
\item \label{cond:qc_basic_1} the following maps are isomorphisms for
  all \(0\le i\le j\le k\le n\) with \(i=j\) or \(j=k\):
  \[
  (\pr_1,\mul_{ijk})\colon
  X_{ij}\times_{\s_{ij},X_j,\rg_{jk}} X_{jk} \to
  X_{ij}\times_{\rg_{ij},X_i,\rg_{ik}} X_{ik},\qquad
  (x,y)\mapsto (x,x\cdot y),
  \]
\item \label{cond:qc_basic_2} the following maps are isomorphisms if
  \(0\le i\le j= k\le n\):
  \[
  (\mul_{ijk},\pr_2)\colon
  X_{ij}\times_{\s_{ij},X_j,\rg_{jk}} X_{jk} \to
  X_{ik}\times_{\s_{ik},X_k,\s_{jk}} X_{jk},\qquad
  (x,y)\mapsto (x\cdot y,y).
  \]
\end{enumerate}
An order-preserving map \(\varphi\colon
\{0,\dotsc,n\}\to\{0,\dotsc,m\}\) induces \(\varphi^*\colon
\GrCat_m\to\GrCat_n\): take
\(X_{\varphi(i)}\), \(X_{\varphi(i)\varphi(j)}\),
\(\rg_{\varphi(i)\varphi(j)}\), \(\s_{\varphi(i)\varphi(j)}\),
\(\mul_{\varphi(i)\varphi(j)\varphi(k)}\).  Thus~\(\GrCat_\bullet\)
is a simplicial set.

Why is this the nerve of the bicategory of bibundle functors?

To begin with, for each \(0\le i\le n\), there is a groupoid~\(\Gr_i\)
with objects~\(X_i\) and arrows~\(X_{ii}\), range~\(\rg_{ii}\),
source~\(\s_{ii}\), and multiplication~\(\mul_{iii}\).  The
conditions above for equal indices amount to the conditions of
Definition~\ref{def:groupoid}.

Next, \(X_{il}\) with left anchor map \(\rg_{il}\colon X_{il}\to
X_i\), right anchor map \(\s_{il}\colon X_{il}\to X_l\), left
action~\(\mul_{iil}\) and right action~\(\mul_{ill}\) is a
\(\Gr_i,\Gr_l\)\nb-bibundle; the associativity
condition~\ref{cond:qc_assoc} for \(i=j=k\le l\) says that the left
action is associative, the one for \(i\le j=k=l\) says that the right
action is associative, the one for \(i=j\le k=l\) says that both
actions commute; \ref{cond:qc_basic_1} for \(i=j\le k\) says that the
left \(\Gr_i\)\nb-action satisfies~\((3''')\) in
Definition~\ref{def:action}, and~\ref{cond:qc_basic_2} for \(i\le
j=k\) says that the right \(\Gr_j\)\nb-action satisfies
condition~\((3''')\) in Definition~\ref{def:action}.

The conditions \ref{cond:qc_cover1} for \(i\le k\) and
\ref{cond:qc_basic_1} for \(i\le j=k\) say that the right
\(\Gr_k\)\nb-action on~\(X_{ik}\) with bundle projection
\(\rg_{ik}\colon X_{ik}\prto X_i\) is a principal bundle;
thus~\(X_{ik}\) is a bibundle functor from~\(\Gr_i\) to~\(\Gr_k\).  By
construction, \(X_{ii}\) is the identity bibundle functor
on~\(\Gr_i\).

The associativity conditions~\ref{cond:qc_assoc} for \(0\le i = j\le
k\le l\le n\), \(0\le i\le j=k\le l\le n\), and \(0\le i\le j\le
k=l\le n\) and~\ref{cond:qc_preassoc} say that the maps
\(m_{ijk}\colon X_{ij}\times_{X_j} X_{jk}\to X_{ik}\) are
\(\Gr_j\)\nb-invariant \(\Gr_i,\Gr_k\)\nb-maps.  Hence they induce
isomorphisms of \(\Gr_i,\Gr_k\)\nb-bibundles
\[
\dot{\mul}_{ijk}\colon X_{ij}\times_{\Gr_j} X_{jk} \congto X_{ik}
\]
by Proposition~\ref{pro:characterise_composition}.
Proposition~\ref{pro:characterise_composition} also shows that the
maps~\(\mul_{ijk}\) are covers for all \(0\le i\le j\le k\le n\) and
that the map in~\ref{cond:qc_basic_1} is an isomorphism also for
\(i<j<k\).

By construction, \(\dot{\mul}_{iik}\colon \Gr_i^1\times_{\Gr_i}
X_{ik}\to X_{ik}\) and \(\dot{\mul}_{ikk}\colon X_{ik}\times_{\Gr_k}
\Gr_k^1\to X_{ik}\) are the canonical isomorphisms.  Finally, for \(0\le i\le
j\le k\le l\le n\), the associativity condition~\ref{cond:qc_assoc}
is equivalent to the following commutative diagram of isomorphisms
of bibundle functors:
\[
\begin{tikzpicture}
  \matrix[cd, column sep=7em] (m) {
    X_{ij} \times_{\Gr_j} X_{jk} \times_{\Gr_k} X_{kl} &
    X_{ij} \times_{\Gr_j} X_{jl} \\
    X_{ij} \times_{\Gr_j} X_{jl} &
    X_{il} \\
  };
  \begin{scope}[cdar]
    \draw (m-1-1) -- node {\(\id_{X_{ij}}\times_{\Gr_j} \dot{\mul}_{jkl}\)} (m-1-2);
    \draw (m-1-1) -- node[swap] {\(\dot{\mul}_{ijk}\times_{\Gr_k} \id_{X_{kl}}\)} (m-2-1);
    \draw (m-2-1) -- node {\(\dot{\mul}_{ikl}\)} (m-2-2);
    \draw (m-1-2) -- node {\(\dot{\mul}_{ijl}\)} (m-2-2);
  \end{scope}
\end{tikzpicture}
\]

Rewriting our \(n\)\nb-simplices in terms of the groupoids~\(\Gr_i\),
bibundle functors \(X_{ij}\colon \Gr_i\to\Gr_j\) and isomorphisms of
bibundle functors \(\dot{\mul}_{ijk}\colon
X_{ij}\times_{\Gr_j}X_{jk}\to X_{ik}\) gives us precisely the
definition of the nerve of the bicategory of bibundle functors.  It is
well-known that the nerve of a bicategory satisfies the Kan condition
\(\Kan(2,1)\) and the unique Kan conditions \(\Kan!(n,j)\) for \(n\ge
3\) and \(1\le j\le n-1\) (and also for \(n\ge 5\) and \(j=0\) or
\(j=n\)).  Of course, this only holds under Assumptions
\ref{assum:local_cover} and~\ref{assum:covering_acts_basically}
because otherwise we do not have a bicategory.

The quasi-category associated to the sub-bicategory of covering
bibundle functors is defined similarly; in addition, we require the
maps~\(\s_{ij}\) to be covers for all \(0\le i\le j\le n\).

The quasi-category associated to the sub-bicategory of bibundle
equivalences is defined similarly, with the following extra
conditions:
\begin{itemize}
\item the maps~\(\s_{ij}\) should be covers for all \(0\le i\le j\le
  n\);
\item the maps in \ref{cond:qc_basic_1} and~\ref{cond:qc_basic_2}
  should be isomorphisms if \(0\le i\le j\le k\le n\) and \(i=j\) or
  \(j=k\).
\end{itemize}
With this symmetric form of the above axioms, we may reverse the
order of \(0,\dotsc,n\) and thus show that \(\s_{ij}\colon
X_{ij}\prto X_j\) is a principal \(\Gr_i\)\nb-bundle.

Proposition~\ref{pro:characterise_composition} shows that the maps in
\ref{cond:qc_basic_1} and~\ref{cond:qc_basic_2} are automatically
isomorphisms if \(0\le i<j<k\le n\) if this is assumed for \(i=j\) or
\(j=k\).  So for the quasi-category of bibundle equivalences, we may
require \ref{cond:qc_basic_1} and~\ref{cond:qc_basic_2} for all \(0\le
i\le j\le k\le n\).  This stronger condition is less technical and
provides a very satisfactory description of the quasi-category of
groupoids in~\((\Cat,\covers)\) with bibundle equivalences.  Since
bibundle equivalences are invertible up to \(2\)\nb-arrows, this is a
weak \(2\)\nb-groupoid.  Thus the corresponding quasi-category also
satisfies the outer Kan conditions \(\Kan(2,j)\) for \(j=0,2\) and
\(\Kan!(n,j)\) for \(n\ge 3\), \(j=0,n\).

\section{Examples of categories with pretopology}
\label{sec:examples_covers}

In this section, we discuss pretopologies on different
categories and check whether they satisfy our extra assumptions.  We
also describe basic actions in each case.  We begin with a
trivial pretopology on an arbitrary category.

\begin{example}
  \label{exa:trivial_pretopology}
  Let~\(\Cat\) be any category and let~\(\covers\) be the class of
  all isomorphisms in~\(\Cat\).  This is a subcanonical pretopology
  satisfying Assumptions \ref{assum:local_cover}
  and~\ref{assum:two-three}.  All groupoids in \((\Cat,\covers)\)
  are \(0\)\nb-groupoids by Example~\ref{exa:0-groupoid}, so there
  are no interesting examples of groupoids.
  Assumptions \ref{assum:covering_acts_basically}
  and~\ref{assum:covering_acts_basically_weak} are trivial.
  Assumption~\ref{assum:final} on final objects usually fails.
\end{example}

In this section, unlike the previous ones, \emph{elementwise
  statements are meant naively, with elements of sets in the usual
  sense}.  Since all our examples are concrete categories, these naive
elements suffice.

\subsection{Sets and surjections}
\label{sec:Sets_surjections}

Let~\(\Sets\) be the category of sets.

\begin{proposition}
  \label{pro:Sets_surjections}
  The class~\(\covers_\surj\) of surjective maps in~\(\Sets\) is a
  subcanonical, saturated pretopology satisfying Assumptions
  \textup{\ref{assum:local_cover}}, \textup{\ref{assum:two-three}},
  \textup{\ref{assum:covering_acts_basically}},
  and~\textup{\ref{assum:covering_acts_basically_weak}}.  The subcategory
  of non-empty sets satisfies the same assumptions and also
  Assumption~\textup{\ref{assum:final}}.
\end{proposition}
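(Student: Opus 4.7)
The plan is to verify each clause in turn, with the only non-formal content being the check of Assumption~\ref{assum:covering_acts_basically}.

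First I would check that \(\covers_\surj\) is a pretopology: identity maps are bijective hence surjective, the composite of two surjections is surjective, and if \(g\colon U\twoheadrightarrow X\) is surjective and \(f\colon Y\to X\) is arbitrary, then for any \(y\in Y\) pick \(u\in U\) with \(g(u)=f(y)\) to see that \((y,u)\in Y\times_X U\) lies over~\(y\), so \(\pr_1\) is surjective; the existence of the fibre product in \(\Sets\) is automatic. For \emph{subcanonicality} I would observe that in~\(\Sets\) the surjections are exactly the epimorphisms and that any epimorphism is the coequaliser of its kernel pair, verifying condition~(1) of Definition and Lemma~\ref{def:subcanonical}. \emph{Saturation} is immediate: if \(f\circ p\) is surjective, then \(f\) is surjective regardless of~\(p\); this directly gives Assumption~\ref{assum:two-three}, which in turn implies Assumption~\ref{assum:local_cover}.

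The substantive step is Assumption~\ref{assum:covering_acts_basically} (which then entails Assumption~\ref{assum:covering_acts_basically_weak}). I would use the criterion in Proposition~\ref{pro:basic_assum}.(4): given a surjection \(p\colon X\twoheadrightarrow Y\) with \v{C}ech groupoid~\(\Gr\) and a right \(\Gr\)\nb-action \(\Act\), I want to exhibit a set~\(\tilde{\Base}\) and a map \(\tilde{f}\colon \tilde{\Base}\to Y\) such that \(\Act\cong \tilde{\Base}\times_{\tilde{f},Y,p} X\) equivariantly. Take \(\tilde{\Base}\defeq \Act/\Gr\) and let \(\tilde{f}\colon \tilde{\Base}\to Y\) be induced by the \(\Gr\)\nb-invariant map \(p\circ\s\colon \Act\to Y\). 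Then the map
\[
\Act\to \tilde{\Base}\times_{\tilde{f},Y,p} X,\qquad a\mapsto ([a],\s(a)),
\]
has a two-sided inverse sending \(([a],x)\mapsto a\cdot(\s(a),x)\); well-definedness on orbits follows because if \(a'=a\cdot(\s(a),\s(a'))\) then \(a'\cdot(\s(a'),x)=a\cdot(\s(a),x)\) by associativity of the action. This bijection is manifestly \(\Gr\)\nb-equivariant, so \(\Act\) is basic.

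Finally, for the subcategory of non-empty sets, the singleton \(\{*\}\) is a final object and any map from a non-empty set to it is surjective, so Assumption~\ref{assum:final} holds; all the previous arguments go through verbatim in this full subcategory since pullbacks, orbit spaces, and surjections of non-empty sets remain non-empty. The only place where one must be slightly careful is ensuring the orbit space \(\Act/\Gr\) is non-empty when \(\Act\) is, which is clear. I expect no genuine obstacle; the only step requiring actual computation is the equivariant bijection \(\Act\cong(\Act/\Gr)\times_Y X\) above.
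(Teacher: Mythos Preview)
Your proof is correct. The routine parts (pretopology axioms, subcanonicality, saturation, and hence Assumptions~\ref{assum:local_cover} and~\ref{assum:two-three}, and Assumption~\ref{assum:final} for non-empty sets) match the paper's treatment essentially verbatim.

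The genuine difference lies in how you handle Assumption~\ref{assum:covering_acts_basically}. You verify criterion~(4) of Proposition~\ref{pro:basic_assum} directly, exhibiting the explicit equivariant bijection \(\Act\cong(\Act/\Gr)\times_Y X\) for any \v{C}ech-groupoid action. The paper instead takes a detour: it first proves an independent characterisation (Proposition~\ref{pro:Sets_basic}) that an action in \((\Sets,\covers_\surj)\) is basic if and only if it is \emph{free}, and then observes that any \v{C}ech-groupoid action is automatically free because \(\s(g)=\rg(g)\) forces \(g\) to be a unit. Your argument is shorter and self-contained for the stated goal; the paper's route costs an extra proposition but yields a reusable description of all basic actions in \(\Sets\), which it later parallels for topological spaces (Propositions~\ref{pro:Top_basic} and~\ref{pro:Hausdorff_basic}). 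Both approaches are entirely sound.

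One cosmetic point: your first pretopology axiom should read ``isomorphisms (bijections) are surjective'' rather than ``identity maps''; the argument is of course unaffected.
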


Thus all the theory developed above applies to groupoids
in~\((\Sets,\covers_\surj)\).  We are going to prove this and
characterise basic actions in~\((\Sets,\covers_\surj)\).

The category~\(\Sets\) is complete and therefore closed under fibre
products.  In the fibre-product
situation~\eqref{eq:fibre-product_of_cover}, if \(g\colon U\prto X\)
is surjective, then so is the induced map \(\pr_1\colon
Y\times_{f,X,g} U\prto Y\).  The isomorphisms in~\(\Sets\) are the
bijections and belong to~\(\covers_\surj\).  Let \(f_1\colon X\to
Y\) and \(f_2\colon Y\to Z\) be composable maps.  If \(f_1\)
and~\(f_2\) are surjective, then so is \(f_2\circ f_1\).  Thus
surjective maps in~\(\Sets\) form a pretopology.

If \(f_2\circ f_1\) is surjective, then so is~\(f_2\).  Hence the
pretopology~\(\covers_\surj\) is saturated, and Assumptions
\ref{assum:two-three} and~\ref{assum:local_cover} follow.  A map
in~\(\Sets\) is a coequaliser if and only if it is surjective.  Thus
the pretopology~\(\covers_\surj\) is subcanonical, and it is the
largest subcanonical pretopology on~\(\Sets\).  Any set with a single
element is a final object, and any map to it from a non-empty set is
surjective, hence a cover.  Hence Assumption~\ref{assum:final} holds
for non-empty sets, but not if we include the empty set.  Since a
fibre-product of non-empty sets along a surjective map is never empty,
surjections still form a pretopology on the subcategory of non-empty
sets, and it still satisfies the same assumptions.  Since
Assumption~\ref{assum:final} does not play an important role for our
theory, it is a matter of taste whether one should remove the empty
set or not.

A groupoid in~\((\Sets,\covers_\surj)\) is just a groupoid in the
usual sense; the surjectivity of the range and source maps is no
condition because the unit map is a one-sided inverse.
Since~\(\Sets\) has arbitrary colimits, any
\(\Gr\)\nb-action~\(\Act\) has an orbit set~\(\Act/\Gr\)
(Definition~\ref{def:orbit_space}).  The canonical map
\(\Act\prto\Act/\Gr\) is always surjective.

\begin{definition}
  \label{def:free}
  A groupoid action in~\(\Sets\) is \emph{free} if for each
  \(x\in\Act\), the only \(g\in\Gr^1\) with \(\s(x)=\rg(g)\) and
  \(x\cdot g=x\) is \(g=1_{\s(x)}\).
\end{definition}

\begin{proposition}
  \label{pro:Sets_basic}
  A groupoid action in~\((\Sets,\covers_\surj)\) is basic if and
  only if it is free.
\end{proposition}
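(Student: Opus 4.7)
The plan is to unpack the definition of basic in this case and relate the principality isomorphism \eqref{eq:principal_bundle} directly to the freeness condition.

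For the ``only if'' direction, suppose $\Act$ is a basic $\Gr$-action with bundle projection $\bunp\colon \Act \prto \Base$, so that $(\pr_1,\mul)\colon \Act\times_{\s,\Gr^0,\rg} \Gr^1 \to \Act\times_{\bunp,\Base,\bunp}\Act$ is a bijection. Given $x\in\Act$ and $g\in\Gr^1$ with $\s(x)=\rg(g)$ and $x\cdot g=x$, the pairs $(x,g)$ and $(x,1_{\s(x)})$ both lie in the domain and both map to $(x,x)$ in $\Act\times_{\Base}\Act$; injectivity of $(\pr_1,\mul)$ then forces $g=1_{\s(x)}$, so the action is free.

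For the converse, suppose the action is free. Since $\Sets$ has all colimits, the orbit set $\Act/\Gr$ and the orbit space projection $\bunp\colon \Act\to\Act/\Gr$ exist, and $\bunp$ is automatically surjective, hence a cover in $\covers_\surj$. I would then verify that
\[
(\pr_1,\mul)\colon \Act\times_{\s,\Gr^0,\rg} \Gr^1 \to \Act\times_{\bunp,\Act/\Gr,\bunp}\Act,\qquad (x,g)\mapsto (x,x\cdot g),
\]
is a bijection. Surjectivity is immediate: if $\bunp(x_1)=\bunp(x_2)$, then $x_1$ and $x_2$ lie in the same $\Gr$-orbit, so some $g\in\Gr^1$ satisfies $x_1\cdot g=x_2$. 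For injectivity, suppose $(x,g_1)$ and $(x,g_2)$ both map to $(x,y)$; then $x\cdot g_1=x\cdot g_2$, so $\s(x)=\rg(g_1)=\rg(g_2)$ and $h\defeq g_1\cdot g_2^{-1}$ satisfies $\s(x)=\rg(h)$ and $x\cdot h=x$. Freeness yields $h=1_{\s(x)}$, i.e.\ $g_1=g_2$.

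The argument is entirely routine; the only subtle point is checking that the ``obvious'' choice of bundle projection (the orbit space projection) really does work, but in $\Sets$ this is automatic because surjective maps are always coequalisers of their kernel pair, so no extra assumption about pretopologies is needed.
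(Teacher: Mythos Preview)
Your proof is correct and follows essentially the same approach as the paper's: both directions hinge on the shear map $(\pr_1,\mul)$ being a bijection, with surjectivity coming from the definition of the orbit set and injectivity being equivalent to freeness. The only cosmetic difference is that in the ``only if'' direction you argue directly from an arbitrary bundle projection~$\bunp$ rather than first invoking Lemma~\ref{lem:principal_bundle} to identify~$\bunp$ with the orbit space projection; this is harmless and arguably slightly more direct.
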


\begin{proof}
  Let~\(\Gr\) be a groupoid and~\(\Act\) a \(\Gr\)\nb-action
  in~\((\Sets,\covers_\surj)\).  The canonical map \(\bunp\colon
  \Act\prto\Act/\Gr\) is a cover.  We have \(\bunp(x_1)=\bunp(x_2)\)
  for \(x_1,x_2\in\Act\) if and only if there is \(g\in\Gr^1\) with
  \(\s(x_1)=\rg(g)\) and \(x_1\cdot g=x_2\).  Thus the map
  \(\Act\times_{\s,\Gr^0,\rg}\Gr^1\to
  \Act\times_{\bunp,\Act/\Gr,\bunp}\Act\) is surjective.  It is
  injective if and only if the action is free.
\end{proof}

Hence a groupoid~\(\Gr\) in~\((\Sets,\covers_\surj)\) is basic if
and only if all \(g\in\Gr^1\) with \(\s(g)=\rg(g)\) are units.
Equivalently, \(\Gr\) is an equivalence relation on~\(\Gr^0\).

We may now finish the proof of
Proposition~\ref{pro:Sets_surjections} by showing that any action of
a \v{C}ech groupoid in~\(\Sets\) is basic.  Let~\(\Gr\) be a
\v{C}ech groupoid and let~\(\Act\) be a \(\Gr\)\nb-action.  If
\(x\in\Act\), \(g\in\Gr^1\) satisfy \(\s(x)=\rg(g)\) and \(x\cdot g
= x\), then \(\s(g) = \s(x\cdot g) = \s(x) = \rg(g)\).  Then
\(g=1_{\s(x)}\) because~\(\Gr\) is a \v{C}ech groupoid.  Thus the
\(\Gr\)\nb-action~\(\Act\) is free; it is basic by
Proposition~\ref{pro:Sets_basic}.  This verifies Assumptions
\ref{assum:covering_acts_basically}
and~\ref{assum:covering_acts_basically_weak}
for~\((\Sets,\covers_\surj)\).

\subsection{Pretopologies on the category of topological spaces}
\label{sec:Top}

Let~\(\Top\) be the category of topological spaces and continuous
maps.  This category is complete and cocomplete.  In particular, all
fibre products exist and any groupoid action has an orbit space.
There are several classes of maps in~\(\Top\) that give candidates
for pretopologies:
\begin{enumerate}
\item quotient maps
\item biquotient maps (also called limit lifting maps)
\item maps with global continuous sections
\item maps with local continuous sections
\item maps with local continuous sections and partitions of unity
\item closed surjections
\item proper surjections
\item open surjections
\item surjections with many continuous local sections
\item étale surjections
\end{enumerate}
We shall see that quotient maps and closed surjections do not form
pretopologies.  The other classes of maps all give subcanonical
pretopologies that satisfy Assumptions \ref{assum:local_cover}
and~\ref{assum:two-three}.  Assumption~\ref{assum:final} about final
objects fails for proper maps and étale maps and holds for the
remaining pretopologies, if we remove the empty topological space.
The pretopologies (2)--(5) are saturated, (7)--(10) are not saturated.
Assumption~\ref{assum:covering_acts_basically} and hence also
Assumption~\ref{assum:covering_acts_basically_weak} hold in cases
(3)--(5) and (8)--(10).
Assumption~\ref{assum:covering_acts_basically} fails for biquotient
maps; we do not know whether
Assumption~\ref{assum:covering_acts_basically_weak} holds for
biquotient maps.

\subsubsection{Quotient maps}
\label{sec:Top_quotient_maps}

\begin{lemma}
  \label{lem:coequaliser_Top_quotient}
  A continuous map \(f\colon X\to Y\) is a coequaliser in~\(\Top\)
  if and only if it is a \emph{quotient map}, that is, \(f\) is
  surjective and \(A\subseteq Y\) is open if and only if
  \(f^{-1}(A)\) is open.
\end{lemma}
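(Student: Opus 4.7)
The plan is to prove both implications separately, exploiting the standard construction of coequalisers in $\Top$ via quotient topologies.

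For the backward direction, I would assume $f\colon X\to Y$ is a quotient map and show that $f$ coincides with the coequaliser of the two coordinate projections $\pr_1,\pr_2\colon X\times_{f,Y,f} X\rightrightarrows X$. This is essentially the same argument as the implication (1)$\Rightarrow$(3) in Definition and Lemma~\ref{def:subcanonical}, specialised to $\Top$. Given any continuous $h\colon X\to W$ with $h\circ\pr_1=h\circ\pr_2$, the condition means that $h(x_1)=h(x_2)$ whenever $f(x_1)=f(x_2)$, so $h$ descends to a unique set map $\tilde h\colon Y\to W$ with $\tilde h\circ f=h$. The key point is continuity of $\tilde h$: for any open $U\subseteq W$, the preimage $f^{-1}(\tilde h^{-1}(U))=h^{-1}(U)$ is open in $X$, so by the defining property of a quotient map (that $A\subseteq Y$ is open iff $f^{-1}(A)$ is open), $\tilde h^{-1}(U)$ is open in $Y$.

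For the forward direction, suppose $f$ is a coequaliser of some pair $g_1,g_2\colon Z\rightrightarrows X$. I would construct explicitly the candidate coequaliser $q\colon X\prto Q$, where $Q$ is the set-theoretic coequaliser of $g_1,g_2$ (obtained by quotienting out by the smallest equivalence relation identifying $g_1(z)\sim g_2(z)$) equipped with the quotient topology (the finest topology making $q$ continuous). A routine check using the universal property of the quotient topology shows that $q$ is itself a coequaliser of $g_1,g_2$ in $\Top$, and $q$ is a quotient map by construction. By uniqueness of coequalisers up to unique isomorphism, there exists a homeomorphism $\varphi\colon Q\congto Y$ with $f=\varphi\circ q$. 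Since quotient maps are preserved under composition with homeomorphisms, $f$ is a quotient map.

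The argument is essentially routine and I do not foresee a serious obstacle; the main content is the well-known construction of coequalisers in $\Top$. The only mild subtlety is that one must verify surjectivity of $f$ in the forward direction, which follows because set-theoretic coequalisers are always surjective and $\varphi$ is a bijection.
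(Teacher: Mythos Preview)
Your proposal is correct and follows essentially the same approach as the paper: both directions are handled by identifying coequalisers in~\(\Top\) with quotients by equivalence relations equipped with the quotient topology, and showing that a quotient map~\(f\) is the coequaliser of its kernel pair \(\pr_1,\pr_2\colon X\times_{f,Y,f}X\rightrightarrows X\). Your write-up is somewhat more explicit in verifying the universal property and the continuity of the factorisation, but the argument is the same.
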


\begin{proof}
  Let \(g_1,g_2\colon Z\rightrightarrows X\) be two maps.
  Let~\(\sim\) be the equivalence relation on~\(X\) generated by
  \(g_1(z)\sim g_2(z)\) for all \(z\in Z\).  Equip \(X/{\sim}\) with
  the quotient topology.  The coequaliser of \(g_1,g_2\) is the
  canonical map \(X\to X/{\sim}\).  Thus coequalisers are quotient
  maps.  Conversely, let \(f\colon X\to Y\) be a quotient map and
  define an equivalence relation~\(\sim\) on~\(X\) by \(x_1\sim
  x_2\) if and only if \(f(x_1)=f(x_2)\).  Since~\(f\) is a quotient
  map, the induced map \(X/{\sim} \to Y\) is a homeomorphism.
  Viewed as a subset of \(X\times X\), we have \({\sim} =
  X\times_{f,Y,f} X\).  Thus~\(f\) is a coequaliser of
  \(\pr_1,\pr_2\colon X\times_{f,Y,f} X\rightrightarrows X\).
\end{proof}

There are a quotient map \(f\colon X\to Y\)
and a topological space~\(Z\)
such that \(f\times\id_Z\)
is no longer a quotient map, see \cite{Michael:Bi-quotient}*{Example
  8.4}.  Thus the pull-back of~\(f\)
along the quotient map \(\pr_1\colon Y\times Z\to Y\)
is not a quotient map any more.  This shows that quotient maps do
\emph{not} form a pretopology on~\(\Top\).
The spaces in \cite{Michael:Bi-quotient}*{Example 8.4} are Hausdorff
spaces, so it does not help to restrict from~\(\Top\)
to the full subcategory~\(\Hausdorff\) of Hausdorff spaces.

\subsubsection{Biquotient maps}
\label{sec:Top_biquotient_maps}

\begin{definition}[\cite{Michael:Bi-quotient}]
  \label{def:biquotient}
  Let \(f\colon X\to Y\) be a continuous surjection.  It is a
  \emph{biquotient} map if for every \(y\in
  Y\) and every open covering~\(\mathcal{U}\) of~\(f^{-1}(y)\)
  in~\(X\), there are finitely many \(U\in\mathcal{U}\) for which
  the subsets~\(f(U)\) cover some neighbourhood of~\(y\) in~\(Y\).
\end{definition}

\begin{definition}[\cites{Hajek:Quotient, Hajek:Quotient_corrections}]
  \label{def:limit_lift}
  The map~\(f\) is \emph{limit lifting} if every convergent net
  in~\(Y\) lifts to a convergent net in~\(X\).  More precisely, let
  \((I,\le)\) be a directed set and let \((y_i)_{i\in I}\) be a net
  in~\(Y\) converging to some \(y\in Y\).  A \emph{lifting} of this
  convergent net is a directed set \((J,\le)\) with a surjective
  order-preserving map \(\varphi\colon J\to I\) and a
  net~\((x_j)_{j\in J}\)
  in~\(X\) with \(f(x_j)=y_{\varphi(j)}\) for all \(j\in J\),
  converging to some \(x\in X\) with \(f(x)=y\).
\end{definition}

\begin{proposition}[\cite{Michael:Bi-quotient}]
  Biquotient maps are the same as limit lifting maps.
\end{proposition}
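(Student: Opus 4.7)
The plan is to prove both implications separately. The implication ``limit lifting $\Rightarrow$ biquotient'' is the easier direction and I would do it by contradiction. Suppose the biquotient condition fails at some $y\in Y$ with an open covering~$\mathcal{U}$ of $f^{-1}(y)$. Take as directed set~$I$ the pairs $(\mathcal{U}_0,V)$ with $\mathcal{U}_0\subseteq\mathcal{U}$ finite and $V$ an open neighbourhood of~$y$, ordered by $(\mathcal{U}_0,V)\le (\mathcal{U}_0',V')$ iff $\mathcal{U}_0\subseteq\mathcal{U}_0'$ and $V\supseteq V'$. By assumption, for each such $i=(\mathcal{U}_0,V)$ there exists $y_i\in V\setminus\bigcup_{U\in\mathcal{U}_0} f(U)$, and by construction $y_i\to y$. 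Apply limit lifting to get $\varphi\colon J\to I$ and $(x_j)\to x$ in $X$ with $f(x)=y$ and $f(x_j)=y_{\varphi(j)}$. Choose $U\in\mathcal{U}$ containing~$x$; eventually $x_j\in U$ so $y_{\varphi(j)}\in f(U)$. But by surjectivity of~$\varphi$ there is $j_1\in J$ with $\varphi(j_1)=(\{U\},Y)$, and any $j_2\ge j_1$ in the eventual tail gives $\varphi(j_2)\ge(\{U\},Y)$ and therefore $y_{\varphi(j_2)}\notin f(U)$, a contradiction.

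For ``biquotient $\Rightarrow$ limit lifting'' I would go through ultrafilters. Given $(y_i)_{i\in I}\to y$, let $\mathcal{F}$ be an ultrafilter on~$Y$ extending the filter of tails together with the neighbourhood filter of~$y$. The key lemma is: \emph{if $f$ is biquotient, there exists an ultrafilter~$\mathcal{G}$ on~$X$ with $f_*\mathcal{G}=\mathcal{F}$ that converges to some $x\in f^{-1}(y)$}. Prove this by contradiction: if no such $\mathcal{G}$ exists, then for each $x\in f^{-1}(y)$ the family $\mathcal{N}(x)\cup\{f^{-1}(F):F\in\mathcal{F}\}$ fails FIP, so there are an open neighbourhood $N_x$ of~$x$ and $F_x\in\mathcal{F}$ with $f(N_x)\cap F_x=\emptyset$. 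The family $\{N_x:x\in f^{-1}(y)\}$ is an open covering of $f^{-1}(y)$; biquotientness yields finitely many $x_1,\dots,x_n$ such that $\bigcup_{k} f(N_{x_k})$ contains a neighbourhood $V$ of~$y$. Both~$V$ and $F_{x_1}\cap\dots\cap F_{x_n}$ lie in $\mathcal{F}$, so their intersection is non-empty and any element $y'$ therein lies in some $f(N_{x_k})\cap F_{x_k}$, contradicting the choice of $N_{x_k},F_{x_k}$.

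Having obtained $\mathcal{G}\to x$ with $f_*\mathcal{G}=\mathcal{F}$ and $f(x)=y$, I would convert~$\mathcal{G}$ into an explicit lifting net. Set
\[
J=\{(i,G,x')\mid i\in I,\ G\in\mathcal{G},\ x'\in G,\ f(x')=y_i\},
\]
ordered by $(i,G,x')\le(i',G',x'')$ iff $i\le i'$ and $G\supseteq G'$, with projection $\varphi(i,G,x')=i$ and net $x_{(i,G,x')}=x'$. Directedness uses that for any $G_1,G_2\in\mathcal{G}$ and $i_1,i_2\in I$, the set $G_1\cap G_2\in\mathcal{G}$ has $f(G_1\cap G_2)\in\mathcal{F}$, which meets the tail $\{y_i:i\ge\max(i_1,i_2)\}\in\mathcal{F}$, producing the required upper bound. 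Surjectivity of~$\varphi$ uses $G=X\in\mathcal{G}$ and surjectivity of~$f$ (a consequence of biquotientness). Finally, for any neighbourhood~$V$ of~$x$ we have $V\in\mathcal{G}$, so $f(V)\in\mathcal{F}$ meets every tail, giving a base element $(i_0,V,x_0')\in J$ beyond which $x_j\in V$; hence $(x_j)\to x$ as required.

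The main obstacle is the ultrafilter lifting lemma in the second direction, since this is where the biquotient hypothesis is genuinely used; the translation back to a convergent net is then a matter of organising the index set carefully so that $\varphi$ is simultaneously order-preserving, surjective, and compatible with convergence in~$X$. The other bookkeeping (directedness, consistency of $f(x_j)=y_{\varphi(j)}$) is routine once~$J$ is set up as above.
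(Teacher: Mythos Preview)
The paper does not give its own proof of this proposition; it simply cites Michael~\cite{Michael:Bi-quotient} and moves on. So there is no argument in the paper to compare against.

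That said, your proposal is sound. The first direction (limit lifting $\Rightarrow$ biquotient) is the standard contrapositive argument, and your index set of pairs $(\mathcal{U}_0,V)$ is exactly the right choice; the paper uses the same directed set later, in the proof of Proposition~\ref{pro:biquotient_largest}, so you are in good company. For the second direction, your ultrafilter lifting lemma is correct: the FIP failure at each $x\in f^{-1}(y)$ does yield open $N_x$ and $F_x\in\mathcal{F}$ with $f(N_x)\cap F_x=\emptyset$, and the biquotient hypothesis then finishes the contradiction just as you describe. The conversion of~$\mathcal{G}$ back into a net indexed by $J=\{(i,G,x')\}$ is carefully done; the only point worth making explicit is that $G\in\mathcal{G}$ implies $f(G)\in\mathcal{F}$ because $G\subseteq f^{-1}(f(G))$ forces $f^{-1}(f(G))\in\mathcal{G}$, which you use for directedness and convergence. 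Everything checks out.
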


We will use both characterisations of these maps where convenient.

\begin{proposition}
  \label{pro:Top_biquotient}
  The biquotient maps~\(\covers_\limlift\) form a subcanonical,
  saturated pretopology that satisfies Assumptions
  \textup{\ref{assum:local_cover}}, \textup{\ref{assum:two-three}},
  but not Assumption~\textup{\ref{assum:covering_acts_basically}}.  It
  satisfies Assumption~\textup{\ref{assum:final}} if we remove the
  empty space from the category.
\end{proposition}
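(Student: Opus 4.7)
My plan is to verify each assertion in turn, using the limit-lifting characterisation from Definition~\ref{def:limit_lift} as the main technical tool, and to invoke Michael's pullback-stability theorem from~\cite{Michael:Bi-quotient} to handle the only hard axiom.

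First I would check the pretopology axioms. Homeomorphisms are trivially limit-lifting, and a composite $g\circ f$ of limit-lifting maps is again limit-lifting: given a convergent net in the target of~$g$, first lift it through~$g$, then lift that lifting through~$f$, and compose the indexing maps. The crucial third axiom, stability of biquotient maps under pullback along arbitrary continuous maps, is precisely Michael's main theorem in~\cite{Michael:Bi-quotient}, which I would cite directly rather than reprove.

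For subcanonicality, I would first show that every biquotient map $f\colon X\prto Y$ is a quotient map: if $A\subseteq Y$ has $f^{-1}(A)$ open and $y\in A$, then applying Definition~\ref{def:biquotient} to the singleton open cover $\{f^{-1}(A)\}$ of $f^{-1}(y)$ produces a neighbourhood of~$y$ contained in $f(f^{-1}(A))=A$, so $A$ is open. Lemma~\ref{lem:coequaliser_Top_quotient} then identifies biquotient maps as coequalisers, proving that~$\covers_\limlift$ is subcanonical. For saturatedness---which, as the paper notes just after Assumption~\ref{assum:two-three}, immediately implies both Assumption~\ref{assum:local_cover} and Assumption~\ref{assum:two-three}---suppose $f\circ p$ is biquotient. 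Given a convergent net in the target of~$f$, lift it through $f\circ p$ to a convergent net $(x_j)$ in the source of~$p$; then $(p(x_j))$ converges by continuity of~$p$, lies over the original net, and provides the required lifting through~$f$.

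Assumption~\ref{assum:final} on the subcategory of non-empty spaces is essentially trivial: the one-point space is final, and for non-empty~$X$ any constant net at some chosen $x\in X$ is a convergent lifting of the unique constant net in the one-point space; for empty~$X$ the map is not surjective, so the exclusion is genuinely needed. Finally, the failure of Assumption~\ref{assum:covering_acts_basically} I would establish by citing Example~\ref{exa:biquotient_covering_not_basic}, whose existence is already announced in the remark following Proposition~\ref{pro:isomorphism_local} and which provides an action of a \v{C}ech groupoid of a biquotient cover that is not basic.

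The main obstacle is pullback stability; everything else is a short manipulation with convergent nets once limit-lifting has been adopted as the working definition. I would therefore lean on~\cite{Michael:Bi-quotient} for that step and keep the rest of the argument short.
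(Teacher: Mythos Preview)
Your proof is correct and follows essentially the same strategy as the paper. The one notable difference is that you outsource pullback stability to Michael's theorem and flag it as ``the main obstacle,'' whereas the paper gives a direct net argument that is no harder than the other steps: given a convergent net \((y_i)\to y\) in~\(Y\), push it forward to~\(X\) via~\(f\), lift through the limit-lifting map~\(g\) to a convergent net \((u_j)\to u\) in~\(U\) indexed by some \(\varphi\colon J\to I\), and observe that \((y_{\varphi(j)},u_j)\to (y,u)\) in \(Y\times_{f,X,g} U\). So pullback stability is not a genuine obstacle once the limit-lifting characterisation is in hand; your citation is correct but unnecessary, and the paper's self-contained argument keeps the whole proof elementary and of a piece with the other verifications.
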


\begin{proof}
  It is clear that isomorphisms are limit lifting.  Let \(f_1\colon
  X\to Y\) and \(f_2\colon Y\to Z\) be composable maps.  If \(f_1\)
  and~\(f_2\) are limit lifting, then so is \(f_2\circ f_1\) by
  lifting in two steps; and if \(f_2\circ f_1\) is limit lifting,
  then so is~\(f_2\) by first lifting a convergent net along
  \(f_2\circ f_1\)
  and then applying the continuous map~\(f_1\).  Thus we have a
  saturated pretopology, and Assumptions \ref{assum:local_cover}
  and~\ref{assum:two-three} follow.

  Next we claim that pull-backs inherit the property of being limit
  lifting.  In the fibre-product
  situation~\eqref{eq:fibre-product_of_cover}, assume that \(g\colon
  U\prto X\) is limit lifting.  Let \((y_i)_{i\in I}\) be a net
  in~\(Y\) that converges to some \(y\in Y\).  Then \(f(y_i)\)
  converges to~\(f(y)\) in~\(X\).  Since~\(g\) is limit lifting,
  there is an order-preserving map \(\varphi\colon J\to I\) and a
  net \((u_j)_{j\in J}\) in~\(U\) with \(g(u_j)=f(y_{\varphi(j)})\)
  for all \(j\in J\), converging to some \(u\in U\) with
  \(g(u)=f(y)\).  Then \((y_{\varphi(j)},u_j)\) is a net in
  \(Y\times_{f,X,g} U\) that converges towards \((y,u)\).  Thus
  \(\pr_1\colon Y\times_{f,X,g} U\prto Y\) is limit lifting.  This
  completes the proof that~\(\covers_\limlift\) is a pretopology.
  Limit lifting maps are quotient maps, hence coequalisers, so this
  pretopology is subcanonical.

  A space with a single element is a final object in~\(\Top\), and any
  map from a non-empty space to it is limit lifting.  Thus
  Assumption~\ref{assum:final} holds if we exclude the empty
  space.  The following counterexample to
  Assumption~\ref{assum:covering_acts_basically} will finish the proof
  of Proposition~\ref{pro:Top_biquotient}.
\end{proof}

\begin{example}
  \label{exa:biquotient_covering_not_basic}
  We first recall \cite{Michael:Bi-quotient}*{Example 8.4}.
  Let~\(X\) be the disjoint union of \(X_1=(0,1)\) and
  \(X_2=\{0,\frac{1}{2},\frac{1}{3},\frac{1}{4},\ldots\}\).
  Let~\(\tau_1\) be the obvious topology on~\(X\): \(X_1\)
  and~\(X_2\) are open and carry the subspace topologies
  from~\(\R\).  We will later introduce another topology~\(\tau_2\)
  on~\(X\).

  Let~\(Y\) be the quotient space of~\(X\) by the relation that
  identifies \(\frac{1}{n}\) in \(X_1\) and~\(X_2\), and let
  \(f\colon X\to Y\) be the quotient map.  This is a quotient map
  that is not limit lifting.  As a set, \(Y=[0,1)\).  The topology
  on~\(Y\) is the usual one on the subset~\((0,1)\); a subset~\(U\)
  of~\(Y\) with \(0\in U\) is a neighbourhood of~\(0\) if and only
  if there are \(n_0\in\N\) and \(\epsilon_n>0\) for \(n\ge n_0\)
  with \(\bigcup_{n\ge n_0} (\frac{1}{n}-\epsilon_n,
  \frac{1}{n}+\epsilon_n)\subseteq U\).

  We now define another topology~\(\tau_2\) on~\(X\) for which the
  map \(f\colon (X,\tau_2)\to Y\) is limit lifting.  The subset
  \(X_1 \sqcup X_2\setminus\{0\}\) is open in both topologies
  \(\tau_1\) and~\(\tau_2\), and both restrict to the same topology
  on \(X_1 \sqcup X_2\setminus\{0\}\).  A subset \(U\subseteq X\) is
  a \(\tau_2\)\nb-neighbourhood of~\(0\) if \(U\cap X_2\) is a
  neighbourhood of~\(0\) and there are \(n_0\in\N\) and
  \(\epsilon_n>0\) for all \(n\ge n_0\) such that \(\bigcup_{n\ge
    n_0} \bigl(\frac{1}{n}-\epsilon_n, \frac{1}{n}+\epsilon_n\bigr)
  \setminus \bigl\{\frac{1}{n}\bigr\}\subseteq U\).  This uniquely
  determines the topology~\(\tau_2\).

  On the open subset \((0,1)\subseteq Y\), the inclusion of~\(X_1\)
  into~\(X\) gives a continuous section for \(f|_{f^{-1}(0,1)}\);
  hence a net in~\(Y\) that converges to some element of~\((0,1)\)
  lifts to a net in \(X_1\subseteq X\) that converges in both
  \((X,\tau_1)\) and \((X,\tau_2)\).  Let now \((y_i)_{i\in I}\) be
  a net in~\(Y\) converging to~\(0\).  We lift it to a net in~\(X\)
  by lifting~\(y_i\) to \(y_i\in X_2\) if \(y_i\in X_2\), and to
  \(y_i\in X_1\) if \(y_i\notin X_2\).  This net in~\(X\) converges
  to \(0\in X_2\) in the topology~\(\tau_2\).

  The topology~\(\tau_1\) is finer than~\(\tau_2\), that is, the
  identity map on~\(X\) is a continuous map \((X,\tau_1)\to
  (X,\tau_2)\).  We claim that the identity map
  \begin{equation}
    \label{eq:pull-back_to_homeo}
    (X,\tau_1) \times_{f,Y,f} (X,\tau_1)
    \to (X,\tau_1) \times_{f,Y,f} (X,\tau_2)
  \end{equation}
  is a homeomorphism.  That is, the topologies \(\tau_1\)
  and~\(\tau_2\) pull back to the same topology on
  \((X,\tau_1)\times_{f,Y,f} X\) along the quotient map \(f\colon
  (X,\tau_1)\to Y\).  This also shows that the locality of
  isomorphisms, Proposition~\ref{pro:isomorphism_local}, fails for
  quotient maps in~\(\Top\); this is okay because they do not form a
  pretopology.

  To prove that the map in~\eqref{eq:pull-back_to_homeo} is a
  homeomorphism, we show that any net in \(X\times_{f,Y,f}
  X\subseteq X\times X\) that converges for \(\tau_1\times\tau_2\)
  also converges for the topology \(\tau_1\times\tau_1\).  A net in
  \(X\times_Y X\) is a pair of nets \((x_{1i})\) and \((x_{2i})\)
  with \(f(x_{1i})=f(x_{2i})\).  We assume that \((x_{1i},x_{2i})\)
  is \(\tau_1\times\tau_2\)-convergent to \((x_1,x_2)\).
  Equivalently, \(x_{1i}\) has \(\tau_1\)\nb-limit~\(x_1\) and
  \((x_{2i})\) has \(\tau_2\)\nb-limit~\(x_2\).  We must show
  that~\(x_{2i}\) also converges to~\(x_2\) in the
  topology~\(\tau_1\).  This is clear if \(x_2\neq 0\in X_2\)
  because the topologies \(\tau_1\) and~\(\tau_2\) agree away
  from~\(0\).  Thus we may assume \(x_1=x_2=0\).  Since~\(X_2\) is
  \(\tau_1\)\nb-open, we have \(x_{1i}\in X_2\) for almost
  all~\(i\); hence also \(f(x_{2i})=f(x_{1i})\in X_2\).  Since the
  points~\(\frac{1}{n}\) in~\(X_1\) are excluded from the
  \(\tau_2\)\nb-neighbourhoods of~\(0\) and \(x_{2i}\) converges
  to~\(0\) in~\(\tau_2\), this implies \(x_{2i}\in X_2\) for almost
  all~\(i\) as well.  Since \(f|_{X_2}\) is injective, we get
  \(x_{1i}=x_{2i}\) for almost all~\(i\), so~\((x_{2i})\)
  converges in~\(\tau_1\).

  Now let~\(\Gr\) be the \v{C}ech groupoid of the limit lifting map
  \((X,\tau_2)\to Y\).  The action of~\(\Gr\) on its orbit
  space~\((X,\tau_2)\) is basic in \((\Top,\covers_\limlift)\) by
  Example~\ref{exa:covering_groupoid_principal}.  The same action
  on~\((X,\tau_1)\) is still a continuous action: the anchor map
  \((X,\tau_1)\to (X,\tau_2)\) is continuous, and the action map
  \[
  (X,\tau_1)\times_{f,X,f} (X,\tau_2) \cong
  (X,\tau_1)\times_X \bigl((X,\tau_2)\times_{f,X,f} (X,\tau_2)\bigr)
  \xrightarrow{\pr_3} (X,\tau_1)
  \]
  is continuous because of the
  homeomorphism~\eqref{eq:pull-back_to_homeo}.  This action is,
  however, not basic because the orbit space projection
  \((X,\tau_1)\to (X,\tau_1)/\Gr\cong Y\) is not
  in~\(\covers_\limlift\).  If this action were basic, it would
  contradict Proposition~\ref{pro:G-map_versus_base_map} because the
  \(\Gr\)\nb-actions on \((X,\tau_1)\) and \((X,\tau_2)\) have the
  same orbit space~\(Y\).
\end{example}

We do not know whether
Assumption~\ref{assum:covering_acts_basically_weak} holds
for~\((\Top,\covers_\limlift)\).  A counterexample would have to be of
different nature because a continuous bijection that is also a
(bi)quotient map is already a homeomorphism.

\begin{proposition}
  \label{pro:biquotient_largest}
  The pretopology of biquotient maps is the largest subcanonical
  pretopology on~\(\Top\).
\end{proposition}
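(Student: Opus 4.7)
The plan is to characterise biquotient maps as exactly those continuous surjections all of whose pullbacks are quotient maps, and then read off the conclusion from the pretopology axioms together with subcanonicity.

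First, let $\covers$ be an arbitrary subcanonical pretopology on $\Top$ and let $f\colon X \prto Y$ be a cover in $\covers$. By Definition and Lemma~\ref{def:subcanonical}, $f$ is a coequaliser in $\Top$; Lemma~\ref{lem:coequaliser_Top_quotient} then gives that $f$ is a quotient map. More importantly, for any continuous map $g\colon Z \to Y$ whatsoever, the pretopology axiom produces a cover $\pr_1\colon Z\times_{g,Y,f} X \prto Z$, which is therefore itself a coequaliser, hence a quotient map by Lemma~\ref{lem:coequaliser_Top_quotient} again. So every pullback of $f$ along an arbitrary continuous map is a quotient map.

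Next I would invoke the classical theorem of Day--Kelly (closely related to Michael's criterion in Definition~\ref{def:biquotient}): a continuous surjection $f\colon X\to Y$ is biquotient if and only if, for every continuous map $g\colon Z\to Y$, the projection $\pr_1\colon Z\times_{g,Y,f} X\to Z$ is a quotient map. The nontrivial direction is seen by unwinding the definition of biquotient: failure of biquotientness gives a point $y\in Y$ and an open cover $\mathcal{U}$ of $f^{-1}(y)$ with no finite subfamily whose images cover any neighbourhood of $y$; one builds a test space $Z$ (for instance, using a directed set of neighbourhoods of $y$ with a cone point attached, or the associated convergent net along the limit-lifting formulation in Definition~\ref{def:limit_lift}) and a map $Z\to Y$ along which the pullback fails to be a quotient map. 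Granting this characterisation, the previous paragraph immediately implies that $f$ is biquotient, so $\covers\subseteq\covers_\limlift$.

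The genuinely nontrivial input is the Day--Kelly characterisation; the rest is just unwinding the two axioms ``covers are stable under pullback'' and ``covers are coequalisers'' together with Lemma~\ref{lem:coequaliser_Top_quotient}. I would cite Day--Kelly (or Michael) for the characterisation rather than reprove it, since the excerpt has already introduced both the open-cover and net-lifting formulations of biquotientness and we only need the standard equivalence with ``stably a quotient map.'' Combined with Proposition~\ref{pro:Top_biquotient}, which establishes that $\covers_\limlift$ is itself a subcanonical pretopology, we conclude that $\covers_\limlift$ is the largest subcanonical pretopology on $\Top$.
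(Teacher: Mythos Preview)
Your proposal is correct and follows exactly the paper's strategy: reduce to the characterisation ``biquotient $=$ stably quotient'' and then read off the result from the pretopology axioms plus Lemma~\ref{lem:coequaliser_Top_quotient}. The only difference is that the paper does not cite Day--Kelly but instead proves the nontrivial implication (all pullbacks quotient $\Rightarrow$ biquotient) directly, via precisely the test-space construction you sketch: from a point~$y_\infty$ and an open cover~$\mathcal{U}$ of $f^{-1}(y_\infty)$ witnessing failure of biquotientness, it builds the directed set $I=\{(F,V)\}$ of finite subfamilies and neighbourhoods, adjoins a point~$\infty$ to form~$I^+$, takes $Z$ to be the graph of a net $i\mapsto y_i\in V\setminus\bigcup_{U\in F}f(U)$ inside $I^+\times Y$, and checks that the projection $Z\times_Y X\to Z$ is not a quotient map.
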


\begin{proof}
  We claim that a continuous map \(f\colon X\to Y\) is biquotient if
  and only if \(\pr_1\colon Z\times_{g,Y,f} X\to Z\) is a quotient
  map for any map \(g\colon Z\to Y\).  We have already seen that
  biquotient maps form a pretopology; in particular, if~\(f\) is
  biquotient, then \(\pr_1\colon Z\times_{g,Y,f} X \to Z\) is
  biquotient and thus quotient.  The converse remains to be proved.
  It implies the proposition because the covers of a subcanonical
  pretopology on~\(\Top\) must be quotient maps by
  Lemma~\ref{lem:coequaliser_Top_quotient}, hence biquotient by our
  claim.

  Assume that \(\pr_1\) is a quotient map for any map \(g\colon Z\to
  Y\), although~\(f\) is not biquotient; we want to arrive at a
  contradiction.  By assumption, there are \(y_\infty\in Y\) and an
  open covering~\(\mathcal{U}\) of~\(f^{-1}(y_\infty)\) such that for
  any finite set \(F\subseteq\mathcal{U}\), \(\bigcup_{U\in F} f(U)\)
  is not a neighbourhood of~\(y_\infty\).  Let~\(I\) be the set of
  pairs~\((F,V)\) for a finite subset \(F\subseteq\mathcal{U}\) and an
  open neighbourhood~\(V\) of~\(y_\infty\) in~\(Y\).  We order~\(I\)
  by \((F_1,V_1)\le (F_2,V_2)\) if \(F_1\subseteq F_2\) and
  \(V_1\supseteq V_2\); this gives a directed set.  By assumption,
  \(V\setminus \bigcup_{U\in F} f(U)\neq\emptyset\) for all \((F,V)\in
  I\); we choose~\(y_{F,V}\) in this difference.  Since \(y_{F,V}\in
  V\), \(\lim y_{F,V}=y_\infty\).

  Let \(I^+\defeq I\sqcup\{\infty\}\), topologised so that any
  subset of~\(I\) is open and a subset~\(W\) containing~\(\infty\)
  is open if and only if there is \(i\in I\) with \(j\in W\) for all
  \(j\ge i\).  Let
  \[
  Z\defeq \{(i,y)\in I^+\times Y \mid y=y_i\}
  \]
  be the graph of the function \(I^+\ni i\mapsto y_i\in Y\), equipped
  with the subspace topology; let \(g\colon Z\to Y\) be the second
  coordinate projection.  We identify
  \[
  Z\times_{g,Y,f} X\cong \{(i,x)\in I^+\times X\mid f(x)=y_i\},
  \qquad (i,y,x)\mapsto (i,x),
  \]
  for \((i,y)\in Z\), \(x\in X\) with \(f(x)=g(i,y)=y\).  This
  bijection is a homeomorphism for the subspace topologies from
  \(Z\times X\subseteq I^+\times Y\times X\) and \(I^+\times X\).

  The subset \(S\defeq Z\setminus\{(\infty,y_\infty)\}\) in~\(Z\) is
  not closed because \(\lim_{i\in I} (i,y_i)=(\infty,y_\infty)\).
  Since the coordinate projection \(Z\times_{g,Y,f} X\to Z\) is
  assumed to be a quotient map, the preimage~\(\hat{S}\) of~\(S\) is
  not closed in \(Z\times_{g,Y,f} X\).  Hence there is a net
  \((i_j,x_j)_{j\in J}\) in~\(\hat{S}\) that converges in
  \(Z\times_{g,Y,f} X\) towards some point in the complement
  of~\(\hat{S}\), that is, of the form \((\infty,x_\infty)\) with
  \(x_\infty\in X\) such that \(f(x_\infty)=y_\infty\).  That is,
  \(\lim i_j=\infty\) in~\(I^+\) and \(\lim x_j=x_\infty\) in~\(X\).
  We have
  \(f(x_j)=y_{i_j}\) because \((i_j,x_j)\in Z\times_{g,Y,f} X\).

  Since~\(\mathcal{U}\) is an open covering of~\(f^{-1}(y_\infty)\)
  and \(f(x_\infty)=y_\infty\), there is \(U\in\mathcal{U}\) with
  \(x_\infty\in U\).  Then also \(x_j\in U\) for sufficiently large
  \(j\in J\).  Since \(i_j\to\infty\), there is \(j_0\in J\) with
  \(i_j\ge (\{U\},Y)\) for all \(j\ge j_0\); thus \(y_{i_j}\notin
  f(U)\) for \(j\ge j_0\).  Then \(x_j\notin U\) for \(j\ge j_0\), a
  contradiction.  Hence the coordinate projection \(Z\times_{g,Y,f}
  X\to Z\) cannot be a quotient map.
\end{proof}

\begin{remark}
  \label{rem:triquotient}
  There are several other notions of improved quotient maps, such as
  triquotient maps (see \cites{Michael:Complete_tri-quotient,
    Clementino-Hofmann:Limit_stability}).  Triquotient maps also form
  a subcanonical pretopology satisfying Assumptions
  \ref{assum:local_cover} and~\ref{assum:two-three}.  The biquotient
  map \((X,\tau_2)\to Y\) is a triquotient map as well, however.  Thus
  the same counterexample as above shows that
  Assumption~\ref{assum:covering_acts_basically} fails for the
  pretopology of triquotient maps.  The surjective open maps form the
  largest pretopology on~\(\Top\) for which we know
  Assumption~\ref{assum:covering_acts_basically}.
\end{remark}

Now we consider groupoids in \((\Top,\covers_\limlift)\).  Any map
with a continuous section is limit lifting.  Since the unit map is a
section for the range and source maps in a groupoid, the condition
that \(\rg\) and~\(\s\) be limit lifting is redundant in the first
definition of a groupoid.  In the second definition of a groupoid, it
is enough to assume that \(\rg\) and~\(\s\) are both quotient maps:
this suffices to construct a continuous unit map, which then implies
that \(\rg\) and~\(\s\) are limit lifting.

The category~\(\Top\) has arbitrary colimits, so~\(\Act/\Gr\) exists
for any \(\Gr\)\nb-action~\(\Act\); it is the set of
orbits~\(\Act/\Gr\) with the quotient topology.

\begin{proposition}
  \label{pro:Top_basic}
  Let~\(\covers\) be any subcanonical pretopology on~\(\Top\).  Let
  \(\Gr\) be a groupoid and~\(\Act\) a \(\Gr\)\nb-action in
  \((\Top,\covers)\).

  The \(\Gr\)\nb-action~\(\Act\) is basic if and only if it satisfies
  the following conditions:
  \begin{enumerate}
  \item the \(\Gr\)\nb-action~\(\Act\) is free;
  \item the map \(\Act\times_{\bunp,\Act/\Gr,\bunp} \Act\to\Gr^1\)
    that maps \(x_1,x_2\in\Act\) in the same orbit to the unique
    \(g\in\Gr^1\) with \(\s(x_1)=\rg(g)\) and \(x_1\cdot g = x_2\)
    is continuous;
  \item the orbit space projection \(\Act\to\Act/\Gr\) is
    in~\(\covers\).
  \end{enumerate}
  The first two conditions hold for any \(\Gr\)\nb-action of a
  \v{C}ech groupoid of a cover in~\((\Top,\covers)\).
\end{proposition}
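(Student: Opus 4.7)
My plan is to prove both directions by directly unpacking the definition of a principal bundle, leveraging the fact that an isomorphism in $\Top$ is exactly a continuous bijection with continuous inverse. The three conditions correspond, respectively, to the bijectivity and bicontinuity of the principality isomorphism~\eqref{eq:principal_bundle} and to the cover requirement on the bundle projection.

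For the forward direction, suppose $\Act$ is basic with bundle projection $\bunp\colon \Act\prto\Base$. By Lemma~\ref{lem:principal_bundle}, $\bunp$ is equivalent to the orbit space projection $\Act\prto\Act/\Gr$, which is therefore a cover in~$\covers$; this is~(3). The principality map $(\pr_1,\mul)\colon \Act\times_{\s,\Gr^0,\rg}\Gr^1\congto \Act\times_{\bunp,\Base,\bunp}\Act$ is an isomorphism, hence a homeomorphism. Its injectivity, applied to pairs $(x,g)$ and $(x,1_{\s(x)})$ both mapping to $(x,x)$, forces freeness, which is~(1). The inverse is of the form $(x_1,x_2)\mapsto (x_1,\gamma(x_1,x_2))$ for a continuous map~$\gamma$ (the second coordinate), and by definition $\gamma(x_1,x_2)$ is the unique $g$ with $x_1\cdot g=x_2$; this~$\gamma$ is exactly the map appearing in~(2).

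For the converse, take $\bunp\colon \Act\to\Act/\Gr$ as bundle projection, which is a cover by~(3). The map $(\pr_1,\mul)$ is continuous and surjective (since $\bunp(x_1)=\bunp(x_2)$ means by definition of the orbit space that $x_2=x_1\cdot g$ for some~$g$). It is injective: if $x\cdot g_1=x\cdot g_2$, then setting $h=g_1g_2^{-1}$ (well-defined because $\s(g_1)=\s(g_2)$ from the equality) gives a loop at~$\s(x)$ with $x\cdot h=x$, and freeness~(1) forces $h=1_{\s(x)}$, hence $g_1=g_2$. The inverse sends $(x_1,x_2)$ to $(x_1,g)$ where~$g$ is the unique element with $x_1\cdot g=x_2$; its first coordinate is $\pr_1$ and its second is the map in~(2), both continuous. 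Hence $(\pr_1,\mul)$ is a continuous bijection with continuous inverse, i.e.\ a homeomorphism, making~$\Act$ a principal $\Gr$-bundle over~$\Act/\Gr$ and so basic.

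For the last statement, let~$\Gr$ be the Čech groupoid of a cover $\bunp_0\colon X\prto Y$, so $\Gr^1=X\times_{\bunp_0,Y,\bunp_0} X$ with $\rg(x_1,x_2)=x_1$, $\s(x_1,x_2)=x_2$, $1_x=(x,x)$. For any $\Gr$-action~$\Act$ with anchor $\s\colon \Act\to X$: if $a\cdot(x_1,x_2)=a$ with $\s(a)=x_1$, then $x_1=\s(a)=\s(a\cdot(x_1,x_2))=x_2$, so $(x_1,x_2)=1_{x_1}$, proving~(1). For~(2), any $g=(x_1,x_2)\in\Gr^1$ with $a_1\cdot g=a_2$ must satisfy $x_1=\rg(g)=\s(a_1)$ and $x_2=\s(g)=\s(a_2)$, so the unique such~$g$ is $(\s(a_1),\s(a_2))$; the map in~(2) is thus $(a_1,a_2)\mapsto(\s(a_1),\s(a_2))$, which is continuous.

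The argument is essentially a direct unpacking; the only mildly delicate point is the elementwise calculation showing that freeness is equivalent to injectivity of $(\pr_1,\mul)$, and the observation that for Čech groupoids the solution~$g$ in~(2) admits an explicit continuous formula in terms of the anchor map.
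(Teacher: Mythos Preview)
Your proof is correct and follows essentially the same approach as the paper's: both identify the bundle projection with the orbit space projection via Lemma~\ref{lem:principal_bundle}, then match conditions (1)--(3) with the bijectivity, bicontinuity, and cover requirements for the shear map, and handle the \v{C}ech case by observing that the unique~$g$ is explicitly $(\s(a_1),\s(a_2))$. Your write-up spells out the injectivity-from-freeness step in slightly more detail than the paper (which defers it to Proposition~\ref{pro:Sets_basic}), but the argument is the same.
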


\begin{proof}
  By Lemma~\ref{lem:principal_bundle}, the bundle projection for a
  basic action must be the orbit space projection \(\bunp\colon
  \Act\prto\Act/\Gr\).  For a basic action, this must be a cover.  The
  continuous map \(\Act\times_{\s,\Gr^0,\rg} \Gr^1 \to
  \Act\times_{\bunp,\Act/\Gr,\bunp}\Act\), \((x,g)\mapsto (x,x\cdot g)\),
  is surjective by the definition of~\(\Act/\Gr\).  It is bijective
  if and only if it is injective if and only if the action is free
  (see Proposition~\ref{pro:Sets_basic}).  In this case, the inverse
  map sends \((x_1,x_2)\in\Act^2\) with \(\bunp(x_1)=\bunp(x_2)\) to
  \((x_1,g)\) for the unique \(g\in\Gr^1\) with \(\s(x_1)=\rg(g)\)
  and \(x_1\cdot g=x_2\).  This inverse map is continuous if and
  only if the map \((x_1,x_2)\mapsto g\) in~(2) is continuous.  Hence
  the action is basic if and only if (1)--(3) hold.

  Let~\(\Gr\) be the \v{C}ech groupoid of a cover \(f\colon
  \Act[Y]\to\Base\).  Let \((x_1,x_2)\in\Act\) be in the same
  \(\Gr\)\nb-orbit.  Then any \(g\in\Gr^1\) with \(\s(x_1)=\rg(g)\)
  and \(x_1\cdot g=x_2\) will satisfy \(\s(g)=\s(x_1\cdot
  g)=\s(x_2)\), hence \(g=(\s(x_1),\s(x_2))\).  This is unique and
  depends continuously on \(x_1,x_2\), giving (1) and~(2)
  above.
\end{proof}

Hence the only way that \v{C}ech groupoid actions in
\((\Top,\covers)\) may fail to be basic is by the orbit space
projection not being a cover.

\begin{example}
  \label{exa:free_not_principal}
  We construct a free groupoid action in~\((\Top,\covers_\limlift)\)
  that satisfies~(3) in Proposition~\ref{pro:Top_basic},
  but not~(2).  Let~\(\R_d\) be~\(\R\) with the discrete
  topology.  The translation action of~\(\R_d\) on~\(\R\) with the usual
  topology is free.  Its orbit space has only one point, so the map
  \(\R\to \R/\R_d\) is a cover.  Condition~(2) is violated, so the
  action is not basic.
\end{example}

The transformation groupoid of the non-basic action in
Example~\ref{exa:biquotient_covering_not_basic} satisfies (1) and~(2)
in Proposition~\ref{pro:Top_basic}, but not~(3).

\subsubsection{Hausdorff orbit spaces}
\label{sec:Hausdorff_orbit}

We now replace~\(\Top\) by its full subcategory~\(\Hausdorff\) of
Hausdorff topological spaces.  The proof of
Proposition~\ref{pro:biquotient_largest} still works in this
subcategory, showing that the biquotient maps form the largest
subcanonical pretopology on~\(\Hausdorff\).  A groupoid
in~\((\Hausdorff,\covers_\limlift)\) is the same as a groupoid
in~\((\Top,\covers_\limlift)\) with Hausdorff spaces \(\Gr^0\)
and~\(\Gr^1\).  A \(\Gr\)\nb-action for a Hausdorff groupoid on a
Hausdorff space is basic in \((\Hausdorff,\covers_\limlift)\) if and
only if it is basic in~\((\Top,\covers_\limlift)\) \emph{and the
  orbit space is Hausdorff}.  This is not automatic:

\begin{example}
  \label{exa:covering_non-Hausdorff}
  Let \(X\defeq [0,1] \sqcup [0,1]\) and let~\(Y\) be the quotient
  of~\(X\) by the relation that identifies the two copies of
  \((0,1]\).  This is a non-Hausdorff, locally Hausdorff space.  The
  quotient map \(X\to Y\) is open and hence biquotient.  Therefore,
  its \v{C}ech groupoid is basic in~\((\Top,\covers_\limlift)\)
  (Example~\ref{exa:covering_groupoid_principal}).  This \v{C}ech
  groupoid is also a groupoid in~\((\Hausdorff,\covers_\limlift)\)
  because its object space \(\Gr^0=X\) and arrow space \(\Gr^1=
  X\times_Y X\subseteq X\times X\) are both Hausdorff.  Since its
  orbit space is non-Hausdorff, it is \emph{not} basic
  in~\((\Hausdorff,\covers_\limlift)\).
\end{example}

\begin{proposition}
  \label{pro:Hausdorff_quotient}
  Let \(f\colon X\to Y\) be a biquotient map.  The space~\(Y\) is
  Hausdorff if and only if the subset \(X\times_{f,Y,f} X\subseteq
  X\times X\) is closed.
\end{proposition}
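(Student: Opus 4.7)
The plan is to reduce both directions to the standard criterion that $Y$ is Hausdorff if and only if the diagonal $\Delta_Y\subseteq Y\times Y$ is closed, together with the observation that $X\times_{f,Y,f} X = (f\times f)^{-1}(\Delta_Y)$.

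First, I would dispose of the easy direction: if $\Delta_Y$ is closed in $Y\times Y$, then by continuity of $f\times f$ its preimage $X\times_{f,Y,f} X$ is closed in $X\times X$. This direction does not use that $f$ is biquotient, only that $f$ is continuous.

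For the converse, the strategy is to show that $f\times f\colon X\times X\to Y\times Y$ is a quotient map, so that a subset of $Y\times Y$ is closed if and only if its preimage is. Then the closedness of $X\times_{f,Y,f} X = (f\times f)^{-1}(\Delta_Y)$ forces $\Delta_Y$ to be closed, hence $Y$ Hausdorff. To produce $f\times f$ as a quotient map, I would apply Lemma~\ref{lem:final_object_products}: biquotient maps form a pretopology on the category of non-empty topological spaces that satisfies Assumption~\ref{assum:final} (the singleton is final, and every map from a non-empty space to it is biquotient), so the product $f\times f = f\times_\star f$ of two biquotient maps is biquotient, and in particular a quotient map.

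The only subtlety will be the degenerate case when $X$ is empty: then $Y$ is empty (as $f$ is surjective), both sides of the equivalence hold trivially, and the reduction via Lemma~\ref{lem:final_object_products} is not needed. Aside from this edge case, which is handled in a sentence, the argument is just the diagonal criterion combined with stability of biquotient maps under products; there is no real obstacle, since the technical content has already been packaged into Proposition~\ref{pro:Top_biquotient} and Lemma~\ref{lem:final_object_products}.
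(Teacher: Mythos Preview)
Your proposal is correct but takes a genuinely different route from the paper's proof.

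The paper argues the hard direction by hand, directly from the covering definition of a biquotient map (Definition~\ref{def:biquotient}): given \(y\neq y'\), it first uses the biquotient property at~\(y'\) to replace each pair of separating opens by a finite union whose image is a neighbourhood of~\(y'\), and then uses the biquotient property at~\(y\) to do the same on the other side. This two-step finitisation produces disjoint neighbourhoods of \(y\) and~\(y'\). No auxiliary result about products is invoked.

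Your argument instead packages everything into one structural observation: since biquotient maps form a pretopology satisfying Assumption~\ref{assum:final} (Proposition~\ref{pro:Top_biquotient}), Lemma~\ref{lem:final_object_products} gives that \(f\times f\) is biquotient, hence a quotient map, and the diagonal criterion finishes. This is shorter and more conceptual, and it exploits the categorical framework the paper has already built. The paper's direct argument, by contrast, is self-contained and can be read without reference to Lemma~\ref{lem:final_object_products} (whose proof is outsourced to~\cite{Henriques:L-infty}); it also makes visible exactly how the biquotient hypothesis is used, which is instructive given that the analogous statement fails for mere quotient maps. One small point: you invoke the diagonal criterion for Hausdorffness as a standard fact, which is fine, but note that in the paper this criterion appears as Corollary~\ref{cor:Hausdorff_diagonal_closed}, \emph{derived from} the present proposition---so you should not cite that corollary, only the elementary fact itself.
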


\begin{proof}
  If~\(Y\) is Hausdorff and \(f\colon X\to Y\) is continuous, then
  \[
  X\times_{f,Y,f} X = \{(x_1,x_2)\in X\times X\mid f(x_1)=f(x_2)\}
  \]
  is closed: if \((x_1,x_2)\in X\times X\) with \(f(x_1)\neq
  f(x_2)\), then there are neighbourhoods~\(U_i\) of \(f(x_i)\)
  in~\(Y\) for \(i=1,2\) with \(U_1\cap U_2=\emptyset\)
  because~\(Y\) is Hausdorff; then \(f^{-1}(U_1)\times f^{-1}(U_2)\)
  is a neighbourhood of \((x_1,x_2)\) in~\(X\times X\) that does not
  intersect~\(X\times_{f,Y,f} X\).

  It remains to show that~\(Y\) is Hausdorff if~\(X\times_{f,Y,f}
  X\) is closed in \(X\times X\) and~\(f\) is biquotient.  We choose
  \(y, y'\in Y\) with \(y\neq y'\) and try to separate them by
  neighbourhoods.  If \(x,x'\in X\) satisfy \(f(x)=y\) and
  \(f(x')=y'\), then \((x,x')\notin X\times_{f,Y,f} X\), so there
  are open neighbourhoods \(U_{x,x'}\) and~\(U'_{x,x'}\) of \(x\)
  and~\(x'\), respectively, with \(U_{x,x'}\times U'_{x,x'}\cap
  X\times_{f,Y,f} X=\emptyset\); that is, \(f(U_{x,x'})\cap
  f(U'_{x,x'})=\emptyset\).  Choose such neighbourhoods for all
  \((x,x')\in f^{-1}(y)\times f^{-1}(y')\).  For fixed~\(x\), the
  open sets~\(U'_{x,x'}\) for \(x'\in f^{-1}(y')\)
  cover~\(f^{-1}(y')\).  Since~\(f\) is biquotient, there is a
  finite set \(A_x\subseteq f^{-1}(y')\) such that \(V_x\defeq
  \bigcup_{x'\in A_x} f(U'_{x,x'})\) is a neighbourhood of~\(y'\)
  in~\(Y\).

  Let \(U_x\defeq \bigcap_{x'\in A_x} U_{x,x'}\).  This is an open
  neighbourhood of~\(x\) because~\(A_x\) is finite.  We have
  \(f(U_x)\cap f(U'_{x,x'})=\emptyset\) for all \(x'\in A_x\) and
  hence \(f(U_x)\cap V_x=\emptyset\).  Since~\(f\) is biquotient and
  the open subsets~\(U_x\) cover \(f^{-1}(y)\), there is a finite
  subset \(B\subseteq f^{-1}(y)\) such that \(\bigcup_{x\in B}
  f(U_x)\) is a neighbourhood of~\(y\) in~\(Y\).  The finite
  intersection \(\bigcap_{x\in B} V_x\) is a neighbourhood of~\(y'\)
  in~\(Y\).  These two neighbourhoods separate \(y\) and~\(y'\).
\end{proof}

\begin{corollary}
  \label{cor:Hausdorff_diagonal_closed}
  A topological space~\(X\) is Hausdorff if and only if the diagonal
  is a closed subset in~\(X\times X\).
\end{corollary}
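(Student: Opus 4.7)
The plan is to derive this corollary as an immediate special case of Proposition~\ref{pro:Hausdorff_quotient}. The identity map \(\id_X\colon X\to X\) is a homeomorphism, hence a biquotient map (isomorphisms belong to every pretopology, in particular to~\(\covers_\limlift\), and a homeomorphism is trivially limit lifting). Applying the proposition with \(f=\id_X\) and \(Y=X\), the fibre product \(X\times_{\id,X,\id} X\) is exactly the diagonal \(\Delta_X = \{(x_1,x_2)\in X\times X \mid x_1=x_2\}\subseteq X\times X\). The proposition then gives that \(X\) is Hausdorff if and only if \(\Delta_X\) is closed in \(X\times X\), which is the claim.

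The only thing worth spelling out is that the identification of the categorical fibre product with the diagonal is via the obvious map \((x_1,x_2)\mapsto (x_1,x_2)\) — a homeomorphism onto its image with the subspace topology — so no subtlety enters. There is no real obstacle here; the work has already been done in Proposition~\ref{pro:Hausdorff_quotient}, and this corollary simply records the \(f=\id\) case as a self-contained standard fact.
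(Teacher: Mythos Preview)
Your proof is correct and matches the paper's own proof exactly: the paper simply says ``Apply Proposition~\ref{pro:Hausdorff_quotient} to the identity map.'' Your additional remarks about why~\(\id_X\) is biquotient and why the fibre product is the diagonal are accurate elaborations of this one-line argument.
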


\begin{proof}
  Apply Proposition~\ref{pro:Hausdorff_quotient} to the identity map.
\end{proof}

The following definitions are needed to characterise when the orbit
space of a basic action in~\((\Top,\covers_\limlift)\) is Hausdorff.

\begin{definition}[\cite{Bourbaki:Topologie_generale}*{I.10.1},
  ``application propre'']
  \label{def:proper_map}
  A map \(f\colon X \to Y\) of topological spaces is \emph{proper}
  if \(f\times\id_Z\colon X\times Z \to Y\times Z\) is closed for
  any topological space~\(Z\).
\end{definition}

\begin{definition}
  \label{def:proper_action}
  An action of a topological groupoid~\(\Gr\) on a topological
  space~\(\Act\) is \emph{proper} if the map
  \begin{equation}
    \label{eq:action_map_proper}
    \Act\times_{\s,\Gr^0,\rg} \Gr^1\to \Act\times \Act,\qquad
    (x,g)\mapsto (x,x\cdot g),
  \end{equation}
  is proper.  A groupoid~\(\Gr\) is \emph{proper} if its canonical
  action on~\(\Gr^0\) is proper, that is, the map \((\rg,\s)\colon
  \Gr^1\to \Gr^0\times\Gr^0\) is proper.
\end{definition}

A map from a Hausdorff space~\(X\) to a Hausdorff, locally compact
space~\(Y\) is proper if and only if preimages of compact subsets
are compact (\cite{Bourbaki:Topologie_generale}*{Proposition 7 in
  I.10.3}).

\begin{proposition}
  \label{pro:Hausdorff_basic}
  Let~\(\covers\) be any subcanonical pretopology on~\(\Hausdorff\).
  Let\/~\(\Gr\) be a groupoid and\/~\(\Act\) a \(\Gr\)\nb-action
  in~\((\Hausdorff,\covers)\).  The \(\Gr\)\nb-action~\(\Act\) is
  basic in \((\Hausdorff,\covers)\) if and only if it satisfies the
  following conditions:
  \begin{enumerate}
  \item the \(\Gr\)\nb-action~\(\Act\) is free;
  \item the \(\Gr\)\nb-action~\(\Act\) is proper;
  \item the orbit space projection \(\Act\to\Act/\Gr\) belongs
    to~\(\covers\).
  \end{enumerate}
  The first two conditions hold for any \(\Gr\)\nb-action of a
  \v{C}ech groupoid of a cover in~\((\Hausdorff,\covers)\).
\end{proposition}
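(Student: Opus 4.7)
The strategy is to lean on the closed-graph criterion for Hausdorffness of biquotient images, Proposition~\ref{pro:Hausdorff_quotient}, combined with the fact that every cover in~\(\covers\) is biquotient by the Hausdorff analogue of Proposition~\ref{pro:biquotient_largest}. Throughout I use that homeomorphisms, closed embeddings, and compositions of proper maps are proper, and that proper maps are closed (\cite{Bourbaki:Topologie_generale}*{I.10}). The observation underlying both directions is that when the action is free, the action map \(\alpha\colon\Act\times_{\s,\Gr^0,\rg}\Gr^1\to\Act\times\Act\) is injective, and an injective continuous map is proper if and only if it is a closed embedding; this single fact delivers simultaneously the continuity of the arrow-extraction map and the closure of the orbit relation in~\(\Act\times\Act\) needed for Hausdorffness of~\(\Act/\Gr\), and is where the main technical work sits.

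For the forward direction, suppose~\(\Act\) is basic with bundle projection \(\bunp\colon\Act\prto\Act/\Gr\); conditions~(1) and~(3) are immediate from the definition and Lemma~\ref{lem:principal_bundle}. To obtain~(2) I factor~\(\alpha\) as the principality isomorphism $\Act\times_{\s,\Gr^0,\rg}\Gr^1 \congto \Act\times_{\bunp,\Act/\Gr,\bunp}\Act$ followed by the inclusion into~\(\Act\times\Act\). Because \(\Act/\Gr\) is Hausdorff and~\(\bunp\) is biquotient, Proposition~\ref{pro:Hausdorff_quotient} makes $\Act\times_{\bunp,\Act/\Gr,\bunp}\Act$ closed in~\(\Act\times\Act\), so the inclusion is a closed embedding; composing with the homeomorphism shows~\(\alpha\) is proper.

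For the backward direction, assume (1)--(3). Since~(3) gives \(\bunp\in\covers\), the orbit space~\(\Act/\Gr\) is Hausdorff and~\(\bunp\) is a cover. By~(1) and~(2), \(\alpha\) is an injective proper continuous map, hence a closed embedding onto its image $R=\Act\times_{\bunp,\Act/\Gr,\bunp}\Act$ (which equals the orbit relation by construction of~\(\bunp\)). Thus the principality isomorphism~\eqref{eq:principal_bundle} is realised, and~\(\Act\) is basic.

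For the final claim, let~\(\Gr\) be the \v{C}ech groupoid of a cover \(f\colon Y\prto B\) and let~\(\Act\) carry a \(\Gr\)\nb-action with anchor~\(\s\). Freeness is immediate since \((z_1,z_2)\cdot x=x\) forces \(z_1=z_2=\s(x)\). For properness, I identify $\Act\times_{\s,Y,\rg}\Gr^1\cong\Act\times_{f\circ\s,B,f}Y$ via \((x,(\s(x),z))\mapsto(x,z)\), so that~\(\alpha\) rewrites as \(\beta(x,z)\defeq(x,\,x\cdot(\s(x),z))\). The continuous map \(\gamma(x,y)\defeq(x,\s(y))\) is a left inverse to~\(\beta\), so~\(\beta\) is a homeomorphism onto its image. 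That image lies in the closed subspace $\Act\times_{f\circ\s,B,f\circ\s}\Act\subseteq\Act\times\Act$ (closed because~\(B\) is Hausdorff) and equals the equaliser therein of the two continuous maps $(x,y)\mapsto y$ and $(x,y)\mapsto x\cdot(\s(x),\s(y))$ into the Hausdorff space~\(\Act\); hence it is closed in~\(\Act\times\Act\). Thus~\(\beta\) is a closed embedding, hence proper.
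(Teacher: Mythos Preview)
Your proof is correct and follows essentially the same approach as the paper's: both use that \(\covers\subseteq\covers_\limlift\), that an injective continuous map is proper iff it is a closed embedding (packaging the continuity of the arrow-extraction map and closedness of the orbit relation into a single statement), and Proposition~\ref{pro:Hausdorff_quotient} to tie closedness of the orbit relation to Hausdorffness of~\(\Act/\Gr\). Your treatment is slightly more direct in that you avoid routing through Proposition~\ref{pro:Top_basic}, and your use of the continuous retraction~\(\gamma\) in the \v{C}ech-groupoid part is a neat alternative to invoking the explicit formula from that proposition, but the paper's argument is otherwise the same.
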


\begin{proof}
  Since~\(\covers_\limlift\) is the largest subcanonical pretopology
  on~\(\Hausdorff\), all covers in~\(\covers\) are biquotient.  Thus
  a basic \(\Gr\)\nb-action in~\((\Hausdorff,\covers)\) is still
  basic in~\((\Top,\covers_\limlift)\); furthermore, its orbit
  space~\(\Act/\Gr\) is Hausdorff, and the orbit space projection
  \(\bunp\colon \Act\to\Act/\Gr\) is in~\(\covers\).  Conversely,
  these three conditions imply that the action is basic
  in~\((\Hausdorff,\covers)\).

  Proposition~\ref{pro:Top_basic} describes when the action is basic
  in~\((\Top,\covers_\limlift)\).  An injective continuous map is
  proper if and only if it is closed, if and only if it is a
  homeomorphism onto a closed subset (see
  \cite{Bourbaki:Topologie_generale}*{I.10.1, Proposition 2}).  Thus
  a free action is proper if and only if the bijection
  \(\Act\times_{\s,\Gr^0,\rg}
  \Gr^1\to\Act\times_{\bunp,\Act/\Gr,\bunp}\Act\) is a homeomorphism
  and \(\Act\times_{\bunp,\Act/\Gr,\bunp}\Act\) is closed
  in~\(\Act\times\Act\).  The first part of this is the second
  condition in Proposition~\ref{pro:Top_basic}, and the closedness
  of \(\Act\times_{\bunp,\Gr^0,\bunp}\Act\) in~\(\Act\times\Act\) is
  equivalent to~\(\Act/\Gr\) being Hausdorff by
  Proposition~\ref{pro:Hausdorff_quotient} because~\(\bunp\) is
  biquotient.  Now it is routine to see that (1)--(3)
  above characterise basic actions in~\((\Hausdorff,\covers)\).

  The first two conditions in Proposition~\ref{pro:Top_basic} are
  automatic for actions of \v{C}ech groupoids
  in~\((\Top,\covers_\limlift)\).  It remains to show that
  \(\Act\times_{\bunp,\Act/\Gr,\bunp}\Act\) is closed
  in~\(\Act\times\Act\) for any action of a \v{C}ech groupoid
  in~\((\Hausdorff,\covers_\limlift)\).  Let \(Y\) and~\(Z\) be
  Hausdorff spaces and let \(f\colon Y\prto Z\) be a
  biquotient map with \v{C}ech groupoid~\(\Gr\).  Thus
  \(Y=\Gr^0\) and the anchor map on~\(\Act\) is a map \(\s\colon
  \Act\to Y\).  Since~\(Z\) is Hausdorff,
  \[
  \Act\times_Z \Act = \{(x_1,x_2)\in\Act\times\Act\mid
  f(\s(x_1))=f(\s(x_2))\}
  \]
  is a closed subset of~\(\Act\times\Act\).  If
  \((x_1,x_2)\in\Act\times_Z \Act\), then
  \((\s(x_1),\s(x_2))\in\Gr^1\), so \(x_2'\defeq x_1\cdot
  (\s(x_1),\s(x_2))\in\Act\) is defined.  This is the unique element
  in the orbit of~\(x_1\) with \(\s(x_2')=\s(x_2)\).  Thus
  \[
  \Act\times_{\Act/\Gr}\Act = \{(x_1,x_2)\in\Act\times_Z\Act\mid
  x_1\cdot(\s(x_1),\s(x_2)) = x_2\}.
  \]
  Since~\(\Act\) is Hausdorff and the two maps sending \((x_1,x_2)\)
  to \(x_1\cdot(\s(x_1),\s(x_2))\) and~\(x_2\) are continuous, this
  subset is closed in \(\Act\times_Z\Act\) and hence
  in~\(\Act\times\Act\).
\end{proof}

We may also restrict to other subcategories of topological spaces,
still with biquotient maps as covers:

\begin{proposition}
  \label{pro:preserve_biquotient}
  Let \(f\colon X\to Y\) be biquotient.  The following properties
  are inherited by~\(Y\) if~\(X\) has them:
  \begin{enumerate}
  \item locally quasi-compact;
  \item having a countable base;
  \item sequential: a subset~\(A\) is closed if and only if it
    is closed under taking limits of convergent sequences in~\(A\);
  \item Fréchet: the closure of a subset~\(A\) is the set of
    all points that are limits of convergent sequences in~\(A\);
  \item \(k\)\nb-space: a subset is closed if and only if
    \(A\cap K\) is relatively compact in~\(K\) for any quasi-compact
    subset~\(K\).
  \end{enumerate}
\end{proposition}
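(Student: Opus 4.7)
My plan is to handle each of the five properties separately, using a common template: a biquotient map $f\colon X\prto Y$ is in particular surjective and quotient, so $A\subseteq Y$ is closed (respectively open) if and only if $f^{-1}(A)\subseteq X$ is, and the biquotient property (either in its cover-reduction form or in its limit-lifting form) lets one pass from data on $f^{-1}(y)$ to data on a neighborhood of $y$ in $Y$.

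For (1) I would use the cover-reduction form: given $y\in Y$ with open neighborhood $V$, use local quasi-compactness of $X$ to cover $f^{-1}(y)$ by interiors of quasi-compact sets $K_x\subseteq f^{-1}(V)$; biquotient extracts finitely many $x_1,\dotsc,x_n$ whose images $f(K_{x_i})$ cover a neighborhood of $y$; then $\bigcup_i f(K_{x_i})$ is quasi-compact, contained in $V$, and a neighborhood of $y$. For (2), with $\{U_n\}$ a countable base of $X$, I would show that the countable collection $V_F := \operatorname{int}\bigl(\bigcup_{U\in F} f(U)\bigr)$ indexed by finite $F\subseteq\{U_n\}$ is a base of $Y$: given $y\in W$ open, cover $f^{-1}(y)$ by basic $U_n\subseteq f^{-1}(W)$ and apply biquotient to find a finite $F$ with $y$ in the interior of $\bigcup_{U\in F} f(U)\subseteq W$, so $y\in V_F\subseteq W$.

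For (3) and (5) I would use only that $f$ is a quotient map together with continuity. If $A\subseteq Y$ is sequentially closed then $f^{-1}(A)$ is sequentially closed in $X$ (any convergent sequence is sent to a convergent sequence by continuity), hence closed since $X$ is sequential, hence $A$ is closed by the quotient property. For the $k$-space property, suppose $A\cap K$ is closed in $K$ for every compact $K\subseteq Y$; for any compact $K'\subseteq X$, the set $f(K')$ is compact in $Y$ and $f^{-1}(A)\cap K' = (f|_{K'})^{-1}\bigl(A\cap f(K')\bigr)$ is closed in $K'$ by continuity of $f|_{K'}$, so $f^{-1}(A)$ is closed in $X$ and hence $A$ in $Y$. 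For (4), the limit-lifting form of biquotient is essential: given $A\subseteq Y$ and $y\in\overline A$, pick a net in $A$ converging to $y$, lift it via biquotient to a net in $f^{-1}(A)$ converging to some $x\in f^{-1}(y)$, apply the Fréchet property of $X$ to find a \emph{sequence} in $f^{-1}(A)$ converging to $x$, and push it forward to a sequence in $A$ converging to $y$.

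None of these arguments involves a serious obstacle; the main bookkeeping is to track which form of the biquotient property each part uses, namely cover-reduction for (1) and~(2), limit-lifting for~(4), and nothing beyond the quotient property for (3) and~(5). The one place that requires a little care is (2), where one must take the \emph{interior} of $\bigcup_{U\in F} f(U)$ rather than that union itself, since images of open sets under a biquotient map need not be open; the biquotient cover-reduction is exactly what guarantees that this interior still contains~$y$.
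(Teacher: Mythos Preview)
Your arguments are correct, and your approach differs from the paper's in a useful way. The paper does not argue directly: for (1) and~(2) it simply cites \cite{Michael:Bi-quotient}*{Proposition~3.4}, and for (3)--(5) it invokes Michael's classification in~\cite{Michael:Quintuple} together with a composition argument (if a property~$P$ characterises the images of some class of spaces under a class~$Q$ of quotient-type maps, and biquotient maps composed with maps in~$Q$ remain in~$Q$, then~$P$ is inherited under biquotient images). This buys the paper generality---many properties in Michael's table are handled simultaneously---at the cost of being non-self-contained.

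Your route is elementary and transparent: you isolate exactly how much of the biquotient hypothesis each item consumes, noting in particular that (3) and~(5) need only that~$f$ is quotient (these are the classical facts that sequential spaces and \(k\)\nb-spaces are closed under quotients), while (4) genuinely needs limit-lifting since Fr\'echet spaces are not preserved under arbitrary quotients. Your observation about taking the interior in~(2) is exactly the point that makes that argument work. One small caveat in~(1): your phrasing ``\(K_x\subseteq f^{-1}(V)\)'' presumes the neighbourhood-base version of local quasi-compactness; if only the weaker ``every point has some quasi-compact neighbourhood'' is intended, drop the arbitrary~\(V\) and the argument still gives that version for~\(Y\).
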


\begin{proof}
  That biquotient images inherit the first two properties is stated
  in \cite{Michael:Bi-quotient}*{Proposition 3.4}.  The remaining
  statements follow from~\cite{Michael:Quintuple}, which
  characterises the images of various classes of ``nice'' spaces
  under five classes of ``quotient maps.''  If a property~P
  characterises the biquotient images of some particular class of
  ``nice'' spaces, then~P is inherited by biquotient images because
  composites of biquotient maps are again biquotient.  Composites of
  biquotient maps with quotient, hereditarily quotient, or countably
  biquotient maps are also again of the same sort.  Hence all the
  properties in rows 3--6 of the table in
  \cite{Michael:Quintuple}*{p.~93} are preserved under biquotient
  images.  This includes the properties of being sequential,
  Fréchet, or a \(k\)\nb-space given above, and many less familiar
  properties.
\end{proof}

The classes of spaces in the first row in the table in
\cite{Michael:Quintuple}*{p.~93} are, however, not closed under
biquotient images.  This includes the class of locally compact,
paracompact spaces and the class of metrisable spaces.
\cite{Michael:Quintuple} shows that any locally compact space is a
biquotient image of a locally compact paracompact space (it is a
biquotient image of the disjoint union of its compact subsets),
while any ``bisequential'' space is a biquotient image of a
metrisable space.

Since orbit space projections of basic actions in~\((\Top,\covers)\)
are biquotient, the orbit space~\(\Act/\Gr\) of a basic action
in~\((\Top,\covers)\) will inherit the properties listed in
Proposition~\ref{pro:preserve_biquotient} from~\(\Act\).
Let~\(\Cat\subseteq\Top\) be the full subcategory described by this
property, say, the full subcategory of \(k\)\nb-spaces, and
let~\(\covers\) be some pretopology on~\(\Top\).  It follows that a
groupoid action in~\((\Cat,\covers\cap\Cat)\) is basic if and only
if it is basic in~\((\Top,\covers)\); the latter is characterised by
Proposition~\ref{pro:Top_basic}.  If we add Hausdorffness, then a
groupoid action in
\((\Cat\cap\Hausdorff,\covers\cap\Cat\cap\Hausdorff)\) is basic if
and only if it is basic in \((\Hausdorff,\covers\cap\Hausdorff)\),
which is characterised by Proposition~\ref{pro:Hausdorff_basic}.

What if we want to restrict to a subcategory that is not closed
under biquotient images, say, the category of metrisable topological
spaces?  An action of a metrisable topological groupoid on a
metrisable topological space is basic with respect to a
pretopology~\(\covers\) contained in~\(\covers_\limlift\) if and
only if it satisfies the conditions in Proposition
\ref{pro:Hausdorff_basic} or~\ref{pro:Top_basic} and, in addition,
\(\Act/\Gr\) is metrisable.

\begin{example}
  Let \(Y\)~be bisequential and not metrisable.  Let \(f\colon
  X\prto Y\) be a biquotient map with \(X\)~metrisable.  Then
  \(X\times_{f,Y,f} X\subseteq X\times X\) is again metrisable.
  Hence the \v{C}ech groupoid of~\(f\) is a groupoid in the category
  of metrisable topological spaces with biquotient maps as covers.
  It is not basic in the category of metrisable spaces, however,
  because its orbit space is not metrisable.
\end{example}

\subsubsection{Covers defined by continuous sections}
\label{sec:Top_sections}

Let \(f\colon X\to Y\) be a continuous map.

\begin{definition}
  \label{def:continuous_sections}
  A \emph{global continuous section} for~\(f\) is a continuous map
  \(\sigma\colon Y\to X\) with \(f\circ\sigma=\id_Y\).
  Let~\(\covers_\splitsur\) be the class of all continuous maps with
  a global continuous section.

  A \emph{local continuous section} for~\(f\) at \(y\in Y\) is a
  pair \((U,\sigma)\) consisting of a neighbourhood~\(U\) of~\(y\)
  and a continuous map \(\sigma\colon U\to X\) with
  \(f\circ\sigma=\id_U\).  We call~\(f\) \emph{locally split}
  if such local continuous sections exist at all \(y\in Y\).
  Let~\(\covers_\locsplit\) be the class of all locally split
  continuous maps.

  We call~\(f\)
  \emph{numerably split} if there is a \emph{numerable} open
  covering~\(\mathcal{U}\)
  of~\(Y\)
  with local sections defined on all elements of~\(\mathcal{U}\);
  an open covering is numerable if it has a subordinate partition of
  unity (see~\cite{Dold:Partitions}).  Let~\(\covers_\numsplit\)
  be the class of numerably split maps.
\end{definition}

By definition,
\[
\covers_\splitsur\subseteq \covers_\numsplit \subseteq \covers_\locsplit.
\]
A local continuous section \(\sigma\colon U\to X\) for \(f\colon
X\to Y\) at \(y\in Y\) is determined by its image \(S\defeq
\sigma(U)\subseteq X\).  This is a subset on which~\(f\) restricts
to a homeomorphism onto a neighbourhood of~\(y\) in~\(Y\), and
any such subset gives a unique local continuous section for~\(f\)
near~\(y\).  The subset~\(S\) is also called a \emph{slice}.

\begin{proposition}
  \label{pro:Top_numsplit_pretopology}
  \(\covers_\splitsur\), \(\covers_\numsplit\) and
  \(\covers_\locsplit\) are subcanonical, saturated pretopologies on
  \(\Top\) and~\(\Hausdorff\) satisfying Assumptions
  \textup{\ref{assum:local_cover}}, \textup{\ref{assum:two-three}},
  \textup{\ref{assum:covering_acts_basically}},
  and~\textup{\ref{assum:covering_acts_basically_weak}}.  They
  satisfy~\textup{\ref{assum:final}} if we exclude the empty
  topological space.
\end{proposition}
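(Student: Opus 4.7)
The plan is to check the pretopology axioms uniformly for the three classes and then concentrate the real work on Assumption~\ref{assum:covering_acts_basically}; everything else is a transport of sections. Isomorphisms are covers because inverses furnish global sections. Composition of covers is clear: sections compose, and the product of two partitions of unity gives a locally finite partition of unity subordinate to the composite cover. For the fibre-product axiom, given \(g\colon Y\to X\) and a cover \(f\colon U\prto X\) with local section \(\sigma\colon V\to U\), the map \(y\mapsto (y,\sigma(g(y)))\) is a local section of \(\pr_1\colon Y\times_X U\to Y\) over \(g^{-1}(V)\); numerability transfers because continuous pull-backs of locally finite partitions of unity remain locally finite. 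Subcanonicality holds because a locally split map is locally a retraction, hence locally a quotient map, hence a quotient map (quotienthood is local over the base), and then Lemma~\ref{lem:coequaliser_Top_quotient} applies. Saturation (hence Assumptions \ref{assum:local_cover} and~\ref{assum:two-three}) follows because if \(f\circ p\) has a section~\(\tau\) of any of the three types, then \(p\circ \tau\) is a section of~\(f\) of the same type. Assumption~\ref{assum:final} is trivial on non-empty spaces.

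For Assumption~\ref{assum:covering_acts_basically}, let \(f\colon X\prto Y\) be a cover, \(\Gr\) its \v{C}ech groupoid, and \(\Act[Y]\) a \(\Gr\)\nb-action. Take \(\tilde{\Base}\defeq \Act[Y]/\Gr\) with the quotient topology and let \(\bar{f\circ\s}\colon \tilde{\Base}\to Y\) be the factorisation of the \(\Gr\)\nb-invariant map \(f\circ\s\). The key gadget is: for each local section \(\sigma\colon V\to X\) of~\(f\), the map
\[
\rho_V\colon \Act[Y]_V\defeq (f\circ\s)^{-1}(V)\to S_V\defeq \s^{-1}(\sigma(V)),\qquad y\mapsto y\cdot (\s(y),\sigma(f(\s(y))))
\]
is a continuous \(\Gr\)\nb-equivariant retraction onto the ``slice'' \(S_V\); it is well-defined because \((\s(y),\sigma(f(\s(y))))\in\Gr^1\) depends continuously on~\(y\). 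Since \(\Act[Y]_V\) is open and \(\Gr\)\nb-saturated, the restriction \(\pi|_{\Act[Y]_V}\) is a quotient map, so \(\rho_V\) descends to a continuous inverse of \(\pi|_{S_V}\colon S_V\to \pi(\Act[Y]_V)\); each orbit meets \(S_V\) in exactly one point, so this is a bijection. Thus \(S_V\) furnishes a continuous section of \(\pi\) over the open set \(\pi(\Act[Y]_V)=\bar{f\circ\s}^{-1}(V)\).

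Covering \(Y\) by open sets \(V_i\) with local sections of~\(f\), the sets \(\bar{f\circ\s}^{-1}(V_i)\) cover \(\tilde{\Base}\), showing \(\pi\) is locally split; if \(f\) is numerably split (resp.\ has a global section), so is~\(\pi\), since a locally finite partition of unity on~\(Y\) subordinate to \(\{V_i\}\) pulls back along~\(\bar{f\circ\s}\) to one on \(\tilde{\Base}\) (resp.\ by taking \(V=Y\)). Principality reduces to the elementary observation that the bijection
\[
\Act[Y]\times_{f\circ\s,Y,f} X\to \Act[Y]\times_{\pi,\tilde{\Base},\pi}\Act[Y],\qquad
(y,x)\mapsto (y,y\cdot(\s(y),x)),
\]
has continuous inverse \((y_1,y_2)\mapsto (y_1,\s(y_2))\); its image is indeed contained in the left-hand side because orbit-equivalent points have equal value under \(f\circ\s\). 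By Proposition~\ref{pro:basic_assum} this establishes Assumption~\ref{assum:covering_acts_basically} (and a fortiori \ref{assum:covering_acts_basically_weak}). Restricting to \(\Hausdorff\) requires only that \(\tilde{\Base}\) is Hausdorff when \(\Act[Y]\) and~\(Y\) are: two distinct points either lie over distinct points of the Hausdorff space~\(Y\), or share a common image in some \(V\) and are then separated inside \(\pi(\Act[Y]_V)\cong S_V\subseteq \Act[Y]\).

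The one step that requires genuine idea is the construction of the retraction~\(\rho_V\) and the verification that it induces a homeomorphism \(\pi(\Act[Y]_V)\cong S_V\); everything else is bookkeeping about how (global, numerable, local) sections and partitions of unity transfer through pull-back and composition. The retraction works because the \v{C}ech groupoid supplies a canonical arrow through which to act, and this arrow varies continuously with~\(y\) precisely because~\(\sigma\) is a continuous local section of~\(f\).
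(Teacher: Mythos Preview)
Your argument is correct and follows essentially the same route as the paper: the core step in both is the retraction \(\rho_V(y)=y\cdot(\s(y),\sigma(f(\s(y))))\), which you call~\(\rho_V\) and the paper calls~\(\sigma'_U\), yielding local sections of the orbit projection over the pulled-back cover \(\{\bar{f\circ\s}^{-1}(V)\}\). The only notable deviations are peripheral: you prove subcanonicality via ``locally a retraction \(\Rightarrow\) quotient map'' whereas the paper observes \(\covers_\locsplit\subseteq\covers_\limlift\); and you verify the shear-map isomorphism and Hausdorffness of \(\tilde{\Base}\) by hand, whereas the paper delegates these to Propositions~\ref{pro:Top_basic} and~\ref{pro:Hausdorff_basic} (one terminological slip: your \(\rho_V\) is \(\Gr\)\nb-\emph{invariant}, not equivariant).
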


\begin{proof}
  Let \(f\colon Y\to X\) be a continuous map and let \(g\colon U\prto
  X\) belong to \(\covers_\splitsur\), \(\covers_\numsplit\) or
  \(\covers_\locsplit\), respectively.  Let \(W\subseteq X\) be the
  domain of a local continuous section \(\sigma\colon W\to U\)
  for~\(g\).  Then \((\id,\sigma\circ f)\colon f^{-1}(W)\to Y\times_X
  U\) is a local continuous section for the coordinate projection
  \(\pr_1\colon Y\times_X U \prto Y\).  If \(W=X\), then we get a
  global continuous section.  If the domains of such local sections
  cover~\(X\), then the subsets \(f^{-1}(W)\) cover~\(Y\).  If
  \((\psi_W)_{W\in\mathcal{U}}\) is a partition of unity on~\(X\) subordinate
  to a cover by domains of local continuous sections, then
  \((\psi_W\circ f)_{W\in\mathcal{U}}\) provides such a partition of
  unity for the domains of the induced local continuous sections for
  \(\pr_1\colon Y\times_X U \prto Y\).  Thus the map~\(\pr_1\) belongs
  to \(\covers_\splitsur\), \(\covers_\numsplit\) or
  \(\covers_\locsplit\), respectively, if~\(g\) does.

  It is clear that homeomorphisms are in~\(\covers_\locsplit\) and
  hence in the other classes as well.  If \(f_2\circ f_1\) is defined
  and belongs to~\(\covers_\locsplit\), then we may get local sections
  for~\(f_2\) by composing local sections for \(f_2\circ f_1\) with
  the continuous map~\(f_1\).  Since this does not change the domain,
  we may use the same partition of unity for~\(f_2\) as for~\(f_2\circ
  f_1\) in the numerably split case.  Hence all three pretopologies
  are saturated, which implies Assumptions \ref{assum:local_cover}
  and~\ref{assum:two-three}.  It is also clear that the constant map
  from any non-empty space to the one-point space has global
  continuous sections, which gives Assumption~\ref{assum:final} for
  all three pretopologies if we exclude the empty space.  A map with
  local continuous sections must be surjective, and we may use local
  continuous sections to lift convergent nets.  Hence
  \(\covers_\locsplit\subseteq\covers_\limlift\), so that the
  pretopologies are subcanonical.

  Let \(f\colon \Act\prto\Base\) be a cover in one of our three
  pretopologies, let~\(\Gr\) be its \v{C}ech groupoid, and
  let~\(\Act[Y]\) be a \(\Gr\)\nb-action.  Let \(\bunp\colon
  \Act[Y]\to\Act[Y]/\Gr\) be the orbit space projection.  By
  Proposition~\ref{pro:Top_basic}, the \(\Gr\)\nb-action is
  basic if and only if~\(\bunp\) is a cover.  The anchor map
  \(\s\colon \Act[Y]\to\Act\) of the \(\Gr\)\nb-action induces a
  continuous map \(\s/\Gr\colon \Act[Y]/\Gr \to \Act/\Gr=\Base\).

  Let~\(\mathcal{U}\) be an open cover of~\(\Base\) by domains of local
  sections \(\sigma_U\colon U\to\Act\) for \(U\in\mathcal{U}\); in the
  case of the pretopology~\(\covers_\numsplit\), we assume a partition
  of unity~\((\psi_U)_{U\in\mathcal{U}}\) subordinate to this cover,
  and in the case of the pretopology~\(\covers_\splitsur\), we assume
  \(\mathcal{U}=\{\Base\}\).  The subsets \(\s^{-1}(U)\) for
  \(U\in\mathcal{U}\) form an open cover~\(\s^*(\mathcal{U})\)
  of~\(\Act[Y]/\Gr\) because~\(\s/\Gr\) is continuous; in the case
  of~\(\covers_\numsplit\),
  \((\psi_U\circ(\s/\Gr))_{U\in\mathcal{U}}\) is a partition of unity
  on~\(\Act[Y]/\Gr\) subordinate to~\(\s^*(\mathcal{U})\),
  and in the case of~\(\covers_\splitsur\), \(\s^*(\mathcal{U})=
  \{\Act[Y]/\Gr\}\).  It remains to construct local continuous
  sections \(\sigma'_U\colon \s^{-1}(U)\to\Act[Y]\) for
  \(U\in\mathcal{U}\).

  Let \(y\in\Act[Y]\) represent \([y]\in\s^{-1}(U)\subseteq
  \Act[Y]/\Gr\).  Then \(\s(y)\in f^{-1}(U)\subseteq \Act\) and
  \(g_y\defeq (\s(y),\sigma_U(f(\s(y)))) \in\Gr^1\) has
  \(\rg(g_y)=\s(y)\), so that \(y\cdot g_y\) is defined
  in~\(\Act[Y]\).  This is the unique element in the
  \(\Gr\)\nb-orbit of~\(y\) with \(\s(y\cdot g_y)\in\sigma_U(U)\).
  Hence \(y\cdot g_y\) depends only on \([y]\in\Act[Y]/\Gr\), so
  that we get a continuous map \(\sigma'_U\colon \s^{-1}(U)\to
  \Act[Y]\), \([y]\mapsto y\cdot g_y\).  This is a continuous
  section for~\(\bunp\) on~\(\s^{-1}(U)\) as needed.
\end{proof}

Thus our whole theory applies to groupoids in~\(\Top\) for the three
pretopologies above.  The range and source maps of a
groupoid are automatically in~\(\covers_\splitsur\) because the unit
map provides a global continuous section.  Hence our pretopologies
do not restrict the class of topological groupoids.  They give
different notions of principal bundles, however, and thus different
bicategories of bibundle functors, equivalences, and actors.

Let~\(\Gr\) be a topological groupoid (without condition on the
range and source maps) and let~\(\Act\) be a \(\Gr\)\nb-action that
satisfies (1) and~(2) in Proposition~\ref{pro:Top_basic}.
Let \(\bunp\colon \Act\to\Act/\Gr\) be its orbit space projection.
This is a principal \(\Gr\)\nb-bundle for~\(\covers\) if and only if
\(\bunp\in\covers\).  We now discuss what this means.

\begin{lemma}
  \label{lem:trivial_G-bundle}
  The map \(\bunp\) is in~\(\covers_\splitsur\) if and only
  if~\(\Act\) is isomorphic to a pull-back of the principal
  \(\Gr\)\nb-bundle \(\rg\colon \Gr^1\prto\Gr^0\) along some map
  \(\Act/\Gr\to\Gr^0\).
\end{lemma}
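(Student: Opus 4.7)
The plan is to prove the two directions separately and reduce everything to the principal-bundle machinery already developed, in particular Proposition~\ref{pro:pull-back_principal} characterising pull-backs, and Proposition~\ref{pro:Top_basic} guaranteeing that the map $(x_1,x_2)\mapsto g$ with $x_1\cdot g=x_2$ is continuous on $\Act\times_{\bunp,\Act/\Gr,\bunp}\Act$.

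For the ``only if'' direction, I would suppose $\sigma\colon \Act/\Gr\to\Act$ is a continuous global section of $\bunp$, and define $f\defeq \s\circ\sigma\colon \Act/\Gr\to\Gr^0$. The candidate map to the pull-back principal $\Gr$-bundle $f^*(\Gr^1)=(\Act/\Gr)\times_{f,\Gr^0,\rg}\Gr^1$ is
$$\Psi\colon \Act\to f^*(\Gr^1),\qquad x\mapsto \bigl(\bunp(x),\,g(x)\bigr),$$
where $g(x)\in\Gr^1$ is the unique arrow with $\s(\sigma(\bunp(x)))=\rg(g(x))$ and $\sigma(\bunp(x))\cdot g(x)=x$; such $g(x)$ exists uniquely because $\Act$ is a principal $\Gr$-bundle, and it depends continuously on $x$ by condition~(2) of Proposition~\ref{pro:Top_basic} applied to the pair $(\sigma\circ\bunp(x),x)\in\Act\times_{\bunp,\Act/\Gr,\bunp}\Act$. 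The inverse is $(b,g)\mapsto \sigma(b)\cdot g$, which is continuous because $\sigma$ and the $\Gr$-action are. A routine check shows $\Psi$ is $\Gr$-equivariant, commutes with the bundle projections to $\Act/\Gr$, and is the unique such $\Gr$-map by Proposition~\ref{pro:pull-back_principal}.

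For the ``if'' direction, assume $\Act\cong f^*(\Gr^1)$ as principal $\Gr$-bundles over $\Act/\Gr$ for some continuous $f\colon \Act/\Gr\to\Gr^0$. The unit map $\unit\colon \Gr^0\to\Gr^1$ is a continuous section of $\rg$, so the assignment
$$\tilde\sigma\colon \Act/\Gr\to (\Act/\Gr)\times_{f,\Gr^0,\rg}\Gr^1,\qquad b\mapsto (b,\unit(f(b)))$$
is a well-defined continuous global section of the coordinate projection $\pr_1$. Under the isomorphism $\Act\cong f^*(\Gr^1)$, this $\tilde\sigma$ transports to a continuous global section of $\bunp$, so $\bunp\in\covers_\splitsur$.

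Neither direction should present a genuine obstacle; the only mildly delicate point is the continuity of $g(x)$ in the forward direction, which is exactly the content of condition~(2) in Proposition~\ref{pro:Top_basic} and is the reason one needs that extra hypothesis in the first place (as highlighted by Example~\ref{exa:free_not_principal}). Everything else is bookkeeping with the pull-back of principal bundles.
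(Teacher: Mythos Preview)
Your proof is correct and follows essentially the same approach as the paper: both directions use the section \(\sigma\) (respectively the unit section of~\(\rg\)) to set up the map \(f=\s\circ\sigma\) and the explicit homeomorphism \(([x],g)\mapsto\sigma[x]\cdot g\), with continuity of the inverse supplied by condition~(2) of Proposition~\ref{pro:Top_basic}. Your backward direction spells out the section via the unit map where the paper is more terse, but the content is the same.
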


\begin{proof}
  By definition, \(\bunp\in\covers_\splitsur\) if and only if it has
  a global continuous section \(\sigma\colon \Act/\Gr\to\Act\).  We
  claim that the map
  \begin{equation}
    \label{eq:trivialisation_principal_bundle}
    \Act/\Gr\times_{\s\circ\sigma,\Gr^0,\rg} \Gr^1 \to \Act,\qquad
    ([x],g)\mapsto \sigma[x]\cdot g,
  \end{equation}
  is a homeomorphism; it is clearly a continuous bijection, and the
  inverse map sends \(y\in\Act\) to \(([y],g)\) where~\(g\) is the
  unique element of~\(\Gr^1\) with \(\s(\sigma[y])=\rg(g)\) and
  \(\sigma[y]\cdot g=y\); the map \(y\mapsto ([y],g)\) is continuous
  by~(2) in Proposition~\ref{pro:Top_basic}.

  The homeomorphism~\eqref{eq:trivialisation_principal_bundle} is
  \(\Gr\)\nb-equivariant for the \(\Gr\)\nb-action on
  \(\Act/\Gr\times_{\s\circ\sigma,\Gr^0,\rg} \Gr^1\) defined by
  \(([x],g_1)\cdot g_2 = ([x],g_1\cdot g_2)\).  Hence the principal
  \(\Gr\)\nb-bundle \(\Act\prto\Act/\Gr\) is isomorphic to the
  pull-back of the principal \(\Gr\)\nb-bundle \(\rg\colon
  \Gr^1\prto\Gr^0\) along the continuous map \(\s\circ\sigma\colon
  \Act/\Gr\to\Gr^0\) (Proposition~\ref{pro:pull-back_principal}).
  Conversely, such pull-backs are principal \(\Gr\)\nb-bundles
  in~\((\Top,\covers_\splitsur)\) because \(\rg\colon
  \Gr^1\prto\Gr^0\) is a principal \(\Gr\)\nb-bundle
  in~\((\Top,\covers_\splitsur)\).
\end{proof}

Thus the principal
\(\Gr\)\nb-bundles in~\((\Top,\covers_\splitsur)\) are precisely the
\emph{trivial} \(\Gr\)\nb-bundles in the following sense:

\begin{definition}
  \label{def:trivial_G-bundle}
  A \(\Gr\)\nb-bundle \(\bunp\colon \Act\prto\Base\) is
  \emph{trivial} if it is isomorphic to a pull-back of the
  \(\Gr\)\nb-bundle \(\rg\colon \Gr^1\prto\Gr^0\) along some map
  \(\Base\to\Gr^0\).

  A \(\Gr\)\nb-bundle \(\bunp\colon \Act\prto\Base\) is
  \emph{locally trivial} if there is an open
  covering~\(\mathcal{U}\) of~\(\Base\) such that the restriction
  \(\bunp|_U\colon \bunp^{-1}(U)\prto U\) is a trivial
  \(\Gr\)\nb-bundle for each \(U\in\mathcal{U}\).
\end{definition}

If \(\covers=\covers_\locsplit\), then there is an open
covering~\(\mathcal{U}\) of~\(\Act/\Gr\) such that trivialisations
as in~\eqref{eq:trivialisation_principal_bundle} are defined on the
\(\Gr\)\nb-invariant subsets \(\bunp^{-1}(U)\) for
\(U\in\mathcal{U}\).  Lemma~\ref{lem:trivial_G-bundle} shows that
the principal \(\Gr\)\nb-bundles in~\((\Top,\covers_\locsplit)\) are
precisely the \emph{locally trivial} \(\Gr\)\nb-bundles.

Let \(\bunp\colon \Act\prto\Base\) be a locally trivial bundle.  We
have an open covering~\(\mathcal{U}\) of~\(\Base\) and continuous
maps \(\varphi_U\colon U\to\Gr^0\) and local trivialisations
\(\tau_U\colon \bunp^{-1}(U)\to U\times_{\varphi_U,\Gr^0,\rg}
\Gr^1\) for all \(U\in\mathcal{U}\).  These local trivialisations
are not unique and therefore do not agree on intersections \(U_1\cap
U_2\) for \(U_1,U_2\in\mathcal{U}\).  Since any \(\Gr\)\nb-map
between fibres of \(\rg\colon \Gr^1\prto\Gr^0\) is given by left
multiplication with some \(g\in\Gr^1\), we get continuous maps
\(g_{U_1,U_2}\colon U_1\cap U_2\to \Gr^1\) with \(\varphi_{U_2}(z) =
\rg(g_{U_1,U_2}(z))\), \(\varphi_{U_1}(z) = \s(g_{U_1,U_2}(z))\), and
\(\tau_{U_2}\circ\tau_{U_1}^{-1}(z,g) = (z,g_{U_1,U_2}(z)\cdot g)\) for
all \((z,g)\in (U_1\cap U_2)\times_{\varphi_{U_1},\Gr^0,\rg}
\Gr^1\).  These maps~\(g_{U_1,U_2}\) satisfy the usual cocycle
conditions.

If \(\covers=\covers_\numsplit\), then principal bundles are locally
trivial and, in addition, the open covering by trivialisation charts
is numerable.  This suffices to construct a continuous
\emph{classifying map} \(\varphi\colon \Act/\Gr\to B\Gr\) such that
\(\Act\prto\Act/\Gr\) is the pull-back of the universal principal
bundle \(E\Gr\prto B\Gr\) along~\(\varphi\), even if the base
space~\(\Act/\Gr\) is not paracompact.  Classifying spaces for
topological groupoids were introduced by
Haefliger~\cite{Haefliger:Feuilletages}, following
Segal~\cite{Segal:Classifying};
Bracho~\cite{Bracho:Haefliger_linear} proves that
\(\Gr\)\nb-principal bundles over locally compact topological spaces
are classified by continuous maps to~\(B\Gr\).  The local
compactness assumption should not be needed here, as shown by the
example of principal bundles over topological groups, but we have
not found a proof of this in the literature.

\begin{remark}
  \label{rem:sections_orbit_projection}
  If~\(\Act\) is Hausdorff, then continuous local sections for the
  orbit space projection \(\Act\to\Act/\Gr\) imply that~\(\Act/\Gr\)
  is locally Hausdorff, that is, every point has a Hausdorff open
  neighbourhood; but it does not yet imply that~\(\Act/\Gr\) is
  Hausdorff.  Indeed, Example~\ref{exa:covering_non-Hausdorff} is also
  a basic groupoid in~\((\Top,\covers_\locsplit)\).
\end{remark}

\subsubsection{Closed and proper maps}
\label{sec:Top_proper_maps}

A map between topological spaces is \emph{closed} if it maps closed
subsets to closed subsets.  A map \(f\colon X\to Y\) is proper if
and only if it is closed and \(f^{-1}(y)\subseteq X\) is
quasi-compact for all \(y\in Y\); this is shown in
\cite{Bourbaki:Topologie_generale}*{I.10.2, Théor\`eme 1}.  In
particular, there are closed maps that are not proper.  Thus closed
maps are not hereditary for pull-backs and do not form a
pretopology.

\begin{proposition}
  \label{pro:Top_proper_pretopology}
  The proper maps form a subcanonical
  pretopology~\(\covers_\proper\) on~\(\Top\).  It satisfies
  Assumptions \textup{\ref{assum:local_cover}} and
  \textup{\ref{assum:two-three}}, but not
  Assumption~\textup{\ref{assum:final}}.
\end{proposition}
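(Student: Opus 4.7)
The plan is to verify the pretopology axioms, subcanonicality, and Assumption~\ref{assum:two-three} (which implies Assumption~\ref{assum:local_cover}) using the standard calculus of proper maps from \cite{Bourbaki:Topologie_generale}*{I.10}, and to produce a simple counterexample to Assumption~\ref{assum:final}. Throughout, $\covers_\proper$ denotes the class of proper surjections.

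First I would check the pretopology axioms. Homeomorphisms are trivially proper surjections. For composition, surjectivity composes, closedness of the set-theoretic image composes, and quasi-compactness of fibres composes because for $f\circ g\colon X\to Y\to Z$ the fibre $(f\circ g)^{-1}(z) = g^{-1}(f^{-1}(z))$ is the preimage under the proper map~$g$ of the quasi-compact set $f^{-1}(z)$, hence quasi-compact. For base change along a continuous $f\colon Y\to X$ against a proper surjection $g\colon U\prto X$, the projection $\pr_1\colon Y\times_{f,X,g} U\to Y$ is a proper surjection: surjectivity is immediate from surjectivity of~$g$, and properness is the standard fact that properness is preserved under arbitrary pullbacks in~$\Top$, which follows directly from Definition~\ref{def:proper_map} since $\pr_1\times\id_Z$ is obtained by restriction from $g\times\id_{Y\times Z}$ along a continuous map.

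For subcanonicality I note that a proper surjection is in particular a closed surjection, hence a quotient map, hence a coequaliser in~$\Top$ by Lemma~\ref{lem:coequaliser_Top_quotient}. To verify Assumption~\ref{assum:two-three}, let $p\colon X\prto Y$ and $f\colon Y\to Z$ be composable with $p$ and $f\circ p$ proper surjections; I want to conclude that $f$ is a proper surjection. Surjectivity of~$f$ is immediate from surjectivity of $f\circ p$. For closedness: if $A\subseteq Y$ is closed, then $p^{-1}(A)$ is closed in~$X$, so $(f\circ p)(p^{-1}(A))$ is closed in~$Z$, and this set equals $f(A)$ since $p$ is surjective. For quasi-compact fibres: $f^{-1}(z) = p((f\circ p)^{-1}(z))$ is the continuous image of the quasi-compact set $(f\circ p)^{-1}(z)$, hence quasi-compact. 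This yields Assumption~\ref{assum:two-three}, and Assumption~\ref{assum:local_cover} follows from it as observed in Section~\ref{sec:assum_local_cover}.

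To show Assumption~\ref{assum:final} fails, observe that a one-point space is a final object in~$\Top$ and the unique map $X\to\star$ is proper if and only if $X$ is quasi-compact; taking any non-quasi-compact $X$ such as $\R$ supplies a map to the final object that is not a cover. The only step requiring genuine care is the preservation of properness under base change, but this is a direct consequence of the ``closed after $\times\id_Z$'' formulation in Definition~\ref{def:proper_map}, so no real obstacle appears; the remaining arguments are bookkeeping with the properties ``closed map'' and ``quasi-compact fibres''.
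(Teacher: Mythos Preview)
Your proof is correct and follows essentially the same route as the paper. The only notable differences are cosmetic: you consistently use the characterisation ``closed with quasi-compact fibres'' for properness (checking these two properties separately for composition and for Assumption~\ref{assum:two-three}), whereas the paper leans on the definition via closedness of $f\times\id_Z$ (so that the two-out-of-three argument for closedness, applied to each $f_i\times\id_Z$, immediately yields properness); and for subcanonicality you argue directly that a closed surjection is a quotient map, while the paper cites that proper surjections are biquotient. Both arguments are entirely standard and equally valid.
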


\begin{proof}
  It is clear that homeomorphisms are proper and that composites of
  proper or closed maps are again proper or closed, respectively.
  Let \(g\colon U\to X\) be proper and let \(f\colon Y\to X\) be any
  map.  Let~\(Z\) be another topological space.  Then \(g_*\defeq
  \id_Y \times g\times \id_Z\colon Y\times U\times Z\to Y\times
  X\times Z\) is closed.  So is its restriction to any subset of the
  form \(g_*^{-1}(S)\) for \(S\subseteq Y\times X\times Z\).
  If~\(S\) is the product of the graph of~\(f\) and~\(Z\), then this
  restriction is the map \(\pr_1\times\id_Z\colon Y\times_{f,X,g} U
  \times Z \to Y\times Z\).  Since this is closed for any~\(Z\),
  \(\pr_1\colon Y\times_{f,X,g} U\to Y\) is proper.

  Proper surjections are biquotient maps by
  \cite{Michael:Bi-quotient}*{Proposition 3.2}.  Hence they are
  coequalisers, so the pretopology of proper maps is subcanonical.

  Let \(f_1\colon X\to Y\) and \(f_2\colon Y\to Z\) be continuous
  surjections.  If \(f_2\circ f_1\) is closed, then so is~\(f_2\):
  for a closed subset \(S\subseteq Y\), \(f_2(S) = (f_2\circ
  f_1)f_1^{-1}(S)\) because~\(f_1\) is surjective, and this is
  closed because~\(f_1\) is continuous and \(f_2\circ f_1\) is
  closed.  Hence the proper maps satisfy
  Assumption~\ref{assum:two-three} and thus
  Assumption~\ref{assum:local_cover}.  The map from a space~\(X\) to
  the one-point space is proper if and only if~\(X\) is
  quasi-compact.  Hence Assumption~\ref{assum:final} fails even if we
  exclude the empty space.
\end{proof}

For a groupoid in~\((\Top,\covers_\proper)\), the source and range
maps must be proper.  This restriction is so strong that we seem to
get a rather useless class of groupoids.  In particular, proper
topological groupoids need not be groupoids
in~\((\Top,\covers_\proper)\).

\subsubsection{Open surjections}
\label{sec:Top_open}

This pretopology is used for topological groupoids
in~\cite{Pronk:Etendues_fractions}, and is implicitly used by most of
the literature on groupoid \(\textup{C}^*\)\nb-algebras
for locally compact groupoids.

A map between topological spaces is \emph{open} if it maps open
subsets to open subsets.  The following criterion for open maps is
similar to but subtly different from the definition of limit lifting
maps:

\begin{proposition}[\cite{Williams:crossed-products}*{Proposition
    1.15}]
  \label{pro:open_lifting_criterion}
  A continuous surjection \(f\colon X\to Y\) between
  topological spaces is open if and only if, for any \(x\in X\), a
  convergent net~\((y_i)\) in~\(Y\) with \(\lim y_i=f(x)\) lifts to
  a net in~\(X\) converging to~\(x\).
\end{proposition}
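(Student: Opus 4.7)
The plan is to prove both directions separately, with the forward direction being an explicit construction of a lift and the backward direction being a contradiction argument.

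For the forward direction, assume $f$ is open. Given $x\in X$ and a net $(y_i)_{i\in I}$ converging to $f(x)$ in $Y$, I will construct the lifting directly. Let $J$ be the set of pairs $(i,U)$ where $i\in I$ and $U$ is an open neighbourhood of $x$ with $y_i\in f(U)$, ordered by $(i_1,U_1)\le(i_2,U_2)$ iff $i_1\le i_2$ and $U_2\subseteq U_1$. The key step is directedness of $J$: given $(i_1,U_1)$ and $(i_2,U_2)$, the set $f(U_1\cap U_2)$ is open in $Y$ (by openness of $f$) and contains $f(x)$, so convergence of $(y_i)$ provides some $i_3\ge i_1,i_2$ with $y_{i_3}\in f(U_1\cap U_2)$, giving $(i_3,U_1\cap U_2)\in J$ above both. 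Define $\varphi\colon J\to I$ by $\varphi(i,U)=i$; this is surjective since $(i,X)\in J$ for all $i$, and order-preserving. For each $(i,U)\in J$, pick $x_{(i,U)}\in U$ with $f(x_{(i,U)})=y_i$. Convergence $x_{(i,U)}\to x$ follows because for any open neighbourhood $V$ of $x$ and any $(i_0,U_0)\in J$, directedness furnishes $(i',V\cap U_0)\in J$ with $i'\ge i_0$, and then $x_{(i,U)}\in U\subseteq V$ whenever $(i,U)\ge (i',V\cap U_0)$.

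For the backward direction, assume every convergent net in $Y$ lifts in the stated sense, and let $U\subseteq X$ be open; I need to show $f(U)$ is open in $Y$. Pick $y\in f(U)$ with $y=f(x)$ for some $x\in U$, and suppose for contradiction that $f(U)$ is not a neighbourhood of $y$. Then every open neighbourhood $V$ of $y$ meets $Y\setminus f(U)$, so choosing $y_V\in V\setminus f(U)$ for each $V$ gives a net indexed by the neighbourhood filter of $y$ (ordered by reverse inclusion) with $y_V\to y=f(x)$. By hypothesis this lifts to a net $(x_j)_{j\in J}$ with $x_j\to x$ and $f(x_j)=y_{\varphi(j)}\in Y\setminus f(U)$ for all $j$; but $f(x_j)\notin f(U)$ forces $x_j\notin U$, contradicting $x_j\to x$ since $U$ is an open neighbourhood of $x$.

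The main obstacle in the forward direction is verifying directedness of $J$, which is the one place openness of $f$ really enters; without it, one cannot guarantee that the neighbourhood condition $y_i\in f(U)$ can be refined compatibly with shrinking $U$. The surjectivity of $\varphi$, needed to match the paper's definition of ``lifting'' (Definition~\ref{def:limit_lift}), is immediate since $(i,X)\in J$ for every $i$. The backward direction needs no further subtlety — the hypothesis is strong enough that it suffices to test against the single convergent net built from open neighbourhoods of $y$.
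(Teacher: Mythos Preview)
Your proof is correct. Note, however, that the paper does not actually prove this proposition: it is stated with a citation to \cite{Williams:crossed-products}*{Proposition 1.15} and no proof is given in the paper itself. So there is no paper proof to compare against; your argument is the standard one and would serve perfectly well as a self-contained proof here.
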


\begin{proposition}
  \label{pro:Top_open_surjections}
  The class~\(\covers_\open\) of surjective, open maps in~\(\Top\)
  is a subcanonical pretopology that satisfies Assumptions
  \textup{\ref{assum:local_cover}}, \textup{\ref{assum:two-three}},
  \textup{\ref{assum:covering_acts_basically}}
  and~\textup{\ref{assum:covering_acts_basically_weak}}, but it is
  not saturated.  It satisfies Assumption~\textup{\ref{assum:final}}
  if we exclude the empty space.
\end{proposition}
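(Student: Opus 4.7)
The plan is to dispatch each claim in turn, reserving the most substantive work for Assumption~\ref{assum:covering_acts_basically}. First I would verify the pretopology axioms: isomorphisms are trivially open surjections; a composite of open surjections is again an open surjection; and the coordinate projection $\pr_1 \colon Y \times_{f, X, g} U \to Y$ is an open surjection whenever $g$ is, because both openness and surjectivity are preserved under pullback in~$\Top$ along an arbitrary continuous map. For subcanonicity, it is enough to observe via Proposition~\ref{pro:open_lifting_criterion} that every open surjection lifts convergent nets and is therefore biquotient; since biquotient maps form a subcanonical pretopology by Proposition~\ref{pro:Top_biquotient}, so does $\covers_\open$. Assumption~\ref{assum:final} (after removing the empty space) is immediate, since the unique map from any non-empty space to the one-point space is surjective and the only possible images of open subsets are $\emptyset$ and the full singleton.

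For Assumption~\ref{assum:two-three}, I would argue directly. If $p$ and $f \circ p$ are open surjections, then $f$ is surjective because $(f \circ p)(X) \subseteq f(Y)$, and for any open $V \subseteq Y$ the surjectivity of~$p$ gives $f(V) = (f \circ p)(p^{-1}(V))$, which is open since $p^{-1}(V)$ is open and $f \circ p$ is an open map. Assumption~\ref{assum:local_cover} then follows automatically. To show that the pretopology is not saturated, I would exhibit the counterexample $Y = (0,1) \cup \{2\} \subseteq \R$, $X = Z = (0,1)$, with $p \colon X \to Y$ the inclusion and $f \colon Y \to Z$ equal to the identity on $(0,1)$ and sending $2$ to some fixed interior point of~$(0,1)$; then $f \circ p = \id_{(0,1)}$ is an open surjection, while $f$ is a continuous surjection that is not open because $\{2\}$ is open in~$Y$ but its image is a singleton, which is not open in~$Z$.

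The main content lies in verifying Assumption~\ref{assum:covering_acts_basically}. Let $f \colon X \prto Y$ be an open surjection with \v{C}ech groupoid~$\Gr$ and let $\Act$ be a $\Gr$\nb-action with anchor $\s \colon \Act \to X$. By the final sentence of Proposition~\ref{pro:Top_basic}, freeness and continuity of the division map are automatic for actions of \v{C}ech groupoids, so it suffices to show that the orbit space projection $\bunp \colon \Act \to \Act/\Gr$ is an open surjection. Surjectivity is clear; openness reduces to showing that the action map $\mul \colon \Act \times_{\s, X, \rg} \Gr^1 \to \Act$ is open, since for any open $U \subseteq \Act$ one then has $\bunp^{-1}(\bunp(U)) = \mul(\pr_1^{-1}(U))$ open, whence $\bunp(U)$ is open in the quotient topology. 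To see that $\mul$ is open, I would use the isomorphism $(\mul, \pr_2)$ from condition~$(3''')$ of Definition~\ref{def:action} to identify $\mul$ with the coordinate projection $\pr_1 \colon \Act \times_{\s, X, \s} \Gr^1 \to \Act$; the latter is the pullback of the open map $\s \colon \Gr^1 \to X$ along the continuous anchor $\s \colon \Act \to X$, hence open by the general stability of open maps under pullback in~$\Top$. Assumption~\ref{assum:covering_acts_basically_weak} is then a formal consequence.

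The main obstacle is recognising the reduction of openness of~$\bunp$ to openness of the action map~$\mul$; once this is identified, the key role played by the isomorphism in~$(3''')$, which turns the action map into an evident pullback of a known open map, makes the argument essentially mechanical.
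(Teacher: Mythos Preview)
Your proposal is correct and follows essentially the same approach as the paper. The paper isolates the key step---that the orbit space projection of \emph{any} \(\Gr\)\nb-action in \((\Top,\covers_\open)\) is open---as the separate Proposition~\ref{pro:open_orbit_space_projection}, but the argument there is the one you give: \(\bunp^{-1}(\bunp(U))=\mul(\pr_1^{-1}(U))\) is open because \(\mul\) is a cover, the latter being exactly your observation via~\((3''')\) that \(\mul\) factors as an isomorphism followed by a pullback of the open map \(\s\colon\Gr^1\prto\Gr^0\); your non-saturation counterexample differs from the paper's Example~\ref{exa:non-saturated} but works for the same reason.
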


The proof that~\(\covers_\open\) is a subcanonical pretopology
satisfying Assumptions \ref{assum:local_cover},
\ref{assum:two-three} and~\ref{assum:final} is similar to the proof
for~\(\covers_\limlift\), using
Proposition~\ref{pro:open_lifting_criterion}, and left to the
reader.  The following simple counterexample shows that it is not
saturated:

\begin{example}
  \label{exa:non-saturated}
  Let \(f\colon X\to Y\) be a continuous map that is not open.  Let
  \(f_2\defeq (f,\id_X)\colon X\sqcup Y\to Y\) and let \(f_1\colon
  Y\to X\sqcup Y\) be the inclusion map.  Then \(f_2\circ f_1\) is
  the identity map on~\(Y\), hence open.  But~\(f_2\) is not open
  because~\(f\) is not open.  Hence the
  pretopology~\(\covers_\open\) is not saturated.
\end{example}

Groupoids in~\((\Top,\covers_\open)\) must have open range and
source maps.  This is no serious restriction for operator
algebraists because they need Haar systems to construct groupoid
\(\textup{C}^*\)\nb-algebras, and Haar systems cannot exist unless
the range and source maps are open.  The benefit of this assumption
is that it forces the orbit space projections \(\Act\to\Act/\Gr\)
for all \(\Gr\)\nb-actions to be open:

\begin{proposition}
  \label{pro:open_orbit_space_projection}
  Let\/~\(\Gr\) be a groupoid and\/~\(\Act\) a \(\Gr\)\nb-action
  in~\((\Top,\covers_\open)\).  The orbit space projection
  \(\bunp\colon \Act\to \Act/\Gr\) is open.  All actions of \v{C}ech
  groupoids in~\((\Top,\covers_\open)\) are basic.
\end{proposition}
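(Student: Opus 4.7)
The plan is to show directly that the orbit space projection $\bunp\colon \Act\to\Act/\Gr$ is an open surjection, and then invoke Proposition~\ref{pro:Top_basic} to conclude that \v{C}ech groupoid actions are basic.

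First, recall that $\bunp$ is always a continuous surjection onto $\Act/\Gr$ equipped with the quotient topology (which exists in $\Top$). To prove it is open, take an open subset $V\subseteq\Act$. Then $\bunp(V)$ is open in $\Act/\Gr$ if and only if $\bunp^{-1}(\bunp(V))$ is open in $\Act$, by definition of the quotient topology. Elementwise,
\[
\bunp^{-1}(\bunp(V)) = \{x\cdot g \mid x\in V,\ g\in\Gr^1,\ \s(x)=\rg(g)\}
= \mul(V\times_{\s,\Gr^0,\rg}\Gr^1).
\]
The subset $V\times_{\s,\Gr^0,\rg}\Gr^1$ is open in $\Act\times_{\s,\Gr^0,\rg}\Gr^1$ because $V$ is open in $\Act$. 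So it suffices to show that $\mul\colon \Act\times_{\s,\Gr^0,\rg}\Gr^1\to\Act$ is open.

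Here I would invoke the key fact, already established in Definition and Lemma~\ref{def:action} (equivalence of condition (3) with $(3'')$), that $\mul$ is a cover in $(\Cat,\covers)$ for any groupoid action; specialising to $(\Top,\covers_\open)$, this cover is by definition an open surjection. (Alternatively, use $(3''')$: the map $(\mul,\pr_2)$ is a homeomorphism $\Act\times_{\s,\Gr^0,\rg}\Gr^1\congto\Act\times_{\s,\Gr^0,\s}\Gr^1$, and then $\mul=\pr_1\circ(\mul,\pr_2)$ is open because $\pr_1$ is open as a pull-back of the open map $\s\colon\Gr^1\prto\Gr^0$.) Thus $\bunp^{-1}(\bunp(V))=\mul(V\times_{\s,\Gr^0,\rg}\Gr^1)$ is open, proving that $\bunp$ is open. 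Since $\bunp$ is also surjective, it belongs to $\covers_\open$.

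For the second assertion, let $\Gr$ be the \v{C}ech groupoid of an open surjection in $(\Top,\covers_\open)$ and let $\Act$ be any $\Gr$-action. By Proposition~\ref{pro:Top_basic}, basicness of $\Act$ is equivalent to the conjunction of (1) freeness, (2) continuity of the map sending $(x_1,x_2)$ in the same orbit to the unique connecting arrow, and (3) the orbit space projection being a cover. The same proposition states that conditions (1) and (2) hold automatically for any action of a \v{C}ech groupoid, and we have just established that (3) holds in $(\Top,\covers_\open)$ for any groupoid action whatsoever. Thus $\Act$ is basic, which is Assumption~\ref{assum:covering_acts_basically}. There is no real obstacle in this argument; the only substantive input is that $\mul$ is automatically a cover, which has already been recorded in the general theory.
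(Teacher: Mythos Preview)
Your proof is correct and follows essentially the same approach as the paper: both compute $\bunp^{-1}(\bunp(V))=\mul(V\times_{\s,\Gr^0,\rg}\Gr^1)$, use that $\mul$ is a cover (hence open) to conclude $\bunp$ is open, and then invoke Proposition~\ref{pro:Top_basic} for the \v{C}ech groupoid statement. Your citation of condition~$(3'')$ in Definition and Lemma~\ref{def:action} for the action map~$\mul$ being a cover is in fact more precise than the paper's reference to Proposition~\ref{pro:unit_inverse_from_basicality}, which concerns the groupoid multiplication rather than the action map.
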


\begin{proof}
  Let \(U\subseteq\Act\) be open.  Then \(\bunp^{-1}(\bunp(U))\) is
  the set of all \(x\cdot g\) with \(x\in U\), \(g\in \Gr^1\),
  \(\s(x)=\rg(g)\).  This is \(\mul\bigl((U\times \Gr^1)\cap
  (\Act\times_{\s,\Gr^0,\rg} \Gr^1)\bigr)\), which is open because
  \((U\times \Gr^1)\cap (\Act\times_{\s,\Gr^0,\rg} \Gr^1)\) is open
  in \(\Act\times_{\s,\Gr^0,\rg} \Gr^1\) and~\(\mul\) is a cover
  (open surjection) by
  Proposition~\ref{pro:unit_inverse_from_basicality}.
  Thus~\(\bunp(U)\) is open in~\(\Act/\Gr\), and~\(\bunp\) is open.
  Proposition~\ref{pro:Top_basic} characterises basic actions by
  three conditions, of which two are automatic for actions of
  \v{C}ech groupoids.  The third condition is that~\(\bunp\) should
  be a cover, which \emph{is} automatic for the
  pretopology~\(\covers_\open\).  Thus all \v{C}ech groupoid actions
  in~\((\Top,\covers_\open)\) are basic, verifying Assumptions
  \ref{assum:covering_acts_basically}
  and~\ref{assum:covering_acts_basically_weak}.
\end{proof}

This finishes the proof of
Proposition~\ref{pro:Top_open_surjections}.

\begin{corollary}
  \label{cor:basic_actions_open}
  Let~\(\Gr\) be a groupoid and~\(\Act\) a \(\Gr\)\nb-action
  in~\((\Top,\covers_\open)\).  The following are equivalent:
  \begin{enumerate}
  \item the action is basic with Hausdorff base space~\(\Act/\Gr\);
  \item the action is free and proper.
  \end{enumerate}
\end{corollary}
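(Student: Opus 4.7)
The plan is to prove the two implications by combining three earlier results: Proposition~\ref{pro:Top_basic} (characterising basic actions in $(\Top,\covers)$), Proposition~\ref{pro:Hausdorff_quotient} (closedness of the fibre-product characterises Hausdorffness of the biquotient image), and Proposition~\ref{pro:open_orbit_space_projection} (the orbit-space projection is automatically open for $\covers_\open$). The bridge between \emph{basicality + Hausdorff quotient} and \emph{free + proper} will be the elementary fact that a continuous injective proper map is the same as a closed embedding, applied to the action map $\varphi\colon \Act\times_{\s,\Gr^0,\rg}\Gr^1\to \Act\times\Act$, $(x,g)\mapsto (x,x\cdot g)$.

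For (1)$\Rightarrow$(2), suppose the action is basic with Hausdorff base $\Act/\Gr$. Proposition~\ref{pro:Top_basic} gives freeness and a homeomorphism $\psi\colon \Act\times_{\s,\Gr^0,\rg}\Gr^1\congto \Act\times_{\bunp,\Act/\Gr,\bunp}\Act$. Since $\bunp$ is an open surjection, hence biquotient, Proposition~\ref{pro:Hausdorff_quotient} shows that $\Act\times_{\bunp,\Act/\Gr,\bunp}\Act$ is a closed subspace of $\Act\times\Act$. The inclusion of a closed subspace is proper, and composing a proper map with a homeomorphism yields a proper map, so $\varphi = \iota\circ\psi$ is proper; that is, the action is proper.

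For (2)$\Rightarrow$(1), suppose the action is free and proper. Freeness gives injectivity of $\varphi$, and properness is the assumption. A continuous injective proper map is a closed embedding (proper $+$ injective $\Rightarrow$ closed with compact injective fibres $\Rightarrow$ closed embedding), so $\varphi$ identifies $\Act\times_{\s,\Gr^0,\rg}\Gr^1$ homeomorphically with a closed subset of $\Act\times\Act$. The image of $\varphi$ is exactly $\Act\times_{\bunp,\Act/\Gr,\bunp}\Act$, so this fibre product is closed in $\Act\times\Act$; Proposition~\ref{pro:Hausdorff_quotient} applied to the biquotient map $\bunp$ (which is open by Proposition~\ref{pro:open_orbit_space_projection}) then gives that $\Act/\Gr$ is Hausdorff. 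Moreover, the closed-embedding factorisation $\varphi=\iota\circ\psi$ shows that the bijection $\psi\colon \Act\times_{\s,\Gr^0,\rg}\Gr^1\to \Act\times_{\bunp,\Act/\Gr,\bunp}\Act$ is itself a homeomorphism, which is the non-trivial content of condition~(2) in Proposition~\ref{pro:Top_basic}; condition~(1) there is freeness, and condition~(3) is automatic by Proposition~\ref{pro:open_orbit_space_projection}. Hence the action is basic.

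There is no serious obstacle; the only delicate point is the implicit use of ``injective $+$ proper $\Rightarrow$ closed embedding,'' which follows from the general fact in \cite{Bourbaki:Topologie_generale}*{I.10.1} already invoked in the proof of Proposition~\ref{pro:Hausdorff_basic}. Both directions are really just the observation that the action map $\varphi$ always factors through $\Act\times_{\Act/\Gr}\Act\hookrightarrow\Act\times\Act$, so properness of $\varphi$ and closedness of this subspace (Hausdorffness of the quotient) are equivalent once freeness and the openness of $\bunp$ are in hand.
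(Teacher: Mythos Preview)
Your proof is correct and follows essentially the same route as the paper's. The paper simply says that the proof of Proposition~\ref{pro:Hausdorff_basic} still works even if \(\Gr\) or~\(\Act\) are non-Hausdorff; what you have written is precisely that proof unpacked, with Proposition~\ref{pro:open_orbit_space_projection} supplying the openness of~\(\bunp\) (so that condition~(3) of Proposition~\ref{pro:Top_basic} is automatic and Proposition~\ref{pro:Hausdorff_quotient} applies) in place of the earlier assumption that \(\bunp\in\covers\subseteq\covers_\limlift\).
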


\begin{proof}
  This follows from the proof of
  Proposition~\ref{pro:Hausdorff_basic}, which still works even if
  \(\Gr\) or~\(\Act\) are non-Hausdorff.
\end{proof}

The proof of Proposition~\ref{pro:Hausdorff_basic} shows that an
action is free and proper if and only if it satisfies the first two
conditions in Proposition~\ref{pro:Top_basic} and
\(\Act\times_{\bunp,\Act/\Gr,\bunp}\Act\) is closed in
\(\Act\times\Act\).  This is exactly how Henri Cartan defines
principal fibre bundles for a topological group~\(G\) in
\cite{Cartan:Espaces_Fibres}*{condition (FP) on page 6-05}:
\begin{quotation}
  Un espace fibré principal~\(E\) est un espace topologique~\(E\),
  o\`u op\`ere un groupe topologique~\(G\) (appelé \emph{groupe
    structural}), de mani\`ere que soit rempli l'axiome suivant~:

  (FP)~: le graphe \(C\) de la relation d'équivalence définie
  par~\(G\) est une partie \emph{fermée} de \(E\times E\)~; pour
  chaque point \((x,y)\in C\) il existe un \(s\in G\) et un seul tel
  que \(x\cdot s=y\)~; en outre, en désignant par~\(u\)
  l'application de~\(C\) dans~\(G\) ainsi définie, on suppose
  que~\(u\) est une fonction \emph{continue}.
\end{quotation}
Thus our notion of principal bundle in~\((\Top,\covers_\open)\) is
the same as Henri Cartan's.  Palais~\cite{Palais:Slices} and later
authors (\cites{Clark:Types_principal, Huef:Transformation_Fell})
call a locally compact group(oid) action \emph{Cartan} if it
satisfies a condition that, for free actions, is equivalent to the
continuity of the map \((x,x\cdot g)\mapsto g\), allowing
non-Hausdorff orbit spaces.  Cartan himself, however, requires the
orbit space to be Hausdorff because he requires the orbit
equivalence relation to be closed
(Proposition~\ref{pro:Hausdorff_quotient}).

\subsubsection{Covers with many local continuous sections}
\label{sec:Top_other_covers}

\begin{definition}
  \label{def:local_section}
  A continuous map \(f\colon X\to Y\) \emph{has many local continuous
    sections} if it is surjective and for all \(x\in X\) there is an
  open neighbourhood \(U\subseteq Y\) of~\(f(x)\) and a continuous map
  \(\sigma\colon U\to X\) with \(\sigma(f(x))=x\) and
  \(f(\sigma(y))=y\) for all \(y\in U\).  Let~\(\covers_\locsect\) be
  the class of continuous maps with many local continuous sections.
\end{definition}

The difference between this pretopology and~\(\covers_\locsplit\) is
that we require local sections with a given \(x\in X\) in the image.
This forces the map~\(f\) to be open, so
\(\covers_\locsect\subseteq\covers_\open\), whereas
\(\covers_\locsplit\not\subseteq \covers_\open\).

\begin{proposition}
  \label{pro:Top_locsect}
  The class~\(\covers_\locsect\) is a subcanonical pretopology
  on~\(\Top\) that satisfies Assumptions
  \textup{\ref{assum:local_cover}}, \textup{\ref{assum:two-three}},
  \textup{\ref{assum:covering_acts_basically}},
  and~\textup{\ref{assum:covering_acts_basically_weak}}, but is not
  saturated.  It satisfies Assumption~\textup{\ref{assum:final}} if we
  exclude the empty space.
\end{proposition}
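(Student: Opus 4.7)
The plan is to run through the pretopology and subcanonicality axioms by hand, exploiting the fact that the ``through a prescribed point'' condition forces $\covers_\locsect\subseteq \covers_\open$, and then invoke Proposition~\ref{pro:smaller_coverings_assum} together with Proposition~\ref{pro:Top_open_surjections} to obtain Assumptions~\ref{assum:covering_acts_basically} and~\ref{assum:covering_acts_basically_weak} essentially for free.

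First I would verify the three pretopology axioms. Identities admit a constant local section through any chosen preimage, hence lie in~$\covers_\locsect$. Given composable $f\in\covers_\locsect$, say $f\colon X\to Y$, and $g\in\covers_\locsect$, say $g\colon Y\to Z$, and a point $x\in X$, I pick a local section $\sigma_g\colon V\to Y$ of~$g$ at $g(f(x))$ with $\sigma_g(g(f(x)))=f(x)$, then a local section $\sigma_f\colon U\to X$ of~$f$ at $f(x)$ with $\sigma_f(f(x))=x$; the composite $\sigma_f\circ\sigma_g$, defined on $\sigma_g^{-1}(U)$, is a local section of $g\circ f$ through~$x$. For the pullback axiom in~\eqref{eq:fibre-product_of_cover}, given $(y,u)\in Y\times_{f,X,g} U$ and a local section $\sigma\colon V\to U$ of~$g$ with $\sigma(f(y))=u$, the map $y'\mapsto (y',\sigma(f(y')))$ on $f^{-1}(V)$ is a local section of~$\pr_1$ through $(y,u)$. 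Thus $\covers_\locsect$ is a pretopology. Since through any preimage there is a local section, every map in $\covers_\locsect$ is open, so $\covers_\locsect\subseteq\covers_\open\subseteq\covers_\limlift$; in particular $\covers_\locsect$ is subcanonical.

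Next I would check the remaining assumptions. Assumption~\ref{assum:final} (excluding the empty space) is immediate: the unique map to a point has a constant local section through any chosen point. For Assumption~\ref{assum:two-three}, suppose $p\colon X\prto Y$ and $f\circ p\colon X\prto Z$ belong to $\covers_\locsect$; given $y\in Y$, pick $x\in p^{-1}(y)$, take a local section $\sigma\colon W\to X$ of $f\circ p$ at $f(y)$ with $\sigma(f(y))=x$, and observe that $p\circ\sigma\colon W\to Y$ is a local section of~$f$ through~$y$, so $f\in\covers_\locsect$; Assumption~\ref{assum:local_cover} then follows. For Assumptions~\ref{assum:covering_acts_basically} and~\ref{assum:covering_acts_basically_weak}, I use $\covers_\locsect\subseteq\covers_\open$ together with Propositions~\ref{pro:smaller_coverings_assum} and~\ref{pro:Top_open_surjections}.

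Finally, for non-saturation I would imitate Example~\ref{exa:non-saturated}: pick a continuous surjection $f\colon X\to Y$ that is not open, and set $f_2\defeq (f,\id_Y)\colon X\sqcup Y\to Y$ and $f_1\colon Y\hookrightarrow X\sqcup Y$. Then $f_2\circ f_1=\id_Y\in\covers_\locsect$. If $f_2$ lay in $\covers_\locsect$, then for any $x\in X$ a local section of~$f_2$ through~$x$ would, by openness of $X$ in $X\sqcup Y$, restrict on a neighbourhood of $f(x)$ to a local section of~$f$ through~$x$; this would force~$f$ to be open, a contradiction. The only mild obstacle in the whole argument is arranging the pullback step so that the chosen local section actually passes through the prescribed point $(y,u)$; everything else is bookkeeping.
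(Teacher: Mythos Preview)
Your proposal is correct and follows essentially the same approach as the paper: verify the pretopology axioms directly, observe $\covers_\locsect\subseteq\covers_\open$ to get subcanonicality, check Assumptions~\ref{assum:two-three} and~\ref{assum:final} by hand, invoke Proposition~\ref{pro:smaller_coverings_assum} together with Proposition~\ref{pro:Top_open_surjections} for Assumptions~\ref{assum:covering_acts_basically} and~\ref{assum:covering_acts_basically_weak}, and adapt Example~\ref{exa:non-saturated} for non-saturation. The paper's proof is terser (it simply points to the $\covers_\locsplit$ argument and also notes $\covers_\locsect\subseteq\covers_\locsplit$), but the substance is the same. One cosmetic slip: the first pretopology axiom asks that \emph{isomorphisms}, not just identities, be covers, and the section in that case is the inverse map rather than a ``constant'' section; this is trivial to fix.
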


\begin{proof}
  The proof is very similar to the proof for the
  pretopology~\(\covers_\locsplit\)
  (Proposition~\ref{pro:Top_numsplit_pretopology}) and therefore
  left to the reader.  The idea of Example~\ref{exa:non-saturated}
  also shows that~\(\covers_\locsect\) is not saturated.  Since
  \(\covers_\locsect\) is contained both in \(\covers_\locsplit\)
  and~\(\covers_\open\) and both are subcanonical and have the
  property that actions of \v{C}ech groupoids are basic, Assumptions
  \ref{assum:covering_acts_basically} and
  hence~\ref{assum:covering_acts_basically_weak} follow from
  Proposition~\ref{pro:smaller_coverings_assum}.
\end{proof}

The topological groupoids in~\((\Top,\covers_\locsect)\) are those
topological groupoids for which~\(\rg\) or, equivalently, \(\s\) has
many local continuous sections.  This happens for both maps once it
happens for one because of the continuous inversion on~\(\Gr^1\).

\begin{proposition}
  \label{pro:many_versus_some_local_sections}
  Let~\(\Gr\) be a groupoid and~\(\Act\) a \(\Gr\)\nb-action
  in~\((\Top,\covers_\locsect)\).  Then~\(\Act\) is basic
  in~\((\Top,\covers_\locsect)\) if and only if it is basic
  in~\((\Top,\covers_\locsplit)\).
\end{proposition}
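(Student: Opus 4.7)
My plan is to reduce the statement to a question about which pretopology the orbit-space projection $\bunp\colon \Act \to \Act/\Gr$ belongs to, and then to use principality together with the hypothesis that $\Gr$ itself lives in $(\Top,\covers_\locsect)$ in order to upgrade an arbitrary local section of $\bunp$ to one through a prescribed point. By Proposition~\ref{pro:Top_basic}, being basic in $(\Top,\covers)$ consists of three conditions: freeness of the action, continuity of the division map $(x_1,x_2)\mapsto g$, and $\bunp\in\covers$. The first two are intrinsic to the action and do not refer to $\covers$, so the entire question reduces to comparing $\bunp\in\covers_\locsplit$ with $\bunp\in\covers_\locsect$. Since $\covers_\locsect\subseteq\covers_\locsplit$ (as noted in the paper), the direction ``basic in $\covers_\locsect$ implies basic in $\covers_\locsplit$'' is immediate.

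For the converse, I would fix $x\in\Act$ and construct a continuous local section $\sigma$ of $\bunp$ near $\bunp(x)$ with $\sigma(\bunp(x))=x$. Starting from any local section $\sigma_0\colon V\to\Act$ of $\bunp$ with $\bunp(x)\in V$ (provided by the assumption that $\bunp\in\covers_\locsplit$), I put $x_0\defeq \sigma_0(\bunp(x))$; in general $x_0\neq x$, but $\bunp(x_0)=\bunp(x)$, so principality gives a unique $g\in\Gr^1$ with $\rg(g)=\s(x_0)$, $\s(g)=\s(x)$ and $x_0\cdot g=x$. Now I invoke the hypothesis that $\Gr$ is a groupoid in $(\Top,\covers_\locsect)$, so $\rg\colon\Gr^1\to\Gr^0$ has many local continuous sections; choose $\tau\colon U\to\Gr^1$ defined on an open neighbourhood $U$ of $\rg(g)=\s(x_0)$ with $\rg\circ\tau=\id_U$ and $\tau(\rg(g))=g$. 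I then define
\[
\sigma(y)\defeq \sigma_0(y)\cdot \tau(\s(\sigma_0(y))),
\qquad y\in V\cap (\s\circ\sigma_0)^{-1}(U).
\]
Composability holds because $\rg(\tau(\s(\sigma_0(y))))=\s(\sigma_0(y))$, the domain is open, and $\sigma$ is continuous. Evaluating at $y=\bunp(x)$ gives $\sigma(\bunp(x))=x_0\cdot\tau(\rg(g))=x_0\cdot g=x$, while $\Gr$-invariance of $\bunp$ yields $\bunp(\sigma(y))=\bunp(\sigma_0(y))=y$. Hence $\sigma$ is a local continuous section of $\bunp$ through $x$, so $\bunp\in\covers_\locsect$.

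The only non-routine step is the twisting by $\tau$, and this is precisely where the hypothesis that $\Gr$ itself lies in $(\Top,\covers_\locsect)$ rather than merely in $(\Top,\covers_\locsplit)$ is essential: without a local section of $\rg$ through the prescribed arrow $g\in\Gr^1$, there is no way to modify the given section $\sigma_0$ of $\bunp$ so as to pass through the prescribed point $x\in\Act$. Everything else reduces to the characterisation of basic actions in Proposition~\ref{pro:Top_basic} and the elementary inclusion $\covers_\locsect\subseteq\covers_\locsplit$.
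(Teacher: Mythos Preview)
Your proof is correct. The reduction via Proposition~\ref{pro:Top_basic} to the single question of whether $\bunp\in\covers_\locsect$ is sound, and your twisting construction $\sigma(y)=\sigma_0(y)\cdot\tau(\s(\sigma_0(y)))$ is exactly right: composability, continuity, and the values at $\bunp(x)$ all check out as you say.

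The paper's formal proof takes a slightly different, more structural route: it observes that principal $\Gr$-bundles in $(\Top,\covers_\locsplit)$ are precisely the locally trivial $\Gr$-bundles, and then argues that since $\rg\colon\Gr^1\to\Gr^0$ lies in $\covers_\locsect$, all its pull-backs (the trivial bundles) are principal in $(\Top,\covers_\locsect)$; locality of the ``many local sections'' condition then passes this to locally trivial bundles. Your argument is the explicit pointwise version of this, and in fact the paper remarks immediately after its proof that ``roughly speaking, if the map $\rg\colon\Gr^1\to\Gr^0$ has many local continuous sections, then we may shift local continuous sections for the quotient map $\Act\to\Act/\Gr$ so that a particular point in a fibre is in the image of the local section''---which is precisely your construction. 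So the two arguments are the same idea viewed from different altitudes: the paper packages the twisting into the abstract statement about pull-backs of principal bundles, while you carry it out by hand.
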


\begin{proof}
  Since \(\rg\colon \Gr^1\prto\Gr^0\) is a principal
  \(\Gr\)\nb-bundle in~\((\Top,\covers_\locsect)\), so are its
  pull-backs, that is, all trivial bundles.  Since the existence of
  many local continuous sections is a local condition, this extends
  to locally trivial \(\Gr\)\nb-bundles.  Since the principal
  \(\Gr\)\nb-bundle in~\((\Top,\covers_\locsplit)\) are precisely
  the locally trivial \(\Gr\)\nb-bundles, they are principal
  in~\((\Top,\covers_\locsect)\) as well.  The same then holds for
  basic actions.
\end{proof}

Roughly speaking, if the map \(\rg\colon \Gr^1\prto\Gr^0\) has many
local continuous sections, then we may shift local continuous
sections for the quotient map \(\Act\to\Act/\Gr\) so that a particular
point in a fibre is in the image of the local section.

Proposition~\ref{pro:many_versus_some_local_sections} implies that
the bicategories of bibundle functors, equivalences, and
actors for~\((\Top,\covers_\locsect)\) are full
sub-bicategories of the corresponding bicategories
for~\((\Top,\covers_\locsplit)\); we only restrict from all
topological groupoids to those where the range and source maps have
many local continuous sections.

\subsubsection{Étale surjections}
\label{sec:Top_etale}

A continuous map \(f\colon X\to Y\) is called \emph{étale} (or local
homeomorphism) if
for all \(x\in X\) there is an open neighbourhood~\(U\) such that
\(f(U)\) is open and \(f|_U\colon U\to f(U)\) is a homeomorphism for
the subspace topologies on \(U\) and~\(f(U)\) from \(X\) and~\(Y\),
respectively.  This implies that~\(f\) is open.
Let~\(\covers_\etale\) be the class of étale surjections.

\begin{proposition}
  \label{pro:Top_etale}
  The class~\(\covers_\etale\) is a subcanonical pretopology
  on~\(\Top\) that satisfies Assumptions
  \textup{\ref{assum:local_cover}}, \textup{\ref{assum:two-three}},
  \textup{\ref{assum:covering_acts_basically}}
  and~\textup{\ref{assum:covering_acts_basically_weak}}; it is
  not saturated and Assumption~\textup{\ref{assum:final}} fails.
\end{proposition}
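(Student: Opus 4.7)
The plan is to verify each clause in turn, exploiting the fact that every étale surjection is open and has many local sections, so that most properties can be inherited from the pretopologies $\covers_\open$ and $\covers_\locsect$ already treated.

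First I would show that $\covers_\etale$ is a pretopology. Homeomorphisms are trivially étale, and a composite of étale surjections is again an étale surjection since local homeomorphisms compose. For the pullback axiom, given étale $g\colon U\prto X$ and any $f\colon Y\to X$, I would take a point $(y,u)\in Y\times_X U$, pick a neighbourhood $V\subseteq U$ of $u$ on which $g$ restricts to a homeomorphism onto an open set $g(V)\subseteq X$, and observe that $\pr_1$ restricts to a homeomorphism from $f^{-1}(g(V))\times_X V$ onto $f^{-1}(g(V))$; combined with surjectivity (which is inherited from $g$), this shows $\pr_1$ is an étale surjection. Subcanonicality then follows because étale surjections are in $\covers_\open\subseteq\covers_\limlift$, which we already know to be subcanonical.

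Next I would verify the strong two-out-of-three property (Assumption \ref{assum:two-three}), which automatically gives Assumption \ref{assum:local_cover}. If $p\colon X\to Y$ and $f\circ p\colon X\to Z$ are both étale surjections, then $f$ is surjective because $p$ is, and to check that $f$ is étale near a point $y\in Y$, I would pick any $x\in p^{-1}(y)$, choose a neighbourhood $U$ of $x$ on which both $p|_U$ and $(f\circ p)|_U$ are homeomorphisms onto open sets $p(U)$ and $fp(U)$ respectively, and observe that on $p(U)$ the map $f$ agrees with the composite homeomorphism $(f\circ p)|_U\circ (p|_U)^{-1}$.

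For Assumptions \ref{assum:covering_acts_basically} and \ref{assum:covering_acts_basically_weak}, I would note that every étale surjection lies in $\covers_\locsect$: any point $x\in X$ has a neighbourhood $U$ on which $f$ is a homeomorphism onto an open neighbourhood of $f(x)$, and the inverse of this restriction is a local continuous section through $x$. Hence $\covers_\etale\subseteq\covers_\locsect$, and Proposition \ref{pro:smaller_coverings_assum} transports the two assumptions from $(\Top,\covers_\locsect)$ (Proposition \ref{pro:Top_locsect}) to $(\Top,\covers_\etale)$.

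Finally, non-saturation is shown by the same trick as in Example \ref{exa:non-saturated}: choose a continuous map $f\colon X\to Y$ that is not étale (for instance, a constant map to a non-discrete space), form $f_2=(f,\id_X)\colon X\sqcup Y\to Y$ and the inclusion $f_1\colon Y\to X\sqcup Y$; then $f_2\circ f_1=\id_Y$ is étale but $f_2$ is not. For Assumption \ref{assum:final}, I would observe that the unique map from a space $X$ to the one-point space is étale only when $X$ is discrete, so the assumption fails on any category containing a non-discrete space. The main obstacle, such as it is, is the pullback step in Step~1 — everything else is routine once one has subcanonicality and the inclusion $\covers_\etale\subseteq\covers_\locsect\cap\covers_\open$ at one's disposal.
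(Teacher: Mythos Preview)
Your proposal is correct and follows essentially the same route the paper sketches: the paper's proof says to imitate Proposition~\ref{pro:Top_numsplit_pretopology}, invokes Example~\ref{exa:non-saturated} for non-saturation, and notes that the map to a point is étale only for discrete spaces. The one minor variation is that for Assumptions \ref{assum:covering_acts_basically} and~\ref{assum:covering_acts_basically_weak} you appeal to the inclusion \(\covers_\etale\subseteq\covers_\locsect\) together with Proposition~\ref{pro:smaller_coverings_assum}, rather than reproducing the direct local-section construction from the proof of Proposition~\ref{pro:Top_numsplit_pretopology}; but this shortcut is exactly the one the paper itself uses in the proof of Proposition~\ref{pro:Top_locsect}, so it is entirely in the paper's spirit. (A tiny wording slip: in your two-out-of-three argument, \(f\) is surjective because \(f\circ p\) is, not because \(p\) is.)
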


\begin{proof}
  The proof is very similar to that of
  Proposition~\ref{pro:Top_numsplit_pretopology} and therefore left to
  the reader.  Unless~\(X\) is discrete, the constant map from~\(X\)
  to a point is not étale, so Assumption~\ref{assum:final} fails
  for~\(\covers_\etale\) even if we exclude the empty space.  The idea
  of Example~\ref{exa:non-saturated} shows that~\(\covers_\etale\) is
  not saturated.
\end{proof}

The groupoids in~\((\Top,\covers_\etale)\) are precisely the étale
topological groupoids.

\begin{proposition}
  \label{pro:basic_for_etale_cover}
  An action of an étale topological groupoid is basic
  in~\((\Top,\covers_\etale)\) if and only if it satisfies the first
  two conditions in Proposition~\textup{\ref{pro:Top_basic}}.  A
  groupoid
  action in~\((\Hausdorff,\covers_\etale)\) is basic if and only if
  it is free and proper.
\end{proposition}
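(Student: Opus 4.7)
The plan is to show that under Assumptions (1) and (2) of Proposition~\ref{pro:Top_basic}, for an étale groupoid the orbit projection $\bunp\colon \Act \to \Act/\Gr$ is automatically étale; thus condition~(3) of Proposition~\ref{pro:Top_basic} becomes free. The converse direction (étale basic implies (1) and (2)) is already covered by Proposition~\ref{pro:Top_basic} since $\covers_\etale$ is a subcanonical pretopology. So the content is the forward direction.

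The key local observation is that for an étale groupoid the unit inclusion $\unit\colon \Gr^0 \to \Gr^1$ has open image. Indeed, around any $\unit(y) \in \Gr^1$ there is an open neighborhood $W$ such that $\s|_W$ is a homeomorphism onto an open subset of $\Gr^0$; the continuous section $\unit|_{\s(W)}$ must then agree with $(\s|_W)^{-1}$ by uniqueness of sections of a bijection, so $\unit(\s(W)) = W$ is open. Letting $y$ range over $\Gr^0$ shows $\unit(\Gr^0)$ is open in $\Gr^1$.

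Next, applying the continuous map $\tau\colon \Act\times_{\bunp,\Act/\Gr,\bunp} \Act \to \Gr^1$ from condition~(2), the preimage $\tau^{-1}(\unit(\Gr^0))$ is open in $\Act\times_{\Act/\Gr}\Act$. By freeness (condition (1)) and the defining property of $\tau$, an element $(x_1,x_2)$ lies in $\tau^{-1}(\unit(\Gr^0))$ iff $x_1 \cdot g = x_2$ for some unit $g$, iff $x_1 = x_2$. Hence the diagonal $\Delta \subseteq \Act\times_{\Act/\Gr}\Act$ is open in the subspace topology inherited from $\Act\times\Act$. Thus for every $x \in \Act$ there is an open $U \ni x$ in $\Act$ with $(U\times U)\cap(\Act\times_{\Act/\Gr}\Act) \subseteq \Delta$, which means $\bunp|_U$ is injective. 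Since $\covers_\etale \subseteq \covers_\open$, Proposition~\ref{pro:open_orbit_space_projection} shows $\bunp$ is open, so $\bunp|_U\colon U \to \bunp(U)$ is a continuous open bijection onto an open subset of $\Act/\Gr$, hence a homeomorphism. This exhibits $\bunp$ as étale, verifying condition~(3) of Proposition~\ref{pro:Top_basic}.

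For the second statement, work inside $(\Hausdorff, \covers_\etale)$. If the action is basic, then it is basic in $(\Top, \covers_\open)$ with Hausdorff base $\Act/\Gr$, so by Corollary~\ref{cor:basic_actions_open} it is free and proper. Conversely, assume the action is free and proper. Corollary~\ref{cor:basic_actions_open} gives that the action is basic in $(\Top, \covers_\open)$ with Hausdorff orbit space, so conditions (1) and (2) of Proposition~\ref{pro:Top_basic} hold; the first part of the present proposition then upgrades this to basicness in $(\Top, \covers_\etale)$, and the Hausdorffness of $\Act/\Gr$ places the bundle in the subcategory $\Hausdorff$.

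The main obstacle — and the step that genuinely uses the étale hypothesis — is proving that $\unit(\Gr^0)$ is open in $\Gr^1$; once that is in hand, the remaining arguments are straightforward preimage and local-homeomorphism manipulations.
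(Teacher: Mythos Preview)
Your approach is essentially the paper's: use openness of $\unit(\Gr^0)\subseteq\Gr^1$ to see that the diagonal is open in $\Act\times_{\Act/\Gr}\Act$, deduce local injectivity of~$\bunp$, and combine with openness of~$\bunp$ from Proposition~\ref{pro:open_orbit_space_projection} to conclude that~$\bunp$ is \'etale. Your Hausdorff argument via Corollary~\ref{cor:basic_actions_open} is a correct variant of the paper's direct appeal to Proposition~\ref{pro:Hausdorff_basic}.

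There is one slip in your justification that $\unit(\Gr^0)$ is open (a fact the paper simply asserts): from $\s|_W\colon W\to\s(W)$ being a bijection you cannot conclude $\unit|_{\s(W)}=(\s|_W)^{-1}$, because $\unit$ need not map all of~$\s(W)$ into~$W$. The fix is to shrink to the open set $V\defeq\unit^{-1}(W)\ni y$: for $y'\in V$ both $\unit(y')$ and $(\s|_W)^{-1}(y')$ lie in~$W$ and have the same image under the injective map~$\s|_W$, hence coincide, so $\unit(V)=(\s|_W)^{-1}(V)$ is open in~$\Gr^1$.
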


\begin{proof}
  Let~\(\Gr\) be an étale topological groupoid and let~\(\Act\) be
  a \(\Gr\)\nb-action in~\(\Top\) that satisfies the first two
  conditions in Proposition~\ref{pro:Top_basic}.  We must prove that
  the canonical map \(\bunp\colon \Act\to\Act/\Gr\) is étale,
  that is, the third condition in Proposition~\ref{pro:Top_basic} is
  automatic for the pretopology~\(\covers_\etale\).  The
  characterisation of basic actions
  in~\((\Hausdorff,\covers_\etale)\) then follows from
  Proposition~\ref{pro:Hausdorff_basic}.

  The first two conditions in Proposition~\ref{pro:Top_basic} say
  that the map
  \[
  \Act\times_{\s,\Gr^0,\rg} \Gr^1 \to
  \Act\times_{\bunp,\Act/\Gr,\bunp} \Act,
  \qquad
  (x,g)\mapsto (x,x\cdot g),
  \]
  is a homeomorphism.  Since~\(\Gr\) is étale, the set of units
  \(u(\Gr^0)\) is open in~\(\Gr^1\).  Its image in
  \(\Act\times_{\bunp,\Act/\Gr,\bunp} \Act\) is the diagonal
  \(\{(x_1,x_2)\in\Act\times \Act \mid x_1=x_2\}\).  Hence any
  \(x\in\Act\) has an open neighbourhood \(U\subseteq\Act\) such that
  \((U\times U)\cap (\Act\times_{\bunp,\Act/\Gr,\bunp} \Act)\) is the
  diagonal in~\(U\).  This means that for \(x_1,x_2\in U\),
  \(\bunp(x_1)=\bunp(x_2)\) only if \(x_1=x_2\).  Thus~\(\bunp\) is
  injective on the open subset \(U\subseteq\Act\).  Since~\(\Gr\) is
  an étale groupoid, its range and source maps are open.
  Hence~\(\bunp\) is open by
  Proposition~\ref{pro:open_orbit_space_projection}.  Its restriction
  to~\(U\) is injective, open and continuous, hence a homeomorphism
  onto an open subset of~\(\Act/\Gr\).
\end{proof}

\subsection{Smooth manifolds of finite and infinite dimension}
\label{sec:Mfd}

We consider the following categories of smooth manifolds of increasing
generality:
\begin{center}
  \begin{tabular}{ll}
    \(\Mfd_\fin\)&finite-dimensional manifolds;\\
    \(\Mfd_\Hil\)&Hilbert manifolds;\\
    \(\Mfd_\Ban\)&Banach manifolds;\\
    \(\Mfd_\Fre\)&Fréchet manifolds;\\
    \(\Mfd_\lcs\)&locally convex manifolds.
  \end{tabular}
\end{center}
Such manifolds are Hausdorff topological spaces that are locally
homeomorphic to finite-dimensional vector spaces, Hilbert spaces,
Banach spaces, Fréchet spaces, or locally convex topological vector
spaces, respectively.  Paracompactness is a standard assumption for
finite-dimensional manifolds, but not for infinite-dimensional ones.
The morphisms between all these types of manifolds are smooth maps,
meaning maps given in local charts by smooth maps between topological
vector spaces.  Banach manifolds are treated in Lang's
textbook~\cite{Lang:Fundamentals_diffgeo}, Fréchet manifolds in
\cite{Hamilton:Nash-Moser}*{Section~I.4}, and locally convex manifolds
in \cite{Nikolaus-Sachse-Wockel:String}*{Appendix~A} and
\cite{Wockel-Zhu:Integrating}*{Appendix~B}.  The covers are, in each
case, the \emph{surjective submersions} in the following sense:

\begin{definition}[see \cite{Hamilton:Nash-Moser}*{Definition 4.4.8}
  and \cite{Nikolaus-Sachse-Wockel:String}*{Appendix A}]
  \label{def:submersion}
  Let \(X\) and~\(Y\) be locally convex manifolds.  A smooth map is a
  \emph{submersion} if for each \(x\in X\), there is an open
  neighbourhood~\(V\) of~\(x\) in~\(X\) such that \(U\defeq f(V)\) is
  open in~\(Y\), and there are a smooth manifold~\(W\) and a
  diffeomorphism \(V\cong U\times W\) that intertwines~\(f\) and the
  coordinate projection \(\pr_1\colon U\times W\to U\).
\end{definition}

Choosing \(U\) small enough, we may achieve that \(V\), \(U\)
and~\(W\) are locally convex topological vector spaces.  So
submersions are maps that locally look like projections onto direct
summands in locally convex topological vector spaces.

\begin{proposition}
  \label{pro:surj_subm_pretopology}
  Surjective submersions form a subcanonical pretopology on
  \(\Mfd_\lcs\), \(\Mfd_\Fre\), \(\Mfd_\Ban\), \(\Mfd_\Hil\) and
  \(\Mfd_\fin\).  In each case, Assumptions
  \textup{\ref{assum:local_cover}}
  and~\textup{\ref{assum:covering_acts_basically_weak}} hold, and
  Assumption~\textup{\ref{assum:final}} holds if we exclude the empty
  manifold.  The pretopology is not saturated.
\end{proposition}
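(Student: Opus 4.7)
The plan is to exploit the local product structure of submersions from Definition~\ref{def:submersion}. A surjective submersion automatically admits smooth local sections (via $u \mapsto (u, w_0)$ in the chart $V \cong U \times W$), from which the pretopology axioms follow in each of the five categories: composites are again submersions (the local model is a projection $U'' \times W'' \times W \to U''$, after shrinking the codomain chart of the first submersion so it lies inside a product chart of the second), and for $g \colon U \prto X$ a submersion and $f \colon Y \to X$ arbitrary, the fibre product $Y \times_X V$ in a chart $V \cong U_0 \times W$ for $g$ is canonically diffeomorphic to $f^{-1}(U_0) \times W$, with $\pr_1$ visibly a submersion. Subcanonicity follows from smooth local sections: a fibrewise-constant smooth map $h \colon U \to W$ descends set-theoretically to $\bar h \colon X \to W$, and $\bar h = h \circ \sigma$ on the image of any local section $\sigma$, so $\bar h$ is smooth. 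Assumption~\ref{assum:final} is immediate for non-empty manifolds ($X \to \star$ fits the model $X \cong \star \times X$). Non-saturation follows the idea of Example~\ref{exa:non-saturated}: for any non-submersive smooth map $f \colon X \to Y$, the map $(f, \id_Y) \colon X \sqcup Y \to Y$ is not a submersion, but its precomposition with the inclusion $Y \hookrightarrow X \sqcup Y$ is $\id_Y$, which is.

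For Assumption~\ref{assum:local_cover}, in the fibre-product situation~\eqref{eq:fibre-product_of_cover} with $g$ and $\pr_2$ surjective submersions, I would verify that $f \colon Y \to X$ is a submersion at each $y \in Y$. Choose $u \in U$ with $g(u) = f(y)$; a chart $V \cong U_0 \times W$ for $g$ near $u$ identifies $Y \times_X V$ with $f^{-1}(U_0) \times W$, under which $\pr_2$ becomes $(y', w) \mapsto (f(y'), w)$. This product map has local product structure at $(y, w_0)$ if and only if $f$ has local product structure at $y$ (the $W$-factor is a diffeomorphism that can be split off), so the submersion assumption on $\pr_2$ transfers to~$f$.

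The main work is Assumption~\ref{assum:covering_acts_basically_weak}, verified via Proposition~\ref{pro:basic_assum_weak}.(4). Given a surjective submersion $p \colon X \prto B$ with \v{C}ech groupoid~$\Gr$ and a \(\Gr\)\nobreakdash-sheaf~$Y$ whose anchor $s \colon Y \prto X$ is a surjective submersion, I would produce a smooth manifold~$\tilde B$ with a smooth map to~$B$ such that $Y \cong \tilde B \times_B X$ equivariantly. Set-theoretically $\tilde B = Y/\Gr$, and the $\Gr$-action is free because $y \cdot (s(y), x) = y$ forces $x = s(y)$. For the smooth structure I work locally: for any smooth local section $\sigma \colon U \to X$ of~$p$ on an open $U \subseteq B$, the image $\sigma(U) \subseteq X$ is an embedded submanifold (locally a graph in any product chart for~$p$), so $Y_\sigma \defeq s^{-1}(\sigma(U))$ is an embedded submanifold of~$Y$, and the composite $Y_\sigma \xrightarrow{s} \sigma(U) \xrightarrow{\sigma^{-1}} U$ is a smooth submersion. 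The crucial observation is that every $\Gr$-orbit meeting $(p \circ s)^{-1}(U)$ contains a unique point of $Y_\sigma$: given $y$ with $p(s(y)) = b \in U$, the element $y \cdot (s(y), \sigma(b))$ lies in $Y_\sigma$ and is the only representative of its orbit there by freeness. Thus $Y_\sigma$ provides a local chart on~$\tilde B$. Transitions between charts built from different sections $\sigma_1, \sigma_2$ are implemented by the smooth map $(\sigma_1, \sigma_2) \colon U_1 \cap U_2 \to \Gr^1$ followed by the smooth action on~$Y$, hence are smooth diffeomorphisms. The desired equivariant isomorphism sends $y \mapsto ([y], s(y))$, with inverse $([y], x) \mapsto y \cdot (s(y), x)$. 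The main obstacle is checking that the locally defined manifold structure on~$\tilde B$ patches consistently and is Hausdorff; both reduce to smoothness and continuity of the groupoid multiplication on $\Gr^1 = X \times_B X$, which is part of the data, together with the fact that $Y_\sigma \subseteq Y$ is Hausdorff as a submanifold of a manifold.
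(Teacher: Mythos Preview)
Your proof is correct. For the pretopology axioms, subcanonicity, Assumption~\ref{assum:final}, and non-saturation you follow the paper closely. For Assumption~\ref{assum:local_cover} your argument is the same in outline, though the claim that ``the $W$-factor is a diffeomorphism that can be split off'' hides exactly the computation the paper spells out: a submersion chart for $(y',w)\mapsto(f(y'),w)$ need not respect the product decomposition, so one must shrink the target to product form and then extract the chart for~$f$ by freezing~$w=w_0$, checking that the resulting map is a diffeomorphism. This works, but it is the substantive step.

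For Assumption~\ref{assum:covering_acts_basically_weak} you take a different route. The paper builds charts on~$Y/\Gr$ by choosing simultaneous product coordinates for both submersions $p$ and~$s$, writing $Y_0\cong\Omega_0\times\Xi_0\times Z_0$, and then constructing a local diffeomorphism~$\alpha$ that straightens the $\Gr$-action so that the orbit projection becomes~$\pr_{13}$. You instead use only a local section~$\sigma$ of~$p$ to produce a slice $Y_\sigma=s^{-1}(\sigma(U))$, relying on the submersion~$s$ to make this an embedded submanifold; the groupoid action then gives a smooth retraction $y\mapsto y\cdot(s(y),\sigma(p(s(y))))$ onto~$Y_\sigma$, which is your chart. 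This is essentially the slice construction the paper uses later for the \emph{stronger} Assumption~\ref{assum:covering_acts_basically} in the Banach case (Proposition~\ref{prop:Mfd_basic}), where the lack of a submersion~$s$ forces the use of Lemma~\ref{lemma:projection-submfd} on idempotent maps; here the submersion hypothesis on~$s$ lets you bypass that lemma entirely. Your approach is more invariant and avoids the coordinate bookkeeping; the paper's coordinate approach makes the chart model on $\Omega_1\times Z_1$ completely explicit.

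One caution: your Hausdorffness sketch is too compressed. Hausdorffness of the pieces~$Y_\sigma$ and smoothness of the transitions do not by themselves prevent a line-with-two-origins situation. What actually works is that your retraction $r\colon (p\circ s)^{-1}(U)\to Y_\sigma$ is continuous, so open sets in~$Y_\sigma$ pull back to open $\Gr$-saturated sets in~$Y$; combined with the continuous induced map $\tilde B\to B$, this separates any two distinct orbits. The paper handles this by first invoking the topological result that \v{C}ech groupoid actions in $(\Hausdorff,\covers_\open)$ are basic, so $Y/\Gr$ is already known to be Hausdorff before the smooth structure is built.
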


\begin{proof}
  That surjective submersions form a pretopology on locally convex
  manifolds is shown in \cites{Nikolaus-Sachse-Wockel:String,
    Wockel-Zhu:Integrating}.  We briefly recall the argument below.
  It also works in the subcategories \(\Mfd_\fin\), \(\Mfd_\Hil\),
  \(\Mfd_\Ban\), and \(\Mfd_\Fre\); more generally, we may use
  manifolds based on any class~\(\Cat\) of topological vector spaces
  (not necessarily locally convex) that is closed under taking finite
  products and closed subspaces (this ensures that fibre products of
  manifolds locally modelled on topological vector spaces in~\(\Cat\)
  still have local models in~\(\Cat\)).

  It is routine to check that isomorphisms are surjective submersions
  and that composites of surjective submersions are again surjective
  submersions.  If \(f_i\colon M_i\to N\) for \(i=1,2\) are smooth
  maps and~\(f_1\) is a submersion, then the fibre-product
  \(M_1\times_N M_2\) is a smooth submanifold of \(M_1\times M_2\),
  which satisfies the universal property of a fibre product.  With our
  definition of smooth submersion, the proof is rather trivial, see
  \cite{Hamilton:Nash-Moser}*{Theorem 4.4.10} for the case of
  Fréchet manifolds and
  \cite{Nikolaus-Sachse-Wockel:String}*{Proposition A.3} for the
  general case.  The proof also shows that \(\pr_2\colon M_1\times_N
  M_2\to M_2\) is a (surjective) submersion if~\(f_1\) is one,
  so~\(\covers_\subm\) is a pretopology.  Our categories of smooth
  manifolds do not admit fibre products in general, so we must be
  careful about their representability.

  Next we show that~\(\covers_\subm\) is subcanonical.  A submersion
  \(f\colon X\to Y\) is open because it is locally open.  So our
  covers are open surjections and hence quotient maps of the
  underlying topological spaces, since open surjections form a
  subcanonical pretopology by
  Proposition~\ref{pro:Top_open_surjections}.  Thus any smooth map
  \(X\to Z\) that equalises the coordinate projections \(X\times_Y
  X\rightrightarrows X\) factors through a continuous map \(f'\colon
  Y\to Z\).  Since any submersion~\(f\) admits smooth local sections,
  this map~\(f'\) is smooth if and only if~\(f\) is smooth.
  Hence~\(f\) is the equaliser of the coordinate projections
  \(X\times_Y X\rightrightarrows X\) also in the category of smooth
  manifolds, so the pretopology~\(\covers_\subm\) is subcanonical.

  The final object is the zero-dimensional one-point manifold, and any
  map to it from a non-empty manifold is a surjective submersion.  So
  Assumption~\ref{assum:final} is trivial for all our categories of
  smooth manifolds if we exclude the empty manifold.  The idea of
  Example~\ref{exa:non-saturated} shows that~\(\covers_\subm\) is not
  saturated on any of our categories.

  Now we check Assumption~\ref{assum:local_cover}.  Let \(f\colon
  X\prto U\) be a surjective submersion and let \(g\colon Y\to U\) be
  a smooth map.  We have already shown that surjective submersions
  form a pretopology.  So the fibre product \(W\defeq X\times_{f,U,g}
  Y\) exists and the projection \(\pr_2\colon W\prto Y\) is a
  surjective submersion.  Assume that \(\pr_1\colon W\prto X\) is a
  surjective submersion as well.  We are going to show that~\(g\) is a
  surjective submersion.

  It is clear that~\(g\) is surjective.  The submersion condition is
  local, so we must check it near some \(y\in Y\).  Since~\(f\) is
  surjective, there is some \(x\in X\) with \(f(x)=g(y)\), that is,
  \((x,y)\in W\).  Since~\(f\) is a submersion, there are
  neighbourhoods~\(X_0\) of \(x\) in~\(X\) and~\(U_0\) of~\(f(x)\)
  in~\(U\), a smooth manifold~\(\Xi_0\), and a diffeomorphism
  \(\alpha\colon \Xi_0\times U_0 \congto X_0\) such that
  \(f\circ\alpha(\xi,u) = u\) for all \(\xi\in\Xi_0\), \(u\in U_0\).
  Let \((\xi_0,u_0)=\alpha^{-1}(x)\).  The preimage \(W_0\defeq
  \pr_1^{-1}(X_0)\) in~\(W\) is an open submanifold.  It is
  diffeomorphic to the fibre product \((\Xi_0\times U_0) \times_{U_0}
  Y \cong \Xi_0\times Y_0\) with \(Y_0\defeq g^{-1}(U_0)\)
  because~\(W\) is a fibre product.  The restriction of~\(\pr_1\)
  to~\(W_0\) is still a surjective submersion \(W_0\to X_0\)
  because~\(\pr_1\) is one and the condition for submersions is local.
  Hence we have improved our original fibre-product situation to one
  where~\(f\) is the projection map \(\pr_1\colon \Xi_0\times U_0\to
  U_0\).  Thus \(W_0= \Xi_0\times Y_0\) is an ordinary product and
  \(\pr_1\colon W_0\to X_0\) becomes the map
  \[
  \id\times g_0\colon \Xi_0\times Y_0 \to \Xi_0\times U_0,\qquad
  (\xi,y_0)\mapsto (\xi,g_0(y_0)),
  \]
  where \(g_0\colon Y_0\to U_0\) is the restriction of~\(g\).

  The map \(\id\times g_0\) is a submersion by assumption.  Hence
  there are neighbourhoods~\(W_1\) around~\((\xi_0,y_0)\)
  in~\(\Xi_0\times Y_0\) and~\(X_1\) around~\((\xi_0,u_0)\)
  in~\(\Xi_0\times U_0\), a smooth manifold~\(\Omega_1\), and a
  diffeomorphism \(\beta\colon W_1\congto \Omega_1\times X_1\) that
  intertwines~\(\id\times g_0\) and the coordinate projection
  \(\Omega_1\times X_1\to X_1\).  We may shrink~\(X_1\) to be of
  product type: \(X_1=\Xi_1\times U_1\) for open submanifolds
  \(\Xi_1\subseteq \Xi_0\) and \(U_1\subseteq U_0\) and
  replace~\(W_1\) by the preimage of \(\Omega_1\) times the
  new~\(X_1\), so we may assume without loss of generality
  that~\(X_1\) is of product type.  If \((\xi_1,y_1)\in W_1\), then
  \[
  \beta(\xi_1,y_1)=(\omega_1(\xi_1,y_1),\xi_1,g_0(y_1))
  \]
  for some function \(\omega_1\colon W_1\to \Omega_1\): the second two
  components of~\(\beta\) are \(\id\times g_0\) by construction.  The
  subset
  \[
  Y_1\defeq \{y\in Y_0\mid (\xi_0,y)\in W_1\}
  \]
  is open in~\(Y_0\).  We claim that \(\gamma(y_1) \defeq
  (\omega_1(\xi_0,y_1),g_0(y_1))\) defines a diffeomorphism
  from~\(Y_1\) onto \(\Omega_1\times U_1\); indeed, if
  \((\omega_1,u_1)\in \Omega_1\times U_1\), then
  \(\beta^{-1}(\omega_1,\xi_0,u_1)\) must be of the form
  \((\xi_0,y_1)\) for some \(y_1\in Y_0\) with \((\xi_0,y_1)\in W_1\)
  because the \(\Xi_0\)\nb-component of \(\beta(\xi_1,y_1)\)
  is~\(\xi_1\).  Hence the smooth map
  \(\beta^{-1}|_{\Omega_1\times\{\xi_0\}\times U_1}\) is inverse
  to~\(\gamma\).  The restriction of~\(g_0\) to~\(Y_1\) becomes the
  coordinate projection \(\Omega_1\times U_1 \to U_1\) by
  construction.  Hence \(g\colon Y\to U\) satisfies the submersion
  condition near~\(y\).  This finishes the proof that
  Assumption~\ref{assum:local_cover} holds for~\(\covers_\subm\).

  Now we check Assumption~\ref{assum:covering_acts_basically_weak}.
  Let \(\bunp\colon X\prto Z\) be a surjective submersion.  Let~\(\Gr\)
  be its \v{C}ech groupoid.  Let~\(Y\) be a sheaf over~\(\Gr\), that
  is, a \(\Gr\)\nb-action that has a surjective submersion \(\s\colon
  Y\prto X\) as anchor map.  We claim that the \(\Gr\)\nb-action
  on~\(Y\) is basic.  That is, the orbit space~\(Y/\Gr\) is a smooth
  manifold, the orbit space projection \(Y\to Y/\Gr\) is a surjective
  submersion, and the shear map \(Y\times_{\s,\Gr^0,\rg} \Gr^1 \to
  Y\times_{Y/G} Y\), \((y,g)\mapsto (y,y\cdot g)\), is a
  diffeomorphism.  Since surjective submersions are open, \(G\)
  becomes a \v{C}ech groupoid in \((\Hausdorff,\covers_\open)\).  We
  know that Assumption~\ref{assum:covering_acts_basically} holds in
  that case, so~\(Y/G\) is a Hausdorff topological space, the orbit
  space projection \(Y\to Y/G\) is a continuous open surjection, and
  the shear map is a homeomorphism.

  Since \(\bunp\) and~\(\s\) are covers, so is \(\bunp\circ\s\), so
  the fibre product \(Y\times_Z Y\) is a smooth submanifold of~\(Y\).
  If \((y_1,y_2)\in Y\times_Z Y\), then \((\s(y_1),\s(y_2))\in
  X\times_Z X=\Gr^1\), with range and source \(\s(y_1)\)
  and~\(\s(y_2)\), respectively.  Thus there is a well-defined smooth
  map
  \[
  \psi\colon Y\times_Z Y \to Y\times_X \Gr^1,\qquad
  (y_1,y_2)\mapsto (y_1, (\s(y_1),\s(y_2))).
  \]
  Composing it with the shear map gives the map
  \[
  \varphi\colon Y\times_Z Y \to Y\times_Z Y,\qquad
  (y_1,y_2)\mapsto (y_1, y_1\cdot (\s(y_1),\s(y_2))).
  \]
  Since \(\s(y_1\cdot (\s(y_1),\s(y_2))) = \s(y_1)\) and
  \((\s(y_1),\s(y_1))\) is an identity arrow, we get
  \(\varphi^2=\varphi\).  If \(y_1,y_2\) are in the same
  \(\Gr\)\nb-orbit, that is, \(y_1\cdot g=y_2\) for some \(g\in\Gr^1 =
  X\times_Z X\), then \(\rg(g)=\s(y_1)\) and \(\s(g)=\s(y_2)\), so
  \(g=(\s(y_1),\s(y_2))\).  Thus \(\bunp(\s(y_1))=\bunp(\s(y_2))\) is
  necessary for \(y_1\) and~\(y_2\) to have the same orbit (we write
  \(y_1\Gr=y_2\Gr\)).  If this necessary condition is satisfied, then
  the only arrow in~\(\Gr\) that has a chance to map~\(y_1\)
  to~\(y_2\) is \((\s(y_1),\s(y_2))\), so \(y_1\Gr= y_2\Gr\) if and
  only if \(y_1\cdot(\s(y_1),\s(y_2)) = y_2\).  This is equivalent to
  \(\varphi(y_1,y_2)=(y_1,y_2)\).  Thus the image of~\(\varphi\) is
  the subspace \(Y\times_{Y/G} Y\) of~\(Y\times_Z Y\), and the
  restriction of~\(\psi\) to~\(Y\times_{Y/G} Y\) is a smooth inverse
  for the shear map.  More precisely, this is a smooth inverse once we
  know that \(Y\times_{Y/G} Y\) is a smooth submanifold of \(Y\times
  Y\) and hence a smooth submanifold of \(Y\times_Z Y\).  Thus the
  condition on the shear map is automatic in our case, and we only
  have to construct a smooth structure on~\(Y/G\) such that the
  projection \(\pi\colon Y\to Y/G\) is a submersion.

  We first restrict to small open subsets where~\(\bunp\) and~\(\s\)
  are of product type.  Fix \(y\in Y\).  We may choose
  neighbourhoods~\(Y_0\) of~\(y\) in~\(Y\), \(X_0\) of~\(\s(y)\)
  in~\(X\) and~\(Z_0\) of \(\bunp(\s(y))\) in~\(Z\), smooth manifolds
  \(\Omega_0\) and~\(\Xi_0\), and diffeomorphisms \(Y_0\cong
  \Omega_0\times X_0\), \(X_0\cong \Xi_0\times Z_0\), such that the
  maps~\(\s\) and~\(\bunp\) become the projections to the second
  coordinate, respectively.  We identify \(Y_0=\Omega_0\times
  \Xi_0\times Z_0\) and \(X_0=\Xi_0\times Z_0\), so that
  \(\s(\omega_0,\xi_0,z_0) = (\xi_0,z_0)\) and \(\bunp(\xi_0,z_0) =
  z_0\) for all \(\omega_0\in \Omega_0\), \(\xi_0\in \Xi_0\), \(z_0\in
  Z_0\).  Let~\((\omega,\xi,z)\) be the coordinates of the chosen
  point \(y\in Y\).

  What happens to the \(\Gr\)\nb-action on~\(Y\) in these coordinates?
  Let \((\omega_0,\xi_0,z_0)\in Y_0\).  Then
  \(\s(\omega_0,\xi_0,z_0)=(\xi_0,z_0)\).  An arrow in \(G\cap
  (X_0\times_{Z_0} X_0)\) with range \((\xi_0,z_0)\) is a pair
  \(((\xi_0,z_0),(\xi_1,z_0))\), which we identify with the triple
  \((\xi_0,z_0,\xi_1)\).  If~\(\xi_1\) is sufficiently close
  to~\(\xi_0\), then \((\omega_0,\xi_0,z_0)\cdot (\xi_0,z_0,\xi_1)\)
  belongs to~\(Y_0\) and may be written in our coordinates.  Since
  \(\s((\omega_0,\xi_0,z_0)\cdot (\xi_0,z_0,\xi_1)) = (\xi_1,z_0)\),
  we get
  \[
  (\omega_0,\xi_0,z_0)\cdot (\xi_0,z_0,\xi_1) =
  (\varphi(\omega_0,\xi_0,z_0,\xi_1),\xi_1,z_0)
  \]
  for a smooth function~\(\varphi\) defined on some
  neighbourhood~\(W_1\) of \((\omega_0,\xi_0,z_0,\xi_0)\) in
  \(\Omega_0\times \Xi_0\times Z_0\times \Xi_0\).  Let \(Y_1 \defeq
  \{y\in Y_0 \mid (\omega_0,\xi_0,z_0,\xi) \in W_1\}\), where~\(\xi\)
  is the \(\Xi_0\)\nb-coordinate of our chosen point~\(y\).  Define
  \[
  \alpha\colon Y_1\to \Omega_0\times \Xi_0\times Z_0,
  \qquad (\omega_0,\xi_0,z_0)\mapsto
  (\varphi(\omega_0,\xi_0,z_0,\xi),\xi_0,z_0).
  \]
  In other words, \(\alpha(\omega_0,\xi_0,z_0)\) takes the original
  \(\Xi_0\)-coordinate and takes the other coordinates from
  \((\omega_0,\xi_0,z_0)\cdot (\xi_0,z_0,\xi)\).  Since
  \((\xi_1,z_0,\xi_2)\cdot (\xi_2,z_0,\xi_1) = (\xi_1,z_0,\xi_1)\) is
  a unit arrow and thus acts identically on~\(Y\) for any
  \(\xi_1,\xi_2\in \Xi_0\), \(z_0\in Z_0\), we get
  \[
  \alpha^{-1}(\omega_0,\xi_0,z_0)
  = (\varphi(\omega_0,\xi,z_0,\xi_0),\xi_0,z_0)
  \]
  on some neighbourhood of~\((\omega,\xi,z)\).  Thus~\(\alpha\) is a
  diffeomorphism from some neighbourhood~\(Y_1\) of~\(y\) in~\(Y_0\)
  onto a neighbourhood \(\Omega_1\times \Xi_1\times Z_1\) of
  \(\alpha(y)=(\omega,\xi,z)\) in~\(\Omega_0\times \Xi_0\times Z_0\).

  When are \(y_1,y_2\in Y_1\) in the same \(\Gr\)\nb-orbit?  Write
  \(\alpha(y_i)= (\omega_i,\xi_i,z_i)\).  Then \(\s(y_i)=(\xi_i,z_i)\)
  and \(\bunp\s(y_i)=z_i\) in local coordinates.  Since
  \(\bunp\s(y_1)=\bunp\s(y_2)\) is necessary for \(y_1\Gr=y_2\Gr\), we
  have \(z_1=z_2\) if \(y_1\Gr=y_2\Gr\).  Assume this, then
  \((\s(y_1),\s(y_2))\in X\times_Z X=\Gr^1\), so we have
  \(y_1\Gr=y_2\Gr\) if and only if \(y_1\cdot(\s(y_1),\s(y_2))=y_2\)
  by the discussion above.  This is equivalent to \(z_1=z_2\) and
  \[
  y_1\cdot (\s(y_1),(z_1,\xi)) = y_2\cdot (\s(y_2),(z_2,\xi)),
  \]
  where~\(\xi\) is the \(\Xi_0\)-coordinate of our fixed point~\(y\).
  And this is equivalent to \(\pr_{13}\circ \alpha(y_1) =
  \pr_{13}\circ \alpha(y_2)\).

  As a consequence, the restriction of the quotient map \(Y\to Y/G\)
  to \(\alpha^{-1}(\Omega_1\times\{\xi\}\times U_1)\) is a
  homeomorphism onto its image; this image is the same as the image
  of~\(Y_1\), which is an open subset in~\(Y/G\) because~\(Y_1\) is
  open and the quotient map \(Y\to Y/G\) is open.  Furthermore, with
  this coordinate chart, the restriction of the projection \(Y\to
  Y/G\) to~\(Y_1\) becomes the coordinate projection \(\pr_{13}\colon
  \Omega_1\times \Xi_1\times Z_1\to \Omega_1\times Z_1\).  Doing the
  above construction for all \(y\in Y\), we get a covering of~\(Y\) by
  open subsets like our~\(Y_1\) with corresponding charts
  on~\(\pi(Y_1)\).  We claim that these local charts on~\(Y/G\) have
  smooth coordinate change maps.  Thus~\(Y/G\) becomes a smooth
  manifold.  The charts are defined in such a way that the projection
  \(\pi\colon Y\to Y/G\) is a smooth submersion.  We have already seen
  above that the shear map is a diffeomorphism.  Hence the proof that
  the \(\Gr\)\nb-action on~\(Y\) is basic will be finished once we
  show that coordinate change maps between our local charts on~\(Y/G\)
  are smooth.

  The charts on~\(Y/G\) are defined in such a way that the projection
  map \(\pi\colon Y\to Y/G\) is smooth for each chart and there are smooth
  sections \(Y/G\supseteq \Omega_1\times U_1\to Y\) on the chart
  neighbourhoods, defined by \((\omega_1,u_1)\mapsto
  (\omega_1,\xi,u_1)\) in local coordinates.  Now take two overlapping
  charts on~\(Y/G\) defined on open subsets \(W_1,W_2\subseteq Y/G\)
  with their smooth local sections \(\sigma_i\colon W_i\to Y\).  If
  \(w\in W_1\cap W_2\), then \(\sigma_1(w)\) and~\(\sigma_2(w)\) are
  two representatives of the same \(\Gr\)\nb-orbit.  By the discussion
  above, this means that \(\bunp\s(\sigma_1(w))=\bunp\s(\sigma_2(w))\)
  and \(\sigma_2(w)=\sigma_1(w)\cdot
  (\s(\sigma_1(w)),\s(\sigma_2(w)))\).  Thus the coordinate change
  map from~\(W_1\) to~\(W_2\) is of the form
  \[
  w\mapsto \pi\bigl(\sigma_1(w)\cdot (\s(\sigma_1(w)),\s(\sigma_2(w)))\bigr).
  \]
  Since the maps \(\pi\), \(\sigma_i\) and~\(\s\) are all smooth in
  our local coordinates, so is the composite map.  This finishes the
  proof that Assumption~\ref{assum:covering_acts_basically_weak} holds
  for \(\covers_\subm\).  The proof works for all our categories of
  infinite-dimensional manifolds.
\end{proof}

It is unclear whether Assumptions \ref{assum:two-three}
and~\ref{assum:covering_acts_basically} hold for Fréchet and locally
convex manifolds.  The problem is that we lack a general implicit
function theorem.  Such a theorem is available for Banach manifolds
and gives the following equivalent charaterisations of submersions:

\begin{proposition}
  \label{pro:submersion_Banach}
  Let \(X\) and~\(Y\) be Banach manifolds and let \(f\colon X\to Y\)
  be a smooth map.  The following are equivalent:
  \begin{enumerate}
  \item \label{enum:submersion1} \(f\) is a submersion in the sense
    of Definition~\textup{\ref{def:submersion}};
  \item \label{enum:submersion2} \(f\) has many smooth local
    sections, that is, for each \(x\in X\), there is an open
    neighbourhood~\(U\) of \(f(x)\) and a smooth map \(\sigma\colon
    U\to X\) with \(\sigma(f(x))=x\) and \(f\circ\sigma=\id_U\);
  \item \label{enum:submersion3} for each \(x\in X\), the derivative
    \(D_xf\colon T_x X\to T_{f(x)}Y\) is split surjective, that is,
    there is a continuous linear map \(s\colon T_{f(x)} Y\to T_x X\)
    with \(D_x f\circ s=\id_{T_{f(x)} Y}\).
  \end{enumerate}
  If \(X\) and~\(Y\) are Hilbert manifolds,
  then~\ref{enum:submersion3} is equivalent to~\(D_x f\) being
  surjective.
\end{proposition}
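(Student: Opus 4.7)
The plan is to prove the three equivalences in a cycle \ref{enum:submersion1}$\Rightarrow$\ref{enum:submersion2}$\Rightarrow$\ref{enum:submersion3}$\Rightarrow$\ref{enum:submersion1}, and then separately treat the Hilbert case.

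First I would handle \ref{enum:submersion1}$\Rightarrow$\ref{enum:submersion2}. Given a local product decomposition $V\cong U\times W$ in which $f$ becomes the first coordinate projection, for each $x\in V$ write $(f(x),w_x)\in U\times W$; then $u\mapsto (u,w_x)$ defines a smooth section on~$U$ sending $f(x)$ to~$x$. This step is essentially tautological. Next, for \ref{enum:submersion2}$\Rightarrow$\ref{enum:submersion3}, differentiate the identity $f\circ\sigma=\id_U$ at~$f(x)$ using the chain rule to get $D_xf\circ D_{f(x)}\sigma=\id_{T_{f(x)}Y}$; the map $s\defeq D_{f(x)}\sigma$ is then the required continuous linear right inverse.

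The main obstacle is \ref{enum:submersion3}$\Rightarrow$\ref{enum:submersion1}, which is where the Banach hypothesis is essential because it is where the inverse function theorem enters. Fix $x\in X$ and a splitting $s\colon T_{f(x)}Y\to T_xX$. Let $K\defeq \ker D_xf$; because $s$ is a continuous linear right inverse, the map $T_xX\to K$, $v\mapsto v-s(D_xf(v))$, is a continuous linear projection, so $T_xX=K\oplus s(T_{f(x)}Y)$ as a topological direct sum of closed subspaces. Choose charts identifying neighbourhoods of $x$ and $f(x)$ with open sets in the Banach spaces $T_xX$ and $T_{f(x)}Y$ respectively (sending $x$ and $f(x)$ to the origin), and let $\pi\colon T_xX\to K$ be the projection above. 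Define a smooth map $\varphi$ on a neighbourhood of~$x$ by $\varphi(z)\defeq (f(z),\pi(z))\in T_{f(x)}Y\oplus K$. Then $D_x\varphi$ is the isomorphism $(D_xf,\pi)\colon T_xX\congto T_{f(x)}Y\oplus K$, so by the inverse function theorem for Banach manifolds $\varphi$ restricts to a diffeomorphism $V\congto U\times W$ on some open neighbourhood, with~$W$ open in~$K$; in these coordinates~$f$ is the first projection, verifying \ref{enum:submersion1}. The delicate point is that the Banach inverse function theorem demands both source and target to be modelled on Banach spaces (not merely locally convex), and that the decomposition $T_xX=K\oplus s(T_{f(x)}Y)$ be topological, which is ensured by having a \emph{continuous} linear splitting.

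Finally, for the Hilbert addendum, I would show that if $X$ and $Y$ are Hilbert manifolds then surjectivity of $D_xf$ already implies split surjectivity. Indeed, $K\defeq \ker D_xf$ is a closed subspace of the Hilbert space $T_xX$, so $T_xX=K\oplus K^\perp$ and $D_xf$ restricts to a continuous linear bijection $K^\perp\to T_{f(x)}Y$; by the open mapping theorem this bijection has a continuous inverse, which composed with the inclusion $K^\perp\hookrightarrow T_xX$ gives the desired splitting $s$. The remainder of the proposition then follows from the Banach case.
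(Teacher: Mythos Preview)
Your proposal is correct and follows essentially the same route as the paper: the paper also proves \ref{enum:submersion1}$\Rightarrow$\ref{enum:submersion2}$\Rightarrow$\ref{enum:submersion3}$\Rightarrow$\ref{enum:submersion1}, handles the Hilbert addendum via the orthogonal complement of $\ker D_xf$ together with the Open Mapping Theorem, and only differs in that it outsources the implication \ref{enum:submersion3}$\Rightarrow$\ref{enum:submersion1} to a reference (Lang, \emph{Fundamentals of differential geometry}, Chapter~II, Proposition~2.2) rather than spelling out the inverse-function-theorem argument you give.
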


\begin{proof}
  It is trivial that~\ref{enum:submersion1}
  implies~\ref{enum:submersion2}.  Using \(D_{f(x)}\sigma\) as
  continuous linear section for~\(D_x f\), we see
  that~\ref{enum:submersion2} implies~\ref{enum:submersion3}.  The
  implication from~\ref{enum:submersion3} to~\ref{enum:submersion1} is
  \cite{Lang:Fundamentals_diffgeo}*{Proposition 2.2 in Chapter II}.
  If \(X\) and~\(Y\) are Fréchet manifolds, then the
  derivative~\(D_x f\) is open once it is surjective by the Open
  Mapping Theorem.  If \(D_x f\) is surjective and \(X\) and~\(Y\) are
  Hilbert manifolds, then the orthogonal projection onto \(\ker D_x
  f\) splits \(X\cong \ker (D_x f) \oplus D_{f(x)} Y\), so
  \ref{enum:submersion3} follows if~\(D_x f\) is surjective and \(X\)
  and~\(Y\) are Hilbert manifolds.
\end{proof}

\begin{proposition}
  \label{pro:subm_local_cover}
  On the categories \(\Mfd_\Ban\), \(\Mfd_\Hil\) and \(\Mfd_\fin\),
  the pretopology~\(\covers_\subm\) also satisfies
  Assumption~\textup{\ref{assum:two-three}}.
\end{proposition}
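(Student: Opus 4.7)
My plan is to prove this by reducing the submersion property for $f$ to a splitting condition on the derivative, using the equivalent characterizations in Proposition \ref{pro:submersion_Banach}. Since all three categories (finite-dimensional, Hilbert, Banach) share the key feature that split surjectivity of continuous linear maps is equivalent to the submersion condition (via the implicit function theorem encoded in Proposition \ref{pro:submersion_Banach}), a single argument will handle all three cases.

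Let $p\colon X\to Y$ and $f\colon Y\to Z$ be composable smooth maps with $p$ and $f\circ p$ surjective submersions. First, surjectivity of $f$ is immediate: given $z\in Z$, pick $x\in X$ with $f(p(x))=z$, and then $y=p(x)$ satisfies $f(y)=z$. For the submersion condition, fix $y\in Y$ and choose $x\in X$ with $p(x)=y$, which exists since $p$ is surjective. By the chain rule for smooth maps between Banach manifolds, $D_x(f\circ p)=D_y f\circ D_x p$ as continuous linear maps between the Banach tangent spaces $T_xX$, $T_yY$, $T_{f(y)}Z$.

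Since $f\circ p$ is a submersion, Proposition \ref{pro:submersion_Banach}.\ref{enum:submersion3} gives a continuous linear map $t\colon T_{f(y)}Z\to T_xX$ with $D_x(f\circ p)\circ t=\id$. Setting $s\defeq D_xp\circ t\colon T_{f(y)}Z\to T_yY$, we compute
\[
D_yf\circ s = D_yf\circ D_xp\circ t = D_x(f\circ p)\circ t = \id_{T_{f(y)}Z},
\]
so $D_yf$ is split surjective. Applying Proposition \ref{pro:submersion_Banach}.\ref{enum:submersion3}$\Rightarrow$\ref{enum:submersion1} at the point $y$ shows that $f$ satisfies the local product condition of Definition \ref{def:submersion} near $y$. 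Since $y\in Y$ was arbitrary, $f$ is a surjective submersion.

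There is no real obstacle here beyond invoking Proposition \ref{pro:submersion_Banach}, which depends on the implicit function theorem being available in Banach spaces; this is precisely why the argument does not extend to $\Mfd_\Fre$ or $\Mfd_\lcs$, where split surjectivity of the derivative no longer implies the local product decomposition. For $\Mfd_\fin$ and $\Mfd_\Hil$, every surjective continuous linear map between the relevant tangent spaces is automatically split (by choosing a complement or an orthogonal projection), so the argument specializes even more cleanly, but the unified Banach-space formulation above covers all three categories simultaneously.
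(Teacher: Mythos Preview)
Your proof is correct. Both you and the paper reduce to one of the equivalent characterisations in Proposition~\ref{pro:submersion_Banach}, but you choose a different one: the paper uses condition~\ref{enum:submersion2} (many smooth local sections) and simply composes a local section~\(\sigma\) for \(f\circ p\) with~\(p\) to obtain a local section \(p\circ\sigma\) for~\(f\), whereas you use condition~\ref{enum:submersion3} (split surjectivity of the derivative) and compose a linear section~\(t\) for \(D_x(f\circ p)\) with~\(D_xp\) to obtain a linear section for~\(D_yf\). The two arguments are formally parallel---each pushes a section through~\(p\) at the appropriate level (smooth versus infinitesimal)---and neither actually uses that~\(p\) is a submersion, only that it is surjective and smooth. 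The paper's version is marginally more direct in that it avoids tangent spaces altogether, but your linear-algebraic formulation makes the structure of the argument very transparent and highlights exactly where the implicit function theorem (hidden in \ref{enum:submersion3}\(\Rightarrow\)\ref{enum:submersion1}) enters.
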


\begin{proof}
  Since we are working with Banach manifolds or smaller categories, we may
  use Proposition~\ref{pro:submersion_Banach} to redefine our covers
  as surjective smooth maps with many smooth local sections.  With
  this alternative definition of our covers,
  Assumption~\ref{assum:two-three} holds for the same reason as for
  the pretopology of continuous surjections with many continuous local
  sections on the category of topological spaces.  Let \(X\), \(Y\)
  and~\(Z\) be Banach manifolds and let \(f\colon X\to Y\) and
  \(g\colon Y\to Z\) be smooth maps.  We assume that \(g\circ f\)
  and~\(f\) are surjective submersions and want to show that~\(g\) is
  so to.  Of course, \(g\)~must be surjective if \(g\circ f\) is.
  Given \(y\in Y\), there is \(x\in X\) with \(f(x)=y\) because~\(g\)
  is surjective, and there is a smooth local section~\(\sigma\) for
  \(g\circ f\) near~\(g(y)\) with \(\sigma(g(y))=x\).  Then
  \(f\circ\sigma\) is a smooth local section for~\(g\) with
  \(f\circ\sigma(g(y))=y\).
\end{proof}

If we redefined submersions of locally convex or Fréchet manifolds
using one of the alternative criteria in
Proposition~\ref{pro:submersion_Banach}, then the existence of
pull-backs along submersions would become unclear.

The next lemma is needed to verify
Assumption~\ref{assum:covering_acts_basically} for Banach manifolds.

\begin{lemma}
  \label{lemma:projection-submfd}
  Let~\(Y\) be a Banach manifold and let \(\varphi\colon Y\to Y\) be a
  smooth map with \(\varphi^2=\varphi\).  The image \(\Ima(\varphi)\)
  is a closed submanifold of~\(Y\) and \(\varphi\colon Y\to
  \Ima(\varphi)\) is a surjective submersion.
\end{lemma}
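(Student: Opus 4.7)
The plan is to fix a point \(y_0\in\Ima\varphi\), analyse~\(\varphi\) in local coordinates using the implicit function theorem for Banach spaces, and establish the lemma in three steps: submanifold structure near~\(y_0\), closedness in~\(Y\), and the submersion property. Since \(\varphi(y_0)=y_0\), differentiating \(\varphi\circ\varphi=\varphi\) at~\(y_0\) yields \(P^2=P\) for \(P\defeq D\varphi(y_0)\), so~\(P\) is a continuous idempotent on the Banach space~\(T_{y_0}Y\), giving a topological direct sum decomposition \(T_{y_0}Y=E\oplus F\) with \(E\defeq\Ima P\) and \(F\defeq\ker P\) both closed. In a chart around~\(y_0\) identified with an open \(V\subset E\oplus F\) sending~\(y_0\) to~\(0\), and after shrinking so~\(\varphi\) preserves the chart domain, I view~\(\varphi\) as a smooth map \(\tilde\varphi=(\tilde\varphi_E,\tilde\varphi_F)\colon V\to E\oplus F\) with \(\tilde\varphi(0)=0\), \(D\tilde\varphi(0)\colon(e,f)\mapsto(e,0)\), and \(\tilde\varphi\circ\tilde\varphi=\tilde\varphi\) where defined.

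The key step is to apply the implicit function theorem to \(\tilde\varphi_F(e,f)=f\): its \(f\)\nb-derivative at~\((0,0)\) is \(D_f\tilde\varphi_F(0)-\id_F=-\id_F\), invertible. This produces a smooth \(g\colon V_E\to F\) on a neighbourhood~\(V_E\) of~\(0\) in~\(E\) with \(g(0)=0\), \(Dg(0)=0\), and \(\tilde\varphi_F(e,g(e))=g(e)\), cutting out a smooth submanifold \(\Gamma\defeq\{(e,g(e)):e\in V_E\}\). I then show \(\Ima\tilde\varphi=\Gamma\) locally: for \(\subset\), any \(u\in\Ima\tilde\varphi\) is fixed by~\(\tilde\varphi\) by idempotency, so its \(F\)\nb-component~\(f\) satisfies \(\tilde\varphi_F(u_E,f)=f\), forcing \(f=g(u_E)\) by uniqueness; for \(\supset\), the map \(\tau\colon V_E\to E\), \(\tau(e)\defeq\tilde\varphi_E(e,0)\), has \(D\tau(0)=\id_E\) and is therefore a local diffeomorphism, so for \(e_0\in V_E\) small, setting \(e_1\defeq\tau^{-1}(e_0)\) gives \(\tilde\varphi(e_1,0)\in\Ima\tilde\varphi\subset\Gamma\) with \(E\)\nb-component~\(e_0\), hence \(\tilde\varphi(e_1,0)=(e_0,g(e_0))\). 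This exhibits \(\Ima\varphi\) as a Banach submanifold of~\(Y\) modelled on~\(E\) near each \(y_0\in\Ima\varphi\). Closedness then follows from the identity \(\Ima\varphi=\{u\in Y:\varphi(u)=u\}\), which realises \(\Ima\varphi\) as the preimage of the closed diagonal of \(Y\times Y\) under the continuous map \(u\mapsto(u,\varphi(u))\).

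For the submersion claim, I straighten the chart around~\(y_0\) so that \(\Ima\varphi\cap W\) corresponds to \(E\times\{0\}\); in those coordinates \(\tilde\varphi=(\tilde\varphi_E,0)\) and \(D\tilde\varphi_E(0)\) equals the first projection, split surjective onto~\(E\). Since split surjective operators form an open subset of \(\mathcal{L}(E\oplus F,E)\) and the derivative of~\(\tilde\varphi_E\) varies continuously, \(\tilde\varphi_E\) remains a submersion on a neighbourhood of~\(0\), so~\(\varphi\) is a submersion at every point of~\(\Ima\varphi\) and hence on an open neighbourhood of~\(\Ima\varphi\) in~\(Y\). The main difficulty I anticipate is extending the submersion conclusion to points of~\(Y\) far from~\(\Ima\varphi\): the idempotency identity \(D\varphi(y_0)\circ D\varphi(u)=D\varphi(u)\) supplies only the inclusion \(\Ima D\varphi(u)\subset E\), not surjectivity onto~\(E\). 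Controlling the derivative globally appears to require either an additional argument exploiting the full functional equation \(\varphi\circ\varphi=\varphi\) at arbitrary \(u\in Y\), or else the observation that the subsequent applications of the lemma only require the submersion property on a neighbourhood of~\(\Ima\varphi\), which is what the coordinate argument above already delivers.
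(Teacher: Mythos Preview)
Your submanifold chart via the implicit function theorem is correct. The paper argues somewhat differently: it writes down the explicit change of coordinates $\Psi(\xi,\eta)=\varphi'(\xi)+\eta$ on $\Ima D\oplus\ker D$, checks that $D\Psi(0)=\id$ so that $\Psi$ is a local diffeomorphism by the inverse function theorem, and then uses a second-order Taylor estimate to show that $\Psi(\xi,\eta)\in\Ima\varphi'$ if and only if $\eta=0$ for small $(\xi,\eta)$. Your IFT route and the paper's explicit chart are equally valid; the closedness argument is identical in both.

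The gap you flag in the submersion claim is genuine, and in fact the lemma as stated is \emph{false}. On $Y=\R^2$ the map $\varphi(x,y)=\bigl(x(1-y^2),0\bigr)$ is smooth and idempotent with $\Ima\varphi=\R\times\{0\}$, yet $D\varphi(0,1)=0$, so $\varphi$ fails to be a submersion at $(0,1)$. The paper's proof makes exactly the oversight you anticipated: it verifies split surjectivity of the derivative only at points $y\in\Ima\varphi$ and then invokes Proposition~\ref{pro:submersion_Banach} as though that sufficed globally. What both arguments actually establish is that $\varphi$ is a submersion onto $\Ima\varphi$ on an open neighbourhood of $\Ima\varphi$. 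In the paper's sole application (Proposition~\ref{prop:Mfd_basic}) the particular idempotent $\varphi_i$ does happen to be a submersion on all of~$Y_i$, because $y\mapsto(\varphi_i(y),\s(y))$ is a diffeomorphism $Y_i\congto\Ima(\varphi_i)\times_N M$ with smooth inverse $(z,m)\mapsto z\cdot(\s(z),m)$, and $\pr_1$ on that fibre product is a submersion since $\bunp$ is --- but this uses the specific groupoid-action structure, not the general lemma.
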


\begin{proof}
  The image is closed because~\(Y\) is Hausdorff and
  \[
  \Ima(\varphi) = \{x\in Y\mid \varphi(x)=x\}.
  \]
  We need a submanifold chart for \(\Ima(\varphi)\) near \(y\in
  \Ima(\varphi)\).  We use a chart \(\gamma\colon T_yY \to Y\), that
  is, \(\gamma\) is a diffeomorphism onto an open neighbourhood
  of~\(y\) in~\(Y\) whose derivative at~\(0\) is the identity map
  on the Banach space~\(T_yY\).  It suffices to find a submanifold
  chart for
  \(\varphi'\defeq\gamma^{-1}\circ\varphi\circ\gamma\), which is an
  idempotent smooth map on a neighbourhood of~\(0\) in~\(T_yY\) with
  \(\varphi'(0)=0\).  Let \(D\colon T_yY\to T_yY\) be the derivative
  of~\(\varphi'\) at~\(0\), which is a linear map with \(D^2=D\).  We
  may assume that~\(\varphi'\) is defined on all of~\(T_yY\) to
  simplify notation.

  Let \(X=\Ima(D)\), \(W=\ker(D)\).  The map
  \[
  \Psi\colon
  X\oplus W\to T_yY,\qquad
  (\xi,\eta)\mapsto \varphi'(\xi)+\eta,
  \]
  has the identity map as derivative near~\((0,0)\).  By the Implicit
  Function Theorem for Banach manifolds, \(\Psi\) is a diffeomorphism
  between suitable open neighbourhoods of~\(0\) in~\(X\oplus W\)
  and~\(T_yY\).  We have \(\Psi(\xi,0) = \varphi'(\xi) \in
  \Ima(\varphi')\) for all \(\xi\in X\), and
  \begin{multline*}
    \varphi'(\Psi(\xi,\eta))-\Psi(\xi,\eta)
    = \varphi'(\varphi'(\xi)+\eta)-\varphi'(\xi)-\eta
    \\= D(D\xi+\eta)-D\xi-\eta + O(\norm{\xi}^2+\norm{\eta}^2)
    = - \eta + O(\norm{\xi}^2+\norm{\eta}^2).
  \end{multline*}
  This is non-zero if \(\eta\neq0\) and \(\eta,\xi\) are small enough.
  Hence, for sufficiently small \(\xi,\eta\), we have
  \(\Psi(\xi,\eta)\in\Ima(\varphi')\) if and only if \(\eta=0\).
  Thus~\(\Psi\) is a submanifold chart for \(\Ima(\varphi')\)
  near~\(0\).  Its composite with~\(\gamma\) gives the required
  submanifold chart for~\(\Ima(\varphi)\) near~\(y\).  It is clear
  from the construction that the map \(\varphi\colon Y\to
  \Ima(\varphi)\) has surjective derivative at~\(y\), hence it is a
  smooth submersion by Proposition~\ref{pro:submersion_Banach}.
\end{proof}

\begin{proposition}
  \label{prop:Mfd_basic}
  Every action of a \v{C}ech groupoid of a cover in~\((\Mfd_\Ban,
  \covers_\subm)\) is basic.
\end{proposition}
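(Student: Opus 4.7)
The plan is to upgrade the weak version (Proposition~\ref{pro:surj_subm_pretopology}, which handles sheaves over \v{C}ech groupoids) to all actions on Banach manifolds by exploiting Lemma~\ref{lemma:projection-submfd}, the feature that distinguishes Banach from general locally convex manifolds: smooth idempotents have images that are closed submanifolds and are submersions onto their images. Given a cover $p\colon X\prto Z$ with \v{C}ech groupoid $\Gr$ and an arbitrary $\Gr$\nb-action on a Banach manifold $Y$ with anchor $\s\colon Y\to X$, we must build a Banach-manifold structure on $Y/\Gr$ such that the orbit projection is a surjective submersion and the shear map $Y\times_{\s,X,\rg}\Gr^1\to Y\times_{Y/\Gr}Y$ is a diffeomorphism; by Proposition~\ref{pro:Top_basic} the underlying topological action is already basic, so we only need to produce the smooth structure.

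The local construction goes as follows. Fix $y_0\in Y$. Since $p$ is a submersion on Banach manifolds, Proposition~\ref{pro:submersion_Banach} supplies a smooth local section $\sigma\colon Z_0\to X$ of $p$ defined on a neighborhood $Z_0$ of $z_0\defeq p(\s(y_0))$ with $\sigma(z_0)=\s(y_0)$. Let $Y_0\defeq (p\circ\s)^{-1}(Z_0)$ and define
\[
\psi\colon Y_0\to Y_0,\qquad y\mapsto y\cdot(\s(y),\sigma(p\s(y))),
\]
which makes sense because $(\s(y),\sigma(p\s(y)))\in X\times_{p,Z,p}X=\Gr^1$. A direct check shows $\psi^2=\psi$: writing $y'=\psi(y)$ we have $\s(y')=\sigma(p\s(y))$, so $p\s(y')=p\s(y)$ and $\sigma(p\s(y'))=\s(y')$, giving $\psi(y')=y'\cdot 1_{\s(y')}=y'$. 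By Lemma~\ref{lemma:projection-submfd}, $S_\sigma\defeq\psi(Y_0)$ is a closed submanifold and $\psi\colon Y_0\prto S_\sigma$ is a surjective submersion. Each $\Gr$\nb-orbit meeting $Y_0$ intersects $S_\sigma$ in exactly one point: if $y_1,y_2\in S_\sigma$ satisfy $y_1\cdot g=y_2$, then necessarily $p\s(y_1)=p\s(y_2)$ and $\s(y_i)=\sigma(p\s(y_i))$, forcing $\s(y_1)=\s(y_2)$, so $g$ is a unit and $y_1=y_2$; conversely $\psi(y)$ always lies in the orbit of $y$. Repeating this for every point of $Y$ produces an open cover $\{Y_0^\alpha\}$ with local slices $S_\sigma^\alpha$. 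The transition from $S_{\sigma_1}^\alpha$ to $S_{\sigma_2}^\beta$ is induced by $y\mapsto y\cdot(\s(y),\sigma_2(p\s(y)))$, smooth in $y$ for the ambient manifold structure on $Y$. Equipping $Y/\Gr$ with the quotient topology and the smooth charts coming from the $S_\sigma^\alpha$'s gives a Banach-manifold structure for which the orbit projection $\pi\colon Y\to Y/\Gr$ is locally $\psi$, hence a surjective submersion. Hausdorffness: two distinct orbits with different images in $Z$ are separated by saturated preimages of disjoint opens of $Z$; two distinct orbits with the same $p\s$-image lie, in any slice chart through one of them, at distinct points of a Hausdorff submanifold. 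Finally, the shear map $Y\times_{\s,X,\pr_1}\Gr^1\to Y\times_{Y/\Gr}Y$ is already a homeomorphism by Proposition~\ref{pro:Top_basic}, and in slice coordinates one reads off a smooth inverse $(y_1,y_2)\mapsto(y_1,(\s(y_1),\s(y_2)))$; by Proposition~\ref{pro:isomorphism_local} (locality of isomorphisms) it suffices to verify this smoothness after pulling back along the cover $\psi\times\psi$, which reduces the verification to the trivial situation inside a single slice.

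The main obstacle is that the anchor map $\s\colon Y\to X$ is \emph{not} assumed to be a submersion, so naive fibre products such as $Y\times_{p\s,Z,p\s}Y$ need not exist in $\Mfd_\Ban$, and the techniques of the proof of Proposition~\ref{pro:surj_subm_pretopology} — where one puts $\s$ into product normal form in a chart — are simply unavailable. The essential new ingredient, and the reason the argument works for Banach manifolds but remains open for Fréchet or locally convex manifolds, is the Implicit Function Theorem underlying Lemma~\ref{lemma:projection-submfd}: it lets us manufacture slices as images of smooth idempotents built purely from the action and local sections of $p$, without ever forming a fibre product along $\s$. All fibre products actually needed (for instance $Y\times_{\s,X,\pr_1}\Gr^1\cong Y\times_{p\s,Z,p}X$) are taken along the submersions $\pr_1,\pr_2\colon \Gr^1\rightrightarrows X$ or along $p$ itself, so they do exist in $\Mfd_\Ban$ and the construction is internally consistent.
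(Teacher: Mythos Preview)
Your proof is correct and follows essentially the same strategy as the paper: cover the base by domains of smooth local sections~\(\sigma\), use them to build smooth idempotents \(y\mapsto y\cdot(\s(y),\sigma(p\s(y)))\), invoke Lemma~\ref{lemma:projection-submfd} to get slice submanifolds, and glue these into a smooth structure on~\(Y/\Gr\) with the explicit inverse \((y_1,y_2)\mapsto(y_1,(\s(y_1),\s(y_2)))\) for the shear map. The paper's argument is organised almost identically, differing only in notation and in that it handles Hausdorffness of \(Y/\Gr\) implicitly via the topological results of Section~\ref{sec:Hausdorff_orbit} rather than arguing it directly as you do.
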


\begin{proof}
  Let \(\bunp\colon M\to N\) be a surjective submersion and let \(\Gr
  =(M\times_{\bunp,N,\bunp}M \rightrightarrows N)\) be its \v{C}ech
  groupoid (see Example~\ref{exa:covering_groupoid}).  Let~\(\Gr\) act
  on a smooth manifold~\(Y\) with anchor map \(\s\colon Y\to \Gr^0\).

  Since~\(\bunp\) has many smooth local sections, we may cover~\(N\)
  by open subsets~\(U_i\), \(i\in I\), for which there are smooth
  local sections \(\sigma_i\colon U_i\to M\) with \(\bunp\circ
  \sigma_i =\id_{U_i}\).  Then \(Y_i\defeq (\bunp
  \s)^{-1}(U_i)\subseteq Y\) is open (and thus an open submanifold
  of~\(Y\)) and \(\Gr\)\nb-invariant.

  The map
  \[
  \varphi_i\colon Y_i \to Y_i, \qquad
  y \mapsto y \cdot \bigl(\s(y), \sigma_i\circ \bunp\circ \s (y)\bigr),
  \]
  is smooth and maps~\(y\) to a point~\(y'\) in the same
  \(\Gr\)\nb-orbit with \(\s(y')\in\sigma_i(U_i)\).  If \(y'\)
  and~\(y''\) belong to the same \(\Gr\)\nb-orbit, then
  \(\s(y')=\s(y'')\) implies \(y'=y''\) because the only element
  of~\(\Gr\) that may map~\(y'\) to~\(y''\) is~\((\s(y'),\s(y''))\).
  Thus \(\varphi_i(y)\) is the unique element in the \(\Gr\)\nb-orbit
  of~\(y\) that belongs to \(\s^{-1}(\sigma_i(U_i))\).  Thus
  \(\varphi_i^2=\varphi_i\).  By Lemma~\ref{lemma:projection-submfd},
  \(\Ima(\varphi)\) is a smooth submanifold of~\(Y\) and
  \(\varphi_i\colon Y_i\to \Ima(\varphi_i)\) is a surjective
  submersion.

  If \(y_1,y_2\in Y_i\) satisfy \(\varphi_i(y_1)=\varphi_i(y_2)\),
  then \(y_2=y_1\cdot (\s(y_1),\s(y_2))\).  Hence the map
  \[
  Y_i\times_{\s,M,\pr_1} (M\times_N M) \to
  Y_i \times_{\varphi_i,Y_i,\varphi_i} Y_i,\qquad
  (y,(m_1,m_2))\mapsto (y,y\cdot (m_1,m_2)),
  \]
  is a diffeomorphism with inverse \((y_1,y_2)\mapsto
  (y_1,(\s(y_1),\s(y_2)))\).  As a consequence, the restriction of the
  \(\Gr\)\nb-action to~\(Y_i\) with the bundle projection
  \(\varphi_i\colon Y_i\to \Ima(\varphi_i)\) is a principal bundle.

  It remains to glue together these local constructions.
  Let~\(Y/\Gr\) be the quotient space with the quotient topology.  The
  quotient map \(\pi\colon Y\to Y/\Gr\) is open by
  Proposition~\ref{pro:open_orbit_space_projection} and because
  submersions are open.  Thus the subsets~\(\pi(Y_i)\) form an open
  cover of~\(Y/\Gr\).  The above argument shows that~\(\pi\) restricts
  to a homeomorphism from~\(\Ima(\varphi_i)\) onto~\(\pi(Y_i)\).  We
  claim that there is a unique smooth manifold structure on~\(Y/\Gr\)
  for which the maps \(\pi\colon \Ima(\varphi_i)\to Y/\Gr\) become
  diffeomorphisms onto open subsets of~\(Y/\Gr\).  We only have to
  check that the coordinate change maps on~\(Y/\Gr\) from
  \(\Ima(\varphi_i)\) to~\(\Ima(\varphi_j)\) are smooth maps.  This is
  so because the map
  \[
  Y_i\cap \Ima(\varphi_j)\to
  \Ima(\varphi_i)\cap Y_j,\qquad
  y\mapsto y\cdot (\sigma_i\circ\s(y),\sigma_j\circ\s(y)),
  \]
  is a diffeomorphism between submanifolds of~\(Y_i\), with inverse
  given by a similar formula.  The orbit space projection \(Y\to
  Y/\Gr\) is a surjective submersion for this smooth manifold
  structure on~\(Y/\Gr\) because this holds locally on each~\(Y_i\).
  The same argument as above shows that the map
  \[
  Y\times_{\s,M,\pr_1} (M\times_N M)\to Y\times_{Y/\Gr} Y,\qquad
  (y,(m_1,m_2))\mapsto (y,y\cdot (m_1,m_2)),
  \]
  is a diffeomorphism with inverse \((y_1,y_2)\mapsto
  (y_1,(\s(y_1),\s(y_2)))\).  Hence \(Y\prto Y/\Gr\) is a principal
  \(\Gr\)\nb-bundle.
\end{proof}

As a result, most of our theory works for locally convex manifolds
with surjective submersions as covers.  This includes the bicategories
of anafunctors, bibundle functors and bibundle equivalences, but
\emph{not} the bicategories of covering bibundle functors and bibundle
actors.  We only know that these bicategories exist for Banach,
Hilbert and finite-dimensional manifolds.

\begin{remark}
  An action is basic in the category of finite-dimensional manifolds
  if and only if it is basic in the category of Hausdorff spaces, if
  and only if it is free and proper (see
  Proposition~\ref{pro:Hausdorff_basic}).  This remains true for
  Hilbert manifolds.  For group actions, this is the main result of
  \cite{Jotz-Neeb:Closedness}; groupoid actions may be reduced to
  group actions using the proof of \cite{Zambon-Zhu:Contact}*{Lemma
    3.11}.

  For Banach manifolds and, even more generally, for locally convex
  manifolds, it remains true that basic actions are free and proper,
  but the converse fails.  Here is a counterexample (see
  also~\cite{Bourbaki:Groupes_algebres_23}*{Chapter~3}).  Let~\(E\) be
  a Banach space and let~\(F\) be a closed subspace without
  complement, for instance, \(c_0(\N)\subset\ell^\infty(\N)\).
  Let~\(F\) act on~\(E\) by inclusion and linear addition.  This
  action is free and proper, and the orbit space projection is the
  quotient map \(E\to E/F\).  The derivative of this map is the same
  map \(E\to E/F\), which has no linear section by assumption.  Hence
  the orbit space projection is not a cover, so the action is not
  basic.
\end{remark}

\begin{bibdiv}
  \begin{biblist}
\bib{Grothendieck:SGA4_Topos}{book}{
  author={Artin, Michael},
  author={Grothendieck, Alexandre},
  author={Verdier, Jean-Louis},
  title={Th\'eorie des topos et cohomologie \'etale des sch\'emas. Tome 1: Th\'eorie des topos},
  note={S\'eminaire de G\'eom\'etrie Alg\'ebrique du Bois-Marie 1963--1964 (SGA 4)},
  publisher={Springer},
  series={Lecture Notes in Mathematics},
  volume={269},
  place={Berlin},
  date={1972},
  pages={xix+525},
  review={\MRref {0354652}{50\,\#7130}},
  doi={10.1007/BFb0081551},
}

\bib{Baez:Introduction_n}{article}{
  author={Baez, John C.},
  title={An introduction to $n$\nobreakdash -categories},
  conference={ title={Category theory and computer science}, address={Santa Margherita Ligure}, date={1997}, },
  book={ series={Lecture Notes in Comput. Sci.}, volume={1290}, publisher={Springer}, place={Berlin}, },
  date={1997},
  pages={1--33},
  review={\MRref {1640335}{99h:18008}},
  doi={10.1007/BFb0026978},
}

\bib{Boardman-Vogt:Homotopy_algebraic}{book}{
  author={Boardman, J. Michael},
  author={Vogt, Rainer M.},
  title={Homotopy invariant algebraic structures on topological spaces},
  series={Lecture Notes in Mathematics},
  volume={347},
  publisher={Springer},
  place={Berlin},
  date={1973},
  pages={x+257},
  review={\MRref {0420609}{54\,\#8623a}},
  doi={10.1007/BFb0068547},
}

\bib{Bourbaki:Topologie_generale}{book}{
  author={Bourbaki, Nicolas},
  title={Topologie g\'en\'erale. Chapitres 1 \`a 4},
  series={\'El\'ements de math\'ematique},
  publisher={Hermann},
  place={Paris},
  date={1971},
  pages={xv+357 pp. (not consecutively paged)},
  isbn={978-3-540-33982-3},
  review={\MRref {0358652}{50\,\#11111}},
}

\bib{Bourbaki:Groupes_algebres_23}{book}{
  author={Bourbaki, Nicolas},
  title={\'El\'ements de math\'ematique. Fasc. XXXVII. Groupes et alg\`ebres de Lie. Chapitre II: Alg\`ebres de Lie libres. Chapitre III: Groupes de Lie},
  publisher={Hermann},
  place={Paris},
  date={1972},
  pages={320},
  isbn={978-3-540-33940-3},
  review={\MRref {0573068}{58\,\#28083a}},
}

\bib{Bracho:Haefliger_linear}{article}{
  author={Bracho, Javier},
  title={Haefliger structures and linear homotopy},
  journal={Trans. Amer. Math. Soc.},
  volume={282},
  date={1984},
  number={2},
  pages={529--538},
  issn={0002-9947},
  review={\MRref {732104}{85d:57026}},
  doi={10.2307/1999250},
}

\bib{Buneci:Morphisms_dynamical}{article}{
  author={Buneci, M\u {a}d\u {a}lina Roxana},
  title={Morphisms of discrete dynamical systems},
  journal={Discrete Contin. Dyn. Syst.},
  volume={29},
  date={2011},
  number={1},
  pages={91--107},
  issn={1078-0947},
  review={\MRref {2725283}{2012b:46146}},
  doi={10.3934/dcds.2011.29.91},
}

\bib{Buneci-Stachura:Morphisms_groupoids}{article}{
  author={Buneci, M\u {a}d\u {a}lina Roxana},
  author={Stachura, Piotr},
  title={Morphisms of locally compact groupoids endowed with Haar systems},
  status={eprint},
  date={2005},
  note={\arxiv {0511613}},
}

\bib{Buss-Exel-Meyer:InverseSemigroupActions}{article}{
  author={Buss, Alcides},
  author={Exel, Ruy},
  author={Meyer, Ralf},
  title={Inverse semigroup actions as groupoid actions},
  journal={Semigroup Forum},
  volume={85},
  date={2012},
  number={2},
  pages={227--243},
  issn={0037-1912},
  doi={10.1007/s00233-012-9418-y},
  review={\MRref {2969047}{}},
}

\bib{Carchedi:Thesis}{thesis}{
  author={Carchedi, David},
  title={Categorical properties of topological and diffentiable stacks},
  institution={Universiteit Utrecht},
  type={phdthesis},
  date={2011},
  eprint={http://people.mpim-bonn.mpg.de/carchedi/Thesis_David_Carchedi.pdf},
}

\bib{Cartan:Espaces_Fibres}{book}{
  editor={Cartan, Henri},
  title={Ecole Normale Sup\'erieure, S\'eminaire Henri Cartan, 2e ann\'ee 1949/1950, Espaces fibr\'es et homotopie},
  edition={2},
  date={1956},
  publisher={Secr\'etariat math\'ematique},
  place={Paris},
}

\bib{Chen-Liu:Comorphisms}{article}{
  author={Chen, Zhuo},
  author={Liu, Zhang-Ju},
  title={On (co-)morphisms of Lie pseudoalgebras and groupoids},
  journal={J. Algebra},
  volume={316},
  date={2007},
  number={1},
  pages={1--31},
  issn={0021-8693},
  review={\MRref {2354851}{2009a:58019}},
  doi={10.1016/j.jalgebra.2007.07.001},
}

\bib{Clark:Types_principal}{article}{
  author={Clark, Lisa Orloff},
  title={Classifying the types of principal groupoid $C^*$-algebras},
  journal={J. Operator Theory},
  volume={57},
  date={2007},
  number={2},
  pages={251--266},
  issn={0379-4024},
  review={\MRref {2328998}{2008i:46050}},
  eprint={http://www.theta.ro/jot/archive/2007-057-002/2007-057-002-003.html},
}

\bib{Clementino-Hofmann:Limit_stability}{article}{
  author={Clementino, Maria Manuel},
  author={Hofmann, Dirk},
  title={On limit stability of special classes of continuous maps},
  journal={Topology Appl.},
  volume={125},
  date={2002},
  number={3},
  pages={471--488},
  issn={0166-8641},
  review={\MRref {1935164}{2003h:54017}},
  doi={10.1016/S0166-8641(01)00293-0},
}

\bib{Dold:Partitions}{article}{
  author={Dold, Albrecht},
  title={Partitions of unity in the theory of fibrations},
  journal={Ann. of Math. (2)},
  volume={78},
  date={1963},
  pages={223--255},
  issn={0003-486X},
  review={\MRref {0155330}{27 \#5264}},
  doi={10.2307/1970341},
}

\bib{Goehle:Thesis}{thesis}{
  author={Goehle, Geoff},
  title={Groupoid crossed products},
  institution={Dartmouth College},
  type={phdthesis},
  publisher={ProQuest LLC, Ann Arbor, MI},
  date={2009},
  pages={504},
  isbn={978-1109-70354-2},
  review={\MRref {2941279}{}},
  eprint={http://search.proquest.com/docview/230889835},
}

\bib{Haefliger:Feuilletages}{article}{
  author={Haefliger, Andr\'e},
  title={Feuilletages sur les vari\'et\'es ouvertes},
  journal={Topology},
  volume={9},
  date={1970},
  pages={183--194},
  issn={0040-9383},
  review={\MRref {0263104}{41\,\#7709}},
  doi={10.1016/0040-9383(70)90040-6},
}

\bib{Hajek:Quotient}{article}{
  author={H\'ajek, Otomar},
  title={Notes on quotient maps},
  journal={Comment. Math. Univ. Carolinae},
  volume={7},
  date={1966},
  pages={319--323},
  issn={0010-2628},
  review={\MRref {0202118}{34\,\#1992}},
  eprint={http://dml.cz/handle/10338.dmlcz/105065},
}

\bib{Hajek:Quotient_corrections}{article}{
  author={H\'ajek, Otomar},
  title={Correction to: ``Notes on quotient maps''},
  journal={Comment. Math. Univ. Carolinae},
  volume={8},
  date={1967},
  pages={171},
  issn={0010-2628},
  review={\MRref {0210089}{35\,\#984}},
  eprint={http://dml.cz/handle/10338.dmlcz/105100},
}

\bib{Hamilton:Nash-Moser}{article}{
  author={Hamilton, Richard S.},
  title={The inverse function theorem of Nash and Moser},
  journal={Bull. Amer. Math. Soc. (N.S.)},
  volume={7},
  date={1982},
  number={1},
  pages={65--222},
  issn={0273-0979},
  review={\MRref {656198}{83j:58014}},
  doi={10.1090/S0273-0979-1982-15004-2},
}

\bib{Henriques:L-infty}{article}{
  author={Henriques, Andr\'e},
  title={Integrating $L_\infty $\nobreakdash -algebras},
  journal={Compos. Math.},
  volume={144},
  date={2008},
  number={4},
  pages={1017--1045},
  issn={0010-437X},
  review={\MRref {2441255}{}},
  doi={10.1112/S0010437X07003405},
}

\bib{Huef:Transformation_Fell}{article}{
  author={an Huef, Astrid},
  title={The transformation groups whose $C^*$\nobreakdash -algebras are Fell algebras},
  journal={Bull. London Math. Soc.},
  volume={33},
  date={2001},
  number={1},
  pages={73--76},
  issn={0024-6093},
  review={\MRref {1798578}{2001k:46087}},
  doi={10.1112/blms/33.1.73},
}

\bib{Jotz-Neeb:Closedness}{article}{
  author={Jotz, Madeleine},
  author={Neeb, Karl-Hermann},
  title={Closedness of the tangent spaces to the orbits of proper actions},
  journal={J. Lie Theory},
  volume={18},
  date={2008},
  number={3},
  pages={517--521},
  issn={0949-5932},
  review={\MRref {2493051}{2009m:22023}},
  eprint={http://www.heldermann.de/JLT/JLT18/JLT183/jlt18031.htm},
}

\bib{Joyal:Quasi-categories}{article}{
  author={Joyal, Andr\'e},
  title={Quasi-categories and Kan complexes},
  journal={J. Pure Appl. Algebra},
  volume={175},
  date={2002},
  number={1-3},
  pages={207--222},
  issn={0022-4049},
  review={\MRref {1935979}{2003h:55026}},
  doi={10.1016/S0022-4049(02)00135-4},
}

\bib{Kock:Synthetic}{book}{
  author={Kock, Anders},
  title={Synthetic differential geometry},
  series={London Mathematical Society Lecture Note Series},
  volume={333},
  edition={2},
  publisher={Cambridge University Press},
  place={Cambridge},
  date={2006},
  pages={xii+233},
  isbn={978-0-521-68738-6},
  isbn={0-521-68738-1},
  review={\MRref {2244115}{2007b:51013}},
  doi={10.1017/CBO9780511550812},
}

\bib{Lang:Fundamentals_diffgeo}{book}{
  author={Lang, Serge},
  title={Fundamentals of differential geometry},
  series={Graduate Texts in Mathematics},
  volume={191},
  publisher={Springer},
  place={New York},
  date={1999},
  pages={xviii+535},
  isbn={0-387-98593-X},
  review={\MRref {1666820}{99m:53001}},
  doi={10.1007/978-1-4612-0541-8},
}

\bib{Leinster:Basic_Bicategories}{article}{
  author={Leinster, Tom},
  title={Basic Bicategories},
  date={1998},
  status={eprint},
  note={\arxiv {math/9810017}},
}

\bib{Mackenzie:General_Lie_groupoid_algebroid}{book}{
  author={Mackenzie, Kirill C. H.},
  title={General theory of Lie groupoids and Lie algebroids},
  series={London Mathematical Society Lecture Note Series},
  volume={213},
  publisher={Cambridge University Press, Cambridge},
  date={2005},
  pages={xxxviii+501},
  isbn={978-0-521-49928-3},
  isbn={0-521-49928-3},
  review={\MRref {2157566}{2006k:58035}},
  doi={10.1017/CBO9781107325883},
}

\bib{Michael:Bi-quotient}{article}{
  author={Michael, Ernest A.},
  title={Bi-quotient maps and Cartesian products of quotient maps},
  journal={Ann. Inst. Fourier (Grenoble)},
  volume={18},
  date={1968},
  number={fasc. 2},
  pages={287--302 vii (1969)},
  issn={0373-0956},
  review={\MRref {0244964}{39\,\#6277}},
  doi={10.5802/aif.301},
}

\bib{Michael:Quintuple}{article}{
  author={Michael, Ernest A.},
  title={A quintuple quotient quest},
  journal={General Topology and Appl.},
  volume={2},
  date={1972},
  pages={91--138},
  review={\MRref {0309045}{46\,\#8156}},
  doi={10.1016/0016-660X(72)90040-2},
}

\bib{Michael:Complete_tri-quotient}{article}{
  author={Michael, Ernest A.},
  title={Complete spaces and tri-quotient maps},
  journal={Illinois J. Math.},
  volume={21},
  date={1977},
  number={3},
  pages={716--733},
  issn={0019-2082},
  review={\MRref {0467688}{57\,\#7543}},
  eprint={http://projecteuclid.org/euclid.ijm/1256049022},
}

\bib{Moerdijk-Mrcun:Groupoids_sheaves}{article}{
  author={Moerdijk, Ieke},
  author={Mr\v {c}un, Janez},
  title={Lie groupoids, sheaves and cohomology},
  conference={ title={Poisson geometry, deformation quantisation and group representations}, },
  book={ series={London Math. Soc. Lecture Note Ser.}, volume={323}, publisher={Cambridge Univ. Press, Cambridge}, },
  date={2005},
  pages={145--272},
  review={\MRref {2166453}{}},
  doi={10.1017/CBO9780511734878},
}

\bib{Moerdijk-Reyes:Models}{book}{
  author={Moerdijk, Ieke},
  author={Reyes, Gonzalo E.},
  title={Models for smooth infinitesimal analysis},
  publisher={Springer},
  place={New York},
  date={1991},
  pages={x+399},
  isbn={0-387-97489-X},
  review={\MRref {1083355}{91m:18017}},
  doi={10.1007/978-1-4757-4143-8},
}

\bib{Mrcun:Thesis}{thesis}{
  author={Mrcun, Janez},
  title={Stability and invariants of Hilsum--Skandalis maps},
  institution={Utrecht University},
  type={phdthesis},
  date={1996},
  eprint={http://arxiv.org/abs/math/0506484},
}

\bib{Muhly-Renault-Williams:Equivalence}{article}{
  author={Muhly, Paul S.},
  author={Renault, Jean N.},
  author={Williams, Dana P.},
  title={Equivalence and isomorphism for groupoid \(C^*\)\nobreakdash -algebras},
  journal={J. Operator Theory},
  volume={17},
  date={1987},
  number={1},
  pages={3--22},
  issn={0379-4024},
  review={\MRref {873460}{88h:46123}},
  eprint={http://www.theta.ro/jot/archive/1987-017-001/1987-017-001-001.html},
}

\bib{Nikolaus-Sachse-Wockel:String}{article}{
  author={Nikolaus, Thomas},
  author={Sachse, {Ch}ristoph},
  author={Wockel, {Ch}ristoph},
  title={A smooth model for the string group},
  journal={Int. Math. Res. Not. IMRN},
  date={2013},
  number={16},
  pages={3678--3721},
  issn={1073-7928},
  review={\MRref {3090706}{}},
  doi={10.1093/imrn/rns154},
}

\bib{Noohi:Foundations_I}{article}{
  author={Noohi, Behrang},
  title={Foundations of Topological Stacks I},
  note={\arxiv {math.AG/0503247}},
  status={eprint},
  date={2005},
}

\bib{Noohi:two-groupoids}{article}{
  author={Noohi, Behrang},
  title={Notes on 2\nobreakdash -groupoids, 2\nobreakdash -groups and crossed modules},
  journal={Homology, Homotopy Appl.},
  volume={9},
  date={2007},
  number={1},
  pages={75--106},
  issn={1532-0073},
  review={\MRref {2280287}{2007m:18006}},
  eprint={http://projecteuclid.org/euclid.hha/1175791088},
}

\bib{Palais:Slices}{article}{
  author={Palais, Richard S.},
  title={On the existence of slices for actions of non-compact Lie groups},
  journal={Ann. of Math. (2)},
  volume={73},
  date={1961},
  pages={295--323},
  issn={0003-486X},
  review={\MRref {0126506}{23\,\#A3802}},
  doi={10.2307/1970335},
}

\bib{Pradines:survey08}{article}{
  author={Pradines, Jean},
  title={Morphisms between spaces of leaves viewed as fractions},
  language={English, with French summary},
  journal={Cahiers Topologie G\'eom. Diff\'erentielle Cat\'eg.},
  volume={30},
  date={1989},
  number={3},
  pages={229--246},
  issn={0008-0004},
  review={\MRref {1029626}{91h:57018}},
}

\bib{Pronk:Etendues_fractions}{article}{
  author={Pronk, Dorette A.},
  title={Etendues and stacks as bicategories of fractions},
  journal={Compositio Math.},
  volume={102},
  date={1996},
  number={3},
  pages={243--303},
  issn={0010-437X},
  review={\MRref {1401424}{97d:18011}},
  eprint={http://www.numdam.org/item?id=CM_1996__102_3_243_0},
}

\bib{Rieffel:Applications_Morita}{article}{
  author={Rieffel, Marc A.},
  title={Applications of strong Morita equivalence to transformation group $C^*$\nobreakdash -algebras},
  conference={ title={Operator algebras and applications, Part I}, address={Kingston, Ont.}, date={1980}, },
  book={ series={Proc. Sympos. Pure Math.}, volume={38}, number={1}, publisher={Amer. Math. Soc.}, place={Providence, RI}, },
  date={1982},
  pages={299--310},
  review={\MRref {679709}{84k:46046}},
  doi={10.1090/pspum/038.1/679709},
}

\bib{Roberts:Anafunctors_localisation}{article}{
  author={Roberts, David Michael},
  title={Internal categories, anafunctors and localisations},
  journal={Theory Appl. Categ.},
  volume={26},
  date={2012},
  pages={788--829},
  issn={1201-561X},
  review={\MRref {3065944}{}},
  eprint={http://www.tac.mta.ca/tac/volumes/26/29/26-29abs.html},
}

\bib{Segal:Classifying}{article}{
  author={Segal, Graeme},
  title={Classifying spaces and spectral sequences},
  journal={Inst. Hautes \'Etudes Sci. Publ. Math.},
  number={34},
  date={1968},
  pages={105--112},
  issn={0073-8301},
  review={\MRref {0232393}{38\,\#718}},
  eprint={http://www.numdam.org/item?id=PMIHES_1968__34__105_0},
}

\bib{Williams:crossed-products}{book}{
  author={Williams, Dana P.},
  title={Crossed products of $C^*$\nobreakdash -algebras},
  series={Mathematical Surveys and Monographs},
  volume={134},
  publisher={Amer. Math. Soc.},
  place={Providence, RI},
  date={2007},
  pages={xvi+528},
  isbn={978-0-8218-4242-3; 0-8218-4242-0},
  review={\MRref {2288954}{2007m:46003}},
  doi={10.1090/surv/134},
}

\bib{Wockel-Zhu:Integrating}{article}{
  author={Wockel, {Ch}ristoph},
  author={Zhu, {Ch}enchang},
  title={Integrating central extensions of Lie algebras via Lie 2\nobreakdash -groups},
  status={eprint},
  note={\arxiv {1204.5583}},
  date={2012},
}

\bib{Wolfson:Descent}{article}{
  author={Jesse Wolfson},
  title={Descent for \(n\)\nobreakdash -bundles},
  status={eprint},
  note={\arxiv {1308.1113}},
  date={2013},
}

\bib{Zambon-Zhu:Contact}{article}{
  author={Zambon, Marco},
  author={Zhu, {Ch}enchang},
  title={Contact reduction and groupoid actions},
  journal={Trans. Amer. Math. Soc.},
  volume={358},
  date={2006},
  number={3},
  pages={1365--1401},
  issn={0002-9947},
  review={\MRref {2187657}{2006h:53088}},
  doi={10.1090/S0002-9947-05-03832-8},
}

  \end{biblist}
\end{bibdiv}
\end{document}